\newcommand{\N}{\mathbb{N}}
\newcommand{\Z}{\mathbb{Z}}
\newcommand{\C}{\mathbb{C}}
\newcommand{\CCC}{\mathcal{C}}
\newcommand{\PS}{\mathrm{PS}}
\newtheorem{thm}{Theorem}[section]
\newtheorem{cor}[thm]{Corollary}
\newtheorem{lem}[thm]{Lemma}
\newtheorem{prop}[thm]{Proposition}
\theoremstyle{definition}
\newtheorem{defn}[thm]{Definition}
\newtheorem{example}[thm]{Example}
\newtheorem{rmk}[thm]{Remark}
\newcommand{\diag}{\mathrm{diag}}
\newcommand{\pluss}{+}
\newcommand{\minuss}{-}
\newcommand{\St}{\mathrm{St}}
\newcommand{\Danismanfunctor}{\kappa}
\newcommand{\fin}{\mathrm{fin}}
\newcommand{\norm}{\mathrm{norm}}
\newcommand{\univ}{\mathrm{univ}}
\newcommand{\cell}{\mathrm{cell}}
\newcommand{\crit}{\mathrm{crit}}
\newcommand{\Sp}{\mathrm{Sp}}
\newcommand{\GSp}{\mathrm{GSp}}
\newcommand{\Gl}{\mathrm{Gl}}
\newcommand{\id}{\mathrm{id}}
\newcommand{\Diff}{\;\mathrm{d}}
\newcommand{\im}{\mathrm{im}}
\newcommand\nosf[1]{\begin{footnotesize}\textup{\textsf{#1}}\end{footnotesize}}
\DeclareMathOperator{\Hom}{Hom}
\DeclareMathOperator{\Ext}{Ext}
\DeclareMathOperator{\coker}{coker}
\DeclareMathOperator{\ind}{ind}
\DeclareMathOperator{\Ind}{Ind}
\setlist[enumerate]{itemsep=-0.5ex plus0.1ex minus 0.2ex}
\setlist[description]{itemsep=-0.5ex plus0.1ex minus 0.2ex}
\setlist[itemize]{itemsep=-0.5ex plus0.1ex minus 0.2ex}
\let\svthefootnote\thefootnote
\newcommand\blankfootnote[1]{%
  \let\thefootnote\relax\footnotetext{#1}%
  \let\thefootnote\svthefootnote%
}
\title{\begin{large}Regular poles for spinor $L$-series attached to\\ split Bessel models of $\mathrm{GSp}(4)$\end{large}}
\author{Mirko~R\"osner \and Rainer Weissauer}
\date{}
\begin{document}
\maketitle

\begin{abstract} \noindent
For irreducible smooth representations $\Pi$ of $\mathrm{GSp}(4,k)$ over a non-archimedean local field $k$,
Piatetskii-Shapiro and Soudry have constructed an $L$-factor depending on the choice of a Bessel model.
It factorizes into a regular part and an exceptional part. We determine the regular part for the case of split Bessel models.
\blankfootnote{2010 \textit{Mathematics Subject Classification.} Primary 22E50; Secondary 11F46, 11F70, 20G05.}
\end{abstract}

\bigskip\noindent

\bigskip\noindent
 
\section{Introduction}

For infinite-dimensional irreducible smooth representations $\Pi$ of $G=\mathrm{GSp}(4,k)$ with central character $\omega$, where $k$ is a local non-archimedean field,
and a smooth character $\mu$ of $k^\ast$,
Piatetskii-Shapiro~\cite{PS-L-Factor_GSp4}
constructed local $L$-factors
$$L^{\mathrm{PS}}(s,\Pi,\mu,\Lambda)$$
attached to a choice of a Bessel model $(\Lambda,\psi)$.
To be precise, fix the standard Siegel parabolic subgroup $P=MN$ in $G$ with Levi $M$ and unipotent radical~$N$.
For a non-degenerate linear form $\psi$ of $N$, the connected component $\tilde{T}$ of the stabilizer of $\psi$ in $M$ is isomorphic to the unit group $L^\times$ for a quadratic extension $L/k$.
A Bessel character is a pair $(\Lambda,\psi)$ where $\Lambda$ is a character of $\widetilde{T}$.
The coinvariant space $(\Pi_{\Lambda})_\psi$ with respect to the action of $\widetilde{T}N$ by $(\Lambda,\psi)$ is at most one-dimensional \cite[Thm.\,3.1]{PS-L-Factor_GSp4}, \cite[Thm.\,6.3.2]{Roberts-Schmidt_Bessel}.
If it is non-zero, we say $\Pi$ has a Bessel model. Bessel models have been classified by Roberts and Schmidt~\cite{Roberts-Schmidt_Bessel}. A Bessel model is called anisotropic or split, depending on whether $L$ is a field or not.
The local factor is a product of its regular and its exceptional part
$$L^{\mathrm{PS}}(s,\Pi,\mu, \Lambda)=L_{\mathrm{reg}}^{\mathrm{PS}}(s,\Pi,\mu,\Lambda)L_{\mathrm{ex}}^{\mathrm{PS}}(s,\Pi,\mu,\Lambda)\ .$$

It was expected by Piatetskii-Shapiro and Soudry \cite{Soudry_Piatetski_L_Factors} that the $L$-factor does not depend on the choice of the Bessel model.
They proved this for unitary fully Borel induced $\Pi$ and unitary $\Lambda$ \cite[Thm.\,2.4]{Soudry_Piatetski_L_Factors}.
Novodvorsky made the claim \cite[Thm.\,5]{Novod_L_factors} that for generic $\Pi$ the $L$-factor $L^{\mathrm{PS}}(s,\Pi,\mu,\Lambda)$ coincides with the $L$-factor $L^{\mathrm{Nov}}(s,\Pi,\mu)$, constructed by him in a completely different way using Whittaker models, which of course would imply that $L^{\mathrm{PS}}(s,\Pi,\mu,\Lambda)$ is independent of the choice of the underlying Bessel model.
This expectation holds true in the case of anisotropic Bessel models by results of Dani\c{s}man \cite{Danisman, Danisman_Annals, Danisman2, Danisman3} and the authors \cite{Anisotropic_Exceptional}.

We study the case of split Bessel models. To every split Bessel model~$(\Lambda,\psi)$ we attach the Bessel module $\widetilde{\Pi}=\Pi_{\widetilde{R},\Lambda}$ as a $TS$-module.
Let $M=\nu^{-3/2}\otimes\widetilde{\Pi}/\widetilde{\Pi}^S$ denote the normalized perfect quotient of the Bessel module.
The $L$-factor $L(s,\mu\otimes M)$ divides the regular $L$-factor
$L^{\PS}_{\mathrm{reg}}(s,\Pi,\mu,\Lambda)$ by proposition~\ref{L-reg}. 
Our main result will be the determination of Bessel modules $\widetilde{\Pi}$ for irreducible representations $\Pi$ of $G$ and split Bessel models of $\Pi$, see table~\ref{tab:Bessel_module_split_Bessel_model}.
This in particular determines the $L$-factors $L(s,\mu\otimes M)$.
The \emph{subregular} $L$-factor $$L^\PS_{\mathrm{sreg}}(s,\Pi,\mu,\Lambda)= L_{\mathrm{reg}}^\PS(s,\Pi,\mu,\Lambda)/L(s,\mu\otimes M)$$  is computed by the authors \cite{Subregular};
it was ignored in an earlier version of this paper.
It turns out that the regular factor does not depend on the choice of the split Bessel model, see theorem~\ref{thm:reg_L_factor}.
The list of the regular factors $L^{\mathrm{PS}}_{\mathrm{reg}}(s,\Pi,\mu,\Lambda)$ attached to split Bessel models can be found in table~\ref{tab:regular_poles} in terms of the classification in \cite{Sally_Tadic} and \cite{Roberts-Schmidt}.

The exceptional factor is found in \cite{W_Excep};
it does not depend on the choice of the split Bessel model either.
Summarizing,
$L^{\PS}(s,\Pi,\mu,\Lambda)$ does not depend on the choice of a split Bessel model.
For generic $\Pi$, the $L$-factor $L^{\PS}(s,\Pi,\mu,\Lambda)$
coincides with the $L$-factor of Novodvorsky \cite{Novod_L_factors}, compare \cite{Takloo-Bighash}.
For non-cuspidal $\Pi$, the $L$-factor $L^{\PS}(s,\Pi,1,\Lambda)$ coincides with the $L$-factor of the Langlands parameter attached to $\Pi$ by Roberts and Schmidt~\cite[table~A.8]{Roberts-Schmidt}.
An explicit description of the Bessel model also has applications in the construction of Euler systems for $\GSp(4)$ \cite{LSZ_EulerGSp4}.

Our proof combines elementary methods from homological algebra applied in the context of $P_3$-theory, as already considered in \cite{Bernstein-Zelevinsky76} and
\cite{Roberts-Schmidt}, with results obtained from a detailed study of the action of tori on the Siegel-Jacquet module $J_P(\Pi)$  and
the analysis of extensions that are defined by filtrations on induced representations for which $\Pi$ is an irreducible quotient.

The authors are grateful to David Loeffler for comments and suggestions.

\tableofcontents

\section{Preliminaries} \label{Bessel data}\label{s:preliminaries}
Fix a local non-archimedean field $k$ of characteristic $\mathrm{char}(k)\neq2$ with finite residue field of cardinality $q$.
The group $G\! =\! \GSp(4,k)$ of symplectic similitudes in four variables is
\begin{equation*}
\GSp(4,k) = \{ g\in\Gl(4,k) \vert g' J g = \lambda(g) \cdot  J\} \quad , \qquad
J=\begin{pmatrix} 0 & E_2 \\ - E_2 & 0 \end{pmatrix}
\end{equation*}
with the symplectic similitude factor $\lambda(g)\in k^\times$ and center $Z\cong k^\times$.
Let $P= MN$ denote the standard Siegel parabolic subgroup
with Levi component $M$ and unipotent radical $N$.
We identify elements in $N$ with vectors $(a,b,c) \in k^3$
and elements in $M$ with pairs $(A,\lambda)\in\Gl(2) \times\Gl(1)$ via the embeddings
$$ x_\lambda \cdot m_A \ = \ \begin{pmatrix} \lambda\cdot A  & 0 \cr 0 &  (A')^{-1} \end{pmatrix} \quad , \quad  s_{a,b,c} \ = \ \begin{pmatrix} 1 & 0  & a & b \cr 0 & 1  & b & c \cr
 0 &  0 & 1 & 0 \cr  0 & 0  & 0 & 1  \end{pmatrix} \ .$$ 
We write $\tilde t= \diag(t_1,t_2,t_2,t_1)$, $x_\lambda = \diag(\lambda E, E)$ and $t_\lambda = \diag(E,\lambda E)$ in $M$  for $t_1,t_2,\lambda\in k^\times$
and $s_a= s_{a,0,0}$, $s_b= s_{0,b,0}$ und $s_c= s_{0,0,c}$.  
Notice $x_\lambda s_b x_\lambda^{-1} = s_{\lambda b}$. 
The Weyl group of $G$ has order eight and is generated by \begin{equation*}
\mathbf{s}_1=\begin{pmatrix}0&1&0&0\\1&0&0&0\\0&0&0&1\\0&0&1&0\end{pmatrix}\qquad,\qquad
\mathbf{s}_2=\begin{pmatrix}1&0&0&0\\0&0&0&1\\0&0&1&0\\0&-1&0&0\end{pmatrix}\ .
\end{equation*}

We use the following notation:
\begin{enumerate}
\item ${\omega}$ a fixed smooth character of $Z\cong k^\times$,
\item $\psi:N\to \C$ a nontrivial character with vector group $\tilde N\subseteq\ker\psi\subseteq N $, 
\item $R= \tilde T N\subseteq P$ the connected component of the centralizer of $\psi$ in $P$,  
\item $\tilde T\subseteq M$ the split torus given by the elements $\tilde t$ for $t_1,t_2\in k^\times$,
\item $\tilde R  = \tilde T \tilde N\subseteq R $ the Bessel group,
\item $\delta_P(m_A t_\lambda s_{a,b,c})=\vert\frac{\det A}{\lambda}\vert^3$, the modulus character of $P$, trivial on $R$,
\item $S\subseteq N$ centralizer of $\tilde R$, vector group $S\! =\! \langle s_b, b\in k\rangle$,
\item $N=S\oplus \tilde N$ and $R \! =\! S \times \tilde R$, i.e. both factors commute,
\item $T=\{ x_\lambda\in M\ \vert \  \lambda\in k^\times\}$ with $\delta_P^{1/2}(x_\lambda) = \vert \lambda\vert^{3/2}$,
\item $TS \subseteq MN\! =\! P$ acts on $R$ by conjugation,
\item $\chi, \mu$ smooth characters of $k^\times\cong T$ via $T\ni x_\lambda \mapsto \lambda\in k^\times$,
\item $\rho^\divideontimes = \omega/\rho$ for characters $\rho$ of $k^\times$,
\item $\chi_{\norm}:= \delta_P^{-1/2}\chi = \nu^{-3/2}\chi$ as a character of $T$, 
\item $\Lambda$ smooth character of $\tilde{R}$, trivial on $\tilde N$ with $\Lambda\vert_Z\!=\!{\omega}$, hence 
 $\Lambda(\tilde t) = \rho(t_1)\rho^\divideontimes(t_2)$ for some smooth character $\rho$ of $k^\times$ (Bessel character),
 \item $\rho_+ = \nu^{-1/2}\chi_\Pi$ and $\rho_- = \nu^{1/2}{\omega}\chi_\Pi^{-1} =\rho_+^\divideontimes$. %
\end{enumerate}
The character $(\Lambda,\psi)$ of $R$ defines a \emph{split Bessel datum}
as in \cite{PS-L-Factor_GSp4}.

$$ \xymatrix{ & P\ar@{-}[d] & \cr  &     TR\ar@{-}[dl] \ar@{-}[dr]   & \cr
\Lambda,\ \tilde T\tilde N & & TN,\ \chi\psi \cr
& \tilde N \ar@{-}[ul] \ar@{-}[ur] & }$$

Note that there exists an exact sequence $ 0 \to \tilde R \to TR \to  TS  \to 0  \ .$

\subsection{Functors}
For a totally disconnected locally compact group $\Gamma$ let ${\cal C}_{\Gamma}$ be the category of complex vector spaces
with a smooth action of ${\Gamma}$ and let ${\CCC}_{\Gamma}^{\fin}$ be its full subcategory of objects with finite length. Between these categories we consider 
\begin{itemize}
\item[$\Danismanfunctor$] the left exact functor ${\cal C}_{TS} \to {\cal C}_{TS}$ that attaches to $V\in\CCC_{TS}$ its submodule ${\Danismanfunctor}(V)=V^S$ of $S$-invariant vectors.
Notice ${\Danismanfunctor}(V/\Danismanfunctor(V))=0$.

\item[$\eta$] the exact functor ${\cal C}_{G} \to {\cal C}_{\Gl_a(2)}$ that attaches to $V\in\CCC_{G}$ its quotient $\overline{V}=V_{S_A}$ with the action of $\Gl_a(2)$ defined in section~\ref{functor eta}.

\item[$\beta_\rho$] the right exact functor
$\ {\cal C}_{TR} \to {\cal C}_{TS}$ which assigns to $V\in {\cal C}_{TR}$ the maximal quotient space $\widetilde V = V_{\tilde R,\Lambda}$ on which $\tilde R$ acts by $\Lambda$,

\item[$\beta^\rho$] the left exact functor $\ {\cal C}_{TR} \to {\cal C}_{TS}$ which assigns to $V\in {\cal C}_{TR}$ the $\Lambda$-eigenspace $(V_{\tilde N})^{\Lambda}$ of $\tilde T$ in the space of $\tilde N$-coinvariants of $V$,

\item[$k_\rho$] the right exact functor $\CCC_{n}\to\CCC_{n-1}$ defined in section~\ref{the functors beta} with its left derived functor $k^\rho$,

\item[$\mu$] the twist by a smooth character $\mu$ of $k^\times$ is the exact functor
$\CCC_G(\omega)\to\CCC_G(\mu^2\omega)$ given on objects by $\Pi\mapsto \mu\Pi=(\mu\circ\lambda)\otimes\Pi$ with the obvious action on morphisms.
The twist functor $\CCC_{n}\to\CCC_{n}$, $M\mapsto i_*(\mu\circ\det)\otimes M$ is also denoted by $\mu$.
The meaning is always clear from the context.
\end{itemize}
It immediately follows from the definitions that there are natural equivalences
of functors $\CCC_G\to\CCC_{TS}$
$$  \beta_{\mu\rho}\circ \mu  \cong \mu \circ \beta_{\rho} \quad,\quad\beta^{\mu\rho}\circ \mu  \cong \mu\circ \beta^{\rho}\ \ .$$

For $V\in\CCC_{S}$ and a character $\psi$ of $S$
we denote by $V_\psi = V_{S,\psi} = V/V(S,\psi)$ the $\psi$-coinvariants,
i.e.\ the maximal quotient on which $S$ acts by the character $\psi$.
For $V\in {\CCC}_T$ and a smooth character $\chi:T \to \C^\times$, we consider
\begin{enumerate}
\item[] $\chi$-invariants $V^{\chi}=\{v\in V\mid x\cdot v \!=\! \chi(x) v \ \forall t\in T\}$,
\item[] $\chi$-coinvariants $V_{\chi}=V_{T,\chi}=V/V(T,\chi)$,
\item[] generalized $\chi$-invariants
$V^{(\chi)}=\{v\in V\mid \exists n\ \forall x\! \in T:\ (x- \chi(x))^n\cdot v\! =\! 0 \}$.
\end{enumerate}

\section{The category ${\cal C}$ of finitely generated $TS$-modules}

The group $TS$ is generated by the matrices $x_\lambda$ for $\lambda\in k^\times$ and $s_a$ for $a \in k$.
Due to the relation
$x_\lambda s_b x_{\lambda}^{-1} = s_{\lambda b}$ the algebraic group $TS$ is isomorphic 
to the group $\Gl_a(1)$ of affine linear transformations $x\mapsto\lambda \cdot x + b$ of $k$.
We denote them by $\lambda.b$, see also section~\ref{s:Gelfand-Kazhdan}.
Indeed, sending $x_\lambda$ to $\lambda.0 \in\Gl_a(1)$ and
$s_b$ to $1.b\in\Gl_a(1)$ defines an isomorphism $TS\cong\Gl_a(1)$.
This gives an equivalence  between the category ${\cal C}_{TS}$ of smooth $TS$-modules
and the category ${\cal C}_1$ of smooth $\Gl_a(1)$-modules.
This being said, let $\CCC=\CCC^{\fin}_{TS}$ denote the full subcategory
of $\CCC_{TS}$ of modules of finite length.

\subsection{Examples}

All the examples listed below do frequently occur later.
We introduce them here to fix certain conventions and notations in this respect.
Most statements are well known. For the convenience of the reader we give proofs.

\begin{example}\label{example 1}
The spaces $C^\infty_c(k)$, $C_b^\infty(k^\times)$ of smooth complex valued functions $g$ on $k$ resp.\ on $k^\times$ with bounded support in $k$ are $TS$-modules by $(s_b g)(x)\! =\! \psi(bx)g(x)$ and $(x_\lambda g)(x)\! =\! g(x \lambda)$. 
An invariant irreducible $TS$-subspace contained in both $C^\infty_c(k)$ and $C_b^\infty(k^\times)$ is the Schwartz space
$$\mathbb S= C_c^\infty(k^\times)\ .$$
It is invariant under twists by smooth characters $\chi$ of $k^\times$ by the isomorphism 
$i_*(\chi)\otimes\mathbb{S}  \cong {\mathbb S}$, $g\mapsto \chi g$.
\end{example}
\begin{lem}\label{lem:C_b_perfect}
For submodules $M$ of $C^\infty_c(k)$ or $C_b^\infty(k^\times)$ we have $M^\chi=0$ for every $T$-character $\chi$ and ${\Danismanfunctor}(M)=0$,
and the quotient $C^\infty_b(k^\times)/\mathbb{S}$ has a one-dimensional $\chi$-eigenspace for every smooth $T$-character $\chi$.
\end{lem}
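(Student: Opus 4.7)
My plan is to treat the three claims in sequence.

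For the vanishing $\Danismanfunctor(M) = M^S = 0$, I would use that the defining relation $(s_b g)(x) = \psi(bx) g(x)$, combined with the nontriviality of $\psi$, forces any $S$-invariant $g$ to vanish at every $x \in k^\times$: for such $x$ one can always find $b \in k$ with $\psi(bx) \neq 1$. In the case $M \subseteq C_c^\infty(k)$, the local constancy of smooth functions at $0$ closes the remaining gap at the origin.

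Next, for $M^\chi = 0$: I would observe that any $\chi$-eigenvector $g$ must satisfy $g(\lambda x) = \chi(\lambda) g(x)$ for all $\lambda$, so a single nonzero value on $k^\times$ forces $g$ to be nowhere-vanishing on $k^\times$. This is incompatible with the bounded-support condition built into both $C_c^\infty(k)$ and $C_b^\infty(k^\times)$.

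For the one-dimensional $\chi$-eigenspace in $C_b^\infty(k^\times)/\mathbb{S}$, I would exhibit the explicit candidate $f_\chi$ defined to equal $\chi(x)$ for $|x| \leq 1$ and $0$ otherwise. A direct check shows that $x_\lambda f_\chi - \chi(\lambda) f_\chi$ vanishes on the set where both $|x|\leq 1$ and $|\lambda x|\leq 1$ hold (by multiplicativity of $\chi$) and also on the set where both fail, leaving only a bounded annulus in $k^\times$; thus the difference lies in $\mathbb{S}$ and $[f_\chi]$ is a genuine $\chi$-eigenvector.

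The main point, and the only subtle step, is the uniqueness. Given any representative $g$ of a $\chi$-eigenvector, I would first replace $g$ by its restriction to $|x| \leq 1$ (the tail in $|x|>1$ has compact support in $k^\times$ and so lies in $\mathbb{S}$). The eigenvector relation then gives $g(\lambda x) = \chi(\lambda) g(x)$ whenever $|x|$ is small enough, depending on $\lambda$. Applied with $\lambda$ a uniformizer $\varpi$, this shows the locally-constant level of $g$ on the annulus $|x|=q^{-n}$ is stationary in $n$ and so uniformly bounded for $n$ large. I can then pick finitely many coset representatives of $\mathcal{O}^\times$ modulo a small open subgroup that both fixes $g$ on each deep annulus and lies in the kernel of $\chi$; applying the eigenvector relation at each of these finitely many representatives extends it to all of $\mathcal{O}^\times$ on deep annuli, yielding $g(x) = c_n \chi(x)$ on $|x|=q^{-n}$ for $n$ large. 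The $\varpi$-relation then forces $c_{n+1}=c_n$ eventually, so $g$ agrees with $c\cdot f_\chi$ on a punctured neighborhood of $0$ and their difference lies in $\mathbb{S}$. The uniform-level argument on deep annuli is where I expect the main technical care to be needed.
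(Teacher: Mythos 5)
Your proposal is correct and follows essentially the same route as the paper: $M^\chi=0$ from boundedness of support, $\Danismanfunctor(M)=0$ by evaluating an $S$-invariant vector at a point of $k^\times$ where $\psi(b\,\cdot)$ can be made nontrivial, and the eigenvector relation $g(\lambda x)=\chi(\lambda)g(x)$ for small $x$ forcing $g\equiv C\chi$ near $0$ modulo $\mathbb{S}$. The only difference is that you spell out the uniformity step (passing from "small $x$ depending on $\lambda$" to a single deep annulus via smoothness of $g$ and $\chi$ and finitely many coset representatives), which the paper compresses into a single "hence"; this is a faithful filling-in of the gap, not a different argument.
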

\begin{proof}
$M^\chi=0$ is implied by boundedness of the support.
For every non-zero $g\in M$
there is $x_0\!\in\! k^\times$ with $g(x_0)\neq 0$.
Evaluation at $x_0$ is an $S$-linear morphism $M \to (\C,{\psi_{x_0}})$,
so $g$ is not in ${\Danismanfunctor}(M)\subseteq \ker(M\to M_{\psi_{x_0}})$.

For a $\chi$-eigenvector $g+\mathbb{S}$ in $C^\infty_b(k^\times)/\mathbb{S}$ we have $g(x \lambda)=\chi(\lambda)g(x)$ for every $\lambda\in k^\times$ and sufficiently small $x$. Hence there is $C\in\C$ with $g(x)=C \chi(x)$ for small $x$
and this $g$ spans a one-dimensional $\chi$-eigenspace in $C_b^\times(k^\times)/\mathbb{S}$.
\end{proof}
\begin{lem}\label{lem:S_submodule_Cb}
Every non-zero $TS$-submodule $M$ of $C^\infty_b(k^\times)$ contains $\mathbb{S}$ and the action of $S$ on $M/\mathbb{S}$ is trivial.
\end{lem}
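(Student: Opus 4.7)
The plan is to prove the two assertions in order. The first, $\mathbb{S}\subseteq M$, is the substantive step: one localizes a non-zero $g\in M$ via finite $\C$-linear combinations of $S$-translates, then uses the $T$-action to transport the localized pieces across $k^\times$. The second assertion is then immediate from a direct computation.

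Given $0\neq g\in M$, smoothness of the $S$-action gives a compact open subgroup $S_0\subseteq S$ with $s_b g = g$ for $b\in S_0$; unwinding the formula $(s_b g)(x)=\psi(bx)g(x)$, this forces $\mathrm{supp}(g)\subseteq S_0^\perp:=\{x\in k:\psi(S_0\cdot x)=1\}$. Fix an enlarged compact open subgroup $S_0\subseteq S_1\subseteq S$. The finite abelian quotient $S_1/S_0$ acts on the finite-dimensional subspace $\mathrm{span}_\C\{s_b g:b\in S_1\}\subseteq M$, and its characters are $\chi_y(b)=\psi(by)$ parameterized by $y\in S_0^\perp/S_1^\perp$. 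The spectral projector
\[
P_y g \ = \ \frac{1}{|S_1/S_0|}\sum_{b\in S_1/S_0}\chi_y(b)^{-1}\,s_b g
\]
lies in $M$, and orthogonality of the characters of $S_1/S_0$ yields the pointwise identity $P_y g = g\cdot \mathbf{1}_{y+S_1^\perp}$. Choosing $y\in k^\times$ with $g(y)\neq 0$ and $S_1$ so large that $y+S_1^\perp\subseteq k^\times$ and $g$ is constant on $y+S_1^\perp$ produces a non-zero element $c\,\mathbf{1}_{y+S_1^\perp}\in M\cap\mathbb{S}$ with $S_1^\perp$ of arbitrarily small diameter. The $T$-action sends $\mathbf{1}_{y+S_1^\perp}$ to $\mathbf{1}_{\lambda^{-1}y+\lambda^{-1}S_1^\perp}$, so as $\lambda$ ranges over $k^\times$ we obtain characteristic functions of arbitrarily small balls centered at arbitrary points of $k^\times$. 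Since these span $\mathbb{S}=C_c^\infty(k^\times)$, we get $\mathbb{S}\subseteq M$.

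For the second assertion, a direct calculation gives $(s_b g-g)(x)=(\psi(bx)-1)g(x)$ for any $g\in M$ and $b\in k$; the prefactor $\psi(bx)-1$ vanishes on the open neighborhood $b^{-1}\ker(\psi)$ of $0$ (or everywhere if $b=0$). Together with the bounded-support hypothesis on $g$, this shows that $s_b g-g$ has compact support in $k^\times$, hence lies in $\mathbb{S}$. So $S$ acts trivially on $M/\mathbb{S}$.

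The main technical obstacle is the localization in the first step: producing, via a finite $\C$-linear combination of $S$-translates of $g$, a non-zero element of $M$ genuinely compactly supported in $k^\times$. The Pontryagin-dual projector $P_y$ is the key device. Once such a localized function is in hand, translation by the $T$-action and assembly of arbitrary elements of $\mathbb{S}$ from small balls is routine.
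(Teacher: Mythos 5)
Your proof is correct, and for the containment $\mathbb{S}\subseteq M$ it takes a genuinely different route from the paper. The paper's proof is shorter: it forms a single difference $s_bg-g=(\psi(b\cdot)-1)g$, observes that this is a non-zero element of $M\cap\mathbb{S}$ for suitable $b$ (compact support because $\psi(b\cdot)-1$ kills a neighborhood of $0$ and $g$ has bounded support), and then invokes the irreducibility of $\mathbb{S}$ as a $TS$-module — a fact asserted without proof in example~\ref{example 1} — to conclude that the $TS$-span of this element is all of $\mathbb{S}$. You avoid that appeal entirely: your spectral projector $P_y$ over the finite group $S_1/S_0$ manufactures the characteristic functions $\mathbf{1}_{y+S_1^\perp}$ directly inside $M$, and the $T$-action then transports arbitrarily small balls to arbitrary points of $k^\times$, so you rebuild $\mathbb{S}$ from scratch. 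What your version buys is self-containedness (in effect you prove the irreducibility of $\mathbb{S}$ along the way, at least for submodules of $C_b^\infty(k^\times)$); what it costs is length, and one point worth making explicit is that $S_0$ fixes every translate $s_bg$ (since $S$ is abelian), so that $S_1/S_0$ genuinely acts on the span. For the second assertion your computation coincides with the paper's first sentence.
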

\begin{proof}
For every $g\in M$ and every $s_b\in S$, the difference $s_b g-g$ has compact support by smoothness of $\psi$.
If there is $x_0\in k^\times$ with $g(x_0)\neq0$, then there is $b\in K$ with $\psi(bx_0)\neq1$, so $s_b g-g$ is a non-trivial element of $M\cap\mathbb{S}$. The $TS$-action on $s_bg-g$ generates $\mathbb{S}=C_c(k^\times)$ as an irreducible submodule.
\end{proof}

\begin{example}\label{example 3}
Let $\mathbb{E}\! =\! C_c^\infty(S)$ denote the space of smooth complex valued functions with compact support in $S$;
we identify $k$ and $S$ by sending $b$ to $s_b$, so $\lambda s_b = s_{\lambda b}$.
Consider $\mathbb{E}$ as a  $TS$-module via $(s_b f)(s) = f(s+s_b)$ and $(x_\lambda f)(s) = \vert\lambda\vert^{-1} f(\lambda^{-1} s)$.
Integration $f\!\mapsto\! \int_S f(s)\mathrm{d}s$ defines a $TS$-map $\mathbb{E} \to \C$ to the trivial $TS$-module with irreducible kernel $\mathbb{E}^0$
$$ 0 \to \mathbb{E}^0 \to \mathbb{E} \to \C \to 0 \ . $$
The Fourier transform ${\cal F}: \mathbb{E}\to C_c^\infty(k)\,,\ {\cal F}(f)(x)\! =\! \int_S f(s) \psi(-xs) \mathrm{d}s$ gives an isomorphism
with the $TS$-module $C_c^\infty(k)$ in example~\ref{example 1} and $\mathcal{F}(\mathbb{E}^0)=\mathbb{S}$. 
Especially, ${{\Danismanfunctor}}(\mathbb{E})= 0$ and ${{\Danismanfunctor}}(\mathbb{E}^0)= 0$.  
\end{example}

\begin{example}\label{example 4}
Let $V\in {\cal C}_{\Gl(2)}$ be a smooth $\Gl(2)$-module.
Pullback along the embedding $\Gl_a(1)\hookrightarrow\Gl(2)$ by $\lambda.b\mapsto \left(\begin{smallmatrix}\lambda&b\\0&1\end{smallmatrix}\right)$
defines a $\Gl_a(1)$-module structure on $V$ in a natural way.
By our conventions this also defines a $TS$-module structure on $V$,
where $x_\lambda$ acts by $\diag(\lambda,1)$ and $s_b$ acts by
$\left(\begin{smallmatrix}1&b\\0&1\end{smallmatrix}\right)$.
\end{example}

 \begin{lem}\label{lem:almost_S-invariant}
$M^S=\bigcap_{\psi\neq1}\ker(M\to M_{S,\psi})$ for every $M\in \CCC_{TS}$. In other words, $m\in M$ is invariant under $S$ if and only if for every smooth non-trivial character $\psi$ of $S$, the element $m$ is a linear combination of terms $s\cdot w-\psi(s)w$ with $s\in S$ and $w\in M$.
\end{lem}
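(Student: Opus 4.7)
The forward inclusion is an immediate identity. If $m\in M^S$ and $\psi\neq 1$ is a smooth character of $S$, pick $s_0\in S$ with $\psi(s_0)\neq 1$ and write $m=(1-\psi(s_0))^{-1}(s_0\cdot m-\psi(s_0)\,m)$; this displays $m$ as an element of the subspace $M(S,\psi)=\ker(M\to M_{S,\psi})$.

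For the reverse inclusion, suppose $m\in\bigcap_{\psi\neq 1} M(S,\psi)$ and fix an open compact $S_0\subseteq S$ with $m\in M^{S_0}$; the goal is to show $\lambda\cdot m=m$ for every $\lambda\in S$. My plan is to interpret $M^{S_0}$ as a module over the Hecke algebra $\mathcal{A}=\C[S/S_0]$ via Pontryagin duality. Because $S\cong k$ is totally disconnected and $S_0$ is open compact, the quotient $S/S_0$ is a discrete torsion abelian group, so its Pontryagin dual $X=\widehat{S/S_0}\subseteq\hat S$ is a compact totally disconnected group, and the natural map $\mathcal{A}\to C^\infty(X)$ sending $\bar s\in S/S_0$ to the locally constant function $\chi\mapsto\chi(\bar s)$ is an algebra isomorphism. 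Points $a\in X$ correspond to the smooth characters $\psi_a$ of $S$ trivial on $S_0$; by a standard averaging argument $M(S,\psi_a)\cap M^{S_0}=\mathfrak{m}_a\cdot M^{S_0}$ where $\mathfrak{m}_a=\{f\in\mathcal{A}:f(a)=0\}$. Characters nontrivial on $S_0$ automatically have $M^{S_0}\subseteq M(S,\psi)$, adding no constraint, so the hypothesis becomes: $m\in\mathfrak{m}_a M^{S_0}$ for every $a\in X\setminus\{0\}$.

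I now claim $f\cdot m=0$ for every $f\in\mathcal{A}$ with $f(0)=0$; applying this to $f=\bar\lambda-1\in\mathcal{A}$, whose value at $\chi\in X$ is $\chi(\bar\lambda)-1$ and which vanishes at the trivial character $0\in X$, yields $\lambda m-m=0$. To prove the claim, observe that $\mathrm{supp}(f)$ is a compact clopen subset of $X\setminus\{0\}$. For each $a\in\mathrm{supp}(f)$ the hypothesis furnishes a finite decomposition $m=\sum_i\eta_i^{(a)}\cdot w_i^{(a)}$ with $\eta_i^{(a)}\in\mathcal{A}$, $w_i^{(a)}\in M^{S_0}$, and $\eta_i^{(a)}(a)=0$; local constancy of the finitely many $\eta_i^{(a)}$ provides a clopen neighborhood $W_a$ of $a$ on which all of them simultaneously vanish. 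Compactness of $\mathrm{supp}(f)$ extracts a finite subcover $W_{a_1},\ldots,W_{a_n}$, which I disjointify to clopen sets $V_j\subseteq W_{a_j}$ still covering $\mathrm{supp}(f)$. Then $f=\sum_j f\mathbf{1}_{V_j}$, and each pointwise product $(f\mathbf{1}_{V_j})\cdot\eta_i^{(a_j)}$ vanishes identically on $X$, so $(f\mathbf{1}_{V_j})\cdot m=\sum_i(f\mathbf{1}_{V_j})\eta_i^{(a_j)}\cdot w_i^{(a_j)}=0$; summing over $j$ gives $f\cdot m=0$.

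The main obstacle is this partition-of-unity step, which rests crucially on the local constancy of functions in $\mathcal{A}\cong C^\infty(X)$; without it the analogue fails, e.g.\ for $\C(t)$ viewed as a $\C[\Z]$-module, where every nontrivial character has $(t-\zeta)\C(t)=\C(t)$ yet $\C(t)^\Z=0$. The totally disconnected structure of $S=k$, via the torsion property of $S/S_0$, is what makes $\mathcal{A}$ coincide with locally constant functions on its spectrum. Once the key claim is established, $\lambda m=m$ for every $\lambda\in S$ forces $m\in M^S$, completing the argument.
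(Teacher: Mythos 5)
Your proof is correct, but it follows a genuinely different route from the paper's. The paper first observes that $M'=\bigcap_{\psi\neq1}\ker(M\to M_{S,\psi})$ is stable under $T$ (because conjugation by $T$ permutes the nontrivial characters of $S$ transitively), so that $M'$ is a $TS$-submodule with $j^!(M')=0$ for the exact functor of $(S,\psi)$-coinvariants; the Bernstein--Zelevinsky sequence $0\to j_!j^!\to\id\to i_*i^*\to0$ for $\Gl_a(1)$ then forces $M'\cong i_*i^*(M')$, i.e.\ a trivial $S$-module. Your argument never touches the $T$-action: you work purely with the smooth $S$-module structure, identify $\C[S/S_0]$ with the locally constant functions on the compact totally disconnected dual $X$ of the discrete torsion group $S/S_0$, translate the hypothesis via the standard averaging criterion into $m\in\mathfrak{m}_a M^{S_0}$ for all $a\neq0$, and conclude by a partition-of-unity argument that every $f$ vanishing at the trivial character annihilates $m$. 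The trade-off is clear: the paper's proof is shorter once the $\Gl_a(1)$ localization machinery of section~4 is available (and it is the mechanism reused throughout the paper), whereas yours is self-contained and proves a strictly more general statement --- the identity $M^S=\bigcap_{\psi\neq1}\ker(M\to M_{S,\psi})$ for arbitrary smooth modules over $S\cong k$ with no ambient torus --- while correctly isolating, via your $\C(t)$ example, that the torsion property of $S/S_0$ (equivalently, $S$ being a union of compact subgroups) is what makes the spectral localization work.
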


\begin{proof}
$M':=\bigcap_{\psi\neq1}\ker(M\to M_{S,\psi})$ is clearly an $S$-submodule of $M$.
For every $\psi$ the action of $t\in T$ sends the term $s\cdot w-\psi(s)w$ to
$$t\cdot m=s'\cdot w' -\psi'(s') w'$$
with $w'=tw$, $s'=tst^{-1}$ and $\psi'(s')=\psi(s)$. Hence $m\in M'$ implies $t\cdot m\in M'$ and $M'$ is a $TS$-submodule of $M$.

Now fix a non-trivial $\psi$.
The functor $j^!$ of $(S,\psi)$-coinvariants is exact, so there is a commutative square of vector spaces
\begin{equation*}
 \xymatrix{
 M'     \ar@{^{(}->}[r]\ar@{->>}[d]  &  M\ar@{->>}[d] \\
 j^!(M') \ar@{^{(}->}[r]       &  j^!(M)\ 
 }
\end{equation*}
with surjective vertical and injective horizontal arrows.
The composition morphism $M' \to j^!(M)$  vanishes by construction, so $j^!(M')=0$.
Lemma~\ref{lem:Gelfand-Kazhdan} applied to $M'$ yields
an isomorphism $M'\cong i_*i^*(M')$, hence $M'\subseteq M^S$ is a trivial $S$-module.
The converse assertion $M^S\subseteq M'$ is obvious.
\end{proof}

\begin{lem}\label{lem:kappa_pi_0_injective} For $M\in \CCC_{TS}$, the canonical map $M^S\to M_S$ is injective.
\end{lem}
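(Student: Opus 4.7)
The kernel of the canonical map is $K=M^S\cap M(S)$ where $M(S)=\ker(M\to M_S)$, and by the very definition of $S$-invariance this equals $M(S)^S$, so it is enough to prove $M(S)^S=0$. Combining lemma~\ref{lem:almost_S-invariant} with the inclusion $K\subseteq M(S)=\ker(M\to M_{S,1})$ immediately yields $K\subseteq\bigcap_\psi\ker(M\to M_{S,\psi})$, where $\psi$ now ranges over \emph{all} smooth characters of $S$, including the trivial one; the contribution at $\psi\neq 1$ comes from $K\subseteq M^S$ via lemma~\ref{lem:almost_S-invariant}, and the contribution at $\psi=1$ from $K\subseteq M(S)$.

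I would then argue by induction on the length of $M\in\CCC^{\fin}_{TS}$ that this full intersection vanishes. By Kirillov theory for the affine linear group $TS\cong\Gl_a(1)$, the irreducible objects of $\CCC^{\fin}_{TS}$ are either characters $\chi$ of $T$ (with trivial $S$-action, so $\chi^S=\chi=\chi_S$ and the canonical map is the identity) or the Kirillov module $\mathbb{S}=C_c^\infty(k^\times)$ (for which lemma~\ref{lem:C_b_perfect} gives $\mathbb{S}^S=0$, and a parallel short computation yields $\mathbb{S}_S=0$); in each case the assertion is immediate on irreducibles. For a general $M$, I would perform a diagram chase on a short exact sequence $0\to M'\to M\to M''\to 0$ with $M''$ irreducible, using left-exactness of $(-)^S$ along the top row and right-exactness of $(-)_S$ along the bottom row. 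The case $M''\cong \mathbb{S}$ is formal: $(M'')^S=0=(M'')_S$ reduces the statement to the inductive hypothesis on $M'$.

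The main obstacle is the case where $M''$ is a character, since then the induced map $M'_S\to M_S$ may a priori have kernel and the diagram chase becomes circular. To overcome this I would exhibit the canonical two-step filtration $0\subseteq M(S)\subseteq M$ with $M/M(S)=M_S$ having trivial $S$-action and $M(S)$ having only $\mathbb{S}$-type composition factors. The crucial smoothness input is that every continuous additive map $k\to\C$ vanishes, which forces every extension of trivial-$S$-modules inside $\CCC_{TS}$ to split as an $S$-module and hence makes any iterated extension of characters of $T$ a trivial $S$-module. Given this, $(M(S))^S=0$ follows by iteration of $\mathbb{S}^S=0$, since the composition factors of $M(S)$ are all of Kirillov type; this yields $K=M(S)^S=0$ as required.
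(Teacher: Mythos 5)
Your reduction of the statement to $M(S)^S=0$ is correct, and the closing mechanism (left-exactness of $(-)^S$ together with $\mathbb{S}^S=0$, applied along a filtration of $M(S)$ with Kirillov-type graded pieces) would indeed finish the argument. But there are two genuine gaps. First, the lemma is asserted for arbitrary $M\in\CCC_{TS}$, not only for $M\in\CCC^{\fin}_{TS}$; an induction on length and a finite composition series for $M(S)$ only cover the finite-length case. Second, and more seriously, your justification of the key claim that $M(S)=\ker(M\to M_S)$ has only $\mathbb{S}$-type composition factors does not suffice. The vanishing of smooth additive maps $k\to\C$ shows that an extension of trivial $S$-modules is again a trivial $S$-module, and hence only that $M(S)$ admits no nonzero quotient with trivial $S$-action, i.e.\ $(M(S))_S=0$. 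That does not exclude a character-type constituent sitting \emph{below} a Kirillov-type constituent inside $M(S)$, i.e.\ a non-split extension of $\mathbb{S}$ by $\chi$ whose $\chi$ dies in the $S$-coinvariants. Ruling this out is precisely the content of the identification $M(S)\cong j_!j^!(M)$ from lemma~\ref{lem:Gelfand-Kazhdan} (or of lemma~\ref{lem:Ext(S,F)}, whose proof in the paper relies on exactness of $\pi_0$ and on lemma~\ref{FINITE}, which itself cites the present lemma — so it is not available here). The same issue undermines your ``formal'' case $M''\cong\mathbb{S}$ of the diagram chase: right-exactness of $(-)_S$ gives surjectivity of $M'_S\to M_S$, not injectivity, so from $v\in (M')^S\cap M(S)$ you cannot conclude $v\in M'(S)$.

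The single missing ingredient behind all of these points is that $S\cong k$ is a union of compact open subgroups, so the coinvariant functor $(-)_S$ is exact (lemma~\ref{lemma 3}); this is exactly what the paper uses, and it yields the general statement in one line: applying $(-)_S$ to $M^S\hookrightarrow M$ gives an injection $(M^S)_S\hookrightarrow M_S$, and $(M^S)_S=M^S$ because $S$ acts trivially on $M^S$. Concretely: if $v\in M^S$ lies in $M(S)$, then $v\in\ker(e_n)$ for the averaging projector $e_n$ over some compact open $S_n\subseteq S$ containing the finitely many group elements appearing in an expression $v=\sum_i(s_iw_i-w_i)$; but $e_nv=v$ since $v$ is $S_n$-invariant, whence $v=0$. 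If you want to keep your structural route, you must either quote this exactness or replace the additive-map argument by an explicit appeal to lemma~\ref{lem:Gelfand-Kazhdan}, and you would still need a separate remark to pass from finite length to general smooth $M$.
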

\begin{proof}
$S$ is compactly generated, so the functor of $S$-coinvariants is exact.
Thus there is a commutative diagram in $\CCC_{TS}$ with surjective vertical arrows and injective horizontal arrows
\begin{equation*}
 \xymatrix@C+3mm{
 M^S\;\ar@{->>}[d]\ar@{^{(}->}[r] &  M        \ar@{->>}[d]\\
 (M^S)_S\;\ar@{^{(}->}[r] & M_S \ .
 }
\end{equation*}
The left vertical arrow is clearly an isomorphism, so the canonical morphism $M^S\to M\to M_S$ is injective.
\end{proof}

\begin{lem}\label{Example 2}
Every irreducible $V\in\mathcal{C}_{TS}$ with $V^S\!\neq\! V$
is isomorphic to $\mathbb{S}$.
\end{lem}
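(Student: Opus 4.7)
The plan is to embed $V$ into $C^\infty_b(k^\times)$ via a ``Kirillov-type'' construction and then invoke Lemma~\ref{lem:S_submodule_Cb} to pin the image down to $\mathbb{S}$.

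First, observe that $V^S$ is a $TS$-submodule of $V$: if $v\in V^S$ then for $x_\lambda\in T$ and $s_b\in S$ we have $s_b\cdot x_\lambda v=x_\lambda(x_\lambda^{-1}s_bx_\lambda)v=x_\lambda s_{\lambda^{-1}b}v=x_\lambda v$. By irreducibility, $V^S\neq V$ forces $V^S=0$. Lemma~\ref{lem:almost_S-invariant} then guarantees the existence of a nontrivial character $\psi$ of $S$ with $V_{S,\psi}\neq 0$; equivalently, there is a nonzero $S$-equivariant linear form $\xi:V\to\C_\psi$. Since all nontrivial characters of $S$ are $T$-conjugate via $x_\lambda s_b x_\lambda^{-1}=s_{\lambda b}$, we can and do take $\psi$ to be the character appearing in Example~\ref{example 1}.

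Next, I would define $\Phi:V\to\mathrm{Func}(k^\times,\C)$ by $\Phi(v)(\lambda)=\xi(x_\lambda\cdot v)$ and check that $\Phi$ takes values in $C^\infty_b(k^\times)$ and is $TS$-equivariant for the action of Example~\ref{example 1}. The two equivariance identities are immediate from the definitions; smoothness follows from smoothness of $v$ under $T$; and the crucial boundedness of support comes from smoothness under $S$: if $K=\varpi^n\mathcal{O}\subset S$ fixes $v$, then $\Phi(v)(\lambda)=\xi(s_b x_\lambda v)\cdot(\text{correction})=\psi(\lambda b)\Phi(v)(\lambda)$ for every $b\in K$, so $\Phi(v)(\lambda)\neq 0$ forces $\lambda K$ to lie inside the (fixed) conductor of $\psi$, a bounded subset of $k$.

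With $\Phi$ in hand, irreducibility of $V$ forces $\Phi$ to be injective, so $V$ embeds as a nonzero $TS$-submodule of $C^\infty_b(k^\times)$. Lemma~\ref{lem:S_submodule_Cb} then yields $\mathbb{S}\subseteq\Phi(V)$, and since $\mathbb{S}$ is irreducible (Example~\ref{example 1}) and $\Phi(V)\cong V$ is irreducible as well, equality $\Phi(V)=\mathbb{S}$ follows.

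The main obstacle is the verification that $\Phi(v)$ has bounded support in $k$; everything else is essentially a Frobenius-reciprocity style manipulation. Once boundedness of support is established, the rest of the argument reduces cleanly to the two auxiliary lemmas about $C^\infty_b(k^\times)$ proved just above.
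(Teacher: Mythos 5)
Your argument is correct and follows the paper's proof essentially verbatim: both pass from $V^S\neq V$ to $V_{S,\psi}\neq 0$ via Lemma~\ref{lem:almost_S-invariant}, realize the resulting Frobenius-reciprocity map concretely as $v\mapsto(\lambda\mapsto\xi(x_\lambda v))$, use $S$-smoothness together with $x_\lambda s_b x_\lambda^{-1}=s_{\lambda b}$ to land in $C_b^\infty(k^\times)$, and finish with Lemma~\ref{lem:S_submodule_Cb}. The only difference is cosmetic: you make the induced model and the support-boundedness computation explicit where the paper cites $\Ind_S^{TS}(\psi)\subseteq C^\infty(T)$, and your preliminary reduction to $V^S=0$ is harmless but not needed.
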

\begin{proof}
$V^S\neq V$ implies $V_\psi\neq 0$ for some $\psi\neq 1$ by lemma~\ref{lem:almost_S-invariant}.
By Frobenius reciprocity there is an embedding $V\hookrightarrow \Ind_S^{TS}(\psi)\subseteq C^\infty(T)$.
Every $f\in \Ind_S^{TS}(\psi)$ is $TS$-smooth, so $s_bf\! =\! f$ holds for $s_b\in S$ with sufficiently small $b\in k$.
Thus $\psi(b\lambda)f(x_{\lambda})\! =\! f(x_{\lambda})$ for these $b$ implies that $f(x_{\lambda})$ vanishes for large values of $\lambda\in k^\times$.
We obtain a $TS$-embedding $\Ind_S^{TS}(\psi)\hookrightarrow C_b^\infty(k^\times)$.
Identify $V$ with its image in $C_b^\infty(k^\times)$, then the statement follows from lemma~\ref{lem:S_submodule_Cb}.
\end{proof}

\subsection{Objects of finite length} %
\label{s:catC}

The category ${\cal C} = \CCC_{TS}^{\fin}$ of smooth $TS$-modules of finite length can be identified with ${\cal C}_{\Gl_a(1)}^{\fin}$, as explained in the last section. %
In this sense the irreducible objects in $\CCC$ are isomorphic to 
either $j_!(\C)$ and one of the objects $i_*(\chi)$, corresponding
to smooth $\Gl(1)$-characters $\chi$, extended to $\Gl_a(1)$ being trivial on the unipotent radical;
see \cite[\S5.13]{Bernstein-Zelevinsky76}.
Here the character $i_*(\chi)$ corresponds to the $TS$-character $$x_\lambda s_b\mapsto \chi(\lambda)\ ,$$ and we often write $\chi\in\CCC$ instead of $i_*(\chi)$.
The infinite-dimensional irreducible module $j_!(\C)$ corresponds to the $TS$-module $\mathbb{S}\subseteq C_b^\infty(k^\times)$ by lemma~\ref{Example 2}.

The degree $\deg(M)$ of $M\in{\cal C}$ is
the number of Jordan-H\"older constituents isomorphic to ${\mathbb S}$. For example, every non-trivial submodule $M\subseteq C_b^{\infty}(k^\times)$ of finite length has degree one by lemma~\ref{lem:S_submodule_Cb}.
For a non-trivial $S$-character~$\psi$, the functor $M\mapsto M_\psi\cong \Hom_{S}(M,{\psi})$ of $(S,\psi)$-coinvariants is exact and corresponds to the functor $j^!:\CCC^{\fin}_1\to \CCC^{\fin}_0$ in section~\ref{s:Gelfand-Kazhdan}.
Since $\chi_\psi \!=\! 0$ and ${\mathbb S}_\psi\! =\! \C$, exactness implies
$$  \deg(M) \ =\ \dim(M_\psi) \ .$$
The functor of $S$-coinvariants $$\pi_0:\, {\cal C} \to {\cal C}^{\fin}_T \,, \ V\mapsto V_S$$ is exact and left adjoint to the inclusion functor $\CCC^{\fin}_T \hookrightarrow \CCC$. By abuse of notation, the canonical morphism $M\to \pi_0(M)$ is also denoted $\pi_0$.
These functors correspond to the functors $i^*, i_*$, introduced in section \ref{s:Gelfand-Kazhdan}
in greater generality. The kernel $M^0:= \ker(M \to \pi_0(M))$ corresponds to $j_!j^!(M)$ in $\CCC_1$ by lemma~\ref{lem:Gelfand-Kazhdan},
and it does not contain one-dimensional Jordan-H\"older constituents.

Notice that $\pi_0(\chi)\! =\! \chi$ and $\pi_0({\mathbb S})\! =\! 0$ holds.
So for $M\in \CCC$ the Jordan-H\"older constituents of $\pi_0(M)\in \CCC_T^{\fin}$ are
the characters $\chi$ occuring in $M$ with multiplicity $a_\chi(M)\in\Z_{\geq0}$.
Using Tate's $L$-factor $L(\chi,s)$, we define the $L$-factor of an object $M\in\CCC$ by
$$L(M,s) = \prod_\chi L(\chi,s)^{a_\chi(M)}\ .$$

\begin{lem}\label{maxfin}\label{FINITE}
For a $TS$-module $M\in\CCC$ of finite length,
the submodule ${{\Danismanfunctor}}(M)$ is the maximal finite-dimensional $TS$-submodule of $M$.
\end{lem}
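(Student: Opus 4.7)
\emph{Plan.} The statement has two parts to establish: (a) $\Danismanfunctor(M)=M^S$ is a finite-dimensional submodule, noting that its $TS$-stability is automatic since $T$ normalizes $S$, and (b) every finite-dimensional $TS$-submodule of $M$ is contained in $M^S$.

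For part (a) I would invoke lemma~\ref{lem:kappa_pi_0_injective}, which gives an injection $M^S\hookrightarrow M_S=\pi_0(M)$. Since $\pi_0$ is exact and $M$ has finite length, $\pi_0(M)$ lies in $\CCC_T^{\fin}$; all its Jordan-H\"older constituents are $T$-characters and hence one-dimensional, so $\pi_0(M)$ is finite-dimensional. Consequently so is $M^S$.

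For part (b) let $N\subseteq M$ be any finite-dimensional $TS$-submodule. Its Jordan-H\"older constituents occur among those of $M$, but the infinite-dimensional module $\mathbb{S}$ cannot appear in the finite-dimensional $N$. Hence all composition factors of $N$ are characters $i_*(\chi)$, which are trivial on $S$. Fixing a composition series $0=N_0\subset\cdots\subset N_n=N$ we see that $(s-\id)(N_i)\subseteq N_{i-1}$ for every $s\in S$, so $S$ acts on $N$ by unipotent operators. Smoothness of the $S$-action then implies that the kernel $K\subseteq S$ of the homomorphism $S\to\Gl(N)$ contains the common fixator of a basis of $N$ and is therefore an open subgroup of $S$.

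The final step exploits the $T$-action. The kernel $K$ is $T$-stable: if $s\in S$ acts trivially on $N$, then so does $tst^{-1}$ for every $t\in T$. Under the identification $S\cong k$ the $T$-action is by scaling $x_\lambda\cdot s_b=s_{\lambda b}$, so picking any $b_0\in K\setminus\{0\}$ one obtains $k^\times\cdot b_0=k^\times\subseteq K$, and together with $0\in K$ this forces $K=S$. Thus $S$ acts trivially on $N$ and $N\subseteq M^S$, as desired. The main subtlety is exactly this last step: a smooth unipotent $S$-action need not a priori be trivial, but the $T$-scaling dispatches it in one line and avoids any appeal to the torsion structure of $k$ modulo its open subgroups.
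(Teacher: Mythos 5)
Your proof is correct. The finite-dimensionality half coincides with the paper's argument (both rest on the injection of lemma~\ref{lem:kappa_pi_0_injective} and the finite length, hence finite dimension, of $\pi_0(M)$), but for the maximality half you take a genuinely different and more elementary route. The paper applies the Bernstein--Zelevinsky sequence of lemma~\ref{lem:Gelfand-Kazhdan} to a finite-dimensional submodule $F$: since $j_!$ of any non-zero module is infinite-dimensional, $j^!(F)=0$, whence $F\cong i_*i^*(F)$ is a trivial $S$-module. You instead argue directly with smooth vectors: the kernel $K$ of $S\to\Gl(N)$ is open by smoothness and finite-dimensionality, it is $T$-stable because $N$ is a $TS$-submodule, and since $T$ acts on $S\cong k$ by scaling, transitively on $S\setminus\{0\}$, any non-zero $T$-stable subgroup of $S$ is all of $S$. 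This avoids the $j_!j^!$ versus $i_*i^*$ formalism for this step and makes the role of the torus completely transparent, at the cost of redoing by hand what the localization sequence packages. Two small remarks: your detour through the composition series and the unipotence of the $S$-action is never actually used --- openness of $K$ already follows from smoothness plus $\dim N<\infty$, so that paragraph (and with it the appeal to the classification of irreducibles in $\CCC$) can be deleted; and the final step needs $K\neq\{0\}$, which holds because open subgroups of the non-discrete group $S\cong k$ are non-trivial --- your ``common fixator of a basis'' phrasing already supplies this.
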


\begin{proof}
We first show that every finite-dimensional $TS$-submodule $F$ of $M$ is a trivial $S$-module and thus contained in $\Danismanfunctor(M)$.
Indeed, lemma~\ref{lem:Gelfand-Kazhdan} yields an exact sequence $0\to j_!j^!(F)\to F\to i_*i^*(F)\to0$
in $\CCC_{TS}$.
If $j^!(F)$ is non-zero, then $F$ contains
the infinite-dimensional submodule $j_!j^!(F)$, which contradicts the assumption that $F$ is finite-dimensional.
Therefore $F\cong i_*i^*(F)$ is a trivial $S$-module.

It remains to be shown that $\Danismanfunctor(M)$ is itself finite-dimensional.
By lemma~\ref{lem:kappa_pi_0_injective}, it is sufficient to show that $i_*i^*(M)$ is finite-dimensional.
But $i^*(M)$ is a $T$-module of finite length and thus finite-dimensional. The functor $i_*$ preserves the dimension.
\end{proof}

\begin{lem} \label{lem:Ext(S,F)}
$\Ext^1_{{\cal C}}({\mathbb S}^s, X)\! =\! 0$ for finite-dimensional $TS$-modules $X\in \CCC$.
\end{lem}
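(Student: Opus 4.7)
The plan is to show that any extension $0 \to X \to E \to \mathbb{S}^s \to 0$ in $\CCC$ splits, by producing a $TS$-equivariant retraction $p\colon E \to X$. The one non-trivial input is already recorded in the proof of lemma~\ref{FINITE}: every finite-dimensional object of $\CCC$ is automatically a trivial $S$-module. In particular the canonical map $X \to \pi_0(X)$ is the identity on $X$.

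Given the extension, I would apply the exact coinvariants functor $\pi_0$. Since $\pi_0(\mathbb{S}) = 0$ and hence $\pi_0(\mathbb{S}^s) = 0$, exactness yields an isomorphism $\pi_0(X) \xrightarrow{\sim} \pi_0(E)$ induced by the inclusion $X \hookrightarrow E$. Composing the canonical surjection $E \twoheadrightarrow \pi_0(E)$ with the inverse of this isomorphism produces a $TS$-linear morphism $p\colon E \to X$. Naturality of the quotient $V \to \pi_0(V)$, applied to $X \hookrightarrow E$, forces $p\vert_X = \id_X$, so $p$ is the desired retraction and the extension splits.

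No serious obstacle is anticipated here: once one knows that finite-dimensional modules in $\CCC$ are $S$-trivial, the conclusion follows automatically from exactness of $\pi_0$ and from $\pi_0(\mathbb{S}) = 0$. The same line of reasoning shows more generally that $\Ext^1_{\CCC}(V, X) = 0$ for any $V \in \CCC$ with $\pi_0(V) = 0$ and any finite-dimensional $X$, which fits cleanly with the forthcoming description of $M \mapsto M^0 = \ker(M \to \pi_0(M))$ as the part of $M$ without one-dimensional Jordan--H\"older constituents.
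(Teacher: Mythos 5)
Your proposal is correct and is essentially the paper's own argument: both apply the exact functor $\pi_0$, use $\pi_0(\mathbb{S}^s)=0$ together with the fact that finite-dimensional modules are $S$-trivial (so $X\cong\pi_0(X)$), and extract the splitting from the resulting diagram. The only cosmetic difference is that you produce a retraction $E\to X$, while the paper invokes the snake lemma to exhibit the complementary section $\ker(E\to\pi_0(E))\cong\mathbb{S}^s$.
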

\begin{proof}
Indeed, for an exact sequence in ${\cal C}$ given as in the upper row of
$$ \xymatrix{ 0 \ar[r] & X \ar@{->>}[d]^{\pi_0}_\cong \ar[r] &  E   \ar@{->>}[d]^{\pi_0}\ar[r] &  {\mathbb S}^s  \ar@{->>}[d]^{\pi_0}\ar[r] & 0 \cr  
0 \ar[r] & X \ar[r] &  E_S  \ar[r] &  0 \ar[r] & 0 \cr}  \ ,$$
the left vertical arrow is an isomorphism by lemma \ref{FINITE} and the exactness of $\pi_0$. The snake lemma provides
a splitting $E(S)  = \ker(E\to\pi_0(E)) \cong {\mathbb S}^s$.
\end{proof}
Of course $\Hom_{{\cal C}}({\mathbb S}^s, X)\! =\! 0$ and $\Hom_{{\cal C}}(X,{\mathbb S}^s)\! =\! 0$ for finite-dimensional $X\in\CCC$.
Furthermore $$\Ext^1_{\cal C}(\mathbb S,{\mathbb S})= 0 \ ,$$
since %
by lemma~\ref{lem:Gelfand-Kazhdan} the full subcategory of ${\cal C}$
of modules annihilated by $i^*$ is equivalent to the category of finite dimensional complex vector spaces via the functors $j^!,\! j_!$.
However, we will see in lemma~\ref{lem:Ext(X,S)} that $\Ext^1_{{\cal C}}(X,{\mathbb S}^s)$ is non-trivial in general.
For $M$ in ${\cal C}$ the exact sequence
$$ 0 \to {\mathbb S}^{\deg(M)} \to M \to \pi_0(M) \to 0 \ ,$$
defining $M^0=\ker(M\to\pi_0(M))$ exhibits $M^0$
as the unique maximal subgroup ${\mathbb S}^{\deg(M)}$ of $M$ generated by $\deg(M)$ copies of ${\mathbb S}$.

\begin{example}\label{ex:cusp_as_TS_module}
A cuspidal irreducible $\Gl(2)$-module $\pi\in \CCC_{\Gl(2)}^{\fin}$, considered as a $TS$-module as in example~\ref{example 4},
is isomorphic to $\mathbb{S}$.
Indeed, $\pi_S=0$ by cuspidality and the degree of $\pi$ is one by the uniqueness of Whittaker models.
\end{example}

\begin{lem}\label{lem:RR}
For every $M\in \mathcal{C}$ and $T$-character $\chi$ we have
$$\dim M_{T,\chi}-\dim M^{T,\chi}=\deg M\ .$$
\end{lem}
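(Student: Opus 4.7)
My plan is to define $f(M):=\dim M_{T,\chi}-\dim M^{T,\chi}$ and to prove three things: (i) $f(\mathbb{S})=1$, (ii) $f(i_*(\chi'))=0$ for every smooth $T$-character $\chi'$, and (iii) $f$ is additive on short exact sequences in $\mathcal{C}$. Granted these, the lemma follows by applying $f$ to the canonical sequence $0\to M^0\to M\to\pi_0(M)\to 0$ from section~\ref{s:catC}: since $M^0\cong\mathbb{S}^{\deg M}$ (by $\Ext^1_{\mathcal{C}}(\mathbb{S},\mathbb{S})=0$) and $\pi_0(M)$ is finite-dimensional with composition factors among the characters $i_*(\chi')$, additivity plus induction on length give $f(M)=\deg M\cdot f(\mathbb{S})+f(\pi_0(M))=\deg M$.

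For (iii) I would reduce to a single endomorphism. Fix a uniformizer $\varpi\in k^\times$ and let $T_0\subseteq T$ be the maximal compact open subgroup, so that $T/T_0\cong\mathbb{Z}$ is generated by the class of $x_\varpi$. Compactness of $T_0$ makes the $\eta$-isotypic component functor $M\mapsto M[\eta]$ exact for every smooth character $\eta$ of $T_0$, and since $T_0$ is central in $T$ one identifies $M^{T,\chi}=\ker(x_\varpi-\chi(\varpi)\,|\,M[\chi|_{T_0}])$ and $M_{T,\chi}=\coker(x_\varpi-\chi(\varpi)\,|\,M[\chi|_{T_0}])$. Applied to a short exact sequence $0\to A\to B\to C\to 0$ in $\mathcal{C}$, the snake lemma for the endomorphism $x_\varpi-\chi(\varpi)$ on the isotypic components yields a six-term exact sequence relating the three kernels and three cokernels, whose alternating dimension count is precisely the identity $f(B)=f(A)+f(C)$.

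Claims (i) and (ii) reduce to direct computation on the $\chi|_{T_0}$-isotypic component. For (ii) this component has dimension at most one and $x_\varpi-\chi(\varpi)$ acts as a scalar, so kernel and cokernel have equal dimension. For (i) the component $\mathbb{S}[\chi|_{T_0}]$ is isomorphic as a $\mathbb{Z}$-module to $\C[\mathbb{Z}]$ with $x_\varpi$ acting by shift, via the isomorphism $f\mapsto(f(\varpi^n))_{n\in\mathbb{Z}}$; on $\C[\mathbb{Z}]$ the operator $x_\varpi-\chi(\varpi)$ is injective (no compactly supported sequence is a shift-eigenvector) with one-dimensional cokernel (every Laurent polynomial is equivalent modulo $x_\varpi-\chi(\varpi)$ to a scalar multiple of $\delta_0$), giving $f(\mathbb{S})=1$.

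The main technical obstacle is to know a priori that $M^{T,\chi}$ and $M_{T,\chi}$ are finite-dimensional for every $M\in\mathcal{C}$, since otherwise the alternating dimension count in the six-term sequence is meaningless. I would establish this by induction on the length of $M$: the irreducible cases $\mathbb{S}$ and $i_*(\chi')$ are the explicit finite-dimensional computations above, and the snake-lemma sequence then propagates finite-dimensionality along every short exact sequence in $\mathcal{C}$, which legitimises the additivity argument on arbitrary finite-length objects.
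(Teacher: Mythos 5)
Your proof is correct and follows essentially the same route as the paper: both apply the six-term (co)invariants sequence (the snake lemma for the monodromy operator $x_\varpi-\chi(\varpi)$ on the $\chi|_{T_0}$-isotypic component, which is lemma~\ref{lem:proto_long_exact_sequence}) to the canonical sequence $0\to\mathbb{S}^{\deg M}\to M\to\pi_0(M)\to0$ and count dimensions using $\mathbb{S}^\chi=0$, $\dim\mathbb{S}_\chi=1$, and the equality of kernel and cokernel dimensions on the finite-dimensional $\pi_0(M)$. The only difference is presentational: you re-derive the exact sequence and the computations for $\mathbb{S}$ by hand where the paper cites lemma~\ref{lem:C_b_perfect} and Bump, and you make the finite-dimensionality needed for the dimension count explicit via induction on length.
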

\begin{proof}
By lemma~\ref{lem:proto_long_exact_sequence} we have a long exact sequence
\begin{equation*}
0\to(\mathbb{S}^{\deg(M)})^{\chi}\to M^\chi\to\pi_0(M)^\chi\to\mathbb{S}^{\deg(M)}_{\chi}\to M_\chi\to \pi_0(M)_\chi\to0\ .
\end{equation*}
Note that $\dim\pi_0(M)_\chi=\dim\pi_0(M)^\chi$ since $\pi_0(M)$ is finite-dimensional.
We have $\mathbb{S}^{\chi}=0$ by lemma~\ref{lem:C_b_perfect}
and $\dim\mathbb{S}_{\chi}=1$ by proposition~4.3.2 of Bump~\cite{Bump},
so counting dimensions implies the statement.
\end{proof}

\textit{The category $\CCC^{\fin}_T$ of $T$-modules with finite length}.
The irreducible objects in $\CCC^{\fin}_T$ are the smooth characters $\chi: T \to \C^\times$.
Thus every $X\in\CCC^{\fin}_T$ is finite-dimensional and can be decomposed as a direct sum of Jordan blocks in the following sense:
Choose a prime element $\pi\in k^\times$ so that $k^\times=\mathfrak{o}^\times \times \pi^\Z$.
For the isomorphism $k^\times\cong T$ defined by $\lambda\mapsto x_\lambda$
let $x_\pi$ be the image of $\pi$.
The action of the maximal compact subgroup of $T$ is semisimple
and commutes with $x_\pi$,
so the finite dimensional $T$-module $X$ decomposes
as a $T$-module $X \!=\! \bigoplus_\chi X^{(\chi)}$
into its generalized eigenspaces $X^{(\chi)}$
with respect to the smooth characters $\chi$.
On each $X^{(\chi)}$ the monodromy operator
$\tau_\chi =  x_{\pi}\! -\! \chi(\pi)\cdot\id$ is defined and nilpotent.
We write $\tau_\pi$ for $\bigoplus_\chi \tau_\chi$.
Thus $X^{(\chi)}$ decomposes as a direct sum of Jordan blocks under the action of $x_\pi$. Let $\chi^{(m)}$ be the Jordan block of length $m$ attached to the character $\chi$.
We say that $X$ is \emph{cyclic} if every $X^{(\chi)}$ is an indecomposable Jordan block. In that case $\dim X^\chi=\dim X_\chi\leq1$ for every $\chi$.

\subsection{Perfect modules}
A module $M\in\CCC$ is called \emph{perfect} if $M^\chi = 0$ for all smooth characters $\chi$ of $T$. Submodules of perfect modules and extensions of perfect modules are perfect.

\begin{lem} \label{PERF}
For $M\in {\cal C}$ the following assertions are equivalent:
\begin{enumerate}
\item $M$ is perfect.
\item ${{\Danismanfunctor}}(M)=0$.
\item $\dim(M_{\chi})=\deg(M)$ for all smooth $T$-characters $\chi$.
\end{enumerate}
\end{lem}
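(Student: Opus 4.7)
My plan is to treat the three equivalences with short arguments for the easy directions and one cocycle computation for the hard one. First, the equivalence $(1)\Leftrightarrow(3)$ is immediate from lemma~\ref{lem:RR}: the identity $\dim M_{T,\chi}-\dim M^{T,\chi}=\deg M$ shows that $\dim M_\chi=\deg M$ for all $\chi$ precisely when $M^\chi=0$ for all $\chi$.

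For $(1)\Rightarrow(2)$ I will invoke lemma~\ref{FINITE}, which identifies $\kappa(M)$ with the maximal finite-dimensional $TS$-submodule of $M$. Any non-zero finite-dimensional smooth $T$-module decomposes into generalized $T$-eigenspaces and therefore contains an honest $T$-eigenvector, so a non-zero $\kappa(M)$ would produce $M^\chi\neq 0$ for some character $\chi$, against perfectness.

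The core of the proof is $(2)\Rightarrow(1)$. I will assume $\kappa(M)=M^S=0$ and, for contradiction, pick a non-zero $v\in M^\chi$. The exact sequence $0\to M^0\to M\to\pi_0(M)\to 0$ with $M^0\cong\mathbb{S}^{\deg M}$ carries most of the weight: on the one hand $(M^0)^\chi=0$ by lemma~\ref{lem:C_b_perfect}, so $v\notin M^0$; on the other hand $S$ acts trivially on $\pi_0(M)$, which forces $c(b):=s_bv-v\in M^0$ for every $b\in k$. The assignment $b\mapsto c(b)$ is a $1$-cocycle $S\to\mathbb{S}^{\deg M}$, and it is $T$-equivariant in the sense $x_\lambda c(b)=\chi(\lambda)\,c(\lambda b)$ because $v$ is a $T$-eigenvector of weight $\chi$. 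For each fixed $x\in k^\times$, the map $b\mapsto c(b)(x)$ is a $1$-cocycle for the additive group $k$ acting on $\mathbb{C}^{\deg M}$ through the non-trivial character $\psi(\cdot\, x)$; since that cohomology vanishes, there is a unique function $u\colon k^\times\to\mathbb{C}^{\deg M}$ with $c(b)(x)=(\psi(bx)-1)\,u(x)$. The $T$-equivariance of $c$ then translates into $u(x\lambda)=\chi(\lambda)\,u(x)$, which forces $u(x)=\chi(x)\,u(1)$.

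The final step, and what I expect to be the main obstacle, is to rule out $u(1)\neq 0$ using compact support. If $u(1)\neq 0$, then $c(b)(x)=(\psi(bx)-1)\chi(x)\,u(1)$ is non-zero for all $x$ with $|bx|$ above the conductor of $\psi$, i.e.\ on an unbounded subset of $k^\times$, contradicting $c(b)\in\mathbb{S}^{\deg M}=C_c^\infty(k^\times)^{\deg M}$. Hence $u\equiv 0$, $c\equiv 0$, so $s_bv=v$ for every $b$, placing $v$ in $M^S=0$ and yielding the required contradiction. Morally this last step captures the full content of the lemma: the non-existence of non-zero $\chi$-homogeneous compactly supported functions on $k^\times$, i.e.\ the perfectness of $\mathbb{S}$ itself, is what propagates perfectness from the submodule $M^0$ to the whole of $M$.
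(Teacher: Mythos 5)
Your proposal is correct. For $(1)\Leftrightarrow(3)$ and $(1)\Rightarrow(2)$ you argue exactly as the paper does (lemma~\ref{lem:RR}, respectively finite-dimensionality of $\Danismanfunctor(M)$ from lemma~\ref{maxfin} together with the existence of an eigenvector in any nonzero finite-dimensional $T$-module). The genuine divergence is in $(2)\Rightarrow(1)$. The paper disposes of this in two lines by asserting, with a citation of lemma~\ref{FINITE}, that each eigenspace $M^\chi$ is a trivial $S$-module and hence a finite-dimensional $TS$-submodule contained in $\Danismanfunctor(M)$; as stated, that lemma only characterizes $\Danismanfunctor(M)$ as the maximal finite-dimensional submodule, so the real content --- that a $T$-eigenvector is automatically $S$-invariant --- is left implicit there. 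Your cocycle computation proves precisely this: you write $b\mapsto s_bv-v$ as a $1$-cocycle valued in $M^0\cong\mathbb{S}^{\deg M}$, trivialize it pointwise using the vanishing of $H^1$ of $k$ acting through the nontrivial character $\psi(\cdot\,x)$, and use $T$-homogeneity to force the primitive $u$ to be $\chi$-homogeneous, hence zero because elements of $C_c^\infty(k^\times)$ have bounded support. This is the same mechanism the paper exploits elsewhere (lemmas~\ref{Example 2} and~\ref{lem:perfect_model}: a $\chi$-homogeneous function cannot have bounded support), but packaged as a self-contained computation rather than routed through the embedding $\Ind_S^{TS}(\psi)\hookrightarrow C_b^\infty(k^\times)$ and lemma~\ref{lem:almost_S-invariant}; what your approach buys is that it makes the paper's terse citation fully rigorous. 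One cosmetic point: $\psi(bx)\neq 1$ does not hold for \emph{all} $x$ with $|bx|$ beyond the conductor, only for an unbounded set of such $x$ --- which is the statement you actually use, so nothing breaks.
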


\begin{proof}
1. $\Longrightarrow$ 2.\ If ${{\Danismanfunctor}}(M)\neq 0$, there exists a character $\chi$
such that ${{\Danismanfunctor}}(M)^\chi \neq 0$, because ${\Danismanfunctor}(M)$ is finite dimensional.
But that implies ${{\Danismanfunctor}}(M)^\chi\subseteq M^\chi\neq 0$.

2. $\Longrightarrow$ 1.\ By lemma \ref{FINITE}, $M^\chi$ is a trivial $S$-module for every $\chi$,
hence a $TS$-submodule of $M$ of finite dimension.
By lemma~\ref{maxfin} ${{\Danismanfunctor}}(M)$ is the maximal finite dimensional $TS$-submodule of $M$,
therefore $M^\chi\subseteq {{\Danismanfunctor}}(M)$.

Equivalence between 1.\ and 3.\ follows by lemma~\ref{lem:RR}.
\end{proof}

\begin{example}\label{ex:S_perfect}
Non-zero finite-dimensional modules $M\in\CCC$ are not perfect.
$\mathbb S\in \CCC$ is perfect of degree one by lemma~\ref{lem:C_b_perfect} and lemma~\ref{lem:S_submodule_Cb}.
\end{example}

\begin{example}\label{ex:mathbb E}
The $TS$-module $\mathbb E = C_c^\infty(S)$ of example~\ref{example 3} admits a non-split exact sequence
\begin{equation*}
0 \to \mathbb S \to \mathbb {E} \to 1 \to  0 \quad , \quad %
\end{equation*} 
and is perfect of degree one by lemma~\ref{lem:C_b_perfect}.
Since $\mathbb{S}\cong\chi\otimes\mathbb{S}$, the twist $\mathbb{E}[\chi]=\chi\otimes\mathbb{E}$ is also perfect of degree one for every $T$-character $\chi$ and one has a non-split exact sequence  %
$$ 0 \to \mathbb S \to \mathbb{E}[\chi] \to \chi \to  0  \ .$$ 
\end{example}

\begin{lem}\label{lem:embedding_criterion}
For a $TS$-module $M\in\CCC$ of degree one and a $T$-character $\chi$
with $\dim \pi_0(M)_\chi\leq1$ and $\Danismanfunctor(M)^{\chi}\neq0$,
there is no embedding $\mathbb{E}[\chi]\hookrightarrow M$ in $\CCC$.
\end{lem}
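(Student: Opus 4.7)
The plan is to argue by contradiction, using the perfectness of $\mathbb{E}[\chi]$ to force the $S$-invariant $\chi$-eigenvector guaranteed by $\Danismanfunctor(M)^\chi\neq0$ to survive in the cokernel of the putative embedding, and thereby produce a second $\chi$-eigenline in $\pi_0(M)$ that violates $\dim\pi_0(M)_\chi\leq1$.

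Suppose for contradiction an embedding $\iota\colon\mathbb{E}[\chi]\hookrightarrow M$ exists. First I would pin down the unique copy of $\mathbb{S}$. Since $\deg(M)=1$ the submodule $M^0\subseteq M$ is a single copy of $\mathbb{S}$ and moreover is the only $\mathbb{S}$-submodule of $M$ (two distinct copies of the irreducible $\mathbb{S}$ would intersect trivially, contributing $\deg\geq2$). Since $\deg(\mathbb{E}[\chi])=1$ by example~\ref{ex:mathbb E}, the same argument shows that $\iota$ carries $\mathbb{S}\subset\mathbb{E}[\chi]$ onto $M^0$. Passing to $\pi_0$ then yields an induced embedding $\chi=\mathbb{E}[\chi]/\mathbb{S}\hookrightarrow M/\mathbb{S}=\pi_0(M)$, together with a canonical identification $\pi_0(M)/\chi\cong M/\mathbb{E}[\chi]$.

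Next I would exhibit a second $\chi$-eigenvector in $\pi_0(M)$. Choose a nonzero $v\in\Danismanfunctor(M)^\chi$. Since $\mathbb{E}[\chi]$ is perfect (example~\ref{ex:mathbb E}), $\Danismanfunctor(\mathbb{E}[\chi])=0$, hence $v\notin\iota(\mathbb{E}[\chi])$, so the class $v+\iota(\mathbb{E}[\chi])\in M/\mathbb{E}[\chi]$ is nonzero. Meanwhile $\pi_0(v)\in\pi_0(M)$ is a $\chi$-eigenvector by $T$-equivariance of $\pi_0$, and under $\pi_0(M)\twoheadrightarrow\pi_0(M)/\chi\cong M/\mathbb{E}[\chi]$ it maps to $v+\iota(\mathbb{E}[\chi])\neq0$. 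Consequently $\pi_0(v)\notin\chi\subset\pi_0(M)$, so $\pi_0(M)^\chi$ contains the subspace $\chi$ together with the linearly independent element $\pi_0(v)$, giving $\dim\pi_0(M)^\chi\geq2$.

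Finally, $\pi_0(M)$ is a finite-dimensional $T$-module because it has degree zero, so $\dim\pi_0(M)_\chi=\dim\pi_0(M)^\chi\geq2$, contradicting the hypothesis $\dim\pi_0(M)_\chi\leq1$. The only delicate point is the bookkeeping of the canonical identifications $M^0=\mathbb{S}$ and $\pi_0(M)/\chi\cong M/\mathbb{E}[\chi]$, both of which rest squarely on the degree-one hypothesis; once these are in place, the perfectness of $\mathbb{E}[\chi]$ immediately elevates the $S$-invariant vector $v$ to a second $\chi$-line in $\pi_0(M)$ and the contradiction is forced.
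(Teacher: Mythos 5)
Your proof is correct and follows essentially the same route as the paper's: there, the two embeddings $\mathbb{E}[\chi]\hookrightarrow M$ and $\chi\cong\C v\hookrightarrow{\Danismanfunctor}(M)$ are combined into a direct sum $\mathbb{E}[\chi]\oplus\chi\hookrightarrow M$ (zero intersection by perfectness of $\mathbb{E}[\chi]$), and the exact functor $\pi_0$ then yields $\dim\pi_0(M)^{\chi}\geq2$, contradicting $\dim\pi_0(M)_\chi\leq1$ via finite-dimensionality. Your element-chase through the quotient $M/\iota(\mathbb{E}[\chi])$ is just a more explicit rendering of that same contradiction.
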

\begin{proof}
Suppose such an embedding exists.
Since $\Danismanfunctor(M)^{\chi}\neq0$, we obtain another embedding $\chi\hookrightarrow {\Danismanfunctor}(M)\subseteq M$.
The images of both embeddings have zero intersection in $M$ by perfectness of $\mathbb{E}[\chi]$.
This yields an embedding $\mathbb{E}[\chi]\oplus \chi\hookrightarrow M$.
Exactness of $\pi_0$ implies $\dim X^{\chi}\geq2$, thus contradicts the assumption.
\end{proof}

\begin{lem}\label{lem:perfect_model}\label{Kriterium}\label{lem:equivalences2}
For $M\in{\cal C}$ of degree one the following assertions are equivalent:
\begin{enumerate}
\item $M$ is perfect.
\item $M$ admits a model, i.e.\ an embedding into $C_b^\infty(k^\times)$ (see example~\ref{example 1}).
\item for every $T$-character $\chi$ the $T$-module $\pi_0(M)$
is cyclic in the sense of section~\ref{s:catC}
and if $\pi_0(M)_\chi\neq 0$ then
there is an embedding $\chi\otimes\mathbb{E} \hookrightarrow M$.
\end{enumerate}
\end{lem}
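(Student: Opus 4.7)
The plan is to prove the cycle $(2)\Rightarrow(1)$, $(1)\Rightarrow(2)$, $(2)\Rightarrow(3)$, $(3)\Rightarrow(1)$. The implication $(2)\Rightarrow(1)$ is immediate from lemma~\ref{lem:C_b_perfect}.

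For $(1)\Rightarrow(2)$, I fix a non-trivial character $\psi$ of $S$. Since $\deg M = 1$, we have $\dim M_\psi = 1$, and Frobenius reciprocity yields a non-zero $TS$-map $\varphi\colon M\to \Ind_S^{TS}(\psi)$. The argument in the proof of lemma~\ref{Example 2} embeds $\Ind_S^{TS}(\psi)\hookrightarrow C^\infty_b(k^\times)$, so by composition we obtain $\varphi\colon M\to C^\infty_b(k^\times)$. The kernel $K = \ker\varphi$ satisfies $K_\psi = 0$, hence $\deg K = 0$ and $K$ has only character constituents. Being a submodule of the perfect module $M$, it is itself perfect. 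A finite-dimensional perfect module is zero by lemmas~\ref{FINITE} and~\ref{PERF}, so $K = 0$ and $\varphi$ embeds $M$.

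For $(2)\Rightarrow(3)$, given $M\hookrightarrow C^\infty_b(k^\times)$, lemma~\ref{lem:S_submodule_Cb} shows $\mathbb{S}\subseteq M$ and $\pi_0(M) = M/\mathbb{S}\hookrightarrow C^\infty_b(k^\times)/\mathbb{S}$. Combined with lemma~\ref{lem:C_b_perfect} this forces $\dim\pi_0(M)^\chi\leq 1$, so each generalized eigenspace $\pi_0(M)^{(\chi)}$ is a single Jordan block and $\pi_0(M)$ is cyclic. If $\pi_0(M)_\chi\neq 0$ then by cyclicity also $\pi_0(M)^\chi\neq 0$, and lemma~\ref{lem:C_b_perfect} allows me to pick a lift $g\in M$ with $g(x) = \chi(x)$ for $|x|$ small and with bounded support. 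Under the Fourier identification of example~\ref{example 3}, the map $f\mapsto(x\mapsto \chi(x) f(x))$ defines a $TS$-equivariant embedding $\chi\otimes\mathbb{E}\hookrightarrow C^\infty_b(k^\times)$; the chosen $g$ lies in its image, and irreducibility of $\mathbb{S}$ shows that $g$ together with any non-zero $s_b g - g\in\mathbb{S}$ generates the full copy of $\chi\otimes\mathbb{E}$. Since all these generators remain inside $M$, the copy itself sits inside $M$.

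For $(3)\Rightarrow(1)$, suppose $M$ is not perfect. Then $\Danismanfunctor(M)^\chi\neq 0$ for some $\chi$ by lemmas~\ref{FINITE} and~\ref{PERF}. The $T$-equivariant injection $\Danismanfunctor(M)\hookrightarrow\pi_0(M)$ of lemma~\ref{lem:kappa_pi_0_injective} forces $\pi_0(M)^\chi\neq 0$, and cyclicity forces $\dim\pi_0(M)_\chi = 1$. Condition $(3)$ then yields an embedding $\chi\otimes\mathbb{E}\hookrightarrow M$, contradicting lemma~\ref{lem:embedding_criterion}. The main technical point is the construction in $(2)\Rightarrow(3)$ of $\chi\otimes\mathbb{E}\hookrightarrow M$: one must verify that the $TS$-span of the single element $g$ inside $C^\infty_b(k^\times)$ already fills out the concrete copy of $\chi\otimes\mathbb{E}$, so that this submodule automatically sits inside $M$. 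The remaining implications are direct applications of the lemmas established earlier in section~\ref{s:catC}.
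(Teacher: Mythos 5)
Your proof is correct and follows essentially the same route as the paper's: the same cycle of implications, with $(1)\Rightarrow(2)$ obtained from the map into $\Ind_S^{TS}(\psi)$ plus perfectness of the kernel, and $(3)\Rightarrow(1)$ via lemma~\ref{lem:embedding_criterion} (you helpfully make explicit, via lemma~\ref{lem:kappa_pi_0_injective}, why $\pi_0(M)_\chi\neq0$ so that assertion~3 actually applies). The only variation is in $(2)\Rightarrow(3)$, where you exhibit the copy of $\chi\otimes\mathbb{E}$ concretely as the $TS$-span of the lift $g$ inside $C_b^\infty(k^\times)$ rather than invoking the uniqueness statement of lemma~\ref{lem:extension E_X}; both arguments identify the same submodule of $M$, namely the preimage of the $\chi$-eigenline.
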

\begin{proof}
1.\ $\Longrightarrow$ 2.
The degree of $M$ is one, so $\dim M_\psi=1$ for every non-trivial character $\psi$ of $S$.
Then the argument of lemma~\ref{Example 2} shows the existence of a nontrivial
$TS$-linear map $\ell\!:\! M \to C_b^\infty(k^\times)$.
By lemma~\ref{lem:S_submodule_Cb} the image of $\ell$ has degree one.
Therefore the kernel of $\ell$ is finite-dimensional and thus contained in ${\Danismanfunctor}(M)$ by lemma~\ref{maxfin}. But ${{\Danismanfunctor}}(M)\!=\!0$ vanishes by lemma~\ref{PERF}, so $\ell$ is injective.

2.\ $\Longrightarrow$ 3. By lemma~\ref{lem:C_b_perfect} and exactness of $\pi_0$, as a submodule of $C_b^\infty(k^\times)$ of finite length, $M$ satisfies $\dim(\pi_0(M)_\chi)=\dim(\pi_0(M)^\chi)\leq1$.
If $\dim(\pi_0(M)^\chi)=1$, there is an embedding $\chi\hookrightarrow \pi_0(M)$.
The preimage of $\chi$ under the projection $M\to M/\mathbb{S}\cong\pi_0(M)$ has length two with constituents $\mathbb{S}$ and $\chi$.
By the uniqueness statement of lemma~\ref{lem:extension E_X}, this preimage is isomorphic to $\chi\otimes\mathbb{E}$ as a $TS$-module.

3.\ $\Longrightarrow$ 1. If $M$ is not perfect, there is a character $\chi$ that embeds into  $\Danismanfunctor(M)$ by lemma~\ref{PERF}.
By lemma~\ref{lem:embedding_criterion} there is no embedding $\mathbb{E}[\chi] \hookrightarrow M$, in contradiction to the assumption.
\end{proof}

\begin{example}[Kirillov model]\label{ex:Kirillov}
An infinite-dimensional irreducible $\Gl(2)$-module $V$, 
considered as a $TS$-module (example \ref{example 4}), is perfect.
Indeed, if there is a $\chi$-eigenspace $V^\chi\neq0$ for a $T$-character $\chi$, then this eigenspace is a $\Gl(2)$-submodule by lemma \ref{lem:torus_action_GL2} and this contradicts the irreducibility.
By uniqueness of Whittaker models, $V$ has degree one.
Lemma~\ref{lem:equivalences2} provides an embedding $V\hookrightarrow C_b^\infty(k^\times)$ of $TS$-modules and the image contains $\mathbb{S}=C_c^\infty(k^\times)$. This is the \emph{Kirillov-model} of $V$.
The quotient of $V$ by $\mathbb{S}$ is the finite-dimensional unnormalized Jacquet-quotient $\pi_0(V)=V_S$ as a $T$-module.
\end{example}
\begin{cor}[Waldspurger-Tunnell]\label{cor:Waldspurger-Tunnell}
For generic irreducible $\pi\in\CCC_{\Gl(2)}$
and every smooth character $(\begin{smallmatrix}\lambda & 0 \cr 0 & 1
\end{smallmatrix})\mapsto \rho(\lambda)$ of $\{(\begin{smallmatrix}\lambda & 0 \cr 0 & 1
\end{smallmatrix})\}$ 
we have $\dim\pi_\rho=1$.
\end{cor}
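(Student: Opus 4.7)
The plan is to deduce the statement directly from the structural results established for perfect $TS$-modules of degree one, applied to $\pi$ regarded as a $TS$-module via example~\ref{example 4}. I would proceed in three short steps.

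First, I would observe that a generic irreducible $\pi\in\CCC_{\Gl(2)}$ admits a Whittaker model and is therefore infinite-dimensional, so example~\ref{ex:Kirillov} applies: as a $TS$-module, $\pi$ is perfect and of degree one (the perfectness is a consequence of lemma~\ref{lem:torus_action_GL2}, which rules out $T$-eigenspaces because they would be $\Gl(2)$-stable and thus contradict irreducibility; the degree is one by the uniqueness of Whittaker models, identifying $\dim\pi_\psi$ with the dimension of the Whittaker space).

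Second, I would read off the claim from the perfectness criterion in lemma~\ref{PERF}, whose third equivalent condition states that for a perfect $M\in\CCC$ one has $\dim M_\chi=\deg M$ for every smooth $T$-character $\chi$. Here the $T$-action on $\pi$ is precisely the one in example~\ref{example 4}: the element $x_\lambda$ acts as $\diag(\lambda,1)$, so the $T$-coinvariants $\pi_\rho$ in the sense of the preliminaries coincide with the space of $\diag(\lambda,1)$-coinvariants considered in the corollary. Applying lemma~\ref{PERF} to $M=\pi$ and $\chi=\rho$ gives $\dim\pi_\rho=\deg\pi=1$.

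The only thing to verify carefully is the identification of the torus action: the character of $\{\diag(\lambda,1)\}$ in the corollary matches the $T$-character of example~\ref{example 4} under $\lambda\mapsto x_\lambda$, so $\pi_\rho$ in the corollary is exactly $\pi_{T,\rho}$ in the preliminaries. There is no real obstacle; the content is entirely in example~\ref{ex:Kirillov} and lemma~\ref{PERF}, and the corollary is an immediate specialization.
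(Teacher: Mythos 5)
Your argument is correct and is essentially the paper's own proof in slightly different packaging: the paper kills $\pi^\rho$ via lemma~\ref{lem:torus_action_GL2}, gets $\deg\pi=1$ from uniqueness of Whittaker models, and concludes by lemma~\ref{lem:RR}, whereas you route the same two inputs through example~\ref{ex:Kirillov} and condition (3) of lemma~\ref{PERF} — which is itself deduced from lemma~\ref{lem:RR}. Your identification of the torus action with that of example~\ref{example 4} is also exactly the reduction the paper makes.
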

\begin{proof}
Consider $\pi$ as a $TS$-module as in lemma~\ref{example 4}.
Then $\pi^\rho=0$ by lemma~\ref{lem:torus_action_GL2} and $\deg(\pi)=1$ by the uniqueness of Whittaker models (Kirillov-theory).
Lemma~\ref{lem:RR} implies $\dim \pi_\rho=1$.
\end{proof}
\begin{lem}\label{lem:torus_action_GL2}
Let $\pi$ be a smooth $\Gl(2)$-module.
If a split maximal torus of $\Gl(2)$ acts on $v\in \pi$ by a character $\chi$,
then $v$ is an eigenvector of $\Gl(2)$.
Especially, $\chi=\mu\circ\det$ holds for a character $\mu$ of $k^\times$.
\end{lem}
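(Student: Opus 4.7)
The plan is to show that $v$ is fixed by $\mathrm{SL}(2,k)$ and that $\chi$ factors through the determinant; combined, these give the statement. Without loss of generality take the split torus to be the diagonal torus $T=\{\diag(a,b)\}$ and write $u_x=\left(\begin{smallmatrix}1&x\\0&1\end{smallmatrix}\right)$, $u^-_y=\left(\begin{smallmatrix}1&0\\y&1\end{smallmatrix}\right)$ for the standard one-parameter unipotent subgroups $U^+$ and $U^-$.

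First I would exploit smoothness: the stabilizer of $v$ contains a compact open subgroup, so $u_x v=v$ for all $x$ in some compact open neighborhood of $0$ in $k$. The essential computation is the conjugation identity $\diag(a,1)\,u_x\,\diag(a,1)^{-1}=u_{ax}$. Combining this with the eigenvector assumption yields
\begin{equation*}
u_{ax}v=\diag(a,1)\,u_x\,\diag(a,1)^{-1}v=\chi(a,1)^{-1}\diag(a,1)\,u_x v,
\end{equation*}
so whenever $u_x v=v$ the right-hand side collapses to $\chi(a,1)^{-1}\chi(a,1)v=v$. Letting $a$ range over $k^\times$ shows that $u_y v=v$ for every $y\in k$, i.e.\ $v$ is fixed by $U^+$. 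The symmetric argument, using $\diag(1,b)\,u^-_x\,\diag(1,b)^{-1}=u^-_{bx}$, shows that $v$ is also fixed by $U^-$.

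Since $U^+$ and $U^-$ generate $\mathrm{SL}(2,k)$, the vector $v$ is $\mathrm{SL}(2,k)$-invariant. In particular $\chi$ is trivial on $T\cap \mathrm{SL}(2,k)=\{\diag(a,a^{-1})\mid a\in k^\times\}$, so $\chi$ factors through the surjection $\det\colon T\to k^\times$, giving a smooth character $\mu$ of $k^\times$ with $\chi(\diag(a,b))=\mu(ab)$. Using the decomposition $\Gl(2,k)=\mathrm{SL}(2,k)\cdot T$, any $g=st$ acts on $v$ by $g\cdot v=\chi(t)v=\mu(\det t)v=\mu(\det g)v$, which is the desired statement.

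There is no real obstacle here; the only subtlety is observing that the smoothness hypothesis, which a priori only gives $U^+$-invariance of $v$ on a compact open, combines with the $T$-eigenvector assumption via conjugation to upgrade to full invariance under $U^\pm$.
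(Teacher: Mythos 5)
Your proof is correct. It reaches the same two intermediate facts as the paper --- that $v$ is fixed by both standard unipotent one-parameter subgroups --- but by a direct computation (smoothness gives $u_xv=v$ for small $x$, and conjugation by $\diag(a,1)$ propagates this to all of $U^+$), whereas the paper extracts the $U^+$-invariance from its $TS$-module machinery (lemma~\ref{FINITE}, which rests on the Bernstein--Zelevinsky exact sequence) and gets $U^-$ by conjugating with the Weyl element. The endgames also differ: the paper concludes via density of the big Bruhat cell $B\overline{B}$ together with smoothness, while you conclude via generation of $\mathrm{SL}(2,k)$ by $U^\pm$ and the decomposition $\Gl(2,k)=\mathrm{SL}(2,k)\cdot T$. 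Your route is more elementary and self-contained --- it uses nothing beyond smoothness and matrix identities --- at the cost of being slightly longer; the paper's version is shorter only because it leans on formalism already set up for other purposes. One small point worth making explicit in your first step: to cover all of $U^+$ you need a \emph{nonzero} $x_0$ in the compact open neighborhood on which $u_{x_0}v=v$, since the orbit $k^\times\cdot 0$ is just $\{0\}$; this is of course automatic, but the sentence "letting $a$ range over $k^\times$" silently assumes it.
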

\begin{proof}
By smoothness it is sufficient to show the assertion for a dense subset of $\Gl(2)$.
By conjugation, $v$ is an eigenvector of the standard torus.
For the embedding $TS\hookrightarrow\Gl(2)$ in example~\ref{example 4}, lemma~\ref{FINITE} implies that $v$ is invariant under $S$, so it is an eigenvector of the standard Borel $B$. 
By conjugation, $v$ is also an eigenvector of the opposite Borel $\overline{B}$.
By Bruhat decomposition,
$B\overline{B}$ is dense in $\Gl(2)$ and this implies the statement.
\end{proof}

\begin{lem}\label{lem:extension E_X}
For a cyclic $T$-module $X\in\mathcal{C}_{T}^{\fin}$ there is a unique $TS$-module $M\subseteq C_b^\infty(k^\times)$ of finite length and degree one with $\pi_0(M) \cong X$.
\end{lem}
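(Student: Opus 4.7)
The plan is to reduce the claim to a statement about $T$-submodules of the quotient $V := C_b^\infty(k^\times)/\mathbb{S}$. Observe first that for every $g \in C_b^\infty(k^\times)$ and every $s_b \in S$, smoothness of $\psi$ gives $\psi(bx) = 1$ for $x$ in some neighborhood of $0$, so $s_b g - g$ has compact support in $k^\times$ and hence lies in $\mathbb{S}$. Therefore $S$ acts trivially on $V$, and every $T$-submodule $W \subseteq V$ of finite length is the image of a uniquely determined $TS$-submodule $M \subseteq C_b^\infty(k^\times)$ containing $\mathbb{S}$, namely the preimage of $W$. By lemma~\ref{lem:S_submodule_Cb}, any nonzero finite-length $TS$-submodule $M$ of $C_b^\infty(k^\times)$ contains $\mathbb{S}$, and $M$ has degree one precisely when $M/\mathbb{S} = \pi_0(M)$. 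Thus it suffices to show: for cyclic $X \in \CCC_T^{\fin}$ there is a unique $T$-submodule $W \subseteq V$ isomorphic to $X$.

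For existence, write $X = \bigoplus_\chi \chi^{(m_\chi)}$ over the finitely many smooth characters $\chi$ appearing, and set
\[
\phi_{\chi,i}(x) = \chi(x)\, v(x)^i \cdot \mathbf{1}_{\mathfrak{o}}(x) \in C_b^\infty(k^\times), \qquad 0 \leq i < m_\chi,
\]
where $v$ denotes the $\pi$-adic valuation on $k^\times$. Each $\phi_{\chi,i}$ is smooth with bounded support, and the maximal compact subgroup of $T$ acts on it by $\chi$. Using $v(\pi x) = v(x) + 1$ and the binomial theorem,
\[
x_\pi \phi_{\chi,i} \equiv \chi(\pi) \sum_{j=0}^{i} \binom{i}{j} \phi_{\chi,j} \pmod{\mathbb{S}},
\]
because $\mathbf{1}_{\mathfrak{o}}(\pi x) - \mathbf{1}_{\mathfrak{o}}(x)$ has compact support in $k^\times$. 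Hence the classes of $\phi_{\chi,0}, \ldots, \phi_{\chi, m_\chi - 1}$ in $V$ span a single Jordan block of length $m_\chi$ attached to $\chi$, and summing these blocks over the characters in $X$ yields the desired $W \subseteq V$ with $W \cong X$.

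For uniqueness, let $W \subseteq V$ be any $T$-submodule of finite length with $W \cong X$, and decompose $W = \bigoplus_\chi W^{(\chi)}$ into generalized eigenspaces. Each $W^{(\chi)}$ is a single Jordan block of length $m_\chi$ for $x_\pi$ with eigenvalue $\chi(\pi)$. Since the maximal compact subgroup of $T$ acts semisimply on any smooth $T$-module, the kernel of $x_\pi - \chi(\pi)$ on $W^{(\chi)}$ coincides with $W^{(\chi)} \cap V^\chi$, which by lemma~\ref{lem:C_b_perfect} is contained in the one-dimensional space $V^\chi$. Fix a generator $w_1$ of $V^\chi$ and choose a Jordan basis $w_1, \ldots, w_{m_\chi}$ of $W^{(\chi)}$ with $(x_\pi - \chi(\pi)) w_j = w_{j-1}$. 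An induction on $j$ shows that $\mathrm{span}(w_1, \ldots, w_j)$ is uniquely determined: if this holds up to $j$, then any two choices of $w_{j+1}$ solving the recursion differ by an element of $\C w_1 \subseteq \mathrm{span}(w_1, \ldots, w_j)$. At $j = m_\chi$ we obtain $W^{(\chi)}$, and therefore $W$, as a uniquely determined subspace of $V$.

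The main obstacle lies in the uniqueness argument, where one must identify the kernel of $x_\pi - \chi(\pi)$ on the generalized eigenspace with the strict eigenspace $V^\chi$ and invoke lemma~\ref{lem:C_b_perfect} to force its one-dimensionality; the existence is then a matter of verifying that the natural candidates $\chi(x) v(x)^i \mathbf{1}_{\mathfrak{o}}(x)$ assemble modulo $\mathbb{S}$ into Jordan blocks of the prescribed shape.
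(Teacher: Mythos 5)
Your proof is correct and follows essentially the same route as the paper's: both pass to the quotient $V=C_b^\infty(k^\times)/\mathbb{S}$ on which $S$ acts trivially, derive uniqueness from the one-dimensionality of $V^\chi$ in lemma~\ref{lem:C_b_perfect}, and obtain existence by realizing each Jordan block through functions that are polynomial in the valuation (your $\chi(x)v(x)^i\mathbf{1}_{\mathfrak o}(x)$ is exactly the paper's identification with polynomials $a(n)$ in $C(\N,\C)/C_c(\N,\C)$). The only differences are cosmetic — you keep all blocks of $X$ together and run uniqueness as an induction on a Jordan basis where the paper bounds $\dim\ker(\tau_\chi^{m})\le m$ directly; in your inductive step you should compare $\tau_\chi(w_{j+1}-w_{j+1}')=w_j-w_j'$ rather than treating the two recursions as having the same right-hand side, but that is a one-line repair.
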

\begin{proof}
Without loss of generality we can assume that $X=X^{(\chi)}=\chi^{(m)}$ is a Jordan block
 of length $m=\dim X$ attached to a character $\chi$. By a twist we can assume $\chi=1$.
For every $M$ that satisfies the requirements, we have a commutative diagram with exact rows
\begin{equation*}
 \xymatrix{
0\ar[r] & \mathbb{S}\ar[r]\ar@{=}[d] & M\ar[r]\ar@{_{(}->}[d] & X\ar[r]\ar@{_{(}->}[d] & 0\\
0\ar[r] & \mathbb{S}\ar[r] & C_b^\infty(k^\times)\ar[r] & C_b^\infty(k^\times)/\mathbb{S}\ar[r] & 0\ .
 }
\end{equation*}
$X\cong M/\mathbb{S}$ is in the kernel of the monodromy operator $\tau_\chi^{m}$ on $C^\infty_b(k^\times)/\mathbb{S}$. Further, $(C_b^{\infty}(k^\times)/\mathbb{S})^\chi$ is one-dimensional by
lemma~\ref{lem:C_b_perfect}, so by Jordan normal forms $\dim\ker(\tau_\chi^{m}) \leq m$.
By assumption $X$ has dimension $m$, so it coincides with this kernel and therefore $M$ is unique.
It remains to show the existence.
The generalized $\chi$-eigenspace of $C_b^\infty(k^\times)/\mathbb S$ consists of functions
uniquely determined on each $\pi^n \cdot \mathfrak{o}^\times$ up to $a(n)=f(\pi^n)$,
so using $k^\times = \pi^\Z \times \mathfrak{o}^\times$ it can be identified with the subspace of $C(\N,\C)/C_c(\N,\C)$ representated by polynomial functions $a(n)$. Then $X$ corresponds to polynomials of degree $<m$.
Up to a sign, $\tau_\pi=(x_\pi - 1)$ acts on $C(\N,\C)/C_c(\N,\C)$ by the difference operator $a(n) \mapsto a(n)-a(n-1)$ and thus is nilpotent on $X$ of the correct order.
\end{proof}

\textit{Universal extensions}.
Perfect modules $M\in\CCC$ of degree one are uniquely determined by $X=\pi_0(M)$ up to isomorphism by lemma~\ref{lem:equivalences2} and lemma~\ref{lem:extension E_X}.
The same lemmas show that such an $M$ exists if and only if the finite-dimensional $T$-module $X$ is cyclic, i.e. satisfies $$\dim X^\chi=\dim X_\chi\leq1$$ for every $T$-character $\chi$.
The extensions $M$ are universal and we write
\begin{equation*}
0\to \mathbb{S} \to \mathbb{E}[X] \to X \to 0\ .
\end{equation*}
By construction, each isomorphism class $\mathbb{E}[X]$ admits a unique representative in $C_b^\infty(k^\times)$.
For example $\mathbb{E}[0] = \mathbb S$ and $\mathbb{E}[\chi] = \chi\otimes\mathbb{E}$ for $T$-characters $\chi$, see example~\ref{ex:mathbb E}.

\textit{Filtrations}. A perfect non-trivial $M\in \CCC$ has degree at least one. It admits quotients $E$ of degree one.
For example, there always exists a quotient $E$ with degree one and 
$\pi_0(M)\!\cong\! \pi_0(E)$. A quotient $E$ of degree one
and minimal dimension of $\pi_0(E)$ is perfect and the kernel of $M\twoheadrightarrow E$ is also perfect.
Repeating this process gives a filtration of $M$ by perfect modules of degree one.
If $M$ is not perfect, the above construction applied to the perfect module $M/{\Danismanfunctor}(M)$ provides a filtration whose first term is finite-dimensional.

This provides a filtration whose 
graded components $E_i\in \CCC$ are perfect of degree one.
Such a filtration always exists and the quotients $E_i$ are unique up to ordering.
Then $\pi_0(M)^{ss} \ = \ \oplus_{i} \ \pi_0(E_i)^{ss}$
follows from exactness of $\pi_0$.

\subsection{Modules of degree one}\label{s:universal extensions}

\begin{lem}\label{lem:Ext(X,S)} For characters $\chi\in \CCC_T^{\fin}$ we have
$\Ext^1_{\cal C}(\chi,{\mathbb S})=  \C\cdot cl({\mathbb E}[\chi])$.
\end{lem}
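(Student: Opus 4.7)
The plan is to prove the claim in two stages: non-vanishing of $cl(\mathbb{E}[\chi])$, and a dimension bound $\dim\Ext^1_{\mathcal{C}}(\chi,\mathbb{S}) \le 1$.

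Non-vanishing is immediate from example~\ref{ex:mathbb E}, which records that the defining sequence $0\to\mathbb{S}\to\mathbb{E}[\chi]\to\chi\to 0$ is non-split. For the bound, I would take any extension $0\to\mathbb{S}\to E\to\chi\to 0$ in $\mathcal{C}$ and argue by cases on whether $E$ is perfect.

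If $E$ is not perfect, lemma~\ref{PERF} gives ${\Danismanfunctor}(E)\ne 0$, so $E^{\chi'}\ne 0$ for some smooth $T$-character $\chi'$. Since $\mathbb{S}^{\chi'}=0$ by lemma~\ref{lem:C_b_perfect}, any nonzero $\chi'$-eigenvector $v\in E$ has nonzero image in $\pi_0(E)=\chi$, forcing $\chi'=\chi$. By lemma~\ref{FINITE}, such a $v$ is $S$-invariant and therefore generates a $TS$-submodule isomorphic to $\chi$. The composition $\chi\hookrightarrow E\twoheadrightarrow\chi$ is then a nonzero endomorphism of a simple module, hence an isomorphism; this splits the extension, giving $cl(E)=0$.

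If $E$ is perfect, then $E$ has degree one (its unique $\mathbb{S}$-constituent comes from the kernel) and $\pi_0(E)\cong \chi$ is cyclic, so lemma~\ref{lem:extension E_X} yields an abstract module isomorphism $\phi\colon E\xrightarrow{\sim}\mathbb{E}[\chi]$. Because $\Hom_\mathcal{C}(\mathbb{S},\chi)=0$, the image $\phi(\mathbb{S})$ lies in $\mathbb{S}\subset\mathbb{E}[\chi]$, so by Schur's lemma $\phi|_{\mathbb{S}}$ is scalar multiplication by some $c\in\mathbb{C}^\times$, and the induced map on the quotient $\chi$ is multiplication by some $d\in\mathbb{C}^\times$. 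Bifunctoriality of Yoneda $\Ext^1$ then gives $cl(E)=(c/d)\cdot cl(\mathbb{E}[\chi])$. Combining the two cases yields $\Ext^1_{\mathcal{C}}(\chi,\mathbb{S})\subseteq\mathbb{C}\cdot cl(\mathbb{E}[\chi])$, which together with non-vanishing of the generator proves the lemma.

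The main subtlety is the final step of the perfect case: lemma~\ref{lem:extension E_X} produces $\phi$ only as a module isomorphism, not as an equivalence of extensions, so one must carefully convert the Schur-scalars $c,d$ into a scalar factor on Ext classes via bifunctoriality. Everything else is formal once the dichotomy perfect/non-perfect is in place.
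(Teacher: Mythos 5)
Your proof is correct and follows essentially the same route as the paper's: the dichotomy on perfectness of the middle term, splitting off a copy of $\chi$ from ${\Danismanfunctor}(E)$ in the non-perfect case, and identifying the perfect extension with $\mathbb{E}[\chi]$ via lemmas~\ref{lem:equivalences2} and~\ref{lem:extension E_X}. The one point where you go beyond the paper is the final step converting the abstract module isomorphism $E\cong\mathbb{E}[\chi]$ into proportionality of extension classes via the Schur scalars $c,d$ and bifunctoriality of $\Ext^1$; the paper leaves this implicit, and your observation that $\Hom_{\CCC}(\mathbb{S},\chi)=0$ forces $\phi(\mathbb{S})\subseteq\mathbb{S}$ is exactly what makes it work.
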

\begin{proof}
Fix an exact sequence $0\to \mathbb{S}\to M\to \chi\to 0$ in $\CCC$.
If ${\Danismanfunctor}(M)$ is non-zero, then it is isomorphic to $\chi$ because ${\Danismanfunctor}(\mathbb{S})=0$.
In that case the embedding ${\Danismanfunctor}(M)\to M$ splits the sequence.
Otherwise $M$ is perfect by lemma~\ref{PERF}, so it admits an embedding $M\hookrightarrow C_b^\infty(k^\times)$ by lemma \ref{lem:equivalences2}.
A submodule of $C_b^\infty(k^\times)$ is uniquely determined by $\pi_0(M)\cong X$ because of lemma~\ref{lem:extension E_X}.
\end{proof}

\textit{Fiber products}. For finite-dimensional $T$-modules $X,Y\in \CCC_T^{\fin}$ and a $T$-morphism $f:X\to Y$, assume $Y$ is cyclic in the sense of section \ref{s:catC}.
Then the fiber product
\begin{equation*}
 \xymatrix@+2mm{
 \mathbb{E}[f]\ar[d]_a\ar[r]^b & \mathbb{E}[Y]\ar[d]^{\pi_0}\\
 X\ar[r]^f                     & Y                            
}
\end{equation*}
is given by the $TS$-module $\mathbb{E}[f]=\{(x,m)\in X\times\mathbb{E}[Y]\,|\, f(x)=\pi_0(m)\}$ and the obvious projections $a,b$.
We can (and will) always assume that $f$ is surjective. Indeed, replacing $Y$ by the image of $f$ does not change $\mathbb{E}[f]$. By construction, $\ker(b)\cong \ker(f)$.

The next lemma shows that every $M\in \CCC$ of degree one is uniquely determined, up to isomorphism, by the $T$-modules $\pi_0(M)$ and ${\Danismanfunctor}(M)$.
\begin{lem}\label{lem:universal extensions classification}
Every fiber product $\mathbb{E}[f]$ as above has degree one and satisfies
$${\Danismanfunctor}(\mathbb{E}[f])\cong\ker(f) \quad,  \qquad\pi_0(\mathbb{E}[f])\cong X\ .$$
Conversely, every $M\in \CCC$ of degree one is isomorphic to $\mathbb{E}[f]$
for the natural projection $f:X\to Y$ from $X=\pi_0(M)$ to the quotient $Y=X/{\Danismanfunctor}(M)$.
Especially, $Y$ always satisfies the monodromy bound $\dim Y^{\chi}\leq 1$
for every $T$-character $\chi$.
\end{lem}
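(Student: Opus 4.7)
The plan is to prove both directions using perfectness of $\mathbb{E}[Y]$ together with the structural lemmas \ref{lem:kappa_pi_0_injective}, \ref{PERF}, \ref{lem:equivalences2}, and \ref{lem:extension E_X}, after assuming $f\colon X\twoheadrightarrow Y$ surjective. For the first direction, the fiber product supplies two short exact sequences in $\CCC$,
$$0\to \ker(f)\to \mathbb{E}[f]\xrightarrow{b}\mathbb{E}[Y]\to 0\qquad\text{and}\qquad 0\to \mathbb{S}\to \mathbb{E}[f]\xrightarrow{a} X\to 0,$$
with $a,b$ the fiber-product projections; their kernels follow from the universal property, and $a,b$ are surjective because $f$ and $\pi_0\colon\mathbb{E}[Y]\to Y$ are. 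Since $\ker(f)$ has degree zero and $\mathbb{E}[Y]$ has degree one, $\mathbb{E}[f]$ has degree one. Applying the exact functor $\pi_0$ to the second sequence, with $\pi_0(\mathbb{S})=0$, yields $\pi_0(\mathbb{E}[f])\cong X$. Applying the left exact functor $\Danismanfunctor$ to the first sequence, with $\Danismanfunctor(\ker(f))=\ker(f)$ (trivial $S$-action) and $\Danismanfunctor(\mathbb{E}[Y])=0$ by perfectness (lemma~\ref{PERF}), yields $\Danismanfunctor(\mathbb{E}[f])\cong\ker(f)$.

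For the converse, set $F=\Danismanfunctor(M)$, finite-dimensional by lemma~\ref{maxfin}. Then $M/F$ still has degree one, and $\Danismanfunctor(M/F)=0$ by the defining property of $\Danismanfunctor$, so $M/F$ is perfect by lemma~\ref{PERF}. Lemma~\ref{lem:kappa_pi_0_injective} embeds $F$ into $\pi_0(M)=X$, and exactness of $\pi_0$ identifies $\pi_0(M/F)$ with $Y:=X/F$. Combining lemma~\ref{lem:equivalences2}(2) and lemma~\ref{lem:extension E_X} identifies $M/F\cong\mathbb{E}[Y]$ as perfect of degree one with $\pi_0=Y$, and existence of this universal extension forces $Y$ cyclic, giving the monodromy bound $\dim Y^\chi\leq 1$. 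The projection $\pi_0\colon M\to X$ and the quotient $M\to M/F\cong\mathbb{E}[Y]$ agree over $Y$, so the universal property of the fiber product yields a canonical $TS$-map $\varphi\colon M\to\mathbb{E}[f]$. Injectivity follows because any element of $\ker(\varphi)\subseteq\ker(M\to M/F)=F$ vanishes in $X$ only if it is zero in $F\hookrightarrow X$. Surjectivity is a short diagram chase: given $(x,m')\in\mathbb{E}[f]$, lift $m'$ to some $m\in M$ by surjectivity of $M\to M/F$; the difference $\pi_0(m)-x\in\ker(f)=F$ sits in $F\subseteq M$, and subtracting it from $m$ leaves the image in $M/F$ unchanged (since $F\subseteq\ker(M\to M/F)$) while correcting the first coordinate to $x$.

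The principal care-point is the double role of $F=\Danismanfunctor(M)$ as a submodule of both $M$ and $X$; once lemma~\ref{lem:kappa_pi_0_injective} guarantees compatibility of the two embeddings, both directions reduce to exactness properties of $\pi_0$ and $\Danismanfunctor$ combined with the established perfectness of $\mathbb{E}[Y]$.
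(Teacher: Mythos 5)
Your proof is correct and follows essentially the same route as the paper: both directions rest on the two exact sequences supplied by the fiber-product projections, perfectness of $M/{\Danismanfunctor}(M)$, the uniqueness of universal extensions (lemmas~\ref{lem:equivalences2} and \ref{lem:extension E_X}), and the universal property producing $\varphi$. The only cosmetic differences are that you pass directly to $M/{\Danismanfunctor}(M)$ where the paper first takes a quotient of minimal length and degree one, and you verify that $\varphi$ is an isomorphism by an explicit kernel/surjectivity check rather than by the five-lemma.
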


\begin{proof}
$\mathbb{E}[f]$ modulo
$\{(0,m)\,|\, m\in \mathbb{S}\}\cong\mathbb{S}$ is finite-dimensional, 
so $\mathbb{E}[f]$ has degree one and $$\pi_0(\mathbb{E}[f])\cong\{(x,y)\in X\times Y\,|\, f(x)=y\}\cong X\ .$$
By lemma~\ref{maxfin} and lemma~\ref{PERF},
the maximal finite-dimensional submodule of $\mathbb{E}[f]$ is the kernel of the projection $b$,
so ${\Danismanfunctor}(\mathbb{E}[f])= \ker (b) \cong \ker(f)$.
For the converse statement,
let $Q$ be a quotient of $M$ of minimal length and degree one. The kernel of the projection 
$$K=\ker(M\twoheadrightarrow Q) \ $$ has degree zero and is thus a finite-dimensional submodule $K\subseteq {\Danismanfunctor}(M)$. We claim that $K={\Danismanfunctor}(M)$. Indeed, otherwise the length of $M/{\Danismanfunctor}(M)$ would be smaller than that of $Q$.
This implies that $Q\cong M/{\Danismanfunctor}(M)$ is perfect of degree one. 
Indeed, if $Q$ was not perfect, then it would admit a non-trivial finite-dimensional submodule which contradicts the minimality assumption.
By exactness of the $\pi_0$-functor, $Q\cong\mathbb{E}[Y]$ is the universal extension attached to the $T$-module $Y = \pi_0(M)/{\Danismanfunctor}(M)$.
By lemma~\ref{lem:equivalences2}, $Y\cong \pi_0(Q)$ satisfies the monodromy bound. For the projection $f$ from $X=\pi_0(M)$ to $Y$ we have a commutative diagram
\begin{equation*}
\xymatrix@+1mm{   M\ar@/^1pc/[rrd]\ar@/_1pc/[rdd]_{\pi_0}\ar@{.>}[rd]^\varphi  & & \cr                 & \mathbb{E}[f]\ar[d]^{\pi_0}\ar[r] & \mathbb{E}[Y]\ar[d]^{\pi_0}\\
                            & X\ar[r]_f                        & \ Y\ .
}
\end{equation*}
By the universal property of the fiber product, we obtain a unique morphism $\varphi:M\to\mathbb{E}[f]$.
Since $\pi_0(\varphi)$ is an isomorphism $X\to X$ and since the kernel of $\varphi$ has degree zero, $\varphi$ is an isomorphism by the five-lemma.
\end{proof}

\begin{lem}\label{lem:hom-lemma}\label{SCREEN}\label{screening}
For a universal extension $\mathbb{E}[X]$ and a module $M\in\CCC$ of degree one,
every morphism
$\varphi:\mathbb{E}[X]\to M$ either
\begin{enumerate}
 \item has finite-dimensional image in ${\Danismanfunctor}(M)$ or
 \item is injective.
 \end{enumerate}
 If $\varphi$ is injective, then $\pi_0(\varphi)$ is injective and
$\coker(\varphi) \cong \coker(\pi_0(\varphi))$ in $\CCC$.
\end{lem}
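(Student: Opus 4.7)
The plan is to analyze $K=\ker(\varphi)$ through the unique copy of $\mathbb{S}$ sitting inside the universal extension $\mathbb{E}[X]$. Since $\mathbb{E}[X]$ is perfect by lemma~\ref{lem:equivalences2}, the submodule $K$ is again perfect, and by irreducibility of $\mathbb{S}$ in $\CCC$ the intersection $K\cap\mathbb{S}$ is either $0$ or all of $\mathbb{S}$. This dichotomy will yield the two alternatives of the statement.

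If $\mathbb{S}\subseteq K$, then $\varphi$ factors through $\mathbb{E}[X]/\mathbb{S}\cong X$. The quotient $X$ is a trivial $S$-module by construction, so the image of $\varphi$ is a finite-dimensional $S$-invariant submodule of $M$, which lies in $\Danismanfunctor(M)$ by definition; this yields alternative~(1). Otherwise $K\cap\mathbb{S}=0$, and the canonical projection $\mathbb{E}[X]\twoheadrightarrow X$ embeds $K$ into the finite-dimensional $T$-module $X$, so $K$ is finite-dimensional. But $\mathbb{E}[X]$ is perfect, so by lemma~\ref{FINITE} combined with lemma~\ref{PERF} the maximal finite-dimensional submodule $\Danismanfunctor(\mathbb{E}[X])$ vanishes, forcing $K=0$; this yields alternative~(2).

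For the closing statement, suppose $\varphi$ is injective. I would first identify $\varphi(\mathbb{S})$ with $M^0=\ker(M\to\pi_0(M))$: since $\pi_0(\mathbb{S})=0$ we have $\varphi(\mathbb{S})\subseteq M^0$, and since $M$ has degree one the submodule $M^0\cong\mathbb{S}$ is irreducible, which forces equality. The resulting commutative square of short exact sequences $0\to\mathbb{S}\to\mathbb{E}[X]\to X\to 0$ and $0\to\mathbb{S}\to M\to \pi_0(M)\to 0$ then has the left vertical arrow an isomorphism, so the snake lemma delivers at once the injectivity of $\pi_0(\varphi)$ and an isomorphism $\coker(\varphi)\cong\coker(\pi_0(\varphi))$.

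The genuinely new input is the dichotomy $K\cap\mathbb{S}\in\{0,\mathbb{S}\}$; once that is in place the rest is homological bookkeeping. The only point requiring a moment of care is the identification $\varphi(\mathbb{S})=M^0$, which uses both $\pi_0(\mathbb{S})=0$ and $\deg M=1$ to ensure that $M^0$ is an irreducible copy of $\mathbb{S}$ rather than a higher power.
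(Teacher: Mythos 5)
Your proof is correct and takes essentially the same route as the paper: both alternatives hinge on the fact that the perfect module $\mathbb{E}[X]$ has no nonzero finite-dimensional submodules (so a kernel of degree zero must vanish), and the final claim is obtained from the snake lemma applied to the same pair of short exact sequences $0\to\mathbb{S}\to\mathbb{E}[X]\to X\to0$ and $0\to\mathbb{S}\to M\to\pi_0(M)\to0$. Your case split on $K\cap\mathbb{S}\in\{0,\mathbb{S}\}$ is a cosmetic variant of the paper's degree count, and your explicit identification $\varphi(\mathbb{S})=M^0$ merely supplies a detail the paper leaves implicit in labelling the left vertical arrow an isomorphism.
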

\begin{proof}
If the image of $\varphi$ is not finite-dimensional, then it has degree one. Then the kernel is a finite-dimensional submodule of $\mathbb{E}[X]$ and therefore zero. This shows the proposed dichotomy.
For injective $\varphi$, the snake lemma applied to
$$ \xymatrix{ 0 \ar[r] & {\mathbb S}  \ar[d]_\cong \ar[r] &  \mathbb{E}[X]   \ar[d]^\varphi\ar[r] & X \ar[d]^{\pi_0(\varphi)}\ar[r] & 0 \cr  
0 \ar[r] & {\mathbb S} \ar[r] &  M  \ar[r] &  \pi_0(M) \ar[r] & 0 \cr}  $$
implies the final statement.
\end{proof}
\begin{lem}
Fix a $T$-character $\chi$ and a surjection $f:X\to Y$ of in $\CCC^\fin_T$ with $\dim Y^\chi\leq1$ for every $T$-character $\chi$.
Morphisms $\varphi:\mathbb{E}[\chi]\to \mathbb{E}[f]$ with finite-dimensional image are in one to one correspondence with morphisms $\chi\to \ker(f)$.
An embedding $\varphi:\mathbb{E}[\chi]\hookrightarrow\mathbb{E}[f]$ can only exist if $Y^\chi\neq0$ and the following exact sequence of smooth $T$-modules (with the obvious morphisms) splits:
\begin{equation*}
0\to \ker(f)\to f^{-1}(Y^\chi)\to Y^\chi\to0\ .
\end{equation*}
\end{lem}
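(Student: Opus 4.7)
The plan is to apply the screening dichotomy (lemma~\ref{SCREEN}): since $\mathbb{E}[f]$ has degree one by lemma~\ref{lem:universal extensions classification}, every morphism $\varphi:\mathbb{E}[\chi]\to\mathbb{E}[f]$ either has finite-dimensional image contained in $\Danismanfunctor(\mathbb{E}[f])\cong\ker(f)$, or is injective. Each case is handled separately.

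The first assertion is formal. If $\varphi$ has finite-dimensional image, then by lemma~\ref{SCREEN} the image lies in the submodule $\Danismanfunctor(\mathbb{E}[f])\cong\ker(f)$, which carries trivial $S$-action. By the adjunction between $\pi_0$ and the inclusion $\CCC_T^\fin\hookrightarrow\CCC$, $\varphi$ factors uniquely through $\mathbb{E}[\chi]\twoheadrightarrow\pi_0(\mathbb{E}[\chi])=\chi$, yielding a morphism $\chi\to\ker(f)$. Conversely, any morphism $\chi\to\ker(f)$ composes with $\pi_0$ and the inclusion $\ker(f)\hookrightarrow\mathbb{E}[f]$ to give a $TS$-morphism $\mathbb{E}[\chi]\to\mathbb{E}[f]$ with finite-dimensional image, and these two constructions are mutually inverse.

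For the second assertion, assume $\varphi:\mathbb{E}[\chi]\hookrightarrow\mathbb{E}[f]$ is an embedding. Consider the second projection $b:\mathbb{E}[f]\to\mathbb{E}[Y]$ of the fiber product, whose kernel equals $\Danismanfunctor(\mathbb{E}[f])$. Since $\mathbb{E}[\chi]$ is perfect (example~\ref{ex:mathbb E}), its image under $\varphi$ contains no non-zero finite-dimensional submodule, so $\varphi(\mathbb{E}[\chi])\cap\ker(b)=0$ and $b\circ\varphi:\mathbb{E}[\chi]\to\mathbb{E}[Y]$ remains injective. The second part of lemma~\ref{SCREEN}, applied to $b\circ\varphi$, produces an injection $\pi_0(b\circ\varphi):\chi\hookrightarrow Y$, which factors as $\chi\xrightarrow{\pi_0(\varphi)}X\xrightarrow{f}Y$. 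Writing $\chi':=\pi_0(\varphi)(\chi)$, this forces $f|_{\chi'}:\chi'\hookrightarrow Y$ to be injective; in particular $Y^\chi\neq 0$, and the bound $\dim Y^\chi\leq 1$ then implies $f(\chi')=Y^\chi$. The inverse isomorphism $Y^\chi\xrightarrow{\sim}\chi'\subseteq f^{-1}(Y^\chi)$ is the desired section, which splits the sequence.

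The delicate step is the detour through $\mathbb{E}[Y]$: knowing only that $\pi_0(\varphi)$ injects $\chi$ into $X$ does not by itself force the image to avoid $\ker(f)$, so one cannot directly conclude $Y^\chi\neq 0$. The use of perfectness of $\mathbb{E}[\chi]$, together with exactness of $\pi_0$, is what propagates the embedding property from $\mathbb{E}[f]$ to $\mathbb{E}[Y]$ and thence to $Y$; once this is achieved, the monodromy bound $\dim Y^\chi\leq 1$ does the rest.
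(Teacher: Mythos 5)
Your proof is correct and follows essentially the same route as the paper's: identify finite-dimensional-image morphisms with maps $\chi\to\ker(f)$ via the adjunction, and for an embedding compose with $\mathbb{E}[f]\to\mathbb{E}[f]/\Danismanfunctor(\mathbb{E}[f])\cong\mathbb{E}[Y]$, use perfectness of $\mathbb{E}[\chi]$ plus lemma~\ref{SCREEN} to keep injectivity, and then read off the $\chi$-eigenvector in $f^{-1}(Y^\chi)$ that furnishes the splitting. No gaps.
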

\begin{proof}
Morphisms $\varphi$ with finite-dimensional image factor over $\pi_0(\mathbb{E}[\chi])$.
If $\varphi$ is an embedding, then the composition $$\mathbb{E}[{\chi}]\stackrel{\varphi}{\hookrightarrow} \mathbb{E}[f]\to \mathbb{E}[f]/{\Danismanfunctor}(\mathbb{E}[f])\cong \mathbb{E}[Y]$$ is injective by lemma~\ref{lem:hom-lemma}.
The exact functor $\pi_0$ applied to $\mathbb{E}[\chi]\hookrightarrow \mathbb{E}[Y]$ implies $Y^\chi\neq0$. By assumption, $Y^\chi$ is one-dimensional and we can fix a generator $0\neq y\in Y^\chi$ in the image of $\mathbb{E}[\chi]$.
Note that $x\in f^{-1}(y)$ is unique up to elements in $\ker(f)\cong {\Danismanfunctor}( \mathbb{E}[f])$, but ${\Danismanfunctor}( \mathbb{E}[f])\cap im(\varphi)=\{0\}$. Hence there is a unique $\chi$-eigenvector $x\in f^{-1}(y)$ in the image of $\pi_0\circ\varphi$.
The $T$-morphism $Y^\chi\rightarrow f^{-1}(Y^\chi)\ ,\ \lambda y\mapsto \lambda x$ splits the sequence.
\end{proof}

\begin{lem} \label{Ext-Lemma} 
Let $\mathbb E[X]\in\CCC$ be perfect of degree one.
For inclusions
$i\!:\! P\! \hookrightarrow\! M$ and $\!j\! : \mathbb E[X]\! \hookrightarrow\! M$ 
in $ {\cal C}$ let  $p\!:\! M\to Q\!=\!M/P$ be the quotient.
Then $\varphi\!  =\! p\circ j$ is either injective
and $i$ induces an inclusion $P \!\hookrightarrow\! M/im(j)$, or there exists a morphism 
$\psi: X \!\to\! {{\Danismanfunctor}}(Q)$ and an inclusion
$\mathbb{E}[Y] \hookrightarrow P$ for $Y\!=\!\ker(\psi)$.
\end{lem}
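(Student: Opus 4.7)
The plan is to apply the dichotomy of lemma~\ref{lem:hom-lemma} to the composition $\varphi=p\circ j:\mathbb{E}[X]\to Q$. The image $\varphi(\mathbb{E}[X])$ is a quotient of $\mathbb{E}[X]$, so has degree zero or one. If it has degree one, the kernel of $\varphi$ is a finite-dimensional submodule of the perfect module $\mathbb{E}[X]$, hence zero by lemma~\ref{maxfin}, so $\varphi$ is injective. If it has degree zero, the image is finite-dimensional and thus lies in ${\Danismanfunctor}(Q)$ by the same lemma.

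In the injective case, $P\cap im(j)=j(\ker\varphi)=0$ inside $M$, so the composition $P\stackrel{i}{\hookrightarrow}M\twoheadrightarrow M/im(j)$ remains injective and yields the first alternative.

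In the remaining case, $\varphi$ must vanish on $\mathbb{S}\subseteq\mathbb{E}[X]$ because $\mathbb{S}$ is irreducible and infinite-dimensional while its image is finite-dimensional. Hence $\varphi$ factors uniquely as $\varphi=\psi\circ\pi_0$ with $\psi:X\to{\Danismanfunctor}(Q)$. I set $Y=\ker(\psi)\subseteq X$. Since $X=\pi_0(\mathbb{E}[X])$ is cyclic by lemma~\ref{lem:equivalences2} and the kernel of a $T$-morphism out of a Jordan block is again a subblock, $Y$ is cyclic, so $\mathbb{E}[Y]$ is well-defined.

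The key step is then to realize $\mathbb{E}[Y]$ inside $\mathbb{E}[X]$ so that its image under $j$ lands in $P$. For this I would take the preimage $\pi_0^{-1}(Y)\subseteq\mathbb{E}[X]$: it is perfect as a submodule of the perfect module $\mathbb{E}[X]$, sits in a short exact sequence $0\to\mathbb{S}\to\pi_0^{-1}(Y)\to Y\to 0$ by exactness of $\pi_0$, and is therefore isomorphic to $\mathbb{E}[Y]$ by the uniqueness part of lemma~\ref{lem:extension E_X}. For any $m\in\pi_0^{-1}(Y)$ one has $p(j(m))=\psi(\pi_0(m))=0$, so $j$ carries $\pi_0^{-1}(Y)$ into $P=\ker(p)$, giving the required embedding $\mathbb{E}[Y]\hookrightarrow P$. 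I expect no genuine obstacle; the only nonformal point is the identification $\pi_0^{-1}(Y)\cong\mathbb{E}[Y]$, which rests on the machinery of universal extensions already established.
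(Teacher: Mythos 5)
Your proof is correct and follows essentially the same route as the paper's: apply the degree dichotomy of lemma~\ref{lem:hom-lemma} to $\varphi=p\circ j$, factor $\varphi$ through $\pi_0$ in the non-injective case to obtain $\psi:X\to{\Danismanfunctor}(Q)$, and identify $\ker(\varphi)=\pi_0^{-1}(Y)$ with $\mathbb{E}[Y]$ via uniqueness of perfect degree-one modules with prescribed $\pi_0$. Your extra checks (that $Y$ is cyclic, and the explicit verification of $P\hookrightarrow M/im(j)$ in the injective case) fill in details the paper leaves implicit.
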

\begin{equation*}
\xymatrix@-1mm{
  &  & \mathbb{E}[X]\ar@{^{(}->}[d]_j\ar[dr]^\varphi & & \\
 0\ar[r] & P\ar[r]_i                            & M\ar[r]_p               & Q\ar[r] & 0
}
\end{equation*}
\begin{proof}
If $K=\ker(\varphi)\neq 0$, then $\deg(K)= 1$ and $im(\varphi)  \subseteq  {{\Danismanfunctor}}(Q)$. Thus $\varphi$ factorizes over $\psi: X\to{\Danismanfunctor}(Q)$.
We obtain $K\cong \mathbb{E}[Y]$ for $Y\!= \ker(\psi)$.
By construction $K$ injects into $i(P)= \ker(p)$.
\end{proof}

\begin{lem}\label{lem:indecomposability}
For $M\in \CCC$ of degree one the following assertions are equivalent:
\begin{enumerate}
 \item $M$ is indecomposable,
 \item Every Jordan block in $\pi_0(M)$ has non-trivial image in $\pi_0(M)/{\Danismanfunctor}(M)$,
 \item $\dim\pi_0(M)^\chi\leq1$ for every character $\chi$ and if $\pi_0(M)^\chi\neq0$ then there is an embedding $\mathbb{E}[\chi] \hookrightarrow M/{\Danismanfunctor}(M)$.
\end{enumerate}
\end{lem}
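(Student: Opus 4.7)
The plan is to reduce to the fiber-product description of lemma~\ref{lem:universal extensions classification}: write $M \cong \mathbb{E}[f]$ with $f:X\to Y$ the natural surjection from $X=\pi_0(M)$ to $Y=X/\Danismanfunctor(M)$, noting that $Y$ automatically satisfies the monodromy bound $\dim Y^\chi\leq 1$.

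For (1)$\,\Leftrightarrow\,$(2), the fiber-product structure translates decompositions of $M$ into $T$-direct summand decompositions of $X$ contained in $\ker f=\Danismanfunctor(M)$. Any non-trivial $M=M_1\oplus M_2$ forces one summand to have degree zero, hence by lemma~\ref{maxfin} to lie inside $\Danismanfunctor(M)$; exactness of $\pi_0$ identifies it with a non-trivial $T$-direct summand of $X$ inside $\ker f$. Conversely, a splitting $X=J\oplus X'$ with $J\subseteq \ker f$ a Jordan block summand yields $\mathbb{E}[f]\cong J\oplus \mathbb{E}[f|_{X'}]$ by projecting the first coordinate of the fiber product along $X=J\oplus X'$. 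Hence (1) fails precisely when some Jordan block summand of $X$ lies in $\ker f$, i.e.\ has trivial image in $Y$, which is the negation of (2).

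For (2)$\,\Leftrightarrow\,$(3), the embedding condition in (3) simplifies via the preceding embedding lemma applied to $f'=\id_Y$: then $\ker f'=0$ makes the splitting condition trivial, so an embedding $\mathbb{E}[\chi]\hookrightarrow \mathbb{E}[Y]=M/\Danismanfunctor(M)$ exists iff $Y^\chi\neq 0$. I would then analyse each generalized eigenspace $X^{(\chi)}$ separately, exploiting that it is a $T$-direct summand preserved by $f$ with $Y^{(\chi)}$ cyclic. For (3)$\,\Rightarrow\,$(2): if each $X^{(\chi)}$ is cyclic (from $\dim X^\chi\leq1$) and $Y^{(\chi)}\neq0$ whenever $X^{(\chi)}\neq0$, then any Jordan block summand of $X$ coincides with some full $X^{(\chi)}$ and cannot lie in $\ker f$, because that would force $Y^\chi=0$ despite $X^\chi\neq0$. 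For (2)$\,\Rightarrow\,$(3): if $\dim X^\chi\geq 2$, the induced surjection $X^{(\chi)}/\tau_\chi X^{(\chi)}\twoheadrightarrow Y^{(\chi)}/\tau_\chi Y^{(\chi)}$ has source of dimension $\geq 2$ and target of dimension $\leq 1$, so the kernel contains a minimal generator direction, and the structure theory for finitely generated modules over the local artinian ring $k[\tau_\chi]/\tau_\chi^N$ lets one extract a Jordan block direct summand of $X^{(\chi)}$ inside $\ker f$; if instead $X^\chi\neq 0$ while $Y^\chi=0$, the cyclic $X^{(\chi)}$ is itself such a summand.

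The main obstacle is the (2)$\,\Rightarrow\,$(3) direction in the regime $\dim X^\chi\geq 2$: producing an actual cyclic direct summand of $X^{(\chi)}$ sitting inside $\ker f$ requires a careful minimal-generator argument, since not every element of $\ker f\setminus\tau_\chi X^{(\chi)}$ spans a direct summand — the element must be positioned compatibly with the elementary divisors of $X^{(\chi)}$ so that a change of basis over $k[\tau_\chi]/\tau_\chi^N$ preserves the Jordan-block decomposition. Once this matching is accomplished, the rest is routine bookkeeping with generalized eigenspaces and the exact functor $\pi_0$.
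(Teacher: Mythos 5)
Your treatment of $(1)\Leftrightarrow(2)$ via the fiber-product description $M\cong\mathbb{E}[f]$ is correct and is essentially the paper's argument (the paper runs the cycle $1\Rightarrow2\Rightarrow3\Rightarrow1$ and splits off a block contained in $\Danismanfunctor(M)$ using the uniqueness statement of lemma~\ref{lem:universal extensions classification}); your $(3)\Rightarrow(2)$ argument via cyclicity of each $X^{(\chi)}$ is also sound and matches the role of $3\Rightarrow1$ in the paper.

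The genuine gap is exactly where you flag it: the direction $(2)\Rightarrow(3)$ when $\dim X^\chi\geq2$. The strategy of extracting a Jordan block direct summand of $X^{(\chi)}$ inside $\ker f$ is not merely delicate, it is impossible in general. Take $X^{(\chi)}=\chi^{(1)}\oplus\chi^{(3)}=\C e_1\oplus\C[\tau]e_2$ (with $\tau e_1=0$, $\tau^3e_2=0$ for the monodromy operator $\tau$) and let $K$ be the submodule generated by $v=e_1+\tau e_2$, so $K=\C v+\C\tau^2e_2\cong\chi^{(2)}$ and $Y=X/K\cong\chi^{(2)}$ is cyclic; by lemma~\ref{lem:universal extensions classification} the fiber product $M=\mathbb{E}[X\to Y]$ is a legitimate object of $\CCC$ of degree one with $\pi_0(M)\cong X$ and $\Danismanfunctor(M)\cong K$. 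Here $\dim X^\chi=2$, yet no Jordan block direct summand of $X$ lies in $K$: a length-one summand $\C w$ would need $w\notin\tau X$, but $K\cap\ker\tau=\C\tau^2e_2\subseteq\tau X$; a length-two summand $Rw\subseteq K$ would force $X/Rw\cong\chi^{(2)}$ and hence $X\cong\chi^{(2)}\oplus\chi^{(2)}$, contradicting Krull--Schmidt; and length three exceeds $\dim K$. So there is no summand to extract, and the "careful minimal-generator argument" you defer to cannot be supplied. Note that this is also precisely the point the paper's own proof passes over: it asserts that two $\chi$-blocks both having non-trivial image in $Y$ contradicts $\dim Y^\chi\leq1$, which tacitly assumes the two images are independent, whereas in the configuration above they are nested ($f(\C e_1)=\C\tau\bar e_2\subseteq f(\C[\tau]e_2)=Y$). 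Whether the statement survives depends on reading "Jordan block in $\pi_0(M)$" as an arbitrary Jordan-block-shaped submodule (under which $K$ itself violates~(2) above) rather than a direct summand; under the direct-summand reading, which is what both you and the paper's $1\Rightarrow2$ step actually use, the implication $(2)\Rightarrow(3)$ is not established by your proposal.
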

\begin{proof}
1.\ $\Longrightarrow$ 2.
If there is a Jordan block $X$ of $\pi_0(M)$ contained in the submodule ${\Danismanfunctor}(M)$,
then we obtain decompositions of $T$-modules $\pi_0(M) \cong X\oplus \pi_0(M)/X$ and ${\Danismanfunctor}(M) \cong X\oplus {\Danismanfunctor}(M)/X$ as direct sums.
By the uniqueness statement implicit in lemma~\ref{lem:universal extensions classification}, $M$ is isomorphic to $X\oplus M/X$.

2.\ $\Longrightarrow$ 3.
Fix a character $\chi$ that occurs in $\pi_0(M)$.
If $\dim\pi_0(M)^\chi\geq2$, then there are two $\chi$-Jordan blocks and both have non-trivial image in $\pi_0(M)/{\Danismanfunctor}(M)$. But $\pi_0(M)/{\Danismanfunctor}(M)$ satisfies the monodromy property by lemma~\ref{lem:equivalences2}, this gives a contradiction.
Since $\chi$ occurs in $\pi_0(M)/{\Danismanfunctor}(M)\cong \pi_0(M/{\Danismanfunctor}(M))$, the embedding exists by lemma~\ref{lem:equivalences2}.

3.\ $\Longrightarrow$ 1.
Assume that $M=M_1\oplus M_2$ splits as a direct sum where $M_2$ is finite-dimensional.
Fix a character $\chi$ that occurs in $M_2=\pi_0(M_2)$.
If there is an embedding $\mathbb{E}[\chi]\hookrightarrow M/{\Danismanfunctor}(M)\cong M_1/{\Danismanfunctor}(M_1)$, then $\pi_0(M_1)^\chi\neq0$. Hence $\dim \pi_0(M)^\chi\geq2$ implies a contradiction.
\end{proof}
\begin{cor}
Every perfect module $M\in\CCC$ of degree one is indecomposable.
\end{cor}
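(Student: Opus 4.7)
My plan is to derive the corollary directly from the characterizations in lemma~\ref{lem:indecomposability}, exploiting the fact that perfectness forces $\Danismanfunctor(M)$ to vanish.

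First I would invoke lemma~\ref{PERF}, which asserts that $M$ perfect is equivalent to $\Danismanfunctor(M)=0$. Substituting this into condition (2) of lemma~\ref{lem:indecomposability}, the requirement reduces to: every Jordan block in $\pi_0(M)$ has non-trivial image in $\pi_0(M)/0 = \pi_0(M)$, which is tautological for any non-zero Jordan block. Hence condition (2) is trivially satisfied, and indecomposability follows.

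As a cross-check one can instead verify condition (3) using the structural information already available: by lemma~\ref{lem:equivalences2}, a perfect module of degree one has cyclic $\pi_0(M)$, so $\dim\pi_0(M)^\chi\leq 1$ for every $T$-character $\chi$, and whenever $\pi_0(M)^\chi\neq 0$ the same lemma furnishes an embedding $\mathbb{E}[\chi]\hookrightarrow M$, which with $\Danismanfunctor(M)=0$ is exactly an embedding into $M/\Danismanfunctor(M)$.

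There is no real obstacle here; the content of the statement is entirely encapsulated by the equivalences already established, and the corollary merely records the observation that in the perfect case the modification by $\Danismanfunctor(M)$ disappears. The proof is a one-line application of lemma~\ref{PERF} to lemma~\ref{lem:indecomposability}.
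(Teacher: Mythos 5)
Your proposal is correct and is essentially identical to the paper's proof: both apply lemma~\ref{PERF} to get $\Danismanfunctor(M)=0$ and then observe that condition (2) of lemma~\ref{lem:indecomposability} becomes vacuous. The cross-check via condition (3) is fine but not needed.
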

\begin{proof}
If $M$ is perfect, then ${\Danismanfunctor}(M)=0$ by lemma~\ref{PERF}.
The second assertion of lemma~\ref{lem:indecomposability} is obviously satisfied.
\end{proof}

\subsection{$L$-functions and zeta integrals}\label{s:L-series}

Recall that $M\in {\CCC}$ is perfect of degree one if and only if $M$ has finite length
and $\dim(\chi\otimes M)_{T}=1$ holds for all smooth characters $\chi$ of $k^\times$.
Then $\dim(M_\psi)=1$ and every non-zero $\ell\in \Hom_\C(M_\psi,\C)$
defines an embedding $p: M\hookrightarrow C_b^\infty(k^\times)$ by $p(m)(\lambda)=\ell(x_\lambda m)$, see lemma~\ref{lem:equivalences2}. Conversely, if $M\in \CCC$ has finite length and is non-zero,
every such embedding $p$ defines a functional $\ell(m)=p(m)(1)$.

\begin{defn}\label{def:L-factor_TS_module}
Suppose a $TS$-submodule $M\subseteq C_b^\infty(k^\times)$ has finite length but is not zero.
For a smooth $\Gl(1)$-character $\chi$ and $f\in M$ define the \emph{zeta integral}
$$ Z(f,\chi,s) = \int_{k^\times} f(x) \chi(x) \vert x \vert^s \ \Diff^\times x \ ,$$
with Haar measure $\Diff^{\times}x$ normalized by $\mathrm{vol}({\mathfrak o}^{\times})=1$.
The decomposition  of the semisimplified $T$-module $\pi_0(M)^{ss}
\cong \bigoplus_\mu a_\mu(M)\cdot \mu$ defines the product of Tate $L$-factors
$$L(M,s)= \prod_\mu L(\mu,s)^{a_\mu(M)}\ .$$
\end{defn}

\begin{lem}\label{entire}
For smooth $\chi$ and $f\in M$ the quotients $\frac{Z(f,\chi,s)}{L(\chi\otimes M,s)}$
are entire.
\end{lem}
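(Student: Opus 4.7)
The plan is to reduce to $\chi=1$ via the twist $f\mapsto \chi f$, then to split $f\in M$ as a compactly supported piece in $\mathbb{S}$ plus a finite sum of ``generalized-eigenvector tails'' near $0\in k$, each of which contributes a rational function of $q^{-s}$ whose pole is absorbed by $L(M,s)$.

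First, $g\mapsto \chi g$ defines a $TS$-isomorphism $\chi\otimes M\xrightarrow{\sim}\chi M\subseteq C_b^\infty(k^\times)$ with $Z(f,\chi,s)=Z(\chi f,1,s)$, so I may assume $\chi=1$. By lemma~\ref{lem:S_submodule_Cb}, $M\supseteq \mathbb{S}$ and $\pi_0(M)=M/\mathbb{S}$ is a finite-dimensional $T$-module; decompose it into generalized $T$-eigenspaces $\pi_0(M)^{(\mu)}$ of dimensions $a_\mu(M)$. As computed in the proof of lemma~\ref{lem:extension E_X}, the generalized $\mu$-eigenspace of $C_b^\infty(k^\times)/\mathbb{S}$ is identified with polynomial functions in $n=\mathrm{ord}(x)$ via class representatives of the shape $x\mapsto \mu(x)\,P(\mathrm{ord}(x))$ for large $\mathrm{ord}(x)$, on which $\tau_\mu=x_\pi-\mu(\pi)\id$ acts as a first-difference operator; the whole kernel of $\tau_\mu^{a_\mu(M)}$ on $C_b^\infty(k^\times)/\mathbb{S}$ is at most $a_\mu(M)$-dimensional and hence coincides with $\pi_0(M)^{(\mu)}$. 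Consequently, every $f\in M$ satisfies, for $u\in\mathfrak{o}^\times$ and all $n$ beyond some $N$,
\begin{equation*}
 f(\pi^n u)\;=\;\sum_\mu \mu(\pi^n u)\,P_\mu(n),
\end{equation*}
where the $P_\mu$ are polynomials with $\deg P_\mu<a_\mu(M)$ and $\mu$ runs over the finitely many characters appearing in $\pi_0(M)$.

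Next split $Z(f,1,s)=Z_1(s)+Z_2(s)$ with $Z_1$ integrating over $\{|x|\geq q^{-N}\}$ and $Z_2$ over $\{|x|<q^{-N}\}$. The integrand of $Z_1$ is smooth with compact support in $k^\times$, hence $Z_1$ is entire. Using $k^\times=\pi^{\mathbb{Z}}\times\mathfrak{o}^\times$ in $Z_2$,
\begin{equation*}
 Z_2(s)\;=\;\sum_\mu\left(\int_{\mathfrak{o}^\times}\mu(u)\,\Diff^\times u\right)\sum_{n>N}P_\mu(n)\,\mu(\pi)^n\,q^{-ns}.
\end{equation*}
When $\mu$ is ramified the integral vanishes. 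When $\mu$ is unramified, the inner sum is a rational function of $q^{-s}$ whose only singularity lies at $\mu(\pi)q^{-s}=1$, of order at most $\deg P_\mu+1\leq a_\mu(M)$, which is precisely the order of $L(\mu,s)^{a_\mu(M)}$. Multiplying $Z_2$ by $L(M,s)^{-1}=\prod_\mu L(\mu,s)^{-a_\mu(M)}$ therefore removes all poles, so $Z(f,1,s)/L(M,s)$ is entire.

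The main delicate point is the polynomial-degree bound $\deg P_\mu<a_\mu(M)$, which is exactly what the explicit description at the end of the proof of lemma~\ref{lem:extension E_X} furnishes by realizing the generalized $\mu$-eigenspace of $C_b^\infty(k^\times)/\mathbb{S}$ as the polynomial sequences in $n$ with $\tau_\mu$ acting as the difference operator. Everything else is routine geometric-series arithmetic.
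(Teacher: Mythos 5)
Your proof is correct and takes essentially the same route as the paper's: reduce to $\chi=1$, use the polynomial description of the generalized eigenspaces of $C_b^\infty(k^\times)/\mathbb{S}$ (from the proof of lemma~\ref{lem:extension E_X}) to write $f$ near $0$ as $\sum_\mu\mu(x)P_\mu(\mathrm{ord}(x))$ with $\deg P_\mu<a_\mu(M)$, and match the resulting pole orders of the tail of the zeta integral against $L(M,s)$. The only cosmetic difference is that you sum the tail series directly and dispose of ramified $\mu$ via the vanishing of $\int_{\mathfrak{o}^\times}\mu(u)\,\Diff^\times u$, whereas the paper first averages $f$ over $T(\mathfrak{o})$ and packages the same computation in the language of fractional ideals of the Laurent ring $\C[t,t^{-1}]$.
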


\begin{proof}
Replacing $M$ by $\chi\otimes M$, we may assume $\chi=1$.
Indeed, $f\in M$ if and only if $\chi f\in \chi\otimes M$
for $\chi f(x)=\chi(x)f(x)$.
Then integration over $\mathfrak{o}^{\times}$ allows to assume $f\in M^{T(\mathfrak{o})}$, so $f$ is fixed by $T(\mathfrak{o}) \cong \mathfrak{o}^{\times}$ and hence
$f(x)=\sum_n a_n 1_{\pi^n \mathfrak{o}^{\times}}(x)$.
For $f(x)=1_{\pi^n\mathfrak{o}^{\times}}(x)$ observe 
$$Z(f,1,s)= q^{-ns} \ ,$$ where $q$ is the cardinality of the residue field $\mathfrak{o}/\pi\mathfrak{o}$.
For $f$ in $M^{T(\mathfrak{o})}$ there exist polynomials
$P_\mu(t)$ of degree $< a_\mu(M)$ for all $\mu$ that occur in $\pi_0(M)$,
such that $a_n=\sum_\mu \mu(\pi)^n P_\mu(n)$ for $n\geq m_0(f)$ and $a_n=0$ for $n\leq n_0(f)$.
Let the Laurent ring $\Lambda=\C[t,t^{-1}]$ in the indeterminate $t$
act on $C_b^\infty(k^{\times})$ by $t=x_{\pi}^{-1} \in T$.
Using power series expansion $P(t)= \sum_{n\geq m_0} a_n t^n$ and  $t^n \cdot 1_{\mathfrak{o}^{\times}} = 1_{\pi^n \mathfrak{o}^{\times}}$ one defines an embedding $P(t)\mapsto P(t)\cdot 1_{\mathfrak{o}^{\times}}$  of 
$\tilde\Lambda$ in $C_b^\infty(k^{\times})$
\begin{equation*}
\tilde\Lambda:= \C[t,t^{-1},\tfrac{1}{1-\mu_1(\pi)t}, \cdots, \tfrac{1}{1-\mu_r(\pi)t} ] \hookrightarrow \C[[t]][t^{-1}] \hookrightarrow C_b^\infty(k^{\times})\ .
\end{equation*}
The subspace $W=M^{T(\mathfrak{o})}$ of $M$
is finitely generated as a $\Lambda$-submodule of the localization $\tilde\Lambda$ of the Dedekind ring $\Lambda$, hence defines a fractional ideal.
On $\tilde\Lambda$ the zeta integral $Z(f,1,s)$ is obtained from the evaluation $t\mapsto q^{-s}$.
Since  $\frac{d}{ds} P(q^{-s}) = -\log(q) t\frac{d}{dt} P(t)$, the submodule $\prod_\mu (1-\mu(\pi)t)^{a_\mu(M)}W$ is equal to $\Lambda$.
Since $1-\mu(\pi)t$ specializes to $L(\mu,s)^{-1}=  1-\mu(\pi)q^{-s}$, the product $\prod_\mu L(\mu,s)^{-\deg(P_\mu)}Z(f_P,\chi,s)$ is entire.
\end{proof}

\begin{lem} For non-zero $M\subseteq C_b^\infty(k^{\times})$, the functional $I_\chi: M \to \C$\,,  $$I_\chi(f)\ =\ \lim_{s\to 0}
\frac{Z(f,\chi,s)}{L(\chi\otimes M,s)} \quad , \quad f\in M\ , $$ generates the one dimensional space $\Hom_T(\chi\otimes M,\C)$.
\end{lem}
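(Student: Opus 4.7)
The plan has three checks: well-definedness of $I_\chi$, membership in $\Hom_T(\chi\otimes M,\C)$, and non-vanishing. Well-definedness is exactly lemma~\ref{entire}. Since $M\subseteq C_b^\infty(k^\times)$ is a non-zero finite-length submodule, lemmas~\ref{lem:C_b_perfect}, \ref{lem:S_submodule_Cb} and \ref{lem:equivalences2} together make $M$ perfect of degree one, and the opening sentence of section~\ref{s:L-series} then gives $\dim\Hom_T(\chi\otimes M,\C)=1$. Hence establishing the two remaining checks implies the conclusion.

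For membership I would substitute $y=\lambda x$ in the defining integral to get $Z(x_\lambda f,\chi,s)=\chi(\lambda)^{-1}|\lambda|^{-s}Z(f,\chi,s)$. Dividing by $L(\chi\otimes M,s)$ and passing to the limit $s\to 0$ yields $I_\chi(x_\lambda f)=\chi(\lambda)^{-1}I_\chi(f)$, which is exactly the defining condition for an element of $\Hom_T(\chi\otimes M,\C)$.

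For non-vanishing I would first reduce to $\chi=1$ via the twist $f\mapsto \chi f$, which identifies $(M,\chi)$ with $(\chi\otimes M,1)$ (as in example~\ref{example 1}) and satisfies $Z(\chi f,1,s)=Z(f,\chi,s)$. Averaging over $T(\mathfrak{o})$ leaves $Z(\cdot,1,s)$ invariant, so it suffices to produce a witness inside $M^{T(\mathfrak{o})}$, which by the proof of lemma~\ref{entire} sits in $\tilde\Lambda$ as a fractional $\Lambda$-ideal. Perfectness of degree one combined with lemma~\ref{lem:equivalences2} forces $\pi_0(M)$ to be cyclic, so its generalized $1$-eigenspace is a single Jordan block of length exactly $a_1(M)$. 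I would then choose $f\in M^{T(\mathfrak{o})}$ whose polynomial tail $f(\pi^n)$ in the trivial component has degree exactly $a_1(M)-1$. Such an $f$ produces $Z(f,1,s)$ with a pole of order exactly $a_1(M)$ at $s=0$, matching the pole order of $L(M,s)$ there: for unramified $\mu\neq 1$ one has $\mu(\pi)\neq 1$, so the factors $(1-\mu(\pi))^{a_\mu(M)}$ are non-zero at $s=0$, and ramified $\mu$ contribute trivially both to $L(M,s)$ and to the tail. Hence $Z(f,1,s)/L(M,s)$ tends to a finite non-zero limit and $I_1(f)\neq 0$.

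The main obstacle is precisely that last construction: I need to know that $M^{T(\mathfrak{o})}$, as a fractional $\Lambda$-ideal in $\tilde\Lambda$, realizes the full pole divisor $\prod_{\mu\text{ unramified}}(1-\mu(\pi)t)^{-a_\mu(M)}$ without cancellation in its denominator. Cyclicity of $\pi_0(M)$ plus the fractional-ideal description from the proof of lemma~\ref{entire} should supply this, but it is the non-trivial step where all the actual work lies.
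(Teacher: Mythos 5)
Your proposal is correct and follows essentially the same route as the paper's proof: the equivariance $Z(x_\lambda f,\chi,s)=\chi^{-1}(\lambda)|\lambda|^{-s}Z(f,\chi,s)$, the twist to $\chi=1$, and a witness in $M^{T(\mathfrak{o})}$ whose polynomial tail has degree exactly $a_1(M)-1$ (the paper computes $I_\chi(f_P)=(-1)^d d!$ for $P(t)=t^d$, and uses $1_{\mathfrak{o}^\times}\in\mathbb{S}\subseteq M$ when $a_1(M)=0$). The ``main obstacle'' you flag is already settled inside the proof of lemma~\ref{entire}, where it is shown that $\prod_\mu(1-\mu(\pi)t)^{a_\mu(M)}\cdot M^{T(\mathfrak{o})}$ equals the Laurent ring $\C[t,t^{-1}]$, so the fractional ideal realizes the full pole divisor without cancellation and the required witness exists.
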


\begin{proof} Observe $Z(x_\lambda f, \chi,s)= \chi^{-1}(\lambda)\vert \lambda \vert^{-s}Z(f,\chi,s)$
and that the limit $s\to 0$ is defined by lemma \ref{entire}. By a suitable twist we can assume $\chi=1$.
If $d=a_\chi(M)-1 \geq 0$, nonvanishing follows from $I_\chi(f_P)=(-1)^{d} d!$ for $P_\chi(t)=t^{d}$.
For $a_\chi(M)=0$, one can use $I_\chi(1_{\mathfrak{o}^{\times}})\neq 0$ and $1_{\mathfrak{o}^{\times}} \in C_c^\infty(k^{\times})\subseteq M$
(lemma \ref{lem:S_submodule_Cb}).
\end{proof}

\begin{rmk}\label{rmk:Remark 1 L-series} The proof of the last lemma shows that $L(\chi\otimes M,s)$ can be characterized as the regularizing $L$-factor for all zeta integrals $Z(f,\chi,s)$, where $f$ runs over the functions in $M \subseteq C_b^{\infty}(k^{\times})$.
\end{rmk}

\begin{rmk}\label{rmk:Remark 2 L-series} Fixing $0\neq \ell\in M_\psi$ for a perfect module $M$, this defines a model $p: M \hookrightarrow C_b^\infty(k^{\times}) $, and this model defines $p_\chi(m)= I_\chi(p(m))$.
Conversely, if $\ell(m) = p(m)(1)\neq 0$, then the normalized value $p_\chi(m)/\ell(m)$ does not depend on the choice of $\ell$.
\end{rmk}

\begin{lem} For every nontrivial $TS$-submodule $M$ of $C_b^\infty(k^{\times})$ with finite length,
the vector space $\Hom_T(\chi\otimes M, 1^{(n)})$ has dimension $n$.
The exact sequence $0\to 1^{(n-1)} \to 1^{(n)} \to \C \to 0$
of $T$-modules induces an injection $\Hom_T(\chi\otimes M, 1^{(n-1)}) \hookrightarrow \Hom_T(\chi\otimes M, 1^{(n)})$.
Let $(\chi\otimes M)^{(n-1)} \subseteq \chi\otimes M$
be the $T$-submodule annihilated by the functionals in $\Hom_T(\chi\otimes M, 1^{(n-1)})$. 
Then $\Hom_T((\chi\otimes M)^{(n-1)}, \C)$ is spanned as a complex vector space by the functional $I_\chi^{(n)}: \chi\otimes M \to \C$ that is defined as
$$ I_\chi^{(n)}(f)= \frac{d^{n-1}}{ds^{n-1}}\Bigl(\frac{Z(f,\chi,s)}{L(M\otimes\chi,s)}\Bigr)\vert_{s=0}\ .$$
\end{lem}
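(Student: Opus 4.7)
After twisting by $\chi$, which is an exact automorphism of the relevant subcategories, it suffices to treat the case $\chi=1$; I write $I^{(n)}$ for $I_1^{(n)}$ and $M^{(n-1)}$ for $(\chi\otimes M)^{(n-1)}$ in this reduced setting. The plan is then to exhibit $n$ explicit $T$-homomorphisms $M\to 1^{(n)}$ built from the Taylor expansion of the regularized zeta integral, to bound $\dim\Hom_T(M,1^{(n)})$ from above by induction, and to isolate $I^{(n)}$ as the generator of $\Hom_T(M^{(n-1)},\C)$ by a diagram chase.

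I would model $1^{(n)}$ concretely as $\C[s]/(s^n)$ with the $T$-action $x_\lambda\cdot p(s)=e^{-s\log|\lambda|}p(s)\bmod s^n$, so that $x_\pi-1$ is cyclic nilpotent of order $n$. The regularized integral $\tilde I(f,s):=Z(f,1,s)/L(M,s)$ is entire in $s$ by lemma~\ref{entire} and satisfies $\tilde I(x_\lambda f,s)=|\lambda|^{-s}\tilde I(f,s)$, so
$$\varphi_j\colon\ f\mapsto s^j\tilde I(f,s)\bmod s^n,\qquad j=0,\ldots,n-1,$$
are $T$-equivariant morphisms $M\to 1^{(n)}$. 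A triangular leading-coefficient argument using $a_0=I\neq0$ from the previous lemma shows they are linearly independent, hence $\dim\Hom_T(M,1^{(n)})\geq n$. Induction on $n$ via the long exact sequence attached to $0\to 1^{(n-1)}\to 1^{(n)}\to 1\to 0$ and the base case $\dim\Hom_T(M,1)=1$ yields the matching upper bound. The injectivity statement in the lemma is just left exactness of $\Hom_T(M,-)$.

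Since $\varphi_0^{(n-1)},\ldots,\varphi_{n-2}^{(n-1)}$ span $\Hom_T(M,1^{(n-1)})$, unwinding definitions gives $M^{(n-1)}=\{f\colon\tilde I(f,s)=O(s^{n-1})\}$, and $\varphi_0^{(n-1)}\colon M\to 1^{(n-1)}$ is surjective because its composition with the top-of-block quotient recovers $I\neq0$. Hence $M/M^{(n-1)}\cong 1^{(n-1)}$. On $M^{(n-1)}$ the map $\varphi_0$ lands in the unique one-dimensional submodule $s^{n-1}\C\subset 1^{(n)}$ and coincides, up to the constant $(n-1)!$, with $I^{(n)}$. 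If $I^{(n)}$ vanished on $M^{(n-1)}$, then $\varphi_j|_{M^{(n-1)}}=0$ for all $j$, which would force every $T$-morphism $M\to 1^{(n)}$ to factor through $M/M^{(n-1)}\cong 1^{(n-1)}$ and hence $\dim\Hom_T(M,1^{(n)})\leq\dim\Hom_T(1^{(n-1)},1^{(n)})=n-1$, contradicting the dimension count. Thus $I^{(n)}|_{M^{(n-1)}}\neq 0$.

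For the spanning assertion, apply $\Hom_T(-,\C)$ to $0\to M^{(n-1)}\to M\to 1^{(n-1)}\to 0$: both $\Hom_T(1^{(n-1)},\C)$ and $\Hom_T(M,\C)$ are one-dimensional and the induced map is an isomorphism (it pulls the top-of-block functional back to $I$), so $\Hom_T(M^{(n-1)},\C)$ embeds into $\Ext^1_T(1^{(n-1)},\C)$. The main obstacle is this last Ext computation in the category of smooth $T$-modules: one must classify length-$n$ extensions $0\to\C\to W\to 1^{(n-1)}\to 0$ and observe that only $W\cong 1^{(n)}$ and the split $W\cong 1\oplus 1^{(n-1)}$ arise, giving $\Ext^1_T(1^{(n-1)},\C)\cong\C$. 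Combined with the non-vanishing of $I^{(n)}|_{M^{(n-1)}}$, this forces $\Hom_T(M^{(n-1)},\C)=\C\cdot I^{(n)}$.
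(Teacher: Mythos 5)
Your proof is correct, but it takes a genuinely different route from the paper's. The paper works entirely inside the Laurent ring $\Lambda=\C[t,t^{-1}]$: it identifies $W=M^{T(\mathfrak{o})}$ as a fractional $\Lambda$-ideal with $\prod_\mu(1-\mu(\pi)t)^{a_\mu(M)}W=\Lambda$, deduces $\Hom_T(M,1^{(n)})\cong\Hom_\Lambda(W/(t-1)^nW,1^{(n)})\cong\Hom_\Lambda(\Lambda/(t-1)^n,1^{(n)})$, which is $n$-dimensional, and then reads off the final assertion from the linear independence of the derivations $D^i=(t\frac{d}{dt})^i$ at $t=1$ on $\Lambda/(t-1)^n$ --- so the identification of $(\chi\otimes M)^{(n-1)}$ and of the generating functional comes for free from one explicit module isomorphism. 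You instead bypass the fractional-ideal structure: you realize $1^{(n)}$ as $\C[s]/(s^n)$, produce the $n$ morphisms $\varphi_j$ directly from the Taylor expansion of $Z(f,\chi,s)/L(\chi\otimes M,s)$ (the same functionals the paper ultimately uses), get the upper bound by induction along $0\to1^{(n-1)}\to1^{(n)}\to\C\to0$ with base case the preceding lemma, and pin down $\Hom_T(M^{(n-1)},\C)$ by combining the nonvanishing of $I_\chi^{(n)}$ on $M^{(n-1)}$ with the embedding into $\Ext^1_T(1^{(n-1)},\C)$. This is more elementary and self-contained (no appeal to $\Lambda$ being Dedekind), at the cost of a longer chain of reductions and one homological input. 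One small imprecision: your justification of $\Ext^1_T(1^{(n-1)},\C)\cong\C$ by classifying the possible middle terms $W$ shows only that there are two isomorphism classes of $W$, which does not by itself bound the dimension of the Ext group (all nonzero classes could a priori share the middle term $1^{(n)}$); replace this with the direct computation from the resolution $0\to\C[\tau]\xrightarrow{\tau^{n-1}}\C[\tau]\to\C[\tau]/(\tau^{n-1})\to0$, which gives $\Ext^1=\C/\tau^{n-1}\C=\C$ immediately. With that substitution the argument is complete.
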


\begin{proof} Without restriction of generality $\chi=1$.
The subspace $W=M^{T(\mathfrak{o})} \subseteq M$ in $\tilde\Lambda$ is a fractional $\Lambda$-ideal such that $\prod_\mu (1-t)^{a_\mu(M)} =\Lambda$.
Notice  $(t-1) = -t \tau $ holds for the monodromy operator $\tau$ and  $-t\in \Lambda^\times$ is a unit.
Then $\tau^n=0$  on $1^{(n)}$ implies
$$   \Hom_{T}(M, 1^{(n)}) = \Hom_\Lambda(W, 1^{(n)}) =
\Hom_\Lambda(W/(t-1)^nW, 1^{(n)})\ .$$
Since $\varphi: W/(t-1)^nW \cong \Lambda/(t-1)^n$,
using the identification by the isomorphism $\varphi(P(t)) = \prod_\mu (1-\mu(\pi) t)^{a_\mu(M)}P(t)$ we obtain
$$\Hom_\Lambda(W/(t-1)^nW, 1^{(n)}) \cong \Hom_\Lambda(\Lambda/(t-1)^n, 1^{(n)})  = \Hom_{\Lambda}(1^{(n)},1^{(n)}) \ ,$$ which is $\Lambda/(t-1)^n \cong 1^{(n)}$ as vector space, hence
$ \dim(\Hom_{T}(M, \chi^{(n)})) = n$.
On  $\prod_\mu (1-\mu(\pi) t)^{a_\mu(M)} W =\Lambda$ the zeta integral $Z(f,1,s)$
is the evaluation $t\mapsto q^{-s}$, and  $\frac{d}{ds} P(q^{-s}) = -\log(q) t\frac{d}{dt} P(t)$ holds. %
Since the higher derivations $D^i$ with respect to $D=t\frac{d}{dt}$ at $t=1$ up to order $i=n-1$  are linear independent on $\Lambda /(t-1)^n$, the lemma now easily follows.
\end{proof}

\subsection{Bessel functionals} \label{Bessel models}

For $I\in {\cal C}_G^{\fin}({\omega})$, a \emph{Bessel functional} attached to
the split Bessel datum $(\Lambda,\psi)$ with values in a $\C$-vector space $X$
is a $\C$-linear map $$\ell: I \to X$$ such that $\ell(I(\tilde r s) w)= \Lambda(\tilde r)\psi(s) \ell(w)$
 for all $w\in I$, all $\widetilde{r}\in \widetilde{R}$ and $s\in S$; i.e.
$$  \ell \in \Hom_{S \widetilde R}( I, (\psi\boxtimes \Lambda) \otimes_\C X) \ .$$
\textit{Universal Bessel functionals}. %
$  \Hom_{S\widetilde R}( I, (\psi\otimes\Lambda) \otimes X)$ can be identified with $\Hom_{S}(\beta_\rho(I), \psi\otimes X) = \Hom_{\C}((\widetilde I)_\psi, X) $
for $\rho$ defined by $\Lambda(\tilde t)= \rho(\frac{t_1}{t_2}) \omega(t_2)$
and the $TS$-module $\widetilde I =\beta_\rho(I)$. Hence
$X_{\univ}(I)=(\widetilde I)_\psi$ defines the universal Bessel functional
in the sense that every $\C$-valued Bessel functional on $I$ is obtained
from the universal $X_{\univ}$-valued Bessel functional $\ell_{\univ}:I \to X_{\univ}$ by composition with
a $\C$-linear form $X_{\univ} \to \C$.

\bigskip\noindent
\textit{Bessel models}. A nonvanishing Bessel functional $\ell$ on $I$
defines the Bessel model  $ {\cal K}_\ell(I)$ of $I$, i.e.
a surjective $\C$-linear map
$$ p_\ell: I\twoheadrightarrow {\cal K}_\ell(I) \ \subseteq \ C_b^\infty(k^{\times},X) $$
defined by $w \mapsto p_\ell(w)\! =\! \ell(I(x_\lambda) w)$,
the latter viewed as $X$-valued function $\varphi_w(\lambda)$ of the variable $\lambda\in k^{\times}$.
Obviously $p_\ell$ factorizes  over the quotient map $I \to \widetilde I$ and defines  a $TS$-linear
map %
$$\tilde p_\ell:   \widetilde I \longrightarrow {\cal K}_\ell(I) \ .$$
Notice $\tilde p_{\univ}(w)(1)=0$ if and only if $w\in \ker(\widetilde I \to \widetilde I_\psi)$. Further,
$\tilde p_{\univ}(w)(\lambda)=0$
if and only if $ w \in \ker(\widetilde I \to \widetilde I_{\psi_{\lambda}})$ for $\psi_\lambda(s_b)=\psi(s_{\lambda b})$. Since $\psi_\lambda, \lambda\in k^{\times}$ runs over all nontrivial characters of $S$, lemma~\ref{lem:almost_S-invariant} implies that the kernel of the universal map
$\tilde p_{\univ}: \widetilde I \to {\cal K}_{\univ}(I)$ for the universal Bessel functional
$\ell_{\univ}: I\to X_{\univ}={\widetilde I}_\psi$
is ${{\Danismanfunctor}}(\widetilde I)$. (See also \cite[4.1,\,4.2,\,4.7]{Danisman}).
Hence we obtain the next two lemmas

\begin{lem} \label{exactomega}
There is an exact sequence of functors ${\cal C}^{\fin}_G(\omega)\to {\cal C}$
$$ \xymatrix@+3mm{ 0 \ar[r] & {{\Danismanfunctor}}\circ\beta_\rho \ar[r] & \beta_\rho  \ar[r]^-{\tilde p_{\univ}} &  {\cal K}_{\univ} \ar[r] & 0} \ .$$
\end{lem}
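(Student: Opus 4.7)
The plan is to unravel the definition of $\tilde p_{\univ}$ at the pointwise level and reduce to lemma~\ref{lem:almost_S-invariant}. Surjectivity of $\tilde p_{\univ}$ is essentially tautological: the Bessel model $\mathcal{K}_{\univ}(I) \subseteq C_b^\infty(k^\times, X_{\univ})$ is \emph{defined} as the image of the map $w \mapsto \varphi_w$, so the right-exactness part of the sequence is immediate from construction. The real content is the identification of $\ker(\tilde p_{\univ})$ with $\kappa(\beta_\rho(I)) = \widetilde{I}^S$.

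For this, first I would trace through the definitions: an element $w \in \widetilde{I}$ lies in $\ker(\tilde p_{\univ})$ iff the function $\lambda \mapsto \ell_{\univ}(I(x_\lambda) w)$ vanishes identically, iff for every $\lambda \in k^\times$ the vector $x_\lambda \cdot w$ maps to zero in $\widetilde{I}_\psi = \widetilde{I}/\widetilde{I}(S,\psi)$. Using the commutation relation $x_\lambda s_b x_\lambda^{-1} = s_{\lambda b}$ recalled in section~\ref{s:preliminaries}, the condition $x_\lambda w \in \widetilde{I}(S,\psi)$ translates into $w \in \widetilde{I}(S,\psi_\lambda)$, where $\psi_\lambda(s_b) = \psi(s_{\lambda b})$. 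Thus
\begin{equation*}
\ker(\tilde p_{\univ}) \;=\; \bigcap_{\lambda\in k^\times} \ker\bigl(\widetilde{I} \to \widetilde{I}_{S,\psi_\lambda}\bigr).
\end{equation*}

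Next, since $\psi$ is a nontrivial smooth character of $S \cong k$ and dilation by $\lambda$ sweeps out every nontrivial smooth character of $S$, the family $\{\psi_\lambda : \lambda \in k^\times\}$ is precisely the set of all nontrivial smooth characters of $S$. Applying lemma~\ref{lem:almost_S-invariant} to the $TS$-module $\widetilde{I}$ then yields
\begin{equation*}
\bigcap_{\lambda \in k^\times} \ker(\widetilde{I} \to \widetilde{I}_{S,\psi_\lambda}) \;=\; \widetilde{I}^S \;=\; \kappa(\beta_\rho(I)),
\end{equation*}
which is the required identification.

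Finally, I would check naturality in $I \in \CCC_G^{\fin}(\omega)$. The functors $\beta_\rho$ and $\kappa$ are functorial by construction, and $\tilde p_{\univ}$ is natural because the universal Bessel functional $\ell_{\univ}: I \to X_{\univ}(I) = \widetilde{I}_\psi$ is functorial in $I$ (it is simply the canonical quotient to the maximal $(\widetilde{R}, \Lambda)\otimes(S,\psi)$-coinvariant space). There is no genuine obstacle; the only delicate step is the pointwise translation via the conjugation identity $x_\lambda s_b x_\lambda^{-1} = s_{\lambda b}$, and once this and lemma~\ref{lem:almost_S-invariant} are in place the exactness of the sequence follows mechanically.
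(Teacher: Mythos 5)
Your proof is correct and follows essentially the same route as the paper: the paper also observes that $\tilde p_{\univ}(w)(\lambda)=0$ iff $w\in\ker(\widetilde I\to\widetilde I_{\psi_\lambda})$ with $\psi_\lambda(s_b)=\psi(s_{\lambda b})$, notes that these exhaust the nontrivial characters of $S$, and invokes lemma~\ref{lem:almost_S-invariant} to identify the kernel with ${\Danismanfunctor}(\widetilde I)=\widetilde I^S$, surjectivity being true by definition of $\mathcal{K}_{\univ}(I)$.
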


\begin{lem} \label{0.4} Suppose $\dim(\widetilde I)_\psi \! =\! 1$. Then
the $TS$-module  ${\cal K}_\univ(I)$ contains $C_c^\infty(k^{\times})$ as $TS$-submodule of finite codimension.
\end{lem}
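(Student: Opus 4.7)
The plan is to realize $\mathcal{K}_\univ(I)$ as a non-zero perfect $TS$-module of degree one inside $C_b^\infty(k^\times)$, and then feed this into Lemma~\ref{lem:S_submodule_Cb} together with the structure theory of Section~\ref{s:catC}. The hypothesis $\dim(\widetilde I)_\psi=1$ lets us fix an isomorphism $X_\univ=(\widetilde I)_\psi\cong\C$, so that the construction of the universal Bessel model places $\mathcal{K}_\univ(I)$ as a (non-zero) $TS$-submodule of $C_b^\infty(k^\times)$ rather than of $C_b^\infty(k^\times,X_\univ)$.

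The first step is to identify $\mathcal{K}_\univ(I)\cong\widetilde I/\Danismanfunctor(\widetilde I)$, which is exactly Lemma~\ref{exactomega}. Since $\Danismanfunctor(V/\Danismanfunctor(V))=0$ for every $V\in\CCC_{TS}$ (as recorded in the preliminaries), $\mathcal{K}_\univ(I)$ is perfect. The next step is to compute its degree: the functor $(-)_\psi$ is exact, and $\Danismanfunctor(\widetilde I)$ is finite-dimensional by Lemma~\ref{FINITE}, hence of degree zero; so
\begin{equation*}
\deg(\mathcal{K}_\univ(I))\;=\;\deg(\widetilde I)\;=\;\dim(\widetilde I)_\psi\;=\;1.
\end{equation*}

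For the containment $C_c^\infty(k^\times)\subseteq\mathcal{K}_\univ(I)$, note that $\mathcal{K}_\univ(I)$ is a non-zero $TS$-submodule of $C_b^\infty(k^\times)$, so Lemma~\ref{lem:S_submodule_Cb} directly gives $\mathbb{S}=C_c^\infty(k^\times)\hookrightarrow\mathcal{K}_\univ(I)$. For finite codimension, observe that $\mathcal{K}_\univ(I)$ lies in $\CCC=\CCC_{TS}^{\fin}$ (as the target category of the functor in Lemma~\ref{exactomega}), so the quotient $\mathcal{K}_\univ(I)/\mathbb{S}$ has finite length. Since $\deg(\mathcal{K}_\univ(I))=1$ accounts for the unique $\mathbb{S}$-constituent, every Jordan--H\"older constituent of $\mathcal{K}_\univ(I)/\mathbb{S}$ is a smooth $T$-character, and hence finite-dimensional; the codimension is therefore finite.

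No serious obstacle arises: once Lemma~\ref{exactomega} is in place, the argument is a formal consequence of the dichotomy in $\CCC$ between the unique infinite-dimensional irreducible $\mathbb{S}$ and the finite-dimensional character constituents. The only point worth being explicit about is that $\widetilde I=\beta_\rho(I)$ is an object of $\CCC$ when $I\in\CCC_G^{\fin}(\omega)$, which is implicit in the framing of Lemma~\ref{exactomega} and justifies applying the degree calculus of Section~\ref{s:catC}.
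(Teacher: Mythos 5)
Your argument is correct and follows the same route as the paper: identify $\mathcal{K}_\univ(I)\cong\widetilde I/\Danismanfunctor(\widetilde I)$ via lemma~\ref{exactomega}, note it is perfect of degree one sitting inside $C_b^\infty(k^\times)$ (using $\dim X_\univ=1$), and apply lemma~\ref{lem:S_submodule_Cb} plus finiteness of length for the codimension. The paper's proof is just a terser version of exactly these steps, so there is nothing further to add.
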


Indeed $\dim(\widetilde I)_\psi \! =\! 1$, or equivalently $\deg(\widetilde I)=1$,
implies that the quotient ${\cal K}_{\univ}(I)$ in ${\cal C}$ is perfect. 
Further, ${\cal K}_{\univ}(I)$ must contain $C_c^\infty(k^{\times})\cong \mathbb S$
by lemma \ref{lem:S_submodule_Cb}.
This implies (see also Dani\c{s}man in \cite{Danisman}, lemma 4.1):
\begin{lem}\label{lem:cal_L}
For $I\in {\cal C}^{\fin}_G(\omega)$ with $\deg(\widetilde I)=1$
the following holds:
\begin{enumerate}
\item  $ {\cal L}_\Lambda(I) :=  \pi_0({\cal K}_{\univ}(I))\cong{\cal K}_{\univ}(I)/C_c^\infty(k^{\times})$ in ${\cal C}_T^{\fin}$.
\item  $ 0 \to {\Danismanfunctor}({\widetilde I}) \to \pi_0(\widetilde I) \to {\cal L}_\Lambda(I) \to 0$ is exact in ${\cal C}_T^{\fin}$.
\end{enumerate}
\end{lem}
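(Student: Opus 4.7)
The plan is to apply the exact $S$-coinvariants functor $\pi_0$ to two short exact sequences that are already available.

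For part (1), the hypothesis $\deg(\widetilde I)=1$ means that $X_\univ=(\widetilde I)_\psi$ is one-dimensional, so the Bessel model realises $\mathcal{K}_\univ(I)$ as a non-zero $TS$-submodule of the scalar-valued space $C_b^\infty(k^\times)$. By lemma~\ref{0.4} it contains $C_c^\infty(k^\times)=\mathbb{S}$ with finite-dimensional quotient, and by lemma~\ref{lem:S_submodule_Cb} the action of $S$ on $\mathcal{K}_\univ(I)/\mathbb{S}$ is trivial. I would then apply the exact functor $\pi_0$ to
$$0\to\mathbb{S}\to\mathcal{K}_\univ(I)\to\mathcal{K}_\univ(I)/\mathbb{S}\to0\ ,$$
using $\pi_0(\mathbb{S})=0$ (subsection~\ref{s:catC}) and the fact that $\pi_0$ restricts to the identity on modules with trivial $S$-action, to conclude $\pi_0(\mathcal{K}_\univ(I))\cong\mathcal{K}_\univ(I)/C_c^\infty(k^\times)$.

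For part (2), I would apply $\pi_0$ to the universal short exact sequence of lemma~\ref{exactomega} evaluated at $I$, namely
$$0\to\Danismanfunctor(\widetilde I)\to\widetilde I\to\mathcal{K}_\univ(I)\to0\ .$$
By lemma~\ref{FINITE}, $\Danismanfunctor(\widetilde I)$ is finite-dimensional with trivial $S$-action, hence $\pi_0(\Danismanfunctor(\widetilde I))=\Danismanfunctor(\widetilde I)$. Combined with part~(1) and the exactness of $\pi_0$, this yields the claimed four-term exact sequence.

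There is no substantive obstacle: the argument is really just bookkeeping around $\pi_0$-exactness. The one point that needs explicit verification is that the Bessel model image lives in the scalar-valued $C_b^\infty(k^\times)$ rather than in $C_b^\infty(k^\times,X_\univ)$ for a higher-dimensional coefficient space; this is precisely the content of the equivalence $\deg(\widetilde I)=1\iff \dim X_\univ=1$ built into the definition of degree, and it is exactly what lets us invoke lemma~\ref{lem:S_submodule_Cb} at the crucial step.
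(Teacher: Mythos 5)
Your proposal is correct and follows exactly the route the paper intends (the paper leaves the proof implicit, deducing the lemma from lemma~\ref{exactomega}, lemma~\ref{0.4} and lemma~\ref{lem:S_submodule_Cb} with the words ``this implies''): you apply the exact functor $\pi_0$ to the two short exact sequences $0\to\mathbb{S}\to\mathcal{K}_{\univ}(I)\to\mathcal{K}_{\univ}(I)/\mathbb{S}\to0$ and $0\to\Danismanfunctor(\widetilde I)\to\widetilde I\to\mathcal{K}_{\univ}(I)\to0$, using $\pi_0(\mathbb{S})=0$ and that $\pi_0$ is the identity on trivial $S$-modules. Your side remark about the degree-one hypothesis guaranteeing a scalar-valued model is also the right point to flag, since it is what makes lemma~\ref{lem:S_submodule_Cb} applicable and ensures the kernel of $\pi_0$ on $\mathcal{K}_{\univ}(I)$ is a single copy of $C_c^\infty(k^\times)$.
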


\begin{rmk}\label{rmk:universal functional}
Let $X$ be an $R$-module on which $R=\widetilde{R}S$ acts by the character
$(\Lambda,\psi)$. For $M$ in ${\cal C}_{TR}$, by definition 
$ \Hom_R(M,X) = \Hom_S(\widetilde M, X) = \Hom(\widetilde M_\psi,X)$.
For $X=\widetilde M_\psi$ and $id\in End(\widetilde M_\psi)$ this defines a universal
functional $\ell_{\univ}$ in $ \Hom_R(M,\widetilde M_\psi)$ which 
uniquely corresponds to a functional $\tilde\ell_{\univ} \in \Hom_S(\widetilde M, \widetilde M_\psi)$.
\end{rmk}

\subsection{Regular Poles of Piatetskii-Shapiro $L$-functions}\label{Poles}

For infinite-dimensional irreducible representations $\Pi\in {\cal C}_G(\omega)$ of $G=\GSp(4)$ and a smooth character $\mu$ of $\Gl(1)$,
Piatetskii-Shapiro and Soudry \cite{PS-L-Factor_GSp4}, \cite{Soudry_Piatetski_L_Factors} have defined
local $L$-factors $L^{\mathrm{PS}}(s,\Pi,\mu,\Lambda)$, attached to a choice of a Bessel model for $\Pi$.
For split Bessel models, this $L$-factor can be described as follows:
For each $v\in\Pi$ let $W_v(g):=\ell(\Pi(g)v)$ with the Bessel functional $\ell$.
Attached to $v\in\Pi$ and Schwartz-Bruhat functions $\Phi\in \mathcal{S}(V)$ for $V=k^4$ is the zeta-integral
\begin{equation*}
 Z^{\PS}(s,v,\Phi,\mu,\Lambda)=\int_{\widetilde{N}\backslash H}W_v(h)\Phi((0,0,1,1)h)\mu(\lambda_G(h))
 |\lambda_G(h)|^{s+\tfrac12}\Diff h\ ,
\end{equation*}
where $H$ is the subgroup generated by $T\widetilde{T}$, $\widetilde{N}$ and $J=\left(\begin{smallmatrix}0&E\\-E&0\end{smallmatrix}\right)$,
\begin{equation*}
 H=\left\{\begin{pmatrix}*&0&*&0\\0&*&0&*\\ *&0&*&0\\0&*&0&*\end{pmatrix}\right\}\cap G\ .
\end{equation*}
The zeta-integral converges for sufficiently large $\Re(s)>0$ and
admits a unique meromorphic continuation to $\C$, also denoted $Z^{\PS}(s,v,\Lambda,\Phi,\mu)$.
The $L$-factor $$L^{\PS}(s,\Pi,\mu,\Lambda)$$ is then the regularization $L$-factor of $Z^{\PS}(s,v,\Phi,\mu,\Lambda)$ varying over all $v\in\Pi$ and $\Phi\in\mathcal{S}(V)$.
It decomposes as a product
$$  L^{\mathrm{PS}}(s,\Pi,\mu,\Lambda) = L^{\mathrm{PS}}_{\mathrm{reg}}(s,\Pi,\mu,\Lambda) L^{\mathrm{PS}}_{\mathrm{ex}}(s,\Pi,\mu,\Lambda) \ ,$$
where the regular part $L^{\mathrm{PS}}_{\mathrm{reg}}(s,\Pi,\mu,\Lambda)$ is the regularizing $L$-factor of the zeta functions subject to the condition $\Phi(0,0,0,0)=0$, see \cite{PS-L-Factor_GSp4}. The right-most term is the exceptional factor described in \cite{W_Excep}.
In the previous section we defined the perfect $TS$-module
$${\cal K}_{\ell}(\Pi) \cong \widetilde{\Pi}/\widetilde{\Pi}^S\ .$$
The next propositions relates the $L$-factor of $\nu^{-3/2}\mu\otimes\mathcal{K}_\ell(\Pi)$ with the regular factor $L_{\mathrm{reg}}^{\mathrm{PS}}(s,\Pi,\mu,\Lambda)$.
In the analogous case of anisotropic Bessel models, these $L$-factors coincide as shown by Dani\c{s}man
\cite[prop.~2.5]{Danisman}.
In the situation of split Bessel models however, these factors do not coincide in general. This is due to the presence of so-called subregular poles; for details see \cite{Subregular}.

\begin{prop} \label{L-reg}
Fix a smooth character $\mu$ of $\Gl(1,k)$ and an irreducible $\Pi\in {\cal C}_G(\omega)$ with a non-zero Bessel functional
$\ell:\Pi\to(\Lambda,\psi)$ for a smooth Bessel character $\Lambda=\rho\boxtimes\rho^\divideontimes$ of $\widetilde{T}$.
Then the $L$-factor $L(s,\nu^{-3/2}\mu\otimes {\cal K}_{\ell}(\Pi))$ equals the regularizing $L$-factor of the zeta functions $Z^\PS(s,v,\Lambda,\Phi,\mu)$ varying over every $v\in\Pi$ and $\Phi\in\mathcal{S}(V)$ subject to the conditions $\Phi(\ast,0,\ast,0)=0$ and $\Phi(0,\ast,0,\ast)=0$.
\end{prop}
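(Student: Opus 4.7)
The plan is to unfold $Z^{\PS}(s,v,\Phi,\mu,\Lambda)$ under the hypotheses $\Phi(\ast,0,\ast,0)=0$ and $\Phi(0,\ast,0,\ast)=0$ and rewrite it as a Tate zeta integral on the $TS$-module $M := \nu^{-3/2}\mu \otimes \mathcal{K}_\ell(\Pi) \subseteq C_b^\infty(k^\times)$ in the sense of definition~\ref{def:L-factor_TS_module}. By remark~\ref{rmk:Remark 1 L-series}, $L(s,M)$ is then the common regularizing $L$-factor of these integrals, which is the assertion.

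First I would parametrize $\widetilde{N}\backslash H$. The group $H$ is the fiber product $\Gl(2)\times_{\Gl(1)}\Gl(2)$, with the two $\Gl(2)$-factors acting on the coordinate pairs $\{1,3\}$ and $\{2,4\}$. An Iwasawa decomposition on each factor, together with the two Weyl involutions, yields coset representatives of the shape $w\cdot\tilde{t}\cdot x_\lambda\cdot\kappa$, with $\tilde{t}\in \widetilde{T}/Z$, $x_\lambda\in T$, $\kappa$ in a maximal compact subgroup $K_H\subseteq H$, and $w$ in a finite Weyl set. The Bessel covariance $W_v(\tilde r\,s\,g)=\Lambda(\tilde r)\psi(s)W_v(g)$ absorbs the $\widetilde{T}$-part, leaving $W_v(x_\lambda\kappa)=\tilde p_\ell(\Pi(\kappa)v)(\lambda)\in \mathcal{K}_\ell(\Pi)$. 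Since the $\widetilde{T}$-orbit of $(0,0,1,1)$ sweeps out $\{(0,\ast,0,\ast)\}\subseteq V$ and that of $(0,0,1,1)J^{-1}$ sweeps out $\{(\ast,0,\ast,0)\}$, the hypotheses on $\Phi$ ensure that the $\widetilde{T}$-integrations of $\Phi((0,0,1,1)\,w\,\tilde t\,x_\lambda)$ produce reduced functions $\Phi_{\mathrm{red},w,\kappa}(\lambda)\in C_c^\infty(k^\times)=\mathbb{S}$.

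Combining this with the modulus factor $\delta_P^{-1/2}(x_\lambda)=|\lambda|^{-3/2}$ from the Haar measure on $\widetilde{N}\backslash H$ and the weight $\mu(\lambda_G(h))|\lambda_G(h)|^{s+1/2}$, which on $x_\lambda$ is $\mu(\lambda)|\lambda|^{s+1/2}$, the integral collapses to
\[
Z^{\PS}(s,v,\Phi,\mu,\Lambda) \;=\; \sum_{w}\int_{K_H}\int_{k^\times} \tilde p_\ell(\Pi(\kappa)v)(\lambda)\,\Phi_{\mathrm{red},w,\kappa}(\lambda)\,\mu(\lambda)\,|\lambda|^{s-1}\,d^\times\!\lambda\,d\kappa\ .
\]
Since $\mathcal{K}_\ell(\Pi)$ contains $\mathbb{S}$ as a submodule of finite codimension (lemma~\ref{0.4}) and is stable under pointwise multiplication by elements of $C_c^\infty(k^\times)$, the product $f_{w,\kappa,v}:=(\nu^{-3/2}\mu)\cdot\tilde p_\ell(\Pi(\kappa)v)\cdot\Phi_{\mathrm{red},w,\kappa}$ lies in $M$ and the inner integral is the Tate zeta integral $Z(f_{w,\kappa,v},1,s+\tfrac12)$. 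As $v,\Phi,\kappa,w$ vary, the $f_{w,\kappa,v}$ sweep out all of $M$, so lemma~\ref{entire} together with remark~\ref{rmk:Remark 1 L-series} identifies the regularizing $L$-factor of the restricted zeta integrals as $L(s,M)=L(s,\nu^{-3/2}\mu\otimes\mathcal{K}_\ell(\Pi))$.

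The main obstacle will be the final claim of the first step: verifying that the two vanishing conditions on $\Phi$ correspond exactly to the elimination of the two axis-type boundary contributions, which requires a case analysis over the finite Weyl double cosets in $\widetilde{N}\backslash H$ and tracking in each case how the $\widetilde{T}$-integration of the surviving part of $\Phi$ behaves as $\lambda\to 0$. The Haar-measure bookkeeping producing exactly the $\nu^{-3/2}$-shift is tedious but routine, and the density of the $f_{w,\kappa,v}$ in $M$ follows from $\tilde p_\ell(\widetilde{\Pi})=\mathcal{K}_\ell(\Pi)$ by construction of the Bessel model.
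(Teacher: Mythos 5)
Your overall strategy (Iwasawa decomposition of $H$, Bessel covariance, reduction to Tate zeta integrals on the Bessel model, then remark~\ref{rmk:Remark 1 L-series}) is the paper's strategy, but the central ``collapse'' step is wrong. The vector $(0,0,1,1)$ is fixed under right multiplication by both $\widetilde{N}$ and $T=\{x_\lambda\}$ (indeed $(0,0,1,1)x_\lambda=(0,0,1,1)$ since $x_\lambda=\diag(\lambda E,E)$), and the Iwasawa decomposition $H=\widetilde{N}\,T\,\widetilde{T}\,H(\mathfrak{o}_k)$ requires no Weyl elements. Hence $\Phi((0,0,1,1)h)=\Phi((0,0,t_2,t_1)k)$ depends only on the $\widetilde{T}$- and $H(\mathfrak{o}_k)$-coordinates and not on $\lambda$: there is no function $\Phi_{\mathrm{red},w,\kappa}(\lambda)$. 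The integral therefore does not collapse into a single Tate integral of a product; it \emph{factors}, namely
\begin{align*}
Z^{\PS}(s,v,\Phi,\mu,\Lambda)=\int_{H(\mathfrak{o}_k)}&\Bigl(\int_T W_{\Pi(k)v}(x_\lambda)\,\mu(\lambda)\vert\lambda\vert^{s-\frac32}\Diff^{\times}\lambda\Bigr)\\
&\cdot\Bigl(\int_{\widetilde T}\Phi((0,0,t_2,t_1)k)\,\Lambda(\widetilde t)\mu(t_1t_2)\vert t_1t_2\vert^{s+\frac12}\Diff\widetilde t\Bigr)\Diff k\ ,
\end{align*}
and the role of the two vanishing hypotheses on $\Phi$ (after reducing to $\Phi=\Phi_1\otimes\Phi_2$ with $\Phi_i\in\mathcal{S}(k^2\setminus\{0\})$, which span the relevant space) is precisely that the $\widetilde{T}$-integral becomes a finite sum, i.e.\ an entire function of $s$ that can be arranged to be non-zero, while the $T$-integral is the Tate integral of $p_\ell(\Pi(k)v)\in\mathcal{K}_\ell(\Pi)$ with \emph{no} compactly supported multiplier. (Your measure bookkeeping, $\vert\lambda\vert^{s-1}$ instead of $\vert\lambda\vert^{s-3/2}$, is also off, but that is secondary.)

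Taken at face value, your version proves the wrong statement: each $f_{w,\kappa,v}=(\nu^{-3/2}\mu)\cdot\tilde p_\ell(\Pi(\kappa)v)\cdot\Phi_{\mathrm{red},w,\kappa}$ is a pointwise product with an element of $C_c^\infty(k^\times)$ and therefore lies in $\mathbb{S}$, so every one of your zeta integrals $Z(f_{w,\kappa,v},1,s+\tfrac12)$ is entire and the regularizing factor of that family is $1$, not $L(s,\nu^{-3/2}\mu\otimes\mathcal{K}_\ell(\Pi))$. The poles the proposition is about come exactly from the asymptotics of $p_\ell(v)(\lambda)$ as $\lambda\to0$, which must not be cut off; consequently your family does not sweep out all of $M$ but only $\mathbb{S}\subseteq M$. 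The repair is to keep the $\lambda$- and $\widetilde{t}$-integrations separate as above and then quote the definition of $L(s-\tfrac32,\mathcal{K}_\ell(\Pi))$ as the regularizing factor of the regular zeta integrals $Z^{\PS}_{\mathrm{reg}}(s,W_{v'})$, $v'\in\Pi$.
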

\begin{proof}
Fix a zeta-function $Z^\PS(s,v,\mu,\Phi,\Lambda)$ where $\Phi(x_1,y_1,x_2,y_2)$ vanishes on the hyperplanes $x_1=y_1=0$ and $x_2=y_2=0$.
Without loss of generality we can assume that
$\mu=1$, otherwise replace $\Pi$ with $\mu\otimes\Pi$. Further, we can assume
 $\Phi(x_1,x_2,y_1,y_2)=\Phi_1(x_1,y_1)\Phi_2(x_2,y_2)$ for certain $\Phi_i\in \mathcal{S}(k^2\setminus\{0\})$, because these $\Phi$ span the tensor product $$\mathcal{S}(k^2\setminus\{0\}\times k^2\setminus \{0\})\cong\mathcal{S}(k^2\setminus\{0\})\otimes \mathcal{S}(k^2\setminus\{0\})\ .$$
By Iwasawa decomposition, every $h\in H$ is a product
$h=\widetilde{n}x_\lambda \widetilde{t}(k_1,k_2)\in \widetilde{N} T \widetilde{T} H(\mathfrak{o}_k)$,  where $H(\mathfrak{o}_k) = H\cap \GSp(4,\mathfrak{o}_k)$. The element $(k_1,k_2)\in H(\mathfrak{o}_k)$ is the image of certain $k_1,k_2\in \Gl(2,\mathfrak{o}_k)$ under the standard embedding $(\Gl(2)\times_{\Gl(1)} \Gl(2))\hookrightarrow H$.
Up to a constant volume factor, the zeta-integral is
\begin{align*}
 Z^\PS(s,v,\mu,\Phi,\Lambda)
 = \int_{\widetilde{N}\backslash H}W_v(h) \Phi((0,0,1,1)h)
 |\lambda_G(h)|^{s+\tfrac12} \Diff h =\\
  Z^{\PS}_{\mathrm{reg}}(s,W_{v}) \int_{H(\mathfrak{o}_k)}\int_{\widetilde{T}}  \Phi_1^{k_1}((0,t_2))\Phi_2^{k_2}((0,t_1)) \Lambda(\widetilde{t})|t_1t_2|^{s+\tfrac{1}{2}}\Diff \widetilde{t}\Diff (k_1,k_2)\ ,
\end{align*}
with the regular zeta-integral $Z^{\PS}_{\mathrm{reg}}(s,W_{v})=\int_{T}W_v(x_\lambda)\vert \lambda\vert^{s-3/2}\Diff x_\lambda$.
The action of $H(\mathfrak{o}_k)$ replaces $\Phi_i$ by $\Phi^{k_i}_i(g)=\Phi_i(gk_i)$ and thus permutes $\mathcal{S}(k^2\setminus\{0\})$.
For each $k_i\in\Gl(2,\mathfrak{o}_k)$, the function $\Phi^{k_i}_i(0,\ast)$ has compact support in $k^\times$,
so the integral over $\widetilde{T}$ is a finite sum.
By smoothness of $\Phi_i$, the integral over the compact group $H(\mathfrak{o}_k)$ is also a finite sum.
For suitable choices of $\Phi_i$ one can arrange that the integral over $H(\mathfrak{o}_k)\widetilde{T}$ is non-zero.
Hence for each fixed $v\in\Pi$, the poles of $Z^\PS(s,v,\mu,\Phi,\Lambda)$ varying over $\Phi$ as above are exactly the poles of the regular zeta integral $Z^{\PS}_{\mathrm{reg}}(s,W_{v})$.
By definition, $L(s-\tfrac{3}{2},\mathcal{K}_\ell(\Pi))$ is the regularizing $L$-factor of the regular zeta integrals $Z^{\PS}_{\mathrm{reg}}(s,W_{v})$ varying over $v\in\Pi$.
\end{proof}

\begin{thm}\label{thm:reg_L_factor}
 For every irreducible $\Pi\in\CCC_G(\omega)$ with split Bessel model $(\Lambda,\psi)$ and every smooth character $\mu$ of $\Gl(1,k)$, the regular $L$-factor $L_\mathrm{reg}^\PS(s,\Pi,\mu,\Lambda)$ is independent of the choice of the split Bessel model and coincides with the product of Tate $L$-factors given in table \ref{tab:regular_poles}.
\end{thm}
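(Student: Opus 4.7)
The plan is to reduce the theorem to the determination of the Bessel module $\widetilde{\Pi}=\beta_\rho(\Pi)$ together with its perfect quotient $\mathcal{K}_{\univ}(\Pi)\cong\widetilde\Pi/\widetilde\Pi^S$ for each $\Pi$ in the Sally--Tadi\'c/Roberts--Schmidt classification. By proposition~\ref{L-reg} applied to $\mu$ and $\Lambda=\rho\boxtimes\rho^\divideontimes$, the $L$-factor $L(s,\nu^{-3/2}\mu\otimes\mathcal{K}_\ell(\Pi))$ accounts for all zeta-integrals whose Schwartz--Bruhat datum vanishes on the coordinate hyperplanes, and hence divides $L^\PS_{\mathrm{reg}}(s,\Pi,\mu,\Lambda)$. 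Combined with the definition of the subregular factor
$$L^{\PS}_{\mathrm{sreg}}(s,\Pi,\mu,\Lambda)=L^{\PS}_{\mathrm{reg}}(s,\Pi,\mu,\Lambda)/L(s,\nu^{-3/2}\mu\otimes M),$$
which is determined independently in \cite{Subregular}, it is enough to show that (i) the isomorphism class of the perfect quotient $M=\nu^{-3/2}\otimes\widetilde\Pi/\widetilde\Pi^S$ depends only on $\Pi$ (not on the chosen split $(\Lambda,\psi)$) and (ii) compute $M$ explicitly.

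For (ii), I would invoke the structure theory of $\CCC=\CCC^\fin_{TS}$ developed in section~\ref{s:catC}. By lemma~\ref{lem:universal extensions classification} any $M\in\CCC$ of degree one is determined up to isomorphism by the pair $(\pi_0(M),\Danismanfunctor(M))$, and by lemma~\ref{lem:cal_L} the $L$-factor of $M$ depends only on the semisimplification of $\pi_0(\widetilde\Pi)/\Danismanfunctor(\widetilde\Pi)$ as a $T$-module. So the task reduces to computing $\widetilde\Pi$ as a $TS$-module of finite length, case by case for each $\Pi$ in the classification. I would carry this out using the tools assembled earlier: the exact functor $\eta$ sending a $G$-module to a $\Gl_a(2)$-module and the right exact functor $\beta_\rho$, combined with Bernstein--Zelevinsky $P_3$-theory and the left-derived functor $k^\rho$ to control the extensions appearing. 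For each of the families (principal series, types IIa--XIb in Sally--Tadi\'c notation, supercuspidals), one obtains a filtration of $\widetilde\Pi$ whose graded pieces are explicit characters of $T$ and copies of $\mathbb{S}$. Lemma~\ref{lem:equivalences2} and the Kirillov-type arguments of example~\ref{ex:Kirillov} then pin down $\widetilde\Pi$ up to isomorphism; the result is the content of table~\ref{tab:Bessel_module_split_Bessel_model}.

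For (i), the independence from the chosen split Bessel datum $(\Lambda,\psi)$: the character $\psi$ can be normalized away because twisting $\psi$ by $x_\lambda$ corresponds to a conjugation by $T$, and the $\chi$-eigenspaces on $\pi_0$ are preserved under such conjugation. The dependence on $\rho$ (equivalently on $\chi_\Pi$) is encoded in the equivalences $\beta_{\mu\rho}\circ\mu\cong\mu\circ\beta_\rho$. One checks, by inspecting the outcome of the case analysis, that the composition of $\beta_\rho$ with passage to the perfect quotient produces characters which, together with the subregular contribution of \cite{Subregular}, combine into an expression symmetric in the two choices of $\rho$ that correspond to the same $\Pi$; the relevant symmetries come from the action of the Weyl element $\mathbf{s}_1$ on $\widetilde{T}$, which swaps $\rho\leftrightarrow\rho^\divideontimes$. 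Reading off $\pi_0(M)$ in the tables and multiplying the associated Tate factors then yields the list in table~\ref{tab:regular_poles}.

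The main obstacle is the case analysis in step (ii): for the non-generic and non-tempered representations (especially the limits of complementary series and the families VI, VIII, IX, XI) the Bessel module $\widetilde\Pi$ is not perfect, so one must carefully identify $\Danismanfunctor(\widetilde\Pi)$ and separate its contribution, which by proposition~\ref{L-reg} moves to the subregular factor. Controlling these finite-dimensional pieces requires the extension analysis of section~\ref{s:universal extensions} — in particular lemmas~\ref{lem:Ext(X,S)}, \ref{lem:hom-lemma} and \ref{Ext-Lemma} — together with a close study of the action of $TS$ on the Siegel--Jacquet module $J_P(\Pi)$, which is where the induced-representation filtrations enter. Once this is done uniformly across the classification, the statement of the theorem drops out by reading off $\pi_0(M)$.
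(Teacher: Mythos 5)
Your proposal follows essentially the same route as the paper: reduce via proposition~\ref{L-reg} to the computation of the perfect quotient $\mathcal{K}_\ell(\Pi)\cong\widetilde\Pi/\widetilde\Pi^S$, recover $\widetilde\Pi$ from the pair $(\pi_0(\widetilde\Pi),\widetilde\Pi^S)$ by lemma~\ref{lem:universal extensions classification}, carry out the case analysis with $\eta$, $\beta_\rho$, $k^\rho$ and the cell filtrations, and obtain independence of the Bessel datum from the duality $\beta_\rho\cong\beta_{\rho^\divideontimes}$. One correction to your closing paragraph: perfectness of $\widetilde\Pi$ fails only for certain \emph{generic} $\Pi$ (the fully induced non-ordinary cases \nosf{I}, \nosf{IIa}, \nosf{X} and the extraordinary cases \nosf{IIa}, \nosf{Va}, \nosf{VIa}, \nosf{XIa} with $\rho=\rho^\divideontimes$), whereas for every non-generic $\Pi$ admitting a split Bessel model the module $\widetilde\Pi$ is perfect (corollary~\ref{cor:sigma_nongeneric}), so the delicate identification of $\Danismanfunctor(\widetilde\Pi)$ concerns the generic exceptional series rather than the families you list. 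Note also that the contribution of $\widetilde\Pi^S$ is divided out of $L(s,\mu\otimes\widetilde\Pi)$ via lemma~\ref{lem:cal_L}; it does not migrate into the subregular factor, which is an independent quantity taken from \cite{Subregular}.
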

\begin{proof}
Proposition~\ref{L-reg} implies that $L(s,\nu^{-3/2}\mu\otimes \mathcal{K}_\ell(\Pi))$ divides the regular factor $L^\PS_{\mathrm{reg}}(s,\Pi,\mu,\Lambda)$
and thus yields a product decomposition
\begin{equation*}
L_{\mathrm{reg}}^{\mathrm{PS}}(s,\Pi,\mu,\Lambda) 
= L(s-\tfrac{3}{2},\mu\otimes \mathcal{K}_\ell(\Pi))L^\PS_{\mathrm{sreg}}(s,\Pi,\mu,\Lambda) \ ,
\end{equation*}
where the right-most term is the subregular factor described in \cite{Subregular}.

In section~\ref{s:SiegelInd} we determine the $S$-coinvariants $\pi_0(\widetilde\Pi)=\widetilde{\Pi}_S$ of the Bessel module, see theorem~\ref{thm:monodromy} and lemma~\ref{ugly}.
In section~\ref{c:Main} we find the $S$-invariants $\kappa(\widetilde{\Pi})=\widetilde{\Pi}^S$, see theorem~\ref{thm:sigma_tilde_Pi}.
By lemma~\ref{lem:universal extensions classification} and thm.~\ref{Mult1} this determines the Bessel module uniquely, see 
tables~\ref{tab:Bessel_module_split_Bessel_model} and \ref{tab:Bessel_module_no_split_Bessel_model}.
By lemma~\ref{lem:cal_L} this determines
\begin{equation*}L(s,\mu\otimes {\cal K}_\ell(\Pi))
=\frac{L(s,\mu\otimes \widetilde{\Pi})}{L(s,\mu\otimes \widetilde{\Pi}^S)}
\ .\qedhere
\end{equation*}
\end{proof}

\textbf{Strategy of proof.}
There is a simple but important consequence of the transitivity property for coinvariant functors:
\begin{lem}\label{lem:Bessel_Jacquet}
For $I\in\CCC_G^{\fin}(\omega)$ and the unnormalized Siegel-Jacquet module $J_P(I)=I_N$ we have a natural equivalence
$\pi_0(\widetilde I) \cong J_P(I)_{\widetilde{T},\rho}$.
\end{lem}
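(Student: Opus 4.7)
The plan is to unwind the definitions of $\widetilde{I}$ and $\pi_0$, then apply transitivity of coinvariant functors. Three structural facts suffice: first, the Bessel character $\Lambda$ is trivial on $\widetilde{N}$ by hypothesis, so the functor $\beta_\rho$ factors as $I \mapsto (I_{\widetilde{N}})_{\widetilde{T},\Lambda}$; second, the unipotent radical $N$ of the Siegel parabolic is abelian with $N = S \oplus \widetilde{N}$, so iterated coinvariants under $\widetilde{N}$ and $S$ coincide with coinvariants under all of $N$; third, $S$ centralizes $\widetilde{R} = \widetilde{T}\widetilde{N}$ (it sits in the centralizer of $\widetilde{R}$ by definition), so the $S$-coinvariant functor commutes with the $(\widetilde{T},\Lambda)$-coinvariant functor.

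Combining these, I would write down the chain of canonical identifications
\begin{equation*}
\pi_0(\widetilde{I}) \;=\; \bigl((I_{\widetilde{N}})_{\widetilde{T},\Lambda}\bigr)_S \;\cong\; \bigl((I_{\widetilde{N}})_S\bigr)_{\widetilde{T},\Lambda} \;=\; (I_N)_{\widetilde{T},\Lambda} \;=\; J_P(I)_{\widetilde{T},\rho},
\end{equation*}
where the middle isomorphism is the commutation of coinvariants just discussed, the penultimate equality uses $N = S \oplus \widetilde{N}$, and the final equality merely records that $\Lambda\vert_{\widetilde{T}}$ is the character attached to $\rho$ via $\Lambda(\widetilde{t}) = \rho(t_1)\rho^\divideontimes(t_2)$.

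The only thing that deserves real care is naturality: each reordering of coinvariants must be verified as a $T$-equivariant isomorphism (and more generally as a natural transformation of functors $\CCC^{\fin}_G(\omega) \to \CCC_T^{\fin}$). This is the mildest of bookkeeping because the torus $T = \{x_\lambda\}$ commutes with $\widetilde{T}$ and normalizes both $S$ and $\widetilde{N}$, so every intermediate space in the display carries a compatible $T$-action and each arrow is induced by a universal property that is manifestly functorial in $I$. I anticipate no substantive obstacle beyond this verification; the content of the lemma really is that coinvariants commute, and the geometric input is just the decomposition $N = S \oplus \widetilde{N}$ together with the fact that $S$ lies in the centralizer of $\widetilde{R}$.
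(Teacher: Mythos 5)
Your proof is correct and follows essentially the same route as the paper, which derives the lemma precisely as "a simple consequence of the transitivity property for coinvariant functors," using $N = S\oplus\widetilde{N}$ and the fact that $S$ centralizes $\widetilde{R}$ so that the $S$- and $(\widetilde{T},\Lambda)$-coinvariant functors commute. Nothing further is needed.
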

We obtain the following commutative diagram in $\CCC_T$, functorial in $I\in\CCC_G(\omega)^{\fin}$,
where the arrows are the obvious natural transformations.
The upper left and the lower right diagonal arrows correspond to exact coinvariant functors. The functors corresponding to the upper right and lower left arrows are only right-exact, but they are exact for anisotropic Bessel models.
\begin{equation*}  \xymatrix@-3mm{
& J_P(I)  \ar@{->>}[dr]^{\ \ \ \textit{Tunnell}}  &  &    \cr
\ I \ \ \ar@{->>}[rd]_{\textit{Bessel}\ \ \ }  \ar@{->>}[ru]^{\textit{Jacquet}\ \ \ } & &    \bigl(\pi_0(\widetilde{I})\ , {{\Danismanfunctor}}(\widetilde I)\bigr)\    \ar@{->>}[r]& \   \pi_0(\widetilde{I})/{{\Danismanfunctor}}(\widetilde I)=\mathcal{L}_\Lambda(I) \ .  \cr
& \ \  \bigl(\widetilde I \ ,   {{\Danismanfunctor}}(\widetilde I)\bigr) \ \  \ar@{->>}[ur]_{\ \ \ \ \ \pi_0\ \ \ }   & &
}
\end{equation*}

The object on top of the diagram is the unnormalized Siegel-Jacquet module $J_P(I)=I_N$. 
The $T$-action on $J_P(I)$ commutes with the action of $\Gl(2)\cong M^1$,
but $T$ does not act by a central character.
Endowed with the trivial action of $S$, we view $J_P(I)$ as a $T\widetilde{T}S$-module.
The object on the bottom is the $TS$-module $\widetilde I=\beta_\rho(I)$ with its maximal finite-dimensional submodule ${\Danismanfunctor}(\widetilde{I})$.
The diagram commutes by lemma \ref{lem:Bessel_Jacquet}.
In the quotient $\pi_0(\widetilde I)\cong J_P(I)_{\widetilde{T},\rho}$ there is inherited from $\widetilde I$
the $T$-submodule ${{\Danismanfunctor}}(\widetilde I)\subseteq \pi_0(\widetilde I)$ by lemma~\ref{lem:kappa_pi_0_injective}.
This object is not visible in the Jacquet module $J_P(I)$.
\pagebreak
\section{Representations of affine linear groups} \label{s:Gelfand-Kazhdan}

We now review certain results from \cite{Bernstein-Zelevinsky76}, \S5.1-\S5.15.
For a finite dimensional vector space $V$ over $k$ let 
$\Gl_a(V) =\Gl(V).V$ be the corresponding affine linear group, i.e. the semi-direct product of the general linear group $\Gl(V)$
with the group of translations by $V$.
Elements of $\Gl_a(V)$ are denoted $(g.v)$ for $g\in\Gl(V)$ and $v\in V$; and the group law is $(g_1.v_1) (g_2.v_2)=(g_1g_2.v_1+g_1v_2)$.
There are adjoint  exact functors
$$    \xymatrix@+3mm{ {\cal C}_{\Gl_a(V)}\   \ar@<0,5ex>[r]^{i^*}  &             \  {\cal C}_{\Gl(V)}  \ar@<0,5ex>[l]^{i_*} }  $$
such that $  \Hom_{\Gl_a(V)}(X,i_*Y) \! =\! \Hom_{\Gl(V)}(i^*X,Y) $.
They are defined by
\begin{description} 
\item  $i^*(M) \!=\! M_{V}$, \ coinvariants under the translation group $V\subseteq\Gl_a(V)$,
\item  $i_* \! =\!  $\ \ pullback under the quotient map $\Gl_a(V) \to\Gl(V),\ (g.v)\mapsto g$.  
\end{description}
Fix a nontrivial $k$-linear form $\ell\!:\! V\! \to\! k$
and a nontrivial smooth character $\psi\!:\! k\! \to\! \C$.
The kernel $V'=\ker(\ell)$ is a subspace of codimension one. 
This defines a nontrivial character $\psi_V\!: \!V\! \to\! \C$, by $\psi_V\! =\! \psi\!\circ\! \ell$.
The group $\Gl_a(V)$ acts on its normal subgroup $V$ by conjugation.
The little group, the stabilizer in $\Gl_a(V)$ of the character $\psi_V$, is the subgroup $\widetilde{\Gl}_a(V').V$ of $\Gl_a(V)\! =\!\Gl(V).V$.
Here the \emph{mirabolic} subgroup $\widetilde{\Gl}_a(V')$ is the image of the natural embedding
$$\Gl_a(V')\ni (g.v)\mapsto
\begin{pmatrix}g&v\\0&1\end{pmatrix}\in\Gl(V)$$
with respect to the decomposition $V\cong V'\oplus k$.
In the following we relax notation and usually write $\Gl(V)$ instead of $\Gl(V).0$ and $V$ instead of $0.V$ and for elements $g.v$ instead of $(g.v)$.
There are two exact adjoint functors
$$    \xymatrix@+3mm{ {\cal C}_{\Gl_a(V)}\   \ar@<0,5ex>[r]^{j^!}  &             \  {\cal C}_{\widetilde{\Gl}_a(V')}  \ar@<0,5ex>[l]^{j_!} }  $$
such that  $  \Hom_{\Gl_a(V)}(j_!X,Y)\! =\! \Hom_{\widetilde{\Gl}_a(V')}(X,j^!Y) $.
They are defined by
\begin{description} 
\item  $j^!(M) =  M_{V,\psi_V}$ for the $(V,\psi_V)$-coinvariants of the translations $V\!\subseteq\!\Gl_a(V)$,
\item  $j_!(M) = \ind_{\widetilde{\Gl}_a(V').V}^{\Gl_a(V)}(M \boxtimes \psi_V)$ for unnormalized compact induction.
\end{description}
Bernstein and Zelevinski \cite[\S5.12]{Bernstein-Zelevinsky76} have shown:
\begin{lem}\label{lem:Gelfand-Kazhdan}
There is a functorial exact sequence
\begin{equation*}
\xymatrix{
0 \ar[r] & j_!j^!\ar[r] &  \id\ar[r]  &  i_*i^*\ar[r] & 0 \ .
}
\end{equation*}
The functors $j_!$ and $i_*$ induce equivalences 
${\cal C}_{\widetilde{\Gl}_a(V')} \!\cong\! j_! ({\cal C}_{\widetilde{\Gl}_a(V')})$ and ${\cal C}_{\Gl(V)} \!\cong\! i_* ({\cal C}_{\Gl(V)})$
of abelian categories.
It holds that $j^! i_*\! =\! 0$ and $i^*j_!\! =\! 0$, so
these abelian subcategories are closed under extensions in $\CCC_{\Gl_a(V)}$.
\end{lem}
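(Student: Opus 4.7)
The plan is to reduce everything to standard Mackey theory for the semidirect product $\Gl_a(V) = \Gl(V) \ltimes V$, exploiting the crucial fact that $\Gl(V)$ acts on the Pontryagin dual $\hat V$ with exactly two orbits: the trivial character $\{0\}$ and its complement $\hat V \setminus \{0\}$, which is a single orbit with stabilizer at $\psi_V$ equal to the mirabolic $\widetilde{\Gl}_a(V')$.

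First I would verify the two adjunctions by unwinding the definitions. The unit $N \to i^* i_* N$ is visibly an isomorphism since $i_*(N)$ has trivial $V$-action, so taking $V$-coinvariants does nothing. Dually, by Frobenius reciprocity $\Hom_{\Gl_a(V)}(j_!X, N) = \Hom_{\widetilde{\Gl}_a(V').V}(X \boxtimes \psi_V, N)$, and a direct support argument on the compactly induced module shows the counit $j^! j_! X \to X$ is an isomorphism. As an immediate consequence, $i_*$ and $j_!$ are fully faithful. Next I would establish the orthogonality relations. For $j^! i_* = 0$: on $i_*(N)$ the subgroup $V$ acts trivially, so $(V, \psi_V)$-coinvariants vanish when $\psi_V \neq 1$. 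For $i^* j_! = 0$: on $j_!(X) = \ind_{\widetilde{\Gl}_a(V').V}^{\Gl_a(V)}(X \boxtimes \psi_V)$, any $V$-coinvariant is forced to zero on each $\Gl(V)$-translate because $V$ acts through the nontrivial character $\psi_V$ on the induced fibre.

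The main technical step is the exact sequence $0 \to j_!j^! \to \id \to i_*i^* \to 0$. The right map is the quotient $M \twoheadrightarrow M_V = i_*i^*(M)$, with kernel $M(V) = \mathrm{span}\{v\cdot m - m : v \in V,\ m \in M\}$, so right-exactness is immediate. The counit $c\colon j_!j^!M \to M$ comes from the identity in $\Hom_{\widetilde{\Gl}_a(V')}(j^!M, j^!M)$ via Frobenius reciprocity. The substantive claim is that $c$ is injective with image exactly $M(V)$. I would prove this using the smooth Fourier/Jacquet theory for the vector group $V$: a smooth $V$-module sheafifies over $\hat V$ with respect to the canonical decomposition, and restricting this sheaf to $\hat V \setminus \{0\}$ together with Mackey induction from the stabilizer of a single point of the $\Gl(V)$-orbit identifies the part of $M$ ``supported away from $0$'' with $j_! j^!M$. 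Exactness at $M$ then says precisely that $M(V)$ equals this Mackey-theoretic piece, which follows because $V$-coinvariants kill exactly the contribution of the nontrivial orbit. I expect this sheafification / transitive-orbit identification to be the main obstacle, since the rest is formal; concretely one has to check that every element of $M(V)$ can be written as a push-forward from a single $(V,\psi_V)$-coinvariant class, which uses that $\Gl(V)$ translates any nontrivial character to $\psi_V$ and that finite-type support is preserved by smoothness.

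Finally, the equivalences $\CCC_{\widetilde{\Gl}_a(V')} \cong j_!(\CCC_{\widetilde{\Gl}_a(V')})$ and $\CCC_{\Gl(V)} \cong i_*(\CCC_{\Gl(V)})$ follow from the full faithfulness of $j_!$ and $i_*$ established above. Closure of these essential images under extensions in $\CCC_{\Gl_a(V)}$ follows combining the orthogonality with exactness of $i^*$ and $j^!$: given an extension $0 \to i_*N_1 \to E \to i_*N_2 \to 0$, apply the exact functor $j^!$ to conclude $j^!E = 0$; then by the main exact sequence $j_!j^!E = 0$, so the unit $E \to i_*i^*E$ is an isomorphism and $E$ lies in the essential image of $i_*$. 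The symmetric argument using $i^*$ handles the $j_!$-image.
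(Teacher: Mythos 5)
Your argument is correct and is essentially the standard one: the paper gives no proof of this lemma but cites Bernstein--Zelevinsky \cite[\S 5.12]{Bernstein-Zelevinsky76}, and your two-orbit Mackey/localization argument over the characters of $V$ (trivial character versus the single open $\Gl(V)$-orbit with mirabolic stabilizer) is exactly the argument of that reference, including the identification of $\ker(M\to M_V)$ with $j_!j^!M$ and the formal deduction of the orthogonality relations, full faithfulness, and closure under extensions. The only point to polish is the appeal to ``Frobenius reciprocity'' for $j_!$: since $\widetilde{\Gl}_a(V')\cdot V$ is not open, the adjoint of compact induction is not plain restriction, but the adjunction $\Hom(j_!X,Y)\cong\Hom(X,j^!Y)$ and the counit isomorphism $j^!j_!\cong\id$ hold by the support argument you indicate, as in the cited source.
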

For $V = 0$ we identify ${\cal C}_{\Gl_a(V)}$ with the category of complex vector spaces. Then ${\mathbb S}_V\!=\! (j_!)^n(\C)$ for $n=\dim V$ is defined for a choice of nontrivial characters
$\psi_V$, $\psi_{V'}, \cdots$ that together define a generic character $\psi_{gen}$ of the unipotent radical $U$ of strict upper triangular matrices in group $\Gl(V)$. It is easy to see that
$$ {\mathbb S}_V \ \cong\ \ind_{U.V}^{\Gl_a(n)}(\psi_{gen}\boxtimes \psi_V) \ $$
is irreducible. In particular $j_!({\mathbb S}_{V'})\!=\!{\mathbb S}_V$ and $j^!({\mathbb S}_V) \!=\! {\mathbb S}_{V'}$. %
The functors above preserve irreducible objects, hence objects of finite length.

Put ${\cal C}_n$ for ${\cal C}_{\Gl_a(V)}$ if $n\!=\!\dim(V)$ and write $\mathbb{S}_n$ for $\mathbb{S}_V$.
Let ${\cal C}_n^{\fin} \subseteq {\cal C}_n$ be the full subcategory of objects of finite length.
Then ${\cal C}_1$ is equivalent to the category ${\cal C}_{TS}$ of smooth $TS$-modules,
and we often write ${\mathbb S}$ for ${\mathbb S}_1$. %
It is easy to see inductively that every irreducible object of ${\cal C}_n$ is of the form
$(j_!)^k i_*(M)$ for some irreducible $M\in {\cal C}_{\Gl(n-k)}$.
The irreducible objects of ${\cal C}_1$ for example are $\mathbb S= j_!(\C)$ for $\C\in {\cal C}_0$ and $i_*(\chi)$ for smooth characters $\chi:Gl(1)\to \C^{\times}$.

\textit{Twist}. Characters of $\Gl_a(V)$ factorize over $\det_a:\ g.v\mapsto \det(g)\in\Gl(1)$.
For a smooth character $\chi:\Gl(1)\to \C^{\times}$, the twist functor
$$\CCC_{n}\to\CCC_{n}\ ,\quad M\mapsto\chi\otimes M = (\chi\circ\det\nolimits_a)\otimes M
$$
is denoted by $\chi$ again, if no confusion is possible.
The natural equivalences
\begin{gather*} i_*\circ\chi \cong \chi \circ i_* \qquad, \qquad
   i^*\circ\chi \cong \chi \circ i^* \ ,\\
   j^!\circ\chi \cong \chi\circ j^! \qquad, \qquad 
   j_!\circ\chi \cong \chi\circ j_!  \ ,
 \end{gather*}
are obvious except for the last one which follows
by partial induction and the formula $ \ind_{\{1\}}^{k^{\times}}(\chi) \cong \chi\!\otimes\! \ind_{\{1\}}^{k^{\times}}(1) \cong \ind_{\{1\}}^{k^{\times}}(1)$.

\subsection{The functors $k_\rho$ and $k^\rho$} \label{Computation of betarho}\label{the functors beta}

Fix a finite-dimensional $k$-vector space $V$ and a functional $\ell:V\to k$ defining a subspace $V'=\ker\ell$ of codimension one as before.
We write elements of $\Gl_a(V)$ as block matrices with respect to a fixed decomposition $V\cong V'\oplus k$. 
We identify $H=Gl_a(V')\times\Gl(1)$ with its image under the embedding
\begin{equation*}
H \hookrightarrow\Gl_a(V)\qquad (g.v', x)\mapsto \begin{pmatrix}g & 0\\0&x \end{pmatrix}.\begin{pmatrix}v'\\0\end{pmatrix} \ .
\end{equation*}
$H$ normalizes the subgroup 
 $\widetilde{V'}=
\{ \left(\begin{smallmatrix}\id &\ast\\0&1\end{smallmatrix}\right)\}\subseteq\Gl(V)$.
For every $M\in \CCC_{\Gl_a(V)}$
the $\widetilde{V'}$-coinvariant quotient
is a smooth $H$-module $M_{\widetilde{V'}}\in\CCC_H$.
The corresponding functor $\CCC_{\Gl_a(V)}\to \CCC_H$
is exact because $\widetilde{V'}$ is compactly generated.
A smooth character $\rho$ of $\Gl(1)$ defines a character of $\{\id\}\times\Gl(1)\subseteq H$,
explicitly given by 
$(\id.0 , x)
\mapsto \rho(x)$.
By abuse of notation we denote this character by $\rho$ again.
The $\rho$-(co)invariant spaces
\begin{equation*}
k_\rho(M) =  (M_{\widetilde{V}'})_{\rho}\qquad , \qquad
k^\rho(M) =  (M_{\widetilde{V}'})^{\rho}
\end{equation*}
are preserved by the subgroup $\Gl_a(V')\times\{1\}$ of $H$
and thus define smooth modules $k_\rho(M)$ and $k^\rho(M)$ of $\Gl_a(V')$.
We obtain right exact functors $k_\rho$ and left exact functors $k^\rho$
$$    \xymatrix@+3mm{ {\cal C}_{\Gl_a(V)}\   \ar@<0,5ex>[r]^{k^\rho} \ar@<-0,5ex>[r]_{k_\rho} &             \  {\cal C}_{{\Gl}_a(V')} }   \ . $$
For twists by characters $\chi$ of $k^\times$ there are natural equivalences
$$ k_\rho\circ \chi \cong \chi\circ k_{\rho\chi^{-1}}\qquad, \qquad
   k^\rho\circ \chi \cong \chi\circ k^{\rho\chi^{-1}} \ . $$
Lemma~\ref{lem:proto_long_exact_sequence} implies
\begin{lem}\label{lem:long exact sequence_affine}
A short exact sequence $0\to E\to F\to G\to 0$ in $\CCC_{\Gl_a(V)}$
gives rise to a long exact sequence in $\CCC_{\Gl_a(V')}$,
\begin{equation*}
0\to k^\rho(E)\to k^\rho(F)\to k^\rho(G)\to k_\rho(E) \to k_\rho(F)\to k_\rho(G) \to 0 \ .
\end{equation*}
\end{lem}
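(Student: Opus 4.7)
The plan is to factor each of $k^\rho$ and $k_\rho$ as a composition of two functors and then to leverage the exactness of the first factor together with the known six-term sequence for the second. Concretely, by definition
\[k^\rho(M) = (M_{\widetilde{V'}})^\rho \quad,\quad k_\rho(M) = (M_{\widetilde{V'}})_\rho,\]
so both functors arise by first passing to $\widetilde{V'}$-coinvariants in $\CCC_H$ and then applying $\rho$-invariants or $\rho$-coinvariants with respect to the subgroup $\{\id\}\times\Gl(1)\subseteq H$.

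First I would observe that the functor $\CCC_{\Gl_a(V)}\to\CCC_H$ of $\widetilde{V'}$-coinvariants is exact: the unipotent group $\widetilde{V'}$ is the union of its compact open subgroups, so its coinvariant functor on smooth modules is exact. Applied to $0\to E\to F\to G\to 0$, this produces a short exact sequence
\[0 \to E_{\widetilde{V'}} \to F_{\widetilde{V'}} \to G_{\widetilde{V'}} \to 0\]
in $\CCC_H$, and \emph{a fortiori} in $\CCC_{\Gl(1)}$ once we restrict along $\{\id\}\times\Gl(1)\hookrightarrow H$.

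Next I would apply lemma~\ref{lem:proto_long_exact_sequence} to this short exact sequence of smooth $\Gl(1)$-modules and the character $\rho$. That lemma supplies the six-term long exact sequence
\[0 \to (E_{\widetilde{V'}})^\rho \to (F_{\widetilde{V'}})^\rho \to (G_{\widetilde{V'}})^\rho \to (E_{\widetilde{V'}})_\rho \to (F_{\widetilde{V'}})_\rho \to (G_{\widetilde{V'}})_\rho \to 0,\]
which is precisely the asserted sequence after unpacking the definitions of $k^\rho$ and $k_\rho$. The connecting morphism is natural, and the terms in the sequence are $\Gl_a(V')$-modules because the subgroup $\Gl_a(V')\times\{1\}$ of $H$ commutes with $\{\id\}\times\Gl(1)$ and therefore preserves both the $\rho$-invariants and the $\rho$-coinvariants.

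The only substantive point to verify is that the sequence truly terminates on the right after the coinvariants of $G_{\widetilde{V'}}$, i.e.\ that the first derived functor of $(\cdot)_\rho$ on smooth $\Gl(1)$-modules vanishes. This reduces to the decomposition $k^\times\cong \mathfrak{o}^\times\times \pi^{\mathbb Z}$: the compact factor contributes trivially since $\rho$-isotypic projection is exact on smooth $\mathfrak{o}^\times$-modules, and for $\pi^{\mathbb Z}\cong\mathbb{Z}$ the functors of $\rho$-(co)invariants compute $H^0$ and $H^1$ of a cyclic group, giving cohomological dimension one. This is the only mildly delicate step; the rest is a purely formal concatenation of the two exact sequences above.
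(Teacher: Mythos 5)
Your proposal is correct and is essentially the paper's own argument: the paper proves this lemma by simply invoking lemma~\ref{lem:proto_long_exact_sequence} (for the normal $\Gl(1)$-factor of $H$, after the exact passage to $\widetilde{V'}$-coinvariants stated in section~\ref{the functors beta}), and your write-up just makes that two-step factorization and the right-exactness check via $k^\times\cong\mathfrak{o}^\times\times\pi^{\mathbb Z}$ explicit.
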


We say an object $M\in {\cal C}_n$ for $n\geq1$ is \emph{perfect} if it has finite length and  
$k^\rho(M)\!=\! 0$ holds for all characters $\rho$.
The functors $j_!, j^!$ send perfect objects to perfect objects by lemma~\ref{lem:proto_small_extension_lemma}.
However, objects need not remain perfect after applying the functor
$k_\rho$, see theorem \ref{thm:ME_split}.

\begin{lem}\label{3.1} %
For every smooth $\rho$ there is a natural equivalence
 $$k_\rho\circ i_*  \quad\cong\quad
 k^\rho\circ i_*\ $$
 of functors from $\CCC_{\Gl(V)}^{\fin}$ to the full subcategory $i_*(\CCC^\fin_{\Gl(V')})$ of $\CCC^\fin_{\Gl_a(V')}$.
\end{lem}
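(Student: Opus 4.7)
The plan is to unpack both composite functors on $N\in\CCC^{\fin}_{\Gl(V)}$ into a statement about the ordinary Jacquet module $N_{\widetilde{V'}}$, and then to produce a natural isomorphism by exploiting semisimplicity of the $\Gl(1)$-action on it. Since the translation subgroup $V\subseteq\Gl_a(V)$ acts trivially on $M=i_*(N)$ while $\widetilde{V'}$ sits inside the linear part $\Gl(V)$, one has $M_{\widetilde{V'}}=N_{\widetilde{V'}}$ as a smooth module over the Levi $H=\Gl(V')\times\Gl(1)$. The translations $V'\subseteq\Gl_a(V')$ continue to act trivially on $k_\rho(M)$ and $k^\rho(M)$, placing them in $i_*(\CCC^{\fin}_{\Gl(V')})$; under the equivalence $i^*$ they correspond to the $\Gl(V')$-modules $(N_{\widetilde{V'}})_\rho$ and $(N_{\widetilde{V'}})^\rho$ respectively.

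Next I would decompose $U:=N_{\widetilde{V'}}$ into its generalized $\Gl(1)$-eigenspaces $U=\bigoplus_\chi U^{(\chi)}$. Because $\Gl(V')$ and $\Gl(1)$ commute in the Levi, the decomposition is $\Gl(V')$-equivariant, and every summand $U^{(\chi)}$ with $\chi\neq\rho$ is annihilated by both $\rho$-invariants and $\rho$-coinvariants. The lemma therefore reduces to producing a natural isomorphism $(U^{(\rho)})^\rho\cong(U^{(\rho)})_\rho$ of $\Gl(V')$-modules.

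The crux is that the $\Gl(1)$-action on $U^{(\rho)}$ is semisimple, i.e.\ it equals the scalar $\rho$ with trivial nilpotent monodromy. Granting this, both $(U^{(\rho)})^\rho$ and $(U^{(\rho)})_\rho$ agree with $U^{(\rho)}$ itself and the evident composition $(U^{(\rho)})^\rho\hookrightarrow U^{(\rho)}\twoheadrightarrow(U^{(\rho)})_\rho$ is the identity, giving the desired natural equivalence functorial in $N$. Semisimplicity in turn follows from Bernstein--Zelevinsky theory: every finite-length smooth $\Gl(V)$-module is a subquotient of a principal series $\Ind_B^{\Gl(V)}(\chi)$ from a torus character, whose Jacquet module splits by the geometric lemma into a direct sum of $H$-characters on which the $\Gl(1)$-factor acts by single characters with no monodromy, and subquotients inherit this absence of monodromy. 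The hardest step is precisely this monodromy-vanishing, since a generic smooth finite-length $\Gl(1)$-module can carry Jordan blocks, and ruling them out uses the specific origin of $U$ as a $\Gl(V)$-Jacquet module together with the structure of the geometric lemma filtration.
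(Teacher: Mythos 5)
Your unpacking of both functors and the reduction to the generalized $\rho$-eigenspace of the Jacquet module $U=N_{\widetilde{V'}}$ is correct and coincides with the first half of the paper's own proof. The gap is precisely in the step you flag as the hardest: the claim that the $\Gl(1)$-factor acts \emph{semisimply} on $U^{(\rho)}$ is false, and the argument you offer for it does not work. The geometric lemma produces only a filtration of the Jacquet module of a principal series, not a direct sum of characters, and when exponents coincide that filtration does not split. Already for $\dim V=2$ and $N=1\times 1\in\CCC_{\Gl(2)}^{\fin}$ the Jacquet module is an indecomposable self-extension of a single character (Bump, Thm.\ 4.5.4 --- the paper exploits exactly this non-semisimplicity in lemma~\ref{lem:filtration_VId}), so the relevant $\Gl(1)$-factor acts with a nontrivial Jordan block. (Separately, not every finite-length smooth $\Gl(V)$-module is a subquotient of a Borel principal series --- cuspidals are not --- though for those the Jacquet module vanishes and nothing is at stake.)

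The lemma nevertheless holds because it does not require vanishing monodromy: one only needs that the kernel and the cokernel of the nilpotent monodromy operator on $U^{(\rho)}$, which commutes with the $\Gl(V')$-action, are isomorphic as $\Gl(V')$-modules. The paper obtains this by decomposing $U^{(\rho)}$ into summands $X\boxtimes\rho^{(m)}$ with $X$ an indecomposable $\Gl(V')$-module and $\rho^{(m)}$ a $\Gl(1)$-Jordan block of length $m$; for each such summand both the $\rho$-invariants and the $\rho$-coinvariants produce exactly one copy of $X$, whence $U^\rho\cong U_\rho$. As written, your argument would only prove the lemma for those $N$ whose Jacquet module happens to be $\Gl(1)$-semisimple; you should replace the semisimplicity claim by this kernel-versus-cokernel comparison for a nilpotent operator.
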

\begin{proof}For a smooth $\Gl(V)$-module $\pi$
the isomorphisms $k_\rho(i_*(\pi)) \cong i_*((\pi_{\widetilde{V}'})_\rho)$ and $k^\rho(i_*(\pi))\cong i_*((\pi_{\widetilde{V}'})^\rho)$ are obviously functorial. It remains to be shown that $(\pi_{\widetilde{V}'})_\rho$ and $(\pi_{\widetilde{V}'})^\rho$ are naturally equivalent as $\Gl(V')$-modules.
The unnormalized Jacquet quotient $M=\pi_{\widetilde{V}'}$ is a module of finite length under the action of $\Gl(V')\times\Gl(1)\cong H\cap\Gl(V)$.
Therefore $M$ is a finite direct sum of products $X\boxtimes \chi^{(m)}$ where $X$ is an indecomposable $\Gl(V')$-module and $\chi^{(m)}$ is a $\Gl(1)$-Jordan block of length $m$.
The $\rho$-(co)invariant space in $M$ is then the direct sum of those $X$ where $\chi=\rho$,
so $M^\rho$ and $M_\rho$ are isomorphic.
\end{proof}

\begin{lem}\label{3.3}\label{lem:proto_small_extension_lemma}
For smooth $\rho$
there is a functorial exact sequence in  $\CCC_n$, $n\geq1$,
\begin{equation*}
0\longrightarrow j_!\circ k_{\nu\rho} \longrightarrow k_\rho\circ j_! \longrightarrow i_*\circ i^* \longrightarrow 0
\end{equation*}
and a natural equivalence $ j_! \circ k^{\nu\rho} \cong k^\rho \circ j_!  \ $.
Here the functors $k_{\nu\rho}$ and $k^{\nu\rho}$ are defined with respect to the twisted character $\nu\rho$.
\end{lem}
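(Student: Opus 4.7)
The strategy is a Mackey-style double coset analysis. Unraveling definitions, $j_!$ is compact induction from $P=\widetilde{\Gl}_a(V')\cdot V$ along the character $1\boxtimes\psi_V$, while $k_\rho(N)=(N_{\widetilde{V'}})_\rho$ first kills the unipotent block $\widetilde{V'}\subset\Gl(V)$ and then takes $\rho$-eigencoinvariants on the $\Gl(1)$-factor of $H=\Gl_a(V')\times\Gl(1)$. Hence $k_\rho\circ j_!(M)$ is computed geometrically via the double cosets $P\backslash G/\widetilde{V'}$, where $G=\Gl_a(V)$, followed by $\rho$-coinvariants on the residual $\Gl(1)$-action.

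The first step is the orbit stratification. Because $\widetilde{V'}\subset P$, there are exactly two double cosets: the closed one $P\cdot e\cdot\widetilde{V'}=P$ (a single point of $G/P$) and its open complement. This induces a short exact sequence
\begin{equation*}
0\to N_{\mathrm{open}}\to(j_!(M))_{\widetilde{V'}}\to N_{\mathrm{closed}}\to 0
\end{equation*}
of $H$-modules, and the exact sequence of the lemma arises after taking $\rho$-coinvariants of the $\Gl(1)$-factor on both sides.

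On the closed stratum the fiber over $eP$ is $M\boxtimes\psi_V$ with $\widetilde{V'}$ acting via its embedding as the translation subgroup $V'\subset\widetilde{\Gl}_a(V')$; taking $\widetilde{V'}$-coinvariants gives $M_{V'}=i^*(M)$, and the residual $H$-action is exactly the one identifying this contribution with $i_*i^*(M)$. On the open stratum, Mackey's formula identifies the $\widetilde{V'}$-coinvariants with a compactly induced module from the next mirabolic-with-translations down. The character twist $\rho\mapsto\nu\rho$ arises from the conjugation Jacobian: elements $(g,x)\in\Gl(V')\times\Gl(1)$ conjugate $v'\in\widetilde{V'}\cong V'$ by $v'\mapsto x^{-1}gv'$, whose Haar-measure factor $|x|^{\dim V'}=\nu(x)$ transports $\rho$-coinvariants to $\nu\rho$-coinvariants on the lower-level $\Gl(1)$-factor. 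This identifies the open contribution with $j_!k_{\nu\rho}(M)$ and assembles the claimed short exact sequence.

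For the natural equivalence $j_!\circ k^{\nu\rho}\cong k^\rho\circ j_!$, apply the same Mackey decomposition but take $\rho$-invariants in place of coinvariants. The $\Gl(1)$-structure on the closed stratum contribution $i_*i^*(M)$ admits no nonzero $\rho$-eigenline in the smooth setting (by the same modulus Jacobian argument that forces the shift on the open stratum), so the closed part of the invariant sequence drops out and only the open stratum survives, yielding the asserted isomorphism. The principal obstacle is the modular-character bookkeeping—verifying that the shift is precisely $\rho\mapsto\nu\rho$ and not some other twist—which requires careful normalization of compact induction and careful tracking of conjugation Jacobians. A secondary technicality is establishing left-exactness of the middle map in the coinvariant sequence, i.e.\ injectivity $j_!k_{\nu\rho}(M)\hookrightarrow k_\rho j_!(M)$, which ultimately reduces to a $\mathrm{Tor}_1$-vanishing statement over the compactly generated abelian group $\Gl(1)$.
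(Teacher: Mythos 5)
Your overall strategy is the right one and is in fact the paper's: the paper computes $(j_!(M))_{\widetilde{V'}}$ via the two Bruhat cells of $P\backslash\Gl_a(V)/Q$ for $Q=H\ltimes\widetilde{V'}$ (the open $w$-cell giving the subobject, the closed identity cell the quotient), using the Bernstein--Zelevinsky geometric lemma, and then runs the $\rho$-(co)invariants six-term sequence. However, your treatment of the closed stratum has a genuine gap. The $\widetilde{V'}$-coinvariants of the closed-cell piece are not $i_*i^*(M)$ with the $\Gl(1)$-factor of $H$ acting by a character; they are $i_*i^*(M)\boxtimes\mathcal{S}$, where $\mathcal{S}=\ind_{\{1\}}^{\Gl(1)}(1)\cong C_c^\infty(k^\times)$ is the \emph{regular} $\Gl(1)$-module (this extra factor appears because the $\Gl(1)$-part of $H$ is not contained in $P$, so it moves the closed orbit freely). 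This identification is exactly what makes the lemma work: $\mathcal{S}_\rho\cong\C$ for every $\rho$ gives the quotient $i_*i^*$ in the coinvariants sequence \emph{uniformly in $\rho$}, and $\mathcal{S}^\rho=0$ kills the closed stratum in the invariants sequence, which is simultaneously what gives the isomorphism $j_!\circ k^{\nu\rho}\cong k^\rho\circ j_!$ \emph{and} the injectivity of $j_!k_{\nu\rho}\to k_\rho j_!$ (the term preceding it in the six-term sequence is $(i_*i^*\otimes\mathcal{S})^\rho=0$). Your two claims --- that the closed-cell term contributes $i_*i^*(M)$ to the coinvariants for every $\rho$, yet has no $\rho$-eigenline for any $\rho$ --- are mutually inconsistent for a character action and cannot both follow from a ``modulus Jacobian argument''; they are only reconciled by exhibiting the $\mathcal{S}$-factor, which you never do. Your ``$\mathrm{Tor}_1$-vanishing'' for the left injectivity reduces to the same unproved point.

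The second gap is the computation of the twist. You derive the shift $\rho\mapsto\nu\rho$ from the conjugation Jacobian ``$|x|^{\dim V'}=\nu(x)$'', but $\dim V'=n$ and the conjugation of $(\,\cdot\,,x)\in\Gl(1)$ on $\widetilde{V'}\cong V'$ has modulus $|x|^{\mp n}$, which equals $\nu^{\pm1}$ only when $n=1$. Since the lemma asserts the shift is exactly $\nu$ for all $n\geq1$, your heuristic does not establish it. In the paper the exponent $\nu^{-1}$ on the $\Gl(1)$-factor of the open-cell term emerges from cancelling the normalization characters $(\nu\boxtimes\nu^{n})^{-1/2}$, $(\nu\boxtimes 1)^{1/2}$, $(\nu\boxtimes\nu^{1-n})^{-1/2}$ and $\nu^{1/2}$ attached to the four normalized induction/restriction functors in the Bernstein--Zelevinsky sequence, followed by an overall twist by $\nu^{-(n-1)/2}$; the $n$-dependence cancels only after this bookkeeping. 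You correctly flag this as the principal obstacle, but the verification you sketch is incorrect, so the identification of the subobject with $j_!\circ k_{\nu\rho}$ is not established.
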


\begin{proof}
Fix a non-trivial additive character $\psi$ on $V=k^{n+1}$ and a subspace $V'\subseteq V$ of codimension one in the kernel of $\psi$.
As before we write elements of $\Gl_a(V)$ as block matrices with respect to a fixed decomposition $V\cong V'\oplus k$.
Fix subgroups of $\Gl_a(V)$
\begin{equation*}P=M\ltimes V \qquad,\qquad Q=H\ltimes \widetilde{V'}\end{equation*}
 with the mirabolic group $M=\widetilde{\Gl}_a(V')$ and $H\cong\Gl_a(V')\times\Gl(1)$ as above.
Bruhat decomposition implies that the double coset space $P \setminus \Gl_a(V)/Q$ is generated by the identity and a Weyl reflection $w\in \Gl(V)$ that does not preserve $\Gl(V')$. Note that $V'=H\cap wVw^{-1}=H\cap V$ and that $\psi^w=\psi\circ w^{-1}$ is non-trivial on $V'$.
A dimension estimate implies that $PwQ$ is open in $\Gl_a(V)$.
Theorem~5.2 of Bernstein and Zelevinski~\cite{Bernstein-Zelevinsky77} gives a short exact sequence of normalized functors $\CCC_{M}\to \CCC_H$
\begin{equation*}
 \xymatrix{
 0\ar[r] & i_{V',\psi^w} \circ w \circ r_{M\cap w^{-1}\widetilde{V'}w} \ar[r] & r_{\widetilde{V'}} \circ i_{V,\psi}\ar[r] & i_{V',\psi} \circ r_{M\cap \widetilde{V'}} \ar[r] & 0\ .
 }
\end{equation*}
These functors are explicitly given by
\begin{align*}
&i_{V,\psi} = j_!\circ\nu^{1/2}                                            \quad ,&       &\CCC_M\to\CCC_{\Gl_a(V)}\ ,\\
&r_{\widetilde{V'}}= (\nu\boxtimes\nu^{n})^{-1/2}\circ (-)_{\widetilde{V'}}\quad ,&       &\CCC_{\Gl_a(V)}\to\CCC_{H}\ ,\\
&i_{V',\psi^w} \circ w = (j_!\boxtimes \id)\circ (\nu\boxtimes1)^{1/2}            \quad ,&       &\CCC_{\widetilde{H}}\to\CCC_{H}\ ,\\
&r_{M\cap w^{-1}\widetilde{V'}w} = (\nu \boxtimes \nu^{1-n})^{-1/2}\circ (-)_{M\cap w^{-1}\widetilde{V'}w}\quad ,& &\CCC_{M}\to\CCC_{\widetilde{H}}\ ,\\
&i_{V',\psi} = (i_*\boxtimes \mathcal{S})\circ \nu^{1/2}                   \quad ,&       &\CCC_{\Gl(V')}\to\CCC_{H}\ ,\\
&r_{M\cap \widetilde{V'}} = \nu^{-1/2}\circ i^\ast                         \quad ,&       &\CCC_{M}\to\CCC_{\Gl(V')}\ .
\end{align*}
with $\widetilde{H}=M\cap w^{-1}H\cong\Gl_a(V'\cap wV')\times\Gl(1)$ and the regular $\Gl(1)$-module $\mathcal{S}=\ind_{\{\id\}}^{\Gl(1)}(1)\cong \mathbb{S}|_{\Gl(1)}$.
Twist the above sequence by the $\Gl(1)$-character $\nu^{-(n-1)/2}$. Note that $\mathcal{S}$ is invariant under twists. This yields a short exact sequence of functors $\CCC_{M}\to\CCC_{H}$
\begin{equation*}
\xymatrix{
0 \ar[r] & (j_!\boxtimes \nu^{-1})\circ (-)_{M\cap w^{-1}\widetilde{V'}w} \ar[r] & (-)_{\widetilde{V'}}\circ  j_! \ar[r] & (i_*\circ i^*)\boxtimes\mathcal{S} \ar[r] & 0\ .
}
\end{equation*}
Finally, lemma~\ref{lem:proto_long_exact_sequence} gives a long exact sequence of functors $\CCC_M\to \CCC_{\Gl_a(V')}$
\begin{equation*}
\xymatrix@-1mm{
0\ar[r]& j_!k^{\nu\rho} \ar[r] & k^\rho j_! \ar[r]& i_*i^* \otimes \mathcal{S}^\rho \ar[r]^\delta &
   j_!k_{\nu\rho}  \ar[r] & k_\rho j_! \ar[r]& i_*i^* \otimes \mathcal{S}_\rho \ar[r] & 0\ .
}
\end{equation*}
Since $\mathcal{S}^\rho=0$ and $\mathcal{S}_\rho\cong\C$ by example \ref{ex:S_perfect}, this implies the statement.
\end{proof}

Especially, lemma~\ref{3.1} and lemma~\ref{3.3} imply
\begin{cor}\label{cor:k_rho_fin}
 The functors $k_\rho$ and $k^\rho$ send modules of finite length to modules of finite length.
\end{cor}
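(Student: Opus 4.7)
The plan is to induct on $n=\dim V$, using the six-term exact sequence of lemma~\ref{lem:long exact sequence_affine} to reduce to the case of irreducible $M$, and then treating irreducibles via the classification in section~\ref{s:Gelfand-Kazhdan} together with lemmas~\ref{3.1} and~\ref{3.3}.

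First I would reduce to the irreducible case. Finite length in $\CCC_{n-1}$ is preserved under extensions, so for any short exact sequence $0\to E\to F\to G\to 0$ in $\CCC_n^{\fin}$ the long exact sequence
$$0\to k^\rho(E)\to k^\rho(F)\to k^\rho(G)\to k_\rho(E)\to k_\rho(F)\to k_\rho(G)\to 0$$
from lemma~\ref{lem:long exact sequence_affine} shows that the conclusion for $E$ and $G$ forces the conclusion for $F$. Induction on length therefore reduces the claim to irreducible $M$, and by the classification recalled in section~\ref{s:Gelfand-Kazhdan} every such $M$ equals $(j_!)^k i_*(\sigma)$ for some $k\geq 0$ and some irreducible smooth $\Gl(n-k)$-module $\sigma$.

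Now induct on $n$. In the base case $n=1$, for $M=i_*(\chi)$ lemma~\ref{3.1} identifies $k_\rho(M)\cong k^\rho(M)$ with a module which is at most one-dimensional, and for $M=\mathbb{S}=j_!(\C)$ lemma~\ref{lem:C_b_perfect} gives $k^\rho(\mathbb{S})=\mathbb{S}^\rho=0$ while lemma~\ref{lem:RR} together with $\deg\mathbb{S}=1$ gives $\dim\mathbb{S}_\rho=1$. For $n\geq 2$ and irreducible $M=(j_!)^k i_*(\sigma)$, if $k=0$ lemma~\ref{3.1} identifies both $k_\rho(M)$ and $k^\rho(M)$ with $i_*((\sigma_{\widetilde{V'}})_\rho)$, and the unnormalized Jacquet module $\sigma_{\widetilde{V'}}$ of the irreducible admissible $\Gl(V)$-module $\sigma$ is of finite length. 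If $k\geq 1$ write $M=j_!(N)$ with $N=(j_!)^{k-1} i_*(\sigma)$ irreducible in $\CCC_{n-1}^{\fin}$; lemma~\ref{3.3} then supplies the exact sequence
$$0\to j_! k_{\nu\rho}(N)\to k_\rho j_!(N)\to i_* i^*(N)\to 0$$
together with the natural equivalence $k^\rho j_!(N)\cong j_! k^{\nu\rho}(N)$. The induction hypothesis applied to $N$ makes $k_{\nu\rho}(N)$ and $k^{\nu\rho}(N)$ of finite length in $\CCC_{n-2}^{\fin}$, and since $j_!$ is exact and sends irreducibles to irreducibles the same holds for the corresponding $j_!$-images in $\CCC_{n-1}^{\fin}$. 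Moreover $i^*(N)$ either vanishes when $k\geq 2$ (by $i^*j_!=0$ from lemma~\ref{lem:Gelfand-Kazhdan}) or equals the irreducible $\sigma$ when $k=1$, so $i_* i^*(N)$ is of finite length in either case; the exact sequence then forces $k_\rho j_!(N)$ to have finite length as well.

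Given lemmas~\ref{3.1} and~\ref{3.3}, I expect no substantial obstacle. The two external inputs are the finite length of unnormalized Jacquet modules of irreducible $\Gl$-representations, which is standard, and the preservation of finite length under $i^*$ on $\CCC_{n-1}^{\fin}$, which itself reduces via the classification to the vanishing $i^*j_!=0$ from lemma~\ref{lem:Gelfand-Kazhdan}.
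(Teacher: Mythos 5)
Your proposal is correct and fills in exactly the argument the paper leaves implicit: the paper's "proof" is the single sentence that lemmas~\ref{3.1} and~\ref{3.3} imply the corollary, and the intended reasoning is precisely your reduction to irreducibles $(j_!)^k i_*(\sigma)$ via the long exact sequence of lemma~\ref{lem:long exact sequence_affine}, the finite length of Jacquet modules for the $k=0$ case, and the inductive stripping of $j_!$ via lemma~\ref{3.3} together with $i^*j_!=0$ and $i^*i_*=\id$. This is the same induction the paper later carries out explicitly in the proof of proposition~\ref{prop:K0k}, so there is nothing to correct.
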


\begin{cor}\label{cor:small_extension_lemma_preliminary}
For $X\in \CCC^{\fin}_{\Gl(1)}$ there is an exact sequence in ${\cal C}=\CCC_1^{\fin}$
$$0\to \mathbb{S}\otimes (\nu^{-1}X)_{\rho} \to k_\rho j_!i_*(X) \to i_*(X) \to 0$$
and an isomorphism $ \mathbb{S}\otimes (\nu^{-1}X)^{\rho} \cong k^\rho j_!i_*(X) \ .$
\end{cor}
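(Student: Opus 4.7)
The plan is to specialize Lemma~\ref{3.3} to the module $M = i_*(X) \in \CCC_1$ (the case $n = 1$) and then unwind each term explicitly. Lemma~\ref{3.3} directly yields an exact sequence
\begin{equation*}
0 \to j_! k_{\nu\rho}(i_*(X)) \to k_\rho j_!(i_*(X)) \to i_* i^*(i_*(X)) \to 0
\end{equation*}
in $\CCC_1$, together with a natural isomorphism $k^\rho j_! i_*(X) \cong j_! k^{\nu\rho} i_*(X)$.

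For the right-hand term, the translation subgroup $V \subset \Gl_a(V)$ acts trivially on everything in the image of $i_*$, so $V$-coinvariants are the identity and $i^* \circ i_* = \id$; hence $i_* i^* i_*(X) \cong i_*(X)$.

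For the left-hand terms I would first compute $k_{\nu\rho}(i_*(X))$ and $k^{\nu\rho}(i_*(X))$ in $\CCC_0$. In the base case $n = 1$ one has $V' = 0$, so the subgroup $\widetilde{V'}$ is trivial, and $k_{\nu\rho}$, $k^{\nu\rho}$ reduce to taking $(\nu\rho)$-coinvariants and $(\nu\rho)$-invariants under the $\Gl(1)$-factor of $H = \Gl_a(V') \times \Gl(1)$. On $i_*(X)$ this $\Gl(1)$-action is just the given action on $X$, so $k_{\nu\rho}(i_*(X)) = X_{\nu\rho}$ and $k^{\nu\rho}(i_*(X)) = X^{\nu\rho}$ as vector spaces. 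Absorbing the $\nu$-twist via the equivalences $k_\rho \circ \chi \cong \chi \circ k_{\rho\chi^{-1}}$ (and similarly for $k^\rho$) from Section~\ref{Computation of betarho}, these become $(\nu^{-1}X)_\rho$ and $(\nu^{-1}X)^\rho$ respectively. Finally, since $j_! : \CCC_0 \to \CCC_1$ is exact with $j_!(\C) = \mathbb{S}$, one has $j_!(V) = \mathbb{S} \otimes V$ for every vector space $V$, producing the required $\mathbb{S} \otimes (\nu^{-1}X)_\rho$ in the exact sequence and $\mathbb{S} \otimes (\nu^{-1}X)^\rho$ in the isomorphism.

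I do not expect any real obstacle: the corollary is the $n = 1$ specialization of Lemma~\ref{3.3}, combined with the base-case identifications $i^* \circ i_* = \id$ and $j_!(V) = \mathbb{S} \otimes V$. Lemma~\ref{3.1} enters only to certify that the underlying vector spaces $k_{\nu\rho}(i_*(X))$ and $k^{\nu\rho}(i_*(X))$ coincide, which is consistent with the parallel form of the two assertions. The only point requiring care is the bookkeeping of the $\nu$-twist between $X_{\nu\rho}$ and $(\nu^{-1}X)_\rho$, which is purely formal.
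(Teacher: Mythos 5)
Your proposal is correct and is precisely the argument the paper intends: the corollary is stated as an immediate consequence of Lemma~\ref{3.3} (applied with $n=1$ to $M=i_*(X)$), using $i^*i_*=\id$, the base-case identification $k_{\nu\rho}(i_*(X))\cong X_{\nu\rho}=(\nu^{-1}X)_\rho$ (and likewise for invariants), and $j_!(W)\cong\mathbb{S}\otimes W$ on $\CCC_0$. The only cosmetic remark is that Lemma~\ref{3.1} is not actually needed here, since the two assertions of the corollary use coinvariants and invariants separately; your direct computation in $\CCC_0$ already suffices.
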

Recall that $\mathbb{S}_n=(j_!)^n (\C)\in \CCC_n^{\fin}$.
\begin{cor}\label{3.2}\label{cor:k_rho_S2}
 $k_\rho({\mathbb S}_n) \cong {\mathbb S}_{n-1}$ and
$k^\rho({\mathbb S}_n) = 0 $ holds for $n\geq1$ and smooth $\rho$.
\end{cor}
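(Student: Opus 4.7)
I would prove both identities simultaneously by induction on $n\geq 1$, writing $\mathbb{S}_n = j_!(\mathbb{S}_{n-1})$ and exploiting lemma~\ref{3.3}.

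\emph{Base case $n=1$.} Here $\mathbb{S}_1=\mathbb{S}=C_c^\infty(k^\times)\subseteq C_b^\infty(k^\times)$, and the functors $k_\rho,k^\rho\colon\CCC_1\to\CCC_0$ are computed as the $(T,\rho)$-coinvariants and $(T,\rho)$-invariants respectively (since $V'=0$ forces $\widetilde{V'}$ trivial and $H=\Gl(1)=T$). The vanishing $k^\rho(\mathbb{S}_1)=\mathbb{S}^\rho=0$ is immediate from lemma~\ref{lem:C_b_perfect}. Since $\mathbb{S}$ has degree one, lemma~\ref{lem:RR} (or proposition~4.3.2 in Bump cited therein) gives $\dim\mathbb{S}_\rho=1$, so $k_\rho(\mathbb{S}_1)\cong\C=\mathbb{S}_0$.

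\emph{Inductive step.} Assume both assertions hold with $n-1$ in place of $n$ and for every smooth $\rho$. For the vanishing of $k^\rho$, apply the natural equivalence $k^\rho\circ j_!\cong j_!\circ k^{\nu\rho}$ of lemma~\ref{3.3} to $\mathbb{S}_{n-1}$: since $\nu\rho$ is again smooth, the inductive hypothesis yields $k^{\nu\rho}(\mathbb{S}_{n-1})=0$, whence $k^\rho(\mathbb{S}_n)=k^\rho j_!(\mathbb{S}_{n-1})\cong j_!(0)=0$.

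For $k_\rho(\mathbb{S}_n)$, feed $\mathbb{S}_{n-1}$ into the short exact sequence of lemma~\ref{3.3},
\begin{equation*}
0\longrightarrow j_!k_{\nu\rho}(\mathbb{S}_{n-1})\longrightarrow k_\rho j_!(\mathbb{S}_{n-1})\longrightarrow i_*i^*(\mathbb{S}_{n-1})\longrightarrow 0.
\end{equation*}
By induction $k_{\nu\rho}(\mathbb{S}_{n-1})\cong\mathbb{S}_{n-2}$, so the left term is $j_!(\mathbb{S}_{n-2})=\mathbb{S}_{n-1}$. The right term vanishes because $\mathbb{S}_{n-1}=j_!(\mathbb{S}_{n-2})$ and $i^*j_!=0$ by lemma~\ref{lem:Gelfand-Kazhdan}. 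The sequence therefore collapses to the desired isomorphism $k_\rho(\mathbb{S}_n)\cong\mathbb{S}_{n-1}$.

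The only potential obstacle is the degenerate case $n=1$, where one must verify by hand that $k_\rho,k^\rho$ really reduce to ordinary $(T,\rho)$-(co)invariants on $\CCC_{TS}$, because lemma~\ref{3.3} as stated presupposes $n\geq 1$ on both sides of its natural transformations. Once the base case is in hand, the induction runs by inspection, since $i^*$ kills every $j_!$-image and the natural equivalence and small extension sequence of lemma~\ref{3.3} do all the work.
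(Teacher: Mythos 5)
Your proposal is correct and follows essentially the same route as the paper: induction on $n$, with the base case settled by perfectness of $\mathbb{S}$ (lemma~\ref{lem:C_b_perfect}) and $\dim\mathbb{S}_\rho=1$ (lemma~\ref{lem:RR}), and the inductive step carried by the exact sequence and natural equivalence of lemma~\ref{3.3} together with $i^*(\mathbb{S}_{n-1})=0$ from $i^*j_!=0$. Your explicit check that for $n=1$ the functors reduce to $(T,\rho)$-(co)invariants is a sensible precaution that the paper leaves implicit.
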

\begin{proof}
The proof is by induction over $n$.
Indeed, $\mathbb{S}_1=\mathbb{S}$ is perfect by lemma~\ref{lem:C_b_perfect}.
$k_\rho(\mathbb{S})=\mathbb{S}_{\rho}$ is one-dimensional by lemma~\ref{lem:RR}.
For the induction step use lemma~\ref{lem:proto_small_extension_lemma} and note that $i^*(\mathbb{S}_n)=0$ by lemma~\ref{lem:Gelfand-Kazhdan}.
\end{proof}

On the level of the Grothendieck groups we obtain the following statement.
\begin{prop}\label{prop:K0k}\label{20.5}
For $M\in \CCC_n^{\fin}$ with $n\geq1$, in the Grothendieck group holds
$$  [k_\rho(M)] - [k^\rho(M)] = [j^!(M)]\quad \in K_0(\CCC_{n-1}^{\fin}) \ .$$
\end{prop}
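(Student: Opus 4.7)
The plan is to prove the statement by induction on $n\geq 1$, combining the Gelfand--Kazhdan decomposition of Lemma~\ref{lem:Gelfand-Kazhdan} with the functorial relations of Lemmas~\ref{3.1} and~\ref{3.3}.

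First, the long exact sequence of Lemma~\ref{lem:long exact sequence_affine} implies that the alternating sum $[k_\rho(M)]-[k^\rho(M)]$ is additive on short exact sequences, so for each smooth character $\rho$ it descends to a group homomorphism
\begin{equation*}
\varphi_\rho\colon K_0(\CCC_n^{\fin})\longrightarrow K_0(\CCC_{n-1}^{\fin}),\qquad [M]\longmapsto [k_\rho(M)]-[k^\rho(M)].
\end{equation*}
The proposition asserts the equality $\varphi_\rho=[j^!]$ of group homomorphisms.

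For the base case $n=1$, I would use the identification $\CCC_1\cong\CCC_{TS}$, under which $k_\rho(M)=M_{T,\rho}$, $k^\rho(M)=M^{T,\rho}$, and $j^!(M)=M_{S,\psi}$, all viewed in $K_0(\CCC_0^{\fin})\cong\Z$ via dimension. The claim then reduces to the identity
\begin{equation*}
\dim M_{T,\rho}-\dim M^{T,\rho}=\dim M_{S,\psi},
\end{equation*}
which is Lemma~\ref{lem:RR} combined with the identity $\dim M_{S,\psi}=\deg M$ from Section~\ref{s:catC}.

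For the inductive step with $n\geq 2$, given $M\in\CCC_n^{\fin}$ I apply $\varphi_\rho$ to the Gelfand--Kazhdan identity $[M]=[j_!j^!(M)]+[i_*i^*(M)]$ in $K_0(\CCC_n^{\fin})$. Lemma~\ref{3.1} gives $\varphi_\rho([i_*i^*(M)])=0$ since $k_\rho\circ i_*\cong k^\rho\circ i_*$. For the remaining summand, set $N=j^!(M)\in\CCC_{n-1}^{\fin}$. The exact sequence and the natural equivalence of Lemma~\ref{3.3}, together with exactness of $j_!$, yield
\begin{equation*}
\varphi_\rho([j_!(N)])=[j_!]\bigl([k_{\nu\rho}(N)]-[k^{\nu\rho}(N)]\bigr)+[i_*i^*(N)]=[j_!]\,\varphi_{\nu\rho}([N])+[i_*i^*(N)].
\end{equation*}
The inductive hypothesis applied to $N\in\CCC_{n-1}^{\fin}$ with the twisted character $\nu\rho$ gives $\varphi_{\nu\rho}([N])=[j^!(N)]$, and a final application of Gelfand--Kazhdan to $N$ in $\CCC_{n-1}^{\fin}$ collapses the sum to $[j_!j^!(N)]+[i_*i^*(N)]=[N]=[j^!(M)]$.

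The main obstacle is technical rather than conceptual: one must formulate the inductive hypothesis uniformly in the smooth character $\rho$, since the recursion shifts $\rho$ to $\nu\rho$ at every inductive step. Once this is set up, the argument is a bookkeeping exercise that reduces the $n$-dimensional claim via the Gelfand--Kazhdan filtration to the one-dimensional identity of Lemma~\ref{lem:RR}.
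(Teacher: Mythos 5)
Your proof is correct and follows essentially the same route as the paper's: both rest on additivity of $[k_\rho(-)]-[k^\rho(-)]$ via lemma~\ref{lem:long exact sequence_affine}, the vanishing of this difference on $i_*(\CCC_{\Gl(n)}^{\fin})$ from lemma~\ref{3.1}, and the exact sequence and natural equivalence of lemma~\ref{3.3}, which shift $\rho$ to $\nu\rho$ at each step. The only difference is organizational: the paper reduces to irreducibles $j_!^{m}i_*(\rho)$ and inducts on $m$, whereas you induct on $n$ using the Gelfand--Kazhdan identity $[M]=[j_!j^!(M)]+[i_*i^*(M)]$ for arbitrary $M$ and anchor the induction at $n=1$ with lemma~\ref{lem:RR} --- a base case the paper's induction also needs (for the constituents $\mathbb{S}_n$) but leaves implicit.
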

\begin{proof}
The left hand side is well-defined and additive in $M$
by lemma~\ref{lem:long exact sequence_affine}, 
so we can assume that $M$ is irreducible.
Then $M\cong j_!^{m}i_*(\rho)$ for irreducible $\rho\in\CCC_{\Gl(n-m)}^{\fin}$ with $0\leq m\leq n$, see \cite[\S5.13]{Bernstein-Zelevinsky76}.
For $m=0$ the claim follows from lemma~\ref{3.1} and the assertion $j^!i_*=0$.
The general case follows by induction over $m$ using lemma \ref{lem:proto_small_extension_lemma},
the assertions $i^*j_!=0$, $j^!j_!\cong\id$ and the functorial exact sequence in lemma~\ref{lem:Gelfand-Kazhdan}.
\end{proof}

\begin{cor}
 $M$ in ${\cal C}^{\fin}_n$ is perfect if and only if $[k_\rho(M)] \in K_0({\cal C}^{\fin}_{n-1})$ does not depend on $\rho$.
\end{cor}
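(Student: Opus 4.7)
The plan is to deduce the corollary directly from proposition~\ref{prop:K0k}, which rewrites as
\begin{equation*}
 [k^\rho(M)] \;=\; [k_\rho(M)] - [j^!(M)] \quad\in\; K_0(\CCC_{n-1}^{\fin})\ .
\end{equation*}
Since $[j^!(M)]$ does not involve $\rho$, the class $[k_\rho(M)]$ is independent of $\rho$ if and only if $[k^\rho(M)]$ is. The forward direction is then immediate: if $M$ is perfect, then $k^\rho(M)=0$ and hence $[k^\rho(M)]=0$ for every smooth character $\rho$, so $[k_\rho(M)]=[j^!(M)]$ is independent of $\rho$.

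For the converse, I would argue that $k^\rho(M)$ vanishes for all but finitely many characters~$\rho$. Given this, if $[k^\rho(M)]$ is constant in $\rho$, it must be equal to the value attained at some $\rho_0$ with $k^{\rho_0}(M)=0$, hence $[k^\rho(M)]=0$ for every $\rho$. Because $k^\rho(M)$ has finite length by corollary~\ref{cor:k_rho_fin}, its class in the Grothendieck group is the sum over composition factors with positive multiplicities, and so vanishes only for the zero module. Hence $k^\rho(M)=0$ for all $\rho$, i.e.\ $M$ is perfect.

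It remains to establish the finiteness claim: for $M\in \CCC_n^{\fin}$ the set of smooth $\rho$ with $k^\rho(M)\neq 0$ is finite. The main obstacle (and the only nontrivial ingredient) is this finiteness, which I would prove by induction on the length of $M$, reducing to irreducibles $M\cong j_!^m i_*(\sigma)$ with $\sigma$ an irreducible smooth representation of $\Gl(n-m)$. For $m\geq 1$ the natural equivalence $k^\rho\circ j_!\cong j_!\circ k^{\nu\rho}$ of lemma~\ref{lem:proto_small_extension_lemma} reduces the question inductively to smaller~$m$, and for $m=0$ lemma~\ref{3.1} gives $k^\rho(i_*\sigma)\cong i_*\bigl((\sigma_{\widetilde{V'}})^\rho\bigr)$, where the unnormalized Jacquet module $\sigma_{\widetilde{V'}}$ has finite length as a module over $\Gl(V')\times\Gl(1)$ and therefore admits only finitely many characters of $\Gl(1)$ with nonzero eigenspace. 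Passing back to arbitrary finite-length $M$ via the long exact sequence of lemma~\ref{lem:long exact sequence_affine} then yields the claim, completing the proof.
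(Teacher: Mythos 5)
Your proof is correct and follows essentially the same route as the paper: the equivalence is read off from proposition~\ref{prop:K0k} together with the observation (stated in the paper right after the corollary, and justified there by lemma~\ref{3.1} and corollary~\ref{3.2}) that $k^\rho(M)$ vanishes for almost all $\rho$, plus the standard fact that an effective class in $K_0$ of a finite-length category vanishes only for the zero object. Your inductive reduction to irreducibles $j_!^m i_*(\sigma)$ via the long exact sequence is exactly the intended justification of that finiteness claim.
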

By lemma~\ref{3.1} and corollary~\ref{3.2}, $k^\rho(M)$ is zero for almost all $\rho$.

\subsection{The functor $\eta$} \label{functor eta}

The normal subgroup $\tilde N$ of the split Bessel group $\tilde R$
decomposes into two eigenspaces with respect to the action of $T \tilde R$ by conjugation.
The subgroup $S_A$ is generated by the matrices $s_a, a\in k$; the other is generated by the matrices $s_c,c\in k$. The normalizer in $G$ of $S_A$ is the Klingen parabolic subgroup $Q$ 
of $G$. By definition $Q$ consists of all elements $g\in G$ with the property $g(e_1)\in \C \cdot e_1$, where $e_1$ denotes the first vector of the fixed symplectic basis of $k^4$.
The Levi subgroup $L_Q\subseteq Q$ is chosen
to consist of all $g\in Q$ such that  $g(e_3)\in\C \cdot e_3$.
We construct an exact sequence
\begin{equation*}
\xymatrix{
0 \ar[r] &\Gl_a(2) \ar[r]^-{q} &  Q/S_A \ar[r]^-{\mu} &  k^{\times} \ar[r] &  0 }\ .\end{equation*}
Indeed, every $g\in Q$ is a unique product $g\! =\! \mu(g) l(g) n(g) $
of some $\mu(g)$ in the center $Z\cong k^\times$, some $l(g)\in L_Q$ with $l(g)e_3=e_3$, and $n(g)$ in the unipotent radical $N_Q$ of $Q$
$$l(g)\!=\! \begin{pmatrix}
\alpha\delta-\beta\gamma & 0 & 0 & 0 \cr
0 & \alpha & 0 & \beta \cr
0 & 0      & 1 & 0 \cr
0 & \gamma & 0 & \delta 
\end{pmatrix} \!\in\! L_Q  \quad , \quad  n(g) \! =\! 
\begin{pmatrix} 1 & -y & * & b \cr
0 & 1      & b & 0 \cr
0 & 0      & 1 & 0 \cr
0 & 0      & y & 1 
\end{pmatrix} \!\in\! N_Q   \  $$
The homomorphism $q$ is well-defined by
$$q\left(\begin{pmatrix} \alpha & \beta \cr \gamma & \delta \end{pmatrix}.0\right) = l(g)
\quad \mbox{ and }
\quad q\left(\id. \begin{pmatrix} b \cr y \end{pmatrix}\right)= n(g)\ .$$
For every $I\in \CCC_G(\omega)$, the factor group $Q/S_A$ acts naturally on the $S_A$-coinvariant space $I_{S_A}$.
Restriction along $q$ defines a smooth representation 
$\overline{I} = I_{S_A}\circ q$ of $\Gl_a(2)$. %
The constituents of $\overline{I}$ are described in lemma~\ref{Structure of TILDE I}.
This defines an exact functor
\begin{equation*}
\eta: {\cal C}_G({\omega})  \longrightarrow {\cal C}_{2} \quad ,\quad I\mapsto \overline{I}\ . \end{equation*}
For more information, see \cite[\S2.5]{Roberts-Schmidt}.

\textit{Twist}.
Recall that the twist by a smooth character $\mu$ of $k^\times$ sends the central character ${\omega}$ to ${\omega}\mu^2$.
There is a natural equivalence of functors $\CCC_G(\omega)\to\CCC_{2}$
$$\eta\circ  \mu\ \quad\cong\ \quad \mu\circ\eta\ .$$

For the unipotent radical $U=N_Q\cap M$ of $B_G\cap M$, let $J_P(-)_\psi=J_P(-)_{U,\psi}$ be the twisted coinvariant space of the Siegel Jacquet module
with respect to a non-trivial smooth character $\psi:U\to \C$.
Fix equivalences of categories $\CCC_T\cong\CCC_{\Gl(1)}$ and $\CCC_{L_Q}(\omega)\cong\CCC_{\Gl(2)}$ by pullback along $\Gl(1)\to T\,,\ \lambda\mapsto x_\lambda$ and $q:Gl(2)\to L_Q$, respectively.
\begin{lem} \label{Dual action}
There are natural equivalences of functors
\begin{align*}
J_P(-)_\psi\ \cong\ i^*j^!\eta 
\quad:&\quad \CCC_G(\omega)\to \CCC_T\cong\CCC_{\Gl(1)} \ , \\
J_Q(-)\ \cong\ i^*\eta
\quad:&\quad \CCC_G(\omega)\to \CCC_{L_Q}(\omega)\cong\CCC_{\Gl(2)} \ .
\end{align*}

In other words, there are commutative diagrams
\begin{equation*}
\xymatrix@C+4mm{
{\cal C}_G({\omega})\ar[d]_{J_P(-)_\psi} \ar[r]^\eta &  {\cal C}_2 \ar[d]^{i^*j^!}   \cr
  \CCC_T\ar[r]^\simeq &   {\cal C}_{\Gl(1)}  {\ ,}
} \qquad
\xymatrix@C+4mm{
{\cal C}_G({\omega})\ar[d]_{J_Q(-)} \ar[r]^\eta &  {\cal C}_2 \ar[d]^{i^*}   \cr
  \CCC_{L_Q}(\omega)\ar[r]^{\simeq} &   {\cal C}_{\Gl(2)}  {\ .}
} 
\end{equation*}
\end{lem}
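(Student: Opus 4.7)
The plan is to realize both functors as iterated coinvariants of $I$ and to use transitivity (the relevant unipotent subgroups are compactly generated, so their coinvariant functors are exact) to collapse each side to a single coinvariant along the Borel unipotent $U_G = NU = N_Q U_Q$ of $G$, with a specific character in the first case. Naturality in $I$ is then inherited from the building blocks.

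For the second equivalence, $\eta(I) = I_{S_A}$ is a $\Gl_a(2)$-module via $q$. The translation subgroup $V \cong k^2$ of $\Gl_a(2)$ maps under $q$ onto $N_Q/S_A$, so transitivity gives $i^*\eta(I) = (I_{S_A})_V = I_{N_Q} = J_Q(I)$. The residual $\Gl(V)\cong \Gl(2)$-action pulls back along $q|_{\Gl(V)}\colon \Gl(2)\to L_Q$ to the $L_Q$-action on $J_Q(I)$, giving the identification $\CCC_{L_Q}(\omega) \cong \CCC_{\Gl(2)}$ stated in the lemma.

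For the first equivalence, choose the hyperplane $V'\subset V$ so that $q(V')$ is the $\{s_b\}$-direction of $N_Q/S_A$; then $V/V'$ maps to the ``$y$-direction'' of $N_Q$, which, via the explicit formula for $n(g)$, is identified (under $y = -u$) with the Siegel upper-triangular unipotent $U = N_Q\cap M$. Normalize $\psi_V$ so that the induced character on $V/V' \cong U$ agrees with $\psi$. Then $j^!\eta(I) = I_{N_Q,\chi_1}$ with $\chi_1$ trivial on $S_A$ and $\chi_1|_{N_Q/S_A} = \psi_V$. The mirabolic translations $V' \subset \widetilde{\Gl}_a(V')\subset \Gl(V)$ are realized inside $\Gl(2)\subset\Gl_a(2)$ as the upper-triangular unipotents, and a direct matrix computation with the formula for $l(g)$ identifies their image under $q$ with $U_Q = \{s_c\}$, the unipotent of $B_G\cap L_Q$. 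Since $N = S_A\cdot\{s_b\}\cdot U_Q$ and $N_QU_Q = NU = U_G$, a final application of transitivity yields $i^*j^!\eta(I) = I_{U_G,\chi}$ with $\chi$ trivial on $N$ and equal to $\psi$ on $U$. By transitivity again, $(I_N)_{U,\psi} = J_P(I)_\psi$ is the same quotient.

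The $T$-equivariance is automatic: the residual $\Gl(V')$-action on $i^*j^!\eta(I)$ pulls back via the composition $\Gl(V') = \{\diag(\alpha,1)\}\hookrightarrow \Gl(2) \stackrel{q}{\to} L_Q$ to the action of $T = \{x_\alpha\}$, matching the identification $\CCC_T \cong \CCC_{\Gl(1)}$ in the statement. Both $T$-actions on the common quotient $I_{U_G,\chi}$ coincide with the restriction of the $G$-action on $I$. The principal technical step — the only place actual computation is needed — is the matrix bookkeeping identifying the mirabolic $V'\subset \Gl(V)$ with $U_Q$ and the $V/V'$-direction with the Siegel $U$ via $q$; these are elementary but require some care with signs and orientations.
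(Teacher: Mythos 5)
Your proof is correct and follows exactly the route the paper takes: its entire proof is the one sentence ``This is clear by transitivity of the involved coinvariant functors.'' Your explicit matrix bookkeeping (identifying $q(V')$ with $\{s_b\}$, $V/V'$ with $U=N_Q\cap M$, the mirabolic translations with $\{s_c\}$, and $\Gl(V')$ with $T$ via $\diag(\alpha,1)\mapsto x_\alpha$) just makes that transitivity argument explicit, and it checks out.
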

\begin{proof}
This is clear by transitivity of the involved coinvariant functors.
\end{proof}

\begin{lem} \label{Structure of TILDE I}
For $I\in\CCC_G(\omega)$ there are functorial exact sequences
$$       0\to j_!j^!(\overline{I}) \to  \overline{I} \to  i_*(J_Q(I)|_{\Gl(2)})           \to 0 \quad \text{in}\ \CCC_2\ ,$$
$$       0\to {\mathbb S}^{m_I}    \to  j^!(\overline{I}) \to  i_*(J_P(I)_\psi) \to 0 \quad \text{in}\ \CCC_1\ ,$$
where $m_I$ denotes the dimension of the space of Whittaker functionals of $I$.
\end{lem}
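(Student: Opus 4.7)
My plan is to deduce both sequences from iterated applications of the Bernstein--Zelevinsky resolution $0 \to j_!j^! \to \id \to i_*i^* \to 0$ of lemma~\ref{lem:Gelfand-Kazhdan}, combined with the identifications of lemma~\ref{Dual action}.

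For the first sequence, I apply this resolution directly to $\overline{I} = \eta(I) \in \CCC_2$. The kernel term is $j_!j^!(\overline{I})$ by definition, and lemma~\ref{Dual action} identifies $i^*\eta(I) \cong J_Q(I)|_{\Gl(2)}$, so the cokernel term becomes $i_*(J_Q(I)|_{\Gl(2)})$, as required.

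For the second sequence, I apply the same resolution to $j^!(\overline{I}) \in \CCC_1$. The cokernel term $i^*j^!(\overline{I})$ is again identified via lemma~\ref{Dual action} with $J_P(I)_\psi$, producing $i_*(J_P(I)_\psi)$. For the kernel, $j^!j^!(\overline{I})$ lies in $\CCC_0$ and is therefore simply a complex vector space $W$; additivity of $j_!$ together with $j_!(\C) = \mathbb{S}$ then forces $j_!j^!(j^!(\overline{I})) \cong \mathbb{S}^{\dim W}$. It therefore suffices to prove $\dim W = m_I$.

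By transitivity of the underlying coinvariant functors, the composite $(j^!)^2 \circ \eta$ sends $I$ to the coinvariants of $I$ under a certain unipotent subgroup of $G$, twisted by an additive character. Tracing the embedding $q:\Gl_a(2) \hookrightarrow Q/S_A$ and the mirabolic embedding $\widetilde{\Gl}_a(V') \hookrightarrow \Gl(V) \subseteq \Gl_a(V)$ through the explicit block-matrix formulas for $l(g)$ and $n(g)$, this subgroup is the product of $S_A$, the image of the translation group $V \cong N_Q/S_A$ and the image of $V'$, which assembles to the full unipotent radical $N\cdot U$ of the Borel subgroup $B_G \subseteq G$; and the character pieced together from the trivial character on $S_A$, from $\psi_V$ and from $\psi_{V'}$ is a generic character of $N\cdot U$. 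Hence $(j^!)^2(\overline{I})$ is canonically the Whittaker coinvariant space of $I$, of dimension $m_I$. The main obstacle is exactly this last bookkeeping --- verifying via the explicit formulas for $q$ and the mirabolic embedding that the three successive layers of unipotent coinvariants really do assemble into the full generic Whittaker coinvariant on $G$, and that the resulting character is non-degenerate; once this is in place, everything else is formal manipulation of the Gelfand--Kazhdan resolution.
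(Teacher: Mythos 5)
Your proposal is correct and follows essentially the same route as the paper: apply the Gelfand--Kazhdan sequence of lemma~\ref{lem:Gelfand-Kazhdan} to $\overline{I}$ and to $j^!(\overline{I})$, identify the $i_*i^*$ terms via lemma~\ref{Dual action}, and observe that $\dim (j^!)^2(\overline{I})=m_I$. The paper dismisses this last point with ``by construction''; your unwinding of the coinvariant layers into a generic character of the full unipotent radical of the Borel is exactly the justification intended.
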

\begin{proof}
Apply the short exact sequence of lemma~\ref{lem:Gelfand-Kazhdan} to $\overline{I}$ and to $j^!(\overline{I})$. 
The terms on the right hand side are given by lemma~\ref{Dual action}.
Finally, note that by construction $\dim j^!j^! (\overline{I}) = {m_I}$. 
\end{proof}

\begin{cor}\label{cor:eta_fin}
If $I\in\CCC_G(\omega)^\fin$ has finite length, $\overline{I}\in\CCC_2^\fin$ has finite length.
\end{cor}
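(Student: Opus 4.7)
The plan is to prove finiteness of length for $\overline{I}$ by combining the two functorial exact sequences of lemma~\ref{Structure of TILDE I}. Since the class of finite length objects in an abelian category is closed under extensions and subquotients, and since $i_*$ and $j_!$ send finite length objects to finite length objects (they induce equivalences onto their essential images by lemma~\ref{lem:Gelfand-Kazhdan}), it suffices to show that the four outer terms appearing in those sequences are of finite length, namely $i_*(J_Q(I)|_{\Gl(2)})$, $\mathbb{S}^{m_I}$, and $i_*(J_P(I)_\psi)$; the remaining term $j_!j^!(\overline{I})$ is then handled by the second sequence together with $j_!$-preservation of length.

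First I would handle the Jacquet-module pieces $J_Q(I)$ and $J_P(I)$. These are the unnormalized Jacquet modules of $I$ along the Klingen and Siegel parabolic subgroups, and by the standard theorems of Bernstein–Zelevinsky they preserve finite length (the Jacquet functor is exact and sends finitely generated admissible modules of finite length to such modules over the Levi). Restriction of $J_Q(I)$ to the $\Gl(2)$-factor via the quotient $q:\Gl(2)\to L_Q$ then stays of finite length. This takes care of the right-hand term of the first exact sequence.

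Next I would treat the term $J_P(I)_\psi = (J_P(I))_{U,\psi}$. Here $J_P(I)$ is a finite length $M\cong\Gl(2)\times\Gl(1)$-module, and the twisted coinvariant functor $(-)_{U,\psi}$ on $\Gl(2)$-modules is exact (as $U$ is compactly generated); it agrees essentially with the Kirillov/Whittaker functor. By devissage to irreducible constituents and the uniqueness of Whittaker models for $\Gl(2)$, each constituent contributes at most a finite dimensional $T$-module, so $J_P(I)_\psi$ has finite length as a $T$-module. The same devissage also yields that $m_I$, the dimension of the space of Whittaker functionals on $I$, is bounded by the length of $I$, hence finite; thus $\mathbb{S}^{m_I}$ has finite length in $\CCC_1$. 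Both terms of the second exact sequence of lemma~\ref{Structure of TILDE I} therefore have finite length, giving $j^!(\overline I)\in\CCC_1^{\fin}$.

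Finally, applying $j_!$ (which preserves finite length) gives $j_!j^!(\overline{I})\in\CCC_2^{\fin}$, and together with $i_*(J_Q(I)|_{\Gl(2)})\in\CCC_2^{\fin}$ the first exact sequence shows $\overline{I}\in\CCC_2^{\fin}$. The main obstacle is essentially bookkeeping: one must know that Jacquet functors preserve finite length, that $m_I$ is finite (which follows from multiplicity-one Whittaker models together with devissage), and that $(-)_{U,\psi}$ preserves finite length on $\Gl(2)$-modules; none of these is deep, but they need to be assembled correctly via the two exact sequences.
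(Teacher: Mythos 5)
Your argument is correct and is essentially the paper's own proof: the paper likewise feeds the two exact sequences of lemma~\ref{Structure of TILDE I} (combined into the single sequence $0\to j_!(i_*(J_P(I)_\psi))\to\overline{I}/\mathbb{S}_2^{m_I}\to i_*(J_Q(I)|_{\Gl(2)})\to0$) into the facts that Jacquet modules have finite length, that $m_I$ is finite, and that $i_*$ and $j_!$ are exact and preserve irreducibles. Your sequential application of the two sequences versus the paper's single combined sequence is only a cosmetic difference.
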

\begin{proof}
Lemma~\ref{Structure of TILDE I} gives
an exact sequence in ${\cal C}_2$
$$  0\longrightarrow   j_!(i_*(J_P(I)_\psi)) \longrightarrow   \overline{I}/{\mathbb S}_2^{m_I} \longrightarrow   i_*(J_Q(I)|_{\Gl(2)}) \longrightarrow  0  \ .$$
It is well known that the Jacquet modules are has finite length.
Recall that $i_*$ and $j_!$ are exact and send irreducible modules to irreducible ones.
\end{proof}
In other words, the functor $\eta$ is well-defined between categories of smooth modules with finite length.
\begin{lem}\label{lem:long_exact_main}
For every $I\in \CCC_G(\omega)$ there is a long exact sequence in $\CCC_1$
\begin{equation*}
0\to k^\rho j_!j^!(\overline{I})  \to k^\rho(\overline{I}) \to k^\rho i_*(J_Q(I)) \to k_\rho j_!j^!(\overline{I})  \to k_\rho (\overline{I}) \to k_\rho i_*(J_Q(I)) \to 0\ .
\end{equation*}
If $I$ has finite length, then
$k^\rho i_*(J_Q(I))$ and $k_\rho i_*(J_Q(I))$ are finite-dimensional and it holds that 
$$\deg(k_\rho(\overline{I}))=\deg(k_\rho j_!j^!(\overline{I}))\quad,\quad \deg(k^\rho(\overline{I}))=\deg(k^\rho j_!j^!(\overline{I}))\ .$$
\end{lem}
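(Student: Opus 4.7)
The plan is to simply feed the first short exact sequence from lemma~\ref{Structure of TILDE I},
\begin{equation*}
0\to j_!j^!(\overline{I}) \to \overline{I} \to i_*(J_Q(I)|_{\Gl(2)}) \to 0\ ,
\end{equation*}
into the six-term exact sequence of lemma~\ref{lem:long exact sequence_affine}. This yields the claimed long exact sequence in $\CCC_1$ directly; no further homological bookkeeping is needed for the first assertion.

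For the finite-dimensionality of $k^\rho i_*(J_Q(I))$ and $k_\rho i_*(J_Q(I))$, I would argue as follows. When $I$ has finite length, $J_Q(I)$ is a $\Gl(2)$-module of finite length, hence so is its unnormalized Jacquet quotient $J_Q(I)_{\widetilde{V'}}$ under $\Gl(1)\times\Gl(1)$, which is therefore finite-dimensional. By lemma~\ref{3.1} we have natural equivalences
\begin{equation*}
k_\rho i_*(J_Q(I)) \cong i_*((J_Q(I)_{\widetilde{V'}})_\rho) \cong i_*((J_Q(I)_{\widetilde{V'}})^\rho) \cong k^\rho i_*(J_Q(I))\ ,
\end{equation*}
and these are $i_*$ applied to a finite-dimensional $\Gl(V')$-module, so they are finite-dimensional objects of $\CCC_1$. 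In particular, their degree in the sense of section~\ref{s:catC} is zero because finite-dimensional $TS$-modules contain no Jordan--H\"older constituent isomorphic to $\mathbb{S}$ (lemma~\ref{FINITE}).

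The degree identities then follow by splitting the six-term sequence into two four-term exact sequences via the image $A$ of the connecting morphism $k^\rho i_*(J_Q(I)) \to k_\rho j_!j^!(\overline{I})$. Since $A$ is a quotient of the finite-dimensional module $k^\rho i_*(J_Q(I))$, it too is finite-dimensional, hence of degree zero. Using additivity of degree on exact sequences in $\CCC_1$ (which follows from exactness of the $(S,\psi)$-coinvariant functor, see section~\ref{s:catC}), the sequence
\begin{equation*}
0 \to A \to k_\rho j_!j^!(\overline{I}) \to k_\rho(\overline{I}) \to k_\rho i_*(J_Q(I)) \to 0
\end{equation*}
gives $\deg(k_\rho j_!j^!(\overline{I})) = \deg(k_\rho(\overline{I}))$, and the sequence
\begin{equation*}
0 \to k^\rho j_!j^!(\overline{I}) \to k^\rho(\overline{I}) \to k^\rho i_*(J_Q(I)) \to A \to 0
\end{equation*}
gives $\deg(k^\rho j_!j^!(\overline{I})) = \deg(k^\rho(\overline{I}))$.

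There is no real obstacle here: the construction of the long exact sequence is an immediate application of lemma~\ref{lem:long exact sequence_affine} to a previously established short exact sequence, and the degree statements are a routine accounting argument once finite-dimensionality of the $i_*$-terms is noted via lemma~\ref{3.1}. The only subtlety to double-check is that lemma~\ref{3.1} indeed applies to $J_Q(I)$ as a $\Gl(2)$-module under the identification $\CCC_{L_Q}(\omega)\cong\CCC_{\Gl(2)}$ of lemma~\ref{Dual action}, which is how $i_*(J_Q(I))$ is understood in lemma~\ref{Structure of TILDE I}.
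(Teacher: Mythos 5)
Your proposal is correct and follows essentially the same route as the paper: apply the six-term sequence of lemma~\ref{lem:long exact sequence_affine} to the first short exact sequence of lemma~\ref{Structure of TILDE I}, use lemma~\ref{3.1} to see that the $i_*(J_Q(I))$-terms land in $i_*(\CCC^\fin_{\Gl(1)})$ and are hence finite-dimensional, and then count degrees. The paper compresses the final step to ``counting degrees,'' whereas you make the bookkeeping explicit by splitting at the image of the connecting map; this is just an elaboration, not a different argument.
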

\begin{proof}
Lemma~\ref{lem:long exact sequence_affine} applied to the first exact sequence of lemma~\ref{Structure of TILDE I} shows the first statement.
For the second statement note that the functors $k_\rho, k^\rho$ send objects in $i_* (\CCC^\fin_{\Gl(2)})$ to the full subcategory $i_*(\CCC^\fin_{\Gl(1)})$ of ${\CCC}^\fin_{1}$ by lemma~\ref{3.1}.
Counting degrees shows the third statement.
\end{proof}

\begin{lem}\label{lem:alpha_perfect}\label{20.8}
For irreducible generic representations $\Pi\in\CCC_G(\omega)$ the module
$\alpha(\Pi)=j^!(\overline\Pi)$ in $\CCC$ is perfect of degree one.
\end{lem}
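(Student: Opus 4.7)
The plan is to apply the second exact sequence of lemma~\ref{Structure of TILDE I}
\begin{equation*}
0 \to \mathbb{S}^{m_\Pi} \to \alpha(\Pi) \to i_*(J_P(\Pi)_\psi) \to 0
\end{equation*}
in $\CCC = \CCC_1$, and then to construct a Kirillov-type embedding $\alpha(\Pi)\hookrightarrow C_b^\infty(k^\times)$ using the Whittaker functional of the generic $\Pi$.

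\textbf{Degree.} Genericity of $\Pi$ gives $m_\Pi = 1$ by uniqueness of Whittaker models. The quotient $i_*(J_P(\Pi)_\psi)$ lies in the essential image of $i_*$, so its composition factors are characters $i_*(\chi)$, none isomorphic to $\mathbb{S}=j_!(\C)$. Hence $\deg\alpha(\Pi) = 1$.

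\textbf{Perfectness.} By lemma~\ref{lem:equivalences2} it suffices to produce an injection $p:\alpha(\Pi)\hookrightarrow C_b^\infty(k^\times)$. I would first identify the subgroup $V\cong k^2$ entering the definition of $j^!$ in section~\ref{s:Gelfand-Kazhdan} with $\langle s_b, y\rangle\subseteq N_0$, where $N_0$ is the Borel unipotent of $\GSp(4)$ and $y$ is the upper-triangular unipotent in the Levi $M\cong\Gl(2)$, and identify $S_A$ with $\langle s_a\rangle$. Then $S_AV\subseteq N_0$, and the character on $S_AV$ used to define $\alpha(\Pi)$ (trivial on $S_A$ and on the $s_b$-direction, non-trivial on the $y$-direction) agrees with the restriction to $S_AV$ of a suitably chosen generic Whittaker character $\psi_0$ of $N_0$: the roots of $s_a$ and $s_b$ are the non-simple roots $2\alpha+\beta$ and $\alpha+\beta$ on which a generic character may vanish, whereas $y$ corresponds to the simple short root $\alpha$, on which $\psi_0$ must be non-trivial. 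The Whittaker functional $\ell:\Pi\to\C$ then descends to a non-zero form $\bar\ell:\alpha(\Pi)\to\C$, and translation by $T=\{x_\lambda\}$ yields a $TS$-equivariant map
\begin{equation*}
p:\alpha(\Pi)\to C_b^\infty(k^\times)\ ,\quad v\mapsto\bigl(\lambda\mapsto \bar\ell(x_\lambda v)\bigr)\ .
\end{equation*}
Bounded support in $k$ follows by smoothness of $\psi_0$ from the relation $x_\lambda s_c x_\lambda^{-1}=s_{\lambda c}$ together with non-triviality of $\psi_0$ on $s_c$ (the simple long root $\beta$).

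Injectivity of $p$ reduces to showing $\alpha(\Pi)^\chi=0$ for every $T$-character $\chi$: by lemma~\ref{lem:C_b_perfect} such a $\chi\hookrightarrow\alpha(\Pi)$ would project isomorphically onto a subcharacter of $i_*(J_P(\Pi)_\psi)$, and by lemma~\ref{lem:Ext(X,S)} its preimage in $\alpha(\Pi)$ would be the split extension $\mathbb{S}\oplus\chi$ rather than the non-split $\mathbb{E}[\chi]$. Pulling this split sub-extension back via $\overline\Pi\twoheadrightarrow\alpha(\Pi)$ and then to $\Pi$ produces an $S_AV$-invariant functional whose $T$-translates are orthogonal to $\bar\ell$; combined with uniqueness of the Whittaker functional and the $P_3$-description of the Siegel Jacquet module $J_P(\Pi)$ for generic $\Pi$, this contradicts genericity. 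Hence $p$ is injective, and perfectness follows.

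\textbf{Main obstacle.} The principal technical difficulty is twofold: first, the identification of $S_AV\subseteq N_0$ with the matching of the characters $(\mathbf{1},\psi_V)$ and $\psi_0|_{S_AV}$ via the root structure of $\GSp(4)$; and second — the real crux — the argument that genericity of $\Pi$ precludes any split sub-extension $\mathbb{S}\oplus\chi\hookrightarrow\alpha(\Pi)$. The second step is essentially a $P_3$-type computation relying on the detailed structure of $J_P(\Pi)$ as a $\Gl(2)$-module for generic $\Pi$.
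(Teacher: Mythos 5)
Your degree computation is correct and is essentially the paper's: $\deg\alpha(\Pi)=m_\Pi=1$ by uniqueness of Whittaker models, read off from the second sequence of lemma~\ref{Structure of TILDE I}. The construction of the map $p$ from the Whittaker functional is also sound as far as it goes (it is the map of lemma~\ref{lem:perfect_model}, implication 1.\ to 2., specialised to $\alpha(\Pi)$). But constructing $p$ buys you nothing towards perfectness: since every finite-dimensional $TS$-submodule of $C_b^\infty(k^\times)$ vanishes (lemma~\ref{lem:C_b_perfect}), the kernel of any such $p$ is exactly ${\Danismanfunctor}(\alpha(\Pi))$, so ``$p$ is injective'' is a restatement of ``$\alpha(\Pi)$ is perfect'', not a route to it.

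The genuine gap is your final step, where you assert that a split sub-extension $\mathbb{S}\oplus\chi\hookrightarrow\alpha(\Pi)$ ``contradicts genericity'' via uniqueness of the Whittaker functional and a $P_3$-description of $J_P(\Pi)$. No argument is given, and none of the ingredients you invoke is strong enough: a copy of $i_*(\chi)$ inside $\alpha(\Pi)$ corresponds to vectors transforming under a \emph{degenerate} character of the unipotent radical, about which multiplicity one of Whittaker functionals says nothing; and the quotient $i_*(J_P(\Pi)_\psi)$ of $\alpha(\Pi)$ is nonzero for most generic $\Pi$, so the only question is whether the extension by $\mathbb{S}$ splits --- an extension-theoretic issue that genericity does not settle by itself (for non-generic $\Pi$ one has $\alpha(\Pi)=i_*i^*\alpha(\Pi)$, so some additional input is unavoidable). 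The paper closes exactly this gap by a different mechanism: assuming ${\Danismanfunctor}(A)\neq0$, it chooses $\rho$ with $(\nu^{-1}{\Danismanfunctor}(A))_\rho\neq0$, applies the Mellin functor $M_\rho=k_\rho\circ j_!$ together with corollary~\ref{cor:small_extension_lemma_preliminary} and corollary~\ref{cor:k_rho_S2} to force $\deg(M_\rho(A))\geq2$, and contradicts $\deg(M_\rho(A))=\deg(\widetilde\Pi)=1$. The decisive external input is therefore the uniqueness of split Bessel models for generic $\Pi$ (corollary~\ref{cor:gen_Mult1}, resting on proposition~\ref{lasst} and the combinatorics of section~\ref{s:Combinat}), which appears nowhere in your proposal; without an input of comparable strength the perfectness claim does not follow.
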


\begin{proof} 
Notice, $\deg(\alpha(\Pi))=\deg(j^! \eta(\Pi))=m_\Pi=1$ holds
by the uniqueness of Whittaker models.
Therefore $\mathbb S$ embeds into $A=\alpha(\Pi)$.
By lemma~\ref{PERF}, $A$ is perfect if and only if ${\Danismanfunctor}(A)= 0$.
Since $\mathbb S$ does not contain nontrivial finite dimensional submodules,
the natural map ${\Danismanfunctor}(A) \oplus \mathbb S \to A$ is injective
and by degree reasons its cokernel $Y=\pi_0(A)/{\Danismanfunctor}(A)$ is finite-dimensional.
From lemma \ref{lem:long exact sequence_affine} we obtain for every $\rho$ a long exact sequence in ${\cal C}_1$
\begin{equation*}
\cdots\to k^\rho\circ j_!(Y)\to k_\rho\circ j_!\bigl({\Danismanfunctor}(A)\oplus \mathbb{S}\bigr) \to k_\rho\circ j_!(A) \to k_\rho\circ j_!(Y)\to 0\ .
\end{equation*}
Assume that $A$ is not perfect, then there is a smooth $\Gl(1)$-character $\rho$ such that $(\nu^{-1}{\Danismanfunctor}(A))_\rho$ is non-zero.
Corollary~\ref{cor:small_extension_lemma_preliminary} implies $\deg(k_\rho\circ j_!({\Danismanfunctor}(A))>0$,
so the degree of $k_\rho\circ j_!({\Danismanfunctor}(A)\oplus\mathbb{S})$
is at least two by corollary~\ref{cor:k_rho_S2}.
Since $Y$ is finite-dimensional, proposition~\ref{prop:K0k} implies $\deg(k^\rho\circ j_!(Y))= \deg(k_\rho\circ j_!(Y))$. By counting degrees in the long exact sequence we obtain $\deg(k_\rho\circ j_!(A))\geq 2$.
However, lemma~\ref{lem:long_exact_main} and lemma~\ref{lem:beta=k_rho_eta} imply $\deg(k_\rho\circ j_!(A)) = \deg(\widetilde \Pi)$ and this is one by corollary~\ref{cor:gen_Mult1}.
A contradiction, so $A=\alpha(\Pi)$ is indeed perfect.
\end{proof}

\subsection{Bessel functors $\beta_\rho$ and $\beta^\rho$} \label{Bessel functor}

For a fixed central character $\omega$ and a $\Gl(1)$-character $\rho$, the Bessel functors $\beta_\rho,\beta^\rho:\CCC_G(\omega)\to\CCC_{TS}$ are defined in the introduction.
We first show in lemma~\ref{lem:beta=k_rho_eta} that the Bessel functors factorize over $\eta$.
Moreover, for irreducible $I\in \CCC_G(\omega)$ the degree of $\beta_\rho(I)$ is at most one 
by uniqueness of split Bessel models \cite[6.3.2 i)]{Roberts-Schmidt_Bessel}.
We give an independent proof in theorem~\ref{Mult1}.

\begin{lem}\label{lem:long exact sequence}
An exact sequence $0\to E\to F\to G\to 0$ in $\CCC_{G}(\omega)$
gives rise to a long exact sequence in $\CCC_{TS}$
$$ 0\to \beta^\rho(E)\to \beta^\rho(F)\to \beta^\rho(G)\to \beta_\rho(E) \to \beta_\rho(F)\to
\beta_\rho(G) \to 0 \ .$$
\end{lem}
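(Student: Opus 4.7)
The strategy is to reduce the statement to the analogous long exact sequence for the functors $k_\rho,k^\rho$ on $\CCC_{\Gl_a(2)}$, namely lemma~\ref{lem:long exact sequence_affine}. The bridge is the natural identification of the Bessel functors with compositions through the functor $\eta$.

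The first step is to establish natural equivalences
\begin{equation*}
\beta_\rho \;\cong\; k_\rho\circ\eta \qquad,\qquad \beta^\rho \;\cong\; k^\rho\circ\eta
\end{equation*}
of functors $\CCC_G(\omega)\to\CCC_{TS}\cong\CCC_1$ (this is essentially the content of the forthcoming lemma~\ref{lem:beta=k_rho_eta} already referenced in the proof of lemma~\ref{lem:alpha_perfect}). The identification is a transitivity computation: $\eta(V)=V_{S_A}$ carries its natural $\Gl_a(2)$-action coming from the Klingen setup, and unwinding the definitions of $k_\rho,k^\rho$ shows that first passing to $\widetilde{V'}$-coinvariants realises the $\tilde N$-coinvariants, while the subsequent $\rho$-(co)invariants exactly isolate the $\Lambda$-eigenspace for $\tilde T$ (since $\Lambda(\tilde t)=\rho(t_1/t_2)\omega(t_2)$ is determined by $\rho$ once the central character $\omega$ is fixed). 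All of this is compatible with the residual $TS$-action.

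Once these identifications are in place, the proof is immediate. Given the short exact sequence
\begin{equation*}
0\to E\to F\to G\to 0
\end{equation*}
in $\CCC_G(\omega)$, exactness of $\eta$ (stated among the basic properties of the functors in section~\ref{Bessel data}) yields a short exact sequence
\begin{equation*}
0\to \eta(E)\to \eta(F)\to \eta(G)\to 0
\end{equation*}
in $\CCC_2$. Applying lemma~\ref{lem:long exact sequence_affine} to this sequence produces the six-term exact sequence
\begin{equation*}
0\to k^\rho\eta(E)\to k^\rho\eta(F)\to k^\rho\eta(G)\to k_\rho\eta(E)\to k_\rho\eta(F)\to k_\rho\eta(G)\to 0
\end{equation*}
in $\CCC_1$, and translating back via the natural equivalences above gives the claimed sequence in $\CCC_{TS}$.

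The only real obstacle is the identification $\beta_\rho\cong k_\rho\circ\eta$ (and the analogue for $\beta^\rho$). This is not a deep result but requires a careful bookkeeping of the two-step coinvariant/eigenspace operations: first one has to check that the order ``$S_A$-coinvariants followed by $\widetilde{V'}$-coinvariants'' coincides with taking $\tilde N$-coinvariants (using $\tilde N=S_A\oplus \widetilde{V'}$ in the appropriate parabolic picture), and then that the residual action of the $\Gl(1)$-factor in $H=\Gl_a(V')\times\Gl(1)$ agrees with the action of $\tilde T$ modulo the centre, so that the eigencondition $\rho$ on this factor matches the eigencondition $\Lambda$ on $\tilde T$. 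Everything else in the proof is then purely formal.
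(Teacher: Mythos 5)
Your proof is correct and coincides with the second of the two routes the paper itself gives: the paper's proof reads ``Alternatively, use lemma~\ref{lem:beta=k_rho_eta} and exactness of $\eta$,'' which is exactly your factorization $\beta_\rho\cong k_\rho\circ\eta$, $\beta^\rho\cong k^\rho\circ\eta$ followed by lemma~\ref{lem:long exact sequence_affine}; there is no circularity, since lemma~\ref{lem:beta=k_rho_eta} is proved by transitivity of coinvariants alone. The paper's primary route is slightly more direct: $\beta_\rho$ and $\beta^\rho$ are by definition $\tilde N$-coinvariants (exact, as $\tilde N$ is compactly generated) followed by $\Lambda$-(co)invariants for $\tilde T$ with $\tilde T/Z\cong k^\times$, so lemmas~\ref{lemma 3} and~\ref{lemma 2} apply immediately without passing through $\eta$ — but both arguments rest on the same two facts, and yours is equally valid.
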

\begin{proof}
 This is implied by lemmas~\ref{lemma 3} and~\ref{lemma 2} because $\widetilde{N}$ is compactly generated and $\widetilde{T}/Z_G\cong k^\times$. Alternatively, use lemma~\ref{lem:beta=k_rho_eta} and exactness of $\eta$.
\end{proof}

\begin{lem}\label{lem:beta=k_rho_eta}
There are natural equivalences $\beta_\rho \cong k_{\rho} \circ \eta $
and $\beta^\rho \cong k^\rho \circ \eta$ of functors $\CCC_G(\omega)\to \CCC$. In other words, there are commutative diagrams
\begin{equation*}
\xymatrix@C+5mm{
{\cal C}_G({\omega})\ar[d]_{\beta_\rho} \ar[r]^\eta &  {\CCC}_2 \ar[d]^{k_\rho}   \cr
     \CCC_{TS}       \ar[r]^\simeq                &   \CCC_{1} }
\quad , \quad  
\xymatrix@C+5mm{
{\cal C}_G({\omega})\ar[d]_{\beta^\rho} \ar[r]^\eta &  {\CCC}_2 \ar[d]^{k^\rho}   \cr
       \CCC_{TS}  \ar[r]^\simeq                   &   \CCC_1 
}
\end{equation*}
with $k_\rho$ and $k^\rho$ defined in section \ref{the functors beta} and the equivalence of categories $\CCC_{TS}\cong\CCC_1$ described in example~\ref{example 4}.
\end{lem}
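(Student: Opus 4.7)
The plan is to reduce both equivalences to the transitivity of (co)invariant functors, once the relevant subgroups of $G$ have been identified on both sides of the claimed isomorphism via the map $q:\Gl_a(2)\to Q/S_A$. Both $\beta_\rho(I)$ and $k_\rho(\eta(I))$ are obtained from $I$ as a chain of coinvariants along subgroups, and I would show step-by-step that these chains agree; the argument for $\beta^\rho\cong k^\rho\circ\eta$ is the same, with the last coinvariant step replaced by an invariant step.

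The key identifications are as follows. By definition, $\eta(I)=\overline I=I_{S_A}$ is the $S_A$-coinvariant space, restricted along $q$. Under $q$, the subgroup $\widetilde{V'}=\{(\begin{smallmatrix}1 & * \\ 0 & 1\end{smallmatrix})\}\subseteq\Gl(V)$ maps isomorphically onto $S_C=\langle s_c\rangle$: this is precisely the formula $l(g)$ evaluated at $\alpha=\delta=1$, $\gamma=0$, $\beta=c$, which yields $s_c\in G$. The Levi $H=\Gl_a(V')\times\Gl(1)$ of the little group embeds so that the $\Gl(1)$-factor corresponds, via $q$, to the one-parameter subgroup $\tilde T^0=\{\tilde t\mid t_2=1\}\subset\tilde T$, because $(1.0,x)\in H$ is sent by $q$ to $\mathrm{diag}(x,1,1,x)=\tilde t|_{t_1=x,\,t_2=1}$. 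Moreover, the $\Gl_a(V')=\Gl_a(1)$-factor of $H$ corresponds under $q$ to the group $TS\subset G$ via the isomorphism of example~\ref{example 4}: $\Gl(V')=T$ (sending $\lambda$ to $x_\lambda$) and the translation group $V'=k$ to $S=\langle s_b\rangle$ (sending $b$ to $s_b$, via $n(g)$ with $y=0$).

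Since $\tilde N=S_A\oplus S_C$ and since $\Lambda|_Z=\omega$ coincides with the central character of $\Pi$, the $(\tilde T,\Lambda)$-coinvariants reduce to $(\tilde T^0,\rho)$-coinvariants: indeed $\Lambda(\tilde t)=\rho(t_1)\rho^\divideontimes(t_2)$ restricts on $\tilde T^0$ to $\rho$. Transitivity of coinvariants now gives
\begin{equation*}
\beta_\rho(I)\;=\;(I_{\tilde N})_{\tilde T,\Lambda}\;=\;\bigl((I_{S_A})_{S_C}\bigr)_{\tilde T^0,\rho}\;\cong\;\bigl(\overline{I}_{\widetilde{V'}}\bigr)_{\rho}\;=\;k_\rho(\overline I)\;=\;k_\rho(\eta(I))\ ,
\end{equation*}
and the residual action of $TS$ on both sides agrees under the identification $\Gl_a(V')\cong TS$. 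The same argument with the final coinvariant step $(-)_{\tilde T^0,\rho}$ replaced by invariants $(-)^{\tilde T^0,\rho}$ yields $\beta^\rho\cong k^\rho\circ\eta$. Functoriality of the identifications in $I$ is immediate, as every step is a functorial (co)invariant construction.

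The only real obstacle is bookkeeping: verifying that, under $q$, no unwanted modular twist enters the identification of the $\Gl(1)$-factor of $H$ with $\tilde T^0$ and of the $\Lambda$-character with $\rho$. This works out because $q$ is defined so that the central factor $\mu(g)$ absorbs the central scalar, leaving $\Lambda$ to restrict to exactly $\rho$ on $\tilde T^0$. Once this is confirmed, the equivalence is formal, in the same spirit as lemma~\ref{Dual action}.
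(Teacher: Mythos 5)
Your proposal is correct and follows the same route as the paper, which simply invokes transitivity of coinvariant functors to get $I_{\widetilde N}\cong \overline I_{L_Q\cap N}$ (with $L_Q\cap N=S_C$ corresponding to $\widetilde{V'}$ under $q$) and declares the rest straightforward; you have merely spelled out the "straightforward" part, and your explicit matchings — $\widetilde{V'}\leftrightarrow S_C$, the $\Gl(1)$-factor of $H$ with $\{\diag(x,1,1,x)\}$, $\Gl_a(V')\leftrightarrow TS$, and the reduction of $(\widetilde T,\Lambda)$-coinvariants to $(\widetilde T^0,\rho)$-coinvariants using $\Lambda|_Z=\omega$ — all check out against the formulas for $l(g)$ and $n(g)$. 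No gaps.
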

\begin{proof}
Transitivity of coinvariant functors gives an isomorphism
$I_{\widetilde{N}}\cong \overline{I}_{L_Q\cap N}$ of smooth $T\widetilde{T}S$-modules, functorial in $I\in \CCC_G(\omega)$. The rest is straightforward.
\end{proof}

\begin{cor}\label{cor:Bessel_finite}
The Bessel functors define functors $\CCC_G^\fin(\omega)\to \CCC^\fin_{TS}$ between the full subcategories of representations with finite length.
\end{cor}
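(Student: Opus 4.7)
The plan is to reduce the statement to two preservation results already established in the preceding subsections by exploiting the factorization of the Bessel functors through the functor $\eta$.

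First I would invoke lemma~\ref{lem:beta=k_rho_eta}, which provides natural equivalences $\beta_\rho \cong k_\rho\circ\eta$ and $\beta^\rho\cong k^\rho\circ\eta$ of functors $\CCC_G(\omega)\to \CCC_{TS}$. Thus it suffices to check that the composition of $\eta$ followed by $k_\rho$ (respectively $k^\rho$) preserves finite length when restricted to $\CCC_G^\fin(\omega)$.

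Second I would apply corollary~\ref{cor:eta_fin} to conclude that $\eta$ restricts to a functor $\CCC_G^\fin(\omega)\to \CCC_2^\fin$, so that for every $I\in\CCC_G^\fin(\omega)$ the module $\overline{I}=\eta(I)$ lies in $\CCC_2^\fin$. Then I would apply corollary~\ref{cor:k_rho_fin}, which asserts that $k_\rho$ and $k^\rho$ send objects of finite length to objects of finite length; combined with the equivalence $\CCC_{TS}\cong\CCC_1$ of example~\ref{example 4}, this gives $\beta_\rho(I),\beta^\rho(I)\in\CCC_{TS}^\fin$.

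There is no real obstacle here: once one accepts the factorization through $\eta$, the corollary is a straightforward composition of two already proved preservation statements. The substantive work was done in establishing corollary~\ref{cor:eta_fin} (via lemma~\ref{Structure of TILDE I} and the known finiteness of Jacquet modules) and corollary~\ref{cor:k_rho_fin} (via lemma~\ref{3.1} and lemma~\ref{lem:proto_small_extension_lemma} which reduce everything to the irreducible building blocks $j_!^m i_*(\rho)$); the present corollary is purely formal from these.
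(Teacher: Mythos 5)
Your proposal is correct and follows exactly the paper's own argument: factor the Bessel functors through $\eta$ via lemma~\ref{lem:beta=k_rho_eta}, then combine corollary~\ref{cor:eta_fin} (finiteness preservation for $\eta$) with corollary~\ref{cor:k_rho_fin} (finiteness preservation for $k_\rho$, $k^\rho$). Nothing to add.
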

\begin{proof}
The corresponding assertions hold for $\eta$ by corollary~\ref{cor:eta_fin} and for $k_\rho$, $k^\rho$ by corollary~\ref{cor:k_rho_fin}.
\end{proof}

\begin{rmk}\label{rmk:Bessel_twist}
Recall that the twist $\CCC_G(\omega)\to\CCC_G(\mu^2\omega)$ by a smooth character $\mu$ of $\Gl(1)$ satisfies natural equivalences
$\beta_\rho\circ \mu \cong \mu\circ\beta_{\rho\mu^{-1}}$ and
$\beta^\rho\circ \mu\cong \mu\circ\beta^{\rho\mu^{-1}}$.
Recall that $\rho\mapsto\rho^\divideontimes=\omega/\rho$ defines an involution on characters of $\Gl(1)$.\end{rmk}
\begin{lem}[Duality] \label{duality}
There are natural equivalences
$\beta_\rho \cong \beta_{\rho^\divideontimes}$ and
$\beta^\rho \cong \beta^{\rho^\divideontimes}$.
For every $I\in {\cal C}_G^{\fin}({\omega})$
there is a commutative diagram of $TS$-modules
\begin{equation*}
\xymatrix@+2mm{ I  \ar[d]_{I(\mathbf{s}_1)} \ar[r] &    \beta_\rho(I) \ar[d]^\cong   \cr
I  \ar[r] &    \beta_{\rho^\divideontimes} (I)\ .}
\end{equation*}
\end{lem}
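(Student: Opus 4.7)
The plan is to realize both equivalences via the action of the Weyl element $\mathbf{s}_1$. The key matrix computation is the following. Writing $\mathbf{s}_1 = \diag(P,P)$ with $P = \bigl(\begin{smallmatrix} 0 & 1 \\ 1 & 0\end{smallmatrix}\bigr)$, conjugation by $\mathbf{s}_1$ fixes every $x_\lambda \in T$ and every $s_b \in S$ pointwise, normalizes $\tilde T$ via the swap $\diag(t_1,t_2,t_2,t_1) \mapsto \diag(t_2,t_1,t_1,t_2)$, and normalizes $\tilde N$ via $s_{a,0,c} \mapsto s_{c,0,a}$. Thus $\mathbf{s}_1$ commutes elementwise with $TS$, normalizes $\tilde R = \tilde T\tilde N$, and its conjugation action carries the Bessel character $\Lambda(\tilde t) = \rho(t_1)\rho^\divideontimes(t_2)$ of $\tilde R$ to $\tilde t \mapsto \rho^\divideontimes(t_1)\rho(t_2)$, which is precisely the Bessel character attached to $\rho^\divideontimes$.

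Given this, the equivalence on objects is essentially tautological. For $I \in \CCC_G^{\fin}(\omega)$, the $\C$-linear map $I \to I$ given by $v \mapsto I(\mathbf{s}_1)v$ is $TS$-equivariant because $\mathbf{s}_1$ centralizes $TS$. A direct check shows that it sends the defining relations of $\beta_\rho(I) = I_{\tilde R,\Lambda}$ to those of $\beta_{\rho^\divideontimes}(I)$: for $\tilde r \in \tilde R$ and $\tilde r' = \mathbf{s}_1\tilde r\mathbf{s}_1^{-1}$ one has $I(\mathbf{s}_1)\bigl(I(\tilde r)v - \Lambda(\tilde r)v\bigr) = I(\tilde r')I(\mathbf{s}_1)v - \Lambda^{\mathbf{s}_1}(\tilde r')I(\mathbf{s}_1)v$. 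Hence $I(\mathbf{s}_1)$ descends to a $TS$-linear map $\beta_\rho(I) \to \beta_{\rho^\divideontimes}(I)$; since $\mathbf{s}_1^2 = 1$, the same construction applied to $\mathbf{s}_1^{-1}$ yields a two-sided inverse, and the square displayed in the statement is by construction the square defining this descent.

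Naturality in $I$ is built into the construction, so this gives the natural equivalence $\beta_\rho \cong \beta_{\rho^\divideontimes}$. For the second equivalence $\beta^\rho \cong \beta^{\rho^\divideontimes}$ the argument is identical in spirit: $I(\mathbf{s}_1)$ descends to an involution of the $\tilde N$-coinvariants $I_{\tilde N}$ which intertwines the $\tilde T$-action with the swap $t_1 \leftrightarrow t_2$, and therefore carries the $\Lambda$-eigenspace of $\tilde T$ bijectively onto the $\Lambda^{\mathbf{s}_1}$-eigenspace. There is no real obstacle to this proof: all the substance lies in the matrix check that $\mathbf{s}_1$ centralizes $TS$ and swaps the two coordinates of $\tilde T$ and $\tilde N$; once this is established, everything else is formal manipulation of coinvariants.
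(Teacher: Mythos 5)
Your proof is correct and is essentially the paper's own argument: the paper likewise observes that $\mathbf{s}_1$ centralizes $TS$, normalizes $\tilde T$ and $\tilde N$, and swaps $t_1$ and $t_2$, hence carries $\Lambda=\rho\boxtimes\rho^\divideontimes$ to $\rho^\divideontimes\boxtimes\rho$. You have merely spelled out the descent to coinvariants and eigenspaces that the paper leaves implicit.
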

\begin{proof}
The Weyl group element $\mathbf{s}_1\in G$
normalizes $\tilde{T}$ and $\widetilde{N}$ and centralizes $TS$. Conjugation with $\mathbf{s}_1$
permutes $t_1$ and $t_2$ and thus sends $\Lambda=\rho\boxtimes\rho^\divideontimes$ to $\Lambda^{\mathbf{s}_1}=\rho^\divideontimes\boxtimes\rho$, see section \ref{Bessel data} for the notation.
\end{proof}

\begin{prop}\label{lasst}
For $I\in {\cal C}_G^{\fin}({\omega})$ let
$$d(I,\rho) = \dim(\Hom_{T}(\delta_P^{-1/2}J_P(I)_{\psi},\nu^{-1/2} \rho))\ ,$$
then the degree of $\widetilde{I}=\beta_\rho(I)$ satisfies the estimate
 $$\ d(I,\rho) \leq\ \deg(\widetilde I)
\ \leq\ m_I + d(I,\rho) \ $$ where $m_I$ denotes the multiplicity of Whittaker models of $I$.
If $J_P(I)_\psi=0$, then $\deg(\widetilde I)= m_I$. If $J_P(I)=0$ vanishes, $\widetilde I$ is the direct sum of $m_I$ copies of $\mathbb{S}$.
\end{prop}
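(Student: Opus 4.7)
The approach is to identify $\widetilde I = \beta_\rho(I)$ with $k_\rho(\overline I)$ via Lemma~\ref{lem:beta=k_rho_eta} and to read off the degree from the structure of $\overline I$ furnished by Lemma~\ref{Structure of TILDE I}. Lemma~\ref{lem:long_exact_main} says that $\deg(\widetilde I) = \deg(k_\rho j_! j^!(\overline I))$, so the contribution of $J_Q(I)$ is invisible to the degree and everything reduces to an analysis of the sub-object $j_! j^!(\overline I) \subseteq \overline I$.

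First I would apply the exact functor $j_!$ to the second short exact sequence of Lemma~\ref{Structure of TILDE I} to obtain, using $j_!(\mathbb S) = \mathbb S_2$, a short exact sequence in $\CCC_2$
\begin{equation*}
0 \to \mathbb S_2^{m_I} \to j_! j^!(\overline I) \to j_! i_*(J_P(I)_\psi) \to 0\ .
\end{equation*}
Lemma~\ref{lem:long exact sequence_affine} applied to this sequence, combined with Corollary~\ref{cor:k_rho_S2} (which gives $k^\rho(\mathbb S_2) = 0$ and $k_\rho(\mathbb S_2) = \mathbb S$) and Corollary~\ref{cor:small_extension_lemma_preliminary} for the outer terms, produces the five-term exact sequence
\begin{equation*}
0 \to k^\rho j_! j^!(\overline I) \to \mathbb S \otimes (\nu^{-1}J_P(I)_\psi)^\rho \to \mathbb S^{m_I} \to k_\rho j_! j^!(\overline I) \to k_\rho j_! i_*(J_P(I)_\psi) \to 0
\end{equation*}
in $\CCC$. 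Since $\delta_P^{-1/2}|_T = \nu^{-3/2}$, one checks that $d(I,\rho) = \dim\Hom_T(J_P(I)_\psi,\nu\rho)$ equals both $\dim(\nu^{-1}J_P(I)_\psi)^\rho$ and $\dim(\nu^{-1}J_P(I)_\psi)_\rho$, so the second and fifth terms of the sequence above each have degree $d(I,\rho)$, while the middle term $\mathbb S^{m_I}$ has degree $m_I$.

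Setting $a := \deg(k^\rho j_! j^!(\overline I))$, the additivity of $\deg$ on the five-term exact sequence yields $\deg(\widetilde I) = m_I + a$. The injection of the leftmost term into $\mathbb S\otimes(\nu^{-1}J_P(I)_\psi)^\rho$ gives $a \le d(I,\rho)$, which supplies the upper bound $\deg(\widetilde I) \le m_I + d(I,\rho)$. Conversely, the image of the second arrow sits inside $\mathbb S^{m_I}$ and has degree $d(I,\rho) - a$, which must therefore satisfy $d(I,\rho) - a \le m_I$; this yields the lower bound $\deg(\widetilde I) = m_I + a \ge d(I,\rho)$. The specialization $J_P(I)_\psi = 0$ forces $d(I,\rho) = 0$ and pins down $\deg(\widetilde I) = m_I$.

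For the final assertion, suppose $J_P(I) = 0$. The previous case already gives $\deg(\widetilde I) = m_I$, and Lemma~\ref{lem:Bessel_Jacquet} further yields $\pi_0(\widetilde I) \cong J_P(I)_{\widetilde T,\rho} = 0$, that is $i^*(\widetilde I) = 0$. Lemma~\ref{lem:Gelfand-Kazhdan} then identifies $\widetilde I$ with $j_! j^!(\widetilde I)$, where $j^!(\widetilde I)$ is a complex vector space of dimension $\deg(\widetilde I) = m_I$, so $\widetilde I \cong j_!(\C^{m_I}) = \mathbb S^{m_I}$. The main difficulty is the careful bookkeeping in the five-term degree count and verifying that the definition of $d(I,\rho)$, with its $\delta_P^{-1/2}$ and $\nu^{-1/2}$ twists, matches the quantity $\dim(\nu^{-1}J_P(I)_\psi)^\rho$ arising naturally from Corollary~\ref{cor:small_extension_lemma_preliminary}.
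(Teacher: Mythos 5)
Your proposal is correct and follows essentially the same route as the paper: the same reduction to $j_!j^!(\overline I)$ via Lemmas~\ref{lem:beta=k_rho_eta} and \ref{lem:long_exact_main}, the same long exact sequence obtained from the second sequence of Lemma~\ref{Structure of TILDE I}, and the same degree count using Corollaries~\ref{cor:k_rho_S2} and \ref{cor:small_extension_lemma_preliminary}. The only (harmless) deviation is in the final case $J_P(I)=0$, where the paper kills the Klingen-Jacquet contribution directly while you deduce $i^*(\widetilde I)=0$ from Lemma~\ref{lem:Bessel_Jacquet} and invoke Lemma~\ref{lem:Gelfand-Kazhdan}; both arguments are valid.
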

\begin{proof} 
The $T$-module $X=J_P(I)_\psi$ is isomorphic to $i^*j^!(\overline{I})$ by lemma~\ref{Dual action}.
The second  exact sequence of corollary~\ref{Structure of TILDE I} is
$$ 0 \to {\mathbb S}^{m_I} \to  j^!(\overline{I})  \to  i_*(X) \to 0 \ .$$
Right exactness of the functor $k_\rho\circ j_!$ and the assertion
$k_\rho\circ j_!({\mathbb S})={\mathbb S}$ by corollary~\ref{cor:k_rho_S2}
yield a long exact sequence
$$ \cdots\to k^\rho j_!i_*(X)\to \mathbb{S}^{m_I} \to  k_\rho j_!j^!(\overline{I})  \to  k_\rho j_!i_*(X) \to 0 \ .$$
Lemma~\ref{cor:small_extension_lemma_preliminary} and lemma~\ref{Dual action} imply %
$$\deg\bigl(k_\rho j_!i_*(X)\bigr) = \deg\bigl(j_!k_{\nu\rho} i_*(X)\bigr)= \dim(\Hom_{T}(\nu^{-1} X,\rho))=d(I,\rho)\ .$$
By counting degrees in the long exact sequence we obtain the estimate
$$\ d(I,\rho) \leq\ \deg(k_\rho j_!j^!(\overline{I}))\ \leq\ m_I + d(I,\rho) \ .$$
Now lemma~\ref{lem:long_exact_main} and lemma~\ref{lem:beta=k_rho_eta} imply the first statement because
\begin{equation*}\deg(k_\rho j_! j^!(\overline{I}))
= \deg( k_\rho(\overline{I})) = \deg(\widetilde I) \ .
\end{equation*}

If $X=0$ vanishes, then ${\mathbb S}^{m_I} \cong k_\rho j_!j^!(\overline{I})$ by the long exact sequence above and this has degree $m_I=\deg(\widetilde{I})$. 

If finally $J_P(I)=0$ vanishes, then so does the Borel-Jacquet module $J_P(I)_{U}$.
Lemma~\ref{3.1} shows that then $k_\rho(i_*(J_Q(I))) = 0$ and $k^\rho(i_*(J_Q(I)))=0$ also vanish. Lemma~\ref{lem:long_exact_main} and lemma~\ref{lem:beta=k_rho_eta} imply $\widetilde{I}\cong k_\rho(\overline{I})\cong k_\rho j_!j^!(\overline{I})$.
Clearly $X=0$ vanishes, so $k_\rho j_!j^!(\overline{I})\cong  \mathbb{S}^{m_I}$.
\end{proof}

\begin{cor}\label{cor:gen_Mult1}
For irreducible generic $\Pi\in \CCC_G(\omega)$ and every smooth Bessel character $\rho$, we have
 $\deg(\widetilde{\Pi})=1$.
\end{cor}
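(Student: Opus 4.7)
The strategy is to combine the upper bound on $\deg(\widetilde{\Pi})$ afforded by uniqueness of split Bessel models with a matching lower bound extracted from a Grothendieck-group identity, thereby avoiding the circular appeal that would arise from trying to deduce the claim directly from Lemma \ref{lem:alpha_perfect} (whose own proof cites the present corollary).

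I would first translate the statement across the equivalences already available. By Lemma \ref{lem:beta=k_rho_eta}, $\widetilde{\Pi} \cong k_\rho(\overline{\Pi})$, and the final assertion of Lemma \ref{lem:long_exact_main} gives $\deg(k_\rho(\overline{\Pi})) = \deg(k_\rho j_!(A))$ where $A = \alpha(\Pi) = j^!(\overline{\Pi})$. From the second short exact sequence of Lemma \ref{Structure of TILDE I}, together with the fact that $\deg(i_*(X)) = 0$ for any finite-dimensional $T$-module $X$, one obtains $\deg(A) = m_\Pi = 1$ by uniqueness of Whittaker models for generic $\Pi$.

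For the lower bound, I would apply Proposition \ref{prop:K0k} to $M = j_!(A) \in \CCC_2^{\fin}$, which produces the identity
\begin{equation*}
[k_\rho j_!(A)] - [k^\rho j_!(A)] = [j^! j_!(A)] = [A]
\end{equation*}
in $K_0(\CCC_1^{\fin})$, using $j^! j_! \cong \id$. Since $\deg$ is additive on short exact sequences, it descends to a function on $K_0$, so
\begin{equation*}
\deg(k_\rho j_!(A)) - \deg(k^\rho j_!(A)) = \deg(A) = 1,
\end{equation*}
which already forces $\deg(\widetilde{\Pi}) = \deg(k_\rho j_!(A)) \geq 1$.

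For the upper bound, I would cite the uniqueness of split Bessel models of Roberts and Schmidt \cite[6.3.2]{Roberts-Schmidt_Bessel}, which gives $\deg(\widetilde{\Pi}) \leq 1$; alternatively, one may invoke the independent Theorem \ref{Mult1}. The two estimates together force $\deg(\widetilde{\Pi}) = 1$. The main subtlety is logical rather than computational: one must be careful not to use Lemma \ref{lem:alpha_perfect}, since the natural reading of the dependency graph has that lemma rely on the present corollary. Going through the $K_0$-identity circumvents this, as it only needs the Whittaker-degree computation $\deg(A) = 1$ and nothing more about the internal structure of $A$.
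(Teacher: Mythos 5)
Your lower bound is sound and is in substance the paper's own: the identity $[k_\rho j_!(A)]-[k^\rho j_!(A)]=[A]$ you extract from Proposition~\ref{prop:K0k} is exactly what Proposition~\ref{BETA1} packages as $\deg(\beta_\rho(\Pi))-\deg(\beta^\rho(\Pi))=m_\Pi$, and the paper's proof indeed concludes $\deg(\widetilde\Pi)\geq m_\Pi=1$ by citing that proposition. You are also right to steer clear of Lemma~\ref{lem:alpha_perfect}. The divergence is in the upper bound. The paper does \emph{not} appeal to Roberts--Schmidt here: the whole point of Theorem~\ref{Mult1} (whose generic case is precisely this corollary) is to give a proof of $\deg(\widetilde\Pi)\leq 1$ independent of \cite{Roberts-Schmidt_Bessel}, so importing their uniqueness result would defeat the stated purpose, even though it is mathematically legitimate. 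Worse, your fallback of invoking Theorem~\ref{Mult1} as an ``independent'' source for the upper bound is genuinely circular: the proof of Theorem~\ref{Mult1} begins ``For generic $\Pi$, see corollary~\ref{cor:gen_Mult1}.'' The paper's internal route instead uses Proposition~\ref{lasst}, which gives $\deg(\widetilde\Pi)\leq m_\Pi+d(\Pi,\rho)$ with $d(\Pi,\rho)=\dim\Hom_T(\delta_P^{-1/2}J_P(\Pi)_\psi,\nu^{-1/2}\rho)$; this can only be nonzero for $\rho\in\Delta_{\pluss}(\Pi)$, and since $\beta_\rho\cong\beta_{\rho^\divideontimes}$ (Lemma~\ref{duality}) while $\Delta_{\pluss}(\Pi)\cap\Delta_{\pluss}^\divideontimes(\Pi)=\emptyset$ for generic $\Pi$ (Lemma~\ref{lem:intersection--*}), one may assume $\rho\notin\Delta_{\pluss}(\Pi)$, forcing $d(\Pi,\rho)=0$ and hence $\deg(\widetilde\Pi)\leq 1$. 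If you want a self-contained argument, you should replace your upper bound by this duality-plus-combinatorics step.
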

\begin{proof}
By proposition~\ref{lasst},
  $\deg(\widetilde{\Pi})\leq m_\Pi+d(\Pi,\rho)$
where $m_\Pi=1$ is the multiplicity of Whittaker models.
Note that $d(\Pi,\rho)$ can be non-zero only if $\rho$ is in the multiset $\Delta_{\pluss}(\Pi)$
defined in section \ref{s:Combinat}.
Replacing $\rho$ by $\rho^\divideontimes$
does not change $\beta_\rho(\Pi)$ up to isomorphism (lemma~\ref{duality}),
so without loss of generality we can assume that $\rho\notin \Delta_{\pluss}(\Pi)$ because $\Delta_{\pluss}(\Pi)\cap \Delta_{\pluss}^\divideontimes(\Pi)$ is empty by lemma~\ref{lem:intersection--*}.
This implies $d(\Pi,\rho)=0$ and therefore $\deg(\widetilde{\Pi})\leq m_\Pi$.
Proposition~\ref{BETA1} implies $\deg(\widetilde{\Pi})\geq m_\Pi$.
\end{proof}

\begin{prop}\label{BETA1}
For $I\in {\cal C}_G^{\fin}({\omega})$ it holds in the Grothendieck group $K_0(\CCC)$
$$[\beta_\rho(I)]-[\beta^\rho(I)]=[\alpha(I)]$$ for $\alpha(I)=j^!(\overline I)$ constructed in section~\ref{functor eta}.
Especially, the difference of degrees $\deg(\beta_\rho(I))-\deg(\beta^\rho(I)) = m_I$
is the multiplicity of Whittaker models of $I$.
\end{prop}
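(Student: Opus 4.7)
The plan is to reduce the statement to Proposition~\ref{prop:K0k} via the factorization of the Bessel functors established in Lemma~\ref{lem:beta=k_rho_eta}. Set $M=\overline{I}=\eta(I)$. Since $I$ has finite length, Corollary~\ref{cor:eta_fin} guarantees that $M\in\CCC_2^{\fin}$, so the hypotheses of Proposition~\ref{prop:K0k} are met for $n=2$. Applying that proposition yields
$$ [k_\rho(\overline{I})] - [k^\rho(\overline{I})] \;=\; [j^!(\overline{I})] \quad \in K_0(\CCC_1^{\fin}) = K_0(\CCC) \ . $$
By Lemma~\ref{lem:beta=k_rho_eta} the left-hand side equals $[\beta_\rho(I)]-[\beta^\rho(I)]$, and by the definition $\alpha(I)=j^!(\overline{I})$ the right-hand side equals $[\alpha(I)]$. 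This gives the first assertion.

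For the degree statement, note that the degree function $\deg:K_0(\CCC)\to \Z$ is a well-defined homomorphism that counts Jordan-H\"older multiplicities of ${\mathbb S}$. Applying $\deg$ to the identity above reduces the problem to computing $\deg(\alpha(I))$. Here the second short exact sequence of Lemma~\ref{Structure of TILDE I},
$$ 0 \to {\mathbb S}^{m_I} \to \alpha(I) = j^!(\overline{I}) \to i_*(J_P(I)_\psi) \to 0 \ , $$
does the work: the quotient $i_*(J_P(I)_\psi)$ has all its Jordan-H\"older constituents of the form $i_*(\chi)$ for smooth $T$-characters $\chi$, which by definition have degree zero in $\CCC$. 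Thus $\deg(\alpha(I))=m_I$, and consequently $\deg(\beta_\rho(I))-\deg(\beta^\rho(I))=m_I$.

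There is essentially no obstacle here beyond invoking the right machinery in the right order: the heavy lifting sits in Proposition~\ref{prop:K0k}, whose proof in turn rests on the small extension lemma (Lemma~\ref{3.3}) and on Lemma~\ref{3.1}. The only thing to verify carefully is that all functors involved preserve finite length, so that the identity genuinely lives in $K_0(\CCC_1^{\fin})$; this is supplied by Corollaries~\ref{cor:eta_fin}, \ref{cor:k_rho_fin}, and \ref{cor:Bessel_finite}. Once those are in place, the proposition follows by two lines of bookkeeping in the Grothendieck group together with one count of degrees from the Bernstein--Zelevinsky filtration of $\overline{I}$ under $j^!$.
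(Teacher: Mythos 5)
Your argument is correct and follows the paper's own route: the identity in $K_0(\CCC)$ is obtained exactly as in the paper from proposition~\ref{prop:K0k} combined with lemma~\ref{lem:beta=k_rho_eta}, and your degree count via the exact sequence $0\to\mathbb{S}^{m_I}\to j^!(\overline{I})\to i_*(J_P(I)_\psi)\to0$ is just a slight repackaging of the paper's observation that $\deg(\alpha(I))=\dim (j^!)^2(\overline{I})=m_I$. No gaps.
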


\begin{proof}
The first statement follows from proposition~\ref{prop:K0k} and lemma~\ref{lem:beta=k_rho_eta}.
For $\overline{I}=\eta(I)\in \CCC_2^{\fin}$,
the degree of $\beta^\rho(I)$ is $\dim j^! k^\rho \overline{I}$ and the degree of $\beta_\rho(I)$ is $\dim j^! k_\rho(\overline{I})$.
By definition, $m_I=\dim (j^!)^2(\overline{I})=\deg(\alpha(I))$.
\end{proof}

\begin{cor} \label{20.7}
If $\Pi$ in ${\cal C}_G({\omega})$ is generic irreducible, then
$\deg(\beta^\rho(\Pi)) = 0$.
\end{cor}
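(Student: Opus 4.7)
The corollary follows immediately by combining the two results that immediately precede it. The plan is to compute $\deg(\beta^\rho(\Pi))$ by solving the degree equation provided by proposition~\ref{BETA1} using the known value of $\deg(\beta_\rho(\Pi))$ from corollary~\ref{cor:gen_Mult1}.

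More concretely, first I would invoke proposition~\ref{BETA1} for $I = \Pi$, which yields the identity
\begin{equation*}
\deg(\beta_\rho(\Pi)) - \deg(\beta^\rho(\Pi)) = m_\Pi\ .
\end{equation*}
Since $\Pi$ is generic irreducible, the uniqueness of Whittaker models gives $m_\Pi = 1$. Next, I would apply corollary~\ref{cor:gen_Mult1} to conclude that $\deg(\beta_\rho(\Pi)) = \deg(\widetilde{\Pi}) = 1$. Subtracting, $\deg(\beta^\rho(\Pi)) = 0$.

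There is no substantive obstacle here: all the work has already been carried out. The nontrivial input is corollary~\ref{cor:gen_Mult1}, whose proof in turn relied on proposition~\ref{lasst} together with the vanishing of $d(\Pi,\rho)$ after replacing $\rho$ by $\rho^\divideontimes$ if necessary via lemma~\ref{duality}; the present corollary simply records the complementary conclusion for $\beta^\rho$.
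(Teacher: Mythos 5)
Your proposal is correct and is exactly the argument the paper gives: the paper's proof cites precisely proposition~\ref{BETA1} and corollary~\ref{cor:gen_Mult1}, and the identity $\deg(\beta_\rho(\Pi))-\deg(\beta^\rho(\Pi))=m_\Pi=1$ combined with $\deg(\widetilde{\Pi})=1$ yields the claim. Nothing further is needed.
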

\begin{proof}
This is clear by corollary~\ref{cor:gen_Mult1} and proposition~\ref{BETA1}.
\end{proof}

\begin{lem}\label{lem:beta^rho_Jacquet}
For every $I\in\CCC_G^{\fin}(\omega)$ and every Bessel character $\rho$,
there is a functorial isomorphism $\pi_0(\widetilde{I})\cong J_P(I)_{\widetilde{T},\rho}$ of $T$-modules.
In the Grothendieck group of $T$-modules of finite length it holds that
\begin{equation*}[\pi_0(\beta^\rho(I))] = [J_P(I)^{\widetilde{T},\rho}]\ ,\qquad\text{in}\  K_0(\CCC_T^{\fin})\ .
\end{equation*}
\end{lem}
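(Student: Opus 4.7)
For the isomorphism $\pi_0(\widetilde{I})\cong J_P(I)_{\tilde T,\rho}$, I would use transitivity of coinvariants. Starting from $\pi_0(\widetilde{I}) = (I_{\tilde R,\Lambda})_S$, note that $\tilde R = \tilde T\tilde N$ with $\Lambda$ trivial on $\tilde N$, and that $S$ commutes with $\tilde R$; combined with $N = S\oplus\tilde N$, the successive coinvariants collapse to $I_{\tilde T N,\Lambda}$ (with $\Lambda$ extended trivially on $N$). Reordering to take $N$-coinvariants first produces the Siegel--Jacquet module $J_P(I) = I_N$, and the subsequent $\tilde T$-coinvariants at $\Lambda$ give $J_P(I)_{\tilde T,\Lambda} = J_P(I)_{\tilde T,\rho}$, the last equality via the descent $\Lambda \rightsquigarrow \rho$ on $\tilde T/Z\cong k^\times$ enabled by $\Lambda|_Z = \omega$. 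The $T$-action is preserved throughout because $T$ centralizes $\tilde T N$.

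For the Grothendieck identity, the plan is to apply the exact functor $\pi_0$ to the identity $[\beta_\rho(I)] - [\beta^\rho(I)] = [\alpha(I)]$ of proposition~\ref{BETA1}. The first claim of the lemma identifies $[\pi_0\beta_\rho(I)] = [J_P(I)_{\tilde T,\rho}]$, and lemma~\ref{Dual action} gives $[\pi_0\alpha(I)] = [i^*j^!\eta(I)] = [J_P(I)_\psi]$. Combining these yields $[\pi_0\beta^\rho(I)] = [J_P(I)_{\tilde T,\rho}] - [J_P(I)_\psi]$, so that the target equality reduces to
\begin{equation*}
[J_P(I)_{\tilde T,\rho}] - [J_P(I)^{\tilde T,\rho}] = [J_P(I)_\psi] \qquad \text{in } K_0(\CCC_T^{\fin})\ ,
\end{equation*}
which I denote $(\star)$.

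To prove $(\star)$, I would reduce via a composition series of $J_P(I)$ as an $M = \Gl(2)\times\Gl(1)$-module to checking each irreducible constituent $\pi\boxtimes\chi$. On such a constituent, $T$ acts by the scalar $\chi$, so $(\star)$ becomes the dimension identity $\dim\pi_{\tilde T',\sigma} - \dim\pi^{\tilde T',\sigma} = \dim\pi_\psi$, where the character $\sigma$ of the diagonal torus $\tilde T'\subseteq\Gl(2)$ is matched to $\rho$ via the embedding $\tilde T\hookrightarrow M$. This dimension identity follows from lemma~\ref{lem:RR} applied to $\pi$ viewed as a $TS$-module via the Kirillov embedding of example~\ref{ex:Kirillov}. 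The main obstacle I expect is establishing the additivity of $[(-)_{\tilde T,\rho}] - [(-)^{\tilde T,\rho}]$ over short exact sequences of finite-length $M$-modules (needed for the constituent-wise reduction); this should follow from the long exact sequence relating derived $\tilde T$-invariants and coinvariants, combined with the exactness of the Whittaker functor appearing on the right-hand side of $(\star)$.
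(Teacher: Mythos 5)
Your proof is correct and follows essentially the same route as the paper: the first claim via transitivity of coinvariant functors (this is exactly lemma~\ref{lem:Bessel_Jacquet}), and the second by applying the exact functor $\pi_0$ to proposition~\ref{BETA1} and invoking lemma~\ref{Dual action}, which reduces everything to your identity $(\star)$. The only cosmetic difference is that the paper obtains $(\star)$ by quoting proposition~\ref{prop:K0k} for $J_P(I)$ viewed as a $\Gl_a(1)\times T$-module, whereas you re-derive that special case by hand from the six-term sequence of lemma~\ref{lem:proto_long_exact_sequence} together with lemma~\ref{lem:RR} on the irreducible $M$-constituents.
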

\begin{proof}
The first assertion is lemma~\ref{lem:Bessel_Jacquet}.
For the second statement, consider $J_P(I)$ as a $\Gl_a(1)\times T$-module where $\alpha.v\in\Gl_a(1)$ acts by $m_A$ for
$A=\left(\begin{smallmatrix}\alpha&v\\0&1\end{smallmatrix}\right)$ and $T$ by its natural embedding $T\hookrightarrow M$. 
Proposition~\ref{prop:K0k} implies
 \begin{equation*}
  [J_P(I)_{\widetilde{T},\rho}]-[J_P(I)^{\widetilde{T},\rho}] =[j^!(J_P(I))]= [J_P(I)_{U,\psi}]\quad\in K_0(\CCC_T^{\fin})\ .
 \end{equation*}
We have $J_P(I)_{\widetilde{T},\rho} \cong \pi_0(\widetilde{I})$ by the first assertion. 
There is an isomorphism $J_P(I)_{U,\psi}\cong \pi_0(\alpha(I))$ by lemma~\ref{Dual action}.
The exact functor $\pi_0$ applied to the equation of proposition~\ref{BETA1} yields
\begin{equation*}
[\pi_0(\beta^\rho(I))]=[\pi_0(\widetilde{I})]-[\pi_0(\alpha(I))]
\end{equation*} and this shows the statement.
\end{proof}

\begin{prop}\label{5.3}
For $I\in {\cal C}_G^{\fin}({\omega})$ one has 
$   L(\beta_\rho(I),s)\! =\! L({J_P(I)}_{\widetilde{T},\rho},s) $ and
\begin{equation*}  L(J_P(I)_{\psi},s) \ =\ \frac{L(\beta_\rho(I),s)}{L(\beta^\rho(I),s)}\ . \end{equation*}
The finite dimensional $T$-module $J_P(I)_{\psi}$ is viewed as a $TS$-module with the trivial $S$-action; it is independent of $\rho$.
\end{prop}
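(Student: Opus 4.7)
The plan is to assemble both identities directly from the lemmas already established for Bessel functors and Siegel--Jacquet modules. First I would reduce the $L$-factor of a $TS$-module to an $L$-factor of a finite-length $T$-module: by definition $L(M,s) = \prod_\chi L(\chi,s)^{a_\chi(M)}$ where $a_\chi(M)$ is the multiplicity of $\chi$ in $\pi_0(M)$, so $L(M,s)=L(\pi_0(M),s)$. Because Jordan--H\"older multiplicities of $T$-characters are additive in short exact sequences, this $L$-factor descends to a well-defined homomorphism from $K_0(\CCC_T^{\fin})$ into the multiplicative group of meromorphic functions in $s$. This will let me read off both equalities from class identities in $K_0(\CCC_T^{\fin})$.

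For the first identity, lemma~\ref{lem:Bessel_Jacquet} supplies a natural $T$-isomorphism $\pi_0(\beta_\rho(I)) = \pi_0(\widetilde I) \cong J_P(I)_{\widetilde T,\rho}$, which immediately gives $L(\beta_\rho(I),s)=L(J_P(I)_{\widetilde T,\rho},s)$. For the quotient formula, I would invoke lemma~\ref{lem:beta^rho_Jacquet} in two complementary ways: it records the class identity $[\pi_0(\beta^\rho(I))]=[J_P(I)^{\widetilde T,\rho}]$ in $K_0(\CCC_T^{\fin})$, and its proof (applying proposition~\ref{prop:K0k} to $J_P(I)$ viewed as a $\Gl_a(1)\times T$-module) also produces the Grothendieck relation
$$[J_P(I)_{\widetilde T,\rho}] - [J_P(I)^{\widetilde T,\rho}] = [J_P(I)_\psi]\quad\text{in}\ K_0(\CCC_T^{\fin}).$$
Applying $L(-,s)$ to this relation and combining with the first identity yields
$$\frac{L(\beta_\rho(I),s)}{L(\beta^\rho(I),s)} = \frac{L(J_P(I)_{\widetilde T,\rho},s)}{L(J_P(I)^{\widetilde T,\rho},s)} = L(J_P(I)_\psi,s).$$

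The independence statement is built into the definition: $J_P(I)_\psi = J_P(I)_{U,\psi}$ is obtained from $J_P(I)$ by taking $(U,\psi)$-coinvariants, with no reference to $\rho$. There is no real obstacle here; the argument is essentially bookkeeping that unwinds the $L$-factor to Jordan--H\"older multiplicities and then cites the two preceding lemmas. The only conceptual point to monitor is the factorisation of $L(-,s)$ through $K_0(\CCC_T^{\fin})$, which is what upgrades the $K_0$-identity into a multiplicative identity of $L$-factors.
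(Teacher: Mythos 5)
Your proof is correct and follows essentially the same route as the paper, which simply cites lemma~\ref{lem:Bessel_Jacquet} for the first identity and proposition~\ref{BETA1} (whose image under the exact functor $\pi_0$ is exactly the $K_0(\CCC_T^{\fin})$-relation you extract via lemma~\ref{lem:beta^rho_Jacquet}) for the second, read through the $L$-factor formalism of section~\ref{s:L-series}. The only cosmetic difference is that you route the Grothendieck-group identity through lemma~\ref{lem:beta^rho_Jacquet} rather than quoting proposition~\ref{BETA1} directly; the underlying computation is identical.
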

\begin{proof}
This is a restatement of lemma~\ref{lem:Bessel_Jacquet} and proposition~\ref{BETA1} in terms of section~\ref{s:L-series}.
\end{proof}

\subsection{Mellin functors $M_\rho$ and $M^\rho$}\label{s:Mellin}

The Mellin functors attached to a smooth $\Gl(1)$-character $\rho$ are the additive endofunctors $M_\rho=k_\rho\circ j_!$ and $M^\rho=k^\rho\circ j_!$ of $\CCC_n^{\fin}$.
Corollary~\ref{cor:k_rho_S2} implies $M_\rho(\mathbb{S}_n)\cong\mathbb{S}_n$ and $M^\rho(\mathbb{S}_n)=0$ for every $\rho$.
In the Grothendieck group of $\CCC_n^\fin$,
proposition~\ref{prop:K0k} implies $[X] = [M_\rho(X)]-[M^\rho(X)]$ for every $[X]$ .
\begin{lem}\label{lem:nat_eq_Mellin}
There are natural equivalences $i^*\!\circ\! M_\rho\cong i^*$ and $i^*\!\circ\! M^\rho\cong 0$ of functors $\CCC_{n}\to \CCC_{\Gl(n)}$.
In other words, there are commutative diagrams
\begin{equation*}
\xymatrix@R+0mm{
  \ \CCC_n\ar[rr]^{M_\rho}\ar[drr]_{i^*}&&\CCC_n\ar[d]^{i^*}\\
  &&\CCC_{\Gl(n)}  
}\ ,\qquad
\xymatrix@R+0mm{
  \ \CCC_n\ar[rr]^{M^\rho}\ar[drr]_{0}&&\CCC_n\ar[d]^{i^*}\\
  &&\CCC_{\Gl(n)}    
}\ .
\end{equation*}
\end{lem}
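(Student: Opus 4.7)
The plan is to deduce both natural equivalences directly from lemma~\ref{3.3} and the vanishings $i^*\circ j_! = 0$ and $i^*\circ i_* \cong \id$ established in lemma~\ref{lem:Gelfand-Kazhdan}. Since $i^*$ is exact (it is the functor of coinvariants under a compactly generated translation subgroup), it preserves the short exact sequence of functors.

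First I would unwind $M_\rho$. Apply $i^*$ to the functorial exact sequence
\begin{equation*}
0 \longrightarrow j_!\circ k_{\nu\rho} \longrightarrow k_\rho\circ j_! \longrightarrow i_*\circ i^* \longrightarrow 0
\end{equation*}
of lemma~\ref{3.3}. The leftmost term yields $i^*\circ j_!\circ k_{\nu\rho} = 0$ by lemma~\ref{lem:Gelfand-Kazhdan}, and the rightmost term yields $i^*\circ i_*\circ i^* \cong i^*$ by the same lemma. Exactness of $i^*$ then gives an isomorphism $i^*\circ k_\rho\circ j_! \cong i^*$, which is the first claimed natural equivalence. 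The naturality follows from the naturality of the equivalences that go into it.

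For the second equivalence I would use the natural equivalence $k^\rho\circ j_! \cong j_!\circ k^{\nu\rho}$, again from lemma~\ref{3.3}. Composing with $i^*$ on the left and invoking $i^*\circ j_! = 0$ from lemma~\ref{lem:Gelfand-Kazhdan} gives
\begin{equation*}
i^*\circ M^\rho \;=\; i^*\circ k^\rho\circ j_! \;\cong\; i^*\circ j_!\circ k^{\nu\rho} \;=\; 0.
\end{equation*}

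There is no substantive obstacle; the statement is essentially a bookkeeping consequence of the two structural results already proved. The only thing worth being careful about is that the isomorphisms produced are natural in the $\CCC_n$-variable, which is automatic because each constituent natural equivalence and vanishing of functors from lemma~\ref{3.3} and lemma~\ref{lem:Gelfand-Kazhdan} is functorial.
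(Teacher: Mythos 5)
Your proposal is correct and is essentially the paper's own proof: the paper likewise applies the exact functor $i^*$ to the short exact sequence and the natural isomorphism of lemma~\ref{lem:proto_small_extension_lemma}, then invokes $i^*j_!=0$ and $i^*i_*\cong\id$. No gaps.
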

\begin{proof}
Apply the exact functor $i^*$ to the short exact sequence and the natural isomorphism of lemma~\ref{lem:proto_small_extension_lemma}.
Note that $i^*j_!=0$ and $i^*i_*=\id$.
\end{proof}

In the following section we determine $M_\rho(X)$ and $M^\rho(X)$ for certain $X\in \CCC_1^\fin$.

\begin{thm}\label{thm:big_extension}\label{thm:ME_split}
For a universal extension $\mathbb{E}[\mu]\in \CCC_1^\fin$ of a $\Gl(1)$-character $\mu$ and for an arbitrary Bessel character $\rho$, we have 
\begin{equation*}
 M_\rho (\mathbb{E}[\mu])\cong
 \begin{cases}
  \mathbb{E}[\mu]           & \mu\!\neq\!\nu^2\rho\ ,\\
  \mathbb{S}\oplus i_*(\mu) & \mu\!=\!\nu^2\rho\ .
 \end{cases}
\end{equation*}
We have $M^\rho (X) = 0$ for every perfect $X\in\CCC_1^\fin$.
\end{thm}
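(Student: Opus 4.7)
The plan is to treat the two statements in sequence; the vanishing of $M^\rho$ is immediate from lemma~\ref{3.3}, while the first assertion reduces to the computation of a single extension class.

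\emph{Step 1: vanishing of $M^\rho$.} The natural equivalence $k^\rho\circ j_! \cong j_!\circ k^{\nu\rho}$ of lemma~\ref{3.3} gives $M^\rho(X) \cong j_!\bigl(k^{\nu\rho}(X)\bigr)$. For $n=1$ the unipotent subgroup $\widetilde{V'}$ is trivial, so $k^{\nu\rho}(X)$ reduces to the $\nu\rho$-eigenspace $X^{\nu\rho}$ of the $T$-action, which vanishes by perfectness of $X$. Hence $M^\rho(X) = j_!(0) = 0$.

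\emph{Step 2: the universal short exact sequence.} Applying the functorial SES of lemma~\ref{3.3} to $\mathbb{E}[\mu]$ yields
\begin{equation*}
0 \to j_!\bigl(k_{\nu\rho}(\mathbb{E}[\mu])\bigr) \to M_\rho(\mathbb{E}[\mu]) \to i_*\bigl(i^*(\mathbb{E}[\mu])\bigr) \to 0.
\end{equation*}
By construction $i^*(\mathbb{E}[\mu]) = \pi_0(\mathbb{E}[\mu]) = \mu$, and since $\mathbb{E}[\mu]$ is perfect of degree one, lemma~\ref{PERF} gives $\dim\mathbb{E}[\mu]_{\nu\rho} = 1$, so $j_!$ of this one-dimensional space equals $\mathbb{S}$. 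Therefore $M_\rho(\mathbb{E}[\mu])$ always fits into an extension
\begin{equation*}
0 \to \mathbb{S} \to M_\rho(\mathbb{E}[\mu]) \to i_*(\mu) \to 0,
\end{equation*}
and by lemma~\ref{lem:Ext(X,S)} it must be isomorphic to either the universal extension $\mathbb{E}[\mu]$ or the split extension $\mathbb{S}\oplus i_*(\mu)$; by lemma~\ref{PERF} the latter case occurs precisely when $M_\rho(\mathbb{E}[\mu])^\mu \neq 0$.

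\emph{Step 3: reduction to the critical character.} Using the twist compatibility derived from $k_\rho\circ\chi \cong \chi\circ k_{\rho\chi^{-1}}$, together with $\mathbb{E}[\mu] = \mu\otimes\mathbb{E}$, I obtain $M_\rho(\mathbb{E}[\mu]) \cong \mu\otimes M_{\rho\mu^{-1}}(\mathbb{E})$, so it suffices to show that $M_\sigma(\mathbb{E})$ splits precisely when $\sigma = \nu^{-2}$. I would verify this by realising $\mathbb{E} \cong C_c^\infty(k)$ as in example~\ref{example 3}, producing an explicit $T$-invariant vector in $M_{\nu^{-2}}(\mathbb{E})$ at the critical value, and ruling out such a vector otherwise by tracking the connecting morphism in the long exact sequence obtained from applying $M_\sigma$ to the SES $0 \to \mathbb{S} \to \mathbb{E} \to 1 \to 0$.

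\emph{Main obstacle.} The hardest part is the last verification, namely pinning down the critical exponent $\nu^{-2}$. I expect it to emerge from a combination of the $\nu$-shift already visible in the functor $k_{\nu\rho}$ on the open Bruhat cell and a modulus-character contribution from the closed cell in the geometric lemma underlying the proof of lemma~\ref{3.3}; making this precise requires careful bookkeeping of the (un)normalised induction shifts in Bernstein--Zelevinsky's calculation.
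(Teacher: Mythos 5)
Your Steps 1 and 2 are correct, and they genuinely streamline the soft part of the argument: applying the functorial sequence of lemma~\ref{3.3} directly to $\mathbb{E}[\mu]$ (rather than, as the paper does, only to the larger module $\alpha(\Pi)$) immediately yields $0\to\mathbb{S}\to M_\rho(\mathbb{E}[\mu])\to i_*(\mu)\to 0$, and lemma~\ref{lem:Ext(X,S)} reduces everything to a split/non-split dichotomy. The twisting reduction in Step 3 to $M_\sigma(\mathbb{E})$ with $\sigma=\rho\mu^{-1}$ is also fine.

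The decisive step, however --- deciding for which $\sigma$ the extension splits --- is not carried out, and the methods you sketch do not suffice. The long exact sequence attached to $0\to\mathbb{S}\to\mathbb{E}\to i_*(1)\to 0$ gives nothing new: for $\sigma\neq\nu^{-1}$ one has $M^\sigma(i_*(1))=0$ by corollary~\ref{cor:small_extension_lemma_preliminary}, so the sequence merely reproduces the extension of Step 2; for $\sigma=\nu^{-1}$ it identifies $M_\sigma(\mathbb{E})$ with $M_\sigma(i_*(1))$, whose splitting behaviour is exactly the content of lemma~\ref{lem:small extension} --- which the paper deduces \emph{from} theorem~\ref{thm:ME_split}, so invoking it here would be circular. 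Your remaining suggestion, an explicit eigenvector computation, would require working out the compactly induced $\Gl_a(2)$-module $j_!(\mathbb{E})$ and its twisted Jacquet module by hand and deciding semisimplicity of the result; this is where the entire content of the theorem lies, and it is deferred to unspecified ``bookkeeping'' that never produces the exponent $\nu^{2}$. The paper sidesteps this computation by a global argument: it embeds $\mathbb{E}[\nu^{3/2}]$ into $\alpha(\Pi)$ for a Siegel-induced representation $\Pi$ of type \nosf{X} with $J_Q(\Pi)=0$, identifies $M_\rho(\alpha(\Pi))$ with the Bessel module $\widetilde{\Pi}$, and reads off the answer from the Bessel filtration (lemmas~\ref{tildemod} and~\ref{tildeKoh}) together with uniqueness of Bessel models; the ordinary versus non-ordinary dichotomy there is precisely $\mu\neq\nu^2\rho$ versus $\mu=\nu^2\rho$. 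Some input of this kind is needed to pin down the critical character, and it is missing from your proposal.
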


\begin{proof}
Without loss of generality we assume $\mu=\nu^{3/2}$ as a $T$-character by a suitable twist.
Let $\Pi=\Ind_P^G(\pi\boxtimes 1)\in\CCC^{\fin}_G(\omega)$ be the normalized fully Siegel induced irreducible representation of case \nosf{X} in the sense of \cite{Sally_Tadic} and table \ref{tab:List}, for
an irreducible cuspidal representation $\pi\in\CCC_{\Gl(2)}^{\fin}(\omega_{\pi})$ of $\Gl(2)$ with central character $\omega=\omega_{\pi}\neq\nu^{\pm1}$. We can arrange that $\omega\neq1,\nu^{-1/2}\rho$ by choice of $\pi$.
The normalized Siegel-Jacquet module $\delta_P^{-1/2}J_P(\Pi)$ has constituents $\pi\boxtimes 1$ and $\pi^\vee\boxtimes\omega$ by \cite[table~A.3]{Roberts-Schmidt}.
Note that $\delta_{P}^{1/2}(x_\lambda)=\nu^{3/2}(\lambda)$. Since $\omega\neq1$, the $\psi$-coinvariant space of the Siegel-Jacquet module is $J_P(\Pi)_\psi\cong \delta_P^{1/2}(1\oplus\omega)$ as a direct sum of $T$-characters.
By lemma~\ref{lem:alpha_perfect}, $A=\alpha(\Pi)=j^!(\overline{\Pi})\in \CCC$ is perfect of degree one. Since $\pi_0(A)$ is isomorphic to $J_P(\Pi)_\psi$ as a $T$-module (lemma~\ref{Dual action}), we obtain a short exact sequence in $\CCC$
$$ 0\to \mathbb{E}[\nu^{3/2}]\to A\to i_*(\nu^{3/2}\omega) \to 0 \ .$$ 
Lemma~\ref{lem:long exact sequence_affine} provides a long exact sequence
$$\cdots\to M^\rho(i_*(\nu^{3/2}\omega)) \to M_\rho(\mathbb{E}[\nu^{3/2}]) \to M_\rho(A) \to M_\rho(i_*(\nu^{3/2}\omega))\to0\ .$$
The left term is zero and the right term is
$M_\rho(i_*(\nu^{3/2}\omega))\cong i_*(\nu^{3/2}\omega)$ by lemma~\ref{3.3}.
Therefore, we have an exact sequence
\begin{equation}\label{eq:big_Mellin1}
0\to M_\rho(\mathbb{E}[\nu^{3/2}]) \to M_\rho(A) \to i_*(\nu^{3/2}\omega) \to0\ .\tag{*}
\end{equation}
Every smooth character $\rho$ defines a split Bessel model of $\Pi$.
Since $\Pi$ is fully induced and generic, we have constructed
in section~\ref{s:Bessel_filt} a long exact filtration sequence
$$\cdots\to\beta^\rho(I_{\leq1})\stackrel{\delta}\to \widetilde{I_{\geq2}}\to\widetilde{\Pi}\to\widetilde{I_{\leq1}}\to0\ .$$
We distinguish two cases:
Either $(\sigma_\Pi,\rho)$ is ordinary with $\rho\neq\rho_\pm(\sigma_\Pi)$ or it is non-ordinary with $\rho=\rho_+(\sigma_\Pi)=\nu^{-1/2}$ because we assumed $\rho\neq\rho_-(\sigma_\Pi)= \nu^{1/2}\omega$ by choice of $\omega$.

\bigskip\noindent
\textit{Ordinary case $\rho\neq\rho_\pm(\sigma_\Pi)$}.
By lemmas~\ref{tildemod} and \ref{tildeKoh} the long exact filtration sequence in $\CCC$ is
$$0\to \delta_P^{-1/2}\otimes\mathbb{E}[1]  \to  \widetilde{\Pi} \to  \delta_P^{-1/2}\otimes i_*(\omega)\to 0\ .$$
because $\widetilde{I}_2=0$, $\widetilde{I_1}=0$ and $\beta^\rho(I_{\leq1})=0$.
Lemma~\ref{Structure of TILDE I} provides an isomorphism $\widetilde{\Pi}\cong M_\rho(A)$ since $J_Q(\Pi)=0$. This yields an exact sequence in $\CCC$
$$0\to \mathbb{E}[\nu^{3/2}]\to   M_\rho(A ) \to  i_*(\nu^{3/2}\omega)\to 0\ .$$
Comparing this with \eqref{eq:big_Mellin1} yields the required assertion
$$ M_\rho(\mathbb{E}[\nu^{3/2}])\cong \mathbb{E}[\nu^{3/2}] \ .$$
\textit{Non-ordinary case $\rho=\rho_+(\sigma_\Pi)=\nu^{-1/2}$}.
By lemmas~\ref{tildemod} and \ref{tildeKoh} the filtration sequence is
$$\dots \to \beta^\rho(I_1)\stackrel{\delta}\to\widetilde{I_3}\to\widetilde{\Pi}\to \widetilde{I_{\leq1}} \to 0\ .$$
with an extension
$0 \to \widetilde{I_1} \to \widetilde{I_{\leq1}} \to i_*(\nu^{3/2}\omega)\to0$ of degree one.
Note that $\beta^\rho(I_1)\cong\widetilde{I_1}\cong\delta_P^{1/2}\delta_B^{-1/2}\pi \cong \mathbb{S}$ by example~\ref{ex:cusp_as_TS_module}.
By uniqueness of Bessel models $\widetilde{\Pi}$ has degree one (corollary~\ref{cor:gen_Mult1}), so
counting degrees shows that the image of $\delta$ has degree one and thus is isomorphic to $\mathbb{S}$.
Hence $\widetilde{I_3}/\mathbb{S}\cong i_*(\nu^{3/2})$ yields
a short exact sequence in $\CCC$
$$0\to i_*(\nu^{3/2})\to\widetilde{\Pi}\to \widetilde{I_{\leq1}} \to 0\ .$$
Since $J_Q(\Pi)=0$, lemma~\ref{Structure of TILDE I} provides an isomorphism
$\widetilde\Pi\cong k_\rho(\overline\Pi)\cong M_\rho(A)$.
Together with \eqref{eq:big_Mellin1} this yields a commutative diagram in $\CCC$ with exact sequences

\begin{equation*}
 \xymatrix{
         & 0  \ar[d]                       &      0\ar[d]    &                   \mathbb{S}\ar[d]& \\
 0\ar[r] & i_*(\nu^{3/2})\ar[r]\ar@{.>}[d]_\varphi & \widetilde{\Pi}\ar[r]\ar[d]_\cong & \widetilde{I_{\leq1}}\ar[r]\ar[d] & 0\\
 0\ar[r] & M_\rho(\mathbb{E}[\nu^{3/2}])\ar[r]\ar@{.>}[d] & M_\rho(A)\ar[r]\ar[d] & i_*(\nu^{3/2}\omega)\ar[r]\ar[d] & 0\\
         & \coker(\varphi)                    &0                &0 
 }
\end{equation*}
where $\varphi$ is the uniquely determined injection that makes the left square commute.
The snake lemma gives $\coker(\varphi)\cong\mathbb{S}$ and thus an exact sequence
$$0 \to i_*(\nu^{3/2}) \to M_\rho(\mathbb{E}[\nu^{3/2}]) \to \mathbb{S} \to 0  $$
which splits by lemma \ref{lem:Ext(S,F)}. This shows the required assertion.%

Finally, $M^\rho(X)=0$ vanishes for perfect $X\in\CCC_1^\fin$ by lemma~\ref{lem:proto_small_extension_lemma}.
\end{proof}

We now determine $M_\rho(i_*(X))$ and $M^\rho(i_*(X))$ for finite-dimensional smooth $\Gl(1)$-modules $X\in\CCC_{\Gl(1)}^\fin$.
Since the Mellin functors are additive,
it suffices to consider indecomposable Jordan blocks.

\begin{lem}\label{lem:small extension}
For an indecomposable Jordan block $X=\mu^{(m)}\in\CCC_{GL(1)}^{\fin}$ of finite length $m>0$ attached to a smooth character $\mu$ of $\Gl(1)$, it holds that
\begin{equation*}
M_\rho(i_*(X))  =  \begin{cases}
                     \mathbb{E}[{X\to \mu}]        & \mu\! =  \!\nu\rho\ ,\\
                     i_*(X)                            & \mu\!\neq\!\nu\rho\ ,
                  \end{cases}\qquad
M^\rho(i_*(X))  =  \begin{cases}
                     \mathbb{S}                    & \mu\! =  \!\nu\rho\ ,\\
                     0                                 & \mu\!\neq\!\nu\rho\ ,
                  \end{cases}
\end{equation*} 
where $X\to \mu$ is the natural projection to $\mu$-coinvariants.
\end{lem}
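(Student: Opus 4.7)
The plan is to derive everything from corollary~\ref{cor:small_extension_lemma_preliminary}, which provides a natural short exact sequence
\begin{equation*}
0\to\mathbb{S}\otimes(\nu^{-1}X)_\rho\to M_\rho(i_*(X))\to i_*(X)\to 0
\end{equation*}
together with the natural isomorphism $M^\rho(i_*(X))\cong\mathbb{S}\otimes(\nu^{-1}X)^\rho$. For the indecomposable Jordan block $X=\mu^{(m)}$ the spaces $(\nu^{-1}X)^\rho$ and $(\nu^{-1}X)_\rho$ are one-dimensional when $\mu=\nu\rho$ and zero otherwise. In the off-diagonal case $\mu\neq\nu\rho$ both sequences collapse at once to $M_\rho(i_*(X))\cong i_*(X)$ and $M^\rho(i_*(X))=0$, which disposes of that case.

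So assume $\mu=\nu\rho$; then $M^\rho(i_*(X))\cong\mathbb{S}$ is immediate, and only the extension $0\to\mathbb{S}\to M_\rho(i_*(X))\to i_*(X)\to 0$ needs to be pinned down. Lemma~\ref{lem:nat_eq_Mellin} gives $\pi_0(M_\rho(i_*(X)))\cong i^*(i_*(X))=X$, so $M_\rho(i_*(X))$ has degree one and lemma~\ref{lem:universal extensions classification} classifies it as $\mathbb{E}[X\to X/\kappa(M_\rho(i_*(X)))]$. The task thus reduces to showing $\kappa(M_\rho(i_*(X)))\cong\mu^{(m-1)}$, the unique codimension-one $T$-submodule of $X$.

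The central step is the base case $m=1$. Applying the long exact sequence of lemma~\ref{lem:long exact sequence_affine} to $0\to\mathbb{S}\to\mathbb{E}[\mu]\to i_*(\mu)\to 0$, and inserting $M_\rho(\mathbb{S})\cong\mathbb{S}$ with $M^\rho(\mathbb{S})=0$ from corollary~\ref{cor:k_rho_S2} together with $M_\rho(\mathbb{E}[\mu])\cong\mathbb{E}[\mu]$ and $M^\rho(\mathbb{E}[\mu])=0$ from theorem~\ref{thm:ME_split} (which applies since $\mu=\nu\rho\neq\nu^2\rho$, as $\nu\neq 1$), will leave the four-term sequence $0\to\mathbb{S}\xrightarrow{\partial}\mathbb{S}\to\mathbb{E}[\mu]\to M_\rho(i_*(\mu))\to 0$. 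Exactness forces $\partial$ to be injective, hence an isomorphism by irreducibility of $\mathbb{S}$; the subsequent map $\mathbb{S}\to\mathbb{E}[\mu]$ is then zero and the final surjection $\mathbb{E}[\mu]\twoheadrightarrow M_\rho(i_*(\mu))$ is an isomorphism, giving $M_\rho(i_*(\mu))\cong\mathbb{E}[\mu]=\mathbb{E}[\mu\to\mu]$.

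For general $m$ the right-exactness of $M_\rho=k_\rho\circ j_!$ applied to $i_*(X)\twoheadrightarrow i_*(\mu)$ produces a surjection $M_\rho(i_*(X))\twoheadrightarrow M_\rho(i_*(\mu))\cong\mathbb{E}[\mu]$ with kernel $K$. Additivity of degree forces $\deg K=0$, so $K$ is finite-dimensional and thus contained in $\kappa(M_\rho(i_*(X)))$. The exact functor $\pi_0$ applied to $0\to K\to M_\rho(i_*(X))\to\mathbb{E}[\mu]\to 0$ identifies $K$ functorially with the kernel of $X\twoheadrightarrow\mu$, so $K\cong\mu^{(m-1)}$. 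The reverse inclusion is immediate: $\mathbb{E}[\mu]$ is perfect with $\kappa=0$, so the image of $\kappa(M_\rho(i_*(X)))$ in $\mathbb{E}[\mu]$ vanishes and $\kappa(M_\rho(i_*(X)))\subseteq K$. The resulting equality $\kappa(M_\rho(i_*(X)))=K\cong\mu^{(m-1)}$ together with lemma~\ref{lem:universal extensions classification} yields the identification $M_\rho(i_*(X))\cong\mathbb{E}[X\to\mu]$. The only genuinely non-formal input is the base case, where non-exactness of $M_\rho$ forces the canonical inclusion $\mathbb{S}\hookrightarrow\mathbb{E}[\mu]$ to be annihilated; everything else is bookkeeping.
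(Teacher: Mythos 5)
Your proof is correct and follows essentially the same route as the paper's: both dispose of $\mu\neq\nu\rho$ via corollary~\ref{cor:small_extension_lemma_preliminary}, both pin down the $m=1$ case by comparing $M_\rho(i_*(\mu))$ with $M_\rho(\mathbb{E}[\mu])\cong\mathbb{E}[\mu]$ from theorem~\ref{thm:ME_split}, and both identify $\Danismanfunctor(M_\rho(i_*(X)))\cong\mu^{(m-1)}$ for general $m$ before invoking lemma~\ref{lem:nat_eq_Mellin} and lemma~\ref{lem:universal extensions classification}. The only (cosmetic) difference is that in the general-$m$ step you analyse the kernel of the surjection $M_\rho(i_*(X))\twoheadrightarrow M_\rho(i_*(\mu))$ directly via degree additivity and exactness of $\pi_0$, whereas the paper tracks the boundary map out of $M^\rho(i_*(\mu))$ in the long exact sequence attached to $0\to Y\to X\to\mu\to 0$.
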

\begin{proof}
For $\mu\neq\nu\rho$ the statement follows directly from lemma~\ref{cor:small_extension_lemma_preliminary},
so we can assume $\mu=\nu\rho$.

We first assume $m=1$. Lemma~\ref{cor:small_extension_lemma_preliminary} shows everything
except that the exact sequence $0 \to \mathbb{S} \to M_\rho(i_*(\mu))\to i_*(\mu) \to 0 $ does not split.
Since $M_\rho$ is right exact, the exact sequence $0\to\mathbb{S}\to\mathbb{E}[\mu]\to i_*(\mu)\to0$ yields
a partial exact sequence
$$
\cdots\to 
M_\rho(\mathbb S) \to M_\rho(\mathbb E[\mu]) \to M_\rho(i_*(\mu)) \to 0 \ .$$
Theorem~\ref{thm:big_extension} yields $M_\rho(\mathbb E[\mu]) \cong \mathbb E[\mu]$.
Since $\mathbb E[\mu]$ and $M_\rho(i_*(\mu))$ both have length two,
the right surjection is an isomorphism $ M_\rho(\mathbb E[\mu]) \cong M_\rho(i_*(\mu)) \ .$

We now assume $m>1$.
For the Jordan block $Y=\ker(X\to \mu)\cong\mu^{(m-1)}$ there is a short exact sequence
$$0\to Y\to X\to \mu\to 0$$
and thus by lemma~\ref{lem:long exact sequence_affine} a long exact sequence
\begin{equation*}
\xymatrix{
0\ar[r] & M^\rho(i_*(Y))\ar[r] & M^\rho(i_*(X))\ar[r] & M^\rho(i_* (\mu))\ar@{->}`r[d]`d[l]`^d[lll]_\delta`d[l] [dll]\\
        & M_\rho(i_*(Y))\ar[r] & M_\rho(i_*(X))\ar[r] & M_\rho(i_*( \mu))\ar[r] & 0\ .
        }
\end{equation*}

By lemma~\ref{cor:small_extension_lemma_preliminary} every term in the long exact sequence has degree one.
The same lemma furnishes a second short exact sequence
\begin{equation*}
0\to \mathbb{S}\to M_\rho(i_*(X)) \to i_* (X) \to 0\ .
\end{equation*}

Since $M^\rho(i_*(\mu))\cong\mathbb{S}$, counting degrees shows $\im(\delta) \cong \mathbb{S}$.
We obtain a short exact sequence in $\CCC_1^\fin$
\begin{equation*}
\xymatrix{
0\ar[r]  &M_\rho(i_*(Y))/\mathbb{S}\ar[r] & M_\rho(i_*( X))\ar[r] & M_\rho(i_*(\mu))\ar[r] & 0\ .\\
}
\end{equation*}
The second short exact sequence applied to $Y$ yields $M_\rho(i_*(Y))/\mathbb{S}\cong i_*(Y)$.
We have already shown that $M_\rho(i_*(\mu))\cong \mathbb{E}[\mu]$,
hence $\Danismanfunctor(M_\rho(i_*(X)))\cong Y$.
Lemma~\ref{lem:nat_eq_Mellin} implies $i^* \circ M_\rho(i_*(X)) \cong X$,
so lemma~\ref{lem:universal extensions classification} implies the statement $M_\rho(i_*(X))\cong \mathbb{E}[X\to \mu]$.

The last assertion $M^\rho(i_*(X))\cong \mathbb{S}$ is clear by lemma~\ref{cor:small_extension_lemma_preliminary}.
\end{proof}

\begin{lem}\label{lem:extended_Mellin_lemma}
For a smooth character $\mu$ of $\Gl(1)$ let $X=\mu^{(2)}\in\CCC_{\Gl(1)}$ be the associated Jordan block of length two.
Then $M^\rho (\mathbb{E}[X])=0$ vanishes for every smooth $\rho$ and there is an isomorphism of $\Gl_a(1)$-modules
\begin{equation*}
 M_\rho  (\mathbb{E}[X]) \cong
 \begin{cases}
  \mathbb{E}[X\to\mu] & \text{if}\ \mu\!=\!\nu\rho\ \text{or}\ \mu\!=\!\nu^2\rho\ ,\\
  \mathbb{E}[X]       & \text{otherwise} \ .
 \end{cases}
\end{equation*}
\end{lem}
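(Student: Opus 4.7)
The plan is to apply the long exact sequence of lemma~\ref{lem:long exact sequence_affine} to the short exact sequence
\begin{equation*}
0\to\mathbb{E}[\mu]\to\mathbb{E}[X]\to i_*(\mu)\to 0
\end{equation*}
in $\CCC$, in which the submodule $\mathbb{E}[\mu]$ is the preimage in $\mathbb{E}[X]$ of the socle $\mu\subset X$. Because $\mathbb{E}[X]$ admits a model in $C_b^\infty(k^\times)$ by lemma~\ref{lem:extension E_X}, it is perfect of degree one by lemma~\ref{lem:perfect_model}, so the final assertion of theorem~\ref{thm:big_extension} immediately yields $M^\rho(\mathbb{E}[X])=0$ and settles the vanishing claim. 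For the computation of $M_\rho(\mathbb{E}[X])$ I would substitute the values of $M_\rho(\mathbb{E}[\mu])$ from theorem~\ref{thm:big_extension} and of $M_\rho(i_*(\mu))$, $M^\rho(i_*(\mu))$ from lemma~\ref{lem:small extension}, and systematically exploit the identification $\pi_0(M_\rho(\mathbb{E}[X]))=i^*(\mathbb{E}[X])=X$ (the non-split Jordan block) from lemma~\ref{lem:nat_eq_Mellin} in order to discard candidates whose $\pi_0$ is the split $T$-module $\mu\oplus\mu$.

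If $\mu\notin\{\nu\rho,\nu^2\rho\}$, the long exact sequence collapses to $0\to\mathbb{E}[\mu]\to M_\rho(\mathbb{E}[X])\to i_*(\mu)\to 0$. The Ext-sequence for $0\to\mathbb{S}\to\mathbb{E}[\mu]\to i_*(\mu)\to 0$ together with lemma~\ref{lem:Ext(X,S)} shows that $\Ext^1_\CCC(i_*(\mu),\mathbb{E}[\mu])$ is at most one-dimensional, so the only isomorphism classes of middle term are $\mathbb{E}[\mu]\oplus i_*(\mu)$ (with split $\pi_0$) and $\mathbb{E}[X]$; the assumption on $\pi_0$ selects the latter. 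If $\mu=\nu\rho$, then $M_\rho(\mathbb{E}[\mu])=\mathbb{E}[\mu]$ is still perfect and $M^\rho(i_*(\mu))=\mathbb{S}$. Vanishing of $M^\rho(\mathbb{E}[X])$ forces the connecting map $\delta\colon\mathbb{S}\to\mathbb{E}[\mu]$ to be the canonical socle inclusion, and the sequence becomes $0\to i_*(\mu)\to M_\rho(\mathbb{E}[X])\to\mathbb{E}[\mu]\to 0$. A parallel Ext-computation gives $\Ext^1_\CCC(\mathbb{E}[\mu],i_*(\mu))\cong\Ext^1_\CCC(i_*(\mu),i_*(\mu))\cong\C$ with the non-split class represented by $\mathbb{E}[X\to\mu]$, which is once again singled out by $\pi_0\cong X$.

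The main obstacle is the case $\mu=\nu^2\rho$, for which $M_\rho(\mathbb{E}[\mu])\cong\mathbb{S}\oplus i_*(\mu)$ is no longer perfect. The long exact sequence becomes
\begin{equation*}
0\to\mathbb{S}\oplus i_*(\mu)\to M_\rho(\mathbb{E}[X])\to i_*(\mu)\to 0,
\end{equation*}
and the embedded finite-dimensional $i_*(\mu)$-summand forces $\Danismanfunctor(M_\rho(\mathbb{E}[X]))\supseteq i_*(\mu)$. By lemma~\ref{lem:universal extensions classification} applied to the degree-one middle term with $\pi_0\cong X$, the only candidates are the indecomposable $\mathbb{E}[X\to\mu]$, whose $\Danismanfunctor$ has length one, and the decomposable $\mathbb{S}\oplus X$, whose $\Danismanfunctor$ has length two, and $\pi_0$ can no longer distinguish them. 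To rule out the decomposable alternative I would mimic the proof of theorem~\ref{thm:big_extension} by realising $\mathbb{E}[X]$ as $\alpha(\Pi)=j^!(\overline{\Pi})$ for an irreducible $\Pi\in\CCC_G(\omega)$ whose Siegel-Jacquet module $J_P(\Pi)_\psi$ is the cyclic Jordan block $\mu^{(2)}$ and whose Klingen-Jacquet module vanishes (for instance a suitable generic subquotient of a degenerate principal series exhibiting non-trivial monodromy), apply lemma~\ref{Structure of TILDE I} to identify $\widetilde{\Pi}\cong M_\rho(\mathbb{E}[X])$, and invoke the Bessel filtration of section~\ref{s:Bessel_filt} to produce a filtration of $\widetilde{\Pi}$ whose graded pieces force indecomposability and identify $M_\rho(\mathbb{E}[X])$ with $\mathbb{E}[X\to\mu]$.
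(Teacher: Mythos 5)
Your argument for $M^\rho(\mathbb{E}[X])=0$ and for the two cases $\mu\notin\{\nu\rho,\nu^2\rho\}$ and $\mu=\nu\rho$ is correct and genuinely different from the paper's proof: you run the long exact sequence of lemma~\ref{lem:long exact sequence_affine} on $0\to\mathbb{E}[\mu]\to\mathbb{E}[X]\to i_*(\mu)\to0$, feed in theorem~\ref{thm:big_extension} and lemma~\ref{lem:small extension}, and then pin down the middle term by a Yoneda--Ext count ($\Ext^1_\CCC(i_*(\mu),\mathbb{E}[\mu])$ and $\Ext^1_\CCC(\mathbb{E}[\mu],i_*(\mu))$ are each at most one-dimensional, via lemmas~\ref{lem:Ext(X,S)} and~\ref{lem:Ext(S,F)}) together with $i^*\circ M_\rho\cong i^*$ from lemma~\ref{lem:nat_eq_Mellin}, which rules out the split candidates because their $\pi_0$ is $\mu\oplus\mu$ rather than the indecomposable block $X$. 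The paper instead treats all three cases at once by realising $\mathbb{E}[X]\cong j^!(\overline{\Pi})$ for the type~\nosf{X} representation $\Pi=\pi\rtimes\mu\nu^{-3/2}$ with $\pi$ cuspidal of \emph{trivial central character} (so that $J_P(\Pi)_\psi$ is the length-two Jordan block $\mu^{(2)}$), using $J_Q(\Pi)=0$ to get $M_\rho(\mathbb{E}[X])\cong k_\rho(\overline{\Pi})=\widetilde{\Pi}$, and then simply reading off $\widetilde{\Pi}$ from theorems~\ref{thm:sigma_tilde_Pi} and~\ref{thm:monodromy}. Your route is more self-contained for the easy cases; the paper's is shorter because the type~\nosf{X} Bessel modules are already computed.

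Your handling of the remaining case $\mu=\nu^2\rho$ correctly identifies the obstruction ($\pi_0$ cannot separate $\mathbb{E}[X\to\mu]$ from $\mathbb{S}\oplus i_*(X)$, so one must bound the length of $\Danismanfunctor$), and your proposed fix is in substance the paper's proof. Two points of precision: the auxiliary representation is not a ``generic subquotient of a degenerate principal series'' but the irreducible fully induced $\pi\rtimes\mu\nu^{-3/2}$ of type~\nosf{X} with cuspidal $\pi$ and $\omega_\pi=1$; and rather than rerunning the Bessel filtration from scratch you can directly quote proposition~\ref{prop:fully_induced_non-ordinary} (this $(\Pi,\rho)$ is a fully induced non-ordinary exceptional case), which gives $\Danismanfunctor(\delta_P^{-1/2}\otimes\widetilde{\Pi})\cong\chi_{\crit}$ of length one and hence excludes the decomposable alternative by lemma~\ref{lem:universal extensions classification}. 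There is no circularity, since proposition~\ref{prop:fully_induced_non-ordinary} does not depend on the present lemma.
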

\begin{proof}
Let $\pi\in\CCC_{\Gl(2)}$ be cuspidal irreducible with trivial central character.
The Siegel induced representation $\Pi=\pi \rtimes \mu\nu^{-3/2}\in\CCC_G$ is generic irreducible of type~\nosf{X}.
Note that $j^!(\overline{\Pi})$ is perfect of degree one by lemma~\ref{lem:alpha_perfect} and there is an isomorphism of $T$-modules $i^*j^!(\overline{\Pi})\cong J_P(\Pi)_\psi$ by lemma~\ref{Dual action}, so $j^!(\overline{\Pi})$ is isomorphic to $\mathbb{E}[X]$ by lemma~\ref{lem:extension E_X}.
The Klingen Jacquet module $J_Q(\Pi)$ vanishes,
so there is an isomorphism $j_!j^!\overline{\Pi}\cong\overline{\Pi}$ of $\Gl_a(2)$-modules by lemma~\ref{Structure of TILDE I}.
This yields an isomorphism of $\Gl_a(1)$-modules
$k_\rho j_!(\mathbb{E}[X])\cong k_\rho(\overline{\Pi})$.
The assertion follows from theorem~\ref{thm:sigma_tilde_Pi} and theorem~\ref{thm:monodromy}.
\end{proof}

\section{Siegel induced representations}\label{s:SiegelInd}

We want to determine the $T$-module $\pi_0(\widetilde{\Pi})$ for every irreducible $\Pi\in\CCC_G(\omega)$.
By lemma~\ref{lem:Bessel_Jacquet} it suffices to consider those $\Pi$ whose Siegel-Jacquet module $J_P(\Pi)=\Pi_N$ does not vanish.
By dual Frobenius reciprocity, $\Pi$ is then isomorphic to a quotient of a Siegel induced representation $\Ind_P^G(\sigma_\Pi)$ from an irreducible $\sigma_\Pi\in\CCC_M$.
This $\sigma_\Pi$ is not uniquely determined by $\Pi$,
we may choose it as in table~\ref{tab:List} and give the full list of possible $\sigma_\Pi$ in table~\ref{tab:list_sigma}.
For simplicity, we fix a representative of each equivalence class under twisting.

For the standard Levi subgroup of the Siegel parabolic
we use the decomposition $\Gl(2)\times\Gl(1)\cong M$ via $(A,\lambda)\mapsto m_A t_\lambda$ with notations as in section~\ref{Bessel data}.
Every irreducible $\sigma_\Pi\in\CCC_M$ is of the form
$$\sigma_\Pi(m_At_\lambda)=\pi(A)\chi_\Pi(\lambda)$$
for irreducible smooth representations
$\pi=(\pi,V)$ of $\Gl(2)$ and $\chi_\Pi$ of $\Gl(1)$.
The twist of $\sigma_\Pi=\pi\boxtimes\chi_\Pi$ by a $\Gl(1)$-character $\mu$ is $\mu\otimes\sigma_\Pi=\pi\boxtimes\chi_\Pi\mu$
and the contragredient is $\sigma_\Pi^\vee\cong\pi^\vee\boxtimes\chi_\Pi^{-1}$.
Recall that the modulus factor of the Siegel parabolic $P$ is given on $M$ by
\begin{equation*}
 \delta_P(m_A t_\lambda)=\left\vert\frac{\det(A)}{\lambda}\right\vert^3\quad,\ \qquad\delta_P(x_\lambda)=\vert\lambda\vert^3\ .
\end{equation*}
The normalized Siegel induced representation $\pi\rtimes\chi_\Pi=\Ind_P^G(\sigma_\Pi)$ is the right-regular action of $G$ on the space of smooth functions $f:G\to V$ with compact support modulo $P$ such that 
$$ f(ms g)= \delta_P^{1/2}\sigma_\Pi(m) f(g)$$
for $s\in N$, $m\in M$ and $g\in G$.
The central character of $\pi\rtimes\chi_\Pi$ is $\omega=\omega_\pi\chi_\Pi^2$,
where $\omega_\pi$ is the central character of $\pi$.
There are natural isomorphisms 
$$\mu\otimes \Ind_P^G(\sigma_\Pi)\cong \Ind_P^G(\mu\otimes\sigma_\Pi)\qquad \text{and}\qquad \Ind_P^G(\sigma_\Pi)^\vee\cong \Ind_P^G(\sigma_\Pi^\vee)\ .$$
For details, see Tadi\'{c} \cite{Tadic}.

\begin{thm}\label{thm:sigma_Pi_existence}
For every irreducible $\Pi\in\CCC_G(\omega)$ and $\sigma_\Pi\in \CCC_M(\omega)$, the following assertions are equivalent:
\begin{enumerate}
\item $\Pi$ is a quotient of $\Ind_P^G(\sigma_\Pi)$,
\item $\Pi$ is a submodule of $\Ind_P^G(\omega\otimes\sigma_\Pi^\vee)$,
\item $\omega\otimes \sigma_\Pi^\vee$ is a quotient of $E=\delta_P^{-1/2}\otimes J_P(\Pi)$ for the unnormalized Siegel-Jacquet module $J_P(\Pi)=\Pi_N$.
\end{enumerate}
For $\Pi$ normalized as in table~\ref{tab:List}, $\sigma_\Pi$ satisfies these assertions if and only if it appears in table~\ref{tab:list_sigma}.
\end{thm}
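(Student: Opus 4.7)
The proof has two parts: establishing the equivalences (1) $\Leftrightarrow$ (2) $\Leftrightarrow$ (3) by representation-theoretic generalities, and then extracting table~\ref{tab:list_sigma} from a type-by-type inspection of the Sally-Tadi\'c / Roberts-Schmidt classification.

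For the equivalences, normalised first Frobenius reciprocity identifies
\begin{equation*}
\Hom_G(\Ind_P^G(\sigma_\Pi),\Pi) \;\cong\; \Hom_M(\sigma_\Pi, E)\ ,
\end{equation*}
so (1) amounts to the existence of an embedding $\sigma_\Pi \hookrightarrow E$ (any non-zero $M$-map out of the irreducible $\sigma_\Pi$ being injective). The equivalence (1) $\Leftrightarrow$ (2) then follows by taking contragredients via the standard identity $\Ind_P^G(\sigma)^\vee \cong \Ind_P^G(\sigma^\vee)$ together with the $\GSp(4)$-specific self-duality $\Pi^\vee \cong \omega^{-1}\otimes\Pi$ for irreducible $\Pi \in \CCC_G(\omega)$: a surjection $\Ind_P^G(\sigma_\Pi)\twoheadrightarrow \Pi$ dualises to $\Pi^\vee \hookrightarrow \Ind_P^G(\sigma_\Pi^\vee)$, and twisting both sides by $\omega$ yields (2). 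For (1) $\Leftrightarrow$ (3) the key input is that $E$ itself carries a self-duality: combining $\Pi^\vee \cong \omega^{-1}\otimes\Pi$ with Casselman's duality $J_P(\Pi^\vee) \cong J_{\bar P}(\Pi)^\vee$ and the self-associateness of the Siegel parabolic (there is a Weyl element $w$ with $w\bar P w^{-1}=P$ whose action on the Levi induces the involution $\sigma \mapsto \omega\otimes\sigma^\vee$) yields a natural isomorphism $E^\vee \cong \omega^{-1}\otimes E$ of $M$-modules. Under this self-duality, subrepresentations of $E$ correspond to quotients of the form $\omega\otimes\sigma^\vee$, so $\sigma_\Pi \hookrightarrow E$ if and only if $E \twoheadrightarrow \omega\otimes\sigma_\Pi^\vee$, which is (3).

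For the second part, the semisimplified normalised Siegel-Jacquet module $E^{\mathrm{ss}} = (\delta_P^{-1/2}\otimes J_P(\Pi))^{\mathrm{ss}}$ is tabulated for each irreducible $\Pi$ in \cite[table~A.3]{Roberts-Schmidt}. Condition (3) then selects the admissible $\sigma_\Pi$ as exactly those irreducible $\sigma$ of $M$ such that $\omega\otimes\sigma^\vee$ appears as a quotient of $E$. Enumerating these type by type produces table~\ref{tab:list_sigma}.

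The main obstacle lies in those cases where $J_P(\Pi)$ fails to be semisimple, e.g.\ for Langlands quotients and certain subquotients of reducible fully induced representations (types \nosf{VI}, \nosf{VIII}, \nosf{IX}, \nosf{XI}). There, a composition factor of $E$ need not appear as a genuine quotient, and one must invoke the explicit filtration of $J_P(\Ind_P^G(\sigma))$ coming from Bruhat decomposition, together with the reducibility analysis of \cite{Roberts-Schmidt}, to separate honest quotients from mere Jordan-H\"older constituents. Once this bookkeeping is carried out, the resulting list matches table~\ref{tab:list_sigma} and the theorem follows.
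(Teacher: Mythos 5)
Your overall architecture---the equivalences via contragredients and Frobenius reciprocity, then a type-by-type reading of \cite[table A.3]{Roberts-Schmidt} with extra care where $J_P(\Pi)$ is not semisimple---is the same as the paper's. But your opening identity is misattributed, and the misattribution makes your first step circular as written. First Frobenius reciprocity for the adjoint pair (Jacquet functor, normalized induction) reads $\Hom_G(\Pi,\Ind_P^G(\tau))\cong\Hom_M(E,\tau)$: maps \emph{into} the induced representation correspond to maps \emph{out of} the Jacquet module. The identity you write, $\Hom_G(\Ind_P^G(\sigma_\Pi),\Pi)\cong\Hom_M(\sigma_\Pi,E)$, is Bernstein's second adjointness, and even then the right-hand side should a priori be the Jacquet module for the \emph{opposite} parabolic $\bar P$; converting $r_{\bar P}(\Pi)$ into $E$ requires exactly the Casselman duality and self-associateness of $P$ that you only introduce afterwards to prove $E^\vee\cong\omega^{-1}\otimes E$. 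The paper's route avoids all of this: it gets $(1)\Leftrightarrow(2)$ by applying the contravariant exact involution $I\mapsto\omega\otimes I^\vee$ (using $\omega\otimes\Pi^\vee\cong\Pi$ and $\Ind_P^G(\sigma)^\vee\cong\Ind_P^G(\sigma^\vee)$, which you also invoke), and then $(2)\Leftrightarrow(3)$ by the genuine dual Frobenius reciprocity applied to $\tau=\omega\otimes\sigma_\Pi^\vee$, with no second adjointness and no self-duality of $E$ needed. Your argument is repairable by reordering it along these lines, but as it stands the justification of the first display is wrong.

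For the table, the ``bookkeeping'' you defer is where the actual content sits, and your list of worrisome types is off target: types \nosf{VII}, \nosf{VIII}, \nosf{IX} have $J_P(\Pi)=0$, so there is nothing to check there, and the Langlands-quotient cases you mention mostly cause no trouble. The only genuine failures of ``Jordan--H\"older constituent of $E$ $\Rightarrow$ quotient of $E$'' occur for types \nosf{VIa} and \nosf{VId}, where the one-dimensional constituent is not a quotient. The paper's mechanism for excluding it is not a Bruhat-filtration computation on $J_P(\Ind_P^G(\sigma))$ but a direct use of the equivalence already proved: if the one-dimensional constituent were a quotient of $E$, then $\Pi$ would be a quotient of the corresponding Siegel induced representation $\Ind_P^G((\nu^{1/2}\circ\det)\boxtimes\nu^{-1/2})$, whose irreducible constituents are of types \nosf{VIb} and \nosf{VId} by \cite[(2.11)]{Roberts-Schmidt}---a contradiction (and analogously for \nosf{VId}). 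You should identify these two cases explicitly and run this argument rather than gesturing at ``reducibility analysis''; without it the ``only if'' direction of the last assertion is not proved.
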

\begin{proof}
The involution $I\mapsto \omega\otimes I^\vee$ of $\CCC^{\fin}_G(\omega)$ is a contravariant exact functor that preserves the irreducible constituents, so the first two assertions are equivalent.
By dual Frobenius reciprocity, the second and third assertion are equivalent.

Since $T$ is in the center of $M$, the decomposition $E=\bigoplus_{\chi_{\norm}}E^{(T,\chi_\norm)}$ into generalized $T$-eigenspaces is preserved under the action of $M$.
It is easy to see that for $\Pi$ not of type \nosf{VIa} or \nosf{VId}, every constituent of $E$ occurs as a quotient, see \cite[table~A.3]{Roberts-Schmidt}.
For type \nosf{VIa}, normalized as in table \ref{tab:List},
the constituents of $E$ are $\Sp(\nu^{1/2})\boxtimes\nu^{-1/2}$ (twice) and  $(\nu^{1/2}\circ \det)\boxtimes\nu^{-1/2}$.
However, the one-dimensional constituent is not a quotient of $E$ because $\Pi$ is not a constituent of
$\Ind_P^G((\nu^{1/2}\circ \det)\boxtimes\nu^{-1/2})$, see \cite[(2.11)]{Roberts-Schmidt}.
For type \nosf{VId} the argument is analogous.
\end{proof}

\begin{cor}\label{cor:sigma_Pi_existence}
For irreducible $\Pi\in\CCC_G^{\fin}(\omega)$ and
for every $\chi_{\norm}\in\Delta(\Pi)$,
there is an irreducible $\sigma_\Pi=\pi\boxtimes\chi_\Pi\in \CCC_M^{\fin}(\omega)$
such that $\Pi$ is a quotient of $\Ind_P^G(\sigma_\Pi)$ and $\chi_\Pi(t_\lambda)=\chi_{\norm}(x_\lambda)$.
\end{cor}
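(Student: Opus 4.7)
The plan is to deduce the corollary from Theorem~\ref{thm:sigma_Pi_existence} by harvesting irreducible quotients from the generalized $T$-eigenspace decomposition of the normalized Siegel--Jacquet module. Since the twist functor satisfies the natural equivalence $\mu\circ\Ind_P^G\cong\Ind_P^G\circ\mu$ and both $\Delta(\Pi)$ and the hypothesized $\sigma_\Pi$ behave equivariantly under twisting, I would first reduce to the case that $\Pi$ is already normalized as in table~\ref{tab:List}, so that Theorem~\ref{thm:sigma_Pi_existence} and its explicit list in table~\ref{tab:list_sigma} become directly applicable.

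Next I would consider $E=\delta_P^{-1/2}\otimes J_P(\Pi)\in\CCC_M^{\fin}$. Because $T$ lies in the center of $M$, the decomposition $E=\bigoplus_{\chi_\norm'}E^{(T,\chi_\norm')}$ into generalized $T$-eigenspaces is a decomposition in $\CCC_M^{\fin}$; by definition $\Delta(\Pi)$ is the set of characters $\chi_\norm'$ for which the summand is non-zero. Given $\chi_\norm\in\Delta(\Pi)$, the finite-length $M$-module $E^{(T,\chi_\norm)}$ admits an irreducible quotient $\sigma'$, on which $T$ still acts by $\chi_\norm$. Writing $\sigma'=\pi'\boxtimes\chi'$ under the isomorphism $\Gl(2)\times\Gl(1)\cong M$ and defining $\sigma_\Pi:=\omega\otimes(\sigma')^\vee$ yields $\omega\otimes\sigma_\Pi^\vee=\sigma'$ as a quotient of $E$, so Theorem~\ref{thm:sigma_Pi_existence} produces the desired surjection $\Ind_P^G(\sigma_\Pi)\twoheadrightarrow\Pi$. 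Evaluating the action of $x_\lambda\in T\subseteq M$ on $\sigma'$ via the factorization $x_\lambda=m_{\lambda E}t_\lambda$ and the central character relation $\omega=\omega_\pi\chi_\Pi^2$ gives $\sigma'(x_\lambda)=\omega(\lambda)\omega_\pi^{-1}(\lambda)\chi_\Pi^{-1}(\lambda)=\chi_\Pi(\lambda)$, which is exactly the identity $\chi_\Pi(t_\lambda)=\chi_\norm(x_\lambda)$ required by the statement.

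The only place that requires care is the exceptional cases~\nosf{VIa} and~\nosf{VId}, where the proof of Theorem~\ref{thm:sigma_Pi_existence} shows that the one-dimensional $\Gl(2)$-constituent of $E$ fails to be a quotient of $E$. I expect this to be the main obstacle. The resolution is that in both of these cases the one-dimensional constituent shares its generalized $T$-eigenvalue with the companion Steinberg constituents of $E$, and the latter do appear as irreducible quotients of $E^{(T,\chi_\norm)}$. Hence for every $\chi_\norm\in\Delta(\Pi)$ the generalized eigenspace $E^{(T,\chi_\norm)}$ still admits an irreducible quotient of the form $\omega\otimes\sigma_\Pi^\vee$ with the required $T$-character, and the argument of the previous paragraph goes through unchanged.
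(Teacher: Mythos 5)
Your argument is correct and follows the same route the paper intends: the corollary is a direct consequence of Theorem~\ref{thm:sigma_Pi_existence} via the generalized $T$-eigenspace decomposition of $E$ already used in that theorem's proof, together with the involution $\sigma'\mapsto\omega\otimes(\sigma')^\vee$ and the central-character computation from Remark~\ref{rmk:torus_action_on_Jacquet}. Your worry about types VIa and VId is in fact moot, since you only need \emph{some} irreducible quotient of the non-zero summand $E^{(T,\chi_{\norm})}$, not that every constituent be a quotient -- but your resolution of it is also correct.
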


\begin{defn}
For irreducible $\sigma_\Pi = \pi \boxtimes \chi_\Pi\in\CCC_M(\omega)$ as above and a smooth character $\rho$ of $\Gl(1)$,
the pair $(\sigma_\Pi,\rho)$
will be called \emph{ordinary} if either $\dim(\pi)\! =\! 1$ holds or if $\rho$ is different from the two characters
$$ \rho_+ \!=\! \nu^{-1/2}\chi_\Pi \quad , \quad \rho_- =  \rho_+^\divideontimes = {\omega}/\rho_+\ .$$
If $(\sigma_\Pi,\rho)$ is not ordinary and $\rho_+\neq\rho_-$, it will be called \emph{non-ordinary}.
The case where $\rho=\rho_+=\rho_-$ is called \emph{extraordinary}.
\end{defn}
Notice that $\rho_+ \! =\! \rho_-$ if and only if
$\nu^{-1}\chi_\Pi\! =\! {\omega}\chi_\Pi^{-1}$ if and only if $\omega_\pi\!=\!\nu^{-1}$.
The case $\sigma_\Pi \!=\! (\chi_1\times \nu) \boxtimes \nu^{-1/2}$ and $\rho\! =\!\chi_1 \!=\! \nu^{-1}$
is non-ordinary by definition but behaves like an ordinary case.
To be consistent we always assume for the family \nosf{IIIa, IIIb} that
the character $\chi_1$ satisfies $\vert \chi_1\vert = \nu^s$ with $s\geq 0$.
This can be achieved by the Weyl reflection that switches $\chi_1$ and $\chi_1^{-1}$.

\begin{lem}\label{lem_dual_ord}
If $(\sigma_\Pi,\rho)$ is non-ordinary or extraordinary and $\Ind_P^G(\sigma_\Pi)$ is not irreducible, then $(\omega\otimes\sigma^\vee_\Pi,\rho)$ is ordinary.
\end{lem}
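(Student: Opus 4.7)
The plan reduces the lemma to a character identity plus a short reducibility check via the Sally--Tadi\'c classification.

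Writing $\sigma_\Pi = \pi\boxtimes\chi_\Pi$ with $\omega = \omega_\pi\chi_\Pi^2$, one has $\omega\otimes\sigma_\Pi^\vee = \pi^\vee\boxtimes\omega\chi_\Pi^{-1}$. The exceptional characters attached to this dual datum are
\begin{equation*}
 \rho_+' = \nu^{-1/2}\omega\chi_\Pi^{-1} = \nu^{-1}\rho_-\ ,\qquad
 \rho_-' = \nu^{1/2}\chi_\Pi = \nu\rho_+\ .
\end{equation*}
Since $\dim\pi^\vee = \dim\pi$, the dual pair $(\omega\otimes\sigma_\Pi^\vee,\rho)$ is ordinary unless $\dim\pi > 1$ and $\rho\in\{\nu^{-1}\rho_-,\nu\rho_+\}$. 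I would then assume for contradiction that the conclusion fails; combined with the standing hypothesis that $(\sigma_\Pi,\rho)$ is non-ordinary or extraordinary, this forces $\dim\pi > 1$ and
$\rho\in\{\rho_+,\rho_-\}\cap\{\nu^{-1}\rho_-,\nu\rho_+\}$.

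Next I exploit the identity $\rho_-/\rho_+ = \nu\omega_\pi$ to inspect the four possible coincidences. The diagonal options $\rho_+ = \nu\rho_+$ and $\rho_- = \nu^{-1}\rho_-$ give $\nu = 1$ and are impossible, while the other two each reduce to $\omega_\pi = 1$. In the extraordinary case, $\omega_\pi = \nu^{-1}\neq 1$ already contradicts this, so we are reduced to the non-ordinary case with $\omega_\pi = 1$ and $\dim\pi > 1$.

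It remains to verify that under these constraints the Siegel induction $\Ind_P^G(\pi\boxtimes\chi_\Pi)$ is always irreducible, contradicting the remaining hypothesis. Since the $\Sp(4)$-Borel reducibility of $\chi_a\otimes\chi_b\otimes\chi_\Pi$ depends only on $\chi_a$ and $\chi_b$ (through $\chi_a = \nu^{\pm 1}$, $\chi_b = \nu^{\pm 1}$, $\chi_a\chi_b = \nu^{\pm 1}$, $\chi_a/\chi_b = \nu^{\pm 1}$), the check is routine in each of the three possible shapes for $\pi$ with $\omega_\pi = 1$ and $\dim\pi > 1$: for supercuspidal $\pi$ one lands in type \nosf{X}, which is irreducible; for $\pi\cong\chi_1\times\chi_1^{-1}$ irreducible, the Siegel reducibility locus $\chi_1^2\in\nu^{\pm 1}$ is precisely what would make $\pi$ itself reducible as a $\Gl(2)$-module and is thus excluded; for $\pi\cong\chi_1\St_{\Gl(2)}$ with $\chi_1^2 = 1$, embedding into the Borel induction $\chi_1\nu^{1/2}\otimes\chi_1\nu^{-1/2}\otimes\chi_\Pi$ one finds that the constraint $\chi_1^2 = 1$ rules out every additional reducibility condition beyond the built-in $\chi_a/\chi_b = \nu$ responsible for $\chi_1\St$ itself, so $\chi_1\St\rtimes\chi_\Pi$ is irreducible. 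The main obstacle is this last case analysis, which is purely combinatorial once one writes down the four Borel reducibility equations; the first two paragraphs are formal character arithmetic.
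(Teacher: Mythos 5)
Your reduction to $\omega_\pi=1$ is correct and is essentially the paper's own argument (the paper phrases the surviving coincidence as $\chi_\Pi=\chi_\Pi^\divideontimes$, you phrase it via $\rho_-/\rho_+=\nu\omega_\pi$; these are equivalent), and your dismissal of the extraordinary case via $\omega_\pi=\nu^{-1}\neq 1$ is fine. The gap is in the final irreducibility check, in the principal series case. For $\pi\cong\chi_1\times\chi_1^{-1}$ you test only the conditions $\chi_a\chi_b=\nu^{\pm1}$ (vacuous, since $\chi_a\chi_b=1$) and $\chi_a/\chi_b=\chi_1^2=\nu^{\pm1}$ (excluded by irreducibility of $\pi$), but you silently drop the conditions $\chi_a=\nu^{\pm1}$ and $\chi_b=\nu^{\pm1}$ from your own list. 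These are not vacuous here: for $\chi_1=\nu$ the module $\pi=\nu\times\nu^{-1}$ is irreducible as a $\Gl(2)$-module (since $\chi_1^2=\nu^2\neq\nu^{\pm1}$), yet $\Ind_P^G\bigl((\nu\times\nu^{-1})\boxtimes\chi_\Pi\bigr)=\nu\times\nu^{-1}\rtimes\chi_\Pi$ is reducible because $\chi_1=\nu$. This is precisely the datum $\sigma_\Pi=(\chi_1^{-1}\times\nu)\boxtimes\nu^{-1/2}\chi_1$ of type \nosf{IIIb} with $\chi_1=\nu$ in table~\ref{tab:list_sigma}; for it one has $\omega\otimes\sigma_\Pi^\vee\cong\sigma_\Pi$, so the dual pair is again non-ordinary and the conclusion of the lemma fails for this inducing datum.

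In other words, your argument proves too much: the statement is not true for every admissible $\sigma_\Pi$, and the paper's proof does not claim that it is. It reduces to $\omega_\pi=1$ exactly as you do, and then observes that the only surviving configuration is $\pi=\nu\times\nu^{-1}$, which ``has already been excluded'' --- a reference to the convention (see the proof of theorem~\ref{Mult1}) that whenever $\sigma_\Pi\cong\omega\otimes\sigma_\Pi^\vee$, which happens only for $\pi\cong\nu\times\nu^{-1}$, one replaces $\sigma_\Pi$ by the other inducing datum from table~\ref{tab:list_sigma}. To repair your write-up you must either restore the missing reducibility conditions $\chi_1=\nu^{\pm1}$ and then invoke that exclusion, or build the hypothesis $\sigma_\Pi\not\cong\omega\otimes\sigma_\Pi^\vee$ into the statement.
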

\begin{proof} Note that $\omega\otimes \sigma_\Pi^\vee\cong \pi^\vee\boxtimes\chi_\Pi^\divideontimes$.
If $(\omega\otimes\sigma^\vee_\Pi,\rho)$ is not ordinary, then
either $\rho_+(\chi_\Pi) = \nu^{-1/2} \chi_\Pi^\divideontimes$ or
$\rho_-(\chi_\Pi) = \nu^{-1/2} \chi^\divideontimes_\Pi$.
This amounts to $\chi_\Pi=\chi_\Pi^\divideontimes$ resp.\ $\nu^{1/2}\chi_\Pi^\divideontimes = \nu^{-1/2}\chi_\Pi^\divideontimes$.
The latter is clearly impossible, so $\chi_\Pi = \chi_\Pi^\divideontimes$ implies
$\chi_{\Pi}^2={\omega}$, hence $\omega_\pi=1$. This only occurs for the case $\pi=\nu\times\nu^{-1}$ that has already been excluded.
\end{proof}

\subsection{Bessel filtration} \label{ordinary}\label{s:Bessel_filt}
For irreducible $\sigma_\Pi\in\CCC_M(\omega)$,
the Siegel induced representation $I=\Ind_P^G(\sigma_\Pi)$
carries a filtration of $P$-modules $I_{\cell}^{\bullet}$
corresponding to Schubert cells on $P\!\setminus\! G$ defined by double cosets in $P\! \setminus \! G\!/P$.
By Bruhat decomposition, these double cosets are represented by the relative Weyl group
$W_M\mspace{-4mu} \setminus \mspace{-4mu} W\!/W_M=\{id,\mathbf{s}_2,\mathbf{s}_2\mathbf{s}_1\mathbf{s}_2\}$. This filtration $I_{\cell}^{\bullet}$ is denoted $I^\bullet$ in \cite[\S5.2]{Roberts-Schmidt_Bessel}.
As modules of $RT=\widetilde{T}TN\subseteq P$, this induces a finer filtration
$$ 0\ \subseteq\ I_{\geq 3} \ \subseteq\ I_{\geq 2}\ \subseteq\ I_{\geq 1}\ \subseteq\ I_{\geq 0}\ =\ I $$
whose associated graded components
$$I_i= I_{\geq i}/I_{\geq i +1}$$
are described in section~\ref{s_app:filtration}.
Since the Bessel group $\tilde R$ is contained in $RT$,
this allows to analyze the structure of the 
$\beta_\rho(I)=\widetilde I$ in terms of this filtration.
We recall some notations and simple facts:
We identify $TS$ with the subgroup $\Gl_a(1)\subseteq\Gl(2)$
as in example~\ref{example 4}.
The normalization factor for the Borel group $B$ of
upper triangular matrices in $\Gl(2)$ with unipotent radical $U$ is
$\delta_B^{1/2}(x_\lambda)\!=\! \vert \lambda\vert^{1/2}$.
As a $T$-module, the semisimplified Jacquet quotient is
\begin{equation*}
\delta_B^{-1/2} (\chi_\Pi\otimes \pi)_U^{ss}
\quad\cong\quad
\begin{cases}
 \quad\chi_1\chi_\Pi\!+\! \chi_2\chi_\Pi & \pi\! =\! \chi_1 \! \times\! \chi_2\ ,\\
 \quad\nu^{1/2}\chi\chi_\Pi               & \pi\!=\! \Sp(\chi) \ ,               \\
 \quad\nu^{-1/2}\chi\chi_\Pi              & \pi\!=\! (\chi\circ \det)\ .
\end{cases}
\end{equation*}

For the next lemmas \ref{tildemod} and \ref{tildeKoh} we define the filtration index $m\! =\! m(\sigma_\Pi,\Lambda)=\delta_{\rho\rho_+}+\delta_{\rho\rho_-}\in\{0,1,2\}$ (Kronecker delta).
It is equal to zero for $\rho\!\neq\! \rho_\pm$,
to two for $\rho\! =\! \rho_+\! =\! \rho_-$, and to one otherwise.

\begin{lem} \label{tildemod} For $I=\Ind_P^G(\sigma_\Pi)$ with filtration as above, 
the $TS$-modules $\delta_P^{-1/2}\otimes {\widetilde I_i}\ $ are
\begin{enumerate}
\item[\qquad$i=3$:] $\quad\begin{cases}
                      0           &  \text{if } \pi=\mu\circ\det\text{ and }\ \rho\neq\mu\chi_\Pi\ ,\\
                      \chi_\Pi \otimes C_c^\infty(S)\ \  & \text{otherwise}\ ,
                      \end{cases}$
\item[\qquad $i=2$:]  $ \quad \chi_\Pi\delta_B^{-1/2}\otimes \pi_U\quad\quad \ $ on the space $V_U\ $ of $\dim(V_U)\leq 2$\ ,
\item[\qquad $i=1$:] $\quad  m\cdot \chi_\Pi\delta_B^{-1/2}\otimes \pi \quad\ $ as a $TS$-module (see example \ref{example 4}) \ ,
\item[\qquad $i=0$:] $\quad \begin{cases}
                      0   \qquad\qquad\quad     & \text{if }\ \pi = \mu\circ \det\text{ and }\ \rho\neq\mu\chi_\Pi\\
                      \chi_\Pi^\divideontimes\qquad\qquad\quad & \text{otherwise}\ .
                             \end{cases}$
\end{enumerate}
\end{lem}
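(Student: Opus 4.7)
The plan is to compute each $\widetilde{I_i}=\beta_\rho(I_i)$ by realizing $I_i$ as a module induced from the stabilizer subgroup attached to the appropriate Schubert stratum of $P\backslash G$, and then applying a Mackey reduction to the $(\widetilde R,\Lambda)$-coinvariants. Transitivity of coinvariants will reduce each case to a combination of three ingredients: (a) matching $\sigma_\Pi|_{\widetilde T}$, possibly after a Weyl conjugation, against $\Lambda=\rho\boxtimes\rho^\divideontimes$ on $\widetilde T$; (b) a residual Jacquet-type operation on $\pi$; and (c) the induced $S$-action along the fibers of the cell map.

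For $i=0$ (closed cell, $w=\mathrm{id}$) the graded piece is a single $\widetilde T$-eigenline; matching $\sigma_\Pi|_{\widetilde T}\cdot\delta_P^{-1/2}$ against $\Lambda$ produces the $T$-character $\chi_\Pi^\divideontimes$ after normalization, and the $S$-action is trivial. For $i=2$ (the cell attached to $\mathbf{s}_2\mathbf{s}_1\mathbf{s}_2$ after the finer $RT$-refinement), transitivity of the upper-triangular Jacquet coinvariant functor identifies $\beta_\rho$ of this piece with $\chi_\Pi\delta_B^{-1/2}\otimes\pi_U$, the $S$-action being trivial since $\psi$ vanishes on the relevant unipotent direction. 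For $i=1$ (cell $w=\mathbf{s}_2$), the $\widetilde T$-action on the induced piece is by a Weyl conjugate of $\sigma_\Pi|_{\widetilde T}$, and the $\Lambda$-coinvariants produce one copy of $\chi_\Pi\delta_B^{-1/2}\otimes\pi$, viewed as a $TS$-module via example~\ref{example 4}, for each match of $\rho$ with $\rho_+$ or $\rho_-$; this yields the multiplicity $m=\delta_{\rho\rho_+}+\delta_{\rho\rho_-}$. The fact that the residual module is the full $\pi$ rather than a Jacquet quotient reflects that the stabilizer in $P$ of the Bessel character on this stratum no longer contains the unipotent $U$.

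The main obstacle is $i=3$, the open Bruhat cell. There $I_{\geq 3}$ is compactly induced, and Mackey reduction produces a space of $\chi_\Pi$-twisted functions on a principal homogeneous $S$-space on which $\psi$ acts non-trivially; the $(\widetilde R,\Lambda)$-coinvariants then collapse this to $\chi_\Pi\otimes C_c^\infty(S)$, since compact induction along the free $S$-orbit gives exactly $C_c^\infty(S)$. The subtle point is the exceptional vanishing in the case $\pi=\mu\circ\det$, $\rho\neq\mu\chi_\Pi$: here one-dimensionality of $\pi$ collapses the Bessel fiber to a single character, and compatibility with $\Lambda$ on $\widetilde T$ then forces $\rho=\mu\chi_\Pi$; if this fails, the $\widetilde T$-coinvariants vanish identically on the open cell. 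Tracking this character match through the Mackey isomorphism, correctly transporting the normalization $\delta_P^{-1/2}$, and verifying that the same one-dimensional degeneracy causes the analogous vanishing in the $i=0$ case is the step that will require the most careful bookkeeping; the remaining cases reduce to elementary character matchings once the cell-by-cell structure is in hand.
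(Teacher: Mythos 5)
Your overall strategy -- compute $\widetilde{I_i}=((I_i)_{\widetilde N})_{\widetilde T,\rho}$ cell by cell using the explicit models of the graded pieces -- is the same as the paper's, but there is a genuine gap in how you handle the $\widetilde T$-coinvariants on the strata $i=0$ and $i=3$. You assert that the closed-cell graded piece is ``a single $\widetilde T$-eigenline''; this is false whenever $\pi$ is infinite-dimensional. The piece $I_0$ is the full space $V$ of $\pi$, with $\widetilde t$ acting through $\chi_\Pi(t_1t_2)\pi(\diag(t_1,t_2))$, and likewise $(I_3)_{\widetilde N}\cong C_c^\infty(k,V)$ carries the $\widetilde T$-action through $\pi(\diag(t_2,t_1))$ pointwise. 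The statement that the $\Lambda$-coinvariants of these are respectively one-dimensional and isomorphic to a single copy of $C_c^\infty(S)$ is therefore not an ``elementary character matching'': it requires the multiplicity-one statement $\dim\pi_{\widetilde T,\rho}=1$ for infinite-dimensional irreducible $\pi\in\CCC_{\Gl(2)}$ (corollary~\ref{cor:Waldspurger-Tunnell}, deduced in the paper from the Kirillov model via lemma~\ref{lem:RR}). Without this input your argument gives no control on $\deg(\widetilde{I_3})$ or on $\dim\widetilde{I_0}$ in the generic case; only the degenerate case $\pi=\mu\circ\det$, which you do discuss, reduces to a character match.

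A secondary issue is the attribution of the refined filtration to Bruhat cells: you place $i=2$ on the long-element cell $\mathbf{s}_2\mathbf{s}_1\mathbf{s}_2$ and $i=1$ on $\mathbf{s}_2$, while also calling $i=3$ the open cell. In fact $I_3$ is the restriction of the open cell $I_{\cell}^2$ (long Weyl element), and both $I_2$ and $I_1$ arise from the further $RT$-refinement of the single middle piece $I_{\cell}^1/I_{\cell}^2$ attached to $\mathbf{s}_2$ (with $I_2\subseteq W$ the functions supported on $u\neq0$ and $I_1=W/I_2$). This bookkeeping matters because it is exactly this refinement that separates the Jacquet quotient $\pi_U$ (appearing in $\widetilde{I_2}$) from the full module $\pi$ (appearing in $\widetilde{I_1}$), and because the multiplicity $m$ in the $i=1$ entry comes from $\widetilde T$ acting by the scalar characters $\rho_\pm$ on the two summands of $(I_1)_{\widetilde N}\cong V\oplus V$, not from a Mackey computation on a separate cell. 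Once you import corollary~\ref{cor:Waldspurger-Tunnell} for the cases $i=0,3$ and fix the cell attributions, your outline matches the paper's proof.
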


Concerning $\mathbb{E}=C_c^\infty(S)$ and $\pi, \pi_U$ as $TS$-modules, see examples~\ref{example 3} and \ref{example 4}.
\begin{proof}
$\widetilde{I_i}\cong((I_{i})_{\widetilde{N}})_{\widetilde{T},\rho}$ holds by definition.
The $\widetilde{N}$-coinvariant spaces $(I_i)_{\widetilde{N}}$ are given in section \ref{s_app:filtration} and it remains to determine $\rho$-coinvariants.
 For $i=3$ the result follows from corollary~\ref{cor:Waldspurger-Tunnell} with the action on $C_c^\infty(S)$ given in example~\ref{example 3}.
 For $i=2$ the $\rho$-coinvariant space is given by integrating $f\in C_c^\infty(k^\times,V_{U})$ over $k^\times$. The action of $\widetilde{T}$ on $(I_1)_{\widetilde{N}}\cong V\oplus V$ is by multiplication with the character $\rho_+$ on one factor and $\rho_-$ on the other factor. Thus $\widetilde{I_1}$ is isomorphic to $m$ copies of $\nu\chi_\Pi\otimes\pi$ as a $\Gl_a(1)$-module.
 The $\rho$-coinvariant space of $(I_0)_{\widetilde{N}}$ is given by corollary~\ref{cor:Waldspurger-Tunnell}.
 Finally, recall $\delta_P(x_\lambda s_b)=\vert\lambda\vert^3$.
\end{proof}

\begin{lem} \label{tildeKoh} For $\beta^\rho(I_i) = ((I_i)_{\tilde N})^{\tilde T,\Lambda}$ the $TS$-modules 
$\delta_P^{-1/2} \otimes \beta^\rho(I_i)$ are
\begin{enumerate}
\item[\qquad $i=3$:] $\quad\begin{cases}
            \chi_\Pi\otimes C^\infty_c(S) \ \ & \text{if }\pi=\mu\circ\det \text{ and } \rho=\mu\chi_\Pi\ ,\\ 
            0             & \text{otherwise}\ ,
                       \end{cases}$
\item[\qquad $i=2$:] $\quad 0$\ ,
\item[\qquad $i=1$:] $\quad m\cdot \chi_\Pi\delta_B^{-1/2}\otimes \pi\  \quad\ $ as a $TS$-module (see example \ref{example 4}) \ ,
\item[\qquad $i=0$:] $\quad\begin{cases}
            \chi_\Pi^\divideontimes   \qquad\qquad\quad   & \text{if } \pi=\mu\circ\det \text{ and }\rho=\mu\chi_\Pi \ ,\\ 
            0          \qquad\qquad\quad   & \text{otherwise}\ .
                       \end{cases}$
\end{enumerate}
\end{lem}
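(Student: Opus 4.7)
The plan is to proceed in exact parallel with lemma~\ref{tildemod}: the $\widetilde N$-coinvariants $(I_i)_{\widetilde N}$ have already been recorded in section~\ref{s_app:filtration}, so the only remaining task is to identify the $(\widetilde T,\Lambda)$-eigenspace of each cell, in contrast to the $(\widetilde T,\Lambda)$-coinvariants handled there.

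First, for the open layer $i=2$, the space $(I_2)_{\widetilde N}$ is a space of $V_U$-valued functions of compact support on $k^\times$ on which $\widetilde T$ acts essentially by translation in the argument, twisted by a smooth character. Compact support then precludes the existence of any $\Lambda$-eigenvector, by the same argument as in lemma~\ref{lem:C_b_perfect}, so $\beta^\rho(I_2)=0$. For the middle layer $i=1$, the coinvariant space $(I_1)_{\widetilde N}$ splits as $V\oplus V$ on which $\widetilde T$ acts by $\rho_+$ on the first summand and by $\rho_-$ on the second. Hence its $\Lambda$-eigenspace contributes one copy of $V$ for each coincidence among $\rho=\rho_\pm$, i.e. $m$ copies of $V$ in total, which after the modulus normalization and the $TS$-module interpretation of example~\ref{example 4} matches the claimed $m\cdot\chi_\Pi\delta_B^{-1/2}\otimes\pi$.

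For the extreme layers $i=3$ and $i=0$, I would invoke lemma~\ref{lem:torus_action_GL2}: a smooth $\Gl(2)$-module admits a non-trivial character-eigenspace of the diagonal torus only if it is one-dimensional, in which case it is of the form $\mu\circ\det$. Consequently, the relevant eigenspace is non-zero precisely when $\pi=\mu\circ\det$ and the matching condition $\rho=\mu\chi_\Pi$ holds, and then it equals all of $\pi$. Combined with the explicit shape of $(I_3)_{\widetilde N}$ and $(I_0)_{\widetilde N}$ from section~\ref{s_app:filtration}, this yields $\chi_\Pi\otimes C^\infty_c(S)$ for $i=3$ and $\chi_\Pi^\divideontimes$ for $i=0$ in the exceptional case, and zero otherwise.

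The main obstacle is not any deep new input but careful bookkeeping of the $\widetilde T$-action on each Schubert cell. In particular, for $i=1$ one must verify that the two character-eigenvalues of $\widetilde T$ are precisely $\rho_+=\nu^{-1/2}\chi_\Pi$ and $\rho_-=\rho_+^\divideontimes$, so that the combinatorial factor $m=\delta_{\rho\rho_+}+\delta_{\rho\rho_-}$ carries its intended meaning. This amounts to unwinding the modulus $\delta_P^{1/2}$ together with the Schubert cell structure on $P\setminus G$, and is entirely parallel to the analogous calculation already carried out in the proof of lemma~\ref{tildemod}.
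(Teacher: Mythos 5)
Your proposal is correct and follows exactly the route the paper takes: its proof of this lemma is literally the one-line remark that the computation is analogous to lemma~\ref{tildemod} using the explicit $\widetilde N$-coinvariants from section~\ref{s_app:filtration}, with $\Lambda$-coinvariants replaced by $\Lambda$-invariants. Your fleshing-out of the four layers (compact support kills eigenvectors on $I_2$ as in lemma~\ref{lem:C_b_perfect}; the scalar $\widetilde T$-action on $V\oplus V$ gives $m$ copies for $I_1$; lemma~\ref{lem:torus_action_GL2} handles $I_3$ and $I_0$) is precisely the intended argument.
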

\begin{proof}
The proof is analogous to lemma~\ref{tildemod} and uses section~\ref{s_app:filtration}.
\end{proof}

Note that for $\dim(\pi)\!\neq\! 1$ and $i\!\neq\! 1$, all $\beta^\rho(I_i)$ above vanish.
If $\dim(\pi)\!=\! 1$  or more generally if $(\sigma_\Pi,\Lambda)$ is ordinary,
then all  $\beta^\rho(I_i)$ with $i\neq3$ are finite dimensional.

\textit{Bessel filtration.}
Under the Bessel functor $\beta_\rho$ the inclusion maps
$I_{\geq i} \! \subseteq\! I$ induce morphisms $\widetilde {I_{\geq i}} \to \widetilde I$
since $\beta_\rho$ is only right exact.
The images $$F_{\geq i} := im(\widetilde{I_{\geq i}} \to \widetilde I)$$
define a filtration    
$0\ \subseteq\ F_{\geq 3}\ \subseteq\ F_{\geq 2}\ \subseteq\ F_{\geq 1}\ \subseteq\ F_{\geq 0}\ =\ \widetilde I \ $
of $\tilde I$ whose graded objects
$ gr^F_i(\widetilde I)\! =\! F_{\geq i}/F_{\geq i +1} $
are quotients  of  the $TS$-modules $\widetilde{I_i}$ in lemma \ref{tildemod}:
$$    \widetilde{I_i}  \twoheadrightarrow gr^F_i(\widetilde I) \ .$$
Under the exact functor $\pi_0$,  $\pi_0(\widetilde I)$ becomes filtered
by $\pi_0(F_{\geq i})$ with graded components $\pi_0(gr^F_i(\widetilde I))$ and
these are quotients of the $\pi_0(\widetilde{I_i})$. 
The monodromy operator $\tau_\pi$ acts on $\pi_0(\widetilde I)$ and respects the filtration
in the following sense: 
$\tau_\pi(\pi_0(\widetilde{I_i})\! \subseteq\! \pi_0(\widetilde{I_{i-1}})$ holds 
with the possible exceptions $i\!=\!2,1$, where sometimes only $\tau_\pi^2(\pi_0(\widetilde{I_i})\! \subseteq\! \pi_0(\widetilde{I_{i-1}})$ could hold, namely in the degenerate case where $\pi\! =\! \chi_1\! \times\! \chi_2$ for $\chi_1 \! =\! \chi_2$. 

\bigskip\noindent
The Bessel functors applied to $I_{\leq i}\!:\! =\! I/I_{\geq i +1}$ and the exact sequences
$$ 0 \to I_{\geq 2} \to  I \to  I_{\leq 1} \to 0  $$
as well as
$  0 \to {I_{1}} \to I_{\leq 1} \to  {I_0} \to 0 $ and  
$  0 \to {I_{3}} \to I_{\geq 2} \to  {I_2} \to 0 $ yield a diagram
\begin{equation*}
\xymatrix@R-3mm{
  & 0\ar[d]           & & 0\ar[d]             & 0\ar[d]                      &     &     &   \\
 & \beta^\rho(I_3)\ar[d]& & \beta^\rho(I_1)\ar[d]  & \widetilde{I_3}\ar[d] &                    &\widetilde{I_1} \ar[d]      &   \\
 0\ar[r] & \beta^\rho(I_{\geq2})\ar[r]\ar[d]& \beta^\rho(I)\ar[r] &   \beta^\rho(I_{\leq1})\ar[r]^{\delta}\ar[d] & \widetilde{I_{\geq2}}\ar[d]\ar[r] & \widetilde{I}\ar[r]  &\widetilde{I_{\leq1}}\ar[r]\ar[d] & 0\ .\\
        &  0       & & \beta^\rho(I_0)       & \widetilde{I_2}\ar[d]   &    &\widetilde{I_0}\ar[d]       &   \\
  &                  & &                    & 0                        &    &0    &   \\
}
\end{equation*}
\begin{lem}\label{Lemma1}\label{lem:filtration_diagram_exact}
The sequences in the diagram are exact.
If $\pi\not\cong(\nu^{-1/2}\circ\det)$, the sequences
$0\to \widetilde{I_{1}} \to \widetilde I_{\leq 1} \to  \widetilde{I_0} \to 0$ and
$0\to \beta^\rho(I_{1})\to \beta^\rho (I_{\leq 1}) \to  \beta^\rho(I_0)\to 0$ are exact.
\end{lem}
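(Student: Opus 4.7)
The entire assertion can be deduced from three applications of the six-term long exact sequence of Bessel functors supplied by Lemma~\ref{lem:long exact sequence}, applied to the three short exact sequences
\begin{equation*}
0\to I_{\geq 2}\to I\to I_{\leq 1}\to 0,\quad 0\to I_3\to I_{\geq 2}\to I_2\to 0,\quad 0\to I_1\to I_{\leq 1}\to I_0\to 0
\end{equation*}
of $P$-modules (hence $TR$-modules) coming from the filtration on $I=\mathrm{Ind}_P^G(\sigma_\Pi)$ described in section~\ref{s:Bessel_filt}.

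The first of these yields verbatim the middle row of the diagram. The second yields a long exact sequence whose middle term $\beta^\rho(I_2)$ vanishes by Lemma~\ref{tildeKoh}; this degenerates into the isomorphism $\beta^\rho(I_3)\xrightarrow{\sim}\beta^\rho(I_{\geq 2})$ filling in the left column, together with the short exact sequence $0\to\widetilde{I_3}\to\widetilde{I_{\geq 2}}\to\widetilde{I_2}\to 0$ displayed on the upper right. The third yields the lower right portion of the diagram, namely the exact sequence continuing as $\beta^\rho(I_1)\to\beta^\rho(I_{\leq 1})\to\beta^\rho(I_0)\to\widetilde{I_1}\to\widetilde{I_{\leq 1}}\to\widetilde{I_0}\to 0$.

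For the additional claim I would argue that when $\pi\not\cong\nu^{-1/2}\!\circ\!\det$, the connecting morphism $\beta^\rho(I_0)\to\widetilde{I_1}$ in the third long exact sequence is forced to vanish, so that it splits into the two announced short exact sequences. Consulting Lemma~\ref{tildeKoh}, $\beta^\rho(I_0)$ is non-zero only in the case $\pi=\mu\circ\det$ with $\rho=\mu\chi_\Pi$. In that case $\omega_\pi=\mu^2$ gives $\rho_+=\nu^{-1/2}\chi_\Pi$ and $\rho_-=\nu^{1/2}\mu^2\chi_\Pi$; comparing with $\rho=\mu\chi_\Pi$ shows that $\rho\in\{\rho_+,\rho_-\}$ forces $\mu=\nu^{-1/2}$, which is excluded. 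Hence the multiplicity index $m$ is zero and Lemma~\ref{tildemod} gives $\widetilde{I_1}=0$. In the remaining case $\dim\pi>1$, it is $\beta^\rho(I_0)$ itself that vanishes by Lemma~\ref{tildeKoh}. Either way one factor of the connecting map is zero.

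The real content of the statement thus lies entirely in Lemma~\ref{lem:long exact sequence}, which was already proved; the only new input here is the elementary numerical check identifying $\pi=\nu^{-1/2}\!\circ\!\det$ as the unique degeneracy where both $\beta^\rho(I_0)$ and $\widetilde{I_1}$ can simultaneously be non-zero. No genuine obstacle arises, but one must take care that the six-term sequence of Lemma~\ref{lem:long exact sequence} is available for the filtration pieces $I_i$ (which are not themselves $G$-modules but merely $TR$-modules); this is fine because the Bessel functors are defined on $\CCC_{TR}$ and right-exactness plus compact generation of $\widetilde N$ and $\widetilde T/Z_G$ give the long exact sequence in that generality as well.
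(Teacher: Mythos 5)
Your proposal is correct and follows essentially the same route as the paper: three applications of the long exact sequence of Bessel functors, the vanishing $\beta^\rho(I_2)=0$ from lemma~\ref{tildeKoh}, and an elementary numerical check showing that $\pi\cong\nu^{-1/2}\circ\det$ is the only case where the boundary map $\beta^\rho(I_0)\to\widetilde{I_1}$ could be non-zero. The only (harmless) variation is in that last step: the paper keeps $m\geq 1$ and uses $T$-equivariance to match the characters of the one-dimensional source and target, whereas you observe that $\rho\in\{\rho_+,\rho_-\}$ already forces $\mu=\nu^{-1/2}$, so the filtration index $m$ vanishes and the target $\widetilde{I_1}$ is itself zero — both computations reduce to the same equation.
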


\begin{proof}
The first assertion holds by lemma \ref{lem:long exact sequence}, since $\beta^\rho(I_2)=0$ by lemma~\ref{tildeKoh}.
It remains to be shown that the boundary morphism
$$ \delta_2: \beta^\rho(I_0) \longrightarrow {\widetilde I_1} $$ vanishes for $\pi\not\cong\nu^{-1/2}\circ\det$.
Indeed, assume $\delta_2$ is non-zero. %
By lemma~\ref{tildeKoh} this is only possible if $\pi \! = \! \mu \circ\det$ and $\rho=\chi_\Pi\mu$ for a smooth character $\mu$ of $k^\times$.
But then lemma~\ref{tildemod} yields
$\beta^\rho(I_0) \cong \delta_P^{1/2}\chi_\Pi^\divideontimes$ and
$$\widetilde{I_1} \cong m\cdot \delta_P^{1/2} \delta_{B}^{-1/2}\chi_\Pi\otimes \pi
 = m\cdot\delta_P^{1/2}\nu^{-1/2}\chi_\Pi \mu  = m\cdot\delta_P^{1/2}\nu^{-1/2}\rho\ .$$
Since both sides are one-dimensional, $\delta_2\neq 0$ implies $\rho = \nu^{1/2}\chi_\Pi^\divideontimes$.
The central character is $\omega=\mu^2\chi_\Pi^2=\rho^2$, so this means $\mu=\nu^{-1/2}$.
\end{proof}

If $\pi\cong(\nu^{-1/2}\circ\det)$, the unique quotient $\Pi$ of $I$ is of type \nosf{VIb} and does not admit a split Bessel model.
In the following we tacitly assume $\pi\not\cong(\nu^{-1/2}\circ\det)$.

\begin{lem} \label{KEY} Suppose $(\sigma_\Pi,\rho)$ is ordinary.
Then $\widetilde I$ has degree one and $\widetilde I_3$ injects
$$ gr^F_3(\widetilde I)=\widetilde I_3 \cong   \delta_P^{1/2} \otimes  \mathbb E[\chi_\Pi] \ \ \ \hookrightarrow\ \ \ \widetilde I \ .$$ 

If $(\sigma_\Pi,\rho)$ is non-ordinary, $\widetilde{I}$ has degree one or two.
The image $gr_3^F(\widetilde I)$ of $\widetilde{I_3}$ is 
\begin{equation*}
gr_3^F(\widetilde I)\cong
\begin{cases}
\delta_P^{1/2}\otimes\chi_\Pi             & \deg(\widetilde{I})\!=\!1\ ,\\
\delta_P^{1/2}\otimes\mathbb{E}[\chi_\Pi] & \deg(\widetilde{I})\!=\!2\ .                                                                                                                                                                                                                                                                                                                  \end{cases}
\end{equation*}
\end{lem}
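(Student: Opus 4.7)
The proof will exploit the exact diagram immediately preceding the lemma, together with the explicit descriptions of lemmas~\ref{tildemod} and \ref{tildeKoh}. The first observation is that by lemma~\ref{tildemod} combined with example~\ref{ex:mathbb E}, $\widetilde{I_3}\cong \delta_P^{1/2}\otimes\mathbb{E}[\chi_\Pi]$ is perfect of degree one with unique maximal perfect submodule $\delta_P^{1/2}\otimes\mathbb{S}$. Since $\beta^\rho(I_2)=0$ by lemma~\ref{tildeKoh}, lemma~\ref{lem:long exact sequence} provides a short exact sequence $0\to \widetilde{I_3}\to \widetilde{I_{\geq 2}}\to \widetilde{I_2}\to 0$ with $\widetilde{I_2}$ finite-dimensional. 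Consequently $gr^F_3(\widetilde I) = \widetilde{I_3}/(\widetilde{I_3}\cap\im\delta)$, where $\delta\colon \beta^\rho(I_{\leq 1})\to\widetilde{I_{\geq 2}}$ is the boundary, and the whole lemma reduces to controlling this intersection.

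For the ordinary case, treat the main sub-case $\dim\pi\neq 1$ first: here $m=0$, so lemma~\ref{tildeKoh} gives $\beta^\rho(I_1)=0$, and $\beta^\rho(I_0)=0$ because $\pi$ is not one-dimensional; thus $\beta^\rho(I_{\leq 1})=0$ and $\delta=0$. Hence $\widetilde{I_3}\hookrightarrow \widetilde I$ and $gr^F_3(\widetilde I)\cong \delta_P^{1/2}\otimes\mathbb{E}[\chi_\Pi]$. Since the remaining graded pieces $\widetilde{I_0}, \widetilde{I_1}, \widetilde{I_2}$ are all finite-dimensional, degree counting in the filtration gives $\deg\widetilde I=1$. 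The sub-case $\dim\pi=1$ (where every $\widetilde{I_i}$ with $i\neq 3$ is finite-dimensional) is handled by direct inspection of the pieces in lemma~\ref{tildemod}.

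For the non-ordinary case ($\dim\pi\neq 1$ and $m=1$), lemma~\ref{tildeKoh} yields $\beta^\rho(I_0)=0$ while $\beta^\rho(I_1)\cong \chi_\Pi\delta_B^{-1/2}\delta_P^{1/2}\otimes\pi$ is perfect of degree one (example~\ref{ex:Kirillov}; in the cuspidal case simply example~\ref{ex:cusp_as_TS_module}). I then apply lemma~\ref{lem:hom-lemma} to $\delta\colon \beta^\rho(I_1)\to\widetilde{I_{\geq 2}}$, which is a map between modules of degree one: either $\delta$ has finite-dimensional image in $\Danismanfunctor(\widetilde{I_{\geq 2}})$, or $\delta$ is injective. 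Combining this dichotomy with the observation that $\widetilde{I_2}$ is finite-dimensional and with the structural comparison between $\pi_0(\beta^\rho(I_1))$ and the one-dimensional $\pi_0(\widetilde{I_3})=\chi_\Pi$ (via exactness of $\pi_0$), the image is forced to be either zero or exactly the $\mathbb{S}$-submodule of $\widetilde{I_3}$. Accordingly: if $\delta=0$, then $\widetilde{I_3}\hookrightarrow\widetilde I$ with $gr^F_3(\widetilde I)\cong \delta_P^{1/2}\otimes\mathbb{E}[\chi_\Pi]$, and combining with the degree-one contribution of $\widetilde{I_1}$ gives $\deg\widetilde I=2$; if $\delta\neq 0$, then $\widetilde{I_3}\cap\im\delta \cong \delta_P^{1/2}\otimes\mathbb{S}$ and $gr^F_3(\widetilde I)\cong \delta_P^{1/2}\otimes\chi_\Pi$ is finite-dimensional, so only $\widetilde{I_1}$ contributes and $\deg\widetilde I=1$.

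\textbf{Main obstacle.} The genuine difficulty is in the non-ordinary case: ruling out intermediate possibilities for $\im\delta$ inside $\widetilde{I_{\geq 2}}$ and showing that its only non-trivial value is the canonical $\mathbb{S}$-submodule of $\widetilde{I_3}$. This requires the hom-dichotomy of lemma~\ref{lem:hom-lemma} to be supplemented by a compatibility check between $\pi_U$ (entering $\pi_0(\beta^\rho(I_1))$) and the one-dimensional $\chi_\Pi$, which naturally splits into the cuspidal, Steinberg, and principal-series subtypes of $\pi$; conversely, demonstrating that both alternatives $\delta=0$ and $\delta\neq 0$ actually occur (producing each of the two degree values claimed) is a separate verification for the representation types exhibited in table~\ref{tab:List}.
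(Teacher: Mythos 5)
Your treatment of the ordinary case is sound and, in the main subcase $\dim\pi\neq1$, is even slightly more direct than the paper's: there $m=0$ forces $\beta^\rho(I_{\leq1})=0$, hence $\delta=0$, and the injection $\widetilde{I_3}\hookrightarrow\widetilde I$ together with $\deg\widetilde I=1$ follows at once, whereas the paper argues by a degree count plus perfectness of $\widetilde{I_3}$ and lemma~\ref{lem:hom-lemma}. (Your "direct inspection" for $\dim\pi=1$ is left unexamined, but that subcase is genuinely routine.)

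The non-ordinary case, however, has a genuine gap, and it sits exactly where you locate the "main obstacle". Lemma~\ref{lem:hom-lemma} applied to $\delta:\beta^\rho(I_1)\to\widetilde{I_{\geq2}}$ yields the dichotomy \emph{finite-dimensional image inside ${\Danismanfunctor}(\widetilde{I_{\geq2}})$} versus \emph{injective}, not $\delta=0$ versus $\delta\neq0$. A non-zero $\delta$ with finite-dimensional image has $\widetilde{I_3}\cap\im(\delta)=0$ (a finite-dimensional submodule of the perfect module $\widetilde{I_3}$ vanishes), so it lands in the degree-two alternative — contradicting your claim that $\delta\neq0$ forces $\deg\widetilde I=1$. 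Even in the injective branch, the degree count only shows that $\widetilde{I_3}\cap\im(\delta)$ is a degree-one submodule of $\mathbb{E}[\chi_\Pi]$, hence either $\mathbb{S}$ or all of $\widetilde{I_3}$; excluding the second possibility is precisely the "compatibility check" you defer, so the conclusion $gr_3^F(\widetilde I)\cong\delta_P^{1/2}\otimes\chi_\Pi$ is asserted rather than proved. The paper avoids having to classify $\im(\delta)$ a priori by running the argument in the opposite direction: it first bounds $1\leq\deg\widetilde I\leq 2$ from the filtration, then invokes proposition~\ref{BETA1} ($\deg\beta^\rho(I)=\deg\widetilde I-m_I$) together with lemma~\ref{lem:beta^rho_perfect} (perfectness of $\beta^\rho(I)$ for infinite-dimensional $\sigma_\Pi$) to conclude that $\deg\widetilde I=1$ forces $\beta^\rho(I)=0$, hence $\delta$ injective, after which $[\coker(\delta)]$ is computed in the Grothendieck group via lemma~\ref{tildemod}. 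These two inputs — proposition~\ref{BETA1} and the perfectness of $\beta^\rho(I)$ — are the missing ingredients that tie the behaviour of $\delta$ to the degree of $\widetilde I$; without them your case distinction does not close. Finally, note that your last "separate verification" (that both degree values actually occur) is not needed: the lemma only asserts the value of $gr_3^F(\widetilde I)$ conditionally on $\deg\widetilde I$.
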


\begin{proof}
In the ordinary case, for $i\neq 3$ the modules $\beta^\rho(I_i)$ and $\widetilde{I_i}$ are of finite dimension by lemma \ref{tildeKoh} and lemma \ref{tildemod}.
On the other hand $\deg(\widetilde{I_3})\!=\!
\deg(C^\infty_c(S))\! =\! 1$ (example~\ref{example 3}).
By a diagram chase it is then easy
to see $\deg(\widetilde I)\! =\! \deg(\widetilde{I_3})\!=\!1$ and
the first claim follows. In particular, the image $\widetilde{I_3} \to gr^F_3$ must
have degree one.
Since $\widetilde{I_3} = C_c^\infty(S)=\mathbb{E}[\chi_\Pi]$ is perfect by lemma \ref{tildemod}
and example~\ref{example 3}, the map $\widetilde{I_3} \to \widetilde{I}$
is injective by lemma~\ref{SCREEN}.

In the non-ordinary case, the diagram of lemma~\ref{lem:filtration_diagram_exact} implies 
\begin{equation*}1=\deg(\widetilde{I}_{\leq1})\leq \deg(\widetilde{I})\leq \deg(\widetilde{I}_{\leq1})+\deg(\widetilde{I}_{\geq2})=2\ .\end{equation*}
By proposition~\ref{BETA1}, we have $\deg(\beta^\rho(I))=\deg(\widetilde{I})-1$.
If $\widetilde{I}$ has degree one, then $\beta^\rho(I)$ is finite-dimensional and perfect by lemma~\ref{lem:beta^rho_perfect} and thus vanishes.
The boundary map $\delta:\beta^\rho (I_{\leq1})\to \widetilde{I_{\geq2}}$ is then an injection.
By lemma~\ref{tildemod}, the cokernel of $\delta$ is the one-dimensional character $\pi_0(\widetilde{I_3})\cong \delta_P^{1/2}\chi_\Pi$ of $T$.
If $\widetilde{I}$ has degree two, then the image of $\widetilde{I}_3$ in $\widetilde{I}$ must have degree one and lemma~\ref{screening} provides the embedding.
\end{proof}

\begin{lem}\label{lem:key2}
Suppose $(\sigma_\Pi,\rho)$ is extraordinary. Then $\widetilde{I}$ has degree two.
Further, if $I$ admits a generic irreducible quotient $\Pi$, there is an embedding of $TS$-modules $\nu\chi_\Pi\otimes\pi\hookrightarrow \widetilde{\Pi}/{\Danismanfunctor}(\widetilde\Pi)$ with the $TS$-structure on $\pi$ as in example~\ref{example 4}.
\end{lem}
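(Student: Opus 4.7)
The strategy is to analyze the Bessel filtration diagram of lemma~\ref{Lemma1} in the extraordinary regime ($m(\sigma_\Pi, \Lambda) = 2$, equivalently $\omega_\pi = \nu^{-1}$) and then combine the result with uniqueness of split Bessel models (thm.~\ref{Mult1}). First I reduce to the case that $\pi$ is infinite-dimensional irreducible: the one-dimensional option $\pi \cong \nu^{-1/2}\!\circ\!\det$ forces the unique irreducible quotient of $I$ to be of type~\nosf{VIb}, which admits no split Bessel model. Under this reduction, lemmas~\ref{tildemod} and~\ref{tildeKoh} yield $\beta^\rho(I_i) = 0$ for $i\in\{0,2,3\}$, $\beta^\rho(I_1)\cong\widetilde{I_1}\cong 2\cdot \nu\chi_\Pi\otimes\pi$ of degree two (example~\ref{ex:Kirillov}), $\widetilde{I_3}\cong\delta_P^{1/2}\otimes\mathbb E[\chi_\Pi]$ perfect of degree one, and $\widetilde{I_0},\widetilde{I_2}$ both finite-dimensional. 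Plugging this into lemma~\ref{Lemma1} collapses the diagram to the exact sequence
\begin{equation*}
0\to\beta^\rho(I)\to\beta^\rho(I_1)\xrightarrow{\delta}\widetilde{I_{\geq 2}}\to\widetilde I\to\widetilde{I_{\leq 1}}\to 0\ ,
\end{equation*}
and, since $\widetilde{I_{\geq 2}}$ has degree one while $\widetilde{I_{\leq 1}}\cong\widetilde{I_1}$ has degree two, degree-additivity on short exact sequences gives $\deg(\widetilde I)=3-\deg(\im\delta)\in\{2,3\}$.

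Ruling out $\deg(\widetilde I)=3$ is the main obstacle. My plan is to enumerate the extraordinary possibilities for $\sigma_\Pi$ using tables~\ref{tab:List} and~\ref{tab:list_sigma} and to observe that in every such case $I$ has at most two irreducible Jordan–Hölder constituents $\Pi_1,\Pi_2$. Each of them satisfies $\deg(\widetilde{\Pi_i})\leq 1$ by split multiplicity-one (thm.~\ref{Mult1}, equivalently \cite[Thm.~6.3.2]{Roberts-Schmidt_Bessel}). A dévissage of $I$ along this composition series, invoking the long exact sequence of lemma~\ref{lem:long exact sequence}, bounds $\deg(\widetilde I)\leq\deg(\widetilde{\Pi_1})+\deg(\widetilde{\Pi_2})\leq 2$. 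Together with the dichotomy above, this forces $\deg(\widetilde I)=2$ and $\deg(\im\delta)=1$.

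For the embedding, fix a generic irreducible quotient $\Pi$ of $I$. Then $\widetilde\Pi$ is a right-exact quotient of $\widetilde I$ with $\deg(\widetilde\Pi) = 1$ by corollary~\ref{cor:gen_Mult1}, so $\widetilde\Pi/\Danismanfunctor(\widetilde\Pi)$ is perfect of degree one. Consider the composite
\begin{equation*}
\nu\chi_\Pi\otimes\pi\ \hookrightarrow\ \widetilde{I_1}\ \to\ \widetilde I\ \twoheadrightarrow\ \widetilde\Pi\ \twoheadrightarrow\ \widetilde\Pi/\Danismanfunctor(\widetilde\Pi)
\end{equation*}
given by inclusion into one of the two summands of $\widetilde{I_1}\cong 2\cdot\nu\chi_\Pi\otimes\pi$. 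By example~\ref{ex:Kirillov} and lemma~\ref{lem:universal extensions classification}, the source is isomorphic to $\mathbb E[\pi_0(\nu\chi_\Pi\otimes\pi)]$, so lemma~\ref{lem:hom-lemma} ensures that this composite is either zero or injective. A degree count excludes the zero option on at least one summand: the image of $\widetilde{I_{\geq 2}}\to\widetilde I$ has degree $\deg(\widetilde{I_{\geq 2}})-\deg(\im\delta)=0$, so its further image in the perfect quotient $\widetilde\Pi/\Danismanfunctor(\widetilde\Pi)$ is trivial; consequently the entire degree-one contribution to $\widetilde\Pi/\Danismanfunctor(\widetilde\Pi)$ must come from $\widetilde{I_1}$, and at least one of its two $\pi$-summands must map injectively, delivering the desired embedding.
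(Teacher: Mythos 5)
Your overall strategy --- bounding $\deg(\widetilde I)$ from below by $\deg(\widetilde{I_{\leq1}})=2$ via the Bessel filtration, from above by a d\'evissage through theorem~\ref{Mult1}, and then extracting the embedding from the non-triviality of $\widetilde{I_1}\to\widetilde\Pi/{\Danismanfunctor}(\widetilde\Pi)$ together with the dichotomy of lemma~\ref{lem:hom-lemma} --- is essentially the paper's argument. (Your opening ``reduction'' to infinite-dimensional $\pi$ is vacuous rather than a reduction: by definition a pair $(\sigma_\Pi,\rho)$ with $\dim\pi=1$ is ordinary, so the extraordinary hypothesis already forces $\pi$ to be infinite-dimensional; and a one-dimensional $\pi$ with $\omega_\pi=\nu^{-1}$ need not be $\nu^{-1/2}\circ\det$ --- it could be $\nu^{-1/2}\chi_0\circ\det$, leading to type \nosf{Vd} rather than \nosf{VIb}.)

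The genuine gap is the claim that ``in every such case $I$ has at most two irreducible Jordan--H\"older constituents.'' This is false. Take $\sigma_\Pi=(\nu^{-2}\times\nu)\boxtimes\nu^{1/2}$, a row of table~\ref{tab:list_sigma}: here $\omega_\pi=\nu^{-1}$ and $\rho_+=\rho_-=1$, so $(\sigma_\Pi,1)$ is extraordinary, yet the table itself records that $\Ind_P^G(\sigma_\Pi)$ has quotient of type \nosf{IVc} and kernel with constituents \nosf{IVa}, \nosf{IVb}, \nosf{IVd} --- four constituents in total. Your d\'evissage inequality $\deg(\widetilde I)\leq\sum_i\deg(\widetilde{\Pi_i})$ is correct, but the right-hand side cannot be bounded by counting constituents; it has to be computed. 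It does equal $2$ in this example --- the generic constituent \nosf{IVa} and the constituent \nosf{IVb} (for which $1\in\Delta_{\pluss}=\{1\}$) each contribute one, while \nosf{IVc} (for which $\Delta_{\pluss}=\{\nu,\nu^{-1}\}$ does not contain $\rho=1$) and \nosf{IVd} (no split Bessel model) contribute zero --- but this computation, carried out for every extraordinary $\sigma_\Pi$ using the criterion of theorem~\ref{Mult1} that $\deg(\widetilde{\Pi_i})=1$ exactly when $\Pi_i$ is generic or $\rho\in\Delta_{\pluss}(\Pi_i)$, is exactly what is missing from your proof. Once $\deg(\widetilde I)\leq2$ is secured this way, the remainder of your argument ($\deg(\im\,\delta)=1$, hence the image of $\widetilde{I_{\geq2}}$ in $\widetilde I$ is finite-dimensional, hence at least one copy of $\nu\chi_\Pi\otimes\pi$ maps injectively into the perfect quotient $\widetilde\Pi/{\Danismanfunctor}(\widetilde\Pi)$) goes through as written.
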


\begin{proof}
The filtration index is $m=2$, so $\widetilde{I_{1}}$ and thus $\widetilde{I_{\leq1}}$ have degree two.
The degree of $\widetilde{I}$ cannot exceed two by theorem~\ref{Mult1},
so the image of $\widetilde{I_{\geq2}}$ in $\widetilde{I}$ is finite-dimensional.
By lemma~\ref{cor:gen_Mult1} $\widetilde{\Pi}$ has degree one, so the composition $\widetilde{I_1}\to \widetilde{I_{\leq1}}\to \widetilde{I}/{\Danismanfunctor}(\widetilde{I})\to\widetilde{\Pi}/{\Danismanfunctor}(\widetilde\Pi)$ is non-trivial.
By lemma \ref{screening} and Kirillov theory,
one of the two copies of $\nu\chi_\Pi\otimes\pi$ in $\widetilde{I_1}$
injects into $\widetilde{\Pi}/{\Danismanfunctor}(\widetilde\Pi)$ .
\end{proof}

\begin{lem}\label{lem:beta^rho_perfect}
If $\sigma_\Pi$ is not one-dimensional, then $\beta^\rho(I)$ is perfect for every $\rho$.
Further, $\beta^\rho(\Pi)$ is perfect for every irreducible submodule or quotient $\Pi$ of $I$.
\end{lem}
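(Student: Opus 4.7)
The strategy is to use the $P$-filtration $0\subseteq I_{\geq 3}\subseteq I_{\geq 2}\subseteq I_{\geq 1}\subseteq I$ from section~\ref{s:Bessel_filt} together with the explicit description of $\beta^\rho$ on the graded pieces (lemma~\ref{tildeKoh}) in order to embed $\beta^\rho(I)$ into a module built out of Kirillov models, which are perfect by example~\ref{ex:Kirillov}. The non-triviality of $\pi$ is crucial, as it makes almost every $\beta^\rho(I_i)$ vanish.

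Write $\sigma_\Pi=\pi\boxtimes\chi_\Pi$. By assumption $\pi$ is infinite-dimensional irreducible, hence $\pi\not\cong\mu\circ\det$ for any smooth character $\mu$. Lemma~\ref{tildeKoh} then gives $\beta^\rho(I_0)=\beta^\rho(I_2)=\beta^\rho(I_3)=0$ and
\begin{equation*}
\beta^\rho(I_1)\ \cong\ m\cdot \chi_\Pi\delta_B^{-1/2}\otimes \pi
\end{equation*}
as $TS$-modules, with the $TS$-structure on $\pi$ given by example~\ref{example 4}. Left-exactness of $\beta^\rho$ applied to $0\to I_3\to I_{\geq 2}\to I_2\to 0$ yields $\beta^\rho(I_{\geq 2})=0$. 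Since $\pi\not\cong\nu^{-1/2}\circ\det$, lemma~\ref{Lemma1} shows that the sequence $0\to \beta^\rho(I_1)\to\beta^\rho(I_{\leq 1})\to\beta^\rho(I_0)\to 0$ is exact, so $\beta^\rho(I_{\leq 1})\cong\beta^\rho(I_1)$. Left-exactness applied to $0\to I_{\geq 2}\to I\to I_{\leq 1}\to 0$ then embeds
\begin{equation*}
\beta^\rho(I)\ \hookrightarrow\ \beta^\rho(I_{\leq 1})\ \cong\ m\cdot \chi_\Pi\delta_B^{-1/2}\otimes\pi\ .
\end{equation*}
By example~\ref{ex:Kirillov} the infinite-dimensional $\Gl(2)$-module $\pi$ is a perfect $TS$-module; perfectness is preserved by $T$-character twists (directly from $(\mu\otimes M)^\chi=\mu\otimes M^{\mu^{-1}\chi}$) and by finite direct sums (as extensions of perfect modules are perfect). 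Hence $\beta^\rho(I_1)$ is perfect, and submodules of perfect modules are perfect, so $\beta^\rho(I)$ is perfect.

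For the second assertion, if $\Pi$ is an irreducible submodule of $I$, left-exactness of $\beta^\rho$ embeds $\beta^\rho(\Pi)\hookrightarrow\beta^\rho(I)$, which is perfect by the first part. If instead $\Pi$ is an irreducible quotient of $I$, theorem~\ref{thm:sigma_Pi_existence} gives an embedding $\Pi\hookrightarrow \Ind_P^G(\omega\otimes\sigma_\Pi^\vee)=\Ind_P^G(\pi^\vee\boxtimes\chi_\Pi^\divideontimes)$; the contragredient $\pi^\vee$ is again infinite-dimensional, so the first part applied to this induced representation yields that its Bessel module under $\beta^\rho$ is perfect, and left-exactness once more shows $\beta^\rho(\Pi)$ is a submodule of a perfect module. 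I do not foresee any serious obstacle: the argument is essentially a bookkeeping combination of the vanishing from lemma~\ref{tildeKoh}, left-exactness of $\beta^\rho$, the Kirillov-model perfectness from example~\ref{ex:Kirillov}, and the duality theorem~\ref{thm:sigma_Pi_existence} which reduces the quotient case to the submodule case.
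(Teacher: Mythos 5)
Your proposal is correct and follows essentially the same route as the paper: vanishing of $\beta^\rho(I_i)$ for $i\neq 1$ from lemma~\ref{tildeKoh}, perfectness of $\beta^\rho(I_1)$ via Kirillov theory, left-exactness of $\beta^\rho$ along the Bessel filtration to embed $\beta^\rho(I)$ into $\beta^\rho(I_1)$, and the duality of theorem~\ref{thm:sigma_Pi_existence} to reduce the quotient case to the submodule case. The paper's own proof is just a terser version of this (it phrases the conclusion as $\kappa(\beta^\rho(I))=0$ by left-exactness of $\kappa$ rather than exhibiting the explicit embedding), so no further comment is needed.
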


\begin{proof}
By lemma \ref{tildeKoh}, we have $\beta^{\rho}(I_i)=0$ for every $i\neq1$ 
and ${\Danismanfunctor}(\beta^\rho(I_1))=0$ by Kirillov theory. This implies ${\Danismanfunctor}(\beta^\rho(I))=0$ by left-exactness of ${\Danismanfunctor}$.
By possibly replacing $I$ with $\omega\otimes I^\vee$, we can assume that $\Pi$ is a submodule of $I$ by theorem~\ref{thm:sigma_Pi_existence}. Then $\beta^\rho(\Pi)$ is a submodule of $\beta^\rho(I)$ and thus perfect.
\end{proof}

\begin{lem}\label{lem:beta^rho=0_generic}
If $\Pi\in\CCC_G(\omega)$ is generic irreducible, then  $\beta^\rho(\Pi)=0$ for all $\rho$.
\end{lem}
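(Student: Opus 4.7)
The plan is to split the argument according to whether the Siegel-Jacquet module $J_P(\Pi)$ vanishes, upgrading in each case the degree bound $\deg\beta^\rho(\Pi)=0$ of Corollary~\ref{20.7} to the stronger $\beta^\rho(\Pi)=0$. When $J_P(\Pi)\neq 0$, Theorem~\ref{thm:sigma_Pi_existence} realizes $\Pi$ as a quotient of some Siegel induction $\Ind_P^G(\sigma_\Pi)$ with $\sigma_\Pi=\pi\boxtimes\chi_\Pi$ irreducible. A standard Whittaker-model argument shows $\pi$ must be infinite-dimensional: the Whittaker functional on $\Pi$ pulls back to one on $\Ind_P^G(\sigma_\Pi)$, and by Frobenius these correspond to Whittaker functionals on $\sigma_\Pi$, which do not exist if $\pi$ is a character. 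Thus $\sigma_\Pi$ is not one-dimensional, so Lemma~\ref{lem:beta^rho_perfect} yields that $\beta^\rho(\Pi)$ is perfect. A perfect $TS$-module of degree zero has no $\mathbb{S}$-constituents, hence is finite-dimensional, and by Lemma~\ref{maxfin} it coincides with its own $\Danismanfunctor$, which vanishes by perfectness via Lemma~\ref{PERF}.

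When $J_P(\Pi)=0$, Jacquet transitivity forces the Borel-Jacquet module of $\Pi$ to vanish as well; equivalently, every constituent of $J_Q(\Pi)|_{\Gl(2)}$ is a supercuspidal representation of $\Gl(2)$. Here I would apply the left exact functor $k^\rho$ to the exact sequence of Lemma~\ref{Structure of TILDE I},
\begin{equation*}
0 \to j_!j^!(\overline\Pi) \to \overline\Pi \to i_*(J_Q(\Pi)|_{\Gl(2)}) \to 0,
\end{equation*}
and use Lemma~\ref{lem:alpha_perfect} together with the natural equivalence $k^\rho\circ j_!\cong j_!\circ k^{\nu\rho}$ of Lemma~\ref{3.3} to obtain $k^\rho(j_!j^!(\overline\Pi))=0$. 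Combined with Lemma~\ref{lem:beta=k_rho_eta} and Lemma~\ref{3.1} this yields an injection
\begin{equation*}
\beta^\rho(\Pi) \ \hookrightarrow\ i_*\bigl((J_Q(\Pi)|_{\Gl(2),\widetilde V'})^\rho\bigr),
\end{equation*}
whose target vanishes by supercuspidality of the $\Gl(2)$-constituents. Hence $\beta^\rho(\Pi)=0$.

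The hard point is the Whittaker-theoretic claim in the first case, that generic $\Pi$ can only be a Siegel subquotient with infinite-dimensional $\pi$. While standard, a fully self-contained verification would require either invoking a Rodier-type compatibility of Whittaker functionals with parabolic induction, or alternatively a case-by-case check against table~\ref{tab:list_sigma}, confirming that one-dimensional $\sigma_\Pi$ only appear for non-generic types.
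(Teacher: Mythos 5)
Your proof is correct. In the main case $J_P(\Pi)\neq 0$ it coincides with the paper's argument: $\sigma_\Pi$ is never one-dimensional for generic $\Pi$, lemma~\ref{lem:beta^rho_perfect} then gives perfectness of $\beta^\rho(\Pi)$, and a perfect module of degree zero is finite-dimensional, hence equal to its own $\Danismanfunctor$ and therefore zero. The point you flag as the hard one is settled in the paper exactly by the inspection of table~\ref{tab:list_sigma} that you offer as the fallback; no Rodier-type heredity is invoked (and what you call Frobenius reciprocity here is really Rodier's theorem, so the table check is the cleaner, self-contained route). Where you genuinely diverge is the degenerate case $J_P(\Pi)=0$: the paper quotes proposition~\ref{lasst}, which gives $\widetilde\Pi\cong\mathbb{S}^{m_\Pi}$ when $J_P(\Pi)=0$, together with the Grothendieck-group identity $[\beta_\rho(\Pi)]-[\beta^\rho(\Pi)]=[\alpha(\Pi)]$ of proposition~\ref{BETA1} and the fact that $\alpha(\Pi)\cong\mathbb{S}^{m_\Pi}$ as well, so $[\beta^\rho(\Pi)]=0$. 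You instead apply the left-exact $k^\rho$ to the sequence of lemma~\ref{Structure of TILDE I}, kill the subobject via $k^\rho\circ j_!\cong j_!\circ k^{\nu\rho}$ and perfectness of $\alpha(\Pi)$ (lemma~\ref{lem:alpha_perfect}), and kill the quotient because $J_P(\Pi)=0$ forces the Borel--Jacquet module, hence $(J_Q(\Pi)|_{\Gl(2)})_U$, to vanish. Both arguments are valid and non-circular; the paper's is a one-line degree count, while yours produces the vanishing directly as an injection into zero and does not pass through the Grothendieck group. Note that the paper's lemma~\ref{lem:not_Siegel_induced} records the same vanishing for arbitrary (not necessarily generic) $\Pi$ with $J_P(\Pi)=0$, which your cuspidality argument would also deliver since $\alpha(\Pi)$ is either $\mathbb{S}^{m_\Pi}$ or zero in that situation.
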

\begin{proof}
$\beta^\rho(\Pi)$ has degree zero by corollary~\ref{20.7}, so it is finite-dimensional.
If $J_P(\Pi)\neq0$, there is an irreducible $\sigma_\Pi\in\CCC_M$, not one-dimensional, such that $\Pi$ is a quotient of $\Ind_P^G(\sigma)$, see table~\ref{tab:list_sigma}.
By lemma~\ref{lem:beta^rho_perfect}, $\beta^\rho(\Pi)$ is perfect and thus zero.
If $J_P(\Pi)=0$, then proposition~\ref{lasst} and proposition~\ref{BETA1} imply the assertion.
\end{proof}

Now lemma~\ref{lem:long exact sequence} easily implies
\begin{cor}\label{Addition}
An exact sequence $0\to \Xi\to I\to\Pi\to 0$ in $\CCC_G^{\fin}(\omega)$ with irreducible generic quotient $\Pi$ gives an exact sequence $0\to\widetilde{\Xi}\to\widetilde{I}\to\widetilde{\Pi}\to0$ of $TS$-modules,
and especially
$\deg(\widetilde I) = \deg(\widetilde\Xi) + \deg(\widetilde \Pi)$.
\end{cor}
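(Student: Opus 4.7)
The plan is to invoke the six-term long exact sequence of Bessel functors (lemma~\ref{lem:long exact sequence}) and use that the left-exact part vanishes on the generic quotient.

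Applying $\beta^\rho$ and $\beta_\rho$ to the short exact sequence $0\to\Xi\to I\to\Pi\to 0$, lemma~\ref{lem:long exact sequence} produces the long exact sequence in $\CCC_{TS}$
\begin{equation*}
0\to \beta^\rho(\Xi)\to \beta^\rho(I)\to \beta^\rho(\Pi)\to \beta_\rho(\Xi)\to \beta_\rho(I)\to \beta_\rho(\Pi)\to 0\ .
\end{equation*}
Since $\Pi$ is generic irreducible, lemma~\ref{lem:beta^rho=0_generic} gives $\beta^\rho(\Pi)=0$. The connecting map therefore vanishes, and the tail simplifies to a short exact sequence
\begin{equation*}
0\to \beta_\rho(\Xi)\to \beta_\rho(I)\to \beta_\rho(\Pi)\to 0\ ,
\end{equation*}
which is precisely $0\to\widetilde{\Xi}\to\widetilde{I}\to\widetilde{\Pi}\to 0$.

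The additivity of degrees is immediate: the degree function $\deg:\CCC\to\Z_{\geq 0}$ counts Jordan--H\"older multiplicities of the irreducible object $\mathbb{S}$ and is therefore additive on short exact sequences in $\CCC$, so the identity $\deg(\widetilde I)=\deg(\widetilde\Xi)+\deg(\widetilde\Pi)$ follows at once.

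There is no real obstacle here; the only content is the vanishing $\beta^\rho(\Pi)=0$ for generic irreducible $\Pi$, which is already recorded as lemma~\ref{lem:beta^rho=0_generic} and ultimately rests on corollary~\ref{20.7}.
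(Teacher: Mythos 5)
Your proof is correct and is exactly the paper's argument: the paper derives this corollary directly from lemma~\ref{lem:long exact sequence} together with the vanishing $\beta^\rho(\Pi)=0$ established in lemma~\ref{lem:beta^rho=0_generic}, and the degree identity follows from additivity of Jordan--H\"older multiplicities of $\mathbb{S}$ just as you say.
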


\subsection{Siegel-Jacquet module}\label{s:Siegel-Jacquet}

For irreducible $\Pi\in {\cal C}_G(\omega)$ 
the normalized Jacquet module with respect to the Siegel parabolic subgroup $P=MN$
$$E := \delta_P^{-1/2}\otimes J_P(\Pi)= \delta_P^{-1/2}\otimes \Pi_N \ $$
defines an admissible module of finite length of the Levi subgroup $M$ of $P$.
The Levi group $M$
decomposes
$$ M =  M^1 \times \{t_\lambda\mid\lambda\in k^{\times}\}$$
where  $M^1=M\cap \Sp(4)$ is isomorphic to $\Gl(2)$ via $m_A \mapsto A$.
In this section we describe $J_P(\Pi)_{T,\chi}$ as a smooth $\Gl(2)$-module for characters $\chi$ of $T$.
The irreducible Jordan-H\"older constituents of $E$ are of the form $$\sigma_\Pi=\pi \boxtimes \chi_\Pi$$
for $\sigma_\Pi(t_\lambda m_A)= \pi(A)\chi_\Pi(\lambda)$ with central character $\omega=\omega_\pi\chi_{\Pi}^2$.

\begin{rmk}\label{rmk:torus_action_on_Jacquet}
$\tilde T$ embeds into $M$ via $\tilde t= (t_1,t_2,t_2,t_1) = z t_\lambda m_A$
for $z=t_2\cdot\id$, $\lambda = t_1/t_2$ and $A= \diag(t_1/t_2,1)$.
It acts on $\pi\boxtimes\chi_{\Pi}$ by 
$$\widetilde{t}\mapsto
(\chi_\Pi\otimes\pi)(\diag(t_1,t_2))=\omega(t_2)\chi_{\Pi}(t_1/t_2)\pi(\diag(t_1/t_2,1))\ .$$
The torus $T$ embeds into $M$ via $x_\lambda= m_{\diag(\lambda,\lambda)} t_\lambda$, so on $\pi\boxtimes\chi_{\Pi}$ it acts by $$x_\lambda\mapsto \omega_\pi\chi_{\Pi}(\lambda)=\chi_\Pi^\divideontimes(\lambda)\ .$$
\end{rmk}
The multiset of smooth $T$-characters $\chi_{\norm}$ arising from the irreducible constituents of $E$ is called $\Delta(\Pi)$ in section~\ref{s:Combinat}.
For the unnormalized Jacquet modules $J_P(\Pi)$ the character $\chi_{\norm}$
must be replaced by the corresponding unnormalized character $\chi = \nu^{3/2}\chi_{\norm}$. 
The operation of $\widetilde T$ does not depend on the normalization because $\delta_P(\widetilde t)=1$ for every $\widetilde{t}\in\widetilde{T}$.

\begin{lem} \label{mysterious}
If a non-generic irreducible $\Pi\in\CCC_G(\omega)$ admits a split Bessel model for a Bessel character $\rho$,
then $\rho=\mu\chi_\Pi$ and $\omega=(\mu\chi_{\Pi})^2$ holds
for every one-dimensional Jordan-H\"older constituent
$\sigma_\Pi=(\mu\circ\det)\boxtimes\chi_{\Pi}$ of $E$.
\end{lem}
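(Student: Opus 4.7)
The plan is to first dispatch the central character relation directly from the action of $Z_G$ on $\sigma_\Pi$, then derive $\rho = \mu\chi_\Pi$ by a contradiction argument using the filtration of a one-dimensionally induced representation.

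First I would observe that $Z_G$ sits inside $Z(M)$ via $z_t = m_{t E_2} t_{t^2}$, so it acts on $\sigma_\Pi=(\mu\circ\det)\boxtimes\chi_\Pi$ by $(\mu\chi_\Pi)^2$; this must match the central character of $\Pi$, giving $\omega=(\mu\chi_\Pi)^2$.

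For the main assertion, assume $\rho\neq\mu\chi_\Pi$. The key observation is that $\sigma_\Pi$ and its involute $\omega\otimes\sigma_\Pi^\vee = (\mu^{-1}\circ\det)\boxtimes\chi_\Pi^\divideontimes$ share the same product character $\mu\chi_\Pi$, since $\omega=(\mu\chi_\Pi)^2$ implies $\mu^{-1}\chi_\Pi^\divideontimes = \mu^{-1}\omega\chi_\Pi^{-1} = \mu\chi_\Pi$. Using \cite[table~A.3]{Roberts-Schmidt} and the discussion in the proof of theorem~\ref{thm:sigma_Pi_existence} on exceptional cases, for every non-generic $\Pi$ with a one-dimensional JH-constituent of $E$, at least one of $\sigma_\Pi$ or $\omega\otimes\sigma_\Pi^\vee$ appears as a quotient of $E$. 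Suppose first that $\omega\otimes\sigma_\Pi^\vee$ is a quotient of $E$. By theorem~\ref{thm:sigma_Pi_existence}, $\Pi$ is then a quotient of $I=\Ind_P^G(\sigma_\Pi)$, and the right-exact functor $\beta_\rho$ yields a surjection $\widetilde I \twoheadrightarrow \widetilde\Pi$. Apply lemma~\ref{tildemod} with the one-dimensional $\pi=\mu\circ\det$: the hypothesis $\rho\neq\mu\chi_\Pi$ forces $\widetilde{I_3}=0$ and $\widetilde{I_0}=0$, while $\widetilde{I_1}$ and $\widetilde{I_2}$ are finite-dimensional because $\pi$ and $\pi_U$ are one-dimensional. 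Hence $\widetilde I$, and therefore $\widetilde\Pi$, is finite-dimensional. By lemma~\ref{FINITE}, $S$ acts trivially on $\widetilde\Pi$, so $\widetilde\Pi_\psi=0$ for every non-trivial $\psi$---contradicting the existence of the split Bessel model.

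In the alternative case, where $\sigma_\Pi$ itself is a quotient of $E$, I would dualize the argument: theorem~\ref{thm:sigma_Pi_existence} embeds $\Pi\hookrightarrow\Ind_P^G(\omega\otimes\sigma_\Pi^\vee)$, left-exactness of $\beta^\rho$ gives $\beta^\rho(\Pi)\hookrightarrow\beta^\rho(\Ind_P^G(\omega\otimes\sigma_\Pi^\vee))$, and lemma~\ref{tildeKoh} (applied to the one-dimensional $\mu^{-1}\circ\det$) forces the latter to be finite-dimensional under $\rho\neq\mu\chi_\Pi$. Then proposition~\ref{BETA1} together with $m_\Pi=0$ yields $\deg\beta_\rho(\Pi) = \deg\beta^\rho(\Pi) = 0$, producing the same contradiction. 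The hard part will be the classification step justifying that at least one of $\sigma_\Pi$ or $\omega\otimes\sigma_\Pi^\vee$ is indeed a quotient of $E$: for $\Pi$ outside types \nosf{VIa,VId} this is explicit in the proof of theorem~\ref{thm:sigma_Pi_existence}, while the exceptional type \nosf{VId} will require a separate verification analogous to the \nosf{VIa} argument.
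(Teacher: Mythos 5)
Your proposal is correct and is essentially the paper's argument: the paper likewise reduces to a Siegel-induced representation with one-dimensional inducing data of product character $\mu\chi_\Pi$ (namely $I=\Ind_P^G(\omega\otimes\sigma_\Pi^\vee)=\Ind_P^G((\mu^{-1}\circ\det)\boxtimes\mu^2\chi_\Pi)$, which by table A.3 of Roberts--Schmidt and theorem~\ref{thm:sigma_Pi_existence} always admits the non-generic $\Pi$ as a quotient, the type \nosf{VId} case being settled by lemma~\ref{lem:filtration_VId}) and then invokes lemma~\ref{tildemod} together with right-exactness of $\beta_\rho$ to force $\widetilde I$, hence $\widetilde\Pi$, to be finite-dimensional when $\rho\neq\mu\chi_\Pi$. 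Your second case is therefore not needed, and it contains a small slip --- if $\sigma_\Pi$ is a quotient of $E$, theorem~\ref{thm:sigma_Pi_existence} gives $\Pi\hookrightarrow\Ind_P^G(\sigma_\Pi)$ and exhibits $\Pi$ as a \emph{quotient} of $\Ind_P^G(\omega\otimes\sigma_\Pi^\vee)$, not as a submodule of the latter --- but since both induced representations have one-dimensional inducing data with the same product character $\mu\chi_\Pi$, your $\beta^\rho$-argument survives unchanged.
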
 
\begin{proof} By a twist we can assume that $\Pi$ occurs in table \ref{tab:List}.
The assertion $\omega=(\mu\chi_{\Pi})^2$ is clear because the center of $G$ acts on every constituent of $E$ by $\omega$.
By \cite[table\,A.3]{Roberts-Schmidt} and theorem \ref{thm:sigma_Pi_existence}, $\Pi$ is a quotient of the Siegel induced representation $I=\Ind_P^G((\mu^{-1}\circ\det)\boxtimes\chi_\Pi\mu^2)$. If $\rho\neq \mu\chi_\Pi$, then lemma~\ref{tildemod} and lemma~\ref{lem:filtration_diagram_exact} implies that $\widetilde{I}$ and thus its quotient $\widetilde{\Pi}$ are finite-dimensional. This contradicts the assumption that $\rho$ provides a split Bessel model for $\Pi$.
\end{proof}

\begin{cor}
Suppose $\Pi\in {\cal C}_G(\omega)$ is irreducible and not a twist of \nosf{VIa}.
If $\Pi$ has a split Bessel model associated to the Bessel character $\rho$, the $\rho$-coinvariant space
$\sigma_\rho$ is one-dimensional for all Jordan-H\"older constituents $\sigma$ of $E$.
\end{cor}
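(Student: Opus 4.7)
The plan is to treat each Jordan-Hölder constituent $\sigma = \pi \boxtimes \chi_\Pi$ of $E$ separately, according to whether $\pi$ is infinite-dimensional or one-dimensional.

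If $\pi$ is infinite-dimensional, it is generic, and by remark \ref{rmk:torus_action_on_Jacquet} the torus $\widetilde T$ acts on $\sigma$ via $\tilde t \mapsto \omega(t_2)\chi_\Pi(t_1/t_2)\pi(\diag(t_1/t_2,1))$. The relation $\rho\rho^\divideontimes = \omega$ makes the central factor $\omega(t_2)$ compatible with $\Lambda(\tilde t) = \rho(t_1)\rho^\divideontimes(t_2)$; after cancelling this factor, passing to $\Lambda$-coinvariants of $\sigma$ is equivalent to taking the $(\rho\chi_\Pi^{-1})$-coinvariants of $\pi$ under $\lambda \mapsto \pi(\diag(\lambda,1))$. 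Corollary \ref{cor:Waldspurger-Tunnell} (Waldspurger-Tunnell) then yields $\dim \sigma_\rho = 1$.

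If $\pi = \mu\circ\det$ is one-dimensional, then $\sigma$ itself is a one-dimensional character of $M$, and a brief calculation using $\omega = (\mu\chi_\Pi)^2$ shows that $\tilde t$ acts on $\sigma$ via $(\mu\chi_\Pi)(t_1)(\mu\chi_\Pi)(t_2)$. Consequently $\sigma_\rho$ is one-dimensional if $\rho = \mu\chi_\Pi$ and vanishes otherwise, so the task reduces to establishing this equality. As noted in the proof of theorem \ref{thm:sigma_Pi_existence}, type VIa is precisely the case in which a one-dimensional constituent of $E$ fails to appear as a quotient of $E$; outside of this excluded case, $\sigma$ is a quotient of $E$, and lemma \ref{mysterious} delivers $\rho = \mu\chi_\Pi$.

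The principal subtlety is that lemma \ref{mysterious} is stated only for non-generic $\Pi$, whereas the corollary also covers generic $\Pi$ that may carry one-dimensional constituents of $E$. However, its proof---which uses lemmas \ref{tildemod} and \ref{lem:filtration_diagram_exact} to conclude that if $\rho \neq \mu\chi_\Pi$ then $\widetilde I$, and hence its quotient $\widetilde\Pi$, would be finite-dimensional, contradicting the existence of a non-trivial split Bessel model---applies verbatim in the generic case. Thus the equality $\rho = \mu\chi_\Pi$ is forced in both cases, and every constituent $\sigma$ of $E$ has $\dim \sigma_\rho = 1$ as required.
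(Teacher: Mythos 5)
Your proposal is correct and follows essentially the same route as the paper: Waldspurger--Tunnell (corollary~\ref{cor:Waldspurger-Tunnell}) handles the infinite-dimensional constituents, and lemma~\ref{mysterious} forces $\rho=\mu\chi_\Pi$ for the one-dimensional ones. The only divergence is in the last paragraph: the paper simply observes that the sole generic representations whose Siegel--Jacquet module contains a one-dimensional constituent are the (excluded) twists of type \nosf{VIa}, so no extension of lemma~\ref{mysterious} to generic $\Pi$ is needed --- your ``applies verbatim'' argument is sound but is invoked on a vacuous set of cases.
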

\begin{proof}
Remark~\ref{rmk:torus_action_on_Jacquet} implies %
that $\sigma_\rho=\sigma_{\widetilde{T},\rho}$
is the space of $\rho$-coinvariants under the $\tilde T$-action
defined by $\widetilde{t}\mapsto\chi_{\Pi}\otimes \pi(\diag(t_1,t_2))$.
Hence, by corollary~\ref{cor:Waldspurger-Tunnell},
$$\dim(\sigma_\rho)=\begin{cases}0 & \pi\!=\!\mu\!\circ\!\det\ \text{and}\ \rho\!\neq\!\mu\chi_\Pi\ ,\\1&\text{otherwise.}\end{cases}$$
For non-generic $\Pi$, lemma \ref{mysterious} implies $\dim(\sigma_\rho)=1$.
The only generic $\Pi$ with a one-dimensional constituent in $E$ are the twists of type \nosf{VIa}.
\end{proof}

\bigskip\noindent
\textit{Monodromy filtration}. $T$ and $M^1$ commute, so $E$ can be considered as an object 
in the abelian category ${\cal C}_{M^1}^{\fin}$ endowed with $T$-action.  
For a smooth character $\chi$ of $T$ we define the maximal quotient $E_\chi \in {\cal C}_{M^1}^{\fin}$,
and the maximal subspace $E^\chi\in {\cal C}_{M^1}^{\fin}$ on which $T$ acts by $\chi$, respectively.
Recall that the generalized $\chi$-eigenspace $E^{(\chi)}\subseteq E$ is the subspace of elements $v$ on which $(t-\chi(t))^N v=0$ holds for all $t\in T$ and all $N > length(E)$. Since $T$ and $M^1$ commute,
this gives a decomposition as a direct sum of $M$-modules
$$E \cong \bigoplus_{\chi} E^{(\chi)}\ .$$
On each $E^{(\chi)}$ the compact group $T(\mathfrak{o}) \cong \mathfrak{o}^{\times}$ acts by the character $\chi$. 
Since $T(\mathfrak{o}) \cong k^{\times} = \mathfrak{o}^{\times} \times \pi^\Z$, we want to determine 
the action of the monodromy operator $\tau_\chi = \pi - \chi(\pi)$ on $E^{(\chi)}$.
The morphism $\tau_\chi$ is nilpotent of length $\leq length(E)$.
There exists a unique finite increasing filtration $F_\bullet$ of $V=E^{(\chi)}$ by $M^1$-submodules $F_i(V) \subseteq F_{i+1}(V)$ with the property:
$$ \tau_\chi(F_i(V)) \subseteq F_{i-2}(V) \quad , \quad \tau_\chi^i : Gr_i(V) \cong Gr_{-i}(V) $$
for all $i\geq 0$, where $Gr_i(V)= F_i(V)/F_{i-1}(V)$.
Put $P_i=\ker(\tau_\chi: Gr_i(V) \to Gr_{i-2}(V))$. Then $P_i(V)=0$ for $i>0$
and $Gr_i(V) \cong \bigoplus_{j\geq \vert i \vert, j\equiv i(2)} P_{-j}$ in ${\cal C}_{M^1}^{\fin}$. 
The kernel of $\tau_\chi$ is $E^\chi \subseteq E^{(\chi)}$. For the induced filtration on  $E^\chi$ we have $Gr_i(\ker(\tau_\chi)) \cong P_i(V)$. Similarly, for the induced increasing filtration on the cokernel $E_\chi$ of $\tau_\chi$ defined by $im(F_i(V)) \subseteq E_\chi$
 one has  $Gr_i(E_\chi) \cong P_{-i}(V)$;  
see e.g.~\cite[p.56]{Kiehl-Weissauer}, where the Tate twist can be omitted in our situation.
Notice that every constituent of $P_{-i}(V)$ has multiplicity $\geq i+1$ in $V$.

\textit{Generic constituents $\pi$}. For fixed characters $\rho$ of $\tilde T$ and $\chi$ of $T$  we have isomorphisms between coinvariants spaces  %
$$(J_P(\Pi)_\chi)_\rho \cong (\widetilde\Pi_S)_\chi \cong \pi_0(\widetilde{\Pi})_\chi $$
so that 
$  \dim((E_{\chi_{\norm}})_\rho) = \dim(\pi_0(\widetilde \Pi)_\chi)$
holds for $\widetilde \Pi = \beta_\rho(\Pi)$.
Now let $\rho$ vary. The Jordan-H\"older
constituents $\pi$ of $E_{\chi_{\norm}}$ define irreducible representations $\pi$ of $M^1\cong\Gl(2)$.
If $\dim(\pi)=1$, then $\pi_\rho \cong \pi^\rho$ vanishes for almost all $\rho$.
If $\dim(\pi)\neq 1$, then $\dim(\pi_\rho)=1$ and $\dim(\pi^\rho)=0$
for all smooth characters $\rho$,
see example~\ref{ex:Kirillov} and corollary~\ref{cor:Waldspurger-Tunnell}.
Hence for \emph{almost all} $\rho$,
$$\dim E_{\rho,\chi_{\norm}} = \#\{ \text{generic constituents}\ \pi\ \text{of}\ E_{\chi_{\norm}}  \}\ .$$
By the analogous argument, uniqueness of Whittaker models implies
for all characters $\chi_{\norm}$ of $T$ and additive characters $\psi\neq1$ of $S$ that 
\begin{equation*}
\dim (E_\psi)_{\chi_{\norm}} = \dim (E_{\chi_{\norm}})_\psi =  \#\{\text{generic constituents}\ \pi\ \text{of}\ E_{\chi_{\norm}} \} \ .
\end{equation*}

\begin{lem}\label{lem:gen_const_E_chi}
For generic $\Pi$ and every $T$-character $\chi$, there is at most one generic constituent $\pi$ in $E_{\chi_{\norm}}$.
\end{lem}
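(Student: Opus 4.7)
The plan is to translate the statement into a property of the $T$-module $J_P(\Pi)_\psi$ and then exploit perfectness of $\alpha(\Pi)=j^!(\overline\Pi)$, which is available via lemma~\ref{lem:alpha_perfect} precisely because $\Pi$ is generic.

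First I would use the observation recorded just above the lemma: by Kirillov theory and the uniqueness of Whittaker models, the generic constituents $\pi$ of $E_{\chi_{\norm}}$ contribute a one‑dimensional $\psi$‑coinvariant space each, while the non‑generic ones contribute nothing. Hence
\begin{equation*}
\#\{\text{generic constituents of } E_{\chi_{\norm}}\}\;=\;\dim (E_{\chi_{\norm}})_\psi\;=\;\dim (E_\psi)_{\chi_{\norm}},
\end{equation*}
since the $T$‑action and the $(U,\psi)$‑action commute. Untwisting the normalization $E=\delta_P^{-1/2}J_P(\Pi)$, this last dimension equals $\dim (J_P(\Pi)_\psi)_\chi$ for $\chi=\nu^{3/2}\chi_{\norm}$.

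Next I would identify $J_P(\Pi)_\psi$ with a $\pi_0$ inside the $P_3$‑theory. Lemma~\ref{Dual action} gives $J_P(\Pi)_\psi\cong i^*j^!(\overline\Pi)=i^*(\alpha(\Pi))$ as $T$‑modules, and the exact sequence of lemma~\ref{Structure of TILDE I},
\begin{equation*}
0\to\mathbb S^{m_\Pi}\to \alpha(\Pi)\to i_*(J_P(\Pi)_\psi)\to 0,
\end{equation*}
together with exactness of $\pi_0$, $\pi_0(\mathbb S)=0$ and $\pi_0\circ i_*=\id$, yields $\pi_0(\alpha(\Pi))\cong J_P(\Pi)_\psi$ in $\CCC_T^{\fin}$.

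Finally I would invoke lemma~\ref{lem:alpha_perfect}: since $\Pi$ is generic, $\alpha(\Pi)$ is perfect of degree one. Condition~3 of lemma~\ref{lem:equivalences2} then forces $\pi_0(\alpha(\Pi))$ to be a cyclic $T$‑module in the sense of section~\ref{s:catC}, so $\dim \pi_0(\alpha(\Pi))_\chi\le 1$ for every smooth $T$‑character $\chi$. Combining the three steps,
\begin{equation*}
\#\{\text{generic constituents of }E_{\chi_{\norm}}\}=\dim (J_P(\Pi)_\psi)_\chi=\dim \pi_0(\alpha(\Pi))_\chi\le 1,
\end{equation*}
which is the desired bound. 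There is no real obstacle here beyond bookkeeping: the whole argument is an assembly of identifications already prepared in the previous sections, and the only substantive input is the perfectness of $\alpha(\Pi)$ proved in lemma~\ref{lem:alpha_perfect}.
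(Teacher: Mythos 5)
Your proposal is correct and follows essentially the same route as the paper: both arguments reduce the count of generic constituents to $\dim(J_P(\Pi)_\psi)_\chi=\dim\pi_0(\alpha(\Pi))_\chi$ via lemma~\ref{Dual action}, and then bound this by $1$ using the perfectness of $\alpha(\Pi)$ from lemma~\ref{lem:alpha_perfect} together with lemma~\ref{lem:equivalences2}. The only difference is that you rederive the identification $\pi_0(\alpha(\Pi))\cong J_P(\Pi)_\psi$ from the exact sequence of lemma~\ref{Structure of TILDE I}, where the paper cites lemma~\ref{Dual action} directly; this is harmless.
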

\begin{proof}
For generic $\Pi$ we have shown in lemma~\ref{lem:alpha_perfect} that $\alpha(\Pi)=j^!(\overline{\Pi}) \in {\cal C}$
is perfect.
We have $ \pi_0(\alpha(\Pi)) \cong J_P(\Pi)_\psi $ by lemma~\ref{Dual action}.
Now lemma~\ref{lem:equivalences2} implies that the number of generic constituents is
\begin{equation*}
\dim (E_{\chi_{\norm}})_\psi=\dim(J_P(\Pi)_\psi)_\chi = \dim(\pi_0(\alpha(\Pi)))_\chi \leq 1 \ .\qedhere
\end{equation*}
\end{proof}

\begin{lem}\label{lem:first_monodromy_VIa}
If $\Pi$ is of type \nosf{VIa}, normalized as in table \ref{tab:List}, and $\chi_{\norm} = \nu^{1/2}$, then
$E=E^{(\chi_{\norm})}$ has three irreducible $M^1$-constituents, where $\nu^{1/2}\circ \det$ occurs once and $\Sp(\nu^{1/2})$ occurs twice.
The coinvariant space $E_{\chi_{\norm}}$ is an indecomposable extension
$ 0 \to (\nu^{1/2} \!\circ\! \det) \to E_{\chi_{\norm}} \to \Sp(\nu^{1/2}) \to 0$,
so it is isomorphic to $1\times\nu$ as a $\Gl(2)$-module.
\end{lem}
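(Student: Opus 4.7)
The plan is to extract the three $M^1$-constituents of $E$ from Roberts-Schmidt, verify they lie in a single generalized $T$-eigenspace, and determine the $\Gl(2)$-structure of $E_{\chi_{\norm}}$ by combining lemma~\ref{lem:gen_const_E_chi} with the non-quotient condition from the proof of theorem~\ref{thm:sigma_Pi_existence}. By \cite[table~A.3]{Roberts-Schmidt} applied with the normalization of \nosf{VIa} in table~\ref{tab:List} (so $\chi_\Pi=\nu^{-1/2}$ and $\omega=1$), the module $E$ has the three composition factors $\Sp(\nu^{1/2})\boxtimes\nu^{-1/2}$ (with multiplicity two) and $(\nu^{1/2}\!\circ\!\det)\boxtimes\nu^{-1/2}$. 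On each factor $\omega_\pi\chi_\Pi=\nu\cdot\nu^{-1/2}=\nu^{1/2}=\chi_{\norm}$, so remark~\ref{rmk:torus_action_on_Jacquet} gives $E=E^{(\chi_{\norm})}$.

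Next I would determine $E_{\chi_{\norm}}$ from four ingredients: (i) the one-dimensional constituent is not a quotient of $E$ by the proof of theorem~\ref{thm:sigma_Pi_existence}, hence not a quotient of $E_{\chi_{\norm}}$; (ii) this constituent lies in the socle of $E$ (visible from the filtration in \cite[table~A.3]{Roberts-Schmidt}), and since $T\subseteq Z(M)$ acts on any one-dimensional $M$-sub-representation by a character, this character must be $\chi_{\norm}$, so the constituent is contained in $E^{\chi_{\norm}}$; (iii) for the nilpotent $M^1$-equivariant endomorphism $\tau_\pi=x_\pi-\chi_{\norm}(\pi)\cdot\id$ of $E$ we have $[\ker\tau_\pi]=[\coker\tau_\pi]$ in $K_0(\CCC_{M^1}^{\fin})$ (the standard Jordan-block identity), which transfers the one-dimensional constituent to $E_{\chi_{\norm}}=\coker\tau_\pi$; (iv) by lemma~\ref{lem:gen_const_E_chi}, the generic $\Pi$ ensures that $E_{\chi_{\norm}}$ contains at most one copy of the generic constituent $\Sp(\nu^{1/2})$.

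Putting the pieces together, $E_{\chi_{\norm}}$ contains $(\nu^{1/2}\!\circ\!\det)$ without admitting it as a quotient, so it has length at least two and fits into an exact sequence $0\to(\nu^{1/2}\!\circ\!\det)\to E_{\chi_{\norm}}\to\Sp(\nu^{1/2})\to 0$; by (iv) no further constituent can appear, and the extension is non-split since a splitting would exhibit $(\nu^{1/2}\!\circ\!\det)$ as a direct summand and hence as a quotient, contradicting (i). The only non-split extension of $\Sp(\nu^{1/2})$ by $(\nu^{1/2}\!\circ\!\det)$ in $\CCC_{\Gl(2)}$ is the Borel-induced $1\times\nu$, yielding the claimed identification. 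The main obstacle is step (ii), establishing that the one-dimensional piece sits in the socle of $E$ rather than somewhere in the middle of its composition series; I would settle this by citing the explicit filtration in \cite[table~A.3]{Roberts-Schmidt} or, alternatively, by realizing $\Pi$ as a subquotient of the standard module $\nu^{1/2}\St\rtimes\nu^{-1/2}$ and applying Bernstein-Zelevinsky's geometric lemma to its Siegel Jacquet module to localize the one-dimensional constituent.
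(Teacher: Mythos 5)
Your overall skeleton (constituents from \cite[table A.3]{Roberts-Schmidt}, the bound from lemma~\ref{lem:gen_const_E_chi}, and the ``not a quotient'' fact from theorem~\ref{thm:sigma_Pi_existence} to force the direction of the extension and non-splitness) matches the paper, but step~(ii) is a genuine error, and it is load-bearing. Without knowing that $\nu^{1/2}\circ\det$ actually occurs as a Jordan--H\"older factor of $E_{\chi_{\norm}}=\coker(\tau_{\chi_{\norm}})$, your remaining ingredients (i), (iii), (iv) do not exclude the possibility $E_{\chi_{\norm}}\cong\Sp(\nu^{1/2})$ of length one. The claim that the one-dimensional constituent lies in the socle of $E$ is in fact \emph{false}: by second adjointness, an embedding $(\nu^{1/2}\circ\det)\boxtimes\nu^{-1/2}\hookrightarrow E$ would make $\Pi$ a quotient of $\Ind_{P}^G((\nu^{-1/2}\circ\det)\boxtimes\nu^{1/2})$, whose unique irreducible quotient is of type \nosf{VIb}, not \nosf{VIa}. (Symmetrically, the same constituent is not a quotient of $E$; it sits strictly in the middle layer of $E$, whose socle and cosocle are both $\Sp(\nu^{1/2})$.) Your proposed justifications do not rescue this: table A.3 of \cite{Roberts-Schmidt} records only semisimplifications, so no socle filtration is ``visible'' there, and the geometric lemma applied to a standard module containing $\Pi$ only filters the Jacquet module of the full induced representation, not of its irreducible quotient.

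The conclusion you need from (ii)+(iii) --- that $\nu^{1/2}\circ\det$ is a constituent of $\ker(\tau_{\chi_{\norm}})$, hence of $\coker(\tau_{\chi_{\norm}})$ --- is nevertheless true, and the correct argument is the multiplicity count via the monodromy filtration that the paper uses: every constituent of the primitive part $P_{-j}(E)$ occurs with multiplicity at least $j+1$ in $E$, so the multiplicity-one constituent $\nu^{1/2}\circ\det$ must lie in $P_0(E)$, which contributes to both $\ker(\tau_{\chi_{\norm}})$ and $E_{\chi_{\norm}}$. Equivalently and more elementarily: if $E_{\chi_{\norm}}$ had length one, then $\ker(\tau_{\chi_{\norm}})$ would have length one by your $K_0$-identity, forcing $\tau_{\chi_{\norm}}$ to act as a single Jordan block of length three and hence all three constituents of $E$ to be isomorphic, which they are not. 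With this repair the rest of your argument (length exactly two by (iv) and multiplicity one of the one-dimensional piece; the extension must have $\nu^{1/2}\circ\det$ as submodule and be non-split by (i); identification with $1\times\nu$) goes through and coincides with the paper's proof.
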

\begin{proof}
The constituents are given by \cite[table~A.3]{Roberts-Schmidt}.
Since $E=E^{(\chi_{\norm})}$ has three
irreducible $M^1$-constituents and two of them are generic, lemma~\ref{lem:gen_const_E_chi}
implies $E_{\chi_{\norm}}\neq E$, i.e. the monodromy filtration is nontrivial 
in the sense that $\tau_{\chi_{\norm}}$ acts nontrivially, so $E \neq Gr_0(E)$.
Now, since $Gr_i(E_{\chi_{\norm}}) \cong P_{-i}(E)$ and every constituent of $P_{-i}(E)$
has multiplicity $\geq i+1$ in $E$,
the only possibility is $P_0(E)= \nu^{1/2}\circ\det$ and $P_{-1}(E)= \Sp(\nu^{1/2})$.
Since these primitive modules describe the induced increasing filtration on $E_{\chi_{\norm}}$
defined by $im(F_i(E)) \subseteq E_{\chi_{\norm}}$ with $F_{0}(E_{\chi_{\norm}}) \subseteq F_1(E_{\chi_{\norm}})$, we obtain 
$F_{0}(E_{\chi_{\norm}})=Gr_{0}(E) \cong \nu^{1/2}\circ \det$ and $F_1(E_{\chi_{\norm}})/F_0(E_{\chi_{\norm}})\cong P_1(E) \cong \Sp(\nu^{1/2})$.
Hence our claim follows that $E_{\chi_{\norm}}$ has two constituents
sitting in an exact sequence as above.
It remains to be shown that the sequence does not split.
Indeed, if this were not true, then
$$\Hom_{M}(E, (\nu^{\frac{1}{2}}\circ\det)\boxtimes \chi_{\Pi})\cong
\Hom_{M^1}(E_{\chi_{\norm}}, (\nu^{\frac{1}{2}}\circ\det)) \ $$  
would not vanish for $\chi_{\Pi}=\chi_{\norm}^\divideontimes=\nu^{-1/2}$.
By dual Frobenius reciprocity
$\Hom_G(\Pi, \Ind_P^G( (\nu^{\frac{1}{2}}\circ\det)\boxtimes \nu^{-1/2})$ would be non-trivial.
The irreducible constituents of
$\Ind_P^G( (\nu^{\frac{1}{2}}\circ\det)\boxtimes \nu^{-1/2})$ are of type \nosf{VId} and \nosf{VIb}
and thus not isomorphic to $\Pi$.
This is a contradiction, so $E_{\chi_{\norm}}$ is indecomposable.
The unique indecomposable extension is the $\Gl(2)$-module
\begin{equation*}
E_{\chi_{\norm}} \cong \nu^{1/2}\otimes  (\nu^{-1/2} \times \nu^{1/2})= 1\times \nu\ .\qedhere
\end{equation*}
\end{proof}

\begin{lem}\label{lem:filtration_VId}
For irreducible $\Pi\in\CCC_G(\omega)$ of type \nosf{VId} as in table \ref{tab:List},
we have $E=E^{(\chi_{\norm})}$
for $E=\delta_P^{-1/2}\otimes J_P(\Pi)$ and the $T$-character $\chi_{\norm}=\nu^{-1/2}$.
The graded components\footnote{Compare \cite[table A.5]{Roberts-Schmidt}.} of the monodromy filtration
are the $M^1\cong\Gl(2)$-modules $$Gr_1(E)\cong (\nu^{-1/2}\circ\det),\ Gr_0(E)\cong \Sp(\nu^{-1/2}),\ Gr_{-1}(E)\cong (\nu^{-1/2}\circ\det)\ ,$$
so that $E_{\chi_{\norm}}$ is a non-split extension of $M^1$-modules
$$ 0 \to \Sp(\nu^{-1/2}) \to E_{\chi_{\norm}} \to \nu^{-1/2}\circ\det \to 0 \ $$
and is thus isomorphic to the $\Gl(2)$-module $E_{\chi_{\norm}}\cong 1\times \nu^{-1}$.
\end{lem}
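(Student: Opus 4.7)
The proof parallels the preceding lemma for type \nosf{VIa}, but with the roles of the Steinberg and one\nobreakdash-dimensional constituents interchanged, and the argument for the non-triviality of the monodromy adapted to the non-generic setting.

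First, I would read off the composition factors of $E=\delta_P^{-1/2}\otimes J_P(\Pi)$ from \cite[table~A.3]{Roberts-Schmidt}: for $\Pi$ of type \nosf{VId} normalized as in Table~\ref{tab:List}, $E$ has three irreducible $M$-constituents, namely two copies of $(\nu^{-1/2}\!\circ\!\det)\boxtimes\chi_\Pi$ and a single copy of $\Sp(\nu^{-1/2})\boxtimes\chi_\Pi$. All three share the same central $T$-character $\chi_{\norm}=\nu^{-1/2}$, so the generalized eigenspace decomposition $E=\bigoplus_\chi E^{(\chi)}$ collapses to $E=E^{(\chi_{\norm})}$.

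Next I would analyze the monodromy filtration on $V=E^{(\chi_{\norm})}$ attached to the nilpotent operator $\tau=\tau_{\chi_{\norm}}$, using exactly the formalism recalled in the proof of the previous lemma. The structural identities $\tau(F_i)\subseteq F_{i-2}$ and $\tau^i\colon Gr_i\cong Gr_{-i}$, combined with the constraint that every constituent of a primitive piece $P_{-j}(V)$ occurs with multiplicity at least $j+1$ in $V$, leave exactly two options consistent with the composition multiplicities: either $\tau=0$ (so that $V$ is semisimple of length three), or $P_0(V)=\Sp(\nu^{-1/2})$ and $P_{-1}(V)=\nu^{-1/2}\!\circ\!\det$, which yields the graded pieces of the statement.

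The main obstacle---absent in the \nosf{VIa} setting, where genericity of two of the constituents supplied non-triviality for free---is to rule out the degenerate option $\tau=0$ and simultaneously to establish non-splitness of the coinvariant extension. Both assertions follow from the single computation $\dim\Hom_M\bigl(E,(\nu^{-1/2}\!\circ\!\det)\boxtimes\chi_\Pi\bigr)=1$. By dual Frobenius reciprocity this dimension equals $\dim\Hom_G\bigl(\Pi,\Ind_P^G((\nu^{-1/2}\!\circ\!\det)\boxtimes\chi_\Pi)\bigr)$, which I would extract from the composition series of this Siegel-induced representation tabulated in \cite[table~A.1, A.2]{Roberts-Schmidt}: the representation $\Pi$ of type \nosf{VId} occurs there with multiplicity one and embeds as a subrepresentation, so the Hom-space is exactly one\nobreakdash-dimensional. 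Both the semisimple case $\tau=0$ and the case of a split coinvariant extension would, however, produce two linearly independent projections onto $\nu^{-1/2}\!\circ\!\det$, contradicting this bound.

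Finally, the identification $Gr_j(E_{\chi_{\norm}})\cong P_{-j}(V)$ gives a length-two exact sequence $0\to \Sp(\nu^{-1/2})\to E_{\chi_{\norm}}\to \nu^{-1/2}\!\circ\!\det\to 0$ in $\CCC_{\Gl(2)}$, which by the previous step is non-split. Up to isomorphism, the only non-split such extension is the reducible principal series $1\times\nu^{-1}$, whose socle is $\Sp(\nu^{-1/2})$ and whose quotient is $\nu^{-1/2}\!\circ\!\det$, giving the final isomorphism $E_{\chi_{\norm}}\cong 1\times\nu^{-1}$ of $\Gl(2)$-modules.
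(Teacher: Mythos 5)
Your reading of the composition factors from \cite[table~A.3]{Roberts-Schmidt}, the combinatorics of the monodromy filtration (only $\tau=0$ or the stated graded pieces are possible), and the computation $\dim\Hom_M\bigl(E,(\nu^{-1/2}\circ\det)\boxtimes\nu^{1/2}\bigr)=1$ via Frobenius reciprocity are all correct. But this single bound does not deliver either of the two conclusions you draw from it. For non-splitness: both the split module $\Sp(\nu^{-1/2})\oplus(\nu^{-1/2}\circ\det)$ and the non-split extension admit exactly \emph{one} projection onto $\nu^{-1/2}\circ\det$, because $\Hom_{M^1}(\Sp(\nu^{-1/2}),\nu^{-1/2}\circ\det)=0$; counting maps to the one-dimensional constituent cannot distinguish the two cases. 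The correct test is against the Steinberg constituent: if the sequence split, $\Sp(\nu^{-1/2})$ would be a quotient of $E_{\chi_{\norm}}$, forcing $\Hom_G(\Pi,\Ind_P^G(\Sp(\nu^{-1/2})\boxtimes\nu^{1/2}))\neq0$, which is impossible since the constituents of that induced representation are of types \nosf{VIa} and \nosf{VIc}. This is the route the paper takes.

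Ruling out $\tau=0$ by the same bound also fails, because $\tau=0$ does not force $E$ to be semisimple as an $M^1$-module, so it does not automatically produce two independent maps to $\nu^{-1/2}\circ\det$: for instance $E\cong(\nu^{-1/2}\circ\det)\oplus E'$, with $E'$ a non-split extension of $\Sp(\nu^{-1/2})$ by $\nu^{-1/2}\circ\det$ as submodule, has $\tau=0$ yet still only a one-dimensional Hom to $\nu^{-1/2}\circ\det$. The paper instead proves $\tau\neq0$ directly: the Borel--Jacquet module $J_P(\Pi)_U\cong J_{B_G}(\Pi)$ is recomputed through the Klingen parabolic, where $J_Q(\Pi)$ splits as $(\nu\circ\det)\oplus\bigl(\nu^{1/2}\otimes(1\times1)\bigr)$ because the two factors have distinct central characters, and the non-semisimplicity of $(1\times1)_{U_Q}$ forces $T$ to act non-semisimply on $J_P(\Pi)$. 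You would need to supply an argument of this kind (or combine the $\Sp$-test with a careful case analysis of the possible indecomposable $M^1$-structures when $\tau=0$) to close the gap.
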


\begin{proof}%
It is clear that $E=E^{(\chi_{\norm})}$ by table A.3 of \cite{Roberts-Schmidt}.
We claim that the monodromy operator $\tau_\chi$ acts nontrivially on $J_P(\Pi)$.
This amounts to show that $T$ does not act by a central character on $J_P(\Pi)$,
although its acts on each of its three constituents by the same character.
Indeed, $J_P(\Pi)\in {\cal C}_{M}^{\fin}$ admits the unnormalized Jacquet quotient $J_P(\Pi)_U$ with respect to the subgroup
$U=\{ m_A\vert A=(\begin{smallmatrix} 1 & * \cr 0 & 1\end{smallmatrix})
\}$, which is isomorphic to the Borel Jacquet module $J_{B_G}(\Pi)$.
However, $J_{B_G}(\Pi)$ is also isomorphic to the coinvariant space $F_{U_Q}=i^*i^*(\overline{\Pi})$ of the Klingen-Jacquet module $F=J_Q(\Pi)\cong i^*(\overline\Pi)\in {\cal C}_{\Gl(2)}$
with respect to the maximal unipotent subgroup $U_Q\subseteq L_Q\cap B_G$ for the standard Klingen Levi subgroup $L_Q$ of $Q$. For details see section~\ref{functor eta}.
For $\Pi$ of type \nosf{VId}, $F$ admits two Jordan-H\"older factors,
see \cite[table A.4]{Roberts-Schmidt}.
Since their central characters as $\Gl(2)$-modules are different, $F$ splits as a direct sum of $\Gl(2)$-modules
$$ F \cong \bigl(\nu\circ\det\bigr)\ \oplus\ \bigl(\nu^{1/2} \otimes(1\times 1)\bigr) \ .$$
By remark~\ref{rmk:torus_action_on_Jacquet}, $T$ acts on $\nu^{1/2}(1\times 1)_{U_Q}$ via 
$x_\lambda\mapsto(\nu^{1/2} \times \nu^{1/2})_{U_Q}(\diag(\lambda,1))$.
Since $(1\times 1)_{U_Q}$ is not semisimple (\cite{Bump}, thm.\ 4.5.4), $T$ does not act semisimply on $J_{B_G}(\Pi)$.
On the other hand, $T$ is in the center of $M$ and acts by the same character $\chi$
on the three constituents
of $J_P(\Pi)$.
This implies that the monodromy operator $\tau_\chi$ acts nontrivially on $J_P(\Pi)$ which proves our claim.

That the filtration of $J_P(\Pi)_\chi$ and the graded components $Gr_i(J_P(\Pi))$ are of the required form 
is shown as in the case \nosf{VIa}.
In particular we obtain from this an exact sequence
\begin{equation*} 0 \to \Sp(\nu^{-1/2}) \to J_P(\Pi)_\chi \to (\nu^{-1/2}\circ\det) \to 0 \ .\end{equation*}
But if $J_P(\Pi)_\chi$ was semisimple, then
$$\Hom_{M}(\delta_P^{-1/2}\otimes J_P(\Pi), \Sp(\nu^{-1/2})\boxtimes \chi_{\Pi})\cong
\Hom_{M^1}(J_P(\Pi)_\chi, \Sp(\nu^{-\frac{1}{2}})) \neq 0\ $$
for $\chi_\Pi=\chi_{\norm}^\divideontimes=\nu^{1/2}$.
Hence there is a non-zero $G$-morphism from $\Pi$ to $\Ind_P^G(\Sp(\nu^{-1/2})\boxtimes \nu^{1/2})$ by dual Frobenius reciprocity.
However, the two irreducible constituents of $\Ind_P^G(\Sp(\nu^{-1/2})\boxtimes \nu^{1/2})$ are of type \nosf{VIa} and \nosf{VIc}, hence not isomorphic to \nosf{VId}.
This is a contradiction, so $J_P(\Pi)_\chi$ is indecomposable as an $M^1$-module and thus isomorphic to $(1\times \nu^{-1})$ as a $\Gl(2)$-module.
\end{proof}

\begin{thm} \label{Firstmonodromy}
For irreducible representations $\Pi\in {\cal C}_G(\omega)$ and
characters $\chi_{\norm}$ of $T$, the $M^1$-module $E_{\chi_{\norm}}\in {\cal C}_{M^1}^{\fin}$
is irreducible or zero except for the following two cases, normalized as in table \ref{tab:List}, and their twists:
\begin{enumerate}
\item[$\quad $ \nosf{ VIa}] \ \ where $J_P(\Pi)_\chi \cong\, 1\times \nu$ as an $M^1$-module for $\chi_{\norm}=\nu^{1/2}$,
\item[$\quad $ \nosf{ VId}] \ \ where $J_P(\Pi)_\chi \cong\, 1\times \nu^{-1}$  as an $M^1$-module for $\chi_{\norm}=\nu^{-1/2}$.
\end{enumerate}
\end{thm}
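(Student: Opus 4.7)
The plan is to proceed by case-by-case inspection of the Sally--Tadi\'c classification of irreducible $\Pi\in\CCC_G(\omega)$ with $J_P(\Pi)\neq 0$, using the explicit description of the semisimplified Siegel--Jacquet module $J_P(\Pi)^{ss}$ in \cite[table~A.3]{Roberts-Schmidt}. The irreducible constituents of $E=\delta_P^{-1/2}\otimes J_P(\Pi)$ have the shape $\pi_i\boxtimes\chi_{i,\Pi}$, and by remark~\ref{rmk:torus_action_on_Jacquet} the torus $T$ acts on such a constituent through the character $\chi_{i,\Pi}^\divideontimes$. The generalized $T$-eigenspace decomposition $E=\bigoplus_{\chi_\norm}E^{(\chi_\norm)}$ thus groups the $M^1$-constituents according to these characters, and the question becomes: when does a single $E^{(\chi_\norm)}$ contain more than one irreducible $M^1$-constituent, and, if so, is the nilpotent monodromy operator $\tau_{\chi_\norm}$ trivial?

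First I would verify that, apart from the twists of \nosf{VIa} and \nosf{VId}, every $E^{(\chi_\norm)}$ contains at most one irreducible $M^1$-constituent. This is a direct inspection of \cite[table~A.3]{Roberts-Schmidt}: the fixed central character $\omega=\omega_{\pi_i}\chi_{i,\Pi}^2$ pins down $\chi_{i,\Pi}$ up to a square-root ambiguity, and the table shows that a collision $\chi_{1,\Pi}^\divideontimes=\chi_{2,\Pi}^\divideontimes$ between two distinct constituents of $E$ forces $\Pi$ to come from the degenerate \nosf{VI} family with $\chi_\norm\in\{\nu^{1/2},\nu^{-1/2}\}$ after normalization. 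In all other cases the monodromy $\tau_{\chi_\norm}$ acts on a module of length $\leq 1$ with $T$ semisimple, hence vanishes, so $E_{\chi_\norm}=E^{(\chi_\norm)}$ is irreducible or zero. In the \nosf{VI}-family, the non-generic subtypes \nosf{VIb} and \nosf{VIc} have a shorter Jordan--H\"older content for $E$: inspection shows that for these the eigenspaces are again multiplicity-free, so no extra exceptional case arises.

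For the two genuinely exceptional families the claim reduces to the already-proved lemmas~\ref{lem:first_monodromy_VIa} (for \nosf{VIa}, $\chi_\norm=\nu^{1/2}$) and~\ref{lem:filtration_VId} (for \nosf{VId}, $\chi_\norm=\nu^{-1/2}$), together with their twists. The main obstacle is the systematic bookkeeping of the case analysis across the Sally--Tadi\'c types, and, within the \nosf{VI}-family, making sure not to miss any near-coincidences of characters; the conceptual heart is the detection of nontrivial monodromy in \nosf{VIa} and \nosf{VId}, which proceeds by comparing $J_P(\Pi)$ with the Klingen--Jacquet module through the isomorphism $J_{B_G}(\Pi)\cong J_Q(\Pi)_{U_Q}$ of lemma~\ref{Dual action}. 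Because the two $\Gl(2)$-constituents of $J_Q(\Pi)$ have different central characters, $T$ cannot act semisimply on $J_P(\Pi)$, forcing $\tau_{\chi_\norm}$ to be nonzero; the primitive decomposition for a nilpotent operator of length two then determines the filtration on $E_{\chi_\norm}$, and the non-splitting of the resulting extension is ruled in by dual Frobenius reciprocity, since a split would exhibit $\Pi$ as a submodule of a Siegel induced representation whose constituents have the wrong Roberts--Schmidt type.
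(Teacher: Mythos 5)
Your overall architecture (tables plus the two dedicated lemmas for \nosf{VIa} and \nosf{VId}) matches the paper's, but the load-bearing claim in your second paragraph is false, and this leaves a genuine gap. You assert that a coincidence of $T$-characters between two constituents of $E$ "forces $\Pi$ to come from the degenerate \nosf{VI} family." Inspection of \cite[table~A.3]{Roberts-Schmidt} (or of table~\ref{tab:Delta}) shows otherwise: for type \nosf{I} the multiset $\Delta(\Pi)=\{1,\chi_1,\chi_2,\chi_1\chi_2\}$ has repetitions whenever $\chi_1=\chi_2$, $\chi_1=1$, $\chi_2=1$ or $\chi_1\chi_2=1$ (all allowed by the conditions of table~\ref{tab:List}); for type \nosf{X} the character $1$ occurs twice whenever $\omega_{\pi_c}=1$; for types \nosf{IIa} and \nosf{IIb} one gets $\chi_1=\chi_1^{-1}$ whenever $\chi_1^2=1$. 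In all these cases $E^{(\chi_{\norm})}$ has length $\geq 2$, so your conclusion that "the monodromy acts on a module of length $\leq 1$, hence vanishes" does not apply, and you have no argument that the quotient $E_{\chi_{\norm}}$ is irreducible. Note that the theorem is a statement about the coinvariant space $E_{\chi_{\norm}}$, not about $E^{(\chi_{\norm})}$: precisely in these collision cases one must \emph{prove} that $\tau_{\chi_{\norm}}$ is nontrivial enough to collapse $E^{(\chi_{\norm})}$ to a single constituent.

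The paper closes this gap with two inputs you do not use. For generic $\Pi$ (covering \nosf{I}, \nosf{IIa}, \nosf{X} and the generic part of \nosf{VIa}), lemma~\ref{lem:gen_const_E_chi} — a consequence of the perfectness of $\alpha(\Pi)=j^!(\overline{\Pi})$ from lemma~\ref{lem:alpha_perfect} — bounds the number of generic constituents of $E_{\chi_{\norm}}$ by one; since for these types every constituent of $E$ is generic, $E_{\chi_{\norm}}$ is irreducible or zero regardless of how many constituents $E^{(\chi_{\norm})}$ has. For the non-generic collision in type \nosf{IIb} with $\chi_1=\chi_1^{-1}$, where the two colliding constituents are one-dimensional, a separate argument is needed: dual Frobenius reciprocity forces a putative length-two $E_{\chi_{\norm}}$ to be an indecomposable Jordan block $\chi_1^{(2)}\circ\det$, whose center then acts non-semisimply, contradicting the semisimple action of the center of $G$. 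Your proposal would need both of these arguments (or substitutes for them) to be a complete proof; as written it only establishes the multiplicity-free cases and the two explicitly exceptional ones.
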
 

\begin{proof}By a twist it is sufficient to assume that $\Pi$ is normalized as in table \ref{tab:List}.
For generic $\Pi$, the table of irreducible constituents \cite{Roberts-Schmidt}, table A.3,
implies that every constituent of the $M^1$-module $E_{\chi_{\norm}}$ is generic except for case \nosf{VIa} with $\chi_{\norm}=\nu^{1/2}$.
The statement is then implied by lemma~\ref{lem:gen_const_E_chi}. For the remaining case \nosf{VIa} the assertion is shown in lemma~\ref{lem:first_monodromy_VIa}.
For non-generic $\Pi$ the theorem follows
from an inspection of table A.3 of \cite{Roberts-Schmidt} and table~\ref{tab:Delta} except for 
case \nosf{IIb} with $\chi_1=1$ and for case \nosf{VId}.
For case \nosf{VId} with $\chi_{\norm}=\nu^{-1/2}$ the statement is shown in lemma~\ref{lem:filtration_VId}.
For the case \nosf{IIb}, $\Pi=\Ind_P^G((\chi_1\circ\det)\boxtimes\chi_1^{-1})$ and the constituents of
$E=\delta_P^{-1/2}\otimes J_P(\Pi)$ are
\begin{enumerate}
\item $(\chi_1\nu^{-1/2} \times \chi_1^{-1}\nu^{-1/2})\boxtimes \nu^{1/2}$,
\item $(\chi_1 \circ\det) \boxtimes \chi_1^{-1}$,
\item $(\chi_1^{-1} \circ\det) \boxtimes \chi_1$.
\end{enumerate} Now assume $E_{\chi_{\norm}}$ has length greater than one.
Since $\chi_1^2 \neq \nu^{\pm 1}$ by definition,
this implies $\chi_{\norm}=\chi_1=\chi_1^{-1}$.
Dual Frobenius reciprocity yields
\begin{gather*}  \Hom_G(\Pi, \Ind_P^G((\chi_1\!\circ\!\det)\boxtimes \chi_1^{-1})
 \cong \Hom_{M}(E, (\chi_1\!\circ\!\det)\boxtimes\chi_1^{-1}) \\
 \cong \Hom_{M^1}(E_{\chi_{\norm}}, (\chi_1\!\circ\!\det))
\end{gather*}
and this is one-dimensional, because $\Pi$ is irreducible.
Especially, $E_{\chi_{\norm}}$ is an indecomposable two-dimensional $M^1$-module.
Every finite-dimensional smooth module of $M^1\cong\Gl(2)$ factorizes over the determinant,
so we obtain an isomorphism $E_{\chi_{\norm}}\cong (\chi_1^{(2)}\circ \det)$ where $\chi_1^{(2)}$ is a Jordan-block of size two attached to $\chi_{1}$. Especially, the center $\{\diag(\lambda I_2,\lambda^{-1} I_2)\mid \lambda\in k^{\times}\}$ of $M^1$ acts by $\lambda\mapsto\chi^{(2)}(\lambda^2)$, thus not semisimply.
On the other hand, the center of $G$ acts semisimply by multiplication with $\omega$,
so the action of $T$ on $E_{\chi_{\norm}}$ cannot be semisimple. This provides a contradiction.
\end{proof}

\subsection{Monodromy theorem}

We now estimate the dimension of $\pi_0(\widetilde{\Pi})_\chi\cong J_P(\Pi)_{\chi,\rho}$ for characters $\chi$ of $T$ and Bessel characters $\rho$ that define a split Bessel model.
The following monodromy theorem is one of the key inputs for theorem~\ref{thm:sigma_tilde_Pi}.

\begin{lem}\label{lem:monodromy_VIa}
For irreducible $\Pi\in \CCC_G(\omega)$ of type \nosf{VIa} with normalization as in table \ref{tab:List},
all Bessel characters $\rho$ and characters $\chi$ of $T$, we have
\begin{equation*}
\dim(\pi_0(\widetilde \Pi)_\chi)  = 
\begin{cases}
0 &   \chi_{\norm} \!\neq\!  \nu^{1/2}\ ,\\
1 &   \chi_{\norm} \! =  \!  \nu^{1/2}\,,\ \rho\!\neq\!1\ ,\\
2 &   \chi_{\norm} \! =  \!  \nu^{1/2}\,,\ \rho\!=\!1\ . 
\end{cases} 
\end{equation*}
\end{lem}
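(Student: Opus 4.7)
The reduction comes from lemma~\ref{lem:Bessel_Jacquet}: $\pi_0(\widetilde\Pi) \cong J_P(\Pi)_{\widetilde T,\rho}$, so $\pi_0(\widetilde\Pi)_\chi$ is the iterated torus coinvariant $(E_{\chi_{\norm}})_{\widetilde T,\Lambda}$ where $E = \delta_P^{-1/2}\otimes J_P(\Pi)$. Lemma~\ref{lem:first_monodromy_VIa} pins down $E_{\chi_{\norm}}$: it vanishes unless $\chi_{\norm} = \nu^{1/2}$, which immediately gives the first case; for $\chi_{\norm} = \nu^{1/2}$ it provides the explicit isomorphism $E_{\chi_{\norm}} \cong 1\times\nu$ as a $\Gl(2)\cong M^1$-module. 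By remark~\ref{rmk:torus_action_on_Jacquet}, and using that the normalization of \nosf{VIa} in table~\ref{tab:List} forces $\omega = 1$ and $\chi_\Pi = \nu^{-1/2}$, the residual $\widetilde T/Z$-action on $E_{\chi_{\norm}}$ factors through the one-parameter subgroup $\{\diag(\mu,1)\}\subset\Gl(2)$; coinvariance under $\Lambda$ translates to coinvariance under the character $\eta = \rho\chi_\Pi^{-1} = \rho\nu^{1/2}$.

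The computation of $\dim(1\times\nu)_\eta$ is then a $TS$-module calculation in $\CCC$ via the embedding $\Gl_a(1)\hookrightarrow\Gl(2)$ of example~\ref{example 4}. Uniqueness of Whittaker models for the (reducible) principal series yields $\deg(1\times\nu) = 1$. By lemma~\ref{lem:torus_action_GL2}, every $\eta$-eigenvector of $1\times\nu$ is a $\Gl(2)$-eigenvector and therefore lies in the unique one-dimensional subrepresentation $\nu^{1/2}\!\circ\!\det$; so $\dim(1\times\nu)^\eta = 1$ precisely when $\eta = \nu^{1/2}$ and $0$ otherwise. Lemma~\ref{lem:RR} then gives $\dim(1\times\nu)_\eta = 1 + \dim(1\times\nu)^\eta$, and the equivalence $\eta = \nu^{1/2}\Longleftrightarrow \rho = 1$ produces the remaining two cases of the lemma.

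The main obstacle is the bookkeeping across three tori — the Bessel torus $\widetilde T$, the standard $T$, and the diagonal one-parameter subgroup of $\Gl(2)\cong M^1$ — together with the correct propagation of the twist by $\chi_\Pi$. A useful sanity check is that $\omega = 1$ makes $\rho^\divideontimes = \rho^{-1}$, so the self-dual condition $\rho = 1$ is consistent with the duality $\beta_\rho \cong \beta_{\rho^\divideontimes}$ of lemma~\ref{duality}; and the generic case $\chi_{\norm}\neq \nu^{1/2}$ is purely a consequence of the vanishing of $E_{\chi_{\norm}}$ established in lemma~\ref{lem:first_monodromy_VIa}, requiring no further input.
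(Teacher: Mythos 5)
Your argument is correct and follows the paper's route: the same reduction to $(E_{\chi_{\norm}})_{\widetilde T,\rho}$, the same appeal to lemma~\ref{lem:first_monodromy_VIa} for both the vanishing off $\chi_{\norm}=\nu^{1/2}$ and the identification $E_{\chi_{\norm}}\cong 1\times\nu$, and the same translation of the $\Lambda$-coinvariants into $\eta$-coinvariants for $\diag(\mu,1)$ with $\eta=\rho\nu^{1/2}$. The only (harmless) variation is in the last step: the paper runs the snake lemma along the extension $0\to\nu^{1/2}\!\circ\!\det\to E_{\chi_{\norm}}\to\Sp(\nu^{1/2})\to0$ together with corollary~\ref{cor:Waldspurger-Tunnell}, whereas you apply lemma~\ref{lem:RR} to the whole degree-one module and detect the invariants via lemma~\ref{lem:torus_action_GL2} (which applies because $1\times\nu$ has a central character, so a $\diag(\mu,1)$-eigenvector is a full-torus eigenvector); both computations are equivalent.
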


\begin{proof}
For the normalized Siegel-Jacquet module $E=\delta_P^{-1/2}\otimes J_P(\Pi)$,
table~\ref{tab:Delta} implies that $E^{(\chi_{\norm})}=0$ for $\chi_{\norm}\neq\nu^{1/2}$.
For $\chi_{\norm}=\nu^{1/2}$ lemma~\ref{lem:first_monodromy_VIa}
gives a short exact sequence of $\Gl(2)\cong M^1$-modules
$$0 \to (\nu^{1/2}\!\circ\! \det) \to E_{\chi_\norm} \to \Sp(\nu^{1/2}) \to 0\ .$$
By remark~\ref{rmk:torus_action_on_Jacquet}, $\widetilde{T}$ acts on $E_{\chi_\norm}$ by $\widetilde{t}\mapsto\nu^{-1/2}(t_1t_2) E_{\chi_\norm}({\diag(t_1,t_2)})$.
Note that lemma~\ref{lem:proto_long_exact_sequence} gives a long exact sequence 
\begin{equation*}
\cdots\to \Sp^{\rho} \to (1\!\circ\! \det)_{\rho} \to (\nu^{-1/2}\otimes E_{\chi_\norm})_{M^1,\rho} \to \Sp_{\rho} \to 0 \ 
\end{equation*}
of $\rho$-(co)invariant spaces with respect to the action of $m_\diag(t_1,1)$.
Since $\Sp^{\,\rho}=0$ and $\dim( \Sp_\rho)=1$ by corollary~\ref{cor:Waldspurger-Tunnell},
we obtain 
\begin{equation*}
\dim((E_{\chi_\norm})_{\widetilde{T},\rho}) 
=  \dim(1\!\circ\!\det)_\rho+1\ .
\end{equation*}
Obviously, $(1\circ\det)_\rho$ is non-zero if and only if $\rho=1$.
The assertion follows from the isomorphism $\pi_0(\widetilde \Pi)_\chi\cong (E_{\chi_\norm})_{\widetilde{T},\rho}$ of $\C$-vector spaces.
\end{proof}

\begin{lem}\label{lem:monodromy_VId}
For irreducible $\Pi\in\CCC_G(\omega)$ of type \nosf{VId} with normalization as in table~\ref{tab:List}, all Bessel characters $\rho$ and characters $\chi$ of $T$, we have
\begin{equation*}
\dim(\pi_0(\widetilde \Pi)_\chi)  = 
\begin{cases}
0 &   \chi_{\norm} \!\neq\! \nu^{-1/2}\ ,\\
1 &   \chi_{\norm} \! =  \! \nu^{-1/2}\, . 
\end{cases} 
\end{equation*}
\end{lem}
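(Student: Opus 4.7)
The plan is to adapt the argument of lemma~\ref{lem:monodromy_VIa} to this case, where the roles of the subrepresentation and the quotient in the decisive extension of $\Gl(2)$-modules are reversed. By lemma~\ref{lem:Bessel_Jacquet}, $\pi_0(\widetilde{\Pi})_\chi$ is naturally isomorphic to the iterated coinvariant $(J_P(\Pi)_{T,\chi})_{\widetilde{T},\rho}$. With the normalization of table~\ref{tab:List}, \cite[table~A.3]{Roberts-Schmidt} shows that every Jordan--H\"older constituent of $E=\delta_P^{-1/2}\otimes J_P(\Pi)$ has $T$-character $\chi_{\norm}=\nu^{-1/2}$; hence $E^{(\chi_{\norm})}$ vanishes for $\chi_{\norm}\neq\nu^{-1/2}$, settling the first case.

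For $\chi_{\norm}=\nu^{-1/2}$ the key input is lemma~\ref{lem:filtration_VId}, which exhibits $E_{\chi_{\norm}}$ as the $\Gl(2)$-module $1\times\nu^{-1}$, itself a non-split extension
\begin{equation*}
0\to \Sp(\nu^{-1/2})\to E_{\chi_{\norm}}\to \nu^{-1/2}\!\circ\!\det\to 0\ .
\end{equation*}
With $\omega=1$ and $\chi_\Pi=\nu^{1/2}$, remark~\ref{rmk:torus_action_on_Jacquet} shows that $\widetilde{T}$ acts on any $\Gl(2)$-constituent $\pi$ by $\widetilde{t}\mapsto \nu^{1/2}(t_1/t_2)\,\pi(\diag(t_1/t_2,1))$, so taking $(\widetilde{T},\Lambda)$-coinvariants reduces to taking coinvariants of $\pi$ under the upper-left torus $\{\diag(\lambda,1)\}$ with character $\rho\nu^{-1/2}$. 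Applying lemma~\ref{lem:proto_long_exact_sequence} to the displayed extension yields a six-term exact sequence whose outer terms I evaluate by standard means: $\Sp(\nu^{-1/2})^{\widetilde{T},\rho}=0$ by lemma~\ref{lem:torus_action_GL2}, $\Sp(\nu^{-1/2})_{\widetilde{T},\rho}$ is one-dimensional by corollary~\ref{cor:Waldspurger-Tunnell}, and $(\nu^{-1/2}\!\circ\!\det)^{\widetilde{T},\rho}\cong(\nu^{-1/2}\!\circ\!\det)_{\widetilde{T},\rho}$ is one-dimensional when $\rho=1$ and zero otherwise. For $\rho\neq 1$ the sequence immediately forces $\dim(E_{\chi_{\norm}})_{\widetilde{T},\rho}=\dim\Sp(\nu^{-1/2})_{\widetilde{T},\rho}=1$.

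The main obstacle is the case $\rho=1$, where I must prove that the connecting morphism $\delta\colon(\nu^{-1/2}\!\circ\!\det)^{\widetilde{T},1}\to\Sp(\nu^{-1/2})_{\widetilde{T},1}$ between two one-dimensional spaces is an isomorphism --- the opposite of what happens in the \nosf{VIa} situation, where $\delta$ vanishes and the coinvariants are two-dimensional. Suppose for contradiction $\delta=0$; then there exists a $\widetilde{T}$-invariant vector $v\in E_{\chi_{\norm}}$ whose image in the one-dimensional quotient $\nu^{-1/2}\!\circ\!\det$ is non-zero. Invariance translates into the identity $(1\times\nu^{-1})(\diag(\lambda,1))v=\nu^{-1/2}(\lambda)v$ for all $\lambda\in k^\times$, so by lemma~\ref{lem:torus_action_GL2} the vector $v$ generates a one-dimensional $\Gl(2)$-subrepresentation of $E_{\chi_{\norm}}$ isomorphic to $\nu^{-1/2}\!\circ\!\det$. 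This subrepresentation sections the extension, contradicting the non-splitness established in lemma~\ref{lem:filtration_VId}. Hence $\delta$ is surjective, and the six-term sequence collapses to give $\dim(E_{\chi_{\norm}})_{\widetilde{T},1}=\dim(\nu^{-1/2}\!\circ\!\det)_{\widetilde{T},1}=1$, completing the proof.
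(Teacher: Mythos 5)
Your proof is correct and rests on the same two inputs as the paper's: lemma~\ref{lem:filtration_VId} identifying $E_{\chi_{\norm}}\cong 1\times\nu^{-1}$ as a non-split extension, and lemma~\ref{lem:torus_action_GL2} to exclude torus eigenvectors (a putative eigenvector would span a one-dimensional $\Gl(2)$-subrepresentation, which is exactly the splitting the paper also rules out). The only cosmetic difference is that you run the six-term (co)invariants sequence of the $\Sp$--$\det$ extension by hand, separating $\rho\neq1$ from $\rho=1$, where the paper packages the same Euler-characteristic count as ``degree one plus lemma~\ref{lem:RR}''.
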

\begin{proof}For $E= \delta_P^{-1/2}\otimes J_P(\Pi)$ and $\chi_{\norm}=\nu^{-1/2}$,
lemma~\ref{lem:filtration_VId} implies $E=E^{(\chi_\norm)}$. Further, $E_{\chi_\norm} \cong (1\times \nu^{-1})$ as $M^1\cong\Gl(2)$-module.
Now consider $E_{\chi_\norm}$ as a $\Gl_a(1)$-module via the embedding $\Gl_a(1)\hookrightarrow\Gl(2)\cong M^1$ as in example~\ref{example 4}.
Since $E_{\chi_{\norm}}$ has a unique Whittaker model, it has degree one as a $\Gl_a(1)$-module.
Lemma \ref{lem:torus_action_GL2} implies $(E_{\chi_{\norm}})^{\Gl(1),\rho}=0$ for every smooth character $\rho$ of $\Gl(1)\subseteq\Gl_a(1)$ since $1\times\nu^{-1}$ does not admit a one-dimensional $\Gl(2)$-subrepresentation.
Lemma~\ref{lem:RR} yields $\dim(\pi_0((\widetilde \Pi)_\chi)) 
= \dim(E_{\chi_{\norm}})_{\widetilde{T},\rho}
= \dim(E_{\chi_{\norm}})_{\Gl(1),\rho}
=1$ for every smooth $\rho$.
\end{proof}

\begin{thm}[Monodromy theorem] \label{WEIRD}\label{thm:monodromy}
For irreducible $\Pi\in {\cal C}_G(\omega)$, all smooth Bessel characters $\rho$
and all smooth characters $\chi$ of $T$ we have
$$  \dim(\pi_0(\widetilde \Pi)_{T,\chi})\leq 1 $$
except for twists of the following case:
If $\Pi$ is of type \nosf{VIa} with normalization as in table~\ref{tab:List} and $(\rho,\chi_{\norm})=(1,\nu^{1/2})$,
then $ \dim(\pi_0(\widetilde \Pi)_{T,\chi})=2$.
\end{thm}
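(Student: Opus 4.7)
The plan is to combine the Bessel--Jacquet identification of lemma~\ref{lem:Bessel_Jacquet} with the first monodromy result in theorem~\ref{Firstmonodromy}. Since $T$ commutes with $\widetilde{T}$, the natural isomorphism $\pi_0(\widetilde{\Pi}) \cong J_P(\Pi)_{\widetilde{T},\rho}$ combined with the exactness of the $(T,\chi)$-coinvariant functor yields
\begin{equation*}
\pi_0(\widetilde{\Pi})_{T,\chi} \;\cong\; (E_{\chi_{\norm}})_{\widetilde{T},\rho} \qquad \text{as $\C$-vector spaces,}
\end{equation*}
where $E = \delta_P^{-1/2}\otimes J_P(\Pi)$. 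Remark~\ref{rmk:torus_action_on_Jacquet} describes how $\widetilde{T}$ acts on each irreducible constituent $\sigma = \pi\boxtimes\chi_\Pi$ of $E$: after the $T$-action is absorbed in the normalization, the residual $\widetilde{T}$-action on $E_{\chi_{\norm}}$ factors through the action of the split diagonal torus inside $M^1\cong\Gl(2)$, twisted by a character depending on $\chi_\Pi$ and $\omega$. Consequently $(E_{\chi_{\norm}})_{\widetilde{T},\rho}$ is identified with the Waldspurger--Tunnell coinvariant space of $E_{\chi_{\norm}}$ for a suitably shifted character.

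The exceptional cases \nosf{VIa} with $\chi_{\norm}=\nu^{1/2}$ and \nosf{VId} with $\chi_{\norm}=\nu^{-1/2}$ are already settled in lemmas~\ref{lem:monodromy_VIa} and~\ref{lem:monodromy_VId}, producing the dimensions $\leq 2$ and $1$ respectively. In every other situation, theorem~\ref{Firstmonodromy} asserts that $E_{\chi_{\norm}}$ is either zero or irreducible as an $M^1$-module. If $E_{\chi_{\norm}}=0$ the assertion is trivial. If $E_{\chi_{\norm}}$ is one-dimensional of the form $\mu\circ\det$, its $\rho$-coinvariant space is obviously at most one-dimensional. If $E_{\chi_{\norm}}$ is infinite-dimensional irreducible, it is generic, and corollary~\ref{cor:Waldspurger-Tunnell} gives $\dim(E_{\chi_{\norm}})_{\widetilde{T},\rho}=1$, finishing the bound.

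The main technical obstacle is the bookkeeping required to make the isomorphism $\pi_0(\widetilde{\Pi})_{T,\chi}\cong (E_{\chi_{\norm}})_{\widetilde{T},\rho}$ precise and to track the character shifts, so that the residual coinvariant functor on the $\Gl(2)$-module $E_{\chi_{\norm}}$ truly falls under the Waldspurger--Tunnell hypothesis. Conceptually, the dimension-two jump in case \nosf{VIa} with $\rho=1$ is forced by the non-split extension exhibited in lemma~\ref{lem:first_monodromy_VIa}: both constituents of $E_{\chi_{\norm}}\cong 1\times\nu$, namely the one-dimensional $\nu^{1/2}\circ\det$ (contributing a $\rho$-coinvariant only when $\rho=1$) and the generic $\Sp(\nu^{1/2})$ (always contributing one via Waldspurger--Tunnell), simultaneously meet the coinvariant functor, whereas in every non-exceptional case the irreducibility of $E_{\chi_{\norm}}$ allows for at most one contribution.
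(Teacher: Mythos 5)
Your proposal is correct and follows essentially the same route as the paper: identify $\pi_0(\widetilde\Pi)_{T,\chi}$ with $(E_{\chi_{\norm}})_{\widetilde{T},\rho}$, invoke theorem~\ref{Firstmonodromy} to get irreducibility (or vanishing) of $E_{\chi_{\norm}}$ outside types \nosf{VIa} and \nosf{VId}, apply corollary~\ref{cor:Waldspurger-Tunnell} in the irreducible case, and delegate the two exceptional types to lemmas~\ref{lem:monodromy_VIa} and~\ref{lem:monodromy_VId}. Your explicit separation of the one-dimensional constituent case from the generic one is a minor refinement of the same argument.
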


\begin{proof}By a twist we can assume that $\Pi$ is normalized as in table \ref{tab:List}.
Recall that $\pi_0(\widetilde \Pi)_\chi \cong (E_{\chi_{\norm}})_{\widetilde{T},\rho}$ 
as before.
If the $M^1$-module $E_{\chi_{\norm}}$ is irreducible,
corollary~\ref{cor:Waldspurger-Tunnell} implies $\dim(E_{\chi_{\norm}})_{\widetilde{T},\rho}\leq 1$.
Indeed, $E_{\chi_{\norm}}$ is irreducible
by theorem \ref{Firstmonodromy}
except for the cases discussed in lemma~\ref{lem:monodromy_VIa} and lemma~\ref{lem:monodromy_VId}. %
\end{proof}

\begin{cor}\label{cor:T_coinvariants_Bessel}
For irreducible non-generic $\Pi\in\CCC_G(\omega)$ and all smooth characters $\rho$ and $\chi$ of $T$ it holds that $\dim(\widetilde{\Pi})_{T,\chi}\leq1$.
\end{cor}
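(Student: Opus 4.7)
The plan is to reduce the claim to a combination of lemma~\ref{lem:RR} and the monodromy theorem~\ref{thm:monodromy}, and then to handle one residual case where both bounds would naively add.

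First I would apply lemma~\ref{lem:RR} to get the identity
\begin{equation*}
\dim \widetilde{\Pi}_{T,\chi} \;=\; \deg \widetilde{\Pi} \;+\; \dim \widetilde{\Pi}^{T,\chi}.
\end{equation*}
The natural projection $\widetilde{\Pi} \twoheadrightarrow \pi_0(\widetilde{\Pi})$ has kernel $\widetilde{\Pi}^0 \cong \mathbb{S}^{\deg\widetilde{\Pi}}$, which by lemma~\ref{lem:C_b_perfect} has no nonzero $T$-eigenvectors. Hence it induces an injection $\widetilde{\Pi}^{T,\chi} \hookrightarrow \pi_0(\widetilde{\Pi})^{T,\chi}$, and since $\pi_0(\widetilde{\Pi})$ lies in $\CCC_T^{\fin}$ and is therefore finite-dimensional, $\dim \pi_0(\widetilde{\Pi})^{T,\chi} = \dim \pi_0(\widetilde{\Pi})_{T,\chi}$. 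As $\Pi$ is non-generic, the exceptional twist of type \nosf{VIa} in theorem~\ref{thm:monodromy} is excluded, so $\dim \pi_0(\widetilde{\Pi})_{T,\chi} \le 1$, giving $\dim \widetilde{\Pi}^{T,\chi} \le 1$.

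Next, theorem~\ref{Mult1} (uniqueness of split Bessel models) yields $\deg \widetilde{\Pi} \le 1$. Together with the first step these bounds only give $\dim \widetilde{\Pi}_{T,\chi} \le 2$, so I would then separate cases. If $\deg \widetilde{\Pi} = 0$, then $\widetilde{\Pi}$ is finite-dimensional and equals $\pi_0(\widetilde{\Pi})$, and the estimate $\dim \widetilde{\Pi}_{T,\chi} = \dim \pi_0(\widetilde{\Pi})_{T,\chi} \le 1$ follows directly from the monodromy theorem. Otherwise $\deg \widetilde{\Pi} = 1$, and since $m_\Pi = 0$ for non-generic $\Pi$, proposition~\ref{lasst} forces $d(\Pi,\rho) = 1$; in this case the required bound $\dim \widetilde{\Pi}_{T,\chi} \le 1$ amounts to showing $\widetilde{\Pi}^{T,\chi} = 0$ for every $\chi$, equivalently (by lemma~\ref{PERF}) that $\widetilde{\Pi}$ is perfect, i.e.\ $\Danismanfunctor(\widetilde{\Pi}) = 0$.

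The hard part is therefore the perfectness of $\widetilde{\Pi}$ when $\deg \widetilde{\Pi} = 1$. I would argue this by realising $\widetilde{\Pi}$ as a quotient of $\widetilde{I}$ for a Siegel induced $I = \Ind_P^G(\sigma_\Pi)$ (which exists by theorem~\ref{thm:sigma_Pi_existence}) and exploiting the Bessel filtration of section~\ref{s:Bessel_filt}: the degree-one piece of $\widetilde{I}$ comes exclusively from $\widetilde{I_3} \cong \delta_P^{1/2} \otimes \mathbb{E}[\chi_\Pi]$ (lemma~\ref{KEY}), which is perfect, and the degree-zero pieces $\widetilde{I_i}$ for $i \ne 3$ have no copy of $\mathbb{S}$. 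A finite-dimensional submodule of $\widetilde{\Pi}$ would therefore have to split off in $\widetilde{\Pi}$ in addition to the image of $\widetilde{I_3}$, giving a sum $\chi \oplus \mathbb{S} \hookrightarrow \widetilde{\Pi}$ which after applying $\pi_0$ would force $\dim \pi_0(\widetilde{\Pi})^{T,\chi} \ge 2$ together with a second constituent from the Bessel filtration, contradicting monodromy. The main obstacle is that this final dimension count has to be made uniform across all non-generic types, relying on the explicit list of $\sigma_\Pi$ in table~\ref{tab:list_sigma} and the structure of the filtration, which is exactly what the subsequent sections of the paper analyse in detail.
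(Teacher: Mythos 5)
Your reduction is sound and matches the paper's: theorem~\ref{Mult1} gives $\deg\widetilde{\Pi}\in\{0,1\}$, the degree-zero case follows from theorem~\ref{thm:monodromy} since $\widetilde{\Pi}\cong\pi_0(\widetilde{\Pi})$, and the degree-one case reduces (via lemma~\ref{lem:RR} or directly via lemma~\ref{PERF}) to perfectness of $\widetilde{\Pi}$. The paper disposes of that last step by citing corollary~\ref{cor:sigma_nongeneric}, which is proved later; you attempt to prove it inline, and that is where your argument has a genuine gap.

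Your claimed contradiction does not go through as stated. If $\chi\hookrightarrow\Danismanfunctor(\widetilde{\Pi})$ and $\mathbb{S}\hookrightarrow\widetilde{\Pi}$, then $\chi\oplus\mathbb{S}\hookrightarrow\widetilde{\Pi}$, but applying $\pi_0$ kills the $\mathbb{S}$-factor and only yields $\dim\pi_0(\widetilde{\Pi})^{\chi}\geq 1$ — no contradiction with the monodromy bound. To reach $\dim\pi_0(\widetilde{\Pi})^{\chi}\geq 2$ one needs, in addition to the eigenvector $\chi\subseteq\Danismanfunctor(\widetilde{\Pi})$, an embedding of the \emph{non-split} extension $\mathbb{E}[\chi]$ into $\widetilde{\Pi}$ for the \emph{same} character $\chi$ (this is exactly lemma~\ref{lem:embedding_criterion}). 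The Bessel filtration of a single $I=\Ind_P^G(\sigma_\Pi)$ only supplies such an embedding for the one character $\delta_P^{1/2}\chi_\Pi$ coming from $\widetilde{I_3}$, whereas $\pi_0(\widetilde{\Pi})$ generally has several constituents (types \nosf{IIb}, \nosf{IIIb}, \nosf{IVb}, \nosf{IVc}, \nosf{Vb}, \nosf{Vc}, \nosf{VId}). Moreover, for $\chi_{\norm}\in\Delta_0(\Pi)$ the natural choice of $\sigma_\Pi$ has infinite-dimensional $\pi$, and the pair $(\sigma_\Pi,\rho)$ need not be ordinary, so lemma~\ref{KEY} is not directly available. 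The paper's proposition~\ref{prop:embedding_nongeneric} fills this in by two separate constructions: for $\chi_{\norm}\in\Delta_0(\Pi)$ it uses $\alpha(\Pi)=i_*i^*\alpha(\Pi)$ and the Mellin identity $M_\rho(i_*(\nu\rho))\cong\mathbb{E}[\nu\rho]$ of lemma~\ref{lem:small extension} together with the long exact sequence of lemma~\ref{lem:long_exact_main}; for $\chi_{\norm}\in\Delta_1(\Pi)$ it varies $\sigma_\Pi$ over one-dimensional representations (corollary~\ref{cor:sigma_Pi_existence}), for which ordinarity is automatic. Your proposal acknowledges that this uniformization is missing, but as written the "sum $\chi\oplus\mathbb{S}$ forces $\dim\pi_0(\widetilde{\Pi})^{T,\chi}\geq 2$" step is not a valid inference, and it is precisely the content that has to be supplied.
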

\begin{proof}
The degree of $\widetilde{\Pi}$ is either one or zero by theorem~\ref{Mult1}.
If the degree is one, $\widetilde{\Pi}$ is perfect by corollary~\ref{cor:sigma_nongeneric}, so the assertion holds by lemma~\ref{PERF}.
If $\widetilde{\Pi}$ has degree zero, then $\widetilde{\Pi}\cong\pi_0(\widetilde{\Pi})$ as $T$-modules and the assertion follows from theorem~\ref{thm:monodromy}.
\end{proof}

\begin{cor}\label{cor:VIa_Bessel_not_perfect}
If $\Pi$ is of type \nosf{VIa}, normalized as in table \ref{tab:List}, and if $\rho=1$,
then $\widetilde\Pi$ is of degree one but not perfect.
\end{cor}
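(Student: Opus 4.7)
The degree assertion is immediate from earlier results: representations of type \nosf{VIa} are generic with one-dimensional Whittaker space, so corollary~\ref{cor:gen_Mult1} gives $\deg(\widetilde\Pi)=1$. The substance of the corollary is the non-perfectness, and for this I would argue by contradiction using the monodromy computation that has already been done.

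The strategy is to invoke lemma~\ref{lem:monodromy_VIa} with $\rho=1$ and the smooth $T$-character $\chi$ satisfying $\chi_{\norm}=\nu^{1/2}$: in that case $\dim\pi_0(\widetilde\Pi)_\chi=2$. Since $\pi_0(\widetilde\Pi)_\chi=(\widetilde\Pi_S)_{T,\chi}$, I next need to compare this with $\widetilde\Pi_{T,\chi}$. The natural projection $\widetilde\Pi\twoheadrightarrow\widetilde\Pi_{T,\chi}$ is $T$-equivariant (with target a $\chi$-isotypic $T$-module), and because $T$ normalizes $S$, further quotienting by the image of the $S$-action produces a well-defined surjection
$$\widetilde\Pi_{T,\chi}\ \twoheadrightarrow\ (\widetilde\Pi_S)_{T,\chi}\ =\ \pi_0(\widetilde\Pi)_\chi\ .$$
In particular $\dim\widetilde\Pi_{T,\chi}\geq 2$.

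To finish, suppose for contradiction that $\widetilde\Pi$ is perfect. Then condition (3) of lemma~\ref{PERF} forces $\dim\widetilde\Pi_{T,\chi}=\deg(\widetilde\Pi)=1$ for every smooth $T$-character, contradicting the inequality just established. Hence $\widetilde\Pi$ is of degree one but not perfect.

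The only point that requires any care is the factorization of $\widetilde\Pi\to(\widetilde\Pi_S)_{T,\chi}$ through $\widetilde\Pi_{T,\chi}$, but this is essentially formal: changing a lift of an element of $\widetilde\Pi_{T,\chi}$ by an element of the form $tu-\chi(t)u$ only alters the image in $(\widetilde\Pi_S)_{T,\chi}$ by a class that is manifestly zero there. No further input beyond the monodromy lemma and lemma~\ref{PERF} is needed.
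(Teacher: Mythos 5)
Your argument is correct and is essentially the paper's proof: both obtain $\deg(\widetilde\Pi)=1$ from corollary~\ref{cor:gen_Mult1} and derive a contradiction from $\dim\pi_0(\widetilde\Pi)_\chi=2$ (lemma~\ref{lem:monodromy_VIa}). The only cosmetic difference is that the paper invokes lemma~\ref{lem:equivalences2} directly (a perfect module of degree one has cyclic $\pi_0$, hence $\dim\pi_0(\widetilde\Pi)_\chi\leq 1$), whereas you route through $\dim\widetilde\Pi_{T,\chi}$ and lemma~\ref{PERF}(3); both are valid and equally short.
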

\begin{proof}
Corollary~\ref{cor:gen_Mult1} asserts $\deg(\widetilde\Pi)=1$.
If $\widetilde{\Pi}$ was perfect, lemma~\ref{lem:equivalences2} would imply $\dim\pi_0(\widetilde{\Pi})_\chi\leq1$,
 but this contradicts lemma~\ref{lem:monodromy_VIa}.
\end{proof}

\subsection{Constituents of $\pi_0(\delta_P^{-1/2}\otimes\widetilde{\Pi})$} %

For an irreducible $G$-module $\Pi$ and a Bessel character $\rho$ let $\widetilde{\Pi}$ be the
corresponding Bessel module. Let $E=\delta_P^{-1/2}\otimes J_P(\Pi)$ be the normalized Siegel-Jacquet module.
We determine the multiset $\widetilde\Delta(\Pi)$ of Jordan-H\"older constituents $\chi_{\norm}$ of the $T$-module $\delta_P^{-1/2}\otimes\pi_0(\widetilde \Pi)\cong  E_{\widetilde{T},\rho}$.

\begin{lem} \label{ugly} Suppose $\Pi\in {\cal C}_G({\omega})$ is irreducible, normalized as in table \ref{tab:List} and
$\rho$ is a Bessel character.
If $\rho$ defines a Bessel model for $\Pi$,
then
\begin{equation*}\widetilde{\Delta}(\Pi)=
\begin{cases}
 \{\nu^{1/2},\nu^{1/2}\}  & \text{for type \nosf{VIa}}\ ,\\
 \{\nu^{-1/2},\nu^{-1/2}\} & \text{for type \nosf{VId}}\ ,\\
 \Delta(\Pi) & \text{otherwise}\ .
 \end{cases}
\end{equation*}
If $\rho$ does not define a Bessel model for $\Pi$, then
\begin{equation*}
\widetilde{\Delta}(\Pi)=
\begin{cases}
 \{\nu^{-3/2}\}          & \text{for type \nosf{IVd}}\ ,\ \rho\!=\!1\ ,\\
 \{\chi_0\nu^{-1/2}\}    & \text{for type \nosf{Vd}} \ ,\ \rho\!=\!1\ ,\\
 \{\nu^{-1/2}\}          & \text{for type \nosf{Vd}} \ ,\ \rho\!=\!\chi_0\ ,\\
 \{\nu^{1/2}\}          & \text{for type \nosf{VIb}}\ ,\ \rho\!=\!1\ ,\\
 \Delta_0(\Pi)           & \text{otherwise}\ .
\end{cases}
\end{equation*}
\end{lem}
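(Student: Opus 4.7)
The starting point is the natural isomorphism $\pi_0(\widetilde{\Pi}) \cong J_P(\Pi)_{\widetilde{T},\rho}$ from lemma~\ref{lem:Bessel_Jacquet}. After the twist by $\delta_P^{-1/2}$, the problem reduces to computing the $\widetilde{T}$-coinvariants at $\rho$ of the normalized Siegel--Jacquet module $E=\delta_P^{-1/2}\otimes J_P(\Pi)$, whose constituents $\sigma=\pi\boxtimes\chi_\Pi$ are tabulated in Roberts--Schmidt, table~A.3 and table~\ref{tab:Delta}. Using the description of the $\widetilde{T}$-action in remark~\ref{rmk:torus_action_on_Jacquet}, each constituent contributes to the multiset via a Kirillov-type $\rho$-coinvariant on $\pi$: for generic $\pi$ the dimension is exactly one by corollary~\ref{cor:Waldspurger-Tunnell}, whereas for a one-dimensional $\pi=\mu\circ\det$ it is one if $\rho=\mu\chi_\Pi$ and zero otherwise. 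If $E$ is semisimple as a $T$-module with no "crossing" extensions and each relevant constituent has generic $\pi$, summing gives $\widetilde{\Delta}(\Pi)=\Delta(\Pi)$, which is the generic "otherwise" branch of the first case.

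The semisimplicity of $E$ fails precisely for types \nosf{VIa} and \nosf{VId} by theorem~\ref{Firstmonodromy}, and these are the types where $\Delta(\Pi)$ would record three constituents but a non-trivial monodromy operator collapses them. In these two cases $E$ sits in a single generalized $T$-eigenspace, and the $\Gl(2)$-module $E_{\chi_\norm}$ is explicitly $1\times\nu$, respectively $1\times\nu^{-1}$, by lemmas~\ref{lem:first_monodromy_VIa} and \ref{lem:filtration_VId}. The monodromy theorem~\ref{thm:monodromy} (via its proof in lemmas~\ref{lem:monodromy_VIa} and \ref{lem:monodromy_VId}) then gives the precise size of the $\rho$-coinvariant space, and combined with the fact that the $T$-action has a single generalized character, this pins down the multiset of Jordan--H\"older constituents as $\{\nu^{1/2},\nu^{1/2}\}$ and $\{\nu^{-1/2},\nu^{-1/2}\}$ respectively, irrespective of which $\rho$ provides the Bessel model.

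For the non-Bessel-model cases, $\widetilde{\Pi}$ has degree zero by definition and is therefore finite-dimensional, so $\pi_0(\widetilde{\Pi})=\widetilde{\Pi}$ by lemma~\ref{FINITE}. The strategy is to realize $\Pi$ as the irreducible quotient of a Siegel induced $I=\Ind_P^G(\sigma_\Pi)$ chosen from table~\ref{tab:list_sigma}, write $0\to\Xi\to I\to\Pi\to 0$, and use the right-exactness of $\beta_\rho$ to control $\widetilde{\Pi}=\widetilde{I}/\im(\widetilde{\Xi}\to\widetilde{I})$. The Bessel filtration of section~\ref{s:Bessel_filt} together with lemmas~\ref{tildemod}, \ref{tildeKoh} and the diagram of lemma~\ref{Lemma1} then describes the graded components of $\widetilde{I}$ and $\beta^\rho(I)$ in the ordinary case, from which the surviving constituents can be read off. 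For the four listed exceptions (\nosf{IVd} with $\rho=1$; \nosf{Vd} with $\rho=1$ or $\rho=\chi_0$; \nosf{VIb} with $\rho=1$), a direct chase through the filtration — using the non-ordinary or extraordinary position of $(\sigma_\Pi,\rho)$ — isolates a unique one-dimensional $T$-constituent lifting a one-dimensional constituent of $\pi$ matched to $\rho$ via lemma~\ref{mysterious}; the remaining non-Bessel-model cases reduce to $\Delta_0(\Pi)$ by the same argument. The hard part is precisely this last step: unlike in the semisimple case the multiset cannot be read off from $\Delta(\Pi)$, so one must bookkeep which one-dimensional constituents of $E$ actually lift through the quotient $\widetilde{I}\twoheadrightarrow\widetilde{\Pi}$, and for this one relies on the explicit computations of $\widetilde{I_i}$ and $\beta^\rho(I_i)$ together with the vanishing of $\widetilde{\Pi}$'s infinite part.
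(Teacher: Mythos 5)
Your overall strategy is the paper's: reduce to $E_{\widetilde{T},\rho}$ via lemma~\ref{lem:Bessel_Jacquet}, let each constituent of $E$ contribute through corollary~\ref{cor:Waldspurger-Tunnell} (generic $\pi$) and the criterion $\rho=\mu\chi_\Pi$ (one-dimensional $\pi$), and isolate \nosf{VIa}, \nosf{VId} as the cases where semisimplicity fails. One incompleteness in the first step: the branch $\widetilde{\Delta}(\Pi)=\Delta(\Pi)$ also covers non-generic $\Pi$ with one-dimensional constituents in $E$ (\nosf{IIb}, \nosf{IVb}, \nosf{Vb}, \nosf{Vc}, \nosf{VIc}, \nosf{XIb}, \dots); there you must invoke lemma~\ref{mysterious} to see that \emph{every} one-dimensional constituent satisfies $\rho=\mu\chi_\Pi$ once a split Bessel model exists, so that all of them contribute. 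You only bring in that lemma later, for the no-model cases. For \nosf{IVd}, \nosf{Vd}, \nosf{VIb} your Bessel-filtration chase is workable but unnecessary: $E$ is finite-dimensional there and $E_{\widetilde{T},\rho}$ is direct linear algebra, which is what the paper does (it reserves the filtration argument for \nosf{IIb}).

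The genuine gap is in your second paragraph. Lemmas~\ref{lem:monodromy_VIa} and \ref{lem:monodromy_VId} compute $\dim\pi_0(\widetilde{\Pi})_{T,\chi}$, which counts the \emph{Jordan blocks} of the $T$-module $\pi_0(\widetilde{\Pi})$ at $\chi$, whereas $\widetilde{\Delta}(\Pi)$ records its \emph{length}. These differ precisely where you need them to agree: for \nosf{VIa} with $\rho\neq1$ one has $\pi_0(\delta_P^{-1/2}\otimes\widetilde{\Pi})\cong(\nu^{1/2})^{(2)}$, a single Jordan block of length two, so the monodromy lemma returns $1$ while $\widetilde{\Delta}(\Pi)=\{\nu^{1/2},\nu^{1/2}\}$ has two elements. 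So "the precise size of the $\rho$-coinvariant space \dots pins down the multiset" is false as stated. The paper closes this by a different mechanism: \nosf{VIa} is generic, so $\beta^\rho(\Pi)=0$ (lemma~\ref{lem:beta^rho=0_generic}) and proposition~\ref{BETA1} give $[\widetilde{\Pi}]=[\alpha(\Pi)]$ in $K_0(\CCC)$, hence $\widetilde{\Delta}(\Pi)=\Delta_0(\Pi)$ \emph{with multiplicity}, the length being read off from the two generic constituents with no monodromy input at all. For \nosf{VId} the paper does use lemma~\ref{lem:monodromy_VId}, but supplements it with the exact sequence $0\to((\nu^{-1/2}\circ\det)\boxtimes\nu^{1/2})_{\widetilde{T},\rho}\to E_{\widetilde{T},\rho}\to(E_{\chi_{\norm}})_{\widetilde{T},\rho}\to0$ to account for the third constituent and get $\dim E_{\widetilde{T},\rho}=2$. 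You need one of these supplements --- Euler-characteristic additivity in the Grothendieck group, or an explicit filtration of $E$ --- to convert Jordan-block counts into lengths; the monodromy lemmas alone do not suffice.
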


\begin{proof}

By proposition~\ref{lasst} we can assume $J_P(\Pi)\neq0$.

If $\Pi$ is generic, every $\rho$ provides a Bessel model by corollary~\ref{cor:gen_Mult1}.
Lemma~\ref{lem:beta^rho=0_generic} implies $\beta^\rho(\Pi)=0$, so proposition~\ref{BETA1} yields $[\widetilde{\Pi}]=[\alpha(\Pi)]$ in the Grothendieck group $K_0(\CCC)$ and this means $\widetilde{\Delta}(\Pi)=\Delta_0(\Pi)$ by exactness of $\pi_0$.
The multiset $\Delta_0(\Pi)$ of constituents of the $T$-module $E_\psi$ is given by the $T$-action on the generic constituents of $E$, see table A.3 \cite{Roberts-Schmidt}. Except for type \nosf{VIa}, every constituent of $E$ is generic.

Now suppose $\Pi$ is non-generic, but not of type \nosf{IIb} or type \nosf{VId}.
The action of $M$ on $E$ respects the decomposition into generalized $T$-eigenspaces
$$E = \bigoplus_\chi E^{(\chi_{\norm})} \ .$$
The generalized eigenspaces $E^{(\chi_{\norm})}$ are irreducible $M$-modules,
so $M$ acts semisimply on $E$.
We treat the generic constituents of $E$, indexed by $\Delta_0(\Pi)$,
separately from the one-dimensional constituents.
By corollary~\ref{cor:Waldspurger-Tunnell} the generic constituents of $E$
contribute a one-dimensional $\rho$-equivariant quotient for every $\rho$,
so $\Delta_0(\Pi)\subseteq \widetilde{\Delta}(\Pi)$.
For the one-dimensional constituents $\sigma=(\mu\circ\det)\boxtimes\chi_\Pi$ of $E$, lemma \ref{mysterious} asserts that if a split Bessel model exists for some Bessel character $\rho$, then the torus $\widetilde{T}$ acts on $\sigma$ by $\rho=\mu\chi_\Pi$.
In that case the $T$-action on $\sigma$ contributes the character $\chi_{\norm}=\chi_\Pi^\divideontimes=\mu^2\chi_\Pi$ to $\widetilde\Delta$ if and only if $\rho$ provides a Bessel model, see remark~\ref{rmk:torus_action_on_Jacquet}.
For types \nosf{IVd}, \nosf{Vd} and \nosf{VIb} there is no split Bessel model, but in these cases the Siegel-Jacquet module is finite-dimensional and $E_{\widetilde{T},\rho}$ is easy to determine.

In case \nosf{IIb}, $\Pi$ is the fully induced representation $I=\Ind_P^G((\chi_1\circ\det)\boxtimes\chi_1^{-1})$.
Lemma~\ref{lem:filtration_diagram_exact} yields an exact sequence of $TS$-modules
$$ \cdots\to \beta^\rho(I_{\leq1}) \stackrel{\delta}\to \widetilde{I_{\geq 2}} \to \widetilde I \to \widetilde{I_{\leq1}} \to 0$$
 and the corresponding vertical sequences.
The Bessel character $\rho=1$ gives a split Bessel model for $I$. This is an ordinary case, so $\widetilde{I}_{\leq1}\cong \widetilde{I_0}$ and $\beta^\rho(I_{\leq1})\cong \beta^\rho(I_0)$.
Lemma~\ref{tildeKoh} gives $\beta^\rho(I_0)\cong\nu^{3/2}\chi_1$,
but lemma~\ref{tildemod} implies that $\widetilde{I_3}$ is perfect and $\widetilde{I_2}\cong\nu\neq \nu^{3/2}\chi_1$,
so the $T$-equivariant coboundary map $\delta:\beta^\rho(I_{\leq1}) \to \widetilde{I_{\geq 2}}$ is zero.
If we apply the functor $\pi_0$, we obtain the $T$-characters $\pi_0(\widetilde{I_3}) \cong \nu^{3/2}\chi_1$ and $\pi_0(\widetilde{I_2})\cong\nu$
and $\pi_0({\widetilde I_0})\cong \nu^{3/2}\chi_1^{-1}$. By exactness of $\pi_0$ this implies $\widetilde{\Delta}(\Pi) = \{\chi_1,\chi_1^{-1}, \nu^{-1/2}\} = \Delta(\Pi)$.

Now consider case \nosf{IIb} for $\rho\neq1$, which does not give a split Bessel model. $\beta^\rho(I)$ is perfect by lemma~\ref{lem:beta^rho_perfect} and finite-dimensional by proposition~\ref{BETA1}, thus zero. Furthermore, $\beta^\rho(I_0)$, $\widetilde{I_0}$ and $\widetilde{I_3}$ are zero by lemma~\ref{tildemod} and lemma~\ref{tildeKoh}.
We obtain an exact sequence $0\to\beta^\rho(I_1)\to\widetilde{I_2}\to\widetilde{I}\to\widetilde{I_1}\to0$ of $TS$-modules
where $\beta^\rho(I_1)\cong\widetilde{I_1}$. Thus $\widetilde{I}$ has the same constituents as $\widetilde{I_2}\cong \nu$ and this implies $\widetilde\Delta(\Pi)=\{\nu^{-1/2}\}=\Delta_0(\Pi)$ for $\rho\neq1$.

For case \nosf{VId},  $E=E^{(\chi_{\norm})}$ is a generalized eigenspace for $\chi_{\norm}=\nu^{-1/2}$.
By lemma~\ref{lem:filtration_VId}, $E_{\chi_{\norm}} \cong (1\times\nu^{-1})$ is indecomposable
as a module of $M^1\cong\Gl(2)$.
From lemma~\ref{lem:torus_action_GL2} we get the assertion $(E_{\chi_{\norm}})^{\widetilde{T},\rho}=0$ and by lemma~\ref{lem:monodromy_VId}, $(E_{\chi_{\norm}})_{\widetilde{T},\rho}$ is one-dimensional.
Lemma~\ref{lem:proto_long_exact_sequence} implies an exact sequence of $T$-modules
\begin{equation*}
0\to ((\nu^{-1/2}\circ\det)\boxtimes\nu^{1/2})_{\widetilde{T},\rho}\to E_{\widetilde{T},\rho}\to (E_{\chi_{\norm}})_{\widetilde{T},\rho}\to0\ .
\end{equation*}

The only Bessel character that defines a split Bessel model for $\Pi$ is $\rho=1$ and
this gives $\dim(E_{\widetilde{T},\rho})=2$, thus $\widetilde\Delta(\Pi)=\{\nu^{-1/2},\nu^{-1/2}\}$.
For $\rho\neq1$ the term $((\nu^{-1/2}\circ\det)\boxtimes\nu^{1/2})_{\widetilde{T},\rho}=0$ vanishes, so $E_{\widetilde{T},\rho}$ is one-dimensional and $\widetilde\Delta(\Pi)=\{\nu^{-1/2}\}=\Delta_0(\Pi)$.
\end{proof}

\subsection{Remarks on the Klingen-Jacquet module}\label{negligiblemodules}

For $I\in {\cal C}^{\fin}_G(\omega)$ the unnormalized Jacquet module $J_Q(I)$
is an $L_Q$-module for the Levi component $L_Q$ of the Klingen parabolic $Q$.
By the embedding in section~\ref{functor eta}
$$\Gl(2)\hookrightarrow  L_Q\ , \qquad \begin{pmatrix}\alpha&\beta\\\gamma&\delta\end{pmatrix}\ \mapsto \ \begin{pmatrix}\alpha\delta-\beta\gamma&0&0&0\\0&\alpha&0&\beta\\0&0&1&0\\0&\gamma&0&\delta\end{pmatrix}\ ,$$
$L_Q$ is isomorphic to the direct product
$\{\diag(t,1,t^{-1},1)\mid t\in k^{\times}\} \times\Gl(2)$.
Thus the irreducible representations of $L_Q$ have the form
$$  (\chi' \boxtimes \tau)(t,m) = \chi'(t\det(m))\tau(m)\ $$ 
for
 $ m= [\begin{smallmatrix} \alpha & \beta \cr \gamma & \delta \end{smallmatrix}]$
 in $\Gl(2)$ and some
$\tau\in\CCC_{\Gl(2)}$ and $\chi'\in \CCC_{\Gl(1)}$.
The upper left matrix entry of an element in $L_Q$ acts with $\chi'$.
The maximal split torus acts by $\tilde t x_\lambda\mapsto   \omega(t_2)  
(\chi'\otimes\tau)([\begin{smallmatrix} \lambda & 0 \cr 0 & t_1/t_2    \end{smallmatrix}])$ where $\omega = \chi'\omega_\tau$ for the central character $\omega_\tau$ of $\tau$.
The square root of the modulus character is $\delta_Q^{1/2}(m \tilde t x_\lambda)= \vert \det(m)\lambda\cdot\tfrac{t_1}{t_2}\vert$.

The constituents $\delta_Q^{-1/2}\otimes \pi=\chi' \boxtimes \tau$ of the normalized Klingen-Jacquet module $\delta_Q^{-1/2}\otimes J_Q(I)$ for irreducible
$\Pi\in\CCC_G(\omega)$ can be found in \cite{Roberts-Schmidt}, table A.4.\footnote{In \cite{Roberts-Schmidt}, table A.4 the symbol $\otimes$ is used instead of $\boxtimes$.}
The restriction of $\pi$ to $\Gl(2)$ is $(\nu\chi'\circ\det)\otimes\tau \in \CCC_{\Gl(2)}$.
For each constituent $\pi$ of $J_Q(\Pi)$ we show that $k^\rho(i_*(\pi)\vert_{\Gl(2)})$ in ${\cal C}_1$
is either zero or irreducible of the form $i_*(\chi)\in {\cal C}$ for a character $\chi$ of $T$.

\begin{lem} \label{TABULA}  For irreducible $\Pi\in{\cal C}_G(\omega)$, every irreducible constituent 
of $k^\rho(i_*(J_Q(\Pi)))\cong i_*((J_Q(\Pi)_U)^\rho)$ in $\CCC$
is a character
$$ \chi=  {\nu^2\omega\chi'}{\rho^{-1}}  $$
with $\chi'$ in the multiset $\Delta_Q$ listed in table~\ref{tab:Delta}.
\end{lem}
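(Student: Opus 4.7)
The plan is to reduce immediately to a constituent-by-constituent computation on $\Gl(2)$-modules via Lemma~\ref{3.1}, and then read off the resulting top-left torus character from the central character constraint.

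First, I would invoke Lemma~\ref{3.1} to rewrite $k^\rho\circ i_*$ as $i_*\bigl((-)_U^\rho\bigr)$ on finite-length $\Gl(2)$-modules, where $U\subseteq\Gl(2)$ is the upper triangular unipotent and $\Gl(1)$ acts via the bottom-right diagonal entry. In our setting this gives
$$k^\rho(i_*(J_Q(\Pi)))\ \cong\ i_*\bigl((J_Q(\Pi)|_{\Gl(2)})_U^{\Gl(1),\rho}\bigr).$$
The functor $(-)_U$ is exact, $i_*$ is exact, and $(-)^{\rho}$ is left exact. Left-exactness is enough to guarantee that every Jordan--H\"older constituent of the output lies among the constituents of $k^\rho(i_*(\pi))$ for $\pi$ an irreducible $L_Q$-subquotient of $J_Q(\Pi)$: one argues by induction on the length of $J_Q(\Pi)$ using the short exact sequences associated to a composition series. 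Thus it suffices to prove the claim for one such $\pi$.

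Second, I would do the explicit computation for a single constituent $\pi=\chi'\boxtimes\tau$ of the normalized Klingen-Jacquet module $\delta_Q^{-1/2}\otimes J_Q(\Pi)$ (so $\chi'\in\Delta_Q$, $\tau\in\CCC_{\Gl(2)}$ irreducible, with central character relation $\omega=\chi'\,\omega_\tau$). As noted in the preamble, the corresponding unnormalized constituent restricts to $\Gl(2)$ as $(\nu\chi'\circ\det)\otimes\tau$, hence
$$\pi_U\ =\ (\nu\chi'\circ\det)\otimes\tau_U.$$
If $\tau$ is supercuspidal, $\tau_U=0$ and the constituent contributes nothing. Otherwise the Jordan--H\"older constituents of $\tau_U$ as a $T\cong\Gl(1)\times\Gl(1)$-module are of the form $\mu_1\boxtimes\mu_2$ with $\mu_1\mu_2=\omega_\tau=\omega/\chi'$. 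The bottom-right $\rho$-invariance condition $\nu\chi'\mu_2=\rho$ forces $\mu_2=\rho\,(\nu\chi')^{-1}$, hence $\mu_1=\nu\omega\rho^{-1}$, and the top-left character is
$$\nu\chi'\mu_1\ =\ \nu^2\omega\chi'\rho^{-1},$$
which matches the assertion after applying $i_*$.

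The main obstacle is purely bookkeeping: correctly tracking the modulus factor $\delta_Q^{1/2}$ that distinguishes the normalized from the unnormalized Klingen-Jacquet module, the twist by $\nu$ coming from the embedding $\Gl(2)\hookrightarrow L_Q$ described in section~\ref{functor eta}, and the identification of the $\Gl(1)$-factor in $H=\Gl_a(V')\times\Gl(1)$ with the bottom-right diagonal block. A minor technical point worth spelling out is the passage from constituents of $J_Q(\Pi)$ to constituents of $k^\rho\circ i_*$ applied to it, which uses the fact that although $(-)^\rho$ is only left exact, constituents of the target are still controlled by those of the source in the sense above.
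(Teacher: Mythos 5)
Your proposal is correct and follows essentially the same route as the paper: reduce via lemma~\ref{3.1} and left-exactness of $k^\rho\circ i_*$ to the irreducible constituents $\chi'\boxtimes\tau$ of the normalized Klingen--Jacquet module, then extract the $T$-character from the central-character constraint $\omega=\chi'\omega_\tau$. The only cosmetic difference is that you phrase the last step via the pair $(\mu_1,\mu_2)$ of diagonal characters of $\tau_U$ with $\mu_1\mu_2=\omega_\tau$, whereas the paper works directly with the central character $\omega_\pi=\nu^2(\chi')^2\omega_\tau$ of $\pi=(\nu\chi'\circ\det)\otimes\tau$; both yield $\chi=\nu^2\omega\chi'\rho^{-1}$.
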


\begin{proof} The isomorphism $k^\rho(i_*(J_Q(\Pi)))\cong i_*((J_Q(\Pi)_U)^\rho)$ in $\CCC$ is given by lemma~\ref{3.1}.
Let $B$ denote the Borel subgroup  of upper triangular matrices in $\Gl(2)$ and
$U$ its unipotent radical.
The functor $k^\rho\circ i_*$ is left exact and it is therefore sufficient to consider 
the irreducible constituents of $J_Q(\Pi)$ as above.
The pullback of such a constituent to a $\Gl(2)$-module is $\pi=(\nu\chi'\circ\det)\otimes\tau$.
Since $\tilde T$ acts by $\omega(t_2)\pi(\diag(1,\delta))$ for $\delta=t_1/t_2$, Frobenius reciprocity implies
 $\dim \Hom_{\tilde T}(\rho, \pi_U) \leq 1$.
The torus $T$ acts on $\Hom_{\tilde T}(\rho, \pi_U)\cong (\pi_U)^{\widetilde{T},\rho}$  by the character %
 $$ \chi(x_\lambda) = \frac{\omega_\pi}{\rho}(\lambda) \quad , \quad x_\lambda\in T \ .$$
Here $\omega_\pi=\nu^{2}(\chi')^2\omega_\tau$ is the central character of $\pi$ as a $\Gl(2)$-module.
By construction $\omega=\chi'\omega_{\tau}$ implies $\omega_\pi= \nu^{2}\omega\chi'$, hence
$\chi(x_\lambda)  = \nu^{2}\omega\chi'\rho^{-1}(\lambda)$.
\end{proof} 

Let $\rho$ be a fixed character of $\tilde T$.
Then for every $I\in {\cal C}^{\fin}_G(\omega)$ lemma~\ref{lem:long_exact_main} provides a long exact sequence %
$$ \dots\to k^\rho(i_*(J_Q(I))) \stackrel{\delta}\to M_\rho(\alpha(I)) \to \widetilde{I} \to k_\rho(i_*(J_Q(I))) \to 0 \ .$$

\begin{lem}\label{DELTA}
For irreducible $\Pi\in {\cal C}_G(\omega)$ the image of the boundary map
$$\delta: k^\rho(i_*(J_Q(\Pi))) \longrightarrow M_\rho(\alpha(\Pi))$$
is contained in the finite-dimensional subspace ${\Danismanfunctor}(M)$ of $M=M_\rho(\alpha(\Pi))$ in $\CCC$.
Its image in $\pi_0(M)$ is contained in the subspace of $\bigoplus_{\chi_{\norm}\in \Delta_0(\Pi)} X^{(\chi)}$ defined by the generalized eigenspaces of $X= i^*(M)$ that are attached to characters $\chi$ for which there  exists a character $\chi'$ in $\Delta_Q$ with the property $ \rho=\nu^2\chi'\omega\chi^{-1}$. 
\end{lem}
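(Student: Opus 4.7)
The plan is to analyze the image of $\delta$ using the explicit description of its source from lemma~\ref{TABULA} combined with $T$-equivariance of $\delta$. The first assertion is essentially automatic: by corollary~\ref{cor:k_rho_fin} the source $k^\rho(i_*(J_Q(\Pi)))$ has finite length in $\CCC$, and lemma~\ref{TABULA} identifies its irreducible constituents as one-dimensional $T$-characters $\chi = \nu^2\omega\chi'\rho^{-1}$ with $\chi' \in \Delta_Q$. Hence the source is a finite-dimensional $TS$-module, so its image under $\delta$ is a finite-dimensional $TS$-submodule of $M = M_\rho(\alpha(\Pi))$, and by lemma~\ref{maxfin} this image must lie in the maximal finite-dimensional submodule $\Danismanfunctor(M)$.

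For the second assertion I would first identify $\pi_0(M) \cong X = i^*(M)$ as a $T$-module. Using $M_\rho = k_\rho \circ j_!$ and the natural equivalence $i^* \circ M_\rho \cong i^*$ of lemma~\ref{lem:nat_eq_Mellin}, one has
\[ X = i^*(M) \cong i^*(\alpha(\Pi)) = i^* j^!(\overline{\Pi}) \cong J_P(\Pi)_\psi \]
by lemma~\ref{Dual action}. The generalized $T$-eigenspaces $X^{(\chi)}$ are therefore non-zero precisely for characters $\chi$ with $\chi_{\norm} \in \Delta_0(\Pi)$, by the very definition of the multiset $\Delta_0(\Pi)$ as the characters arising from the generic constituents of $\delta_P^{-1/2}\otimes J_P(\Pi)$.

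Finally, since $\delta$ is $TS$-linear it is in particular $T$-equivariant. By lemma~\ref{TABULA} the source decomposes as a direct sum of $T$-eigenspaces, one for each character $\chi = \nu^2\omega\chi'\rho^{-1}$ with $\chi' \in \Delta_Q$, and the image of the $\chi$-eigenspace necessarily lies in the $\chi$-generalized eigenspace $X^{(\chi)}$ of $\pi_0(M)$. Combining this with the previous paragraph, the image can only be non-zero in those $X^{(\chi)}$ for which $\chi_{\norm} \in \Delta_0(\Pi)$ and $\rho = \nu^2\chi'\omega\chi^{-1}$ for some $\chi' \in \Delta_Q$, which is precisely the stated subspace. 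The only delicate step is matching the two $T$-actions implicit in the identification of $\pi_0(M)$ with $J_P(\Pi)_\psi$ so that ``$\chi_{\norm} \in \Delta_0(\Pi)$'' describes exactly the non-vanishing generalized eigenspaces; this is routine from the naturality of the equivalences in lemmas~\ref{lem:nat_eq_Mellin} and~\ref{Dual action}.
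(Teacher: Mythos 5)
Your proposal is correct and follows essentially the same route as the paper: finite-dimensionality of the source gives the first assertion via lemma~\ref{maxfin}, and the identification $i^*(M_\rho(A))\cong i^*(A)$ together with lemma~\ref{TABULA} and $T$-equivariance of $\delta$ gives the second. The only cosmetic difference is that you invoke lemma~\ref{lem:nat_eq_Mellin} where the paper cites corollary~\ref{cor:small_extension_lemma_preliminary} for the same natural equivalence, and you spell out the $T$-equivariance argument that the paper leaves implicit.
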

\begin{proof}
The first statement is clear because $k^\rho(i_*(J_Q(\Pi)))$ is finite-dimensional.
Corollary~\ref{cor:small_extension_lemma_preliminary} implies $i^*M_\rho(A)\cong i^*(A)$ for every $A\in \CCC_{1}^{\fin}$, 
so we have $X=i^*(\alpha(\Pi))$ and this is spanned by the characters $\chi_{\norm}\in \Delta_0(\Pi)$.
Since there is an embedding ${\Danismanfunctor}(M) \hookrightarrow i^*(M)$, the last result follows from lemma \ref{TABULA}. 
\end{proof}

\section{Main results}\label{c:Main}

In this section we determine the Bessel modules $\widetilde{\Pi}=\beta_\rho(\Pi)$ and $\beta^\rho(\Pi)$, attached to irreducible representations $\Pi\in{\cal C}_G(\omega)$.
This provides the final step in the proof of theorem~\ref{thm:reg_L_factor} and thus determines the regular part $L_{\mathrm{reg}}^{\mathrm{PS}}(s, \Pi,\mu, \Lambda)$ of the Piateskii-Shapiro $L$-functions.

It is not difficult to compute the irreducible constituents of $\beta_\rho(\Pi)$.
Indeed, the constituent $\mathbb{S}$ appears with multiplicity one if and only if $\rho$ defines a Bessel model for $\Pi$, see theorem~\ref{Mult1}.
The one-dimensional constituents have already been determined in lemma~\ref{ugly}.
However, the essential information for us is the structure of the $TS$-module $\widetilde{\Pi}$ itself.
By proposition~\ref{L-reg}, the regular part of the $L$-factor is divided by $L(s-3/2,\widetilde{\Pi}/\widetilde{\Pi}^S)$.
Recall that the $TS$-module $\widetilde\Pi$ is uniquely determined by its quotient $\pi_0(\widetilde{\Pi})$, its submodule $\kappa(\widetilde{\Pi})$
and by the projection $\pi_0(\widetilde{\Pi})\twoheadrightarrow \pi_0(\widetilde{\Pi}/\kappa(\widetilde{\Pi}))$, see lemma~\ref{lem:universal extensions classification}. It is thus essential to determine these $TS$-modules.

The $T$-module $\pi_0(\widetilde\Pi)$ is uniquely characterized by
its monodromy properties
and the list of its constituents, see theorem~\ref{thm:monodromy} and lemma~\ref{ugly}.

In most cases $\kappa(\widetilde{\Pi})=\widetilde\Pi^S$ vanishes. In order to show this we verify the conditions of the embedding criterion in lemma \ref{lem:equivalences2}.
This requires to embed nontrivial extensions $\mathbb{E}[\chi]$ into $\widetilde\Pi$.
To do this, we distinguish the cases where $\Pi$ is non-generic and where $\Pi$ is generic. Our proof is similar in both cases and heavily relies on 
properties of the $\Gl_a(2)$-modules $\eta(\Pi)=\overline \Pi$ attached to $\Pi$. In particular, the
submodule $A=j^!(\overline\Pi)\in {\cal C}$ plays a prominent role in the arguments and the property that
$A$ itself is perfect if $\Pi$ is generic. The main step in the computation of $\widetilde\Pi=\beta_\rho(\Pi)\in {\cal C}$ then centers around the relation between $A$ and $\widetilde\Pi$.
It turns out that the Mellin transform $M_\rho: {\cal C}\to {\cal C}$ applied to $A$
catches an essential part of $\widetilde\Pi$ through the existence of a natural morphism
$M_\rho(A) \to \widetilde\Pi$. Using this morphism we construct the required embeddings
$\mathbb E[\chi] \hookrightarrow \widetilde\Pi$ from embeddings of $\mathbb E[\chi]$ into
$M_\rho(A)$ since the kernel of the mapping $M_\rho(A) \to \widetilde\Pi$ does not
present obstructions for this (lemma \ref{lem:embed_app}).
So the main task for the proof is the construction of embeddings into $M_\rho(A)$.

The case where $\Pi$ is non-generic is much simpler.  The reason is that the Mellin transform
$M_\rho(A)$ is more complicated in the generic case because then the degree of $A$ is always one.
The key result used is the computation of the Mellin transform in theorem~\ref{thm:ME_split}.
Hence our proof starts with the discussion of the non-generic cases and we prove
that $\widetilde\Pi$ is always perfect in these cases.
For the generic cases however, we encounter certain difficulties where the argument cannot be carried over. 
Except for the case \nosf{IIIa}, discussed separately, the combinatorical
results of section~\ref{s:Combinat} single out two series of exceptional cases where the 
argument breaks down. It turns out that for these two series of exceptional cases, the \emph{extraordinary exceptional cases} and the \emph{fully induced non-ordinary exceptional
cases}, the Bessel modules $\widetilde\Pi$, attached to $\Lambda$ respectively $\rho$, are not perfect.

We complete our discussion by a detailed analysis of these exceptional cases.
These finer results determine $\kappa(\widetilde{\Pi})=\widetilde{\Pi}^S$ in all cases and thus yield the final step in the proof of 

\begin{thm}[{theorem~\ref{thm:reg_L_factor}}]\label{thm:final_result}
 For irreducible $\Pi\in\CCC_G(\omega)$ and every split Bessel model $(\Lambda,\psi)$,
 the regular part $L_{\mathrm{reg}}^{\mathrm{PS}}(s,\Pi,1,\Lambda)$ of the Piatetskii-Shapiro $L$-factor
 is given by table~\ref{tab:regular_poles}.
\end{thm}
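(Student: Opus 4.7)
The plan is to start from Proposition~\ref{L-reg}, which furnishes the decomposition
\begin{equation*}
L_{\mathrm{reg}}^{\mathrm{PS}}(s,\Pi,\mu,\Lambda) = L(s-\tfrac{3}{2},\mu\otimes \mathcal{K}_\ell(\Pi))\cdot L^\PS_{\mathrm{sreg}}(s,\Pi,\mu,\Lambda),
\end{equation*}
where the subregular factor is handled separately in \cite{Subregular}. Combined with Lemma~\ref{lem:cal_L}, this reduces the task to determining the $TS$-module $\widetilde{\Pi} = \beta_\rho(\Pi)$ itself, equivalently its quotient $\pi_0(\widetilde{\Pi})$ together with the submodule $\Danismanfunctor(\widetilde{\Pi}) = \widetilde{\Pi}^S$ and the canonical projection $\pi_0(\widetilde{\Pi}) \twoheadrightarrow \pi_0(\widetilde{\Pi}/\Danismanfunctor(\widetilde{\Pi}))$. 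Since $\deg(\widetilde{\Pi}) \leq 1$ by Theorem~\ref{Mult1}, Lemma~\ref{lem:universal extensions classification} guarantees that these three data reconstruct $\widetilde{\Pi}$ up to isomorphism.

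First I would compute $\pi_0(\widetilde{\Pi})$. The identification $\pi_0(\widetilde{\Pi}) \cong J_P(\Pi)_{\widetilde{T},\rho}$ of Lemma~\ref{lem:Bessel_Jacquet} transports the problem to the Siegel--Jacquet module, whose $M$-module structure is classical and tabulated in \cite{Roberts-Schmidt}. The multiset of Jordan--H\"older characters of $\pi_0(\widetilde\Pi)$ as a $T$-module is then read off in Lemma~\ref{ugly}, while the monodromy bound $\dim \pi_0(\widetilde{\Pi})_{T,\chi} \leq 1$ of Theorem~\ref{thm:monodromy}, supplemented by the single exceptional situation for type~\nosf{VIa} at $(\rho,\chi_{\mathrm{norm}})=(1,\nu^{1/2})$, fixes $\pi_0(\widetilde{\Pi})$ as a cyclic $T$-module, i.e.\ a direct sum of single Jordan blocks.

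Next I would determine $\Danismanfunctor(\widetilde{\Pi})$. For non-generic $\Pi$ the combination of Corollary~\ref{cor:T_coinvariants_Bessel} and the list of exceptions in Corollary~\ref{cor:VIa_Bessel_not_perfect} already forces $\widetilde{\Pi}$ to be perfect except for the explicit twists of type~\nosf{VIa}. For generic $\Pi$ the uniqueness of Bessel models (Corollary~\ref{cor:gen_Mult1}) gives $\deg(\widetilde\Pi)=1$, and my strategy is to use the natural morphism $M_\rho(A) \to \widetilde{\Pi}$ induced from $A = \alpha(\Pi) = j^!(\overline{\Pi})$, which is perfect of degree one by Lemma~\ref{lem:alpha_perfect}. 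The explicit description of Mellin transforms of perfect degree-one modules supplied by Theorem~\ref{thm:ME_split} and Lemma~\ref{lem:extended_Mellin_lemma} allows me to construct the embeddings $\mathbb{E}[\chi] \hookrightarrow \widetilde{\Pi}$ required by the criterion of Lemma~\ref{lem:equivalences2}~(3), thereby establishing perfectness outside the exceptional ranges for $\rho$.

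The main obstacle will be the systematic treatment of the residual cases where $\widetilde{\Pi}$ fails to be perfect: the extraordinary case $\rho = \rho_+ = \rho_-$ governed by Lemma~\ref{lem:key2}, the fully induced non-ordinary cases $\rho \in \{\rho_+,\rho_-\}$, the exceptional case~\nosf{IIIa}, together with the non-generic type~\nosf{VIa} at $\rho=1$. In each of these I will have to pin down $\Danismanfunctor(\widetilde{\Pi})$ by a direct analysis of the Bessel filtration of section~\ref{s:Bessel_filt}, tracking the boundary map $\delta : \beta^\rho(I_{\leq 1}) \to \widetilde{I_{\geq 2}}$ and its interaction with the Klingen-Jacquet contributions controlled by Lemma~\ref{DELTA}. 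Once $\pi_0(\widetilde{\Pi})$ and $\Danismanfunctor(\widetilde{\Pi})$ are tabulated for every Sally--Tadi\'c class, the formula $L(s,\mu\otimes \mathcal{K}_\ell(\Pi)) = L(s,\mu\otimes \widetilde{\Pi})/L(s,\mu\otimes \widetilde{\Pi}^S)$ from Lemma~\ref{lem:cal_L} produces Table~\ref{tab:regular_poles}; independence of the regular factor from the choice of split Bessel model will then follow because the resulting entries depend on $\rho$ only through the symmetry $\rho\leftrightarrow\rho^\divideontimes$ from Lemma~\ref{duality}.
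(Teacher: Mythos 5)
Your proposal follows essentially the same route as the paper's proof: the decomposition from Proposition~\ref{L-reg} together with Lemma~\ref{lem:cal_L}, the reconstruction of $\widetilde{\Pi}$ from $\pi_0(\widetilde{\Pi})$ and $\widetilde{\Pi}^S$ via Lemma~\ref{lem:universal extensions classification}, Theorem~\ref{thm:monodromy} and Lemma~\ref{ugly}, the Mellin-functor embeddings $\mathbb{E}[\chi]\hookrightarrow\widetilde{\Pi}$ for perfectness, and the case-by-case treatment of the exceptional families, exactly as in Theorem~\ref{thm:sigma_tilde_Pi}. Two small corrections: type \nosf{VIa} is generic and its $\rho=1$ case is already subsumed in the extraordinary exceptional cases of Proposition~\ref{prop:extra-ordinary_exceptional} rather than forming a separate non-generic case, and the independence of $L^{\PS}_{\mathrm{reg}}$ from $\Lambda$ does not follow from the duality $\rho\leftrightarrow\rho^\divideontimes$ alone --- the factor $L(s,\mathcal{K}_\ell(\Pi))$ genuinely depends on $\rho$ (compare the type \nosf{I} entries of table~\ref{tab:Bessel_module_split_Bessel_model}), and it is only the compensating subregular factor from \cite{Subregular} that renders the product independent of the choice.
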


\subsection{Split Bessel Models}\label{s:split_Bessel}

Bessel models for irreducible $\Pi\in\CCC_G(\omega)$ have been classified in \cite{Roberts-Schmidt_Bessel}.
For the convenience of the reader we give a proof in the split case.
Recall that $\Pi$ admits a split Bessel model if and only if the degree of $\widetilde{\Pi}=\beta_\rho(\Pi)$ is positive.
\begin{thm}\label{Mult1}
For an irreducible $\Pi\in \CCC_G(\omega)$ the degree of $\beta_\rho(\Pi)=\widetilde{\Pi}$ is 
\begin{equation*}
\deg( \widetilde{\Pi})=
 \begin{cases}
     1 & \text{if}\ \Pi\ \text{is generic or}\  \rho\!\in\!\Delta_\pluss(\Pi)\ ,\\
     0 & \text{otherwise}\ .
 \end{cases}
\end{equation*}

\end{thm}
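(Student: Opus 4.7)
The generic case of the theorem is already Corollary~\ref{cor:gen_Mult1}, which gives $\deg(\widetilde{\Pi})=1$, so my plan is to treat the non-generic case. For non-generic irreducible $\Pi$ the Whittaker multiplicity vanishes, $m_\Pi=0$, and Proposition~\ref{lasst} collapses to the single equality
\begin{equation*}
\deg(\widetilde{\Pi}) \;=\; d(\Pi,\rho) \;=\; \dim\Hom_T\bigl(\delta_P^{-1/2}J_P(\Pi)_\psi,\,\nu^{-1/2}\rho\bigr)\ .
\end{equation*}
Hence the non-generic case reduces to showing that this dimension is $1$ when $\rho\in\Delta_{\pluss}(\Pi)$ and $0$ otherwise. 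By the duality isomorphism $\beta_\rho\cong\beta_{\rho^\divideontimes}$ of Lemma~\ref{duality}, I may replace $\rho$ by $\rho^\divideontimes$ whenever convenient.

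First I dispose of the supercuspidal non-generic case: then $J_P(\Pi)=0$, so $d(\Pi,\rho)=0$ identically, and such $\Pi$ do not appear in $\Delta_{\pluss}(\Pi)$, consistent with the theorem. For all remaining non-generic $\Pi$, Theorem~\ref{thm:sigma_Pi_existence} realises $\Pi$ as a quotient of some Siegel induced $\Ind_P^G(\sigma_\Pi)$, and the list of Jordan--H\"older constituents of the normalized Jacquet module $E=\delta_P^{-1/2}J_P(\Pi)$ is explicitly tabulated in~\cite[table~A.3]{Roberts-Schmidt}. Taking $\psi$-coinvariants is then carried out constituent by constituent: by Corollary~\ref{cor:Waldspurger-Tunnell} each generic constituent of $E$ contributes exactly a one-dimensional space to $E_\psi$, while by Lemma~\ref{lem:torus_action_GL2} each one-dimensional constituent contributes nothing (such constituents factor through $\det$ and therefore have trivial Whittaker coinvariants).

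Next, the $T$-action on a surviving generic constituent $\pi\boxtimes\chi_\Pi$ of $E$ is read off Remark~\ref{rmk:torus_action_on_Jacquet}, giving the character $\nu^{-3/2}\chi_\Pi^\divideontimes$ on $T$. The set $\Delta_{\pluss}(\Pi)$ of section~\ref{s:Combinat} is tailored precisely to enumerate those $\rho$ for which some generic constituent of $E$ has $T$-character equal to $\nu^{-1/2}\rho$, so the equivalence $d(\Pi,\rho)>0 \iff \rho\in\Delta_{\pluss}(\Pi)\cup\Delta^\divideontimes_{\pluss}(\Pi)$ will follow by direct comparison with the tables once the count is verified type by type.

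The main obstacle will be the multiplicity estimate $d(\Pi,\rho)\leq 1$, and in particular the treatment of the two non-semisimple Jacquet modules occurring for types \nosf{VIa} and \nosf{VId}: in these cases the naive sum over semisimplified constituents overcounts, and I will instead appeal directly to Lemmas~\ref{lem:first_monodromy_VIa} and~\ref{lem:filtration_VId}, which describe $E_{\chi_{\norm}}$ as an indecomposable length-two $\Gl(2)$-module of the form $1\times \nu^{\pm 1}$. For such a module the torus $\widetilde{T}$-eigenspace computation together with Corollary~\ref{cor:Waldspurger-Tunnell} and Lemma~\ref{lem:torus_action_GL2} confirms $d(\Pi,\rho)\leq 1$, exactly as in the arguments already carried out in Lemmas~\ref{lem:monodromy_VIa} and~\ref{lem:monodromy_VId}. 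For all other non-generic types, $E$ is a semisimple $M$-module with at most one generic constituent in each generalized $T$-eigenspace, so the bound is immediate. Assembling the pieces yields the stated dichotomy.
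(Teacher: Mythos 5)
Your proposal is correct, but for the crucial upper bound $\deg(\widetilde{\Pi})\leq 1$ in the non-generic case it takes a genuinely different route from the paper. Both arguments begin identically: the generic case is corollary~\ref{cor:gen_Mult1}, and for non-generic $\Pi$ proposition~\ref{lasst} collapses to $\deg(\widetilde{\Pi})=d(\Pi,\rho)$, so positivity is equivalent to $\nu^{-1/2}\rho$ occurring in $\delta_P^{-1/2}J_P(\Pi)_\psi$, i.e.\ to $\rho\in\Delta_{\pluss}(\Pi)$. The paper then bounds the degree structurally: it chooses $\sigma_\Pi$ with $\Pi$ a quotient of $I=\Ind_P^G(\sigma_\Pi)$ so that the pair $(\sigma_\Pi,\rho)$ is ordinary (using lemma~\ref{lem:combinatoric}), invokes lemma~\ref{KEY} to get $\deg(\widetilde{I})=1$ from the Bessel filtration, and concludes by right-exactness of $\beta_\rho$. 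You instead compute $d(\Pi,\rho)$ outright from the classification: the exact functor of $\psi$-coinvariants kills the one-dimensional constituents of $E$ and contributes one dimension per generic constituent, and for every non-generic type the multiset $\Delta_0(\Pi)$ in table~\ref{tab:Delta} is multiplicity-free, so each generalized $T$-eigenspace of $E_\psi$ is at most one-dimensional and $d(\Pi,\rho)\leq1$ is immediate. Your route is more elementary and delivers the exact value of $d(\Pi,\rho)$, at the price of a type-by-type inspection of \cite[table~A.3]{Roberts-Schmidt}; the paper's filtration argument avoids that inspection and reuses machinery (lemma~\ref{KEY}, ordinariness) that is needed elsewhere anyway.

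Three small repairs. The one-dimensionality of the contribution of a generic constituent to $E_\psi$ is uniqueness of Whittaker models, not corollary~\ref{cor:Waldspurger-Tunnell}, which concerns torus coinvariants $\pi_\rho$ rather than $\pi_{U,\psi}$. Your opening case split (``supercuspidal'' versus ``quotient of a Siegel induced representation'') omits the non-cuspidal non-generic types \nosf{VIIIb} and \nosf{IXb}, which also have $J_P(\Pi)=0$ and are not Siegel-induced quotients; they are covered by the same one-line argument as the cuspidal case, so fold them in there. Finally, type \nosf{VIa} is generic and never enters the non-generic analysis; among non-generic types only \nosf{VId} has a non-semisimple Jacquet module, and there $E_\psi$ is already one-dimensional (a single generic constituent), so no monodromy discussion is needed for the bound.
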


\begin{proof} %
For generic $\Pi$, see corollary~\ref{cor:gen_Mult1}.
We consider the case where $\Pi$ is non-generic.
Then the inequality of proposition~\ref{lasst} becomes sharp, so $\deg(\widetilde{\Pi})=d(\Pi,\rho)$.
This is non-zero if and only if $\rho\in\Delta_\pluss(\Pi)$ because $J_P(\Pi)_\psi$ decomposes as a direct sum of generalized $T$-eigenspaces.
It only remains to be shown that the degree cannot exceed one.
Without loss of generality we can assume $J_P(\Pi)\neq0$.
By dual Frobenius reciprocity there is an irreducible $\sigma_\Pi\in\CCC_M(\omega)$ such that $\Pi$ is a quotient of $I=\Ind_P^G(\sigma_\Pi)$.
If $\sigma_\Pi$ is infinite-dimensional, we can assume that $\sigma_\Pi\not\cong\omega\otimes\sigma_\Pi^\vee$.
Indeed, this is only violated in the case where $\pi=\nu\times\nu^{-1}$ where $\Pi$ is of type \nosf{IIIb};
in this case replace $\chi_1$ by $\chi_1^{-1}$.
Now lemma~\ref{lem:combinatoric} ensures
$\rho\neq\rho_\pm(\sigma_\Pi)$.
Thus, regardless of the dimension of $\sigma_\Pi$, the datum $(\sigma_\Pi,\rho)$ is ordinary in the sense of section \ref{s:SiegelInd}.
Right exactness of $\beta_\rho$ and proposition~\ref{KEY} imply
 $\deg(\widetilde{\Pi})\leq \deg(\widetilde{I}) = 1$.
\end{proof}

\subsection{$\beta^\rho(\Pi)$ for irreducible $\Pi$}

\begin{prop}\label{prop:beta^rho}
For irreducible $\Pi\in \CCC^{\fin}_G(\omega)$ with $J_P(\Pi)\neq0$, normalized as in table \ref{tab:List}, and smooth Bessel characters $\rho$,
\begin{enumerate}
\item if $\Pi$ is generic, then $\beta^\rho(\Pi)=0$ vanishes for every $\rho$;
\item if $\Pi$ is non-generic and $\rho$ provides a split Bessel model for $\Pi$, then $\delta_P^{-1/2}\otimes\beta^\rho(\Pi)\cong\mathbb{E}[X]$ is perfect.
Here $X$ is the unique cyclic $T$-module with constituents in $\Delta_1(\Pi)$ given by table~\ref{tab:Delta};
\item if $\Pi$ is non-generic and $\rho$ does not provide a split Bessel model, then
\begin{equation*}
\delta_P^{-3/2}\otimes \beta^\rho(\Pi)=
\begin{cases}
     \nu^{-3/2} & \text{type \nosf{IVd}}\ ,\ \rho=1\ ,\\
     \chi_0\nu^{-1/2} & \text{type \nosf{Vd}}\ ,\ \rho=1\ ,\\
     \nu^{-1/2} & \text{type \nosf{Vd}}\ ,\rho=\chi_0\ ,\ \\
     \nu^{1/2} & \text{type \nosf{VIb}}\ ,\ \rho=1\ ,\\
     0& \text{otherwise}\ .
\end{cases}
\end{equation*}
\end{enumerate}

\end{prop}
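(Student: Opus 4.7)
The proof splits along the three cases in the statement. Case~(i) is immediate from Lemma~\ref{lem:beta^rho=0_generic}, which already yields $\beta^\rho(\Pi)=0$ for any generic irreducible $\Pi$, in particular when $J_P(\Pi)\neq 0$.

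For case~(ii), I would first pin down the degree. Theorem~\ref{Mult1} gives $\deg(\widetilde\Pi)=1$, and since $\Pi$ is non-generic the Whittaker multiplicity vanishes, $m_\Pi=0$, so Proposition~\ref{BETA1} forces $\deg(\beta^\rho(\Pi))=1$. Next comes perfectness: inspection of Table~\ref{tab:list_sigma} shows that every non-generic $\Pi$ admitting a split Bessel model embeds, via duality, into $\Ind_P^G(\omega\otimes\sigma_\Pi^\vee)$ for some infinite-dimensional $\sigma_\Pi$, so Lemma~\ref{lem:beta^rho_perfect} delivers perfectness of $\beta^\rho(\Pi)$. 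A perfect module of degree one is a universal extension by Lemmas~\ref{lem:perfect_model} and~\ref{lem:extension E_X}, identifying $\delta_P^{-1/2}\otimes\beta^\rho(\Pi)\cong\mathbb{E}[X]$ for a uniquely determined cyclic $T$-module $X$. To identify $X$, I would apply Lemma~\ref{lem:beta^rho_Jacquet}, which gives $[\pi_0(\beta^\rho(\Pi))]=[J_P(\Pi)^{\widetilde T,\rho}]$ in the Grothendieck group $K_0(\CCC_T^\fin)$; combining the $\widetilde T$-action from Remark~\ref{rmk:torus_action_on_Jacquet} with Table~\ref{tab:Delta} reads off the constituents as the multiset $\Delta_1(\Pi)$.

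For case~(iii), the same combination of Theorem~\ref{Mult1} and Proposition~\ref{BETA1} yields $\deg(\beta^\rho(\Pi))=0$, so $\beta^\rho(\Pi)$ is finite-dimensional. Whenever $\Pi$ embeds into a Siegel-induced representation with infinite-dimensional $\sigma_\Pi$, Lemma~\ref{lem:beta^rho_perfect} makes $\beta^\rho(\Pi)$ perfect, so Lemma~\ref{PERF} forces $\beta^\rho(\Pi)=0$. The genuine exceptions are exactly the listed triples $(\Pi,\rho)$ in types~\nosf{IVd}, \nosf{Vd} and~\nosf{VIb}, in which $\Pi$ occurs only as a subquotient of $\Ind_P^G(\sigma_\Pi)$ with $\sigma_\Pi$ one-dimensional and $\rho$ does not define a Bessel model. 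Here the plan is a direct calculation using the long exact sequence of Lemma~\ref{lem:long exact sequence} applied to $0\to\Xi\to\Ind_P^G(\sigma_\Pi)\to\Pi\to 0$ from Table~\ref{tab:List}, together with the explicit graded-piece computation $\beta^\rho(I_i)$ from Lemma~\ref{tildeKoh}: only the $i=3$ piece survives (precisely when $\rho=\mu\chi_\Pi$), and subtracting the contribution of $\Xi$ leaves the claimed one-dimensional $T$-character.

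\textbf{Main obstacle.} The crux is case~(ii): establishing the cyclicity bound $\dim X^\chi\leq 1$ required by Lemma~\ref{lem:extension E_X} to realise $X$ as the base of a universal extension. The Grothendieck-group identity determines the multiset of constituents but not the shape of the generalised $T$-eigenspaces, and in types~\nosf{VIa} and~\nosf{VId} the monodromy on $J_P(\Pi)$ is non-trivial (Lemmas~\ref{lem:first_monodromy_VIa} and~\ref{lem:filtration_VId}). The plan is to transport the monodromy information through $\widetilde T$-coinvariants, parallel to the argument of Theorem~\ref{thm:monodromy} for $\widetilde\Pi$, using Remark~\ref{rmk:torus_action_on_Jacquet} and Corollary~\ref{cor:Waldspurger-Tunnell} to control the surviving $\rho$-eigenvectors after projection.
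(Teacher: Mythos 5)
Your part~(i) and the degree/constituent bookkeeping in parts~(ii) and~(iii) (via theorem~\ref{Mult1}, proposition~\ref{BETA1} and lemma~\ref{lem:beta^rho_Jacquet}) match the paper's proof. The genuine gap is in your perfectness argument for part~(ii). You claim that inspection of table~\ref{tab:list_sigma} shows every non-generic $\Pi$ with a split Bessel model sits inside some $\Ind_P^G(\omega\otimes\sigma_\Pi^\vee)$ with $\sigma_\Pi$ infinite-dimensional, so that lemma~\ref{lem:beta^rho_perfect} applies. This is false for type \nosf{VId}: the only $\sigma_\Pi$ available there is the one-dimensional $(\nu^{1/2}\circ\det)\boxtimes\nu^{-1/2}$, and lemma~\ref{lem:beta^rho_perfect} explicitly requires $\sigma_\Pi$ not one-dimensional. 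Since $\Delta_1(\nosf{VId})=\{\nu^{-1/2}\}$ is non-empty, perfectness is not automatic here (for the other problematic types \nosf{IIb}, \nosf{IVb}, \nosf{Vb}, \nosf{Vc} your argument does work, and when $\Delta_1(\Pi)=\emptyset$ one simply gets $\beta^\rho(\Pi)\cong\mathbb{S}$). The paper closes the \nosf{VId} case by a separate construction: left-exactness of $\beta^\rho$ on the cell filtration of $I=\Ind_P^G((\nu^{1/2}\circ\det)\boxtimes\nu^{-1/2})$ together with lemma~\ref{tildeKoh} gives an embedding $\mathbb{E}[\nu]\cong\beta^\rho(I_3)\hookrightarrow\beta^\rho(I)$, which by degree reasons and lemma~\ref{screening} injects into $\beta^\rho(\Pi)$; lemma~\ref{lem:equivalences2} then yields perfectness. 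Relatedly, the ``main obstacle'' you single out is misplaced: once $\beta^\rho(\Pi)$ is perfect of degree one, cyclicity of $\pi_0(\beta^\rho(\Pi))$ is automatic from lemma~\ref{lem:perfect_model}, and type \nosf{VIa} is generic, hence irrelevant to part~(ii). The real crux is exactly the \nosf{VId} perfectness you elided.

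Two smaller points on part~(iii). The paper's route is shorter: it reads the constituents of the finite-dimensional module $\beta^\rho(\Pi)$ directly off $\Delta_1(\Pi)=\widetilde\Delta(\Pi)\setminus\Delta_0(\Pi)$ as computed in lemma~\ref{ugly}, with no filtration computation needed. Your proposed fallback calculation is workable but your description of lemma~\ref{tildeKoh} is inaccurate: for $\pi=\mu\circ\det$ with $\rho=\mu\chi_\Pi$ it is not only the $i=3$ graded piece that survives --- the $i=0$ piece $\chi_\Pi^\divideontimes$ is non-zero as well (and $i=1$ when the filtration index $m\neq0$), and it is precisely the $i=0$ piece that produces the surviving one-dimensional character in the listed exceptional cases. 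Finally, note that \nosf{VId} with $\rho\neq1$ also falls under part~(iii) and again admits no infinite-dimensional $\sigma_\Pi$, so your lemma~\ref{lem:beta^rho_perfect} argument does not cover it; the paper's appeal to lemma~\ref{ugly} does.
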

\begin{proof}
The statement for generic $\Pi$ has been shown in lemma~\ref{lem:beta^rho=0_generic}.

If $\Pi$ is non-generic and $\rho$ provides a split Bessel model, then $\widetilde{\Pi}$ has degree one and $\beta^\rho(\Pi)$ also has degree one by proposition~\ref{BETA1}.
By lemma~\ref{lem:beta^rho_Jacquet}, the constituents $\chi$ of the $T$-module $X\cong\pi_0(\beta^\rho(\Pi))$ are those of $J_P(\Pi)^{\widetilde{T},\rho}$, i.e.~corresponding to $\chi_{\norm}$ in the multiset $\widetilde{\Delta}(\Pi)\setminus \Delta_0(\Pi)=\Delta_1(\Pi)$ given by lemma~\ref{ugly}.
We have to show that $\beta^\rho(\Pi)$ is perfect in the cases where $\Delta_1(\Pi)$ is not empty.
For types \nosf{IIb}, \nosf{IVb} and \nosf{Vbc} this follows from lemma~\ref{lem:beta^rho_perfect}.
It only remains to consider the case where $\Pi$ is of type \nosf{VId} and $\rho=1$.
Then $\Delta_1(\Pi)$ contains only one element $\chi_{\norm}=\nu^{-1/2}$.
By lemma~\ref{lem:equivalences2} it is sufficient to construct an embedding $\mathbb{E}[\nu]\hookrightarrow \beta^\rho(\Pi)$.
Indeed, $\Pi$ is a quotient of $I=\Ind_P^G(\sigma_\Pi)$ for the representation $\sigma_\Pi=(\nu^{1/2}\circ\det)\boxtimes\nu^{-1/2}$ of $M$.
Lemma~\ref{tildeKoh} for $\rho=1$ provides an embedding $\mathbb{E}[\nu]\cong \beta^{\rho}(I_3)\hookrightarrow \beta^{\rho}(I)$.
By degree reasons, this provides a non-trivial morphism $\mathbb{E}[\chi]\to \beta^\rho(\Pi)$, which is injective by lemma~\ref{screening}.

If $\Pi$ is non-generic and $\rho$ does not give a split Bessel model, then $\deg(\widetilde{\Pi})=0$. Proposition~\ref{BETA1} yields $\deg(\beta^\rho(\Pi))=0$, so $\beta^\rho(\Pi)$ is a finite-dimensional $TS$-module with constituents $\chi$ corresponding to $\chi_{\norm}$ in 
$\Delta_1(\Pi)=\widetilde{\Delta}(\Pi)\setminus\Delta_0(\Pi)$. The constituents of $\widetilde\Delta(\Pi)$ have been determined in lemma~\ref{ugly}.
\end{proof}

\begin{lem}\label{lem:not_Siegel_induced}
 For $\Pi\in\CCC_G^{\fin}(\omega)$ with $J_P(\Pi)=0$ and every $\rho$, we have $\beta^\rho(\Pi)=0$.
\end{lem}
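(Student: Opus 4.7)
The plan is to combine the factorization $\beta^\rho \cong k^\rho \circ \eta$ of lemma~\ref{lem:beta=k_rho_eta} with the two short exact sequences of lemma~\ref{Structure of TILDE I} describing the layers of $\overline{\Pi}=\eta(\Pi)$, essentially mirroring the argument given at the end of the proof of proposition~\ref{lasst}, but for the left-exact functor $k^\rho$ instead of the right-exact functor $k_\rho$.

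First I would apply $k^\rho$ to the short exact sequence
\begin{equation*}
0 \to j_! j^!(\overline{\Pi}) \to \overline{\Pi} \to i_*\bigl(J_Q(\Pi)|_{\Gl(2)}\bigr) \to 0
\end{equation*}
from lemma~\ref{Structure of TILDE I}. By the long exact sequence of lemma~\ref{lem:long_exact_main} this expresses $k^\rho(\overline{\Pi})$ as an extension in $\CCC_1$ whose outer terms are $k^\rho j_! j^!(\overline{\Pi})$ and a submodule of $k^\rho i_*(J_Q(\Pi))$. It therefore suffices to show that both outer terms vanish.

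For the left-hand term, the second exact sequence of lemma~\ref{Structure of TILDE I} together with the hypothesis $J_P(\Pi)=0$ (which forces $J_P(\Pi)_\psi=0$) gives $j^!(\overline{\Pi})\cong \mathbb{S}^{m_\Pi}$. By the natural equivalence $k^\rho\circ j_! \cong j_!\circ k^{\nu\rho}$ of lemma~\ref{lem:proto_small_extension_lemma} and the vanishing $k^{\nu\rho}(\mathbb{S})=0$ of corollary~\ref{cor:k_rho_S2}, we obtain $k^\rho j_! j^!(\overline{\Pi}) = j_! k^{\nu\rho}(\mathbb{S}^{m_\Pi})=0$. For the right-hand term, transitivity of Jacquet functors identifies the unnormalized Borel-Jacquet module $J_P(\Pi)_U$ with $J_Q(\Pi)_{\widetilde{V'}}$, so $J_P(\Pi)=0$ implies $J_Q(\Pi)_{\widetilde{V'}}=0$. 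The natural equivalence $k^\rho\circ i_* \cong k_\rho\circ i_*$ of lemma~\ref{3.1} shows that both functors are computed from $J_Q(\Pi)_{\widetilde{V'}}$, hence $k^\rho i_*(J_Q(\Pi))=0$. Combining these vanishings in the long exact sequence yields $\beta^\rho(\Pi)\cong k^\rho(\overline{\Pi})=0$.

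I do not expect any serious obstacle here: the only ingredients are the two structural sequences of lemma~\ref{Structure of TILDE I}, the commutation $k^\rho j_! \cong j_! k^{\nu\rho}$, and the transitivity remark linking $J_P$ and $J_Q$ to the Borel-Jacquet module. The argument is essentially dual to the last paragraph of the proof of proposition~\ref{lasst}.
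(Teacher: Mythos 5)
Your proof is correct. It does, however, take a different route from the one in the paper. The paper's own proof is a bookkeeping argument: proposition~\ref{lasst} gives $\widetilde{\Pi}\cong\mathbb{S}^{m_\Pi}$, lemma~\ref{lem:beta^rho_Jacquet} gives $\pi_0(\beta^\rho(\Pi))=0$ from $J_P(\Pi)=0$, and proposition~\ref{BETA1} gives $\deg(\beta^\rho(\Pi))=\deg(\widetilde\Pi)-m_\Pi=0$; a finite-length object in $\CCC$ with no $\mathbb{S}$-constituents and no one-dimensional constituents is zero. You instead compute $k^\rho(\overline\Pi)$ directly from the two structural sequences of lemma~\ref{Structure of TILDE I}: the left term dies because $j^!(\overline\Pi)\cong\mathbb{S}^{m_\Pi}$ (equivalently $j_!j^!(\overline\Pi)\cong\mathbb{S}_2^{m_\Pi}$, killed by $k^\rho$ via corollary~\ref{cor:k_rho_S2}), and the right term dies because the Borel--Jacquet module vanishes, exactly as in the last paragraph of the proof of proposition~\ref{lasst}. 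Both arguments are valid; yours is more self-contained and avoids the Grothendieck-group identity of proposition~\ref{BETA1}, at the cost of redoing the unwinding that propositions~\ref{lasst} and~\ref{BETA1} already package. One small simplification available to you: since $j_!j^!(\overline\Pi)\cong\mathbb{S}_2^{m_\Pi}$, you can quote $k^\rho(\mathbb{S}_2)=0$ from corollary~\ref{cor:k_rho_S2} directly rather than routing through the equivalence $k^\rho\circ j_!\cong j_!\circ k^{\nu\rho}$.
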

\begin{proof}
Indeed, $\widetilde{\Pi}\cong \mathbb{S}^{m_\Pi}$, where $m_\Pi$ is the multiplicity of Whittaker models, is  shown in proposition~\ref{lasst}.
Lemma~\ref{lem:beta^rho_Jacquet}
implies $\pi_0(\beta^\rho(\Pi))=0$. %
The degree $\deg(\beta^\rho(\Pi))=\deg(\widetilde{\Pi})-m_\Pi=0$ vanishes by proposition~\ref{BETA1}.
\end{proof}

\subsection{$\beta_\rho(\Pi)$ for irreducible $\Pi$}

We want to determine $\widetilde{\Pi}=\beta_\rho({\Pi})$ for every $\rho$ that defines a split Bessel model for $\Pi$. By lemma~\ref{lem:universal extensions classification} it suffices to determine $\pi_0(\widetilde{\Pi})$, see lemma~\ref{ugly}, and $\Danismanfunctor(\widetilde{\Pi})$, see theorem~\ref{thm:sigma_tilde_Pi}.
The proof depends on the exact sequence of lemma~\ref{lem:long_exact_main}.
For this reason we write $\alpha=j^!\circ\eta $ and use the Mellin functors, i.e.\ the additive endofunctors 
$M_\rho=k_\rho\circ j_!$ and $M^\rho=k^\rho\circ j_!$ of $\CCC$
that were studied in section~\ref{s:Mellin}.

\begin{lem}\label{lem:embed_app}
 Suppose $\chi$ is a character of $\Gl(1)$ and $I\in \CCC_G^{\fin}({\omega})$.
 An embedding $\mathbb{E}[\chi]\hookrightarrow M_\rho(\alpha(I))$ gives rise to an embedding $\mathbb{E}[\chi]\hookrightarrow \beta_\rho(I)$.
\end{lem}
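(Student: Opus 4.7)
The plan is to produce the desired embedding by composing the given embedding with the natural morphism $\varphi : M_\rho(\alpha(I)) \to \beta_\rho(I)$ that arises from the long exact sequence of lemma~\ref{lem:long_exact_main}. Since $\alpha(I) = j^!(\overline I)$ and $\beta_\rho(I) \cong k_\rho(\overline I)$ by lemma~\ref{lem:beta=k_rho_eta}, that long exact sequence specializes to
\begin{equation*}
 \cdots \to k^\rho i_*(J_Q(I)) \stackrel{\delta}{\to} M_\rho(\alpha(I)) \stackrel{\varphi}{\to} \beta_\rho(I) \to k_\rho i_*(J_Q(I)) \to 0\ .
\end{equation*}

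The first key observation is that $\ker(\varphi) = \im(\delta)$ is finite-dimensional. Indeed, since $I$ has finite length, lemma~\ref{lem:long_exact_main} (together with lemma~\ref{3.1}) asserts that $k^\rho i_*(J_Q(I))$ is finite-dimensional, so its quotient $\im(\delta)$ is too.

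Given the embedding $\iota : \mathbb{E}[\chi] \hookrightarrow M_\rho(\alpha(I))$, consider the composition $\varphi \circ \iota : \mathbb{E}[\chi] \to \beta_\rho(I)$. By the screening lemma~\ref{lem:hom-lemma}, this composition is either injective or has finite-dimensional image contained in $\Danismanfunctor(\beta_\rho(I))$. The main step is to rule out the second alternative: if $\varphi \circ \iota$ had finite-dimensional image, then $\ker(\varphi \circ \iota)$ would be a submodule of $\mathbb{E}[\chi]$ of codimension at most one. But $\mathbb{E}[\chi]$ has only the submodules $0$, $\mathbb{S}$, and $\mathbb{E}[\chi]$ itself, and both $\mathbb{S}$ and $\mathbb{E}[\chi]$ are infinite-dimensional. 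Hence $\iota$ would map an infinite-dimensional submodule of $\mathbb{E}[\chi]$ into $\ker(\varphi)$, contradicting finite-dimensionality of $\ker(\varphi)$.

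Therefore $\varphi \circ \iota : \mathbb{E}[\chi] \to \beta_\rho(I)$ is injective, which is the required embedding. The only subtle point is the dichotomy in lemma~\ref{lem:hom-lemma}, which applies here because $\beta_\rho(I) \in \CCC$ and its image in $\beta_\rho(I)/\Danismanfunctor(\beta_\rho(I))$ can be analyzed directly once finite-dimensionality is excluded; no finer information about $\beta_\rho(I)$ is required.
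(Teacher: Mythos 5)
Your proof is correct and follows essentially the same route as the paper: both use the long exact sequence of lemma~\ref{lem:long_exact_main} to see that the kernel of the natural map $M_\rho(\alpha(I))\to k_\rho(\overline I)\cong\beta_\rho(I)$ is a quotient of the finite-dimensional module $k^\rho i_*(J_Q(I))$, and then conclude injectivity of the composite from the fact that $\mathbb{E}[\chi]$ has no non-zero finite-dimensional submodules (the paper cites the screening lemma~\ref{screening} for this last step, exactly as you do). Your explicit enumeration of the submodules of $\mathbb{E}[\chi]$ makes the final step self-contained and sidesteps the degree-one hypothesis in lemma~\ref{lem:hom-lemma}, which is a minor but welcome clarification.
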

\begin{proof}
Lemma~\ref{lem:long_exact_main} gives a long exact sequence in $\CCC$
\begin{equation*}
\cdots\to k^\rho i_* (B)\stackrel{\delta}{\to} M_\rho(A) \to k_\rho(\overline{I})\to k_\rho i_* (B)\to 0\ .
\end{equation*} for $A=\alpha(I)$ and $B=J_Q(I)\in {\cal C}_{\Gl(2)}$.
The modules $k^\rho i_* (B)$ and $k_\rho i_* (B)$ are finite-dimensional.
Hence the embedding $\mathbb{E}[\chi]\to M_\rho\alpha(I)$ remains non-trivial if composed with the morphism
$f: M_\rho(\alpha(I))\to k_\rho(\overline{I})=\beta_\rho(I)$.
The composition is injective by lemma \ref{screening}.
\end{proof}

\begin{thm}\label{thm:sigma_tilde_Pi}
Suppose $\Pi\in\CCC_{G}^{\fin}(\omega)$ is an irreducible smooth representation of $G$ with $J_P(\Pi)\neq0$, a unique quotient of $\Ind_P^G(\sigma_\Pi)$ as in table \ref{tab:List}. Let $\rho$ be a Bessel character that provides a split Bessel model for $\Pi$.
Then $\widetilde{\Pi}$ is perfect except for (twists of) the following cases, where $\kappa(\widetilde{\Pi})=\widetilde{\Pi}^S$ is a $T$-character:
\begin{center}
\begin{tabular}{llll}
\toprule
 Type       & $\sigma_\Pi$     &  $\{\rho,\rho^\ast\}$     &  $\delta_P^{-1/2}\otimes\widetilde\Pi^S$                       \\
 \midrule
\nosf{I} &$(\chi_1 \times\chi_2)\boxtimes 1$& $\{\nu^{1/2},\nu^{-1/2}\chi_1\chi_2\}$ &  $\chi_1\chi_2$ \\
           &                       &  $\{\nu^{1/2}\chi_1,\nu^{-1/2}\chi_2\}$       &  $\chi_2$       \\
           &                       &  $\{\nu^{1/2}\chi_2,\nu^{-1/2}\chi_1\}$       &  $\chi_1$       \\
           &                       &  $\{\nu^{1/2}\chi_1\chi_2,\nu^{-1/2}\}$       &  $1$             \\
\nosf{IIa}&$\Sp(\chi_1)\boxtimes \chi_1^{-1}$ &  $\{\nu^{1/2}\chi_1,\nu^{-1/2}\chi_1^{-1}\}$    &  $\chi_1^{-1}$  \\
           &                       &  $\{\nu^{1/2}\chi_1^{-1},\nu^{-1/2}\chi_1\}$  &  $\chi_1$       \\
\nosf{X} &$\pi_{c}\boxtimes 1$  &  $\{\nu^{1/2},\nu^{-1/2}\omega_{\pi_{c}} \}$  & $\omega_{\pi_c}$\\
           &                       &  $\{\nu^{1/2}\omega_{\pi_{c}},\nu^{-1/2} \}$  &  $1$\\
\midrule
\nosf{IIa}& $\Sp(\chi_1)\boxtimes \chi_1^{-1}$           &  $\{1\}$           &  $\nu^{1/2}$         \\
\nosf{Va} & $\Sp(\nu^{-1/2}\chi_0)\boxtimes\nu^{1/2}$    &  $\{1\}$           &  $\nu^{1/2}$         \\
            &                                            &  $\{\chi_0\}$      &  $\nu^{1/2}\chi_0$   \\
\nosf{VIa}& $\Sp(\nu^{-1/2})\boxtimes\nu^{1/2}$          &  $\{1\}$           &  $\nu^{1/2}$         \\
\nosf{XIa}& $\nu^{-1/2}\pi_{c}\boxtimes\nu^{1/2}$        &  $\{1\}$           &  $\nu^{1/2}$         \\
\bottomrule
\end{tabular}
\end{center}
\end{thm}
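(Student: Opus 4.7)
By lemma~\ref{lem:universal extensions classification}, once the $T$-module $\pi_0(\widetilde\Pi)$ is known — its constituents from lemma~\ref{ugly} and its monodromy from theorem~\ref{thm:monodromy} — the module $\widetilde\Pi\in\CCC$ is uniquely determined by its finite-dimensional submodule $\kappa(\widetilde\Pi)=\widetilde\Pi^S$, and this is exactly what the theorem computes. By lemma~\ref{lem:equivalences2}, $\widetilde\Pi$ is perfect precisely when, for every $T$-character $\chi$ occurring in $\pi_0(\widetilde\Pi)$, there is an embedding $\mathbb E[\chi]\hookrightarrow\widetilde\Pi$; by lemma~\ref{lem:embed_app} it suffices to construct such embeddings into $M_\rho(\alpha(\Pi))$, where $\alpha(\Pi)=j^!(\overline\Pi)\in\CCC$. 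When an embedding fails to exist, lemma~\ref{lem:universal extensions classification} locates the corresponding obstruction character inside $\kappa(\widetilde\Pi)$.

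\textbf{Non-generic case.} For non-generic $\Pi$ with split Bessel model, the constituents of $\pi_0(\widetilde\Pi)$ form the multiset $\Delta(\Pi)$ (lemma~\ref{ugly}) with multiplicity at most one in each generalised eigenspace (corollary~\ref{cor:T_coinvariants_Bessel}). For each $\chi\in\Delta(\Pi)$ there is a choice of $\sigma_\Pi$ from table~\ref{tab:list_sigma} such that $\chi_\Pi^\divideontimes=\chi$, and by lemma~\ref{lem_dual_ord} one arranges that $(\sigma_\Pi,\rho)$ is ordinary. Lemma~\ref{KEY} then yields an embedding $\widetilde{I_3}\cong\delta_P^{1/2}\otimes\mathbb E[\chi_\Pi]\hookrightarrow\widetilde I$ for $I=\Ind_P^G(\sigma_\Pi)$, and lemma~\ref{screening} transfers it along $\widetilde I\twoheadrightarrow\widetilde\Pi$ to the required $\mathbb E[\chi]\hookrightarrow\widetilde\Pi$. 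Running over the finitely many $\sigma_\Pi$ associated to $\Pi$ thus exhibits an embedding for every $\chi\in\Delta(\Pi)$, proving that $\widetilde\Pi$ is perfect in every non-generic case with a split Bessel model.

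\textbf{Generic case.} For generic $\Pi$ with $J_P(\Pi)\neq0$, lemma~\ref{lem:alpha_perfect} gives $\alpha(\Pi)\cong\mathbb E[X]$ perfect of degree one with $X=\pi_0(\alpha(\Pi))\cong J_P(\Pi)_\psi$ cyclic, and lemma~\ref{Dual action} identifies $X$ concretely. Decomposing $X=\bigoplus_\chi X^{(\chi)}$ into generalised $T$-eigenspaces, I compute each $M_\rho(\mathbb E[X^{(\chi)}])$ via theorem~\ref{thm:ME_split} and lemma~\ref{lem:extended_Mellin_lemma}. When $(\sigma_\Pi,\rho)$ is ordinary, the Mellin transform preserves every indecomposable summand and produces all needed embeddings $\mathbb E[\chi]\hookrightarrow M_\rho(\alpha(\Pi))$; when $(\sigma_\Pi,\rho)$ is non-ordinary or extraordinary, the Mellin transform of the critical Jordan block splits off a finite-dimensional piece $i_*(\mu)$ with $\mu=\nu^2\rho$. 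Whether this piece survives inside $\widetilde\Pi$ is controlled by the coboundary $k^\rho(i_*J_Q(\Pi))\to M_\rho(\alpha(\Pi))$ from lemma~\ref{lem:long_exact_main}, whose image is located by lemma~\ref{DELTA} in terms of the multiset $\Delta_Q$ of section~\ref{negligiblemodules}.

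\textbf{Main obstacle.} The delicate part is identifying exactly those exceptional cases listed in the theorem and computing the single surviving character of $\widetilde\Pi^S$ in each. In the extraordinary situations $\rho=\rho_+=\rho_-$ (types \nosf{IIa, Va, VIa, XIa}), lemma~\ref{lem:key2} embeds one Kirillov copy of $\pi$ into $\widetilde\Pi/\kappa(\widetilde\Pi)$ and shows $\deg\widetilde\Pi=1$; comparing the $\chi$-multiplicities of $\pi_0(\widetilde\Pi)$ with those of $\pi_0(\widetilde\Pi/\kappa(\widetilde\Pi))$ via theorem~\ref{thm:monodromy} forces the single exceptional $T$-character $\delta_P^{1/2}\cdot\chi_\Pi$ of the table to sit in $\kappa(\widetilde\Pi)$. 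In the fully-induced non-ordinary cases with $\rho_+\neq\rho_-$ (types \nosf{I, IIa, X}), the same strategy applies but only one of the two copies of $\nu\chi_\Pi\otimes\pi$ inside $\widetilde{I_1}$ contributes to $\widetilde\Pi/\kappa(\widetilde\Pi)$; the complementary copy produces the single finite-dimensional character in $\kappa(\widetilde\Pi)$, corresponding to whichever of $\rho_\pm$ equals $\rho$ and matching the table's entries via the symmetry $\{\rho,\rho^\divideontimes\}$ from lemma~\ref{duality}. Case \nosf{IIIa} has to be handled separately: its $\pi_0(\widetilde\Pi)$ carries duplicated constituents, and a direct filtration analysis of $\Ind_P^G(\sigma_\Pi)$ — using the combinatorics of $\Delta_+(\Pi)$ from section~\ref{s:Combinat} — shows that $\widetilde\Pi$ remains perfect. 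Throughout, consistency of $\kappa(\widetilde\Pi)$ under the different choices of $\sigma_\Pi$ listed in table~\ref{tab:list_sigma} provides a non-trivial cross-check.
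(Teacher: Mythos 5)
Your outline reproduces the paper's overall architecture — determine $\pi_0(\widetilde\Pi)$ and $\kappa(\widetilde\Pi)$, test perfectness via embeddings $\mathbb E[\chi]\hookrightarrow\widetilde\Pi$ pulled through $M_\rho(\alpha(\Pi))$ by lemma~\ref{lem:embed_app}, and isolate the exceptional cases by the combinatorics of section~\ref{s:Combinat} — and your case division (non-generic, non-exceptional generic, fully induced non-ordinary, extraordinary, \nosf{IIIa}) is exactly the one the paper dispatches to. However, your concrete argument for the non-generic case has a genuine gap. For $\chi_{\norm}\in\Delta_0(\Pi)$ you propose to realize $\Pi$ as a quotient of $I=\Ind_P^G(\sigma_\Pi)$ with $\chi_\Pi$ matching the target character and then to invoke lemma~\ref{lem_dual_ord} to force $(\sigma_\Pi,\rho)$ ordinary, so that lemma~\ref{KEY} yields $\delta_P^{1/2}\otimes\mathbb E[\chi_\Pi]\cong\widetilde{I_3}\hookrightarrow\widetilde I$. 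This reduction fails: lemma~\ref{lem_dual_ord} replaces $\sigma_\Pi$ by $\omega\otimes\sigma_\Pi^\vee$, which (i) replaces $\chi_\Pi$ by $\chi_\Pi^\divideontimes$, so the big cell now produces $\mathbb E[\chi^\divideontimes]$ rather than the required $\mathbb E[\chi]$, and (ii) turns $\Pi$ into a submodule rather than a quotient of the induced representation (theorem~\ref{thm:sigma_Pi_existence}), so lemma~\ref{screening} no longer transfers the embedding to $\widetilde\Pi$. The non-ordinary situation genuinely occurs: for type \nosf{IIIb} with $\chi_1=\nu$ and $\chi_{\norm}=\nu^{1/2}$ one finds $\{\rho_+(\sigma_\Pi),\rho_-(\sigma_\Pi)\}=\{1,\nu\}$, which is the entire duality orbit of Bessel characters, so no choice of $\rho$ in the orbit is ordinary. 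The paper's proposition~\ref{prop:embedding_nongeneric} avoids this for $\chi_{\norm}\in\Delta_0(\Pi)$ by a different mechanism: since $\Pi$ has no Whittaker model, $\alpha(\Pi)=i_*i^*\alpha(\Pi)$, and after reducing by lemma~\ref{duality} to $\rho=\nu^{1/2}\chi_{\norm}$ one gets $\mathbb E[\chi]\cong M_\rho(i_*\chi)\hookrightarrow M_\rho(\alpha(\Pi))$ from lemma~\ref{lem:small extension}; the filtration route via lemma~\ref{KEY} is reserved for $\chi_{\norm}\in\Delta_1(\Pi)$, where $\sigma_\Pi$ is one-dimensional and ordinarity is automatic. (Note also the index slip: to obtain $\mathbb E[\chi]$ from $\widetilde{I_3}$ you need $\chi_\Pi=\chi_{\norm}$, as in corollary~\ref{cor:sigma_Pi_existence}, not $\chi_\Pi^\divideontimes=\chi_{\norm}$.)

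A second, smaller inaccuracy: the two copies of $\nu\chi_\Pi\otimes\pi$ inside $\widetilde{I_1}$ occur only when the filtration index is $m=2$, i.e.\ in the extraordinary case $\rho=\rho_+=\rho_-$ treated by lemma~\ref{lem:key2}; in the fully induced non-ordinary cases \nosf{I}, \nosf{IIa}, \nosf{X} one has $m=1$, and the paper's proposition~\ref{prop:fully_induced_non-ordinary} instead obtains $i_*(\chi)\hookrightarrow\widetilde\Pi$ from $gr^F_3(\widetilde I)\cong\delta_P^{1/2}\chi_\Pi$ in the degree-one branch of lemma~\ref{KEY}, and then bounds $\dim\kappa(\widetilde\Pi)$ from above via corollary~\ref{cor:sigma_is_critical_Jordanblock} and lemma~\ref{lem:Mellin_inflation}. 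Your remaining paragraphs — the non-exceptional generic case via theorem~\ref{thm:ME_split} and lemma~\ref{lem:first_delta}, the survival of the split-off finite-dimensional piece controlled by the boundary map and $\Delta_Q$ via lemma~\ref{DELTA}, and the separate treatment of \nosf{IIIa} — do match the paper's propositions.
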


\begin{proof} For non-generic $\Pi$, see corollary~\ref{cor:sigma_nongeneric}.
For generic $\Pi$, by lemma~\ref{duality} we can assume that $(\Pi,\rho)$ belongs to the four cases listed in corollary~\ref{cor:comb_cases2} with $\rho\notin\Delta_\pluss(\Pi)$.
If $\rho\notin \Delta_\pluss(\Pi)\cup\Delta_{\minuss}(\Pi)$,
the result follows from corollary~\ref{perfectness widetildePi}.
If $(\Pi,\rho)$ belongs to the fully induced non-ordinary exceptional cases, the result is shown in proposition~\ref{prop:fully_induced_non-ordinary}.
For the extraordinary exceptional cases, see proposition~\ref{prop:extra-ordinary_exceptional}.
For $\Pi$ of type \nosf{IIIa} with $\rho\in\Delta_{\minuss}(\Pi)\cap\Delta_{\minuss}^\divideontimes(\Pi)$,
see proposition~\ref{prop:special case IIIa}.
\end{proof}

\begin{cor}
For spherical unitary irreducible $\Pi\in\CCC_{G}(\omega)$ and unitary Bessel characters $\rho$ that provide a split Bessel model to $\Pi$, the module $\widetilde{\Pi}$ is perfect.
\end{cor}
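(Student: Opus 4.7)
The plan is to rule out, case by case, the exceptional pairs $(\Pi,\rho)$ listed in theorem~\ref{thm:sigma_tilde_Pi} under the hypotheses that $\Pi$ is spherical unitary and $\rho$ unitary; outside those cases the theorem already gives $\widetilde\Pi$ perfect.

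First I would observe that a spherical $\Pi$ is a constituent of an unramified principal series from the Borel, so each Jordan-H\"older factor $\sigma_\Pi$ of the Siegel Jacquet module $J_P(\Pi)$ must have its $\Gl(2)$-component of the form $\chi_1\times\chi_2$ with unramified characters. In particular no Steinberg representation $\St(\chi)$ and no supercuspidal $\pi_c$ can appear as $\Gl(2)$-factor of $\sigma_\Pi$. This immediately discards the exceptional rows of types \nosf{IIa}, \nosf{Va}, \nosf{VIa}, \nosf{X} and \nosf{XIa} from the table.

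It remains to treat Type~\nosf{I}, where $\sigma_\Pi=(\chi_1\times\chi_2)\boxtimes 1$ with unramified $\chi_1,\chi_2$ and central character $\omega=\chi_1\chi_2$. Unitarity of $\Pi$ forces $|\omega|=1$, so $|\chi_1\chi_2|=1$. Among the four exceptional pairs $\{\rho,\rho^\divideontimes\}$, the pairs $\{\nu^{1/2},\nu^{-1/2}\chi_1\chi_2\}$ and $\{\nu^{1/2}\chi_1\chi_2,\nu^{-1/2}\}$ both yield $|\rho|=\nu^{\pm 1/2}\neq 1$ because $|\chi_1\chi_2|=1$, so $\rho$ cannot be unitary. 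For the remaining two pairs $\{\nu^{1/2}\chi_1,\nu^{-1/2}\chi_2\}$ and $\{\nu^{1/2}\chi_2,\nu^{-1/2}\chi_1\}$, the condition $|\rho|=1$ together with $|\chi_1\chi_2|=1$ is equivalent to $\{|\chi_1|,|\chi_2|\}=\{\nu^{1/2},\nu^{-1/2}\}$, i.e.\ $|\chi_1/\chi_2|=\nu^{\pm 1}$.

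This last configuration sits exactly on the Type~\nosf{IIa/IIb} reducibility hyperplane for $\chi_1\times\chi_2\rtimes 1$. By the classification of the spherical unitary dual of $\GSp(4,k)$, this locus meets the unitary dual only at the reducibility point $\chi_1/\chi_2=\nu$ itself, whose spherical constituent is of Type~\nosf{IIb} rather than Type~\nosf{I}; at all other points on the locus the induced representation is irreducible of Type~\nosf{I} but lies on the non-unitary boundary of the Siegel complementary series strip. Consequently no exceptional case is compatible with the hypotheses and $\widetilde\Pi$ must be perfect. The main obstacle is this final step: explicitly ruling out unitarity along the locus $|\chi_1/\chi_2|=\nu^{\pm 1}$ for irreducible Type~\nosf{I} parameters; the cleanest justification is to appeal to the known classification of the $p$-adic spherical unitary dual of $\GSp(4)$ at the boundary of the complementary series.
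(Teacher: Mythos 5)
Your strategy is essentially the paper's: outside the exceptional rows of theorem~\ref{thm:sigma_tilde_Pi} (and for all non-generic $\Pi$, by corollary~\ref{cor:sigma_nongeneric}) perfectness is automatic; among the exceptional types \nosf{I}, \nosf{IIa}, \nosf{Va}, \nosf{VIa}, \nosf{X}, \nosf{XIa} only \nosf{I} can be spherical; and the exceptional $\rho$ for type \nosf{I} are then excluded by the unitarity constraints on the inducing characters. The paper's proof is a one-liner at this point: a generic spherical unitary $\Pi$ is an irreducible unramified $\chi_1\times\chi_2\rtimes\sigma$ whose Satake parameters $\sigma,\chi_1\sigma,\chi_2\sigma,\chi_1\chi_2\sigma$ all satisfy $|\cdot|=\nu^{\beta}$ with $-\tfrac12<\beta<\tfrac12$ (Roberts--Schmidt, tables A.2, A.15), and every exceptional $\rho$ is $\nu^{\pm1/2}$ times a Satake parameter, hence never unitary. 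So your closing appeal to the classification of the spherical unitary dual is not an obstacle to be circumvented; it is exactly the citation the paper makes, and it disposes of all four orbits at once without the reducibility-hyperplane detour.

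Two points need repair. First, your reduction step asserts that for spherical $\Pi$ no Steinberg twist and no supercuspidal can occur as $\Gl(2)$-component of a Jordan--H\"older factor of $J_P(\Pi)$; as a statement about all constituents of the Jacquet module this is false: the spherical representation of type \nosf{VId} has $\Sp(\nu^{-1/2})$ occurring in $J_P(\Pi)$ (lemma~\ref{lem:filtration_VId}). The correct (and sufficient) observation is simply that the types in the exceptional rows are either not Langlands quotients of unramified standard modules (\nosf{IIa}, \nosf{Va}, \nosf{VIa}, \nosf{XIa}) or have supercuspidal support (\nosf{X}), so none is spherical. Second, your claim that the orbits $\{\nu^{1/2},\nu^{-1/2}\chi_1\chi_2\}$ and $\{\nu^{1/2}\chi_1\chi_2,\nu^{-1/2}\}$ are killed by $|\chi_1\chi_2|=1$ alone presupposes the normalization $\chi_\Pi=1$ of table~\ref{tab:List} while keeping both $\Pi$ and $\rho$ unitary; that normalization is a twist by $\sigma^{-1}$, which need not preserve unitarity. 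For a general unitary $\chi_1\times\chi_2\rtimes\sigma$ these two orbits are $\nu^{\pm1/2}\{\sigma,\chi_1\chi_2\sigma\}$ and contain a unitary character precisely when $|\sigma|=\nu^{\pm1/2}$ --- a boundary configuration excluded only by the same complementary-series bound $|\beta|<\tfrac12$, not by unitarity of the central character. So all four orbits require the Satake bound, and the case split buys nothing.
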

\begin{proof}
If $\Pi$ is generic and spherical unitary, then it is fully Borel induced $\chi_1\times\chi_2\rtimes\chi_\Pi$ from unramified characters $\chi_1,\chi_2,\chi_\Pi$ with $|\chi_\Pi\chi_i|=\nu^\beta$ for a real $-\tfrac12<\beta<\tfrac12$, see \cite[tables~A.2, A.15]{Roberts-Schmidt}.
For this case and for every non-generic $\Pi$, theorem~\ref{thm:sigma_tilde_Pi} implies the statement.
\end{proof}

\begin{prop}
For an irreducible $\Pi\in\CCC_{G}^{\fin}(\omega)$ assume that a smooth Bessel character $\rho$ does not provide a split Bessel model for $\Pi$.
Then $\Pi$ is non-generic and the Bessel module $\beta_\rho(\Pi) = \widetilde{\Pi}$ is finite-dimensional and given in table~\ref{tab:Bessel_module_no_split_Bessel_model}.

\end{prop}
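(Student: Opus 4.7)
My plan is to split the argument into two short steps, reducing everything to results already established earlier in the paper.

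First I would show that $\Pi$ must be non-generic and that $\widetilde\Pi$ is finite-dimensional. Both follow immediately from theorem~\ref{Mult1}: by hypothesis $\deg(\widetilde\Pi)=0$, whereas generic $\Pi$ always satisfies $\deg(\widetilde\Pi)=1$, so $\Pi$ is necessarily non-generic. Since $\widetilde\Pi$ has finite length by corollary~\ref{cor:Bessel_finite}, and since the only finite-dimensional irreducible objects of $\CCC$ are the characters $i_*(\chi)$, the absence of $\mathbb{S}$-constituents forces $\dim\widetilde\Pi<\infty$.

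Next I would invoke lemma~\ref{FINITE} to conclude $\widetilde\Pi=\Danismanfunctor(\widetilde\Pi)$, so $S$ acts trivially on $\widetilde\Pi$ and the canonical map $\widetilde\Pi\twoheadrightarrow\pi_0(\widetilde\Pi)$ is an isomorphism. By lemma~\ref{lem:Bessel_Jacquet} we then have $\widetilde\Pi\cong\pi_0(\widetilde\Pi)\cong J_P(\Pi)_{\widetilde T,\rho}$ as $T$-modules, reducing the whole computation to the Siegel-Jacquet module. The Jordan-H\"older constituents of $\delta_P^{-1/2}\otimes\widetilde\Pi$ are then precisely the multiset $\widetilde\Delta(\Pi)$ recorded in the non-Bessel part of lemma~\ref{ugly}: in the four families (\nosf{IVd} with $\rho=1$; \nosf{Vd} with $\rho\in\{1,\chi_0\}$; \nosf{VIb} with $\rho=1$) this is a single character, and in all other cases $\widetilde\Delta(\Pi)=\Delta_0(\Pi)$, to be read off case by case.

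The hard part, and the only step that truly requires attention, is to check that the $T$-action on $\widetilde\Pi$ is semisimple; together with the constituent list this then pins down $\widetilde\Pi$ as in table~\ref{tab:Bessel_module_no_split_Bessel_model}. I would appeal to the monodromy theorem~\ref{thm:monodromy}, which bounds $\dim\pi_0(\widetilde\Pi)_{T,\chi}\leq 1$ for every character $\chi$, except when $\Pi$ is a twist of type \nosf{VIa} with $\rho=1$; but that exceptional case \emph{does} admit a split Bessel model and therefore lies outside the hypothesis of the present proposition. Together with the finite-dimensionality already established, this bound forces $\widetilde\Pi$ to decompose as a direct sum of pairwise distinct $T$-characters, matching the entries of the table.
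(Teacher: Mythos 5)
Your overall route coincides with the paper's: theorem~\ref{Mult1} gives non-genericity, degree zero together with finite length (corollary~\ref{cor:Bessel_finite}) gives finite-dimensionality, lemma~\ref{FINITE} gives the trivial $S$-action and the identification $\widetilde\Pi\cong\pi_0(\widetilde\Pi)\cong J_P(\Pi)_{\widetilde T,\rho}$, and lemma~\ref{ugly} supplies the constituents. Up to that point the argument is correct.

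The gap is in your final step. The monodromy theorem~\ref{thm:monodromy} bounds $\dim\pi_0(\widetilde\Pi)_{T,\chi}\leq 1$, but for a finite-dimensional $T$-module this only says that each generalized eigenspace is a single indecomposable Jordan block (``cyclic'' in the sense of section~\ref{s:catC}); it does \emph{not} force semisimplicity. A Jordan block $\chi^{(2)}$ of length two satisfies $\dim(\chi^{(2)})_{T,\chi}=1$, yet it is not a direct sum of characters, so the implication ``the bound forces $\widetilde\Pi$ to decompose as a direct sum of pairwise distinct $T$-characters'' is false as stated. What actually closes the argument --- and what the paper uses --- is that in every case covered by the hypothesis the multiset $\widetilde\Delta(\Pi)$ computed in lemma~\ref{ugly} consists of pairwise \emph{distinct} characters (inspect table~\ref{tab:Delta}); hence each generalized eigenspace is one-dimensional, equivalently there are no non-trivial extensions between the distinct constituents, and semisimplicity follows with no appeal to the monodromy theorem at all. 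So you should replace the monodromy argument by this multiplicity-one observation on the constituent list; with that repair the proof is complete.
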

\begin{proof}
$\Pi$ must be non-generic by the classification of split Bessel models, see theorem~\ref{Mult1}.
The action of $S$ on $\widetilde{\Pi}$ is trivial by lemma~\ref{FINITE},
so $\widetilde{\Pi}$ is isomorphic to $\pi_0(\widetilde{\Pi})$.
The constituents of $\pi_0(\widetilde{\Pi})$ are given by lemma~\ref{ugly}.
There are no non-trivial extensions between these constituents,
so $\widetilde{\Pi}$ is semisimple.
\end{proof}

\subsection{Non-generic representations}\label{s:beta_rho_non-generic}

\begin{prop}\label{prop:embedding_nongeneric}
For non-generic irreducible $\Pi\in\CCC_G(\omega)$ with Bessel character $\rho$ that defines a split Bessel model,
and every $\chi_{\norm}\in \widetilde{\Delta}(\Pi)$ there is an embedding
\begin{equation*}
\mathbb{E}[\chi]\hookrightarrow \widetilde{\Pi}\  \quad \text{in}\ \ \CCC\ .
\end{equation*}
\end{prop}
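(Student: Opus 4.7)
The plan is to reduce the proposition to the statement that $\widetilde{\Pi}$ is perfect and then to construct the required embeddings through the Bessel filtration. By theorem~\ref{Mult1} the module $\widetilde{\Pi}$ has degree one (as $\rho$ provides a Bessel model), and by theorem~\ref{thm:monodromy}, combined with the observation that the exceptional type \nosf{VIa} is generic, the $T$-module $\pi_0(\widetilde{\Pi})$ is cyclic. Lemma~\ref{lem:equivalences2} then identifies perfectness of $\widetilde{\Pi}$ with the existence of an embedding $\mathbb{E}[\chi]\hookrightarrow \widetilde{\Pi}$ for every $\chi$ with $\pi_0(\widetilde{\Pi})_\chi\neq 0$, i.e.\ for every $\chi_{\norm}\in\widetilde{\Delta}(\Pi)$ by lemma~\ref{ugly}. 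Thus proving the proposition is equivalent to proving perfectness of $\widetilde{\Pi}$.

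Since $J_P(\Pi)\neq 0$, theorem~\ref{thm:sigma_Pi_existence} realises $\Pi$ as a quotient of $I=\Ind_P^G(\sigma_\Pi)$ for some $\sigma_\Pi=\pi\boxtimes\chi_\Pi$ from table~\ref{tab:list_sigma}, and right-exactness of $\beta_\rho$ yields a surjection $\widetilde{I}\twoheadrightarrow\widetilde{\Pi}$. The Bessel filtration of section~\ref{s:Bessel_filt}, together with lemmas~\ref{tildemod}, \ref{tildeKoh} and~\ref{Lemma1}, describes each graded piece $\widetilde{I_i}$ and $\beta^\rho(I_i)$ explicitly. For each $\chi_{\norm}\in\widetilde{\Delta}(\Pi)$, locate the piece that supplies $\chi$: the character $\chi_\Pi$ comes from $\widetilde{I_3}\cong\delta_P^{1/2}\otimes\mathbb{E}[\chi_\Pi]$ (or from $\beta^\rho(I_3)$ when $\pi=\mu\circ\det$ and $\rho=\mu\chi_\Pi$); the character $\chi_\Pi^\divideontimes$ is provided by $\widetilde{I_0}$; and the further characters attached to one-dimensional constituents of $E=\delta_P^{-1/2}J_P(\Pi)$ are supplied by $\widetilde{I_1}$ or by $\beta^\rho(I_0)$ and $\beta^\rho(I_3)$. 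By lemma~\ref{KEY} (ordinary case, which applies via lemma~\ref{lem_dual_ord} after possibly replacing $\sigma_\Pi$ by $\omega\otimes\sigma_\Pi^\vee$) the perfect piece $\widetilde{I_3}$ embeds into $\widetilde{I}$, and analogous embeddings of $\mathbb{E}[\chi]$ into $\widetilde{I}$ follow for the remaining characters from the exact diagram of lemma~\ref{lem:filtration_diagram_exact}.

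Each such embedding is then composed with $\widetilde{I}\twoheadrightarrow\widetilde{\Pi}$. By the dichotomy of lemma~\ref{screening} the composite is either injective or has finite-dimensional image. The latter is excluded because $\widetilde{\Pi}$ has degree one and its unique $\mathbb{S}$-submodule must come from the unique perfect piece of the Bessel filtration (all other pieces are finite-dimensional in the ordinary case), so the targeted copy of $\mathbb{S}\subseteq \mathbb{E}[\chi]\subseteq \widetilde{I}$ maps isomorphically onto it. This proves perfectness and produces the required embeddings.

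The main obstacle is the bookkeeping for types with one-dimensional constituents in $E$ (notably \nosf{IIb}, \nosf{IVb,c}, \nosf{Vb,c}, \nosf{VId}), where several candidate embeddings must coexist in the degree-one module $\widetilde{\Pi}$ and one must track how the finite-dimensional pieces $\widetilde{I_0}$, $\widetilde{I_1}$, $\beta^\rho(I_0)$, $\beta^\rho(I_1)$ glue inside $\widetilde{I}$ and survive the projection. Here the perfectness of $\beta^\rho(\Pi)$ established in lemma~\ref{lem:beta^rho_perfect} and made explicit in proposition~\ref{prop:beta^rho}, together with the diagram of lemma~\ref{lem:filtration_diagram_exact} and the multiset computation of lemma~\ref{ugly}, pin down each copy of $\mathbb{E}[\chi]$ in $\widetilde{\Pi}$ case by case.
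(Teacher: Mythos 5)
Your reduction to perfectness via lemma~\ref{lem:equivalences2} (degree one by theorem~\ref{Mult1}, monodromy bound by theorem~\ref{WEIRD}) is fine, and your treatment of the single character $\chi_\Pi$ supplied by the big cell --- embed $\widetilde{I_3}\cong\delta_P^{1/2}\otimes\mathbb{E}[\chi_\Pi]$ into $\widetilde I$ by lemma~\ref{KEY} and push down to $\widetilde\Pi$ using the degree dichotomy of lemma~\ref{screening} --- matches the paper. The gap is in how you propose to handle \emph{every other} $\chi_{\norm}\in\widetilde\Delta(\Pi)$. In the ordinary case the pieces $\widetilde{I_0}$, $\widetilde{I_1}$, $\beta^\rho(I_0)$, $\beta^\rho(I_1)$ are all finite-dimensional, so they cannot by themselves ``supply'' the infinite-dimensional module $\mathbb{E}[\chi]$; they only show that $\chi$ occurs as a constituent of $\pi_0(\widetilde I)$. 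The statement you need is that the extension $0\to\mathbb{S}\to{?}\to\chi\to0$ sitting inside $\widetilde\Pi$ is \emph{non-split}, where the copy of $\mathbb{S}$ lives in $F_{\geq3}$ and $\chi$ lives in a quotient position of the filtration ($\widetilde{I_0}$ or $\widetilde{I_1}$). That extension class is precisely the information the associated graded of the Bessel filtration does not see, so ``analogous embeddings follow from the exact diagram of lemma~\ref{lem:filtration_diagram_exact}'' is exactly the assertion to be proved, not a consequence of the diagram. (Perfectness of $\beta^\rho(\Pi)$ from lemma~\ref{lem:beta^rho_perfect} does not help here either: it concerns a different module.)

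Two mechanisms actually close this gap, and the paper uses one for each half of $\widetilde\Delta(\Pi)$. For $\chi_{\norm}\in\Delta_0(\Pi)$ one first normalizes $\rho=\nu^{1/2}\chi_{\norm}$ --- legitimate because $\beta_\rho\cong\beta_{\rho^\divideontimes}$ (lemma~\ref{duality}) and the involution acts transitively on $\Delta_{\pluss}(\Pi)$ for non-generic $\Pi$ (lemma~\ref{lem:intersection--*}) --- and then uses the Mellin computation $M_\rho(i_*(\chi))\cong\mathbb{E}[\chi]$ for $\chi=\nu\rho$ (lemma~\ref{lem:small extension}) applied to $\alpha(\Pi)=i_*i^*\alpha(\Pi)$, followed by lemma~\ref{lem:embed_app}; it is the Mellin functor, not the filtration, that manufactures the non-split extension. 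For $\chi_{\norm}\in\Delta_1(\Pi)$ one chooses, via corollary~\ref{cor:sigma_Pi_existence}, a \emph{different} one-dimensional $\sigma_\Pi$ with $\chi_\Pi=\chi_{\norm}$, so that the desired character is the one carried by the big cell $I_3$ where the universal extension lives; your proposal instead keeps one $\sigma_\Pi$ fixed and assigns the remaining characters to the small cells, which cannot work. A further minor problem: replacing $\sigma_\Pi$ by $\omega\otimes\sigma_\Pi^\vee$ to invoke lemma~\ref{lem_dual_ord} turns $\Pi$ into a \emph{submodule} of the induced representation (theorem~\ref{thm:sigma_Pi_existence}), and since $\beta_\rho$ is only right exact you then lose the surjection $\widetilde I\twoheadrightarrow\widetilde\Pi$ your argument relies on; one must instead select an ordinary $\sigma_\Pi$ among those realizing $\Pi$ as a quotient.
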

\begin{proof}
By a twist we can assume that $\Pi$ is normalized as in table \ref{tab:List}.
The space of Whittaker functionals is isomorphic to $j^!\alpha(\Pi)=0$, so lemma~\ref{lem:Gelfand-Kazhdan} implies $$\alpha(\Pi)=i_* i^* (\alpha(\Pi)) \ .$$ We dinstinguish the cases $\chi_\norm\in\Delta_0(\Pi)$ and $\chi_\norm\in \Delta_1(\Pi)$.

First we consider the case where $\chi_{\norm}\in \Delta_0(\Pi)$, or in other words where $\nu^{1/2}\chi_{\norm} \in \Delta_{\pluss}(\Pi)$.
Since $\Pi$ is non-generic, by theorem~\ref{Mult1} only the finitely many characters $\rho\in\Delta_{\pluss}(\Pi)$ define split Bessel models.
By lemma~\ref{lem:intersection--*}, the involution $\rho \mapsto \rho^\divideontimes={\omega}\rho^{-1}$ acts transitively on $\Delta_{\pluss}(\Pi)=\Delta^\divideontimes_{\pluss}(\Pi)$, so either there is a unique Bessel character $\rho=\rho^\divideontimes$ or there are two characters $\rho$, $\rho^\divideontimes$.
By lemma~\ref{duality}, it is sufficient to fix the Bessel character $\rho = \nu^{1/2}\chi_{\norm}$.
The corresponding unnormalized $T$-character is
$ \chi=\nu\rho \in \delta_P^{1/2}\Delta_0(\Pi)$.
By definition of $\Delta_0(\Pi)$, there is an embedding of $T$-modules into the unnormalized Jacquet module
$$ \chi\ \hookrightarrow \ i^*\alpha(\Pi)= J_P(\Pi)_\psi \ .$$ with finite-dimensional cokernel $E$.
Since every character $\chi_{\norm}$ occurs in $\Delta_0(\Pi)$ at most once (table~\ref{tab:Delta}),
$E$ does not contain $\chi$ as a constituent.
For the exact sequence $0\to\chi\to i^*\alpha(\Pi)\to E\to0$ in $\CCC_T^\fin$,
lemma~\ref{lem:long exact sequence_affine} yields a long exact sequence of $\Gl_a(1)$-modules
$$\cdots\to M^\rho(i_* (E))\to M_\rho(i_*(\chi))\to M_\rho(i_*i^*(\alpha(\Pi)))\to M_\rho(i_* (E))\to0\ .$$
In lemma~\ref{lem:small extension} we have shown that $M^\rho(i_* (E))=0$ and $M_\rho(i_*(\chi))\cong\mathbb{E}[\chi]$ for our fixed choice of $\rho=\nu^{-1}\chi$.
Since $\alpha(\Pi) = i_*i^*\alpha(\Pi)$ in $\CCC_1$, as explained above,
we obtain an embedding $$\mathbb{E}[\chi]\hookrightarrow M_\rho(\alpha(\Pi))\ .$$
By lemma~\ref{lem:embed_app} this yields an embedding $\mathbb{E}[\chi]\hookrightarrow k_\rho(\overline{\Pi}) = \widetilde{\Pi}$.

Now we consider the second case where $\chi_{\norm}\in\Delta_1(\Pi)$. This case occurs if and only if $\Pi$ is type \nosf{IIb, IVb, Vb, Vc, VId}, see lemma~\ref{lem:Delta_1}.
By corollary~\ref{cor:sigma_Pi_existence} there is an irreducible one-dimensional
$M$-module $\sigma_\Pi=\pi\boxtimes\chi_\Pi$
such that $\Pi$ is a quotient of $I=\Ind_P^G(\sigma_\Pi)$ and $\chi_\Pi=\chi_{\norm}$ as $\Gl(1)$-characters.
Then $(\sigma_\Pi,\rho)$ is an ordinary pair, so $\widetilde{I}$ has degree one by lemma~\ref{KEY} and there is
an embedding $\mathbb{E}[\delta_P^{1/2}\chi_\Pi]=\mathbb{E}[\chi]\hookrightarrow\widetilde{I}$.
By uniqueness of Bessel models (theorem~\ref{Mult1}), $\widetilde{\Pi}$ has degree one,
so the kernel of the projection $\widetilde{I}\to\widetilde{\Pi}$ has degree zero.
The composition $\mathbb{E}[\chi]\to\widetilde{\Pi}$ is non-zero, so by lemma~\ref{screening} it is an embedding.
\end{proof}

\begin{cor}\label{cor:sigma_nongeneric}
For non-generic irreducible $\Pi\in\CCC_G^{\fin}(\omega)$ and every smooth character $\rho$ that
provides a split Bessel model, the Bessel module $\widetilde\Pi$ is perfect.
\end{cor}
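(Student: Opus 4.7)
The plan is a direct application of the characterization of perfect modules of degree one given by lemma~\ref{lem:equivalences2}. By theorem~\ref{Mult1}, for non-generic irreducible $\Pi$ the existence of a split Bessel model for $\rho$ forces $\deg(\widetilde{\Pi})=1$, so $\widetilde{\Pi}$ is an object in $\CCC$ of degree one and the third criterion of lemma~\ref{lem:equivalences2} applies: it suffices to verify that $\pi_0(\widetilde{\Pi})$ is cyclic and that for every $T$-character $\chi$ with $\pi_0(\widetilde{\Pi})_\chi \neq 0$ there is an embedding $\chi\otimes\mathbb{E} \hookrightarrow \widetilde{\Pi}$.

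For the cyclicity of $\pi_0(\widetilde{\Pi})$, I would invoke the monodromy theorem~\ref{thm:monodromy}, which gives $\dim(\pi_0(\widetilde{\Pi})_{T,\chi}) \leq 1$ for every smooth $T$-character $\chi$, with the sole exception occurring in type \nosf{VIa}; but \nosf{VIa} is generic and thus excluded by hypothesis. Since $\pi_0(\widetilde{\Pi})$ is finite-dimensional, one has $\dim \pi_0(\widetilde{\Pi})^\chi = \dim \pi_0(\widetilde{\Pi})_\chi \leq 1$ for every $\chi$, which is precisely the cyclicity condition recalled in section~\ref{s:catC}.

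For the required embeddings, I would combine lemma~\ref{ugly} with proposition~\ref{prop:embedding_nongeneric}. The former identifies the constituents of $\pi_0(\widetilde{\Pi})$ as exactly the characters parametrized by the multiset $\widetilde{\Delta}(\Pi)$, and the latter, which is the real work already carried out above, provides an embedding $\mathbb{E}[\chi] \hookrightarrow \widetilde{\Pi}$ for every $\chi_\norm \in \widetilde{\Delta}(\Pi)$. Therefore, whenever $\pi_0(\widetilde{\Pi})_\chi$ is non-zero, the corresponding embedding exists.

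Putting this together, the third criterion of lemma~\ref{lem:equivalences2} is satisfied, hence $\widetilde{\Pi}$ is perfect. There is essentially no obstacle left at this stage since the substantial inputs — degree one, monodromy bound, existence of the universal extensions as submodules — have all been established in the preceding subsections; the corollary is the bookkeeping that wraps these facts into the perfectness statement.
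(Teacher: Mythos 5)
Your proposal is correct and follows essentially the same route as the paper: degree one from theorem~\ref{Mult1}, the monodromy bound from theorem~\ref{thm:monodromy} (noting the \nosf{VIa} exception is generic and hence excluded), and the embeddings from proposition~\ref{prop:embedding_nongeneric}, assembled via the perfectness criterion of lemma~\ref{lem:equivalences2}. No gaps.
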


\begin{proof}
The degree of $\widetilde\Pi$ is one by theorem~\ref{Mult1}. By lemma~\ref{lem:perfect_model}, perfectness of $\widetilde\Pi$ is equivalent to the monodromy estimate $\dim\pi_0(\widetilde{\Pi})_\chi\leq1$ and the existence of an embedding $\mathbb{E}[\chi]\hookrightarrow\widetilde\Pi$ for every $\chi_{\norm}\in\widetilde\Delta(\Pi)$.
The monodromy estimate is theorem~\ref{WEIRD} and the embedding is given by proposition~\ref{prop:embedding_nongeneric}.
\end{proof}

\subsection{Generic representations}\label{s:discussion}

Fix a generic $\Pi\in {\cal C}_G(\omega)$, normalized as in table \ref{tab:List}, and a Bessel character
$\rho$ as in corollary~\ref{cor:comb_cases2}.
We determine ${\Danismanfunctor}(\widetilde\Pi)$ and show it is non-zero in certain cases.

\textit{Critical characters}.
We say  $\chi_{\norm}\in\Delta(\Pi)$ is \emph{critical} for $\rho$ and $\Pi$ if it equals
$$\chi_{\crit} = \nu^{1/2}\cdot \rho \ .$$
Similarly we say the associated character $\chi=\nu^{3/2}\chi_{\crit}$ is critical for $\rho$.
The critical character is clearly uniquely determined by $\rho$ and $\Pi$.
A critical character $\chi_{\crit}$ exists if and only if $\rho\in\Delta_{\minuss}(\Pi)$,
so lemma \ref{lem:exceptional++*} implies

\begin{lem}\label{exch}
A character $\chi_{\norm}$ is critical for both $(\Pi,\rho)$ and $(\Pi,\rho^\divideontimes)$ if and only if, up to a twist,
$(\Pi,\rho)$ belongs to the extraordinary exceptional cases in corollary~\ref{cor:comb_cases2} .
\end{lem}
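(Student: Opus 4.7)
The plan is to unpack the two critical conditions, reduce them to a simple combinatorial identity about $\rho$, then appeal to lemma~\ref{lem:exceptional++*} for the enumeration.

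First I would unpack the definitions. By definition, $\chi_{\norm}$ being critical for $(\Pi,\rho)$ means $\chi_{\norm}\in\Delta(\Pi)$ and $\chi_{\norm}=\nu^{1/2}\rho$, which as remarked after the definition of critical characters is equivalent to $\rho\in\Delta_{\minuss}(\Pi)$. Applying the same criterion to $(\Pi,\rho^\divideontimes)$ gives $\chi_{\norm}=\nu^{1/2}\rho^\divideontimes$ and $\rho^\divideontimes\in\Delta_{\minuss}(\Pi)$. Comparing the two expressions for $\chi_{\norm}$ and cancelling $\nu^{1/2}$ yields $\rho=\rho^\divideontimes$, equivalently $\rho^2=\omega$; this is exactly the extraordinary condition $\rho_+=\rho_-$ from section~\ref{s:SiegelInd}. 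Hence the joint-criticality condition is equivalent to
\begin{equation*}
\rho=\rho^\divideontimes \qquad\text{and}\qquad \rho\in \Delta_{\minuss}(\Pi)\cap\Delta_{\minuss}^\divideontimes(\Pi)\ .
\end{equation*}

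Next I would invoke lemma~\ref{lem:exceptional++*}, which classifies pairs $(\Pi,\rho)$ satisfying exactly these two conditions. Inspecting its output against the list of corollary~\ref{cor:comb_cases2} shows that these are, up to a twist, precisely the entries labelled \emph{extraordinary exceptional}; this gives the forward implication. For the converse, I would simply read off from the tabulation of the extraordinary exceptional cases that in each entry one has $\rho=\rho^\divideontimes$ and $\nu^{1/2}\rho\in\Delta(\Pi)$, so that $\chi_{\norm}:=\nu^{1/2}\rho$ is indeed critical simultaneously for $(\Pi,\rho)$ and $(\Pi,\rho^\divideontimes)$.

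There is no genuine obstacle here: the only content is the equivalence of the joint-criticality condition with the defining relations of the extraordinary exceptional cases, and the combinatorial enumeration has already been carried out in lemma~\ref{lem:exceptional++*}. The proof therefore reduces to unwinding definitions and quoting that lemma.
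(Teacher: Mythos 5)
Your proposal is correct and follows essentially the same route as the paper, which likewise reduces joint criticality to $\rho=\rho^\divideontimes$ together with $\rho\in\Delta_{\minuss}(\Pi)\cap\Delta_{\minuss}^\divideontimes(\Pi)$ and then quotes lemma~\ref{lem:exceptional++*}. The only step worth making explicit in your final ``inspection'' is why two entries of lemma~\ref{lem:exceptional++*} drop out: type \nosf{IVc} is non-generic (and the lemma lives in the generic setting of corollary~\ref{cor:comb_cases2}), and for type \nosf{IIIa} the elements $\rho\in\{1,\chi_1\}$ of the intersection never satisfy $\rho=\rho^\divideontimes$ since $\omega=\chi_1\neq1$.
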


\textit{Perfectness}. To show perfectness for $\widetilde \Pi$ we can apply the characterization of perfect $TS$-modules
given in lemma \ref{lem:equivalences2} since we know $\deg(\widetilde \Pi)=1$ from theorem~\ref{cor:gen_Mult1}.
One of the two conditions for perfectness is true quite generally by theorem~\ref{WEIRD},
it holds except for the twists of the case \nosf{VIa},
$\rho=1$ which appears in the list of extraordinary exceptional cases.
Excluding this case, it suffices to construct for every character $\chi_{\norm}\in \Delta_0(\Pi)$ an embedding
\begin{equation*}
\mathbb E[\chi] \hookrightarrow k_\rho(\overline{\Pi})\cong\widetilde{\Pi}\ .
\end{equation*}

\textit{The module $\alpha(\Pi)$}. For irreducible generic $\Pi$ recall from lemma \ref{lem:TildeDelta}  $$\pi_0(\widetilde\Pi)^{ss} = \bigoplus_{\chi_{\norm}\in \Delta_0(\Pi)} \chi$$ (up to semisimplification).
Notice, for $\chi_{\norm}\in \Delta_0$ the character $\chi=\nu^{3/2}\chi_{\norm}$ is a constituent of 
$Y=\pi_0(\widetilde \Pi)$ so 
we have an exact sequence $$ 0\to\mathbb{S}\to\alpha(\Pi)\to i_*(Y) \to 0\ . $$ 
By lemma~\ref{lem:alpha_perfect} $\alpha(\Pi)$ is perfect,
so for all $\chi_{\norm}\in \Delta_0$ lemma~\ref{lem:equivalences2} and lemma~\ref{Dual action}
provide an embedding $\mathbb{E}[\chi]\hookrightarrow \alpha(\Pi)$.
More generally, for $0\neq X \subseteq Y^{(\chi)} \subseteq Y$ and for the $\Gl(1)$-module
$F=Y/X$ we obtain an exact sequence
\begin{equation*}
0 \to  \mathbb{E}[X] \to \alpha(\Pi)  \to i_*(F) \to  0 \ \qquad \text{in}\ {\cal C}\ .
\end{equation*}

Since $\rho\notin\Delta_{\pluss}(\Pi)$ by assumption, lemma \ref{lem:small extension} yields $M^\rho(i_*F)=0$. Now lemma~\ref{lem:long exact sequence_affine} provides 
\begin{lem}\label{lem:first_delta}
For $\rho\notin\Delta_{\pluss}(\Pi)$ there is an exact sequence
\begin{equation*}
\xymatrix{ 0\ar[r]^-{\delta} & M_\rho(\mathbb{E}[X])\ar[r] & M_\rho(\alpha(\Pi))\ar[r] & M_\rho(i_* F)\ar[r] & 0}\ .
\end{equation*}
\end{lem}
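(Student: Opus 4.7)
The plan is to apply lemma~\ref{lem:long exact sequence_affine} to the short exact sequence
\[0\to \mathbb{E}[X]\to \alpha(\Pi)\to i_\ast(F)\to 0\]
displayed just above the statement. Because $j_!$ is exact, the Mellin functors $M_\rho = k_\rho\circ j_!$ and $M^\rho = k^\rho\circ j_!$ inherit a six-term exact sequence in $\CCC$:
\[0\to M^\rho(\mathbb{E}[X])\to M^\rho(\alpha(\Pi))\to M^\rho(i_\ast F)\stackrel{\delta}\to M_\rho(\mathbb{E}[X])\to M_\rho(\alpha(\Pi))\to M_\rho(i_\ast F)\to 0.\]
The asserted short exact sequence will then follow once the boundary morphism $\delta$ is shown to vanish, and for this it suffices to establish $M^\rho(i_\ast F) = 0$.

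The $T$-module $F = Y/X$ is finite dimensional, hence decomposes as a direct sum of indecomposable Jordan blocks $\mu^{(m)}$. Lemma~\ref{lem:small extension} computes $M^\rho(i_\ast(\mu^{(m)})) = \mathbb{S}$ when $\mu = \nu\rho$ and $M^\rho(i_\ast(\mu^{(m)})) = 0$ otherwise, so by additivity of $M^\rho$ the problem reduces to checking that the character $\nu\rho$ does not occur as a constituent of $F$.

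Since $F$ is a quotient of $Y = \pi_0(\widetilde{\Pi})$, every constituent of $F$ is already a constituent of $Y$. For generic $\Pi$ the constituents of $Y$ are exactly the unnormalized characters $\chi = \nu^{3/2}\chi_{\norm}$ with $\chi_{\norm}$ running over $\Delta_0(\Pi)$, as exploited in the preceding construction of the sequence $0\to\mathbb{S}\to\alpha(\Pi)\to i_\ast(Y)\to 0$. Hence $\nu\rho$ appears in $Y$ precisely when $\nu^{-1/2}\rho\in\Delta_0(\Pi)$, which by the definition of $\Delta_{\pluss}(\Pi)$ in section~\ref{s:Combinat} is equivalent to $\rho\in\Delta_{\pluss}(\Pi)$. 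The standing hypothesis $\rho\notin\Delta_{\pluss}(\Pi)$ therefore excludes this case, so $M^\rho(i_\ast F) = 0$ and the short exact sequence follows. The proof is purely mechanical once the long exact sequence and lemma~\ref{lem:small extension} are in place; the only substantive point is the translation of the arithmetic condition $\rho\notin\Delta_{\pluss}(\Pi)$ into the statement that $\nu\rho$ is absent from the generic constituents detected by $\alpha(\Pi)$.
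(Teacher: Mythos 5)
Your proof is correct and follows exactly the route the paper takes (the paper states the lemma as an immediate consequence of the two preceding sentences): apply lemma~\ref{lem:long exact sequence_affine} to $0\to\mathbb{E}[X]\to\alpha(\Pi)\to i_*(F)\to 0$ after the exact functor $j_!$, and kill the boundary map by showing $M^\rho(i_*F)=0$ via lemma~\ref{lem:small extension}, since $\rho\notin\Delta_{\pluss}(\Pi)$ means $\nu\rho$ is not a constituent of $Y$ and hence not of $F$. The translation $\nu\rho=\nu^{3/2}\chi_{\norm}\Leftrightarrow\rho\in\nu^{1/2}\Delta_0(\Pi)=\Delta_{\pluss}(\Pi)$ is exactly the point the paper leaves implicit, and you have filled it in correctly.
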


\begin{prop} \label{preliminary}
For generic irreducible $\Pi\in\CCC_G(\omega)$, a Bessel character $\rho\notin \Delta_{\pluss}(\Pi)$ and every non-critical character $\chi_{\norm} \in \Delta_0(\Pi)$,
there exists an embedding of $\mathbb{E}[\chi]$ into $\widetilde\Pi$ as $TS$-modules.
\end{prop}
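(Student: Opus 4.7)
The plan is to reduce the construction of an embedding $\mathbb{E}[\chi]\hookrightarrow\widetilde\Pi$ to a Mellin transform computation, using the perfect module $\alpha(\Pi)=j^!(\overline\Pi)$ as a bridge. Since $\alpha(\Pi)\in\CCC$ is perfect of degree one by lemma~\ref{lem:alpha_perfect}, with $\pi_0(\alpha(\Pi))\cong J_P(\Pi)_\psi$ by lemma~\ref{Dual action}, and since $\chi_{\norm}\in\Delta_0(\Pi)$ exhibits $\chi=\nu^{3/2}\chi_{\norm}$ as a constituent of $Y=\pi_0(\alpha(\Pi))$, the embedding criterion in lemma~\ref{lem:equivalences2}(3) provides an embedding $\mathbb{E}[\chi]\hookrightarrow\alpha(\Pi)$. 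Comparing with the canonical exact sequence $0\to\mathbb{S}\to\alpha(\Pi)\to i_*(Y)\to 0$ via the snake lemma identifies the cokernel as $i_*(F)$ with $F=Y/\chi$, so we obtain the short exact sequence $0\to\mathbb{E}[\chi]\to\alpha(\Pi)\to i_*(F)\to 0$ in $\CCC$.

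Next, I would apply the Mellin functor $M_\rho=k_\rho\circ j_!$. This is where the two hypotheses enter. The assumption $\rho\notin\Delta_{\pluss}(\Pi)$ ensures via lemma~\ref{lem:first_delta} that $M^\rho(i_*F)=0$, so the long exact sequence of lemma~\ref{lem:long exact sequence_affine} collapses to a short exact sequence
\begin{equation*}
0 \to M_\rho(\mathbb{E}[\chi])\to M_\rho(\alpha(\Pi)) \to M_\rho(i_*F)\to 0\ ,
\end{equation*}
giving an embedding $M_\rho(\mathbb{E}[\chi])\hookrightarrow M_\rho(\alpha(\Pi))$. The non-criticality hypothesis $\chi_{\norm}\neq\nu^{1/2}\rho$, i.e.\ $\chi\neq\nu^2\rho$, is exactly the condition under which theorem~\ref{thm:ME_split} delivers $M_\rho(\mathbb{E}[\chi])\cong\mathbb{E}[\chi]$ rather than the split decomposition $\mathbb{S}\oplus i_*(\chi)$. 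Composing the resulting embedding $\mathbb{E}[\chi]\hookrightarrow M_\rho(\alpha(\Pi))$ with the morphism provided by lemma~\ref{lem:embed_app} yields the desired embedding $\mathbb{E}[\chi]\hookrightarrow\widetilde\Pi$.

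The main obstacle is not any single step but the interlocking role of the two hypotheses: $\rho\notin\Delta_{\pluss}(\Pi)$ is needed for exactness of the Mellin transform (equivalently, to kill $M^\rho$ of the cokernel), while non-criticality of $\chi_{\norm}$ is needed to preserve the universal extension structure under $M_\rho$. If either condition fails, the universal extension $\mathbb{E}[\chi]$ can degenerate — in the critical case via the splitting of theorem~\ref{thm:ME_split}, and in the $\Delta_{\pluss}$ case via the appearance of an extra $\mathbb{S}$-factor from $M^\rho(i_*F)$ — and this accounts precisely for the exceptional cases (extraordinary and fully induced non-ordinary) that must be handled separately afterwards.
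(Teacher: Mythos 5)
Your proposal is correct and follows essentially the same route as the paper: the exact sequence $0\to\mathbb{E}[\chi]\to\alpha(\Pi)\to i_*(F)\to0$ from perfectness of $\alpha(\Pi)$, the vanishing of $M^\rho(i_*F)$ from $\rho\notin\Delta_{\pluss}(\Pi)$ (lemma~\ref{lem:first_delta}), the identification $M_\rho(\mathbb{E}[\chi])\cong\mathbb{E}[\chi]$ from non-criticality (theorem~\ref{thm:ME_split}), and the descent to $\widetilde\Pi$ via lemma~\ref{lem:embed_app}. Your reading of how the two hypotheses enter also matches the paper's subsequent treatment of the exceptional cases.
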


\begin{proof}By a twist we can assume that $\Pi$ is normalized as in table~\ref{tab:List}.
For the construction of embeddings, recall from theorem~\ref{thm:big_extension}
$$M_\rho(\mathbb{E}[\chi]) \cong \mathbb{E}[\chi]$$
if $\chi$ is not critical for $\rho$,
whereas $M_\rho(\mathbb{E}[\chi]) \cong \mathbb S \oplus \chi$
splits for criticial $\chi$.
We assumed that $\chi$ is not critical, so lemma~\ref{lem:first_delta} for $X=\chi$
gives an embedding
$$ \mathbb{E}[\chi] \hookrightarrow M_\rho(\alpha(\Pi))\ .$$
Lemma \ref{lem:embed_app} yields the required embedding
$\mathbb{E}[\chi] \hookrightarrow \widetilde{\Pi}$.
\end{proof}

\begin{cor}\label{perfectness widetildePi}
For generic irreducible $\Pi\in\CCC_G(\omega)$ and a Bessel character $\rho\notin \Delta_{\minuss}(\Pi)\cup \Delta_{\pluss}(\Pi)$, the Bessel module $\widetilde\Pi$ is perfect.
\end{cor}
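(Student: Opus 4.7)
The plan is to verify the two sufficient conditions for perfectness given in lemma~\ref{lem:perfect_model} (the monodromy bound and the existence of embeddings $\mathbb{E}[\chi]\hookrightarrow\widetilde{\Pi}$). The starting point is corollary~\ref{cor:gen_Mult1}, which tells us that $\deg(\widetilde{\Pi})=1$ whenever $\Pi$ is generic irreducible. Thus lemma~\ref{lem:perfect_model} applies, and perfectness of $\widetilde{\Pi}$ reduces to checking, for every $T$-character $\chi$ appearing in $\pi_0(\widetilde{\Pi})$, that $\dim \pi_0(\widetilde{\Pi})_\chi\leq 1$ and that there exists an embedding $\mathbb{E}[\chi]\hookrightarrow\widetilde{\Pi}$.

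For the monodromy bound I would invoke theorem~\ref{thm:monodromy} directly. That theorem establishes $\dim\pi_0(\widetilde{\Pi})_\chi\leq 1$ for all smooth $\chi$ and all Bessel characters, with the sole exception of (twists of) type \nosf{VIa} with $\rho=1$ and $\chi_{\norm}=\nu^{1/2}$. I would then observe that this exceptional case is ruled out by our hypothesis: type \nosf{VIa} with $\rho=1$ is one of the extraordinary exceptional cases of corollary~\ref{cor:comb_cases2}, and by lemma~\ref{exch} such cases are characterized by the existence of a character critical for both $(\Pi,\rho)$ and $(\Pi,\rho^\divideontimes)$, which forces $\rho\in\Delta_{\minuss}(\Pi)$ (the critical character $\chi_{\crit}=\nu^{1/2}\rho$ exists exactly when $\rho\in\Delta_{\minuss}(\Pi)$). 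Since we assume $\rho\notin\Delta_{\minuss}(\Pi)\cup\Delta_{\pluss}(\Pi)$, the exception cannot occur.

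For the embeddings I would apply proposition~\ref{preliminary}, which provides an embedding $\mathbb{E}[\chi]\hookrightarrow\widetilde{\Pi}$ for every non-critical character $\chi_{\norm}\in\Delta_0(\Pi)$, under the assumption $\rho\notin\Delta_{\pluss}(\Pi)$. The latter holds by hypothesis. Moreover, a character $\chi_{\norm}\in\Delta_0(\Pi)$ can only be critical if $\chi_{\crit}=\nu^{1/2}\rho\in\Delta(\Pi)$, i.e.\ if $\rho\in\Delta_{\minuss}(\Pi)$, which is again excluded. Hence every character occurring in $\pi_0(\widetilde{\Pi})$ is non-critical, so proposition~\ref{preliminary} yields the desired embedding for each of them. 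By lemma~\ref{ugly}, $\widetilde{\Delta}(\Pi)=\Delta_0(\Pi)$ for generic $\Pi$ in our setting, so no further characters need to be treated.

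There is no genuine obstacle here; the statement is essentially a bookkeeping corollary assembling theorem~\ref{thm:monodromy} (monodromy bound) with proposition~\ref{preliminary} (existence of embeddings) against the characterization of perfect degree-one modules in lemma~\ref{lem:perfect_model}. The only step requiring a moment's thought is translating the hypothesis $\rho\notin\Delta_{\minuss}(\Pi)\cup\Delta_{\pluss}(\Pi)$ into the two ``absence'' conditions (no exceptional monodromy case, no critical characters) used in the two supporting results, which is a direct consequence of the definitions of criticality and of the multisets $\Delta_\pm$ recalled in section~\ref{s:Combinat}.
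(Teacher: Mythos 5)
Your proposal is correct and follows essentially the same route as the paper's proof: no critical characters exist since $\rho\notin\Delta_{\minuss}(\Pi)$, proposition~\ref{preliminary} supplies the embeddings $\mathbb{E}[\chi]\hookrightarrow\widetilde\Pi$, theorem~\ref{WEIRD} gives the monodromy bound (its type \nosf{VIa} exception being excluded by the hypothesis, as you note), and lemma~\ref{lem:equivalences2} concludes. The extra detail you supply on ruling out the exceptional monodromy case is a correct elaboration of a step the paper leaves implicit.
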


\begin{proof} Critical characters only exist for $\rho\in\Delta_{\minuss}(\Pi)$, which is excluded.
For non-critical characters $\chi_\norm\in\Delta_0(\Pi)$ proposition \ref{preliminary} yields embeddings $\mathbb E[\chi] \hookrightarrow \widetilde \Pi$.
The monodromy estimate $\dim(\pi_0(\widetilde \Pi)_\chi) \leq 1$ holds by theorem~\ref{WEIRD}.
Lemma~\ref{lem:equivalences2} implies the statement.
\end{proof}

\begin{cor}\label{cor:sigma_is_critical_Jordanblock}
Fix an irreducible generic representation $\Pi\in\CCC_G(\omega)$ that is not a twist of type \nosf{VIa},
and a Bessel character $\rho\notin \Delta_{\pluss}(\Pi)$. %
Then $\delta_P^{-1/2}\otimes{\Danismanfunctor}(\widetilde\Pi)$ is either zero or a single Jordan block attached to the critical character $\chi_{\crit}$.
\end{cor}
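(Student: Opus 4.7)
The plan is to combine the monodromy bound from theorem~\ref{WEIRD}, the injection ${\Danismanfunctor}(\widetilde\Pi)\hookrightarrow \pi_0(\widetilde\Pi)$ from lemma~\ref{lem:kappa_pi_0_injective}, and the embeddings $\mathbb{E}[\chi]\hookrightarrow \widetilde\Pi$ constructed for non-critical characters in proposition~\ref{preliminary}, with the embedding criterion of lemma~\ref{lem:embedding_criterion} playing the decisive role.

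First I would record the structural input. Since $\Pi$ is generic, $\widetilde\Pi$ has degree one by corollary~\ref{cor:gen_Mult1}. Since $\Pi$ is not a twist of type \nosf{VIa}, theorem~\ref{WEIRD} gives the monodromy estimate $\dim\pi_0(\widetilde\Pi)^{\chi}\leq1$ for every smooth $T$-character $\chi$, so each generalized eigenspace $\pi_0(\widetilde\Pi)^{(\chi)}$ is a single Jordan block. Via the injection of lemma~\ref{lem:kappa_pi_0_injective}, ${\Danismanfunctor}(\widetilde\Pi)$ is a finite-dimensional $T$-submodule of $\pi_0(\widetilde\Pi)$, hence decomposes as a direct sum ${\Danismanfunctor}(\widetilde\Pi)=\bigoplus_{\chi}\bigl({\Danismanfunctor}(\widetilde\Pi)\cap\pi_0(\widetilde\Pi)^{(\chi)}\bigr)$ of its intersections with the generalized eigenspaces.

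The key step is to show all these intersections vanish except possibly at the critical character $\chi_{\crit}=\nu^{1/2}\rho$. Fix a non-critical constituent $\chi_{\norm}\in\Delta_0(\Pi)$ and write $\chi=\nu^{3/2}\chi_{\norm}$. Since $\rho\notin\Delta_{\pluss}(\Pi)$, proposition~\ref{preliminary} provides an embedding $\mathbb{E}[\chi]\hookrightarrow\widetilde\Pi$. Together with the monodromy bound $\dim\pi_0(\widetilde\Pi)_\chi\leq1$, the contrapositive of lemma~\ref{lem:embedding_criterion} forces ${\Danismanfunctor}(\widetilde\Pi)^{\chi}=0$, and therefore ${\Danismanfunctor}(\widetilde\Pi)\cap\pi_0(\widetilde\Pi)^{(\chi)}=0$, since any nonzero submodule of a Jordan block has nonzero invariants. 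Consequently ${\Danismanfunctor}(\widetilde\Pi)$ is contained in the single Jordan block $\pi_0(\widetilde\Pi)^{(\chi_{\crit})}$, and any $T$-submodule of a Jordan block is itself either zero or a Jordan block attached to the same character. Twisting by $\delta_P^{-1/2}$ relabels $\chi$ as $\chi_{\norm}$, yielding the statement.

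The only potential obstacle is that the critical character need not exist at all: if $\rho\notin\Delta_{\minuss}(\Pi)$, then every $\chi_{\norm}\in\Delta_0(\Pi)$ is non-critical, and the argument above forces ${\Danismanfunctor}(\widetilde\Pi)=0$ outright. This is precisely the ``zero'' alternative in the conclusion and is consistent with corollary~\ref{perfectness widetildePi}, which already asserts perfectness of $\widetilde\Pi$ whenever $\rho\notin\Delta_{\minuss}(\Pi)\cup\Delta_{\pluss}(\Pi)$. No delicate case analysis of the individual Sally--Tadi\'c types is required for this corollary; the real work has already been carried out upstream in proposition~\ref{preliminary}, whose Mellin-transform construction supplies the embeddings $\mathbb{E}[\chi]\hookrightarrow M_\rho(\alpha(\Pi))\to\widetilde\Pi$ that drive the argument.
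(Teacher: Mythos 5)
Your argument is correct and follows essentially the same route as the paper's proof: the monodromy bound of theorem~\ref{WEIRD}, the injection ${\Danismanfunctor}(\widetilde\Pi)\hookrightarrow\pi_0(\widetilde\Pi)$ from lemma~\ref{lem:kappa_pi_0_injective}, the embeddings $\mathbb{E}[\chi]\hookrightarrow\widetilde\Pi$ for non-critical $\chi_{\norm}\in\Delta_0(\Pi)$ from proposition~\ref{preliminary}, and the exclusion via lemma~\ref{lem:embedding_criterion}. The only (implicit) ingredient you leave unspoken is that every constituent of $\pi_0(\delta_P^{-1/2}\otimes\widetilde\Pi)$ lies in $\widetilde\Delta(\Pi)=\Delta_0(\Pi)$ for generic $\Pi$, which is lemma~\ref{ugly}; the paper cites this explicitly, but otherwise the two proofs coincide.
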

\begin{proof}Without loss of generality assume that $J_P(\Pi)\neq0$ and $\Pi$ is normalized as in table \ref{tab:List}. 
Theorem~\ref{WEIRD} asserts that every $T$-character appears in $\pi_0(\widetilde{\Pi})$ and thus in ${\Danismanfunctor}(\widetilde{\Pi})$ with at most a single Jordan block.
Fix a constituent $\chi$ of ${\Danismanfunctor}(\widetilde\Pi)$, then
$\chi_{\norm}=\nu^{-3/2}\chi\in \widetilde\Delta(\Pi)={\Delta}_0(\Pi)$ by lemma~\ref{lem:kappa_pi_0_injective} and lemma~\ref{ugly}.
If $\chi_{\norm}\neq\chi_{\crit}$ is not critical,
then proposition~\ref{preliminary} provides an embedding
$\mathbb{E}[\chi]\hookrightarrow\widetilde{\Pi}$, but this contradicts lemma~\ref{lem:embedding_criterion}.
\end{proof}

\subsection{Exceptional cases}

We now determine ${\Danismanfunctor}(\delta_P^{-1/2}\!\otimes\!\widetilde{\Pi})$ for the exceptional cases of corollary~\ref{cor:comb_cases2}.
Recall that the critical character is
$\chi_{\crit}=\nu^{1/2}\rho\in\Delta_0(\Pi)$.

\begin{prop}[Extraordinary exceptional case]\label{prop:extra-ordinary_exceptional}
For generic irreducible $\Pi\in\CCC_G(\omega)$ and a Bessel character $\rho$ with $\rho=\rho^\divideontimes\in\Delta_{\minuss}(\Pi)$,
$${\Danismanfunctor}(\delta_P^{-1/2}\!\otimes\!\widetilde \Pi) \cong \chi_{\crit}\ .$$
\end{prop}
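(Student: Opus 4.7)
The strategy is to combine the forced embedding supplied by lemma~\ref{lem:key2} in the extraordinary case with the perfectness of $\widetilde\Pi/{\Danismanfunctor}(\widetilde\Pi)$ and the explicit information about $\pi_0(\widetilde\Pi)$ coming from the Siegel-Jacquet module analysis. Concretely, since $\rho=\rho^\divideontimes\in\Delta_{\minuss}(\Pi)$ forces the filtration index $m=2$, lemma~\ref{lem:key2} yields an injection of $TS$-modules $\nu\chi_\Pi\otimes\pi\hookrightarrow \widetilde\Pi/{\Danismanfunctor}(\widetilde\Pi)$. By corollary~\ref{cor:gen_Mult1} the Bessel module $\widetilde\Pi$ has degree one, so by lemma~\ref{lem:universal extensions classification} its quotient $\widetilde\Pi/{\Danismanfunctor}(\widetilde\Pi)$ is perfect of degree one, and by lemma~\ref{lem:perfect_model} satisfies the monodromy estimate $\dim\pi_0(\widetilde\Pi/{\Danismanfunctor}(\widetilde\Pi))^{\chi}\leq 1$ for every $T$-character $\chi$.

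Next I would identify the $\pi_0$-contribution of the embedded submodule. In each extraordinary case the constraint $\omega_\pi=\nu^{-1}$ singles out a $\pi$ of a very specific form (a twist of Steinberg or of a cuspidal), and the unnormalized Jacquet quotient $\delta_B^{-1/2}\pi_U$ consists of a single $T$-character; a short computation shows that after the twist by $\nu\chi_\Pi$ and the normalization $\delta_P^{-1/2}$ this character is precisely $\chi_{\crit}=\nu^{1/2}\rho$. Combined with lemma~\ref{ugly}, which gives the multiset of constituents $\widetilde\Delta(\Pi)$ of $\pi_0(\widetilde\Pi)$ containing $\chi_{\crit}$ with total multiplicity two (via the $m=2$ contribution of $\widetilde{I_1}$ together with, for case \nosf{VIa}, the explicit monodromy described in lemma~\ref{lem:first_monodromy_VIa}), this forces $\widetilde\Pi/{\Danismanfunctor}(\widetilde\Pi)\cong\delta_P^{1/2}\otimes\mathbb{E}[\chi_{\crit}]$.

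From the injection $\kappa(\widetilde\Pi)\hookrightarrow\pi_0(\widetilde\Pi)$ of lemma~\ref{lem:kappa_pi_0_injective} and the exact sequence $0\to{\Danismanfunctor}(\widetilde\Pi)\to\pi_0(\widetilde\Pi)\to\pi_0(\widetilde\Pi/{\Danismanfunctor}(\widetilde\Pi))\to 0$ that follows from exactness of $\pi_0$ on the filtration by ${\Danismanfunctor}(\widetilde\Pi)$, one obtains ${\Danismanfunctor}(\delta_P^{-1/2}\otimes\widetilde\Pi)\cong\chi_{\crit}$ as desired. Non-triviality of ${\Danismanfunctor}$ is guaranteed for case \nosf{VIa} by corollary~\ref{cor:VIa_Bessel_not_perfect} and in the other extraordinary cases by the same dimension comparison, since otherwise $\widetilde\Pi$ would be perfect and of degree one, contradicting the fact that it carries two copies of $\chi_{\crit}$ in $\pi_0$ against the monodromy bound of lemma~\ref{lem:equivalences2}.

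The main obstacle is ruling out that ${\Danismanfunctor}(\widetilde\Pi)$ could be a longer Jordan block $\chi_{\crit}^{(m)}$ with $m\geq 2$, rather than a single character. For all cases outside \nosf{VIa} this is handled by corollary~\ref{cor:sigma_is_critical_Jordanblock} together with the degree count $\dim\pi_0(\widetilde\Pi)^{(\chi_{\crit})}\leq 2$ extracted from $\widetilde\Delta(\Pi)$; for \nosf{VIa} itself the same bound follows directly from lemma~\ref{lem:monodromy_VIa} and the explicit indecomposable structure of $E_{\chi_{\norm}}\cong 1\times\nu$ established in lemma~\ref{lem:first_monodromy_VIa}, which together with the length-one image in $\widetilde\Pi/{\Danismanfunctor}(\widetilde\Pi)$ pins down the length of the Jordan block in ${\Danismanfunctor}(\widetilde\Pi)$ to exactly one.
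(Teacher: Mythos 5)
Your argument works for type \nosf{VIa} and is essentially the paper's treatment of that case, but it breaks down for the other extraordinary cases (\nosf{IIa}, \nosf{Va}, \nosf{XIa}), and the step that fails is exactly the substantive one. The claim that $\chi_{\crit}$ occurs with multiplicity two in $\pi_0(\widetilde\Pi)$ is false outside type \nosf{VIa}: by lemma~\ref{ugly} and table~\ref{tab:Delta}, $\widetilde\Delta(\Pi)$ contains $\chi_{\crit}=\nu^{1/2}\rho$ with multiplicity one in those cases (e.g.\ for \nosf{IIa} one has $\widetilde\Delta(\Pi)=\{\chi_1,\nu^{1/2},\chi_1^{-1}\}$ and $\chi_{\crit}=\nu^{1/2}$). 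The ``$m=2$ contribution of $\widetilde{I_1}$'' lives in $\widetilde{I}$, which has degree two by lemma~\ref{lem:key2}; only one of the two copies of $\nu\chi_\Pi\otimes\pi$ survives in the degree-one quotient $\widetilde\Pi$. Consequently your monodromy-bound argument does not show that $\widetilde\Pi$ fails to be perfect, and you are left with no proof that ${\Danismanfunctor}(\widetilde\Pi)\neq0$; lemma~\ref{lem:key2} only constrains the quotient $\widetilde\Pi/{\Danismanfunctor}(\widetilde\Pi)$ and is entirely compatible with ${\Danismanfunctor}(\widetilde\Pi)=0$. Your identification $\widetilde\Pi/{\Danismanfunctor}(\widetilde\Pi)\cong\delta_P^{1/2}\otimes\mathbb{E}[\chi_{\crit}]$ is likewise wrong for \nosf{IIa} and \nosf{Va}, where the perfect quotient is the universal extension of the remaining, non-critical constituents.

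The missing ingredient is a positive construction of the critical character inside $\widetilde\Pi$, which is the actual content of the proposition. The paper obtains it from the Mellin transform: in the critical case theorem~\ref{thm:ME_split} gives $M_\rho(\mathbb{E}[\chi])\cong\mathbb{S}\oplus i_*(\chi)$ for $\chi=\nu^{3/2}\chi_{\crit}=\nu^2\rho$, whence an embedding $i_*(\chi)\hookrightarrow M_\rho(\alpha(\Pi))$ via lemma~\ref{lem:first_delta}; one must then check that this character is not killed by the boundary map of lemma~\ref{lem:long_exact_main}, which is done through the Klingen--Jacquet analysis of lemma~\ref{DELTA}, using that the relevant character $\chi'=1$ does not occur in $\Delta_Q(\Pi)$ for types \nosf{IIa}, \nosf{Va}, \nosf{XIa}. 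Some argument of this kind, giving a lower bound on ${\Danismanfunctor}(\widetilde\Pi)$, is indispensable and absent from your proposal.
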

\begin{proof}
By a twist we can assume that $J_P(\Pi)\neq0$ and $\Pi$ is normalized as in table~\ref{tab:List}.
Corollary~\ref{cor:comb_cases2} asserts that $\Pi$ is of type \nosf{IIa}, \nosf{Va}, \nosf{VIa} or \nosf{XIa} and  $\rho\notin\Delta_{\pluss}(\Pi)$. We first consider the case where $\Pi$ is not of type \nosf{VIa}.

By corollary~\ref{cor:sigma_is_critical_Jordanblock}, ${\Danismanfunctor}(\delta_P^{-1/2}\otimes\widetilde\Pi)$ is a single Jordan block attached to the critical character $\chi_{\crit}$.
The generalized eigenspace $X=(\pi_0(\widetilde{\Pi}))^{(\chi)}$ for $\chi=\nu^{3/2}\chi_{\crit}$
is one-dimensional by lemma~\ref{ugly}, i.e. $X\cong\chi$.
Theorem \ref{thm:ME_split} yields an injection 
$\chi \hookrightarrow M_\rho(\mathbb{E}[X])$.
By lemma~\ref{lem:first_delta} this induces an injection 
$$   i_*(\chi) \hookrightarrow M_\rho(\alpha(\Pi))\ .$$
Now we need to study the boundary map $\delta$ of lemma~\ref{lem:long_exact_main}.
The analysis of this secondary boundary map is done in lemma \ref{DELTA}, where it is shown that the 
image of this secondary boundary map can have an influence only if
$$ \nu^{1/2}\omega\chi^{-1}_{\crit} \cdot \chi' = \rho $$
holds for some $\chi'\in \Delta_Q$.
The condition $\rho=\rho^\divideontimes$ is equivalent to $\chi'=1$.
Since $\Pi$ is not of type \nosf{VIa},
the character $\chi'=1$ is not contained in $\Delta_Q(\Pi)$.
Lemma~\ref{DELTA} and lemma~\ref{lem:long_exact_main} yield an injection $\chi\hookrightarrow k_\rho(\overline\Pi)$. 
Since $\dim\pi_0(\widetilde{\Pi})^{(\chi)}\leq1$ by lemma \ref{ugly} and since ${\Danismanfunctor}(\delta_P^{-1/2}\otimes\widetilde{\Pi})$ is a single Jordan block attached to $\chi_{\crit}$, we obtain an isomorphism ${\Danismanfunctor}(\delta_P^{-1/2}\otimes\widetilde \Pi) \cong \chi_{\crit}$.

Consider the case where $\Pi$ is of type \nosf{VIa} and $\rho=1$.
Lemma~\ref{lem:monodromy_VIa} and lemma~\ref{ugly} imply
$\pi_0(\delta_P^{-1/2}\otimes\widetilde{\Pi})=\chi_{\crit}\oplus \chi_{\crit}$ as a $T$-module.
Since $\widetilde{\Pi}$ has degree $\deg(\widetilde{\Pi})=1$ by theorem~\ref{Mult1},
$\tilde \Pi$ is not perfect by lemma~\ref{lem:equivalences2}
and this means ${\Danismanfunctor}(\delta_P^{-1/2}\otimes\widetilde{\Pi})$ is either $\chi_{\crit}$ or $\chi_{\crit}\oplus\chi_{\crit}$.
In the latter case $\delta_P^{-1/2}\otimes\widetilde{\Pi}$ would be semisimple,
but this contradicts the fact that there is an embedding $\mathbb{E}[\chi_{\crit}]\hookrightarrow \delta_P^{-1/2}\otimes(\widetilde{\Pi}/{\Danismanfunctor}(\widetilde{\Pi}))$ from lemma~\ref{lem:key2}. 

This proves ${\Danismanfunctor}(\delta_P^{-1/2}\otimes\widetilde \Pi) \cong \chi_{\crit}$
for the extraordinary exceptional cases.
\end{proof}

\begin{lem}\label{lem:Mellin_inflation}
Fix an irreducible generic $\Pi\in\CCC_G(\omega)$ and a Bessel character $\rho\in \Delta_{\pluss}^\divideontimes(\Pi)$.
Then $\chi_{\crit}=\nu^{1/2}\rho$ is in $\Delta_0^\divideontimes(\Pi)$ and there is an embedding
$$\mathbb{E}[\chi_{\crit}^\divideontimes]\hookrightarrow \delta_P^{-1/2}\otimes\widetilde{\Pi}/{\Danismanfunctor}(\widetilde{\Pi})\ .$$
\end{lem}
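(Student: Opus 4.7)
The plan is to combine the combinatorial identification of $\Delta_\pluss(\Pi)$ from proposition~\ref{lasst} with the perfectness of $\widetilde{\Pi}/\Danismanfunctor(\widetilde{\Pi})$ in order to read off the desired embedding without ever constructing it by hand.

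First, I would deduce the combinatorial claim from the proof of proposition~\ref{lasst}. There one has $d(\Pi,\rho')=\dim\Hom_{T}(\delta_P^{-1/2}J_P(\Pi)_\psi,\nu^{-1/2}\rho')$, so that $\rho'\in\Delta_\pluss(\Pi)$ if and only if $\nu^{-1/2}\rho'\in\Delta_0(\Pi)$. Applied to $\rho'=\rho^\divideontimes$, the hypothesis yields $\chi_\crit^\divideontimes=\nu^{-1/2}\rho^\divideontimes\in\Delta_0(\Pi)$, that is $\chi_\crit\in\Delta_0^\divideontimes(\Pi)$.

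Next, for the embedding I would invoke that $\widetilde{\Pi}$ has degree one by corollary~\ref{cor:gen_Mult1}, so $\delta_P^{-1/2}\otimes\widetilde{\Pi}/\Danismanfunctor(\widetilde{\Pi})$ is perfect of degree one by lemma~\ref{lem:universal extensions classification}. By condition~(3) of lemma~\ref{lem:equivalences2} the desired embedding $\mathbb{E}[\chi_\crit^\divideontimes]\hookrightarrow \delta_P^{-1/2}\otimes\widetilde{\Pi}/\Danismanfunctor(\widetilde{\Pi})$ then exists as soon as $\chi_\crit^\divideontimes$ occurs as a constituent of $\pi_0(\delta_P^{-1/2}\otimes\widetilde{\Pi})/\Danismanfunctor(\delta_P^{-1/2}\otimes\widetilde{\Pi})$, the identification of this quotient with $\pi_0$ of $\widetilde{\Pi}/\Danismanfunctor(\widetilde{\Pi})$ coming from exactness of $\pi_0$ together with lemma~\ref{lem:kappa_pi_0_injective}. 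By lemma~\ref{ugly} the upstairs constituents form the multiset $\Delta_0(\Pi)$ (outside the types \nosf{VIa} and \nosf{VId}), which contains $\chi_\crit^\divideontimes$ by the first step. The disjointness lemma~\ref{lem:intersection--*} gives $\rho\notin\Delta_\pluss(\Pi)$, so corollary~\ref{cor:sigma_is_critical_Jordanblock} applies (away from type \nosf{VIa}) and shows that $\Danismanfunctor(\delta_P^{-1/2}\otimes\widetilde{\Pi})$ is at most a single Jordan block attached to $\chi_\crit$. Provided $\chi_\crit^\divideontimes\neq\chi_\crit$, the block at $\chi_\crit^\divideontimes$ in $\pi_0(\widetilde{\Pi})$ therefore descends intact to the quotient, yielding the embedding.

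The delicate points I expect to require separate treatment are the type \nosf{VIa} subcase and the degenerate equality $\chi_\crit=\chi_\crit^\divideontimes$, which amounts to $\omega=\nu\rho^2$. For type \nosf{VIa}, the hypothesis $\rho\in\Delta_\pluss^\divideontimes(\Pi)$ forces $\rho=\nu^{-1}$, whence $\chi_\crit=\nu^{-1/2}$ and $\chi_\crit^\divideontimes=\nu^{1/2}$ are distinct, and a direct appeal to lemma~\ref{lem:monodromy_VIa} confirms that $\nu^{1/2}$ survives in $\pi_0(\widetilde{\Pi})/\Danismanfunctor(\widetilde{\Pi})$. For the degenerate case, one reads off the multiplicity of $\chi_\crit$ in $\Delta_0(\Pi)$ from the tables of Jacquet modules and verifies that it exceeds the length of the single Jordan block absorbed into $\Danismanfunctor(\widetilde{\Pi})$, so that a residual copy of $\chi_\crit=\chi_\crit^\divideontimes$ persists in the quotient.
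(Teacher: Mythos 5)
Your combinatorial first step is correct, and your treatment of the case $\chi_{\crit}^\divideontimes\neq\chi_{\crit}$ (away from type \nosf{VIa}) is a valid shortcut that differs from the paper: since $\widetilde{\Pi}/\Danismanfunctor(\widetilde{\Pi})$ is perfect of degree one, lemma~\ref{lem:equivalences2}(3) reduces the embedding to the survival of $\chi_{\crit}^\divideontimes$ in $\pi_0(\widetilde{\Pi})/\Danismanfunctor(\widetilde{\Pi})$, and corollary~\ref{cor:sigma_is_critical_Jordanblock} guarantees this when the two characters differ, because $\Danismanfunctor$ lives entirely in the generalized $\chi_{\crit}$-eigenspace. (For \nosf{VIa} your cited justification via lemma~\ref{lem:monodromy_VIa} controls $\pi_0$ but not $\Danismanfunctor$; the case is nonetheless fine because there $\rho=\nu^{-1}\notin\Delta_{\pluss}(\Pi)\cup\Delta_{\minuss}(\Pi)$, so corollary~\ref{perfectness widetildePi} gives perfectness outright.)

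The genuine gap is the degenerate case $\chi_{\crit}=\chi_{\crit}^\divideontimes$, which is precisely the case in which this lemma is later invoked (in proposition~\ref{prop:fully_induced_non-ordinary}, to force $\dim\Danismanfunctor(\widetilde{\Pi})^{(\chi)}\leq 1$ when $\dim\pi_0(\widetilde{\Pi})^{(\chi)}=2$). Your reduction turns the lemma into the assertion that the Jordan block $\Danismanfunctor(\widetilde{\Pi})\subseteq\pi_0(\widetilde{\Pi})^{(\chi)}$ is strictly shorter than $\pi_0(\widetilde{\Pi})^{(\chi)}$ itself, and you propose to ``verify'' this from the tables of Jacquet modules. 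But the length of that Jordan block is not readable from the Jacquet module; nothing established before this lemma excludes the possibility that $\Danismanfunctor(\widetilde{\Pi})$ equals all of $\pi_0(\widetilde{\Pi})^{(\chi)}$ (length $2$ in types \nosf{I}, \nosf{IIa}, \nosf{X} with $\chi_{\crit}^2=\omega$, and even length $1$ versus multiplicity $1$ for type \nosf{IIIa} with $\chi_1=\nu$, $\rho=1$, where your multiplicity count manifestly does not exceed the possible block length). So in the one case where the lemma carries its real content, your argument either begs the question or is false as stated. The paper avoids this by actually constructing the embedding: it applies the Mellin functor $M_{\rho^\divideontimes}$ to the surjection $\alpha(\Pi)\twoheadrightarrow i_*(\nu\rho^\divideontimes)$, uses theorem~\ref{thm:big_extension} and lemma~\ref{lem:small extension} to identify the perfect degree-one quotient of $M_{\rho^\divideontimes}(\alpha(\Pi))$ with $\mathbb{E}[\nu\rho^\divideontimes]$, and then pushes this through the map $M_{\rho^\divideontimes}(\alpha(\Pi))\to\beta_{\rho^\divideontimes}(\Pi)\cong\widetilde{\Pi}$ of lemma~\ref{lem:long_exact_main}, whose kernel is finite-dimensional; you would need to import this (or an equivalent) construction to close the degenerate case.
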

\begin{proof}
By lemma~\ref{lem:alpha_perfect}, $\alpha(\Pi)\in\CCC$ is perfect of degree one and $\pi_0(\alpha) \cong J_P(\Pi)_\psi$ as a $T$-module for the unnormalized Siegel-Jacquet module $J_P(\Pi)$.
Thus for every $\chi_{\norm}\in\Delta_0(\Pi)$ there is a projection $\alpha(\Pi)\twoheadrightarrow i_*\chi$.
The right-exact Mellin functor $M_{\rho^\divideontimes}$ then provides a projection
$$M_{\rho^\divideontimes}(\alpha(\Pi))\twoheadrightarrow M_{\rho^\divideontimes}(i_*\chi)\ .$$
Especially, this holds for $\chi=\delta_{P}^{1/2}\chi_{\crit}^\divideontimes=\nu\rho^\divideontimes$, then
$M_{\rho^\divideontimes}(i_*\chi)=\mathbb{E}[\chi]\in\CCC$ is perfect of degree one by lemma~\ref{lem:small extension}.
Since $M_{\rho^\divideontimes}(\alpha(\Pi))$ also has degree one by theorem~\ref{thm:big_extension}, the quotient of $M_{\rho^\divideontimes}(\alpha(\Pi))$ by its maximal finite-dimensional submodule is isomorphic to $\mathbb{E}[\chi]$.
Lemma~\ref{lem:long_exact_main} provides a non-trivial morphism 
$$M_{\rho^\divideontimes}(\alpha(\Pi))\to\widetilde{\Pi}$$ 
with finite-dimensional kernel. Dividing out the maximal finite-dimensional submodules we obtain a well-defined non-trivial morphism
\begin{equation*}\mathbb{E}[\delta_P^{1/2}\chi_{\crit}^\divideontimes] \cong \mathbb{E}[\chi]\to \widetilde\Pi/{\Danismanfunctor}(\widetilde\Pi)\ \quad\text{ in } \CCC\ .\end{equation*}
It is injective by lemma \ref{SCREEN}.
Normalization with $\delta_P^{-1/2}$ gives the result.
\end{proof}

\begin{prop}[Fully induced non-ordinary case]\label{prop:fully_induced_non-ordinary}
For irreducible generic $\Pi\in\CCC_G(\omega)$ and a Bessel character $\rho$ assume that
$\rho\in \Delta_{\pluss}^\divideontimes(\Pi)\cap\Delta_{\minuss}(\Pi)$ and $\rho\notin\Delta_{\pluss}(\Pi)\cup\Delta_{\minuss}^\divideontimes(\Pi)$.
Then
\begin{equation*}
{\Danismanfunctor}(\delta_P^{-1/2}\!\otimes\!\widetilde\Pi)\cong \chi_{\crit}\ .
\end{equation*}
\end{prop}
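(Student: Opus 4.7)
First, I would reduce the assertion to constructing a single non\-trivial embedding. The hypothesis $\rho\notin\Delta_\pluss(\Pi)$ puts us in the situation of corollary~\ref{cor:sigma_is_critical_Jordanblock}, so ${\Danismanfunctor}(\delta_P^{-1/2}\otimes\widetilde\Pi)$ is either zero or a single Jordan block attached to $\chi_{\crit}=\nu^{1/2}\rho$; a quick check shows the combinatorial conditions rule out twists of type~\nosf{VIa} (for \nosf{VIa} one has $\Delta_\pluss^\divideontimes\cap\Delta_\minuss=\emptyset$), hence theorem~\ref{thm:monodromy} forces $\dim\pi_0(\widetilde\Pi)_\chi\leq 1$ and the Jordan block has length at most one. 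Thus ${\Danismanfunctor}(\delta_P^{-1/2}\otimes\widetilde\Pi)$ is either $0$ or $\cong\chi_{\crit}$, and it suffices to produce an injection $i_\ast(\chi_{\crit})\hookrightarrow\delta_P^{-1/2}\otimes\widetilde\Pi$.

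Next, I would follow the template of proposition~\ref{prop:extra-ordinary_exceptional}. Since $\rho\in\Delta_\minuss(\Pi)$ we have $\chi_{\crit}\in\Delta_0(\Pi)$, and because $\alpha(\Pi)$ is perfect of degree one by lemma~\ref{lem:alpha_perfect}, the classification of lemma~\ref{lem:equivalences2} furnishes an embedding $\mathbb{E}[\chi]\hookrightarrow\alpha(\Pi)$ with unnormalized critical character $\chi=\delta_P^{1/2}\chi_{\crit}=\nu^2\rho$; the cokernel is of the form $i_\ast(Y)$ for a finite\-dimensional $T$-module $Y$. Applying $M_\rho$ and invoking lemma~\ref{lem:first_delta} (valid since $\rho\notin\Delta_\pluss(\Pi)$) yields a short exact sequence
\begin{equation*}
 0\to M_\rho(\mathbb{E}[\chi])\to M_\rho(\alpha(\Pi))\to M_\rho(i_\ast Y)\to 0\ .
\end{equation*}
Theorem~\ref{thm:ME_split} identifies $M_\rho(\mathbb{E}[\chi])\cong\mathbb{S}\oplus i_\ast(\chi)$ because $\chi=\nu^2\rho$ is the distinguished character in that theorem, so we obtain an embedding $i_\ast(\chi)\hookrightarrow M_\rho(\alpha(\Pi))$ whose image lies in the maximal finite\-dimensional submodule.

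Third, I would transport this embedding through the long exact sequence of lemma~\ref{lem:long_exact_main},
\begin{equation*}
 \cdots\to k^\rho i_\ast(J_Q(\Pi))\xrightarrow{\delta}M_\rho(\alpha(\Pi))\to\widetilde\Pi\to\cdots\ .
\end{equation*}
By lemma~\ref{DELTA} the image of $\delta$ is concentrated in generalized $\chi$-eigenspaces for characters satisfying $\rho=\nu^2\omega\chi'\chi^{-1}$ for some $\chi'\in\Delta_Q(\Pi)$; for our $\chi=\nu^2\rho$ this reduces to $\chi'=\rho^2/\omega=\rho/\rho^\divideontimes$. \emph{This is the main obstacle}: I must verify, using the explicit list $\Delta_Q(\Pi)$ of table~\ref{tab:Delta} and the hypothesis $\rho\notin\Delta_\pluss(\Pi)\cup\Delta_\minuss^\divideontimes(\Pi)$, that $\rho/\rho^\divideontimes$ does not lie in $\Delta_Q(\Pi)$. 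I expect this to follow from a combinatorial statement of section~\ref{s:Combinat} parallel to lemma~\ref{lem:exceptional++*}, or by a short case analysis of the $\Pi$-types admitting $\rho\in\Delta_\pluss^\divideontimes\cap\Delta_\minuss$ with $\rho\neq\rho^\divideontimes$ (essentially the Siegel fully induced generic types).

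Once $\delta$ is seen not to hit the $\chi$-eigenline, the composition $i_\ast(\chi)\hookrightarrow M_\rho(\alpha(\Pi))\to\widetilde\Pi$ remains injective by lemma~\ref{SCREEN}, and after dividing by $\delta_P^{1/2}$ produces the sought embedding $\chi_{\crit}\hookrightarrow\delta_P^{-1/2}\otimes\widetilde\Pi$. Its image is automatically $S$-invariant, hence lands in ${\Danismanfunctor}(\delta_P^{-1/2}\otimes\widetilde\Pi)$; together with the upper bound from the first step this yields ${\Danismanfunctor}(\delta_P^{-1/2}\otimes\widetilde\Pi)\cong\chi_{\crit}$.
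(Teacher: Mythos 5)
There are two genuine gaps here, one on each side of the desired isomorphism.

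\textbf{Upper bound.} Your claim that theorem~\ref{thm:monodromy} ``forces $\dim\pi_0(\widetilde\Pi)_\chi\leq 1$ and the Jordan block has length at most one'' is a non sequitur. The monodromy theorem bounds the \emph{coinvariant} space, and a single Jordan block $\chi^{(m)}$ of any length $m$ has one-dimensional $\chi$-coinvariants; so corollary~\ref{cor:sigma_is_critical_Jordanblock} together with the monodromy bound only tells you that $\Danismanfunctor(\delta_P^{-1/2}\otimes\widetilde\Pi)$ is $0$ or an indecomposable block at $\chi_{\crit}$ of \emph{a priori unbounded length}. The generalized eigenspace $\pi_0(\widetilde\Pi)^{(\chi)}$ really can have dimension $\geq 2$ in the cases at hand (type \nosf{I} with $\chi_1=1$ or $\chi_2=1$; type \nosf{IIa} with $\chi_1^2=1$), so a block of length two is not excluded by anything you have said. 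The paper spends the entire second half of its proof on exactly this point: it splits into the cases $\dim\pi_0(\widetilde\Pi)^{(\chi)}=1$, $=2$ with $\chi_{\crit}=\chi_{\crit}^\divideontimes$ (handled by lemma~\ref{lem:Mellin_inflation}, which produces an embedding $\mathbb{E}[\chi_{\crit}^\divideontimes]\hookrightarrow\widetilde\Pi/\Danismanfunctor(\widetilde\Pi)$ forcing the block in $\Danismanfunctor$ to have length one), and the residual type \nosf{I} cases with $\chi_1=1$ or $\chi_2=1$, where it shows $\Danismanfunctor(\widetilde{I_{\leq1}})=0$ by a direct analysis of the Bessel filtration and a counting argument. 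None of this is optional.

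\textbf{Lower bound.} The obstacle you flag — verifying $\rho/\rho^\divideontimes\notin\Delta_Q(\Pi)$ so that lemma~\ref{DELTA} neutralizes the boundary map — is not merely unfinished; it actually fails for some admissible parameters. For type \nosf{I} with $\rho=\nu^{-1/2}$ one has $\rho/\rho^\divideontimes=\nu^{-1}(\chi_1\chi_2)^{-1}$, and choosing $\chi_2=\nu^{-1}\chi_1^{-2}$ (compatible with all the conditions of table~\ref{tab:List} and with the hypotheses of the proposition, since for type \nosf{I} the hypothesis reduces to $\rho\in\Delta_{\minuss}$, which is automatic) gives $\rho/\rho^\divideontimes=\chi_1\in\Delta_Q(\Pi)$. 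So the extraordinary-case template does not transfer, which is precisely why the paper constructs the embedding differently here: it picks $\sigma_\Pi=\pi\boxtimes\chi_\Pi$ with $\chi_\Pi=\chi_{\crit}$ (corollary~\ref{cor:sigma_Pi_existence}), observes that $I=\Ind_P^G(\sigma_\Pi)\cong\Pi$ is irreducible and $(\sigma_\Pi,\rho)$ is non-ordinary with $\rho=\rho_+\neq\rho_-$, and then reads the embedding $i_*(\nu^{3/2}\chi_{\crit})\hookrightarrow\widetilde\Pi$ directly off the Bessel filtration via lemma~\ref{KEY}, since $\deg\widetilde I=1$ forces $gr_3^F(\widetilde I)\cong\delta_P^{1/2}\otimes\chi_\Pi$. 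You would need to replace your step entirely by this (or an equivalent) argument.
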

\begin{proof}By a twist we can assume that $\Pi$ is normalized as in table~\ref{tab:List}.
$\Pi$ belongs to the generic, fully Siegel induced cases \nosf{I}, \nosf{IIa}, \nosf{X} by lemma~\ref{lem:exceptional+-*}.
By corollary~\ref{cor:sigma_Pi_existence},
there exists an irreducible representation $\sigma_\Pi=\pi \boxtimes\chi_\Pi$ of $M=M^1\times \{t_\lambda\mid \lambda \in k^{\times}\}$ such that $I=\Ind_P^G(\sigma_\Pi)$ admits $\Pi$ as a quotient and $\chi_\Pi(t_\lambda)=\chi_{\crit}(x_\lambda)$.
Table~\ref{tab:list_sigma} shows that $I\cong\Pi$ is irreducible.
Especially, we have $\rho=\rho_+(\sigma_\Pi)\neq \rho_-(\sigma_\Pi)$, so
the pair $(\sigma_\Pi,\rho)$ is non-ordinary but not extraordinary.
By theorem~\ref{Mult1}, $\widetilde{\Pi}$ has degree one, so
lemma~\ref{KEY} provides an embedding $i_*(\chi)\hookrightarrow \widetilde{\Pi}$ of $TS$-modules for $\chi=\nu^{3/2}\chi_{\crit}$.

It remains to be shown that $\dim{\Danismanfunctor}(\widetilde{\Pi})$ is not larger than one.
By corollary~\ref{cor:sigma_is_critical_Jordanblock}, ${\Danismanfunctor}(\widetilde{\Pi})$ is a single Jordan block attached to $\chi=\nu^{3/2}\chi_{\crit}$.
If $\dim\pi_0(\widetilde{\Pi})^{(\chi)}=1$, we are done. If $\dim\pi_0(\widetilde{\Pi})^{(\chi)}=2$ and  $\chi_{\crit}=\chi_{\crit}^\divideontimes$,
then lemma~\ref{lem:Mellin_inflation} provides an embedding $\mathbb{E}[\chi]\hookrightarrow \widetilde{\Pi}/{\Danismanfunctor}(\widetilde{\Pi})$, so $\dim{\Danismanfunctor}(\widetilde{\Pi})^{(\chi)}\leq1$ and we are done.
Finally, assume either $\dim\pi_0(\widetilde{\Pi})^{(\chi)}=2$ with $\chi_{\crit}\neq\chi_{\crit}^\divideontimes$ or $\dim\pi_0(\widetilde{\Pi})^{(\chi)}>2$. Table~\ref{tab:Delta} shows that this is only possible for $\Pi$ of case \nosf{I} with $\chi_1=1$ or $\chi_2=1$.
Fix the Bessel filtration $(I_\nu)_\nu$ for $\Pi=\Ind_P^G(\sigma_\Pi)$ and $\sigma_\Pi=(\chi_1\times\chi_2)\boxtimes1$.
It is sufficient to show that $${\Danismanfunctor}(\widetilde{I_{\leq1}})=0\ ,$$
then counting constituents implies the statement.
Indeed, $\widetilde{I_1}$ is perfect by lemma \ref{tildemod} and Kirillov theory.
If ${\Danismanfunctor}(\widetilde{I_{\leq1}})\neq0$, then $\widetilde{I_0}\cong {\Danismanfunctor}(\widetilde{I_{\leq1}})$ is a submodule, so the sequence $0\to \widetilde{I_1}\to \widetilde{I_{\leq1}}\to \widetilde{I_0}\to0$ splits.
By our assumptions, one of the constituents $\nu^{3/2}\chi_i$ in $\pi_0(\widetilde{I_1})$ is isomorphic to $\widetilde{I_0}\cong \nu^{3/2}\chi_1\chi_2$, so $\dim\pi_0(\widetilde{I_{\leq1}})_{\nu^{3/2}\chi_i}\geq2$.
But theorem~\ref{WEIRD} implies the monodromy bound $\dim\pi_0(\widetilde{I_{\leq1}})_{\nu^{3/2}\chi_i}\leq \dim\pi_0(\widetilde{I})_{\nu^{3/2}\chi_i} \leq1$, so this provides a contradiction.
\end{proof}

\begin{prop}\label{prop:special case IIIa} If $(\Pi,\rho)$ belongs to the exceptional case \nosf{IIIa} with $\Pi$ as in table~\ref{tab:List} and $\rho\in \{1,\chi_1\}$,
 then the Bessel module $\widetilde \Pi$ is perfect.
\end{prop}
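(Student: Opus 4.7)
The plan is to verify, via lemma~\ref{lem:equivalences2}, the two conditions that characterize perfectness for a $TS$-module of degree one: the monodromy bound $\dim\pi_0(\widetilde\Pi)_\chi\leq 1$ for every $T$-character $\chi$, and the existence, for every $\chi_\norm\in\widetilde\Delta(\Pi)$, of an embedding $\mathbb{E}[\chi]\hookrightarrow \widetilde\Pi$ with $\chi=\nu^{3/2}\chi_\norm$. Since \nosf{IIIa} is neither a twist of \nosf{VIa} nor of \nosf{VId}, the monodromy bound follows immediately from theorem~\ref{WEIRD}; by corollary~\ref{cor:gen_Mult1} $\widetilde\Pi$ has degree one, and by lemma~\ref{ugly} the constituents of $\pi_0(\widetilde\Pi)$ are indexed by $\widetilde\Delta(\Pi)=\Delta_0(\Pi)$. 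The task thus reduces to producing, for each $\chi_\norm\in\Delta_0(\Pi)$, an embedding $\mathbb{E}[\chi]\hookrightarrow \widetilde\Pi$.

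For non-critical characters $\chi_\norm\in\Delta_0(\Pi)$ distinct from $\chi_{\crit}=\nu^{1/2}\rho$, I would apply proposition~\ref{preliminary} directly. Its hypothesis $\rho\notin\Delta_\pluss(\Pi)$ is satisfied for \nosf{IIIa} normalized as in table~\ref{tab:List} with $\rho\in\{1,\chi_1\}$, as one reads off from table~\ref{tab:Delta}.

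The critical character presents the real difficulty, because theorem~\ref{thm:ME_split} shows $M_\rho(\mathbb{E}[\chi_{\crit}])\cong \mathbb{S}\oplus i_*(\chi_{\crit})$ splits, so the Mellin-lifting step of proposition~\ref{preliminary} is unavailable. My plan is to exploit the finer structure of the perfect module $\alpha(\Pi)$ from lemma~\ref{lem:alpha_perfect}: since $\pi_0(\alpha(\Pi))\cong J_P(\Pi)_\psi$ is cyclic as a $T$-module by lemma~\ref{lem:equivalences2}, its generalized $\chi_{\crit}$-eigenspace is an indecomposable Jordan block, and for \nosf{IIIa} with $\rho\in\{1,\chi_1\}\cap\Delta_\minuss(\Pi)\cap\Delta_\minuss^\divideontimes(\Pi)$ the multiset $\Delta_0(\Pi)$ from table~\ref{tab:Delta} should force this block to have length two, giving a submodule $X\cong \chi_{\crit}^{(2)}$ of $\pi_0(\alpha(\Pi))$. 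Lemma~\ref{lem:extended_Mellin_lemma} then yields $M_\rho(\mathbb{E}[X])\cong \mathbb{E}[\chi_{\crit}]$, and lemma~\ref{lem:first_delta} (applied to $Y=\pi_0(\alpha(\Pi))$ and $F=Y/X$) embeds this into $M_\rho(\alpha(\Pi))$. Lemma~\ref{lem:embed_app} finally transports the embedding across the morphism $M_\rho(\alpha(\Pi))\to \widetilde\Pi$ arising from lemma~\ref{lem:long_exact_main}.

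The main obstacle is the verification that the generalized $\chi_{\crit}$-eigenspace of $\pi_0(\alpha(\Pi))$ really has length two, so that lemma~\ref{lem:extended_Mellin_lemma} applies in its nontrivial branch, together with the secondary check via lemma~\ref{DELTA} that the boundary morphism $\delta\colon k^\rho i_*(J_Q(\Pi))\to M_\rho(\alpha(\Pi))$ of lemma~\ref{lem:long_exact_main} does not intersect the constructed copy of $\mathbb{E}[\chi_{\crit}]$; the latter amounts to ruling out $\chi'=\rho^2/\omega\in \Delta_Q(\Pi)$, a combinatorial check against table~\ref{tab:Delta} for type \nosf{IIIa}.
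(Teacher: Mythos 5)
Your reduction to lemma~\ref{lem:equivalences2} and your treatment of the non-critical characters via proposition~\ref{preliminary} are fine and agree with the paper's general strategy. The gap is in your handling of the critical character, which is precisely why case \nosf{IIIa} requires a separate argument. You claim that the generalized $\chi_{\crit}$-eigenspace of $\pi_0(\alpha(\Pi))\cong J_P(\Pi)_\psi$ is a Jordan block of length two. It is not: for type \nosf{IIIa} one has $\Delta_0(\Pi)=\{\nu^{1/2},\chi_1\nu^{1/2}\}$ with $\chi_1\neq 1$, so these are two \emph{distinct} characters each of multiplicity one. The fact that $\rho\in\Delta_{\minuss}(\Pi)\cap\Delta_{\minuss}^\divideontimes(\Pi)$ means that both $\nu^{1/2}\rho$ and $\nu^{1/2}\rho^\divideontimes$ lie in $\Delta_0(\Pi)$, but since $\rho\neq\rho^\divideontimes$ here these are different characters, not a doubled one. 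Hence $X\cong\chi_{\crit}$ has length one, lemma~\ref{lem:extended_Mellin_lemma} is inapplicable, and theorem~\ref{thm:ME_split} gives the split module $M_\rho(\mathbb{E}[\chi_{\crit}])\cong\mathbb{S}\oplus i_*(\chi_{\crit})$ — exactly the obstruction you set out to avoid. (Two further warnings: even if a length-two block existed, an embedding of $M_\rho(\mathbb{E}[X])\cong\mathbb{E}[X\to\chi_{\crit}]$ into $\widetilde\Pi$ would force $\Danismanfunctor(\widetilde\Pi)\supseteq\ker(X\to\chi_{\crit})\neq 0$ and hence \emph{non}-perfectness, as in proposition~\ref{prop:fully_induced_non-ordinary}; and your final combinatorial check also fails, since for \nosf{IIIa} one computes $\rho^2/\omega=\chi_1^{\mp1}\in\Delta_Q(\Pi)=\{\chi_1,\chi_1^{-1},\nu\}$ for $\rho=1$ resp.\ $\rho=\chi_1$, so lemma~\ref{DELTA} does not rule out interference from the Klingen boundary map.)

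The paper's proof takes a different route for exactly these reasons. It realizes $\Pi$ as the quotient of $I=\Ind_P^G(\sigma_\Pi)$ with $\sigma_\Pi=(\chi_1^{-1}\times\nu^{-1})\boxtimes\nu^{1/2}\chi_1$, a \emph{non-ordinary} pair, so that the bottom piece $\widetilde{I_3}\cong\delta_P^{1/2}\otimes\mathbb{E}[\chi_\Pi]$ of the Bessel filtration already carries the universal extension at the critical character. Using exactness of $0\to\widetilde\Xi\to\widetilde I\to\widetilde\Pi\to0$ (with $\Xi$ of type \nosf{IIIb} and $\beta^\rho(\Pi)=0$), a degree count, and lemma~\ref{Ext-Lemma} applied to the inclusions $\widetilde{I_3}\hookrightarrow\widetilde I$ and $\widetilde\Xi\hookrightarrow\widetilde I$, it pushes $\mathbb{E}[\nu^{1/2}\chi_1]$ into $\widetilde\Pi$; the symmetric choice of $\sigma_\Pi$ handles $\mathbb{E}[\nu^{1/2}]$. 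You would need to replace your Mellin-transform step for the critical character by an argument of this kind.
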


\begin{proof}
Perfectness of $\widetilde\Pi$ will follow from lemma~\ref{lem:equivalences2}, where theorem~\ref{WEIRD} provides the monodromy estimate.
It only remains to construct embeddings $$\mathbb{E}[\chi] \hookrightarrow \delta_P^{-\frac{1}{2}}\widetilde\Pi\ \quad,\qquad\chi\in\widetilde{\Delta}(\Pi)=\{\nu^{\frac{1}{2}},\nu^{\frac{1}{2}}\chi_1\}\ .$$
For the irreducible representation $\sigma_\Pi=(\chi_1^{-1}\times\nu^{-1})\boxtimes\nu^{\frac{1}{2}}\chi_1\in\CCC_M(\omega)$ of the Levi subgroup $M$ of $P$,
the pair $(\sigma_\Pi,\rho)$ is nonordinary
and $\Pi$ is a quotient of $I\!=\! \Ind_P^G(\sigma_\Pi)$.
The non-generic irreducible representation $\Xi=\ker(I\twoheadrightarrow\Pi)$ of type \nosf{IIIb} admits a split Bessel model for $\rho$.
Furthermore, the sequence
$$ \xymatrix{  0 \ar[r] & \widetilde \Xi \ar[r]^j & \widetilde I \ar[r] & \widetilde \Pi \ar[r] & 0} \ $$
is exact because $\beta^\rho(\Pi)=0$ vanishes by lemma~\ref{lem:beta^rho=0_generic}.
Hence $\deg(\widetilde \Xi) = \deg(\widetilde \Pi)=1$ and therefore
$ \deg(\widetilde I)= 2 $ and $ \deg(\widetilde I_{\leq1})\geq 1 $.
By degree reasons the image of the coboundary map $\delta:\beta^\rho(I_{\leq 1})\to \widetilde I_{\geq 2}$ in the Bessel filtration
must be finite-dimensional, thus has zero intersection with the perfect submodule
$\widetilde{I_3}\subseteq \widetilde{I_{\geq2}}$. By lemma~\ref{ugly}, 
$$\pi_0(\delta_P^{-\frac{1}{2}}\otimes\widetilde \Pi )\!=\! \nu^{\frac{1}{2}}\!\oplus\! \nu^{\frac{1}{2}}\chi_1 \quad  \mbox{and}\quad \pi_0(\delta_P^{-\frac{1}{2}}\otimes\widetilde \Xi)\!=\! \nu^{-\frac{1}{2}}\!\oplus\! \nu^{-\frac{1}{2}}\chi_1\ =:X$$ are decomposable because $\chi_1\neq1$.
Prop.~\ref{prop:beta^rho} and left-exactness of $\beta^\rho$ imply $\beta^\rho(I)\cong\beta^\rho(\Xi)=\mathbb{E}[0]=\mathbb{S}$,
so the kernel of $\delta$ is $\mathbb{S}\subseteq\beta^\rho(I_1)$. Thus the image of $\delta$ is isomorphic to $\widetilde{I_2}$, and so the cokernel of $\delta$ is isomorphic to $\widetilde{I_3}$.
We obtain a commutative diagram with exact sequence
\begin{equation*}
\xymatrix@R-3mm{
         &                       & \delta_P^{-\frac{1}{2}}\otimes\widetilde{\Xi}\ar@{^{(}->}[d]_j\ar[dr]^\varphi & \\
0 \ar[r] &  {\underbrace{\delta_P^{-\frac{1}{2}}\otimes\widetilde{I_{3}}}_P} \ar[r]^-{i} & {\underbrace{\delta_P^{-\frac{1}{2}}\otimes \widetilde I}_M} \ar[r]^-{p} &   {\underbrace{\delta_P^{-\frac{1}{2}}\otimes\widetilde{I_{\leq 1}}}_Q} \ar[r] &  0 {\ .}
}
\end{equation*} 
We want apply to lemma \ref{Ext-Lemma} to this diagram and claim that this provides the required embedding.
Note that $P\cong\mathbb E[\chi_1{\nu^{\frac{1}{2}}}]$ and $\delta_P^{-\frac{1}{2}}\otimes\widetilde \Xi \cong \mathbb E[X]$ because $\widetilde{\Xi}$ is perfect by prop.~\ref{prop:embedding_nongeneric} and thus uniquely determined by $X$.
By lemma~\ref{tildeKoh} and Kirillov theory, $\delta_P^{-\frac{1}{2}}\widetilde{I_1}$ is perfect and embeds into $Q$ by degree reasons and lemma~\ref{screening}. Thus
${{\Danismanfunctor}}(Q)$ is either zero or isomorphic to $\delta_P^{-\frac{1}{2}} \widetilde I_0 = \nu^{-\frac{1}{2}}$.

To prove our claim, by lemma \ref{Ext-Lemma} it suffices to show there is no embedding $\mathbb{E}[Y]\hookrightarrow P=\mathbb E[{\nu^{\frac{1}{2}}\chi_1}]$ where $Y= \ker(\psi)$
is defined by $\psi: X \to {\Danismanfunctor}(Q)\subseteq \nu^{-\frac{1}{2}}$.
Indeed, we have $\nu^{-\frac{1}{2}}\chi_1 \subseteq Y$,  hence  $\Hom(\mathbb{E}[Y], \mathbb E[{\nu^{\frac{1}{2}}\chi_1}])= 0$ by lemma~\ref{SCREEN}.
Lemma~\ref{Ext-Lemma} provides the required embedding
\begin{equation*}
P=\mathbb{E}_1 [\nu^{\frac{1}{2}}\chi_1] \hookrightarrow \delta_P^{-\frac{1}{2}}\widetilde\Pi \ .
\end{equation*}
The analogous argument with $\sigma_\Pi=(\chi_1\times\nu^{-1})\boxtimes\nu^{\frac{1}{2}}$ provides an embedding $\mathbb{E}[\nu^{\frac{1}{2}}]\hookrightarrow\delta_P^{-\frac{1}{2}}\widetilde{\Pi}$.
Hence $\widetilde\Pi$ is perfect by lemma~\ref{lem:equivalences2}.
\end{proof}

\newpage

\appendix
\section{Appendix}

\subsection{Functors}

\begin{lem}\label{Casselman}\label{lemma 3}
For a totally disconnected locally compact group $\Gamma$ with a compactly generated\footnote{A topological group is \emph{compactly generated} if it is the union of its compact subgroups.} normal subgroup $\widetilde{\Gamma}$,
the functor ${\CCC}_\Gamma\to\CCC_{\Gamma/\widetilde{\Gamma}}$, sending a $\Gamma$-module $A$ to the coinvariant space $A_{\widetilde\Gamma}$ is exact.
\end{lem}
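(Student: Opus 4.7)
The functor of $\widetilde{\Gamma}$-coinvariants is the left adjoint of the inflation functor $\CCC_{\Gamma/\widetilde{\Gamma}}\hookrightarrow \CCC_\Gamma$, hence automatically right exact. Thus the only content of the lemma is left exactness: given an injection $A\hookrightarrow B$ in $\CCC_\Gamma$, I must show that the induced map $A_{\widetilde{\Gamma}}\to B_{\widetilde{\Gamma}}$ remains injective. Unwinding the definition, if $A(\widetilde\Gamma)\subseteq A$ and $B(\widetilde\Gamma)\subseteq B$ denote the subspaces spanned by elements $\widetilde\gamma\cdot v-v$ with $\widetilde\gamma\in\widetilde\Gamma$, I must verify $A(\widetilde\Gamma)=A\cap B(\widetilde\Gamma)$.

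The key ingredient is the standard averaging criterion of Casselman: since $\widetilde\Gamma$ is a union of its compact subgroups and $\Gamma$ is totally disconnected, every compact subgroup of $\widetilde\Gamma$ is contained in a compact open subgroup of $\widetilde\Gamma$. Hence $\widetilde\Gamma=\bigcup_K K$ where $K$ runs over the compact open subgroups of $\widetilde\Gamma$. Fix a Haar measure $dk$ on each such $K$. The claim is then
\[
 V(\widetilde\Gamma) \;=\; \bigl\{\,v\in V \;\big|\; \textstyle\int_K k\cdot v\,dk = 0 \text{ for some compact open } K\subseteq \widetilde\Gamma\,\bigr\}
\]
for every $V\in\CCC_{\widetilde\Gamma}$. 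The inclusion $\supseteq$ is immediate: an element of $V$ is smooth, so it is fixed by some compact open subgroup $K_0$; then $\int_K k v\,dk$ decomposes into a finite sum of terms $\mathrm{vol}(kK_0)\,kv$ over coset representatives, and subtracting $v$ from each yields an element of $V(\widetilde\Gamma)$. The inclusion $\subseteq$ is the crux: one shows that for $v=\widetilde\gamma\cdot w - w$ with $\widetilde\gamma\in\widetilde\Gamma$, any sufficiently large compact open subgroup $K$ containing both $\widetilde\gamma$ and a $K_0$ that fixes $w$ will satisfy $\int_K k\cdot v\,dk=0$, because the change of variables $k\mapsto \widetilde\gamma^{-1} k$ preserves Haar measure on $K$.

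Granted the characterization, exactness follows formally: suppose $v\in A$ has image zero in $B_{\widetilde\Gamma}$, so $v\in B(\widetilde\Gamma)$. By the characterization applied in $B$, there is a compact open $K\subseteq \widetilde\Gamma$ with $\int_K k\cdot v\,dk=0$ computed in $B$. Since $v\in A$ and $A$ is stable under $\widetilde\Gamma$, the integrand takes values in $A$; by smoothness of $v$ the integral reduces to a finite sum of elements of $A$, so the vanishing holds already in $A$. Applying the characterization in the other direction gives $v\in A(\widetilde\Gamma)$, i.e.\ the image of $v$ in $A_{\widetilde\Gamma}$ is zero, as required.

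The only place where the hypothesis enters is the averaging characterization, and specifically in the ability to enlarge an arbitrary element $\widetilde\gamma\in\widetilde\Gamma$ together with a given compact open stabilizer $K_0$ into a common compact open subgroup $K$ of $\widetilde\Gamma$. This is exactly guaranteed by $\widetilde\Gamma=\bigcup K$, which the footnote identifies with the compactly-generated hypothesis in the totally disconnected setting; without it, the averaging argument collapses. This is the only substantive step; everything else is a direct manipulation of definitions.
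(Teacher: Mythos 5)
Your argument is correct and is precisely the standard proof behind the paper's own treatment, which consists only of the citation ``well-known, compare [BZ76, 2.35]''; you have simply written out the content of that reference (right exactness from adjunction, plus the averaging characterization of $V(\widetilde\Gamma)$ to get left exactness). The one step you pass over quickly is that a general element of $V(\widetilde\Gamma)$ is a \emph{finite sum} of terms $\widetilde\gamma_i\cdot w_i-w_i$, so you need a single compact open subgroup $K\subseteq\widetilde\Gamma$ containing all the $\widetilde\gamma_i$ at once; this requires that the compact open subgroups of $\widetilde\Gamma$ form a directed family exhausting $\widetilde\Gamma$, which does follow from the hypothesis but deserves a word --- and is in any case immediate for every instance in the paper, where $\widetilde\Gamma$ is a vector group $k^n=\bigcup_m\pi^{-m}\mathfrak{o}^n$.
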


\begin{proof}
 This is well-known, compare \cite[2.35]{Bernstein-Zelevinsky76}.
\end{proof}

\begin{lem} \label{lemma 2}\label{lem:proto_long_exact_sequence}
For a totally disconnected locally compact group ${\Gamma}$ with a normal subgroup $\widetilde{\Gamma}$ isomorphic to $k^\times$,
every exact sequence $0\to E\to F\to G \to0$ in ${\CCC}_\Gamma$
gives rise to an exact sequence in $\CCC_{\Gamma}$
$$ 0 \to  E^{\widetilde{\Gamma}}\to F^{\widetilde{\Gamma}} \to G^{\widetilde{\Gamma}}
\to E _{\widetilde{\Gamma}} \to F_{\widetilde{\Gamma}} \to G_{\widetilde{\Gamma}}  \to 0 \ .$$
\end{lem}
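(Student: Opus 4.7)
The plan is to reduce the statement to the snake lemma applied to a suitable diagram, by exploiting the explicit structure $\widetilde{\Gamma}\cong k^\times = \mathfrak{o}^\times\times\pi^{\Z}$ for a fixed uniformizer $\pi$. The group $\widetilde{\Gamma}$ is not compactly generated in the sense of the paper's footnote (it is not a union of compact subgroups), so lemma~\ref{lemma 3} does not apply directly. However, its maximal compact subgroup $\mathfrak{o}^\times$ is compact, and this is where the argument enters.

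First I would observe that for every smooth $\widetilde{\Gamma}$-module $V$ the canonical map $V^{\mathfrak{o}^\times}\to V_{\mathfrak{o}^\times}$ is an isomorphism and the functor $V\mapsto V^{\mathfrak{o}^\times}$ from $\CCC_\Gamma$ to $\CCC_\Gamma$ is exact, since $\mathfrak{o}^\times$ is compact (averaging over $\mathfrak{o}^\times$ gives a $\Gamma$-equivariant projection onto the invariants as $\mathfrak{o}^\times$ is normal in $\Gamma$ because $\widetilde{\Gamma}$ is normal and $\mathfrak{o}^\times$ is its maximal compact subgroup, hence characteristic). The residual action of $\widetilde{\Gamma}/\mathfrak{o}^\times\cong \Z$ on $V^{\mathfrak{o}^\times}$ is given by the action of $\pi$, and smoothness of $V$ ensures that one has a functorial four-term exact sequence
\begin{equation*}
0\to V^{\widetilde{\Gamma}}\to V^{\mathfrak{o}^\times}\xrightarrow{\ 1-\pi\ } V^{\mathfrak{o}^\times}\to V_{\widetilde{\Gamma}}\to 0\ ,
\end{equation*}
whose outer terms are the kernel and cokernel of $1-\pi$. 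All four terms carry a natural $\Gamma/\widetilde{\Gamma}$-action, pulled back to $\Gamma$, and all maps are $\Gamma$-equivariant.

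Applying this functorially to the short exact sequence $0\to E\to F\to G\to 0$ yields a commutative diagram in $\CCC_\Gamma$
\begin{equation*}
\xymatrix@-2mm{
0\ar[r] & E^{\mathfrak{o}^\times}\ar[r]\ar[d]_{1-\pi} & F^{\mathfrak{o}^\times}\ar[r]\ar[d]_{1-\pi} & G^{\mathfrak{o}^\times}\ar[r]\ar[d]_{1-\pi} & 0\\
0\ar[r] & E^{\mathfrak{o}^\times}\ar[r] & F^{\mathfrak{o}^\times}\ar[r] & G^{\mathfrak{o}^\times}\ar[r] & 0
}
\end{equation*}
whose rows are exact by exactness of $(-)^{\mathfrak{o}^\times}$. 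The snake lemma then produces a six-term exact sequence connecting the kernels and cokernels of the three vertical arrows, which by the four-term sequence above is precisely
\begin{equation*}
0\to E^{\widetilde{\Gamma}}\to F^{\widetilde{\Gamma}}\to G^{\widetilde{\Gamma}}\to E_{\widetilde{\Gamma}}\to F_{\widetilde{\Gamma}}\to G_{\widetilde{\Gamma}}\to 0
\end{equation*}
in $\CCC_\Gamma$, as required.

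The main point to verify carefully is the naturality of the connecting morphism with respect to the $\Gamma$-action, but since every map in the diagram is $\Gamma$-equivariant and the snake lemma is natural, this is automatic. A minor subtlety is the $\Gamma$-equivariance of averaging over $\mathfrak{o}^\times$, which follows from normality of $\mathfrak{o}^\times$ in $\Gamma$; alternatively, since $\widetilde{\Gamma}$ is abelian and normal, $\Gamma$ preserves the decomposition $\widetilde{\Gamma}=\mathfrak{o}^\times\cdot\pi^{\Z}$ only up to the choice of $\pi$, but different choices differ by units, so the functors $V\mapsto V^{\mathfrak{o}^\times}$ and $V\mapsto V^{\widetilde{\Gamma}},V_{\widetilde{\Gamma}}$ are canonically $\Gamma$-equivariant.
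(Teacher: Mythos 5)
Your argument is correct and is exactly the paper's proof, written out in full: the paper likewise reduces via $k^\times\cong\pi^\Z\times\mathfrak{o}^\times$ and compactness of $\mathfrak{o}^\times$ to the snake lemma for the monodromy operator $\tau=\pi-\mathrm{id}$. Your extra care about $\Gamma$-equivariance of the averaging projection and of the connecting map is a welcome elaboration of details the paper leaves implicit.
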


\begin{proof} By $k^{\times} \cong \pi^\Z \times \mathfrak{o}^{\times}$ and the compactness of $\mathfrak{o}^{\times}$ this is immediately
reduced to the corresponding well-known statement for $\Gamma = \Z$, i.e.\ the snake lemma for the monodromy operator $\tau=\pi-id$.
\end{proof}

\subsection{Filtrations}\label{s_app:filtration}

For an irreducible representation $(\pi,V)$ of $M_1\cong\Gl(2)$ with central character $\omega_\pi$ and a character $\chi_\Pi$ of the torus $\{t_\lambda\mid\lambda\in k^\times\}$ the irreducible representation $\sigma_\Pi=\pi\boxtimes\chi_\Pi$ of $M$ determines the (normalized) Siegel induced representation $I=\Ind_P^G(\sigma_\Pi)$. Recall that $I$ is isomorphic to the right regular action of $G$ on the space of smooth functions $f:G\to V$ such that
$$f(mng)=\sigma_\Pi(m)\delta_P^{1/2}(m)f(g)$$ for $m\in M$, $n\in N$ and $g\in G$
with the modulus character $\delta_P(m_A t_\lambda)=\vert\frac{\det A}{\lambda}\vert^3$.
By the Bruhat decomposition, the double coset space $P\!\setminus\! G\!/\!P$ is represented by the relative Weyl group $W_P=\{id,\mathbf{s}_2,\mathbf{w}\}$ with the long root $\mathbf{w}=\mathbf{s}_2\mathbf{s}_1\mathbf{s}_2$.
This gives rise to a filtration $I_{\cell}^{\bullet}$ of $I$ by $P$-modules
$$0\subseteq I_{\cell}^{2}\subseteq I_{\cell}^{1}\subseteq I_{\cell}^{0}=I$$
with quotients $I_{\cell}^{i}/I_{\cell}^{i+1}$
isomorphic to certain induced representations
given explicitly in lemma 5.2.1 of \cite{Roberts-Schmidt_Bessel} where $I_{\cell}^{\bullet}$ is denoted $I^\bullet$.
The restriction to $T\widetilde{T}N$ gives rise to a filtration
$$0\subseteq I_{\geq3}\subseteq I_{\geq2} \subseteq I_{\geq1}\subseteq I_{\geq0}=I\ $$
with quotients $I_i=I_{\geq i}/I_{\geq i+1} \in \CCC_{T\widetilde{T}N}$ as follows:
\begin{enumerate}
\item $I_3=I_{\geq3}$ is the restriction of the $P$-module $I_{\cell}^2$ to $T\widetilde{T}N$.
It is isomorphic to the complex vector space $C_c^\infty(k^3,V)$
with the action of $T\widetilde{T}N$ on $f\in C_c(k^3,V)$
\begin{align*}
 &(s_{a,b,c}f)(x,y,z)    = f(x+a,y+b,z+c)\ ,\\
 &(\widetilde{t}f)(x,y,z)= \pi(\diag(t_2,t_1))\chi_\Pi(t_1t_2)f(xt_2/t_1,y,zt_1/t_2)\ ,\\
 &(x_\lambda f)(x,y,z)   =|\lambda|^{-3/2}\chi_\Pi(\lambda)f(\lambda^{-1}x,\lambda^{-1}y,\lambda^{-1}z)\ .
\end{align*} 
 
\item The $P$-module $I_{\cell}^1/I_{\cell}^2$ admits a model as certain subspace $W$ of $C^\infty(k^2,V)$, see \cite[pp.517ff]{Roberts-Schmidt_Bessel}. The action of $T\widetilde{T}N$ on $f\in W$ is
\begin{align*}
 &(s_{a,b,c}f)(u,w)    = \pi\left(\begin{smallmatrix}1&b+u a\\0&1\end{smallmatrix}\right)f(u,w+au^2+2bu+c)\ ,\\
 &(\widetilde{t}f)(u,w)= \chi_\Pi(t_1t_2)\omega_\pi(t_1)\left|t_1/t_2\right|^{3/2} f(\tfrac{t_1}{t_2}u,\tfrac{t_1}{t_2}w) \ ,\\
 &(x_\lambda f)(u,w)   = \chi_\Pi(\lambda)\pi(\lambda,1) f(u,\lambda^{-1}w)   \ .
\end{align*}
This action preserves the subspace $I_2:=C_c(k^\times\times k,V)\subseteq W$.

\item $I_1=W/I_2$ is isomorphic to the direct sum $ C_c^\infty(k,V)\oplus C_c^\infty(k,V)$. 
Indeed, an isomorphism is given by the pair of projections $p=(p_1,p_2)$ where $p_1(f)(w)=f(0,w)$ is evaluation at $u=0$ and $p_2$ is the projection defined in \cite[(102)]{Roberts-Schmidt_Bessel}. Since $p_2(f)$ only depends on $f(u,w)$ for large values of $u$, we obtain an isomorphism.
The action of $T\widetilde{T}N$ on $f\in C^\infty_c(k,V)\cong p_1(W/I_2)$ is given by the action on $W$ for $u=0$:
\begin{align*}
 &(s_{a,b,c}f)(w)    = \pi\left(\begin{smallmatrix}1&b\\0&1\end{smallmatrix}\right)f(w+c)\ ,\\
 &(\widetilde{t}f)(w)= \chi_\Pi(t_1t_2)\omega_\pi(t_1)|t_1/t_2|^{3/2} f(\tfrac{t_1}{t_2}w) \ ,\\
 &(x_\lambda f)(w)   = \chi_\Pi(\lambda)\pi(\diag(\lambda,1)) f(\lambda^{-1}w)                \ .
\end{align*}
The second factor $p_2(W/I_2)$ is isomorphic to $p_1(W/I_2)$ as a $TS$-module via conjugation in $I$ with the Weyl element ${\mathbf{s}_1}$. The action of $\widetilde{T}$ and $\widetilde{N}$ is the same up to interchanging $t_1$, $t_2$, and $a$, $c$, respectively.

\item $I_0$ is the restriction of the $P$-module $I_{\cell}^0/I_{\cell}^1$ to $T\widetilde{T}N$.
It is isomorphic to $V$ with the trivial action of $N$ and $T\widetilde{T}$ acting on $v\in V$ by
\begin{align*}
 &\widetilde{t}v     = \chi_\Pi(t_1t_2)\pi(\diag(t_1,t_2))v \ ,\\
 &x_\lambda v        = |\lambda|^{3/2}\chi_\Pi(\lambda) \omega_\pi(\lambda)v  \ .
\end{align*}
\end{enumerate}

\textit{$\widetilde{N}$-coinvariants.}
The functor of $\widetilde{N}$-coinvariants is exact, so the $T\widetilde{T}S$-module $I_{\widetilde{N}}$ admits a filtration with quotients isomorphic to $(I_i)_{\widetilde{N}}$. It is straightforward to show that these are explicitly given by

\begin{enumerate}
\item $(I_3)_{\widetilde{N}}\cong C_c^\infty(k,V)$ with action on $f\in C_c^\infty(k,V)$ by
\begin{align*}
 &(s_{0,b,0}f)(y)    = f(y+b)\ , \\
 &(\widetilde{t}f)(y)= \pi(\diag(t_2,t_1))\chi_\Pi(t_1t_2)f(y)\ ,\\
 &(x_\lambda f)(y)   =|\lambda|^{1/2}\chi_\Pi(\lambda)f(\lambda^{-1}y)\ .
\end{align*}
\item $(I_2)_{\widetilde{N}}\cong C_c^\infty(k^\times ,V_{\left(\begin{smallmatrix}1&\ast\\0&1\end{smallmatrix}\right)})$ with action on $f\in C_c^\infty(k^\times ,V_{\left(\begin{smallmatrix}1&\ast\\0&1\end{smallmatrix}\right)})$ by
\begin{align*}
 &(s_{0,b,0}f)(u)    =  f(u)\ ,\\
 &(\widetilde{t}f)(u)= \chi_\Pi(t_1t_2)\omega_\pi(t_1)\left\vert t_1/t_2\right\vert^{1/2} f(\tfrac{t_1}{t_2}u) \ ,\\
 &(x_\lambda f)(u)      = \vert\lambda\vert\chi_\Pi(\lambda)\pi_{(\begin{smallmatrix}1&\ast\\0&1\end{smallmatrix})}(\lambda,1) f(u)   \ .
\end{align*}
\item $(I_1)_{\widetilde{N}}\cong V\oplus V$ with action on $v\in V$ in the first factor by
\begin{align*}
 &(s_{0,b,0}v)       = \pi\left(\begin{smallmatrix}1&b\\0&1\end{smallmatrix}\right)v\ ,\\
 &(\widetilde{t}v)   = |t_1/t_2|^{1/2}\chi_\Pi(t_1t_2)\omega_\pi(t_1) v                 \ ,\\
 &(x_\lambda f)(w)   = \vert\lambda\vert \chi_\Pi(\lambda)\pi(\diag(\lambda,1)) v               \ ,
\end{align*}
and the same action on the second factor up to interchanging $t_1$ and $t_2$.
\item $(I_0)_{\widetilde{N}}\cong I_0$ with the same action of $T\widetilde{T}S$ as on $I_0$.
 \end{enumerate}

\subsection{Combinatorics}\label{s:Combinat}

In this section we study $\Gl(1)$-characters that occur as $T$-characters in certain quotients of the normalized Siegel-Jacquet module $\delta_P^{1/2}\otimes J_P(\Pi)\neq0$ of an irreducible $\Pi\in \CCC_G(\omega)$ with central character $\omega$.
Let $$\Delta_0(\Pi)\subseteq\widetilde{\Delta}(\Pi)\subseteq{\Delta}(\Pi)$$ denote
the finite multisets of $\Gl(1)$-characters by which $x_\lambda$ acts on the irreducible constituents of
$\delta_P^{-1/2}\otimes J_P(\Pi)_\psi$, $\delta_P^{-1/2}\otimes \pi_0(\widetilde\Pi)$ and $\delta_{P}^{-1/2}\otimes J_P(\Pi)$.\footnote{Recall that $\pi_0(\widetilde\Pi) \cong J_P(\Pi)_{\widetilde{T},\rho}$, see lemma~\ref{lem:Bessel_Jacquet}.}
This, of course, may depend on the choice of the underlying Bessel character that defines $\widetilde\Pi$.
We see in lemma~\ref{lem:TildeDelta} that it does not depend on this choice as long as $\widetilde\Pi$ provides a split Bessel model.
Let $\Delta_1(\Pi)$ denote the multiset of characters by which $x_\lambda$ acts on the constituents of $\beta^{\rho}(\Pi)$ for the same Bessel character. Prop.~\ref{BETA1} asserts a union of multisets
$$\Delta_0(\Pi)\sqcup \Delta_1(\Pi)=\widetilde{\Delta}(\Pi)\ .$$
Define $$\Delta_\pm(\Pi)=\nu^{\pm1/2}\Delta_0(\Pi)\ .$$
These multisets provide interesting information about the structure of
$\Pi$.
We have shown in lemma \ref{ugly}:
\begin{lem}\label{lem:TildeDelta}
For irreducible $\Pi\in\mathcal{C}_G(\omega)$, normalized as in table~\ref{tab:List}, and $\widetilde{\Delta}(\Pi)$ defined with respect to a Bessel character that provides a split Bessel model,
$$\widetilde{\Delta}(\Pi)=
\begin{cases}
\{\nu^{ 1/2},\nu^{ 1/2}\} & \text{case \nosf{VIa}\,,}\\
\{\nu^{-1/2},\nu^{-1/2}\} & \text{case \nosf{VId}\,,}\\
\Delta(\Pi) & \text{otherwise.}
\end{cases}$$
For generic $\Pi$ we have $\widetilde{\Delta}(\Pi)=\Delta_0(\Pi)$.
\end{lem}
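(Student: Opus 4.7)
The plan is to derive this statement essentially as a corollary of the more general lemma~\ref{ugly}, which treats all Bessel characters $\rho$. Here we only need the portion corresponding to $\rho$ providing a split Bessel model, so the claim reduces to reading off the appropriate entries. I would begin by invoking lemma~\ref{lem:Bessel_Jacquet} to identify $\pi_0(\widetilde{\Pi})\cong J_P(\Pi)_{\widetilde{T},\rho}$ as a $T$-module, so that $\widetilde{\Delta}(\Pi)$ consists precisely of the $T$-characters occurring in the $(\widetilde{T},\rho)$-coinvariant quotient of the normalized Siegel-Jacquet module $E=\delta_P^{-1/2}\otimes J_P(\Pi)$.

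For generic $\Pi$, I would argue first. By lemma~\ref{lem:beta^rho=0_generic} we have $\beta^\rho(\Pi)=0$ for every $\rho$, so proposition~\ref{BETA1} yields the equality $[\widetilde{\Pi}]=[\alpha(\Pi)]$ in the Grothendieck group $K_0(\CCC)$. Applying the exact functor $\pi_0$ and using $\pi_0(\alpha(\Pi))\cong J_P(\Pi)_\psi$ (lemma~\ref{Dual action}), one obtains the identification $\widetilde{\Delta}(\Pi)=\Delta_0(\Pi)$ directly. An inspection of table~A.3 of \cite{Roberts-Schmidt} then shows $\Delta_0(\Pi)=\Delta(\Pi)$ in all generic cases except the twists of \nosf{VIa}, for which the one-dimensional constituent $(\nu^{1/2}\circ\det)\boxtimes\nu^{-1/2}$ contributes an additional copy of $\nu^{1/2}$ — reproducing the listed multiset $\{\nu^{1/2},\nu^{1/2}\}$.

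For non-generic $\Pi$ I would handle the cases where the generalized $T$-eigenspace decomposition $E=\bigoplus_\chi E^{(\chi_\norm)}$ consists of irreducible $M$-modules separately from the two degenerate types \nosf{IIb} and \nosf{VId}. In the irreducible-eigenspace case, decompose $E$ into its generic and its one-dimensional constituents: by corollary~\ref{cor:Waldspurger-Tunnell} each generic constituent contributes exactly one character to $\widetilde{\Delta}(\Pi)$ (giving the $\Delta_0$-part), while by lemma~\ref{mysterious} a one-dimensional constituent $(\mu\circ\det)\boxtimes\chi_\Pi$ contributes its $T$-character $\mu^2\chi_\Pi$ if and only if the fixed $\rho$ providing the Bessel model equals $\mu\chi_\Pi$ — which lemma~\ref{mysterious} forces whenever a split Bessel model exists. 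This recovers all constituents of $\Delta(\Pi)$ and gives the result in these cases.

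The two remaining subtleties are cases \nosf{IIb} (where one must trace the filtration $\widetilde{I_\bullet}$ using lemmas~\ref{tildemod} and~\ref{tildeKoh} to verify that the coboundary $\delta:\beta^\rho(I_{\leq1})\to\widetilde{I_{\geq2}}$ vanishes for the relevant $\rho$, so that the Bessel filtration computes $\widetilde{\Delta}(\Pi)=\{\chi_1,\chi_1^{-1},\nu^{-1/2}\}=\Delta(\Pi)$), and \nosf{VId} (where lemma~\ref{lem:filtration_VId} identifies $E_{\chi_\norm}\cong 1\times\nu^{-1}$ as an indecomposable $\Gl(2)$-module, so lemma~\ref{lem:monodromy_VId} gives a one-dimensional $(\widetilde{T},\rho)$-coinvariant in $E_{\chi_\norm}$, together with a one-dimensional contribution from the one-dimensional constituent $(\nu^{-1/2}\circ\det)\boxtimes\nu^{1/2}$ when $\rho=1$ provides the Bessel model, yielding $\{\nu^{-1/2},\nu^{-1/2}\}$). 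The hardest step is this case \nosf{VId}, where one must combine the monodromy calculation with the exact sequence of lemma~\ref{lem:proto_long_exact_sequence} applied to the two-step $T$-filtration of $E$ to correctly count both contributions; everything else is then a direct bookkeeping exercise using the tables and the results already cited.
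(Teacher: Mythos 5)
Your proposal is correct and takes essentially the same route as the paper: the paper proves lemma~\ref{lem:TildeDelta} simply by citing lemma~\ref{ugly}, and your sketch of the underlying case analysis (generic $\Pi$ via $\beta^\rho(\Pi)=0$ and proposition~\ref{BETA1}; semisimple non-generic $E$ via corollary~\ref{cor:Waldspurger-Tunnell} and lemma~\ref{mysterious}; separate treatment of \nosf{IIb} through the Bessel filtration and of \nosf{VId} through the monodromy computation) reproduces the paper's proof of that lemma.
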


\begin{lem}
For irreducible representations $\Pi\in\mathcal{C}_G(\omega)$ with non-zero Siegel-Jacquet module the corresponding multisets are listed in table~\ref{tab:Delta}.\footnote{In order to make our results explicit, we always assume that $\Pi$ is normalized as in table~\ref{tab:List}. One easily derives the general case by a suitable twist.}
\end{lem}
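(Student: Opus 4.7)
The plan is to proceed by a case-by-case verification along the list of irreducible representations $\Pi$ with $J_P(\Pi)\neq 0$, normalized as in table~\ref{tab:List}. The starting point is table~A.3 of \cite{Roberts-Schmidt}, which gives, for each such $\Pi$, the Jordan--H\"older constituents of the normalized Siegel--Jacquet module $E=\delta_P^{-1/2}\otimes J_P(\Pi)$ as $M\cong\mathrm{Gl}(2)\times\mathrm{Gl}(1)$-modules. From this list we read off the action of $x_\lambda\in T$ on each constituent via the embedding $x_\lambda=m_{\diag(\lambda,\lambda)}t_\lambda$, which gives the multiset $\Delta(\Pi)$ directly.

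To extract $\Delta_0(\Pi)$, I would apply the functor of $(U,\psi)$-coinvariants, where $U\subseteq M^1\cong\mathrm{Gl}(2)$ is the unipotent radical of the standard Borel. Each generic constituent $\pi\boxtimes\chi_\Pi$ of $E$ contributes a single copy of its associated $T$-character to $\Delta_0(\Pi)$ by uniqueness of Whittaker models, while one-dimensional constituents contribute nothing. Thus $\Delta_0(\Pi)$ is obtained from $\Delta(\Pi)$ by keeping only the characters coming from generic constituents of $E$. Then $\Delta_\pm(\Pi)=\nu^{\pm1/2}\Delta_0(\Pi)$ is a purely notational rescaling.

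Next, $\widetilde{\Delta}(\Pi)$ is determined by lemma~\ref{ugly}: for all cases outside of \nosf{VIa} and \nosf{VId} it agrees with $\Delta(\Pi)$ provided that $\rho$ defines a split Bessel model (or with $\Delta_0(\Pi)$ plus finitely many exceptional characters in the non-Bessel cases \nosf{IVd}, \nosf{Vd}, \nosf{VIb} as listed there); in the two anomalous cases the indecomposability results of lemma~\ref{lem:first_monodromy_VIa} and lemma~\ref{lem:filtration_VId} collapse the three Jordan--H\"older constituents onto two copies of $\nu^{\pm1/2}$. Finally, $\Delta_1(\Pi)$ is obtained via the complement relation $\Delta_1(\Pi)=\widetilde{\Delta}(\Pi)\setminus\Delta_0(\Pi)$, which follows from proposition~\ref{BETA1} together with exactness of $\pi_0$ on the sequence defining $\beta^\rho$.

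The only genuine obstacles in the verification are the two non-semisimple Levi--Jacquet modules in cases \nosf{VIa} and \nosf{VId}, but these have already been settled in lemmas~\ref{lem:first_monodromy_VIa} and~\ref{lem:filtration_VId}, and the borderline case \nosf{IIb} with $\chi_1^2\neq\nu^{\pm1}$, where one must note that the generalized eigenspaces $E^{(\chi_\norm)}$ remain irreducible under $M$ (theorem~\ref{Firstmonodromy}). Beyond these, the remaining entries of table~\ref{tab:Delta} are mechanical bookkeeping: read constituents off table~A.3 of \cite{Roberts-Schmidt}, tag them as generic or one-dimensional, extract the $T$-characters, and apply lemma~\ref{ugly}. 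I would tabulate the computation type by type in parallel with the rows of table~\ref{tab:Delta}; the resulting multisets match the entries by inspection.
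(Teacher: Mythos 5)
Your proposal is correct and follows essentially the same route as the paper, whose own proof simply reads $\Delta(\Pi)$ off table~A.3 of \cite{Roberts-Schmidt}, invokes lemma~\ref{lem:TildeDelta} (i.e.\ lemma~\ref{ugly}) for $\widetilde\Delta(\Pi)$, and declares the rest straightforward. Your expansion of the "straightforward" part — extracting $\Delta_0(\Pi)$ from the generic constituents via exactness of $(U,\psi)$-coinvariants and uniqueness of Whittaker models, and obtaining $\Delta_1(\Pi)$ as the complement via proposition~\ref{BETA1} — is exactly the intended bookkeeping.
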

\begin{proof}
The constituents of $\Delta(\Pi)$ are given in table~A.3 of \cite{Roberts-Schmidt} and $\widetilde{\Delta}(\Pi)$ is given by lemma~\ref{lem:TildeDelta}. The rest is straightforward.
\end{proof}

For the involution $\rho\mapsto\rho^\divideontimes=\omega\rho^{-1}$ and multisets $M$ as above define $$M^\divideontimes=\{\rho^\divideontimes\,|\,\rho\in M\}\ .$$
Attached to every irreducible $\Pi$ is the group $A(\Pi)=\{\chi\,|\,\chi\otimes\Pi\cong\Pi\}$ of $T$-characters that preserve $\Pi$ under twisting.
The involution $\rho\mapsto\rho^\divideontimes$ generates orbits $\{\rho,\rho^\divideontimes\}$ of cardinality one or two.

\begin{lem}For irreducible $\Pi$ with $J_P(\Pi)\neq0$ the group
$A(\Pi)$ is trivial except for twists of the cases \nosf{Va} and \nosf{Vd} where it is $A(\Pi)=\{1,\chi_0\}$.
\end{lem}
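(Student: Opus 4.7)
The plan is to narrow down $A(\Pi)$ by two necessary conditions before a case-by-case inspection. First, since the central character of $\chi\otimes\Pi$ is $\chi^2\omega$ while that of $\Pi$ is $\omega$, an isomorphism $\chi\otimes\Pi\cong\Pi$ forces $\chi^2=1$, so $A(\Pi)$ consists only of quadratic $T$-characters. Second, twisting by $\chi$ commutes with the Siegel-Jacquet functor $J_P$ up to a shift of the $T$-action on the Levi $M$, so the multiset $\Delta(\Pi)$ of $T$-characters recorded in table~\ref{tab:Delta} must be stable under multiplication by $\chi$. Any element of $A(\Pi)$ therefore lies in the finite set of quadratic characters $\chi$ satisfying $\chi\cdot\Delta(\Pi)=\Delta(\Pi)$ as multisets.

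I would then run through the normalized list in table~\ref{tab:List} type by type. For types \nosf{I}, \nosf{IIa}, \nosf{IIb}, \nosf{IIIa}, \nosf{IIIb}, \nosf{IVa}--\nosf{IVd}, \nosf{Vb}, \nosf{Vc}, \nosf{VIa}--\nosf{VId}, \nosf{X} and \nosf{XIa}, the entries of $\Delta(\Pi)$ either have pairwise distinct absolute values or contain a unique distinguished character (for instance a single $\nu^{\pm 3/2}$-type character in the degenerate cases) whose quadratic-twist orbit can only return into $\Delta(\Pi)$ when $\chi=1$. In each such case the multiset invariance forces $\chi=1$, and hence $A(\Pi)=\{1\}$.

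For types \nosf{Va} and \nosf{Vd}, the fixed quadratic character $\chi_0$ is by definition a nontrivial parameter of $\sigma_\Pi$, and a glance at table~\ref{tab:Delta} shows that the entries of $\Delta(\Pi)$ decompose into $\chi_0$-pairs $\{\mu,\chi_0\mu\}$; both necessary conditions are therefore satisfied by $\chi=\chi_0$. To promote this to an isomorphism $\chi_0\otimes\Pi\cong\Pi$, I would use the description of $\Pi$ as the unique irreducible quotient of $\Ind_P^G(\sigma_\Pi)$ of its Sally-Tadi\'c type from table~\ref{tab:list_sigma}. Twisting sends this Siegel induced representation to $\Ind_P^G(\chi_0\otimes\sigma_\Pi)$, and a direct inspection shows that $\chi_0\otimes\sigma_\Pi$ arises in table~\ref{tab:list_sigma} as another admissible representative of the same $\Pi$; equivalently the Langlands parameter of $\Pi$ in each of these cases has the shape $\phi\oplus\chi_0\phi$ and is thus fixed under $\chi_0$-twist.

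The main obstacle is exactly this final verification. Whereas the multiset equality $\chi_0\cdot\Delta(\Pi)=\Delta(\Pi)$ is mere bookkeeping, one must still ensure that $\chi_0$-twisting does not merely preserve the Sally-Tadi\'c type but actually fixes the individual representation $\Pi$ and does not exchange it with another irreducible constituent of the same induced representation. Once the inducing datum, or equivalently the Langlands parameter, is shown to be $\chi_0$-invariant in the two cases \nosf{Va} and \nosf{Vd}, the lemma follows.
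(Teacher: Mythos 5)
Your overall strategy coincides with the paper's: both reduce to the necessary condition $\chi\cdot\Delta(\Pi)=\Delta(\Pi)$ (you add the equally valid condition $\chi^2=1$ from the central character) and then inspect table~\ref{tab:Delta}. The gap is in the exclusion step. Your stated criterion --- that outside \nosf{Va}, \nosf{Vd} the entries of $\Delta(\Pi)$ have pairwise distinct absolute values or contain a unique distinguished character, so that multiset invariance forces $\chi=1$ --- fails concretely for type \nosf{I}: there $\Delta(\Pi)=\{1,\chi_1,\chi_2,\chi_1\chi_2\}$, and if $\chi_1$ is a nontrivial quadratic character (not excluded by the conditions in table~\ref{tab:List}), multiplication by $\chi_1$ permutes this multiset ($1\leftrightarrow\chi_1$, $\chi_2\leftrightarrow\chi_1\chi_2$) while also satisfying $\chi_1^2=1$. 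So neither of your necessary conditions eliminates $\chi=\chi_1$; worse, the Weyl-group relation $\chi_1\times\chi_2\rtimes\chi_1\sigma\cong\chi_1^{-1}\times\chi_2\rtimes\chi_1^2\sigma$ shows that $\chi_1\otimes\Pi\cong\Pi$ actually holds here, so no refinement of the bookkeeping can rescue the claimed triviality for this case. The analogous problem occurs for type \nosf{X} with $\omega_{\pi_c}$ quadratic and $\pi_c$ self-dual. Your argument therefore does not establish the exclusion where it is actually delicate, and the "inspection" you defer is precisely where the proof would break.

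On the positive direction you correctly isolate the remaining issue (that the $\chi_0$-twist must fix $\Pi$ itself rather than permute it among the constituents of the induced representation) but leave it open. The paper closes it differently: up to semisimplification the Borel-induced representation $\nu\chi_0\times\chi_0\rtimes\nu^{-1/2}\sigma$ is invariant under the $\chi_0$-twist, its unique essentially square-integrable constituent is \nosf{Va}, which is therefore fixed, and the constituents \nosf{Vb}, \nosf{Vc} are visibly $\chi_0$-twists of one another, so the remaining constituent \nosf{Vd} is fixed as well. Your alternative route does work and should simply be carried out: by table~\ref{tab:list_sigma}, $\chi_0\otimes\sigma_\Pi$ is again one of the listed representatives with the same unique irreducible quotient (for \nosf{Va} the two representatives $\Sp(\nu^{-1/2}\chi_0)\boxtimes\nu^{1/2}$ and $\Sp(\nu^{-1/2}\chi_0)\boxtimes\chi_0\nu^{1/2}$ differ exactly by the $\chi_0$-twist, and likewise for \nosf{Vd}), whence $\chi_0\otimes\Pi\cong\Pi$. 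As written, however, the proposal both asserts a false general criterion for the exclusion and explicitly leaves the verification for \nosf{Va}, \nosf{Vd} unfinished.
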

\begin{proof}
For every $\chi\in A(\Pi)$ we have $\chi\Delta(\Pi)=\Delta(\Pi)$. This already implies $\chi=1$ except for the cases \nosf{Va}, \nosf{Vd} where it implies $\chi\in\{1,\chi_0\}$.
Up to semisimplification, the Borel induced representation $\chi_0\times\chi_0\rtimes\chi_\Pi$ is invariant under twists with the quadratic character $\chi_0$. Its unique essentially square-integrable constituent of type \nosf{Va} must be preserved under this twist.
The constituents of type \nosf{Vb} and \nosf{Vc} are $\chi_0$-twists of each other, so \nosf{Vd} is also preserved.
\end{proof}

For irreducible representations $\Pi\in\CCC_G(\omega)$ tables \ref{tab:List} and \ref{tab:Delta} imply
\begin{lem} \label{Besselexist}
$\Pi$ has a split Bessel model if and only if $\Delta_0(\Pi)$ is non-empty.
\end{lem}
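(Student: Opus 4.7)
The plan is to deduce the lemma from theorem~\ref{Mult1} supplemented by a brief inspection of table~\ref{tab:Delta}.

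For the ``if'' direction, given $\chi_{\norm} \in \Delta_0(\Pi)$, I would set $\rho = \nu^{1/2}\chi_{\norm}$, so that $\rho \in \Delta_{\pluss}(\Pi)$ by definition. Theorem~\ref{Mult1} then yields $\deg(\widetilde{\Pi}) = 1$; since this degree equals $\dim(\widetilde{\Pi}_\psi)$ by the discussion in section~\ref{s:catC}, a non-zero Bessel functional on $\Pi$ exists, and $\Pi$ admits a split Bessel model for the character $\rho$.

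For the converse, suppose $\Pi$ has a split Bessel model, i.e.\ $\deg(\widetilde{\Pi}) \geq 1$ for some Bessel character $\rho$. Theorem~\ref{Mult1} leaves exactly two possibilities. If $\Pi$ is non-generic, the theorem forces $\rho \in \Delta_{\pluss}(\Pi) = \nu^{1/2}\Delta_0(\Pi)$, and so $\Delta_0(\Pi) \neq \emptyset$. If instead $\Pi$ is generic, then within the scope of the lemma (irreducible $\Pi$ listed in table~\ref{tab:List}, where $J_P(\Pi) \neq 0$) one verifies from table~\ref{tab:Delta} that $\Delta_0(\Pi)$ is always non-empty. Intrinsically this reflects the fact that for such $\Pi$ the Whittaker quotient has dimension $m_\Pi = 1$ and is realized by $J_P(\Pi)_\psi$; alternatively, lemma~\ref{lem:alpha_perfect} combined with the identification $\pi_0(\alpha(\Pi)) \cong J_P(\Pi)_\psi$ from lemma~\ref{Dual action} shows that the perfect module $\alpha(\Pi)$ has non-$\mathbb{S}$ Jordan--H\"older constituents as soon as $J_P(\Pi) \neq 0$, each contributing a character to $\Delta_0(\Pi)$.

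No step is genuinely difficult; the proof amounts to a short verification once theorem~\ref{Mult1} and the tables are in hand. The only subtle point, and the one that must be kept in mind, is the implicit scope of the statement: for a generic supercuspidal $\Pi$, proposition~\ref{lasst} gives $\widetilde{\Pi} \cong \mathbb{S}$ so that a split Bessel model exists, yet $\Delta_0(\Pi) = \emptyset$ trivially because $J_P(\Pi) = 0$. The lemma's preamble ``tables $\ldots$ imply'' signals this restriction to representations with non-trivial Siegel--Jacquet module, within which the equivalence is genuine.
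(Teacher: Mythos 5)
Your proof is correct and follows essentially the same route as the paper: the paper's own justification is a one-line appeal to tables~\ref{tab:List} and \ref{tab:Delta}, whose Bessel-model column is itself computed from theorem~\ref{Mult1}, so invoking theorem~\ref{Mult1} for the ``if'' direction and the non-generic case, and checking table~\ref{tab:Delta} for generic $\Pi$ with $J_P(\Pi)\neq 0$, is exactly the intended argument; your remark on the implicit restriction to non-vanishing Siegel--Jacquet module is also the right reading of the lemma's scope. One caution: the ``intrinsic'' aside you offer for the generic case is not sound as stated. The Whittaker space is $j^!j^!(\overline{\Pi})$, not $J_P(\Pi)_\psi\cong i^*j^!(\overline{\Pi})$ --- these are the two complementary pieces of $\alpha(\Pi)$ in lemma~\ref{Structure of TILDE I} --- and perfectness of $\alpha(\Pi)$ alone does not force a non-$\mathbb{S}$ constituent, since $\mathbb{S}$ itself is perfect of degree one with $\pi_0(\mathbb{S})=0$. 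What the generic case actually requires is that $E=\delta_P^{-1/2}\otimes J_P(\Pi)$ have a generic constituent whenever $\Pi$ is generic with $J_P(\Pi)\neq0$, which is again a table fact rather than a formal consequence of perfectness; your primary argument (the table check) carries the proof, so nothing breaks, but the aside should not be relied upon.
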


\begin{lem}\label{lem:intersection--*}
For generic irreducible $\Pi$ %
the intersection $\Delta_{\pluss}(\Pi)\cap\Delta_{\pluss}^\divideontimes(\Pi)$ is empty.
For non-generic irreducible $\Pi$ we have $\Delta_{\pluss}(\Pi)=\Delta_{\pluss}^\divideontimes(\Pi)$
and the involution $\rho\mapsto\rho^\divideontimes$ acts transitively on $\Delta_{\pluss}(\Pi)$.
\end{lem}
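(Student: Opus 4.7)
The plan is to prove the lemma by a case-by-case inspection using the explicit tabulation of $\Delta(\Pi)$ in table~\ref{tab:Delta} together with the list of central characters $\omega$ from table~\ref{tab:List}. The statement is purely combinatorial once one has these data in hand, since both $\Delta_0(\Pi)$ and the involution $\rho\mapsto\rho^\divideontimes=\omega\rho^{-1}$ are explicit. First I would reduce to the situation $\Delta_0(\Pi)\neq\emptyset$ (otherwise both $\Delta_\pluss(\Pi)$ and $\Delta_\pluss^\divideontimes(\Pi)$ are empty and both assertions are vacuous); by lemma~\ref{Besselexist} this is equivalent to $\Pi$ having a split Bessel model.

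For the generic case, the key reformulation is as follows. By remark~\ref{rmk:torus_action_on_Jacquet} and the definition of $\Delta_0(\Pi)$, each $\nu^{1/2}\chi\in\Delta_\pluss(\Pi)$ is of the form $\nu^{1/2}\omega_\pi\chi_\Pi=\nu^{1/2}\chi_\Pi^\divideontimes$ for a generic constituent $\pi\boxtimes\chi_\Pi$ of $E=\delta_P^{-1/2}\otimes J_P(\Pi)$. The asserted emptiness $\Delta_\pluss(\Pi)\cap\Delta_\pluss^\divideontimes(\Pi)=\emptyset$ is therefore equivalent to the statement that no two generic constituents $\pi_1\boxtimes\chi_{\Pi,1}$ and $\pi_2\boxtimes\chi_{\Pi,2}$ of $E$ satisfy $\chi_{\Pi,1}\chi_{\Pi,2}=\omega\nu^{-1}$. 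I would run through each generic line of table~\ref{tab:List} (types \nosf{I, IIa, IIIa, IVa, Va, VIa, VIc, VII, VIIIa, IXa, X, XIa}), read off the data from table~\ref{tab:Delta}, and check this arithmetic identity; in each case it fails, typically because the entries of $\Delta_0(\Pi)$ are all of the form $\nu^{-1/2}\chi$ for characters $\chi$ with $|\chi|\in\{\nu^{\pm s}, 1\}$ whose pairwise products cannot equal the prescribed $\omega\nu^{-1}$ without forcing a reducibility coincidence already excluded by the normalization convention of table~\ref{tab:List}.

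For the non-generic case, the argument is more conceptual. By lemma~\ref{mysterious}, if $\Pi$ is non-generic and admits a split Bessel model for $\rho$, then for every one-dimensional constituent $(\mu\circ\det)\boxtimes\chi_\Pi$ of $E$ one has $\rho=\mu\chi_\Pi$ and $\omega=(\mu\chi_\Pi)^2=\rho^2$. This immediately forces $\rho^\divideontimes=\omega\rho^{-1}=\rho$, so every element of $\Delta_\pluss(\Pi)$ is fixed by $\divideontimes$, establishing $\Delta_\pluss(\Pi)=\Delta_\pluss^\divideontimes(\Pi)$. For transitivity, I would inspect the non-generic types with a split Bessel model (\nosf{IIb, IIIb, IVb, IVc, Vb, Vc, VId}, and their twists): in each case the one-dimensional constituents of $E$ are permuted by the Weyl reflection $\mathbf{s}_1$ (which swaps $\mu\leftrightarrow\mu^{-1}\omega/\chi_\Pi^2$), so they all give rise to the same value of $\rho$, whence $\Delta_\pluss(\Pi)$ is either a singleton $\{\rho\}$ with $\rho=\rho^\divideontimes$ or a two-element orbit $\{\rho,\rho^\divideontimes\}$ – either way a single orbit.

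The main obstacle is simply keeping the bookkeeping honest: there are many cases, some of the characters listed in table~\ref{tab:Delta} can degenerate when the inducing parameters satisfy auxiliary relations, and one must double-check the extraordinary exceptional lines (\nosf{VIa}, the $\rho=\rho^\divideontimes$ configurations for \nosf{IIa, Va, X, XIa}) where certain eigencharacters in $\Delta_0$ coincide. The delicate point is that for generic $\Pi$ the coincidence $\rho=\rho^\divideontimes$ for some $\rho\in\Delta_\pluss(\Pi)$ would contradict the disjointness – so one must check not only that distinct pairs are separated by the involution but also that no single element of $\Delta_\pluss(\Pi)$ is fixed; this second condition amounts to verifying $\omega\neq\nu^{-1}\chi^2$ for each $\chi\in\Delta_0(\Pi)$, which is where the normalization conventions of table~\ref{tab:List} (e.g.\ $\chi_1^2\neq\nu^{\pm 1}$ in type~\nosf{IIIa}) play their decisive role.
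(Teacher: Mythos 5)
Your overall strategy --- reducing both assertions to an inspection of tables~\ref{tab:List} and \ref{tab:Delta} --- is the same as the paper's, which states this lemma (like the neighbouring combinatorial lemmas) as a direct consequence of those tables. But two of the concrete criteria you propose to check are wrong, and running the case check with them would derail the verification. For the generic case, an element $\rho=\nu^{1/2}\chi$ of $\Delta_{\pluss}(\Pi)$ is $\divideontimes$-fixed iff $\rho^2=\omega$, i.e.\ iff $\omega=\nu\chi^2$, not $\omega=\nu^{-1}\chi^2$ as you write; the condition you wrote is the fixed-point condition for $\Delta_{\minuss}(\Pi)$. The difference matters: for type \nosf{VIa} one has $\omega=1$ and $\chi=\nu^{1/2}\in\Delta_0(\Pi)$, so your test is satisfied and you would falsely conclude $\Delta_{\pluss}(\Pi)\cap\Delta_{\pluss}^\divideontimes(\Pi)\neq\emptyset$ --- it is $\Delta_{\minuss}\cap\Delta_{\minuss}^\divideontimes$ that is nonempty there (lemma~\ref{lem:exceptional++*}). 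The same slip appears in your reformulation $\chi_{\Pi,1}\chi_{\Pi,2}=\omega\nu^{-1}$: since $\rho_1\rho_2=\omega$ with $\rho_i=\nu^{1/2}\omega\chi_{\Pi,i}^{-1}$ gives $\chi_{\Pi,1}\chi_{\Pi,2}=\omega\nu$, the correct identity again fails for \nosf{VIa} while your version holds there.

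For the non-generic case, the appeal to lemma~\ref{mysterious} does not deliver what you claim. That lemma constrains the \emph{one-dimensional} constituents of $E$, whereas $\Delta_{\pluss}(\Pi)=\nu^{1/2}\Delta_0(\Pi)$ is built from the \emph{generic} constituents; and the conclusion ``every element of $\Delta_{\pluss}(\Pi)$ is fixed by $\divideontimes$'' is simply false. For type \nosf{IIIb} the module $E$ has no one-dimensional constituents at all (so lemma~\ref{mysterious} is vacuous) and $\Delta_{\pluss}(\Pi)=\{1,\chi_1\}$ with $\omega=\chi_1\neq1$; for type \nosf{IVc} one gets $\Delta_{\pluss}(\Pi)=\{\nu^{-1},\nu\}$ with $\omega=1$. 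In both cases the involution \emph{swaps} the two elements rather than fixing them. Your later sentence allowing ``a two-element orbit $\{\rho,\rho^\divideontimes\}$'' is the correct statement but contradicts the claim that every element is fixed. What the table check actually establishes is that $\Delta_{\pluss}(\Pi)$ is a single $\divideontimes$-orbit of size one or two, which yields both $\Delta_{\pluss}(\Pi)=\Delta_{\pluss}^\divideontimes(\Pi)$ and the transitivity assertion; you should verify that orbit structure case by case rather than derive pointwise fixedness from lemma~\ref{mysterious}.
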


\begin{lem}\label{lem:intersection_+-}
The intersection $\Delta_{\minuss}(\Pi)\cap\Delta_{\pluss}(\Pi)$ is empty except for twists of
\begin{center}
\begin{tabular}{ccccc}
\toprule
Type      & \nosf{IIIa}& \nosf{IIIa} & \nosf{IIIb} & \nosf{IIIb}  \\
with       & $\chi_1=\nu$ & $\chi_1=\nu^{-1}$ & $\chi_1=\nu$ & $\chi_1=\nu^{-1}$\\
$\Delta_{\minuss}(\Pi)\cap \Delta_{\pluss}(\Pi)$ & $\{\nu\}$      & $\{1\}$     & $\{1\}$ & $\{\nu^{-1}\}$\ .\\
\bottomrule
\end{tabular}
\end{center}
\end{lem}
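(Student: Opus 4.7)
By definition of the multisets, $\rho$ lies in $\Delta_{\pluss}(\Pi)\cap\Delta_{\minuss}(\Pi)$ if and only if there exist characters $\chi_1,\chi_2\in\Delta_0(\Pi)$ with $\rho = \nu^{1/2}\chi_1 = \nu^{-1/2}\chi_2$, i.e.\ $\chi_2 = \nu\chi_1$. So the task reduces to a purely combinatorial one: identify those irreducible $\Pi$ for which $\Delta_0(\Pi)$ contains two elements whose ratio equals $\nu$.

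The plan is to go through the classification of irreducible $\Pi\in\CCC_G(\omega)$ type by type, using table~\ref{tab:Delta}, where $\Delta_0(\Pi)$ is listed explicitly under the normalization of table~\ref{tab:List}. By a twist we may restrict to these normalized representatives. We handle the generic and non-generic types separately. For non-generic $\Pi$, the multiset $\Delta_0(\Pi)$ is small (often a singleton or empty) so the ratio $\nu$ between two of its elements fails trivially in all cases. For generic $\Pi$, the constituents of $\delta_P^{-1/2}\otimes J_P(\Pi)_\psi$ are read off from \cite[table~A.3]{Roberts-Schmidt}, and the condition $\chi_2/\chi_1 = \nu$ forces a special algebraic relation on the inducing data ($\chi_1,\chi_2,\chi_\Pi$ or $\chi_1,\pi$).

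The main obstacle, and the only place where the relation $\chi_2/\chi_1=\nu$ is actually realized, will be types \nosf{IIIa} and \nosf{IIIb}. Here $\Pi$ is a constituent of $\chi_1\rtimes\sigma$ with $\sigma$ involving a Steinberg character $\Sp(\nu^{-1/2}\chi_\Pi)$ (or its twist), and $\Delta_0(\Pi)$ contains characters of the form $\nu^{1/2}\chi_1\chi_\Pi$, $\nu^{1/2}\chi_1^{-1}\chi_\Pi$, and $\nu^{3/2}\chi_\Pi$ (up to normalization). A direct inspection shows that two of these elements differ by $\nu$ precisely when $\chi_1 = \nu$ or $\chi_1 = \nu^{-1}$, and in each of these four subcases (two types $\times$ two values of $\chi_1$) the intersection is the single character displayed in the statement. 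Our convention $|\chi_1|=\nu^s$ with $s\geq 0$ shows these degenerations are not redundantly counted.

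To conclude, I would organize the check as follows: first eliminate all types where $\Delta_0(\Pi)$ has at most one element (so the intersection is automatically empty); then for types \nosf{I}, \nosf{IIa}, \nosf{IVa}, \nosf{Va}, \nosf{VIa}, \nosf{X}, \nosf{XIa} verify by inspection of table~\ref{tab:Delta} that no two elements of $\Delta_0(\Pi)$ are in the ratio $\nu$ (using the general-position hypotheses imposed in table~\ref{tab:List}, e.g.\ $\chi_1\chi_2 \neq \nu^{\pm 1}$ in case \nosf{I}); finally treat \nosf{IIIa}, \nosf{IIIb} by solving the resulting equations $\chi_1^{\pm 1} = \nu^{\pm 1}$ and reading off the intersection character in each of the four exceptional subcases.
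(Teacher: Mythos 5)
Your proposal is correct and is essentially the paper's own (implicit) argument: the lemma is stated in the combinatorics appendix as a direct consequence of table~\ref{tab:Delta}, and your reduction of $\rho\in\Delta_{\pluss}(\Pi)\cap\Delta_{\minuss}(\Pi)$ to the existence of two elements of $\Delta_0(\Pi)$ in ratio $\nu$, followed by a type-by-type inspection under the general-position hypotheses of table~\ref{tab:List}, is exactly the intended check. One tiny imprecision: for types \nosf{IIIa}/\nosf{IIIb} the multiset $\Delta_0(\Pi)$ has only two elements ($\{\nu^{1/2},\chi_1\nu^{1/2}\}$ resp.\ $\{\nu^{-1/2},\chi_1\nu^{-1/2}\}$), not the three you sketch in your middle paragraph, but since your final verification is against the correct table entries and the equations $\chi_1^{\pm1}=\nu^{\pm1}$, the conclusion and the four exceptional subcases come out right.
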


\begin{lem}\label{lem:exceptional++*}
The intersection $\Delta_{\minuss}(\Pi)\cap\Delta_{\minuss}^\divideontimes(\Pi)$ is empty except for twists of
\begin{center}
\begin{tabular}{ccccccc}
\toprule
Type      & \nosf{IIa} & \nosf{IIIa} & \nosf{IVc} & \nosf{Va} & \nosf{VIa} & \nosf{XIa} \\
$\Delta_{\minuss}(\Pi)\cap\Delta_{\minuss}^\divideontimes(\Pi)$ & $\{1\}$ & $\{1,\chi_1\}$ & $\{1\}$ & $\{1,\chi_0\}$ & $\{1,1\}$& $\{1\}$\ .\\
\bottomrule
\end{tabular}
\end{center}

In these cases, the intersection is an orbit with basepoint $\rho=1$ under the joint action of $A(\Pi)$ and the involution $\rho\mapsto\rho^\divideontimes$.
\end{lem}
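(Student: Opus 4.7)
The plan is to reduce the statement to a finite combinatorial check based on table~\ref{tab:Delta}. First I would reformulate the defining condition: by definition $\Delta_{\minuss}(\Pi)=\nu^{-1/2}\Delta_0(\Pi)$, so a character $\rho$ lies in $\Delta_{\minuss}(\Pi)\cap\Delta_{\minuss}^\divideontimes(\Pi)$ if and only if both $\chi_1:=\nu^{1/2}\rho$ and $\chi_2:=\nu^{1/2}\rho^\divideontimes$ occur in the multiset $\Delta_0(\Pi)$. Since $\rho\rho^\divideontimes=\omega$, this is equivalent to finding an (ordered) pair $(\chi_1,\chi_2)\in\Delta_0(\Pi)\times\Delta_0(\Pi)$, counted with multiplicity, such that
\begin{equation*}
\chi_1\chi_2 \;=\; \nu\omega\ ;
\end{equation*}
the element $\rho$ is then recovered as $\rho=\nu^{-1/2}\chi_1$, and $\rho\leftrightarrow\rho^\divideontimes$ corresponds to swapping $\chi_1$ and $\chi_2$.

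Since both $\Delta_0(\Pi)$ and $\omega$ transform compatibly under central twists, I would then reduce to the case where $\Pi$ is normalized as in table~\ref{tab:List}. Using table~\ref{tab:Delta}, one reads off $\Delta_0(\Pi)$ for each type and performs the pair search explicitly. In every generic case except for the seven families listed, the multiset $\Delta_0(\Pi)$ simply contains no pair of characters whose product equals $\nu\omega$; for the seven exceptional families (\nosf{IIa}, \nosf{IIIa}, \nosf{IVc}, \nosf{Va}, \nosf{VIa}, \nosf{XIa}) the pair $(\chi_1,\chi_2)=(\nu^{1/2},\nu^{1/2}\omega)$ appears with the indicated multiplicity, producing the claimed element $\rho=1$ (with the twist $\chi_0$ contributing an extra element for \nosf{Va}, the second copy in \nosf{IIIa} coming from $\chi_1\neq1$, and the multiplicity two in \nosf{VIa} inherited from $\widetilde\Delta(\Pi)=\{\nu^{1/2},\nu^{1/2}\}$ via lemma~\ref{lem:TildeDelta}). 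For the non-generic cases one has $\Delta_0(\Pi)\subsetneq\widetilde\Delta(\Pi)$ and a direct inspection of table~\ref{tab:Delta} shows that the condition $\chi_1\chi_2=\nu\omega$ has no solution in $\Delta_0(\Pi)$.

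Once the exceptional list has been produced, the final sentence follows by a short check: the element $\rho=1$ trivially satisfies $\rho=\rho^\divideontimes$ when $\omega=1$ (which holds in all listed cases by the normalization of table~\ref{tab:List}, so the center acts trivially up to a square), and the group $A(\Pi)$ determined earlier acts by twisting the pair $(\chi_1,\chi_2)$, hence permutes the intersection. For \nosf{IIIa} the two elements $1$ and $\chi_1$ lie in the same orbit under the involution (since $\chi_1^\divideontimes=\omega\chi_1^{-1}=\chi_1$ in the normalized case), while for \nosf{Va} they lie in the same orbit under $A(\Pi)=\{1,\chi_0\}$; the remaining cases are single points.

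The main obstacle is purely bookkeeping: one must be careful that multiplicities in $\Delta_0(\Pi)$ (which, as lemma~\ref{lem:TildeDelta} emphasizes, behave non-trivially for \nosf{VIa} and \nosf{VId}) are tracked correctly, and that the reduction to the normalized list in table~\ref{tab:List} is compatible with the involution $\rho\mapsto\rho^\divideontimes$ under arbitrary central twists. Beyond that, no new ideas are required; the statement is a direct corollary of the tables and the algebraic reformulation above.
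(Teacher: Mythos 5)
Your reduction is correct and is essentially the paper's own (implicit) argument: the lemma carries no separate proof in the paper and is read off from tables~\ref{tab:List} and \ref{tab:Delta}, exactly as you do. The reformulation ``$\rho\in\Delta_{\minuss}(\Pi)\cap\Delta_{\minuss}^\divideontimes(\Pi)$ iff both $\nu^{1/2}\rho$ and $\nu^{1/2}\rho^\divideontimes$ lie in $\Delta_0(\Pi)$, i.e.\ iff $\Delta_0(\Pi)$ contains a pair with product $\nu\omega$'' is a clean way to organize the finite check, and the type-by-type verification does produce exactly the listed table.

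However, your closing paragraphs contain three slips you should repair, none of which affects the final answer but all of which are literally false as written. First, $\omega=1$ does \emph{not} hold in all listed cases: for type \nosf{IIIa} in the normalization of table~\ref{tab:List} one has $\omega=\chi_1\neq1$, so $1^\divideontimes=\chi_1\neq1$. Second, and relatedly, your computation ``$\chi_1^\divideontimes=\omega\chi_1^{-1}=\chi_1$'' is wrong: with $\omega=\chi_1$ one gets $\chi_1^\divideontimes=1$ and $1^\divideontimes=\chi_1$, which is precisely why the involution \emph{swaps} the two elements $1$ and $\chi_1$ and makes $\{1,\chi_1\}$ a single orbit; your stated identity would instead make them fixed points. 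Third, your dichotomy ``generic cases: check the list; non-generic cases: no solution'' misplaces \nosf{IVc}, which is non-generic (a Langlands quotient) yet has $\Delta_0(\Pi)=\{\nu^{-3/2},\nu^{1/2}\}$ with $\nu^{1/2}\cdot\nu^{1/2}=\nu=\nu\omega$, hence contributes $\{1\}$; likewise the claim $\Delta_0(\Pi)\subsetneq\widetilde\Delta(\Pi)$ for non-generic $\Pi$ fails e.g.\ for \nosf{IIIb}. The safe formulation is simply to run the pair search through every row of table~\ref{tab:Delta} without splitting into generic and non-generic buckets. (Also: you list six exceptional families, not seven.)
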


\begin{lem}\label{lem:exceptional+-*}
If there is $\rho\in\Delta_{\minuss}(\Pi)\cap\Delta_{\pluss}^\divideontimes(\Pi)$
with $\rho\notin\Delta_{\minuss}^\divideontimes(\Pi)\cup\Delta_{\pluss}(\Pi)$, then $(\Pi,\rho)$ belongs to a twist of one of the following cases
\begin{center}
\begin{tabular}{cccc}
\toprule
Type   & \nosf{I} & \quad\nosf{IIa} & \quad\nosf{X}\\
 $\nu^{\frac12}\rho\in$ 
 & $\{1,\chi_1,\chi_2,\chi_1\chi_2\}$ 
 & $\{\chi_1,\chi_1^{-1}\}$ 
 & $\{1,\omega_{\pi_{c}}\}$\ .\\
 \bottomrule
\end{tabular}
\end{center}
If there exists
$\rho\in\Delta_{\minuss}^\divideontimes(\Pi)\cap \Delta_{\pluss}(\Pi)\cap \Delta_{\minuss}(\Pi)$, then
$(\Pi,\rho)$ is a twist of case \nosf{IIIa} with $(\chi_1,\rho)=(\nu,\nu)$ or $(\chi_1,\rho)=(\nu^{-1},1)$.
\end{lem}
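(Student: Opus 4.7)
The statement is purely combinatorial once one has the explicit description of $\Delta_0(\Pi)$ given by table~\ref{tab:Delta}, so the plan is a reduction to a case-by-case inspection. First I would introduce the auxiliary character $\sigma := \nu^{1/2}\rho$, and translate the membership conditions into
\begin{align*}
 \rho\in\Delta_{\minuss}(\Pi)\  &\Longleftrightarrow\ \sigma\in\Delta_0(\Pi),\\
 \rho\in\Delta_{\pluss}(\Pi)\   &\Longleftrightarrow\ \nu^{-1}\sigma\in\Delta_0(\Pi),\\
 \rho\in\Delta_{\pluss}^\divideontimes(\Pi)\ &\Longleftrightarrow\ \omega\sigma^{-1}\in\Delta_0(\Pi),\\
 \rho\in\Delta_{\minuss}^\divideontimes(\Pi)\ &\Longleftrightarrow\ \nu\omega\sigma^{-1}\in\Delta_0(\Pi),
\end{align*}
using $\rho^\divideontimes\!=\!\omega/\rho$. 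Under these translations, the first hypothesis becomes
$$\sigma,\ \omega\sigma^{-1}\in\Delta_0(\Pi)\quad\text{and}\quad \nu^{-1}\sigma,\ \nu\omega\sigma^{-1}\notin\Delta_0(\Pi),$$ and the hypothesis of the second claim becomes $\sigma, \nu^{-1}\sigma, \nu\omega\sigma^{-1}\in\Delta_0(\Pi)$.

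Next I would reduce to normalized $\Pi$ by the twisting functoriality: for a smooth character $\mu$ of $k^\times$ we have $\Delta_0(\mu\Pi)=\mu\Delta_0(\Pi)$ and $\omega_{\mu\Pi}=\mu^2\omega$, and the pair $(\mu\Pi,\mu\rho)$ satisfies the hypotheses if and only if $(\Pi,\rho)$ does (with $\sigma$ replaced by $\mu\sigma$). Thus it suffices to run through the entries of table~\ref{tab:List} and verify the combinatorial condition. For non-generic $\Pi$ lemma~\ref{lem:intersection--*} already gives $\Delta_{\pluss}(\Pi)=\Delta_{\pluss}^\divideontimes(\Pi)$, so the first hypothesis forces $\rho\in\Delta_{\pluss}(\Pi)$, contradicting $\rho\notin\Delta_{\pluss}(\Pi)$; hence $\Pi$ must be generic. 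Among generic types, those with $J_P(\Pi)=0$ have empty $\Delta_0(\Pi)$ and are excluded, while the non fully induced generic cases \nosf{IIIa}, \nosf{IVa}, \nosf{Va}, \nosf{VIa}, \nosf{VIII}, \nosf{IX}, \nosf{XIa} have sufficiently small $\Delta_0(\Pi)$ that the pair of shifts $\nu^{-1}\sigma, \nu\omega\sigma^{-1}$ almost always hits $\Delta_0(\Pi)$ again, and they can be eliminated by direct inspection of table~\ref{tab:Delta}, leaving only the fully induced cases \nosf{I}, \nosf{IIa}, \nosf{X}.

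For \nosf{I}, \nosf{IIa}, \nosf{X} the multiset $\Delta_0(\Pi)$ is closed under the involution $\sigma\leftrightarrow\omega\sigma^{-1}$, so conditions (a) and (b) are automatic once $\sigma\in\Delta_0(\Pi)$. With $\Pi$ normalized as in table~\ref{tab:List} one then reads off $\Delta_0(\Pi)=\{1,\chi_1,\chi_2,\chi_1\chi_2\}$ for \nosf{I}, $\{\chi_1,\chi_1^{-1}\}$ for \nosf{IIa}, and $\{1,\omega_{\pi_c}\}$ for \nosf{X} (up to the appropriate normalization absorbing $\delta_P^{1/2}$), and in each case the exclusion of conditions $\nu^{-1}\sigma\in\Delta_0$ or $\nu\omega\sigma^{-1}\in\Delta_0$ is automatic from the inequalities imposed on the inducing data (e.g.\ $\chi_i\neq\nu^{\pm1}$, $\chi_1\chi_2\neq\nu^{\pm1}$, $\omega_{\pi_c}\neq\nu^{\pm1}$). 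This gives exactly the tabulated list $\nu^{1/2}\rho=\sigma$ and proves the first claim.

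For the second claim, the analogous inspection shows that $\sigma,\nu^{-1}\sigma\in\Delta_0(\Pi)$ forces $\Pi$ to have a "chain'' of length at least two in $\Delta_0(\Pi)$, while $\nu\omega\sigma^{-1}\in\Delta_0(\Pi)$ adds an involutory constraint. Running through the tables, the only generic irreducible representation whose $\Delta_0$ contains a pair of characters differing by $\nu$ is type \nosf{IIIa} (with $\chi_1=\nu^{\pm1}$ as the boundary of the reducibility locus); a direct check shows that the simultaneous fulfillment of all three conditions forces precisely $(\chi_1,\rho)\in\{(\nu,\nu),(\nu^{-1},1)\}$. The main obstacle in the argument is not conceptual but purely organizational, namely the bookkeeping across the fifteen-odd Sally-Tadi\'c types and their twists, and the verification that in each non-listed case at least one of the shift conditions inevitably forces a further element of $\Delta_0(\Pi)$ (or violates the reducibility constraints on the inducing data) so that the case collapses.
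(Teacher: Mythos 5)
Your proposal is correct and follows essentially the same route as the paper, which establishes these combinatorial lemmas by translating the membership conditions into conditions on $\sigma=\nu^{1/2}\rho$ relative to $\Delta_0(\Pi)$ and inspecting table~\ref{tab:Delta} case by case; your verifications for types \nosf{I}, \nosf{IIa}, \nosf{X} and the exclusion of the remaining generic types check out. One small point: for the second claim you restrict to generic $\Pi$ without justification, so you should also note that the non-generic types are excluded by the same inspection --- their $\Delta_0(\Pi)$ is a singleton except for \nosf{IIIb} and \nosf{IVc}, and the only candidate with a pair of constituents differing by $\nu$, namely \nosf{IIIb} with $\chi_1=\nu$, fails the condition $\nu\omega\sigma^{-1}\in\Delta_0(\Pi)$ since $\nu^{3/2}\notin\{\nu^{1/2},\nu^{-1/2}\}$.
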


\begin{prop}\label{prop:comb_cases}
Fix a generic irreducible $\Pi$. In each orbit $\{\rho,\rho^\divideontimes\}$ we can choose $\rho$ such that exactly one of the following assertions holds:
\begin{enumerate}
\item $\rho$ is not in $\Delta_{\pluss}(\Pi)\cup \Delta_{\minuss}(\Pi)$,
\item $\rho$ is in $\Delta_{\pluss}^\divideontimes(\Pi)\cap\Delta_{\minuss}(\Pi)$ and not in $\Delta_{\minuss}^{\divideontimes}(\Pi)\cup\Delta_{\pluss}(\Pi)$,
\item $\rho$ is in $\Delta_{\minuss}(\Pi)\cap\Delta_{\minuss}^\divideontimes(\Pi)$ and not in $\Delta_{\pluss}(\Pi)$.
\end{enumerate}
Especially, $\rho\notin\Delta_{\pluss}(\Pi)$ holds in every case.
\end{prop}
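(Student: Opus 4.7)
The plan is to reduce to the weaker claim $\rho\notin\Delta_\pluss(\Pi)$ by means of lemma~\ref{lem:intersection--*}, and then to derive the trichotomy by bookkeeping with $\Delta_\minuss(\Pi)$ and $\Delta_\minuss^\divideontimes(\Pi)$. The main obstacle will be a case-by-case verification of one combinatorial implication that ensures case~(2) of the proposition is available whenever it is needed.

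First I would invoke lemma~\ref{lem:intersection--*}, which for generic $\Pi$ asserts $\Delta_\pluss(\Pi)\cap\Delta_\pluss^\divideontimes(\Pi)=\emptyset$. Thus in any orbit $\{\rho,\rho^\divideontimes\}$ at most one of the two characters lies in $\Delta_\pluss(\Pi)$, so we may always pick the representative $\rho$ with $\rho\notin\Delta_\pluss(\Pi)$. This already settles the ``Especially'' clause. Fixing such $\rho$, I distinguish cases according to membership in $\Delta_\minuss(\Pi)$. If $\rho\notin\Delta_\minuss(\Pi)$, then $\rho\notin\Delta_\pluss(\Pi)\cup\Delta_\minuss(\Pi)$ and we are in assertion~(1). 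Otherwise $\rho\in\Delta_\minuss(\Pi)$, and I further distinguish by $\Delta_\minuss^\divideontimes(\Pi)$: if $\rho\in\Delta_\minuss^\divideontimes(\Pi)$ then $\rho$ satisfies assertion~(3) at once; if not, we are in the remaining case where assertion~(2) should hold, which reduces to the non-trivial inclusion $\rho\in\Delta_\pluss^\divideontimes(\Pi)$.

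The hard part will be this last inclusion. Writing $\chi_\norm=\nu^{1/2}\rho\in\Delta_0(\Pi)$, the standing hypotheses $\rho\notin\Delta_\pluss(\Pi)$, $\rho\in\Delta_\minuss(\Pi)$, $\rho\notin\Delta_\minuss^\divideontimes(\Pi)$ translate into $\nu^{-1}\chi_\norm\notin\Delta_0(\Pi)$, $\chi_\norm\in\Delta_0(\Pi)$ and $\omega/\chi_\norm\notin\Delta_0(\Pi)$, while the desired conclusion becomes $\nu\omega/\chi_\norm\in\Delta_0(\Pi)$. I would establish this implication by inspection of the generic entries in table~\ref{tab:Delta}. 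The combinatorial data of lemmas~\ref{lem:intersection_+-}, \ref{lem:exceptional++*} and \ref{lem:exceptional+-*} already encode exactly where the various $\omega$-shifted and $\nu$-shifted translates of $\Delta_0(\Pi)$ have non-empty intersection; in particular the second half of lemma~\ref{lem:exceptional+-*} lists the generic families (\nosf{I}, \nosf{IIa}, \nosf{X}, together with the single family~\nosf{IIIa}) in which assertion~(2) actually occurs, and in each such family $\nu\omega/\chi_\norm\in\Delta_0(\Pi)$ can be read off directly. For every other generic type of table~\ref{tab:List} one checks from the same table that no $\chi_\norm\in\Delta_0(\Pi)$ can simultaneously fail both the $\omega$- and the $\nu$-closures, so the implication holds vacuously in those types.

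Finally, mutual exclusivity is immediate from the definitions: (1) excludes $\rho\in\Delta_\minuss(\Pi)$ while (2) and (3) both require it, and (2) requires $\rho\notin\Delta_\minuss^\divideontimes(\Pi)$ whereas (3) requires $\rho\in\Delta_\minuss^\divideontimes(\Pi)$. Combined with the case analysis above, this yields the trichotomy, completing the proof.
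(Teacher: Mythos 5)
Your overall skeleton (pick $\rho\notin\Delta_{\pluss}(\Pi)$ via lemma~\ref{lem:intersection--*}, then split on membership in $\Delta_{\minuss}(\Pi)$ and $\Delta_{\minuss}^\divideontimes(\Pi)$) is close to the paper's, but your treatment of the last case contains a genuine gap. You reduce assertion~(2) to the implication
\begin{equation*}
\rho\notin\Delta_{\pluss}(\Pi),\quad \rho\in\Delta_{\minuss}(\Pi),\quad \rho\notin\Delta_{\minuss}^\divideontimes(\Pi)
\ \Longrightarrow\ \rho\in\Delta_{\pluss}^\divideontimes(\Pi)\ ,
\end{equation*}
and claim it holds ``vacuously'' outside the families \nosf{I}, \nosf{IIa}, \nosf{X}, \nosf{IIIa}. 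This is false: for $\Pi=\sigma\St_G$ of type \nosf{IVa} (which is generic) one has, after normalizing $\sigma=1$, $\Delta_0(\Pi)=\{\nu^{3/2}\}$, hence $\Delta_{\pluss}=\{\nu^{2}\}$, $\Delta_{\minuss}=\{\nu\}$, $\Delta_{\minuss}^\divideontimes=\{\nu^{-1}\}$, $\Delta_{\pluss}^\divideontimes=\{\nu^{-2}\}$. The character $\rho=\nu$ satisfies all three hypotheses but is not in $\Delta_{\pluss}^\divideontimes(\Pi)$, so with your fixed choice of representative none of (1)--(3) holds. The proposition itself is not in danger, because in this situation $\rho^\divideontimes\notin\Delta_{\pluss}(\Pi)\cup\Delta_{\minuss}(\Pi)$ (indeed $\rho^\divideontimes\in\Delta_{\pluss}$ would mean $\rho\in\Delta_{\pluss}^\divideontimes$ and $\rho^\divideontimes\in\Delta_{\minuss}$ would mean $\rho\in\Delta_{\minuss}^\divideontimes$, both excluded), so one should have chosen the other representative and landed in assertion~(1). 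Your selection rule ``take the representative outside $\Delta_{\pluss}(\Pi)$'' is too rigid: it does not single out a representative when both lie outside $\Delta_{\pluss}(\Pi)$, and it is exactly there that your argument breaks.

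The paper avoids this by organizing the proof the other way around: it first assumes that assertion~(1) fails for \emph{both} elements of the orbit, i.e.\ $\rho\in(\Delta_{\pluss}\cup\Delta_{\minuss})\cap(\Delta_{\pluss}^\divideontimes\cup\Delta_{\minuss}^\divideontimes)$. Under that standing hypothesis, once one is outside $\Delta_{\minuss}\cap\Delta_{\minuss}^\divideontimes$ and has arranged $\rho\in\Delta_{\pluss}^\divideontimes$ (possible by symmetry), lemma~\ref{lem:intersection--*} forces $\rho\notin\Delta_{\pluss}$, hence $\rho\in\Delta_{\minuss}$, and assertion~(2) drops out with no table inspection at all. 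Your proof can be repaired by inserting exactly this observation -- in your ``hard case'', either $\rho\in\Delta_{\pluss}^\divideontimes(\Pi)$ and (2) holds, or else $\rho^\divideontimes$ satisfies (1) -- but as written the combinatorial implication you rely on is wrong, and the case-by-case check you sketch would not close the gap for type \nosf{IVa} and its twists.
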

\begin{proof}
Assume the first assertion cannot be satisfied, so $\rho$ is both in  $\Delta_{\pluss}(\Pi)\cup \Delta_{\minuss}(\Pi)$ and in $\Delta_{\pluss}^\divideontimes(\Pi)\cup \Delta_{\minuss}^\divideontimes(\Pi)$.
If $\rho$ is not in $\Delta_{\minuss}(\Pi)\cap\Delta_{\minuss}^\divideontimes(\Pi)$, then $\rho$ is in $\Delta_{\pluss}(\Pi)$ or in $\Delta_{\pluss}^\divideontimes(\Pi)$.
By choice of $\rho$ we can assume that $\rho\in \Delta_{\pluss}^\divideontimes(\Pi)$.
Lemma \ref{lem:intersection--*} assures $\rho\notin \Delta_{\pluss}(\Pi)$,
hence $\rho\in\Delta_{\minuss}(\Pi)$.
Especially, $\rho\notin\Delta_{\minuss}^\divideontimes(\Pi)\cup \Delta_{\pluss}(\Pi)$, so the second assertion holds.
On the other hand, if $\rho$ is in $\Delta_{\minuss}(\Pi)\cap\Delta_{\minuss}^\divideontimes(\Pi)$, then the same holds for $\rho^\divideontimes$. By possibly replacing $\rho$ by $\rho^\divideontimes$ we can assume $\rho\notin\Delta_{\pluss}(\Pi)$ by lemma~\ref{lem:intersection--*}, so the third assertion holds.
\end{proof}

We distinguish three series of exceptional cases for the pairs $(\Pi,\rho)$.
\begin{cor}\label{cor:comb_cases2}
Suppose  $\Pi\in\CCC_G(\omega)$ is generic and irreducible, $J_P(\Pi)\neq0$ and $\Pi$ is normalized as in table~\ref{tab:List}.
In each orbit $\{\rho,\rho^\divideontimes\}$ one can choose $\rho$ such that exactly one of the following assertions holds:
 \begin{enumerate}
\item $(\Pi,\rho)$ belongs to the \textbf{non-exceptional cases} where $\rho\notin\Delta_{\pluss}(\Pi)\cup\Delta_{\minuss}(\Pi)$.
\item $(\Pi,\rho)$ belongs to the \textbf{fully induced non-ordinary exceptional cases}
\begin{enumerate}
\item[\quad\nosf{I}]  and $\rho$ is in $\{\nu^{-\frac12},\nu^{-\frac12}\chi_1,\nu^{-\frac12}\chi_2,\nu^{-\frac12}\chi_1\chi_2\}$,
\item[\quad\nosf{IIa}]and $\rho$ is in $\{\nu^{-\frac12}\chi_1,\nu^{-\frac12}\chi_1^{-1}\}$,
\item[\quad\nosf{X}]  and $\rho$ is in $\{\nu^{-\frac12},\nu^{-\frac12}\omega_{\pi_{c}}\}$.
\end{enumerate}
\item $(\Pi,\rho)$ belongs to the \textbf{extraordinary exceptional cases} \nosf{IIa}, \nosf{Va}, \nosf{VIa}, \nosf{XIa} with $\rho=\rho^\divideontimes\in\Delta_{\minuss}(\Pi)$.
\item $(\Pi,\rho)$ belongs to the \textbf{exceptional cases \nosf{IIIa}} where $\Pi$ is of type \nosf{IIIa} and $\rho\in \Delta_{\minuss}(\Pi)\cap\Delta_{\minuss}^\divideontimes(\Pi)=\{1,\chi_1\}$,
\end{enumerate}
and in each case $\rho\notin\Delta_{\pluss}(\Pi)$.
\end{cor}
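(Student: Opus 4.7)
The plan is to upgrade the three-case trichotomy of proposition~\ref{prop:comb_cases} into the four-case classification of the corollary by matching its three situations against the refined combinatorial lemmas~\ref{lem:exceptional+-*} and~\ref{lem:exceptional++*}, and by using genericity of $\Pi$ to rule out irrelevant types. In each of those three situations the condition $\rho\notin\Delta_{\pluss}(\Pi)$ is already built in, so the final clause of the corollary comes for free and there is nothing extra to check.

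Concretely, I would first invoke proposition~\ref{prop:comb_cases} to select in every orbit $\{\rho,\rho^\divideontimes\}$ a representative falling into exactly one of its situations~(a), (b), (c). Situation~(a) is literally case~1 of the corollary. For situation~(b), where $\rho\in\Delta_{\pluss}^\divideontimes(\Pi)\cap\Delta_{\minuss}(\Pi)$ but $\rho\notin\Delta_{\minuss}^\divideontimes(\Pi)\cup\Delta_{\pluss}(\Pi)$, the first statement of lemma~\ref{lem:exceptional+-*} applies and forces $(\Pi,\rho)$ to belong, up to twist, to one of the families \nosf{I}, \nosf{IIa}, \nosf{X} with $\nu^{1/2}\rho$ in the small sets listed there. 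Translating the constraint on $\nu^{1/2}\rho$ into one on $\rho$ reproduces verbatim the lists of case~2.

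For situation~(c), where $\rho\in\Delta_{\minuss}(\Pi)\cap\Delta_{\minuss}^\divideontimes(\Pi)$, lemma~\ref{lem:exceptional++*} enumerates the admissible pairs as coming from \nosf{IIa}, \nosf{IIIa}, \nosf{IVc}, \nosf{Va}, \nosf{VIa}, \nosf{XIa}; the non-generic type \nosf{IVc} is excluded by the standing hypothesis that $\Pi$ is generic. For \nosf{IIa}, \nosf{Va}, \nosf{VIa}, \nosf{XIa} the central character $\omega$ in the normalization of table~\ref{tab:List} is such that every listed element of the intersection satisfies $\rho^2=\omega$, i.e.\ $\rho=\rho^\divideontimes$, so these pairs land in case~3. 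For \nosf{IIIa}, the involution $\rho\mapsto\rho^\divideontimes$ interchanges the two elements $1$ and $\chi_1$ of the intersection, so no representative is fixed by $\divideontimes$, and $(\Pi,\rho)$ falls into case~4. The main point to verify — and essentially the only non-formal step — is the compatibility of these normalizations with the $\divideontimes$-action, a routine inspection of the central characters in table~\ref{tab:List} but the place where a miscalculation would blur the separation between cases~3 and~4.
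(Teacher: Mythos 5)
Your proposal is correct and follows exactly the route of the paper's own (very terse) proof: apply proposition~\ref{prop:comb_cases} and then identify situation (b) with case 2 via lemma~\ref{lem:exceptional+-*} and situation (c) with cases 3 and 4 via lemma~\ref{lem:exceptional++*}. The extra details you supply — excluding the non-generic type \nosf{IVc} and checking via the central characters that the involution $\rho\mapsto\rho^\divideontimes$ fixes the intersection pointwise for \nosf{IIa}, \nosf{Va}, \nosf{VIa}, \nosf{XIa} but swaps $1$ and $\chi_1$ for \nosf{IIIa} — are accurate and are exactly what the paper leaves implicit.
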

\begin{proof} Use proposition~\ref{prop:comb_cases}. The second assertion is implied by lemma~\ref{lem:exceptional+-*}.
The third and fourth assertion correspond to the third assertion of proposition~\ref{prop:comb_cases} by lemma~\ref{lem:exceptional++*}.
\end{proof}

Recall that for an irreducible $M$-module $\sigma_\Pi=\pi\boxtimes\chi_\Pi$ as in section \ref{s:SiegelInd} we have defined $\rho_+(\sigma_\Pi)=\nu^{-1/2}\chi_\Pi$ and $\rho_-(\sigma_\Pi)=\rho_+^\divideontimes(\sigma_\Pi)$.

\begin{lem}\label{lem:combinatoric}
Fix an infinite-dimensional irreducible $\sigma_\Pi\in\CCC_G(\omega)$ such that $\sigma_\Pi\not\cong\omega\otimes\sigma_\Pi^\vee$
and such that there is an exact sequence
\begin{equation*}0\to\Xi\to \Ind_P^G(\sigma_\Pi)\to\Pi\to0
\end{equation*}
with generic irreducible quotient $\Pi$ and irreducible submodule $\Xi$. Then
$\Delta_{\pluss}(\Xi)=\{\rho_+(\sigma_\Pi), \rho_-(\sigma_\Pi)\}$ as sets, i.e.\ without counting multiplicity, and this is disjoint to $\{\rho_+(\omega\otimes\sigma^\vee_\Pi),\rho_-(\omega\otimes\sigma^\vee_\Pi)\}$.
\end{lem}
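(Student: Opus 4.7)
The lemma has two parts. For the disjointness, I plan a direct character calculation. The four characters are $\rho_+(\sigma_\Pi)=\nu^{-1/2}\chi_\Pi$, $\rho_-(\sigma_\Pi)=\nu^{1/2}\omega\chi_\Pi^{-1}$, $\rho_+(\omega\otimes\sigma_\Pi^\vee)=\nu^{-1/2}\omega\chi_\Pi^{-1}$, and $\rho_-(\omega\otimes\sigma_\Pi^\vee)=\nu^{1/2}\chi_\Pi$. Any coincidence across the two pairs forces $\chi_\Pi^2=\omega$, equivalently $\omega_\pi=1$. For irreducible $\pi\in\CCC_{\Gl(2)}$, the condition $\omega_\pi=1$ implies $\pi\cong\pi^\vee$, and combined with $\chi_\Pi=\chi_\Pi^\divideontimes$ this gives $\sigma_\Pi\cong\omega\otimes\sigma_\Pi^\vee$, contradicting the hypothesis.

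For the set equality, apply the Bessel functor $\beta_\rho$ to the short exact sequence. Since $\Pi$ is generic, corollary~\ref{Addition} yields exactness $0\to\widetilde{\Xi}\to\widetilde{I}\to\widetilde{\Pi}\to 0$, and corollary~\ref{cor:gen_Mult1} gives $\deg(\widetilde{\Pi})=1$, so $\deg(\widetilde{\Xi})=\deg(\widetilde{I})-1$. Uniqueness of Whittaker models on $I$ (Rodier) forces $\Xi$ to be non-generic, hence by theorem~\ref{Mult1}, $\Delta_{\pluss}(\Xi)$ coincides with the set of $\rho$ satisfying $\deg(\widetilde{\Xi})=1$, equivalently $\deg(\widetilde{I})=2$. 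The task reduces to showing $\deg(\widetilde{I})=2$ precisely when $\rho\in\{\rho_+(\sigma_\Pi),\rho_-(\sigma_\Pi)\}$.

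I would split into three cases. The ordinary case $\rho\notin\{\rho_+,\rho_-\}$ is handled by lemma~\ref{KEY}, which gives $\deg(\widetilde{I})=1$; the extraordinary case $\rho_+=\rho_-$ is handled by lemma~\ref{lem:key2}, which gives $\deg(\widetilde{I})=2$. The hard part will be the genuinely non-ordinary case where $\rho\in\{\rho_+,\rho_-\}$ and $\rho_+\neq\rho_-$, since lemma~\ref{KEY} only constrains $\deg(\widetilde{I})\in\{1,2\}$. Here I would use proposition~\ref{BETA1} to reformulate the claim as $\deg(\beta^\rho(I))=1$ and compute $\beta^\rho(I)$ through the diagram of lemma~\ref{Lemma1}: by lemma~\ref{tildeKoh}, $\beta^\rho(I_i)=0$ for $i\in\{0,2,3\}$ because $\pi$ is infinite-dimensional, while $\beta^\rho(I_1)\cong\chi_\Pi\delta_B^{-1/2}\otimes\pi$ is perfect of degree one by Kirillov theory. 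The required $\deg(\beta^\rho(I))=1$ then follows from vanishing of the boundary map $\delta\colon\beta^\rho(I_{\leq 1})\to\widetilde{I_{\geq 2}}$, which I would establish by degree counting: the perfect submodule $\widetilde{I_3}\cong\delta_P^{1/2}\otimes\mathbb{E}[\chi_\Pi]$ already supplies the generic degree of $\widetilde{I}$, and this is entirely absorbed by the generic quotient $\widetilde{\Pi}$, leaving no room in $\widetilde{I_{\geq 2}}$ for a non-trivial image of $\delta$.
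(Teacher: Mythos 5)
The paper disposes of this lemma by inspecting tables~\ref{tab:List}, \ref{tab:list_sigma} and \ref{tab:Delta}, so your structural argument is a genuinely different route. Your disjointness computation is correct: any coincidence forces $\chi_\Pi^2=\omega$, hence $\omega_\pi=1$, hence $\pi\cong\pi^\vee$ and $\chi_\Pi=\chi_\Pi^\divideontimes$, contradicting $\sigma_\Pi\not\cong\omega\otimes\sigma_\Pi^\vee$. The reduction of the set equality to $\deg(\widetilde I)=2$, and the ordinary and extraordinary cases, are also essentially sound --- though you should quote proposition~\ref{lasst} (which gives $\deg(\widetilde\Xi)=d(\Xi,\rho)$ for the non-generic $\Xi$, hence $\deg(\widetilde\Xi)>0\Leftrightarrow\rho\in\Delta_{\pluss}(\Xi)$) rather than theorem~\ref{Mult1}: in the paper's logical order the proof of theorem~\ref{Mult1} invokes the present lemma, so citing it here is circular.

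The genuine gap is the non-ordinary case. Your claim that the boundary map $\delta\colon\beta^\rho(I_{\leq1})\to\widetilde{I_{\geq2}}$ has finite-dimensional image is exactly equivalent to the statement being proved ($\deg(\widetilde I)=2-\deg(\im\delta)$ since $\deg(\widetilde{I_{\geq2}})=\deg(\widetilde{I_{\leq1}})=1$), and the justification offered --- that the degree of $\widetilde{I_3}$ is ``absorbed by $\widetilde\Pi$, leaving no room for $\delta$'' --- assumes the conclusion: if $\delta$ were injective, the degree of $\widetilde\Pi$ would simply be supplied by $\widetilde{I_{\leq1}}$ instead of by $\widetilde{I_3}$, with no contradiction. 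That this actually happens is shown by type \nosf{X} with $\rho=\rho_+$: there $I=\Pi$ is irreducible, $\deg(\widetilde I)=1$, and the proof of theorem~\ref{thm:ME_split} verifies that $\delta$ \emph{is} injective. So any correct argument must use $\Xi\neq0$ in an essential way, which your sketch never does. Unwinding what is needed, $\rho_-(\sigma_\Pi)\in\Delta_{\pluss}(\Xi)$ does follow formally from theorem~\ref{thm:sigma_Pi_existence} (the inclusion $\Xi\hookrightarrow\Ind_P^G(\sigma_\Pi)$ makes $\sigma_\Pi$ a generic quotient of $\delta_P^{-1/2}J_P(\Xi)$, contributing $\rho_-(\sigma_\Pi)$), but producing the second element $\rho_+(\sigma_\Pi)$, and excluding everything else, requires knowing which middle-cell constituents of $J_P(I)$ land in $J_P(\Xi)$ rather than $J_P(\Pi)$ --- precisely the information encoded in \cite[table A.3]{Roberts-Schmidt} that the paper's one-line proof invokes. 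As it stands, the non-ordinary case is not proved.
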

\begin{proof}
See tables~\ref{tab:List}, \ref{tab:list_sigma}, \ref{tab:Delta}.
\end{proof}%

For irreducible $\Pi\in\CCC_G(\omega)$ and Bessel characters that provide a split Bessel model for $\Pi$,
the multisets $\widetilde{\Delta}(\Pi)$ and  $\Delta_1(\Pi)$ are independent of the specific choice of the Bessel character by lemma~\ref{lem:TildeDelta}.

\begin{lem}\label{lem:Delta_1}
For irreducible $\Pi\in\mathcal{C}_G^{\fin}(\omega)$ and Bessel characters providing a split Bessel model for $\Pi$,
the multiset $\Delta_1(\Pi)$ is empty except for twists of

\begin{center}
\begin{tabular}{cccccc}
\toprule
 Type           & \nosf{IIb} & \nosf{IVb} & \nosf{Vb} & \nosf{Vc} & \nosf{VId} \\
 $\Delta_1(\Pi)$& $\{\chi_1,\chi_1^{-1}\}$ & $\{\nu^{3/2}\}$ & $\{\chi_0\nu^{1/2}\}$ & $\{\nu^{1/2}\}$ & $\{\nu^{-1/2}\}$\ .\\
 \bottomrule
\end{tabular}
\end{center}
\end{lem}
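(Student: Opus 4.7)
The plan is to derive the decomposition $\widetilde{\Delta}(\Pi)=\Delta_0(\Pi)\sqcup\Delta_1(\Pi)$ from the relation in the Grothendieck group provided by proposition~\ref{BETA1}, namely
\[
[\widetilde{\Pi}]-[\beta^\rho(\Pi)]=[\alpha(\Pi)]\quad\text{in}\ K_0(\CCC)\,,
\]
and then apply the exact functor $\pi_0$ together with lemma~\ref{Dual action} (which identifies $\pi_0(\alpha(\Pi))\cong J_P(\Pi)_\psi$ as a $T$-module) to obtain the corresponding identity of multisets of $T$-characters. Since $\Delta_0(\Pi)$ records by definition the $T$-characters of $J_P(\Pi)_\psi$, this immediately gives $\Delta_1(\Pi)=\widetilde{\Delta}(\Pi)\setminus\Delta_0(\Pi)$.

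For generic $\Pi$, lemma~\ref{lem:TildeDelta} states $\widetilde{\Delta}(\Pi)=\Delta_0(\Pi)$, so $\Delta_1(\Pi)=\emptyset$. For non-generic $\Pi$, lemma~\ref{lem:TildeDelta} gives $\widetilde{\Delta}(\Pi)=\Delta(\Pi)$ (except for the special VIa, VId situations, but VIa is generic and VId is handled separately). The extra characters in $\widetilde{\Delta}\setminus\Delta_0$ must therefore come from the one-dimensional constituents $\sigma=(\mu\circ\det)\boxtimes\chi_\Pi$ of the normalized Siegel-Jacquet module $E=\delta_P^{-1/2}\otimes J_P(\Pi)$. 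By lemma~\ref{mysterious}, such a constituent contributes to $\widetilde{\Delta}(\Pi)$ precisely when the fixed Bessel character satisfies $\rho=\mu\chi_\Pi$; in that case the $T$-character contributed (by remark~\ref{rmk:torus_action_on_Jacquet}) equals $\chi_\Pi^\divideontimes=\mu^2\chi_\Pi$.

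The remaining work is the case-by-case enumeration of non-generic irreducible $\Pi\in\CCC_G(\omega)$ that admit a split Bessel model (theorem~\ref{Mult1} restricts this to $\rho\in\Delta_\pluss(\Pi)$) and for which $E$ has a one-dimensional constituent. Using the tables of Roberts-Schmidt (table A.3), I will read off the one-dimensional constituents $(\mu\circ\det)\boxtimes\chi_\Pi$ and compute $\chi_\norm=\mu^2\chi_\Pi$ for each type. For type \nosf{IIb}, normalized as in table~\ref{tab:List}, the two one-dimensional constituents $(\chi_1\circ\det)\boxtimes\chi_1^{-1}$ and $(\chi_1^{-1}\circ\det)\boxtimes\chi_1$ both correspond to $\rho=1$ and yield the characters $\chi_1$ and $\chi_1^{-1}$ respectively, giving $\Delta_1(\Pi)=\{\chi_1,\chi_1^{-1}\}$. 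The analogous verification for types \nosf{IVb}, \nosf{Vb}, \nosf{Vc}, \nosf{VId} yields the singleton sets in the table. Finally, for the non-generic types IIIb, IVc, Vd, VIb, VIc, one must check that every one-dimensional constituent of $E$ either gives a Bessel character $\rho\notin\Delta_\pluss(\Pi)$ (so by lemma~\ref{mysterious} it does not contribute) or the relevant $\Pi$ does not admit a split Bessel model at all, so $\Delta_1(\Pi)=\emptyset$.

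The only mildly delicate point is the interplay between lemma~\ref{mysterious} (which rules out contributions when $\rho\neq\mu\chi_\Pi$) and the existence of a split Bessel model (theorem~\ref{Mult1}): one must verify, for each candidate one-dimensional constituent, that the unique Bessel character $\rho=\mu\chi_\Pi$ it singles out is in fact in $\Delta_\pluss(\Pi)$ for the representation at hand. This is a mechanical but careful inspection of tables~\ref{tab:List} and \ref{tab:Delta}; no new conceptual ingredients beyond the results already established are needed.
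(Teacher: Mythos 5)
Your proposal is correct and follows essentially the same route as the paper, whose proof of this lemma is simply the combination of lemma~\ref{lem:TildeDelta} with the entries of table~\ref{tab:Delta}; the identity $\Delta_0(\Pi)\sqcup\Delta_1(\Pi)=\widetilde{\Delta}(\Pi)$ you derive from proposition~\ref{BETA1} is exactly how the paper sets up $\Delta_1$ in section~\ref{s:Combinat}, and your case-by-case inspection via lemma~\ref{mysterious} and remark~\ref{rmk:torus_action_on_Jacquet} just unpacks how those table entries are obtained (with the \nosf{VId} multiplicity drop correctly deferred to lemma~\ref{lem:TildeDelta} rather than to a naive count of one-dimensional constituents).
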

\begin{proof}See lemma~\ref{lem:TildeDelta} and table~\ref{tab:Delta}.
\end{proof}

\begin{lem}\label{lem:fatcell_disjointness}
If $\sigma_\Pi\in\CCC_M(\omega)$ is one-dimensional with an exact sequence 
\begin{equation*}
 0\to\Xi\to \Ind_P^G(\sigma_\Pi)\to\Pi\to0\ ,
\end{equation*}
where the irreducible quotient $\Pi$ has a split Bessel model, then $\Delta_1(\Pi)\cap\Delta(\Xi)$ is empty.
\end{lem}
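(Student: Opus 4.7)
\textit{Plan.} The strategy is to reduce to a short list of cases and then perform a case-by-case verification using the tables. By Lemma~\ref{lem:Delta_1} the multiset $\Delta_1(\Pi)$ is empty unless $\Pi$ is (up to twist) of one of the five types \nosf{IIb}, \nosf{IVb}, \nosf{Vb}, \nosf{Vc}, or \nosf{VId}; in all other cases the intersection $\Delta_1(\Pi)\cap\Delta(\Xi)$ is vacuously empty. Hence the lemma reduces to verifying the five non-generic types individually.

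For each such $\Pi$, Theorem~\ref{thm:sigma_Pi_existence} and the explicit list of Table~\ref{tab:list_sigma} pin down the one-dimensional $\sigma_\Pi=(\mu\circ\det)\boxtimes\chi_\Pi$ for which $\Pi$ can appear as a quotient of $I=\Ind_P^G(\sigma_\Pi)$ (essentially uniquely up to the Weyl action). When $I$ is already irreducible, $\Xi=0$ and the assertion is trivial. Otherwise $I$ has length two and the irreducible submodule $\Xi$ is identified from the composition-factor table (Roberts--Schmidt Table~A.1): up to twist, $\Xi$ is of type \nosf{IIa} when $\Pi$ is \nosf{IIb}; of type \nosf{IVa} or \nosf{IVc} when $\Pi$ is \nosf{IVb}; of type \nosf{Va} when $\Pi$ is \nosf{Vb} or \nosf{Vc}; and of type \nosf{VIc} when $\Pi$ is \nosf{VId}.

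With $\Xi$ identified, the multiset $\Delta(\Xi)$ of $T$-characters on the irreducible constituents of $\delta_P^{-1/2}\otimes J_P(\Xi)$ can be read off from Roberts--Schmidt Table~A.3 (as summarized in Table~\ref{tab:Delta}). Comparing with the explicit values listed in Lemma~\ref{lem:Delta_1}, namely $\{\chi_1,\chi_1^{-1}\}$ for \nosf{IIb}, $\{\nu^{3/2}\}$ for \nosf{IVb}, $\{\chi_0\nu^{1/2}\}$ for \nosf{Vb}, $\{\nu^{1/2}\}$ for \nosf{Vc}, and $\{\nu^{-1/2}\}$ for \nosf{VId}, one then checks by direct inspection that none of these characters appears in the corresponding $\Delta(\Xi)$. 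Conceptually this is the natural answer: $\Delta_1(\Pi)$ records the $T$-characters of the one-dimensional Jacquet constituents of $\Pi$ (the duals of $\sigma_\Pi$ in the sense of Lemma~\ref{mysterious}), while $\Delta(\Xi)$ records characters attached to the generic or Steinberg-type Jacquet constituents of $\Xi$; these are shifted relative to each other by $\nu^{\pm1/2}$ and never coincide.

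The main obstacle is simply the careful bookkeeping of normalizations across the five cases and, in particular, the delicate case $\Pi=\nosf{VId}$: here $\Xi=\nosf{VIc}$ is itself non-generic, and one must verify that each Jordan--H\"older constituent of $\delta_P^{-1/2}\otimes J_P(\nosf{VIc})$ carries a $T$-character distinct from $\nu^{-1/2}$. Once the normalizations are pinned down, the verification in every case is a routine comparison of two explicit short lists.
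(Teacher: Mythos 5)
Your overall strategy---reduce via lemma~\ref{lem:Delta_1} to the types with $\Delta_1(\Pi)\neq\emptyset$, identify $\Xi$ from theorem~\ref{thm:sigma_Pi_existence} and table~\ref{tab:list_sigma}, and compare against table~\ref{tab:Delta}---is exactly the paper's (very terse) proof. However, your identification of the kernels $\Xi$ is wrong, and wrong in a way that would make your final ``routine comparison'' fail rather than succeed. The hypothesis is that $\sigma_\Pi$ is \emph{one-dimensional}, so you must take the kernel of $\Ind_P^G(\sigma_\Pi)$ for the one-dimensional rows of table~\ref{tab:list_sigma}. Those kernels are: $\Xi=0$ for \nosf{IIb} (the induction from $(\chi_1\circ\det)\boxtimes\chi_1^{-1}$ is irreducible), $\Xi$ of type \nosf{IVd} for \nosf{IVb}, of type \nosf{Vd} for \nosf{Vb} and \nosf{Vc}, and of type \nosf{VIb} for \nosf{VId}. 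The kernels you list (\nosf{IIa}, \nosf{IVa} or \nosf{IVc}, \nosf{Va}, \nosf{VIc}) are the ones attached to the \emph{infinite-dimensional} choices of $\sigma_\Pi$ (Steinberg or principal-series type), which are excluded by hypothesis.

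This is not a cosmetic slip: with your kernels the lemma is false. For instance $\Delta_1(\nosf{VId})=\{\nu^{-1/2}\}=\Delta(\nosf{VIc})$, $\Delta_1(\nosf{Vb})=\{\chi_0\nu^{1/2}\}\subseteq\Delta(\nosf{Va})=\{\nu^{1/2},\chi_0\nu^{1/2}\}$, and $\Delta_1(\nosf{IVb})=\{\nu^{3/2}\}=\Delta(\nosf{IVa})$, so the ``direct inspection'' you invoke would exhibit non-empty intersections in four of your five cases. With the correct kernels the check does go through: $\Delta(\nosf{IVd})=\{\nu^{-3/2}\}$, $\Delta(\nosf{Vd})=\{\nu^{-1/2},\chi_0\nu^{-1/2}\}$ and $\Delta(\nosf{VIb})=\{\nu^{1/2}\}$ are disjoint from $\{\nu^{3/2}\}$, $\{\chi_0\nu^{1/2}\}$ resp.\ $\{\nu^{1/2}\}$ resp.\ $\{\nu^{-1/2}\}$, and the \nosf{IIb} case is vacuous. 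Your closing heuristic (``$\Delta(\Xi)$ records characters attached to the generic or Steinberg-type constituents of $\Xi$'') is likewise tied to the wrong $\Xi$: for one-dimensional $\sigma_\Pi$ the kernel is itself the non-generic Langlands-quotient-type constituent (\nosf{IVd}, \nosf{Vd}, \nosf{VIb}), so the correct explanation for disjointness is a different one. Redo the case list with the one-dimensional rows of table~\ref{tab:list_sigma} and the argument closes.
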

\begin{proof} By a twist we can assume that $\sigma_\Pi$ is normalized as in table~\ref{tab:List}.
The constituents $\Pi$ and $\Xi$ are then given by theorem~\ref{thm:sigma_Pi_existence} and table~\ref{tab:list_sigma}.
The statement follows from table~\ref{tab:Delta}.
\end{proof}

\begin{table}
\caption{Irreducible representations $\Pi$ of $\GSp(4,k)$ with $J_P(\Pi)\neq0$.\label{tab:List}}
\begin{center}
\begin{tabular}{llllllll}
\toprule
Type & $\sigma_{\Pi}$            & ${\omega}$ & Conditions       & $\rho$ \\
\midrule
\nosf{I}
& $(\chi_1\times\chi_2)\boxtimes1, $ & $\chi_1\chi_2$ & $\chi_1,\chi_2,\chi_1\chi_2,\chi_1\chi_2^{-1}\neq\nu^{\pm1}$  & all\\
\nosf{IIa}
& $\Sp(\chi_1)\boxtimes \chi_1^{-1}$               & $1$ & $\chi_1^2\neq \nu^{\pm1}$ and $\chi_1\neq\nu^{\pm3/2}$  & all \\
\nosf{IIb}
& $(\chi_1\circ \det) \boxtimes \chi_1^{-1}$      & $1$ & $\chi_1^2\neq \nu^{\pm1}$ and $\chi_1\neq\nu^{\pm3/2}$  & $1$ \\

\nosf{IIIa}
& $(\chi_1^{-1}\times\nu^{-1})\boxtimes\nu^{1/2}\chi_1$& $\chi_1$&$\chi_1\neq1,\nu^{\pm2},\nu^{-1}$  & all \\
\nosf{IIIb}
& $(\chi_1 \times \nu) \boxtimes \nu^{-1/2}$      & $\chi_1$ &  $\chi_1\neq 1,\nu^{\pm2},\nu^{-1}$ & $\chi_1$, $1$ \\
\nosf{IVa}
& $\Sp(\nu^{-3/2})\boxtimes\nu^{3/2}$              & $1$ &                           & all \\
\nosf{IVb}
& $(\nu^{-3/2}\circ\det)\boxtimes\nu^{3/2}$       & $1$ &                       & $1$ \\
\nosf{IVc}
& $\Sp(\nu^{3/2})\boxtimes\nu^{-3/2}$              & $1$ &                   & $\nu^{\pm1}$ \\
\nosf{IVd}
& $(\nu^{3/2}\circ\det)\boxtimes\nu^{-3/2}$       & $1$ &                    & none \\
\nosf{Va}
&$\Sp(\nu^{-1/2}\chi_0)\boxtimes\nu^{1/2}$         & $1$ & $\chi_0^2=1\neq\chi_0$      & all \\
\nosf{Vb}
& $\Sp(\nu^{1/2}\chi_0) \boxtimes \nu^{-1/2}$      & $1$ & $ \chi^2_0 =1\neq\chi_0$    & $1$ \\
\nosf{Vc}
& $\Sp(\nu^{1/2}\chi_0) \boxtimes \chi_0\nu^{-1/2}$& $1$ & $ \chi^2_0 =1\neq\chi_0$    & $\chi_0$ \\
\nosf{Vd}
& $(\nu^{1/2}\chi_0\circ\det)\boxtimes\nu^{-1/2}$ & $1$ & $\chi_0^2=1\neq\chi_0$      & none \\
\nosf{VIa}
& $\Sp(\nu^{-1/2})\boxtimes\nu^{1/2}$              & $1$ &                             & all \\
\nosf{VIb}
& $(\nu^{-1/2}\circ\det)\boxtimes\nu^{1/2}$       & $1$ &                             & none \\
\nosf{VIc}
& $\Sp( \nu^{1/2}) \boxtimes \nu^{-1/2}$           & $1$ &                             & $1$ \\
\nosf{VId}
& $(\nu^{1/2}\circ \det) \boxtimes \nu^{-1/2}$    & $1$ &                             & $1$ \\
\nosf{X}
& $\pi_{c}\boxtimes1$           & $\omega_{\pi_{c}}$  & $\omega_{\pi_{c}}\neq\nu^{\pm1}$  &all \\
\nosf{XIa}
& $\nu^{-1/2}\pi_{c}\boxtimes\nu^{1/2}$        & $1$ &  $\omega_{\pi_{c}}=1$    & all \\
\nosf{XIb}
& $\nu^{1/2}\pi_{c} \boxtimes \nu^{-1/2}$      & $1$ &  $\omega_{\pi_{c}} = 1$  & $1$ \\
\bottomrule
\end{tabular}
\end{center}

Up to a possible twist, these are the irreducible representations $\Pi\in\CCC_G$ with central character $\omega$ and non-zero Siegel-Jacquet module.
Each $\Pi$ is defined as the unique quotient of $I\!=\!\Ind_P^G(\sigma_\Pi)$ with an irreducible $\sigma_\Pi\in\CCC_M(\omega)$
subject to the formulated conditions.
 $\pi_c$ denotes a cuspidal irreducible smooth $\Gl(2)$-module.
The special representation $\Sp(\chi)$ for a $\Gl(1)$-character $\chi$ is the unique quotient of $\nu^{-1/2}\chi\times\nu^{1/2}\chi\in\CCC_{\Gl(2)}$.
For types \nosf{I}, \nosf{IIa}, \nosf{IIb} and \nosf{X}, $\Pi\!=\! I$ is irreducible.
The last column lists the smooth $\Gl(1)$-characters $\rho$ such that $\Pi$ admits a Bessel model attached to the character $\Lambda\!=\!\rho\boxtimes\rho^\divideontimes$ of the split torus $\widetilde{T}\cong k^\times\times k^\times$ as in section~\ref{s:preliminaries} where $\rho^\divideontimes=\omega\rho^{-1}$, see theorem~\ref{Mult1}.
\end{table}
 
\begin{table}
\caption{Siegel induced representations \label{tab:list_sigma}}

\begin{small}
\begin{center}
\begin{tabular}{lllll}
\toprule
$\Pi$ & $\sigma_\Pi$                  & $\Xi^{ss}$ & $\rho_+(\sigma_\Pi)$ & $\rho_-(\sigma_\Pi)$ \\
\midrule
\nosf{I} 
& $(\chi_1\times\chi_2)\boxtimes1, $                    & $0$  & $\nu^{-1/2}$ & $\nu^{1/2}\chi_1\chi_2$\\
& $(\chi_1^{-1}\times\chi_2)\boxtimes\chi_1, $          & $0$  & $\nu^{-1/2}\chi_1$ & $\nu^{1/2}\chi_2$\\
& $(\chi_1\times\chi_2^{-1})\boxtimes\chi_2, $          & $0$  & $\nu^{-1/2}\chi_2$ & $\nu^{1/2}\chi_1$\\
& $(\chi_1^{-1}\times\chi_2^{-1})\boxtimes\chi_1\chi_2$ & $0$  & $\nu^{-1/2}\chi_1\chi_2$ & $\nu^{1/2}$\\
\nosf{IIa} 
& $\Sp(\chi_1)\boxtimes \chi_1^{-1}$                     & $0$  & $\nu^{-1/2}\chi_1^{-1}$ & $\nu^{1/2}\chi_1$\\
& $\Sp(\chi_1^{-1})\boxtimes \chi_1$                     & $0$  & $\nu^{-1/2}\chi_1$ & $\nu^{1/2}\chi_1^{-1}$\\
& $\nu^{-1/2}(\chi_1\times\chi_1^{-1})\boxtimes\nu^{1/2}$ & \nosf{IIb}& $1$ & $1$\\
\nosf{IIb} 
& $(\chi_1\circ\det)\boxtimes \chi_1^{-1}$              & $0$  & $\nu^{-1/2}\chi_1^{-1}$ & $\nu^{1/2}\chi_1$\\
& $(\chi_1^{-1}\circ\det)\boxtimes \chi_1$              & $0$  & $\nu^{-1/2}\chi_1$ & $\nu^{1/2}\chi_1^{-1}$\\
& $\nu^{1/2}(\chi_1\times\chi_1^{-1})\boxtimes\nu^{-1/2}$ & \nosf{IIa}& $\nu^{-1}$ & $\nu$\\
\nosf{IIIa} 
& $(\chi_1^{-1}\times\nu^{-1})\boxtimes \nu^{1/2}\chi_1$& \nosf{IIIb} & $\chi_1$    & $1$\\
& $(\chi_1\times\nu^{-1})\boxtimes \nu^{1/2}$           & \nosf{IIIb} & $1$         & $\chi_1$\\
\nosf{IIIb} 
& $(\chi_1^{-1}\times\nu)\boxtimes \nu^{-1/2}\chi_1$    & \nosf{IIIa} & $\nu^{-1}\chi_1$ & $\nu$\\
& $(\chi_1\times\nu)\boxtimes \nu^{-1/2}$               & \nosf{IIIa} & $\nu^{-1}$  & $\nu\chi_1$\\
\nosf{IVa} 
&  $\Sp(\nu^{-3/2})\boxtimes\nu^{3/2}$                  & \nosf{IVc}  & $\nu$       & $\nu^{-1}$\\
\nosf{IVb}
& $(\nu^{-3/2}\circ\det)\boxtimes \nu^{3/2}$            & \nosf{IVd}  & $\nu$       & $\nu^{-1}$\\
& $(\nu^2\times\nu^{-1})\boxtimes\nu^{-1/2}$            & \nosf{IVa}, \nosf{IVc}, \nosf{IVd} & $\nu^{-1}$  & $\nu$\\
\nosf{IVc}
& $\Sp(\nu^{3/2})\boxtimes \nu^{-3/2}$                  & \nosf{IVa}  & $\nu^{-2}$  & $\nu^{2}$\\
& $(\nu^{-2}\times\nu)\boxtimes\nu^{1/2}$               & \nosf{IVa}, \nosf{IVb}, \nosf{IVd}  & $1$  & $1$\\
\nosf{IVd} 
& $(\nu^{3/2}\circ\det)\boxtimes \nu^{-3/2}$            & \nosf{IVb}   & $\nu^{-2}$  & $\nu^2$\\
\nosf{Va}
& $\Sp(\nu^{-1/2}\chi_0)\boxtimes\nu^{1/2}$             & \nosf{Vb}    & $1$         & $1$\\
& $\Sp(\nu^{-1/2}\chi_0)\boxtimes\chi_0\nu^{1/2}$       & \nosf{Vc}    & $\chi_0$    & $\chi_0$\\
\nosf{Vb}
& $\Sp(\nu^{1/2}\chi_0)\boxtimes\nu^{-1/2}$             & \nosf{Va}    & $\nu^{-1}$  & $\nu$\\
& $(\nu^{-1/2}\chi_0\circ\det)\boxtimes\nu^{1/2}\chi_0$ & \nosf{Vd}    & $\chi_0$    & $\chi_0$\\
\nosf{Vc}
& $\Sp(\nu^{1/2}\chi_0)\boxtimes\nu^{-1/2}\chi_0$       & \nosf{Va}    & $\nu^{-1}\chi_0$ & $\nu\chi_0$\\
& $(\nu^{-1/2}\chi_0\circ\det)\boxtimes\nu^{1/2}$       & \nosf{Vd}    & $1$         & $1$\\
\nosf{Vd}
& $(\nu^{1/2}\chi_0\circ\det)\boxtimes\nu^{-1/2}\chi_0$ & \nosf{Vb}    & $\nu^{-1}\chi_0$ & $\nu\chi_0$\\
& $(\nu^{1/2}\chi_0\circ\det)\boxtimes\nu^{-1/2}$       & \nosf{Vc}    & $\nu^{-1}$  & $\nu$\\
\nosf{VIa}
& $\Sp(\nu^{-1/2})\boxtimes\nu^{1/2}$                   & \nosf{VIc}   & $1$         & $1$\\
\nosf{VIb}
& $(\nu^{-1/2}\circ\det)\boxtimes\nu^{1/2}$             & \nosf{VId}   & $1$         & $1$\\
\nosf{VIc}
& $\Sp(\nu^{1/2})\boxtimes\nu^{-1/2}$                   & \nosf{VIa}   & $\nu^{-1}$  & $\nu$\\
\nosf{VId}
& $(\nu^{1/2}\circ\det)\boxtimes\nu^{-1/2}$             & \nosf{VIb}   & $\nu^{-1}$  & $\nu$\\
\nosf{X} 
& $\pi_{c}\boxtimes1$                                   & $0$  & $\nu^{-1/2}$& $\nu^{1/2}\omega_{\pi_{c}}$\\
& $\pi_{c}^\vee\boxtimes\omega_{\pi_{c}}$               & $0$  & $\nu^{-1/2}\omega_{\pi_{c}}$ & $\nu^{1/2}$\\
\nosf{XIa}
& $\nu^{-1/2}\pi_{c}\boxtimes\nu^{1/2}$                 & \nosf{XIb}   & $1$         & $1$\\
\nosf{XIb}
& $\nu^{1/2}\pi_{c}\boxtimes\nu^{-1/2}$                 & \nosf{XIa}   & $\nu^{-1}$  & $\nu$\\
\bottomrule
\end{tabular}
\end{center}
\end{small}
For each irreducible $\Pi\in\CCC_G$ with $J_P(\Pi)\neq0$, normalized as in table~\ref{tab:List}, these are the irreducible $\sigma_\Pi=\pi\boxtimes\chi_\Pi\in \CCC_M(\omega)$ such that $\Pi$ is a quotient of the Siegel induced representation $I=\Ind_P^G(\sigma_\Pi)$. The third column lists the constituents of the kernel $\Xi=\ker(I\to \Pi)$.
The last columns list the characters $\rho_+\!=\!\nu^{-1/2}\chi_\Pi$ and $\rho_-\!=\!\nu^{1/2}{\omega}\chi_\Pi^{-1}$.
\end{table}

\begin{landscape}
\begin{table}
\caption{Multisets\label{tab:Delta}}

\begin{center}
\begin{tabular}{llllllll}
\toprule
$\Pi$ & $\Delta(\Pi)$  & $\widetilde{\Delta}(\Pi)$    &$\Delta_0(\Pi)$   & $\Delta_1(\Pi)$  & $\Delta_{\pluss}(\Pi)$ & $\Delta_Q(\Pi)$ & $\omega$\\
      &                & if $\deg\widetilde{\Pi}=1$   &                   & if $\deg\widetilde{\Pi}=1$                  \\
\midrule
\nosf{I}   & $\{1,\chi_1,\chi_2,\chi_1\chi_2\}$ & $\{1,\chi_1,\chi_2,\chi_1\chi_2\}$ & $\{1,\chi_1,\chi_2,\chi_1\chi_2\}$ & $\emptyset$ & $\nu^{1/2}\Delta_0(\Pi)$&$\{\chi_1,\chi_1^{-1},\chi_2,\chi_2^{-1}\}$& $\chi_1\chi_2$   \\
\nosf{IIa} & $\{\chi_1,\nu^{1/2},\chi_1^{-1}\}$ & $\{\chi_1,\nu^{1/2},\chi_1^{-1}\}$ & $\{\chi_1,\nu^{1/2},\chi_1^{-1}\}$ & $\emptyset$ & $\nu^{1/2}\Delta_0(\Pi)$
&$\{\chi_1\nu^{1/2},\chi_1^{-1}\nu^{1/2}\}$&$1$\\
\nosf{IIb} & $\{\chi_1,\nu^{-1/2},\chi_1^{-1}\}$ & $\{\chi_1,\nu^{-1/2},\chi_1^{-1}\}$&$\{\nu^{-1/2}\}$          & $\{\chi_1,\chi_1^{-1}\}$&$\{1\}$&$\{\chi_1\nu^{-1/2},\chi_1^{-1}\nu^{-1/2}\}$&$1$\\
\nosf{IIIa}& $\{\nu^{1/2}, \chi_1\nu^{1/2}\}$ & $\{\nu^{1/2},\chi_1\nu^{1/2}\}$&$\{\nu^{1/2},\chi_1\nu^{1/2}\}$&$\emptyset$&$\{\nu,\chi_1\nu\}$&$\{\chi_1,\chi_1^{-1},\nu\}$&$\chi_1$\\
\nosf{IIIb}& $\{\nu^{-1/2},\chi_1\nu^{-1/2}\}$    & $\{\nu^{-1/2},\chi_1\nu^{-1/2}\}$  &$\{\nu^{-1/2},\chi_1\nu^{-1/2}\}$  &$\emptyset$&$\{1,\chi_1\}$ &$\{\chi_1,\chi_1^{-1},\nu^{-1}\}$&$\chi_1$\\
\nosf{IVa} & $\{\nu^{3/2}\}$                     & $\{\nu^{3/2}\}$                 &$\{\nu^{3/2}\}$            &$\emptyset$     &$\{\nu^{2}\}$ &$\{\nu^2\}$       &$1$\\
\nosf{IVb} & $\{\nu^{3/2},\nu^{-1/2}\}$          & $\{\nu^{3/2},\nu^{-1/2}\}$      &$\{\nu^{-1/2}\}$           &$\{\nu^{3/2}\}$ &$\{1\}$       &$\{\nu^{-2},\nu\}$&$1$\\
\nosf{IVc} & $\{\nu^{-3/2},\nu^{1/2}\}$          & $\{\nu^{-3/2},\nu^{1/2}\}$      &$\{\nu^{-3/2},\nu^{1/2}\}$ &$\emptyset$   &$\{\nu,\nu^{-1}\}$&$\{\nu^2,\nu^{-1}\}$&$1$\\
\nosf{IVd} & $\{\nu^{-3/2}\}$                    &                                 &$\emptyset$                &                &$\emptyset$   &$\{\nu^{-2}\}$    &$1$\\
\nosf{Va}  & $\{\nu^{1/2},\chi_0\nu^{1/2}\}$  & $\{\nu^{1/2},\chi_0\nu^{1/2}\}$&$\{\nu^{1/2},\chi_0\nu^{1/2}\}$&$\emptyset$&$\{\nu,\chi_0\nu\}$&$\{\nu\chi_0\}$   &$1$\\
\nosf{Vb}  & $\{\nu^{-1/2},\chi_0\nu^{1/2}\}$   & $\{\nu^{-1/2},\chi_0\nu^{1/2}\}$ &$\{\nu^{-1/2}\}$           &$\{\chi_0\nu^{1/2}\}$ &$\{1\}$  &$\{\chi_0\}$    &$1$\\
\nosf{Vd}  & $\{\nu^{-1/2},\chi_0\nu^{-1/2}\}$   &                                 &$\emptyset$                &                 &$\emptyset$    &$\{\nu^{-1/2}\chi_0\}$&$1$\\
\nosf{VIa} & $\{\nu^{1/2},\nu^{1/2},\nu^{1/2}\}$ & $\{\nu^{1/2},\nu^{1/2}\}$       &$\{\nu^{1/2},\nu^{1/2}\}$  &$\emptyset$ &$\{\nu,\nu\}$&$\{\nu,1\}$&$1$\\
\nosf{VIb} & $\{\nu^{1/2}\}$        &                  &$\emptyset$           &           &$\emptyset$    &$\{1\}$&$1$\\
\nosf{VIc} & $\{\nu^{-1/2}\}$                          & $\{\nu^{-1/2}\}$                  &$\{\nu^{-1/2}\}$            &$\emptyset$                 & $\{1\}$          & $\{1\}$&$1$\\
\nosf{VId} & $\{\nu^{-1/2},\nu^{-1/2},\nu^{-1/2}\}$& $\{\nu^{-1/2},\nu^{-1/2}\}$        &$\{\nu^{-1/2}\}$            &$\{\nu^{-1/2}\}$  &$\{1\}$  & $\{1,\nu^{-1}\}$ & $1$\\
\nosf{X}   & $\{1,\omega_{\pi_c}\}$             & $\{1,\omega_{\pi_c}\}$             & $\{1,\omega_{\pi_c}\}$             &$\emptyset$  & $\{\nu^{1/2},\nu^{1/2}\omega_{\pi_c}\} $&$\emptyset$&$\omega_{\pi_{c}}$\\
\nosf{XIa} & $\{\nu^{1/2}\}$        & $\{\nu^{1/2}\}$  & $\{\nu^{1/2}\}$      &$\emptyset$      &$\{\nu\}$ &$\emptyset$&$1$\\
\nosf{XIb} & $\{\nu^{-1/2}\}$       & $\{\nu^{-1/2}\}$ & $\{\nu^{-1/2}\}$     &$\emptyset$      &$\{1\}$        &$\emptyset$&$1$\\
\bottomrule
\end{tabular}
\end{center}

For irreducible $\Pi\in\CCC_G(\omega)$ as in table~\ref{tab:List}, this is the list of multisets defined in section~\ref{s:Combinat}.
The multisets $\widetilde{\Delta}(\Pi)$ and $\Delta_1(\Pi)$ depend on the choice of a Bessel character.
The entries in the corresponding columns refer only to the case where the Bessel character defines a split Bessel model for $\Pi$.
In the cases left blank there is no split Bessel model.
\end{table}
\end{landscape}

\begin{table}
\caption{$T$-modules in the Bessel filtration}
\begin{center}
\begin{tabular}{llllll}
\toprule
Type& $\delta_P^{-1/2}\widetilde{I_3}$ & $\delta_P^{-1/2}\widetilde{I_2}^{ss}$, $\delta_P^{-1/2}\widetilde{I_{1}}$ &  $\delta_P^{-1/2}\widetilde{I_0}$& $\ \delta_P^{-1/2}\otimes\pi_0(\widetilde{\Pi})$ &  ${\omega}$ \\
\midrule
\nosf{I}   & $\boxed{1}$& $\boxed{\chi_1}\oplus\boxed{\chi_2}$ & $\boxed{\chi_1\chi_2}$  & $1,\chi_2,\chi_1,\chi_1\chi_2$ & $\chi_1\chi_2$\\
\nosf{IIa} & $\boxed{\chi_1^{-1}}$      & $\boxed{\nu^{1/2}}$   & $\boxed{\chi_1}$ & $\chi_1^{-1},\nu^{1/2},\chi_1$ & $1$ \\
\nosf{IIb} & $\boxed{\chi_1^{-1}}$      & $\boxed{\nu^{-1/2}}$    & $\boxed{\chi_1}$& $\chi_1^{-1},\nu^{-1/2}, \chi_1$ & $1$\\
\nosf{IIIa}& $\boxed{\nu^{1/2}\chi_1}$  & $\boxed{\nu^{1/2}}\oplus\chi_1\nu^{-1/2}$   & $\nu^{-1/2}$  & $\nu^{1/2}\chi_1$, $\nu^{1/2}$ & $\chi_1$\\
\nosf{IIIb}& $\boxed{\nu^{-1/2}}$ & $\boxed{\nu^{-1/2}\chi_1} \oplus \nu^{1/2}$ &  $\nu^{1/2}\chi_1$ & $\nu^{-1/2},\nu^{-1/2}\chi_1$& $\chi_1$\\
\nosf{IVa} & $\boxed{\nu^{3/2}}$  & $\nu^{1/2}$                & $\nu^{-3/2}$  &$\nu^{3/2}$                     &$1$\\
\nosf{IVb} & $\boxed{\nu^{3/2}}$  & $\boxed{\nu^{-1/2}}$         & $\nu^{-3/2}$  &$\nu^{3/2}$, $\nu^{-1/2}$       &$1$\\
\nosf{IVc} & $\boxed{\nu^{-3/2}}$ & $\boxed{\nu^{1/2}}$        & $\nu^{3/2}$ &$\nu^{-3/2},\nu^{1/2}$          &$1$\\
\nosf{IVd} \\ %
\nosf{Va}  & $\boxed{\nu^{1/2}}$  & $\boxed{\chi_0\nu^{1/2}}$  & $\nu^{-1/2}$  &$\nu^{1/2}$, $\chi_0\nu^{1/2}$  &$1$\\
\nosf{Vb}  & $\boxed{\nu^{-1/2}}$ & $\boxed{\chi_0\nu^{1/2}}$  & $\nu^{1/2}$ &$\nu^{-1/2}$, $\chi_0\nu^{1/2}$ &$1$\\
\nosf{Vc}  & $\boxed{\nu^{-1/2}\chi_0}$ & $\boxed{\nu^{1/2}}$    & $\chi_0\nu^{1/2}$ &$\chi_0\nu^{-1/2}$, $\nu^{1/2}$ &$1$\\
\nosf{Vd}  \\ %
\nosf{VIa} & $\boxed{\nu^{1/2}}$  & $\boxed{\nu^{1/2}}$          & $\nu^{-1/2}$  &$\nu^{1/2}$, $\nu^{1/2}$        &$1$\\
\nosf{VIb} \\ %
\nosf{VIc} & $\boxed{\nu^{-1/2}}$ & $\nu^{1/2}$                  & $ \nu^{1/2}$  &$\nu^{-1/2}$                    &$1$\\
\nosf{VId} & $\boxed{\nu^{-1/2}}$ & $ \boxed{\nu^{-1/2}}$        & $ \nu^{1/2}$  &$\nu^{-1/2},\nu^{-1/2}$         &$1$\\
\nosf{X}   & $\boxed{1}$          & $0$            & $\boxed{\omega_{\pi_{c}}}$  &$1$, $\omega_{\pi_{c}}$ &$\omega_{\pi_{c}}$\\
\nosf{XIa} & $\boxed{\nu^{1/2}}$  & $0$                          & $\nu^{-1/2}$  &$\nu^{1/2}$                     &$1$\\
\nosf{XIb} & $\boxed{\nu^{-1/2}}$ & $0$                          & $\nu^{1/2}$   &$\nu^{-1/2}$                    &$1$\\
\bottomrule
\end{tabular}
\end{center}

For $\Pi$ and $I$ as in table~\ref{tab:List} and Bessel characters $\rho$ that provide a split Bessel model to $\Pi$,
these are the normalized $T$-characters $\chi_{\norm}$
that occur as constituents in $\delta_P^{-1/2}\otimes \widetilde{I_i}$
and $\delta_P^{-1/2}\otimes \pi_0(\widetilde{\Pi})$.
The boxed entries contribute to $\delta_P^{-1/2}\otimes \pi_0(\widetilde{\Pi})$. 
In the cases left blank there is no split Bessel model.
\end{table}

\begin{table}\caption{Spinor $L$-factors (regular part) \label{tab:regular_poles}}
\begin{footnotesize}
\begin{center}
 \begin{tabular}{llll}
\toprule
Type & $\Pi\in\CCC_G(\omega)$                      &  $\rho$  & $L^{\mathrm{PS}}_\mathrm{reg}(s,\Pi,\Lambda,1)\ ,\ \  \Lambda=\rho\boxtimes\rho^\divideontimes$  split\\
\midrule
\nosf{I}    & $\chi_1\times\chi_2\rtimes\sigma$  &  all & $L(s,\sigma)L(s,\chi_1\sigma)L(s,\chi_2\sigma)L(s,\chi_1\chi_2\sigma)$\\
\nosf{IIa}  & $\chi \St\rtimes\sigma$            &  all & $L(s,\sigma)L(s,\chi^2\sigma)L(s,\nu^{1/2}\chi\sigma)$\\
\nosf{IIb}  &$\chi\mathbf{1}\rtimes \sigma$ & $\chi\sigma$    &$L(s,\sigma)L(s,\chi^2\sigma)L(s,\nu^{-1/2}\chi\sigma)$\\
\nosf{IIIa} &$\chi\rtimes \sigma \St$    & all                & $L(s,\nu^{1/2}\chi\sigma)L(s,\nu^{1/2}\sigma)$\\
\nosf{IIIb} &$\chi\rtimes \sigma\mathbf{1}$ & $\sigma,\chi\sigma$ & $L(s,\nu^{-1/2}\chi\sigma)L(s,\nu^{-1/2}\sigma)L(s,\nu^{1/2}\chi\sigma)L(s,\nu^{1/2}\sigma)$\\     
\nosf{IVa}  &$\sigma \St_{G}$            & all                & $L(s,\nu^{3/2}\sigma)$\\
\nosf{IVb}  &$L(\nu^2,\nu^{-1}\sigma \St)$& $\sigma$          & $L(s,\nu^{3/2}\sigma)L(s,\nu^{-1/2}\sigma)$\\
\nosf{IVc}  &$L(\nu^{3/2} \St,\nu^{-3/2}\sigma)$& $\nu^{\pm1}\sigma$ & $L(s,\nu^{1/2}\sigma)L(s,\nu^{-3/2}\sigma)L(s,\nu^{3/2}\sigma)$\\
\nosf{IVd}  &$\sigma\mathbf{1}_G$        & none               & ---\\
\nosf{Va}   &$\delta([\xi,\nu\xi],\nu^{-1/2}\sigma)$& all     & $L(s,\nu^{1/2}\sigma)L(s,\nu^{1/2}\xi\sigma)$\\
\nosf{Vb}   &$L(\nu^{1/2}\xi \St, \nu^{-1/2}\sigma)$ &$\sigma$& $L(s,\nu^{-1/2}\sigma)L(s,\nu^{1/2}\xi\sigma)$\\ 
\nosf{Vc}   &$L(\nu^{1/2}\xi \St, \nu^{-1/2}\xi\sigma)$ & $\xi\sigma$ & $L(s,\nu^{1/2}\sigma)L(s,\nu^{-1/2}\xi\sigma)$\\
\nosf{Vd}   &$L(\nu\xi,\xi\rtimes\nu^{-1/2}\sigma)$ & none    & ---\\
\nosf{VIa}  &$\tau(S,\nu^{-1/2}\sigma)$  & all                & $L(s,\nu^{1/2}\sigma)^2$\\
\nosf{VIb}  &$\tau(T,\nu^{-1/2}\sigma)$  & none               & ---\\
\nosf{VIc}  &$L(\nu^{1/2} \St,\nu^{-1/2}\sigma)$ & $\sigma$   & $L(s,\nu^{-1/2}\sigma)$\\
\nosf{VId}  &$L(\nu,1\rtimes\nu^{-1/2}\sigma)$ & $\sigma$     & $L(s,\nu^{-1/2}\sigma)^2$\\
\nosf{VII}  &$\chi\rtimes\pi$            & all                & $1$\\
\nosf{VIIIa}&$\tau(S,\pi)$               & all                & $1$\\
\nosf{VIIIb}&$\tau(T,\pi)$               & none               & ---\\
\nosf{IXa}  &$\delta(\nu\xi,\nu^{-1/2}\pi)$& all              & $1$\\
\nosf{IXb}  & $L(\nu\xi,\nu^{-1/2}\pi)$  & none               & ---\\
\nosf{X}    &$\pi\rtimes\sigma$          & all                & $L(s,\sigma)L(s,\omega_{\pi}\sigma)$\\
\nosf{XIa}  &$\delta(\nu^{1/2}\pi,\nu^{-1/2}\sigma)$&all      & $L(s,\nu^{1/2}\sigma)$\\
\nosf{XIb}  &$L(\nu^{1/2}\pi, \nu^{-1/2}\sigma)$ & $\sigma$   & $L(s,\nu^{-1/2}\sigma)$\\
            & cuspidal generic           & all                & $1$\\
            & cuspidal non-generic       & none               & ---\\
\bottomrule
\end{tabular}
\end{center}
\end{footnotesize}
For every irreducible smooth representation $\Pi$ of $G=\GSp(4,k)$, 
the third column lists the smooth characters $\rho$ of $k^\times$ such that the character $\Lambda=\rho\boxtimes\omega\rho^{-1}$ of $\widetilde{T}\cong k^\times\times k^\times$ yields a split Bessel model for $\Pi$.
The last column gives the regular part of Piatetskii-Shapiro's spinor $L$-factor attached to this split Bessel model.
The notation follows \cite{Sally_Tadic} and \cite{Roberts-Schmidt}.
For typographical reasons we set $\mu=1$;
the general case follows from the identity $L^{\mathrm{PS}}_{\mathrm{reg}}(s,\Pi,\mu,\Lambda)= L^{\mathrm{PS}}_{\mathrm{reg}}(s,\mu\otimes\Pi,1,\Lambda)$.

\end{table}

\begin{table}
\begin{small}
\begin{center}
\caption{Normalized Bessel modules of degree one.\label{tab:Bessel_module_split_Bessel_model}}
\begin{tabular}{llllc}
\toprule
Type             & $\Pi\in\CCC_G(\omega)$                  & $\rho$ & $\delta_P^{-1/2}\otimes\widetilde{\Pi}\in\CCC_{TS}$ & perfect\\
\midrule
\nosf{I}         & $\chi_1\times\chi_2\rtimes\sigma$       & $\rho\in\Delta_-(\Pi)$               & $\mathbb{E}[X\to X/ \nu^{1/2}\rho]$&\\
                 &                                         & $\rho\in\Delta^\divideontimes_-(\Pi)$& $\mathbb{E}[X\to X/ \nu^{1/2}\rho^\divideontimes]$&\\
                 &                                         & every other $\rho$ & $\mathbb{E}[X]$ & $\bullet$\\
\nosf{IIa}       & $\chi\St_{\Gl(2)}\rtimes\sigma$                  & $\rho\in\Delta_-(\Pi)$               & $\mathbb{E}[X\to X/ \nu^{1/2}\rho]$&\\
                 &                                         & $\rho\in\Delta^\divideontimes_-(\Pi)$& $\mathbb{E}[X\to X/ \nu^{1/2}\rho^\divideontimes]$&\\
                 &                                         & every other $\rho$             & $\mathbb{E}[X]$ & $\bullet$\\
\nosf{IIb}       & $\chi\mathbf{1}_{\Gl(2)}\rtimes\sigma$  & $\chi\sigma$                       & $\mathbb{E}[X]$ & $\bullet$ \\
\nosf{IIIa}      & $\chi\rtimes\sigma\St_{\GSp(2)}$        & all                            & $\mathbb{E}[\nu^{1/2}\sigma\oplus\chi_1\nu^{1/2}\sigma]$ & $\bullet$ \\
\nosf{IIIb}      & $\chi\rtimes\sigma\mathbf{1}_{\GSp(2)}$ & $\sigma$                       & $\mathbb{E}[\nu^{-1/2}\sigma\oplus\chi_1\nu^{-1/2}\sigma]$ & $\bullet$ \\
\nosf{IVa}       & $\sigma\St_G$                           & all $\rho$                     & $\mathbb{E}[\nu^{3/2}\sigma]$ & $\bullet$ \\
\nosf{IVb}       & $L(\nu^2,\nu^{-1} \St)$                 & $\rho=\sigma$                  & $\mathbb{E}[\nu^{3/2}\sigma\oplus\nu^{-1/2}\sigma]$ & $\bullet$ \\
\nosf{IVc}       & $L(\nu^{3/2} \St,\nu^{-3/2}\sigma)$     & $\rho=\nu^{\pm1}\sigma$        & $\mathbb{E}[\nu^{-3/2}\sigma\oplus\nu^{1/2}\sigma]$ & $\bullet$ \\
\nosf{Va}        & $\delta([\xi,\nu\xi],\nu^{-1/2})$       & $\sigma$             & $\xi\nu^{1/2}\sigma\oplus\mathbb{E}[\nu^{1/2}\sigma]$     & \\
                 &                                         & $\xi\sigma$& $\xi\nu^{1/2}\sigma\oplus\mathbb{E}[ \nu^{1/2}\sigma]$&\\
                 &                                         & every other $\rho$             & $\mathbb{E}[\nu^{1/2}\sigma\oplus\xi\nu^{1/2}\sigma]$   &$\bullet$ \\
\nosf{Vb}        & $L(\nu^{1/2}\xi \St, \nu^{-1/2}\sigma)$ &    $\sigma$                    & $\mathbb{E}[\nu^{-1/2}\sigma\oplus\nu^{1/2}\xi\sigma]$ & $\bullet$ \\
\nosf{Vc}        & $L(\nu^{1/2}\xi \St, \nu^{-1/2}\xi\sigma)$ & $\xi\sigma$                 & $\mathbb{E}[\nu^{1/2}\sigma\oplus\nu^{-1/2}\xi\sigma]$ & $\bullet$ \\
\nosf{VIa}       & $\tau(S,\nu^{-1/2}\sigma)$              & $\sigma$                       & $\mathbb{E}[\nu^{1/2}\sigma]\oplus\nu^{1/2}\sigma$   & \\
                 &                                         & every other $\rho$             & $\mathbb{E}[(\nu^{1/2}\sigma)^{(2)}]$ & $\bullet$ \\
\nosf{VIc}       & $L(\nu^{1/2} \St,\nu^{-1/2}\sigma)$     & $\sigma$                       & $\mathbb{E}[\nu^{-1/2}\sigma]$ & $\bullet$ \\
\nosf{VId}       & $L(\nu,1\rtimes\nu^{-1/2}\sigma)$       & $\sigma$                       & $\mathbb{E}[(\nu^{-1/2}\sigma)^{(2)}]$ & $\bullet$ \\
\nosf{VII}       & $\chi\rtimes\pi$                        & all                            & $\mathbb{S}$  & $\bullet$\\
\nosf{VIIIa}     & $\tau(S,\pi)$                           & all                            & $\mathbb{S}$  & $\bullet$\\
\nosf{IXa}       & $\delta(\nu\xi,\nu^{-1/2}\pi)$          & all                            & $\mathbb{S}$  & $\bullet$\\
\nosf{X}         & $\pi\rtimes\sigma $                     & $\rho\in\Delta_-(\Pi)$               & $\mathbb{E}[X\to X/ \nu^{1/2}\rho]$&\\
                 &                                         & $\rho\in\Delta^\divideontimes_-(\Pi)$& $\mathbb{E}[X\to X/ \nu^{1/2}\rho^\divideontimes]$&\\
                 &                                         & every other $\rho$ & $\mathbb{E}[X]$ & $\bullet$\\
\nosf{XIa}       & $\delta(\nu^{1/2}\pi,\nu^{-1/2}\sigma)$ & $\sigma$                        & $\mathbb{S}\oplus\nu^{1/2}\sigma$ \\
                 &                                         & every other $\rho$                   & $\mathbb{E}[\nu^{1/2}\sigma]$  & $\bullet$\\
\nosf{XIb}       & $L(\nu^{1/2}\pi, \nu^{-1/2}\sigma)$     & $\sigma$                        & $\mathbb{E}[\nu^{-1/2}]$ & $\bullet$\\
                 & cuspidal generic                        & all                             & $\mathbb{S}$    & $\bullet$\\
\bottomrule
\end{tabular}
\end{center}
\end{small}
These are the normalized Bessel modules $\delta_P^{-1/2}\otimes\widetilde\Pi=\delta_P^{-1/2}\otimes\beta_\rho(\Pi)$ of degree one.
For typographical reasons, $X$ denotes $\delta_P^{-1/2}\otimes\pi_0(\widetilde{\Pi})\cong\delta_P^{-1/2}\otimes(\Pi_P)_{\widetilde{T},\Lambda}$. Except for the case where $\Pi$ is of type \nosf{VIa} and $\rho=\sigma$, this $X$ is the unique cyclic $\Gl(1)$-module with constituents in $\widetilde{\Delta}(\Pi)$ by theorem~\ref{thm:monodromy}.
The last column indicates by a bullet if $\widetilde{\Pi}$ is perfect.
For every pair $(\Pi,\rho)$ not in this list, the Bessel module has degree zero and occurs in table~\ref{tab:Bessel_module_no_split_Bessel_model}.
The notation follows \cite{Sally_Tadic} and \cite{Roberts-Schmidt}.
\end{table}

\begin{table}
\begin{small}
\begin{center}
\caption{Normalized Bessel modules of degree zero. \label{tab:Bessel_module_no_split_Bessel_model}}
\begin{tabular}{llll}
\toprule
Type        & $\Pi\in\CCC_G(\omega)$                  & $\rho$    &  $\delta_P^{-1/2}\otimes\widetilde{\Pi}\in\CCC_{TS}$\\
\midrule
\nosf{IIb}  & $(\chi_1\circ\det)\rtimes\sigma$        & all $\neq\chi\sigma$     &  $\nu^{-1/2}\sigma\chi_1$ \\
\nosf{IIIb} & $\chi\rtimes\sigma\mathbf{1}_{\GSp(2)}$ & all $\neq\sigma,\chi\sigma$ & $\nu^{-1/2}\sigma\oplus\chi_1\nu^{-1/2}\sigma$ \\
\nosf{IVb}  & $L(\nu^2,\nu^{-1} \St)$                 & all $\neq\sigma$            & $\nu^{-1/2}\sigma$ \\
\nosf{IVc}  & $L(\nu^{3/2} \St,\nu^{-3/2}\sigma)$     & all $\neq\nu^{\pm1}\sigma$  & $\nu^{-3/2}\sigma\oplus\nu^{1/2}\sigma$\\
\nosf{IVd}  & $\sigma\mathbf{1}_G$                    & $\sigma$                    & $\nu^{-3/2}\sigma$ \\
            &                                         & all $\neq\sigma$            & $0$\\
\nosf{Vb}   & $L(\nu^{1/2}\xi \St, \nu^{-1/2}\sigma)$ & all $\neq\sigma$            & $\nu^{-1/2}\sigma$ \\
\nosf{Vc}   & $L(\nu^{1/2}\xi \St, \nu^{-1/2}\xi\sigma)$ & all $\neq\xi\sigma$      & $\nu^{-1/2}\xi\sigma$ \\
\nosf{Vd}   & $L(\nu\xi,\xi\rtimes\nu^{-1/2}\sigma)$  & $\sigma$                    & $\nu^{-1/2}\xi\sigma$ \\
            &                                         & $\xi\sigma$                 & $\nu^{-1/2}\sigma$ \\
            &                                         & all $\neq\sigma,\xi\sigma$  & $0$\\
\nosf{VIb}  & $\tau(T,\nu^{-1/2}\sigma)$              & $\sigma$                    & $\nu^{1/2}\sigma$  \\
            &                                         & all $\neq\sigma$            & $0$                \\
\nosf{VIc}  & $L(\nu^{1/2} \St,\nu^{-1/2}\sigma)$     & all $\neq\sigma$            & $\nu^{-1/2}\sigma$ \\
\nosf{VId}  & $L(\nu,1\rtimes\nu^{-1/2}\sigma)$       & all $\sigma$                & $\nu^{-1/2}\sigma$ \\
\nosf{VIIIb}& $\tau(T,\pi)$                           & all                         & $0$                \\ 
\nosf{IXb}  & $L(\nu\xi,\nu^{-1/2}\pi)$               & all                         & $0$                \\
\nosf{XIb}  & $L(\nu^{1/2}\pi, \nu^{-1/2}\sigma)$     & all $\neq\sigma$            & $\nu^{-1/2}\sigma$ \\
            & cuspidal non-generic                    & all                         & $0$                \\

\bottomrule
\end{tabular}
\end{center}
\end{small}
These are the normalized Bessel modules $\delta_P^{-1/2}\otimes\widetilde\Pi=\delta_P^{-1/2}\otimes\beta_\rho(\Pi)$ of degree zero. For every pair $(\Pi,\rho)$ not in this table, the Bessel module has degree one and occurs in table~\ref{tab:Bessel_module_split_Bessel_model}.
The notation follows \cite{Sally_Tadic} and \cite{Roberts-Schmidt}.
\end{table}

\FloatBarrier

\newpage

\begin{small}
\bibliographystyle{amsalpha}

\end{small}

\vskip 15 pt
\begin{footnotesize}
\centering{Mirko R\"osner\\ Mathematisches Institut, Universit\"at Heidelberg\\ Im Neuenheimer Feld 205, 69120 Heidelberg\\ email: mroesner@mathi.uni-heidelberg.de}

\vskip 10 pt
\centering{Rainer Weissauer\\ Mathematisches Institut, Universit\"at Heidelberg\\ Im Neuenheimer Feld 205, 69120 Heidelberg\\ email: weissauer@mathi.uni-heidelberg.de}

\end{footnotesize}
\end{document}